\setlist[1]{leftmargin=*}
\setlist[enumerate,1]{label=(\alph*)}
\setlist[enumerate,2]{label=(\roman*), ref=(\alph{enumi}.\roman*), leftmargin=.5em}
\newlist{enumerate-alt}{enumerate}{1}
\setlist[enumerate-alt]{leftmargin=*, label=(\roman*)}
\newlist{parlist}{enumerate}{1}
\setlist[parlist]{leftmargin=0cm, itemindent=2\parindent, label=(\alph*), ref=(\alph*), itemsep=0.2em}
\newlist{steps}{enumerate}{1}
\setlist[steps]{leftmargin=0cm, itemindent=2\parindent, itemsep=.5em, label={\textbf{\arabic*.}},  ref=\textbf{\arabic*}}
\renewcommand\subsection{\@startsection{subsection}{2}%
	\z@{-.5\linespacing\@plus-.7\linespacing}{.5\linespacing}%
	{\bfseries\itshape}}
\renewcommand\subsubsection{\@startsection{subsubsection}{3}%
	\z@{.5\linespacing\@plus.7\linespacing}{-.5\linespacing}%
	{\normalfont\bfseries}}
\renewcommand\paragraph{\@startsection{paragraph}{4}%
	\z@{.5\linespacing\@plus.7\linespacing}{-.5\linespacing}%
	{\normalfont\bfseries}}
\makeatletter \renewenvironment{proof}[1][\proofname]{
	\par\pushQED{\qed}\normalfont
	\topsep6\p@\@plus6\p@\relax
	\trivlist\item[\hskip\labelsep\bfseries#1\@addpunct{.}]
	\ignorespaces}{
	\popQED\endtrivlist\@endpefalse} \makeatother
\theoremstyle{plain}
\newtheorem{theo}{Theorem}[section]
\newtheorem*{theo*}{Theorem}
\newtheorem{question}[theo]{Question}
\theoremstyle{definition}
\newtheorem{definition}[theo]{Definition}
\newtheorem{lema}[theo]{Lemma}
\newtheorem{prop}[theo]{Proposition}
\newtheorem{example}[theo]{Example}
\newtheorem{notation}[theo]{Notation}
\newtheorem*{notation*}{Notation}
\newtheorem{remark}[theo]{Remark}
\newtheorem{setting}[theo]{Setting}
\newtheorem{cor}[theo]{Corollary}
\newtheorem*{remark*}{Remark} 
\theoremstyle{remark$(i)$}
\newtheorem*{claim*}{Claim}
\newtheorem*{case*}{Case}
\newcommand{\CC}{{\mathbb{C}}}
\newcommand{\NN}{{\mathbb{N}}}
\newcommand{\RR}{{\mathbb{R}}}
\newcommand{\ZZ}{{\mathbb{Z}}}
\newcommand{\comp}{{\circ}}
\let\sec\S 
\newcommand{\B}{\mathbb{B}}
\newcommand{\C}{\mathbb{C}}
\newcommand{\D}{\mathbb{D}}
\newcommand{\F}{\mathbb{F}}
\newcommand{\N}{\mathbb{N}}
\renewcommand{\P}{\mathbb{P}}
\newcommand{\Q}{\mathbb{Q}}
\newcommand{\R}{\mathbb{R}}
\renewcommand{\S}{\mathbb{S}}
\newcommand{\Z}{\mathbb{Z}}
\newcommand{\cA}{\mathcal{A}}
\newcommand{\cB}{\mathcal{B}}
\newcommand{\cC}{\mathcal{C}}
\newcommand{\cE}{\mathcal{E}}
\newcommand{\cF}{\mathcal{F}}
\newcommand{\cI}{\mathcal{I}}
\newcommand{\cJ}{\mathcal{J}}
\newcommand{\cM}{\mathcal{M}}
\newcommand{\cO}{\mathcal{O}}
\newcommand{\cP}{\mathcal{P}}
\newcommand{\cQ}{\mathcal{Q}}
\newcommand{\cR}{\mathcal{R}}
\newcommand{\cS}{\mathcal{S}}
\newcommand{\cT}{\mathcal{T}}
\newcommand{\cU}{\mathcal{U}}
\newcommand{\cV}{\mathcal{V}}
\newcommand{\cW}{\mathcal{W}}
\newcommand{\cX}{\mathcal{X}}
\newcommand{\cZ}{\mathcal{Z}}
\renewcommand{\epsilon}{\varepsilon}
\renewcommand{\tilde}{\widetilde}
\renewcommand{\hat}{\widehat}
\renewcommand{\bar}{\overline}
\newcommand{\sdot}{\, \cdot \, } 
\renewcommand{\leq}{\leqslant}
\renewcommand{\geq}{\geqslant}
\renewcommand{\to}{\longrightarrow}
\newcommand{\onto}{\twoheadrightarrow}
\newcommand{\into}{\hookrightarrow}
\newcommand{\trans}{\pitchfork}
\newcommand{\Sing}{\operatorname{Sing}}
\newcommand{\Exc}{\operatorname{Exc}}
\newcommand{\Hom}{\operatorname{Hom}}
\newcommand{\reg}{^{\mathrm{reg}}}
\newcommand{\Int}{\operatorname{Int}} 
\newcommand{\redd}{_{\mathrm{red}}} 
\newcommand{\sspan}{\operatorname{span}}
\newcommand{\Sp}{\operatorname{Sp}}
\newcommand{\Fr}{\operatorname{Fr}}
\newcommand{\CZ}{\operatorname{CZ}}
\newcommand{\Ind}{\operatorname{Ind}}
\newcommand{\Fix}{\operatorname{Fix}}
\newcommand{\lbr}{[\![}
\newcommand{\rbr}{]\!]}
\newcommand{\HF}{\operatorname{HF}}
\newcommand{\CFpt}{\operatorname{CF}^{\textnormal{pt}}}
\newcommand{\CFlp}{\operatorname{CF}^{\textnormal{loop}}}
\newcommand{\std}{_{\mathrm{std}}}
\newcommand{\pt}{\mathrm{pt}}
\newcommand{\gp}{^{\mathrm{gp}}}
\newcommand{\rest}{\vartheta}
\newcommand{\hor}{_{\mathrm{hor}}}
\renewcommand{\vert}{_{\mathrm{vert}}}
\renewcommand{\d}{\partial}
\newcommand{\id}{\mathrm{id}}
\newcommand{\de}{\coloneqq} 
\newcommand{\AC}{_{A}}
\newcommand{\limn}{\lim_{\nu\rightarrow \infty}}
\newcommand{\pr}{\mathrm{pr}}
\newcommand{\ac}{\mathscr{a}}
\newcommand{\kk}{\mathscr{k}}
\newcommand{\btau}{\boldsymbol{\tau}}
\begin{document}

\title
[Symplectic monodromy at radius zero]
{Symplectic monodromy at radius zero \\ and equimultiplicity of $\mu$-constant families}
\author{Javier Fern\'andez de Bobadilla}
\address{Javier Fern\'andez de Bobadilla:  
	(1) IKERBASQUE, Basque Foundation for Science, Euskadi Plaza, 5, 48009 Bilbao, Basque Country, Spain; 
	(2) BCAM,  Basque Center for Applied Mathematics, Mazarredo 14, 48009 Bilbao, Basque Country, Spain; 
	(3) Academic Colaborator at UPV/EHU.} 
\email{jbobadilla@bcamath.org}

\thanks{J.F.B was supported by the Spanish Ministry of Science, Innovation and Universities, project reference MTM2016-76868-C2-1-P (UCM), J.F.B. and T. P. were supported by the Basque Government through the BERC 2018-2021 program, and Gobierno Vasco Grant IT1094-16, 
	by the Spanish Ministry of Science, Innovation and Universities BCAM Severo Ochoa accreditation SEV-2017-0718, by the Spanish Ministry of Science, Innovation and Universities, project reference PID2020-114750GB-C33, and by CIRM Jean Morlet Semester \emph{Singularity Theory from Modern Perspectives}. 
}

\author{Tomasz Pe{\l}ka}
\address{Tomasz Pe{\l}ka:
	(1) BCAM,  Basque Center for Applied Mathematics, Mazarredo 14, 48009 Bilbao, Basque Country, Spain; 
	(2) Institute of Mathematics, University of Warsaw, Banacha 2, 02-097 Warsaw, Poland}
\email{tpelka@mimuw.edu.pl}

\subjclass[2020]{14B05, 14J17, 32S25, 32S30, 32S55, 53D40}
\keywords{Zariski problem, equimultiplicity, monodromy, Floer homology}
\begin{abstract}
We show that every family of isolated hypersurface singularities with constant Milnor number has constant multiplicity. To achieve this, we endow the A'Campo model of \enquote{radius zero} monodromy with a symplectic structure. This new approach allows to generalize a spectral sequence of McLean converging to fixed point Floer homology of iterates of the monodromy to a more general setting which is well suited to study $\mu$-constant families. 
\end{abstract}

\maketitle 
\setcounter{tocdepth}{1}
\tableofcontents

\section{Introduction}\label{sec:intro}

Let $f\in\CC\lbr z_1,...,z_n\rbr $ be a formal power series. We write $f=\sum_{\iota\in\NN^n}a_{\iota}z^\iota$ with the usual multi-index notation. The \emph{multiplicity} of $f$ is the biggest number $\nu(f)$ such that $f\in \mathfrak{m}^{\nu(f)}$, where $\mathfrak{m}$ is the maximal ideal of $\CC\lbr x_1,...,x_n\rbr $. The Milnor number of $f$ is $\mu(f)=\dim_{\C}\C \lbr z_1,\dots,z_n \rbr/\langle \tfrac{\d f}{\d z_1},\dots, \tfrac{\d f}{\d z_{n}}\rangle$, see  \cite[\sec 7]{Milnor}. A \emph{continuous family of power series} parametrized by $t\in [0,1]$ assigns to each $t$ a  power series $f_t=\sum_{\iota\in\NN^n}a_{\iota}z^\iota$ in such a way that each coefficient $a_{\iota}(t)$ is continuous in $t$. 
\smallskip

The main result of this article is the following.

\begin{theo}\label{theo:Zariski}
	Let $(f_t)$ be a continuous family of power series. If the Milnor number $\mu(f_t)$ is independent of $t$ and finite then the multiplicity $\nu(f_t)$ is also independent of $t$.
\end{theo}

Theorem \ref{theo:Zariski} implies a positive answer to Zariski Multiplicity Question for the case of families of isolated singularities, see Corollary \ref{cor:Zariski}. A brief survey on Zariski Question is given in Section \ref{sec:history}: for more extensive ones we refer to \cite{Eyral-survey,Eyral-book} or \cite{Bobadilla_survey}.
\smallskip

\subsection{Summary of our approach}\label{sec:intro_summary}

The starting point of our approach towards proving Theorem \ref{theo:Zariski} is a result of McLean \cite[Corollary 1.4]{McLean}, which characterizes the multiplicity of an isolated singularity as the minimal integer $m\geq 1$ such that a version $\HF^*(\phi_f^m,+)$ of the fixed point Floer cohomology of the $m$-th iterate of the symplectic monodromy $\phi_f\colon \mathbb{F}_f\to \mathbb{F}_f$ of the Milnor fibration does not vanish. Here, $\mathbb{F}_f$ denotes the symplectic Milnor fiber (the precise definition of $(\mathbb{F}_f,\phi_f)$ requires some care, but we skip this subtlety for the sake of clarity at the introduction). To prove Theorem \ref{theo:Zariski}, we compare the groups  $\HF^*(\phi_{f_t}^m,+)$, for different members $f_t$ of  a $\mu$-constant family. We will now outline what is the key difficulty of this approach, why the setting of \cite{McLean} is not sufficient to cover it, and how our new method allows to overcome this obstacle.
\smallskip

Let $f$ be a germ of an isolated hypersurface singularity. It is known that the pair $(\mathbb{F}_{f},\phi_f)$ is a Liouville domain with a compactly supported exact symplectomorphism \cite{McLean}. Any two such pairs which are symplectically isotopic (in a certain sense)  give rise to the same Floer cohomology, see \cite[Lemma B.3]{McLean} or Proposition \ref{prop:isotopy_invariance}. So, the natural approach to prove Theorem \ref{theo:Zariski} would be to prove that, for small $t>0$, the pair $(\mathbb{F}_{f_{t}},\phi_{f_t})$ is symplectically isotopic to $(\mathbb{F}_{f_{0}},\phi_{f_{0}})$. However, even proving that $\mathbb{F}_{f_{t}}$ and $\mathbb{F}_{f_{0}}$ are isotopic as Liouville domains seems hard. The main reason is that it is not known whether the Milnor radius $\epsilon_t$ for $f_t$ can be chosen independent of $t$: in principle, we could have $\epsilon_t\rightarrow 0$ as $t\rightarrow 0$. This obstructs the direct comparison of Milnor fibrations for $f_t$ and $f_0$.

In \cite{McLean}, McLean uses an equivalence relation which is weaker than isotopy. Namely, to each pair $(\mathbb{F},\phi)$ as above, a construction of Giroux \cite{Giroux_original,Giroux_cool-gadget} associates a pair $L_{\phi}\subseteq S_{\phi}$ of contact manifolds. Now, \cite[Appendix B]{McLean}, roughly speaking, shows that $\HF^{*}(\phi^m,+)$ depends only on this associated \emph{contact pair}. For a symplectic monodromy of the Milnor fiber, \cite{McLean} shows that this pair is the link $f^{-1}(0)\cap \S^n_{\epsilon}\subseteq \S^n_{\epsilon}$, where $0<\epsilon\ll 1$ is a Milnor radius, with the standard contact structure introduced by Varchenko \cite{Varchenko}.  This provides the basis for the main theorem of \cite{McLean}, asserting that the multiplicity 
only depends on the embedded contact type of the link. 

However, the above mentioned McLean's main result cannot be easily applied in our setting, due to the radius degeneration problem explained in the paragraph above. Indeed, let $(f_{t})_{t\in [0,1]}$ be a $\mu$-constant family, and let $\epsilon_{t}$ be a Milnor radius of $f_{t}$. Fix $0<t\ll 1$ and choose $\epsilon_t<\epsilon_0$.  Then
\begin{equation}
	\label{eq:cobordismintro}
	(\overline{\B_{\epsilon_{0}}\setminus \B_{\epsilon_t}},\overline{\B_{\epsilon_{0}}\setminus \B_{\epsilon_t}}\cap f_{t}^{-1}(0))
\end{equation}
defines a symplectic cobordism between the contact pair of $f_t$ and a contact pair which is contactomorphic to the one for $f_{0}$. This cobordism is known to be topologically trivial if $n\geq 4$ \cite{Le-Ramanujam}, but proving its symplectic triviality seems hard.
\smallskip

In order to explain our approach let us briefly recall known facts about the Milnor fibration. Let $f\colon (\CC^n,0)\to(\CC,0)$ be a holomorphic function germ. A classical result  \cite{Milnor} asserts that  there is  $\epsilon_0>0$ such that for any $0<\epsilon<\epsilon_0$ and any $0<\delta\ll \epsilon$ the restrictions
\begin{equation}
	\label{eq:milfibsphintro}
	\frac{f}{|f|}\colon \S_\epsilon \setminus f^{-1}(0) \to\S^1
\end{equation}
and
\begin{equation}
	\label{eq:milfibtbintro}
	f\colon \B_\epsilon\cap f^{-1}(\D_\delta^*)\to \D_\delta^*
\end{equation}
are locally trivial fibrations. The first one (called Milnor fibration \emph{in the sphere}) is smoothly equivalent to the restriction of the second one (called Milnor fibration \emph{in the tube}) to $\Int\B_{\epsilon}\cap f^{-1}(\d\D_{\delta})$. The numbers $\epsilon,\delta$ are called \emph{Milnor radii} for the ball and the disc, respectively.  
If $f$ has an isolated singularity at the origin then the monodromy of the above fibrations can be chosen compactly supported.

The closures of the fibers of both \eqref{eq:milfibsphintro} and \eqref{eq:milfibtbintro} are Liouville domains for the standard Liouville form $-\frac{\pi}{2}d^{c}||z||^2$. For the fibration~\eqref{eq:milfibsphintro} McLean associates a monodromy $\phi_f:\mathbb{F}_f\to\mathbb{F}_f$ which is a compactly supported exact symplectomorphism and studies the Floer cohomology of the pair $(\mathbb{F}_f,\phi_f)$. 

In turn, we note that the fibration \eqref{eq:milfibtbintro} will not be, in general, locally trivial in a symplectic sense, since there is no reason why the volume of the fibers should be constant in $z\in\D^*_\delta$. Nonetheless, in Example \ref{ex:Milnor-fibration} we fix this issue by slightly modifying the shape of the ball $\B_{\epsilon}$.

In \cite[Theorem 1.2]{McLean} McLean provides a spectral sequence converging to $\HF^{*}(\phi_f^m,+)$, whose first page allows to compute the multiplicity of $f$. This first page is closely related to the A'Campo formula \cite{A'Campo} for the Lefschetz numbers and zeta function of $\phi_{f}^m$, which we now recall.

Let $h\colon X\to\CC^n$ be a log resolution of $f$ that only modifies $\C^n$ over the origin. Write $(f\comp h)^{-1}(0)=\sum_{i=0}^Nm_i D_i$, where $D_0$ is the strict transform of $f^{-1}(0)$.  Then the function $f\comp h$ can be expressed in local coordinates $(z_{1},...,z_n)$ as $\prod_{i=1}^kz_{i}^{m_{i}}$.
Denote by $\tau:\D_{\log}\to\D_\delta$ the real oriented blow up at the origin. Then $\d\D_{\log}:=\tau^{-1}(0)$ can be thought as a circle \enquote{at radius 0}. Exploiting the local structure of $f\comp h$, A'Campo constructed a topological locally trivial fibration
\begin{equation}
	\label{eq:ACampofibintro}
	f_A\colon A\to\D_{\log}
\end{equation}
whose restriction over $\D_{\delta}^{*}$ coincides with~\eqref{eq:milfibtbintro} and whose restriction
\begin{equation}
	\label{eq:ACampofibintrorad0}
	f_A|_{f_A^{-1}(\d \D_{\log})}\colon f_A^{-1}(\d \D_{\log})\to \d\D_{\log}
\end{equation}
over the radius $0$ circle admits a monodromy $\phi_A$ such that the fixed point set of $\phi_A^m$ is equal to
\begin{equation}
	\label{eq:fixedsetintro}
	\bigsqcup_{m_i|m}B_i,
\end{equation}
where $B_i$ is a cyclic covering of degree $m_i$ of $D_i\setminus (\bigcup_{j\neq i} D_i)$ determined by $f$, cf.\ \cite[\sec 2.3]{Denef_Loeser-Lefshetz_numbers}. Therefore, we have the equality of Lefschetz numbers
\begin{equation}\label{eq:ACampo-formula}
	\Lambda(\phi_f^m)=\sum_{m_i|m}\chi(B_i),
\end{equation}
where $\chi(\sdot)$ denotes the topological Euler characteristic. Notice that the summand $\chi(B_0)$ appears for every $m$, but it is equal to $0$, since $B_0$ is homeomorphic to $(f^{-1}(0)\cap\S_{\epsilon_0})\times (0,\epsilon_0]$ by the conical structure theorem \cite[Theorem 2.10]{Milnor}, and $\chi(f^{-1}(0)\cap\S_{\epsilon_0})=0$ since $f^{-1}(0)\cap\S_{\epsilon_0}$ is an odd-dimensional, smooth manifold. At this stage, we would also like to remark that the space $A$ constructed by A'Campo is a manifold {\em with corners}, and $f_A$ has no further structure than the topological locally trivial fibration. 

As we will explain below, the main step of our proof is to endow $A$ with a smooth structure, and define a fiberwise symplectic form on $A$ such that $\phi_{A}$ is realized as the symplectic lift of the angular vector field on $\d\D_{\log}$. In particular, the flow of the symplectic lift of the radial vector field on $\D_{\log}$ provides a \emph{symplectic isotopy} between $\phi_{A}$ and the monodromy of the Milnor fibration \eqref{eq:milfibtbintro}.

In turn, McLean in  \cite[Theorem 1.2]{McLean} constructs a certain Liouville domain $\mathbb{F}$ with a compactly supported exact symplectomorphism $\phi$, whose dynamics coincides with the one of $\phi_A$; and whose associated contact pair is contactomorphic to the link of $f$. Hence even though $(\F_{f},\phi_{f})$ and $(\F,\phi)$ might not be  symplectically isotopic, one still has that $\HF^{*}(\phi^m,+)=\HF^{*}(\phi_{f}^m,+)$ is the required Floer cohomology. Now, given such $(\mathbb{F},\phi)$, the first step to define $\HF^*(\phi,+)$ is to deform $\phi$ slightly by a Hamiltonian near $\d\mathbb{F}$ so that the deformed $\check\phi$ has no fixed points in a collar near $\d\mathbb{F}$. Therefore, the fixed point set of the deformed McLean monodromy $\check{\phi}^m$ is of the form 
\begin{equation}
	\label{eq:mcleandecomp}
	\bigsqcup_{m_i|m,\ i\neq 0}B_i
\end{equation}
where $B_i$ have the same properties as above. Note that the term $B_0$ got removed by the deformation near $\d \mathbb{F}$. This allows to construct a spectral sequence converging to $\HF^*(\phi,+)$ whose $E_1$-pieces are the ordinary homologies of the components $B_i$ for $m_i|m$, $i\neq 0$, see \cite[Theorem 1.2]{McLean}. Since $\chi(\HF^*(\phi^m,+))=\Lambda(\phi^m)$, this result may be regarded as a lifting  of the A'Campo formula \eqref{eq:ACampo-formula} to the setting of Floer cohomology.

Since the component $B_0$ is {\em not present} in \eqref{eq:mcleandecomp}, the $E_1$-page is empty for $m$ is strictly smaller than the multiplicity $\nu(f)$. It is also easy to show that $\HF^*(\phi^m,+)\neq 0$ for $m=\nu(f)$.

To have a grasp of why the deformation $\check\phi$ can be performed so that the component $B_0$ is not present, observe that, exploiting the topological product structure $B_0\cong (f^{-1}(0)\cap\S_{\epsilon})\times (0,\epsilon]$, where $\epsilon$ is the Milnor radius of $f$, the monodromy homeomorphism $\phi_A$ constructed by A'Campo can be isotoped to a homeomorphism $\check{\phi}_A$ that eliminates the fixed point component $B_0$ and leaves the other ones. Now for a sufficiently small Milnor radius $\epsilon>0$, $B_0$ becomes symplectically trivial, that is, a piece of a symplectization $(f^{-1}(0)\cap\S_{\epsilon}) \times (0,\epsilon)$ of the contact link. This allows to realize the above isotopy in the symplectic category, see \cite[p.\ 1023]{McLean}.

\smallskip

The Milnor fibration in the tube \eqref{eq:milfibtbintro} is better suited to study families, in particular $\mu$-constant ones. Indeed, let $F:\CC^n\times\D_\xi\to\CC\times\D_\xi$ be holomorphic of the form $F(x,t)=(f_t(x),t)$ such that $f_t$ has a  $\mu$-constant isolated singularity at the origin. Then, in~\cite{Le-Ramanujam} it is proved that there exits Milnor radii $\epsilon_0,\delta_0$ for $f_0$ such that, after shrinking $\xi>0$, the restriction
\begin{equation}
	\label{eq:auxfibintro}
	F\colon (\B_{\epsilon_0}\times\D_\xi)\cap F^{-1}(\D_{\delta_0}^*\times\D_\xi)\to \D_{\delta_0}^*\times\D_\xi
\end{equation}
is a locally trivial fibration. So, fixed any $t\in \D^*_\xi$ and any $\delta_t<\delta_0$, the restriction
\begin{equation}
	\label{eq:auxfibintro2}
	f_t\colon \B_{\epsilon_0}\cap f_t^{-1}(\D_{\delta_t}^*)\to \D_{\delta_t}^*
\end{equation}
is diffeomorphic to the Milnor fibration in the tube for $f_0$. Now we pick $\epsilon_t<\epsilon_0$ and $\delta_t<\delta_0$, Milnor radii for $f_t$. Then, for this particular choice of radii, the Milnor fibration in the tube for $f_t$ is embedded in the fibration~\eqref{eq:auxfibintro2}, and according with~\cite{Le-Ramanujam}, for any $z\in \D_{\delta_{t}}$ the difference between the fiber of~\eqref{eq:auxfibintro2} over $z$ and the Milnor fiber $f_t(z)\cap\B_{\epsilon_t}$ of $f_t$ over $z$ is a cobordism diffeomorphic to~\eqref{eq:cobordismintro}. Moreover, the monodromy of~\eqref{eq:auxfibintro2} can be chosen with support at the interior of $f_t^{-1}(z)\cap\B_{\epsilon_t}$.  Thus in our case, the component $B_0$ of the fixed point set \eqref{eq:fixedsetintro} of $\phi_A$ corresponds to the  cobordism \eqref{eq:cobordismintro}, which is not known to be symplectically trivial. So, $B_0$ cannot be eliminated by a small Hamiltonian perturbation. 
\smallskip 

Our proof of Theorem \ref{theo:Zariski} 
can be summarized in the following steps:
\begin{steps}
	\item\label{step:red-balls} We modify the fibration~\eqref{eq:auxfibintro} so that it becomes symplectically locally trivial and maintains the topological properties stated above. This is a matter of modifying the shape of the domain $(\B_{\epsilon_0}\times\D_\xi)\cap F^{-1}(\D_{\delta_0}^*\times\D_\xi)$ in such a way that the fibers can be symplectomorphic, in particular have constant volume. As a consequence, for each $t\in\D_\xi$ we define a pair $(\mathbb{F}_t,\phi_t)$ formed by a Liouville domain and an exact symplectomorphism, whose topology coincides with the monodromy of the fibration~\eqref{eq:auxfibintro2}, and such that for any small $t>0$, the pairs $(\mathbb{F}_{t},\phi_{t}^m)$ and $(\mathbb{F}_{0},\phi_{0}^m)$ are symplectically isotopic. In particular, $\HF^*(\phi_{t}^m,+)\cong \HF^*(\phi_{0}^m,+)$ for every $m$.
	\item\label{step:A'Campo} For each fixed $t$, we produce a symplectic version of A'Campo construction. That is, we construct a symplectically locally trivial fibration~\eqref{eq:ACampofibintro} satisfying the following two properties.
	\begin{enumerate}[(i)]
		\item\label{item:intro-i} The restrictions of $f_A$ and of the fibration~\eqref{eq:auxfibintro} coincide symplectically over $\D_\delta\setminus\D_{\delta'}\subset\D_{\log}$ for a certain $0<\delta'<\delta$,
		\item\label{item:intro_ii} The monodromy of~\eqref{eq:ACampofibintrorad0} induced by the lifting of the unit vector field in $\d\D_{\log}$ has exactly the same dynamical properties as the topological monodromy constructed by A'Campo.
	\end{enumerate}   
	We call this monodromy  the \emph{symplectic monodromy at radius zero} and denote it by $(\mathbb{F}^0_{t},\phi_{t}^0)$. Property \ref{item:intro-i} gives an isotopy between $(\mathbb{F}_{t},\phi_{t})$ and $(\mathbb{F}^0_{t},\phi_{t}^0)$, so we have $\HF^*(\phi_{t}^m,+)\cong \HF^*((\phi_{t}^0)^m,+)$ for any $m$. 
	
	As we explain below, this step gets rather involved, and creates a technique that may be useful for other degeneration problems in algebraic geometry. Our construction is, in fact, performed in a much broader generality, covering e.g.\ the case when $f$ is a function from Stein space $Y$, and both $Y$ and $f^{-1}(0)$ can have many isolated singularities.
	\item\label{step:sequence} 
	We slightly generalize McLean's spectral sequence in such a way that the component $B_0$, corresponding to the cobordism \eqref{eq:cobordismintro}, is taken into account. The new terms appearing in the first page of the spectral sequence are summands of the Borel-Moore homology $H_*^{BM}(B_0,\ZZ_2)$. They vanish because the cobordism~\eqref{eq:cobordismintro} is homologically trivial. Thus we get the same $E_1$ page as \cite[Theorem 1.2]{McLean}, hence the same characterization of multiplicity -- now for the Milnor radius $\epsilon_0$ fixed within the family. This result can also be stated in more generality indicated in Step \ref{step:A'Campo} above, see Propositions \ref{prop:spectral-sequence-monodromy},  \ref{prop:monodromy}.
\end{steps}

Our proof does not use Giroux construction \cite{Giroux_original,Giroux_cool-gadget} or McLean result \cite[(HF2)]{McLean} asserting that Floer homology depends only on the associated contact link. We directly deform the natural symplectic monodromy arising from the Milnor fibration in the tube~\eqref{eq:milfibtbintro} to a radius zero monodromy. 
This way, for \emph{any} fixed radius we get a model $(\mathbb{F}_A,\phi_A)$ which is actually isotopic to the fibration $(\mathbb{F}_{f},\phi_f)$ at that fixed radius, and has dynamical properties which allow to derive McLean spectral sequence. This is done without relying on most constructions in~\cite{McLean}.

As a byproduct of our construction, in Corollary \ref{cor:no-fixed-poins} we produce, for any {\em singular} hypersurface germ, a representative of its monodromy in the symplectic mapping class group of its Milnor fiber, which has no fixed points. We also produce one whose fixed point set is precisely the boundary of the Milnor fiber. This extends McLean results, who produces models with the same properties, but instead of being symplectically isotopic to the monodromy, they just share the same contact pairs.

\subsection{Outline}
We will now indicate the sections of the article where each of the above steps is realized.
\smallskip

The technical core of the proof lies in Step \ref{step:A'Campo}, that is, in the construction of the symplectic structure on the A'Campo space $A$. It is carried out in Sections \ref{sec:ACampo} and \ref{sec:symplectic}. Definition \ref{def:AX} of the space $A$ can be explained in two steps. As before, let $h\colon X\to \C^n$ be a log resolution of $f$, and let $D=\sum_{i}m_{i}D_{i}$ be the total transform of $f^{-1}(0)$. First, we perform a real oriented blowup $X_{\log}\to X$ which extends the fibration over $\D_{\delta}^{*}$ to one over $\D_{\log}$, see \eqref{eq:ACampofibintro}. This can be phrased conveniently in terms of Kato--Nakayama construction \cite{Kato_Nakayama} in log geometry, see \cite{Cauwbergs,CFP_Motivic-logarithmic-topological} for applications in this context. The resulting monodromy has the required behavior over each $D_{i}$, but is not continuous over the intersections, see Example \ref{ex:monodromy}. To remedy this problem, we multiply the preimage of each intersection $\bigcap_{i\in I} D_{i}$ by a corresponding face $\Delta_{I}$ of the dual complex of $D$. This face is parametrized by \emph{tropical coordinates} $w_{i}$, see \eqref{eq:def-w_i-u_i}, which measure the relative speed of convergence toward each $D_i$. The idea of such a \emph{hybrid} construction, combining classical and tropical coordinates, goes back to \cite{Bergman}, and has been successfully applied to various degeneration problems,  see e.g.\ \cite{BJ_tropical,EM_Lagrangian-fibrations}. For our application, we need to push this idea further.

Indeed, the space $A$ constructed above is a manifold with corners. We need to choose its smoothing in a very careful way,  which allows to introduce a fiberwise symplectic form satisfying restrictive properties \ref{item:intro-i}--\ref{item:intro_ii} above. To do this, we introduce smooth charts \eqref{eq:AC-chart} in such a way that both radial coordinates $r_i=|z_i|$ of $X$; and tropical coordinates $w_i$ of the simplices in $\d A$, decay exponentially 
towards their zero loci. This way, the local coordinate $r_i$ corresponding to a component $D_i$ gets replaced by a global smooth function $\bar{v}_{i}\colon A\to \R$ measuring, in a sense, a \enquote{hybrid} distance to $D_i$. Using these functions in \eqref{eq:omegaE} we provide an explicit formula for the symplectic form. The resulting monodromy $\phi_A|_{\d A}$ at radius zero  is then a time one flow of the vector field given by the formula \eqref{eq:monodromy-vector-field}.

The guiding principle of the technical proofs in Sections \ref{sec:ACampo}--\ref{sec:symplectic} is that all choices made along the construction become irrelevant at radius zero. Hence a local computation can be used to prove that at radius zero, $\phi_{A}$ is a rotation by $\tfrac{2\pi}{m_i}$ about $D_i$, and an interpolating Dehn twist in between, see Figure \ref{fig:monodromy}. This way, we get a symplectic representative of the monodromy as required in \ref{item:intro-i} and \ref{item:intro_ii}.
\smallskip

Step \ref{step:red-balls} is addressed in Section \ref{sec:acobs}, where we recall the necessary notions from the theory of Liouville domains and their fibrations \cite{Giroux_cool-gadget,Seidel_Fukaya-categories}. In particular, in Definition \ref{def:acob} we recall McLean's notion of graded abstract contact open book \cite{McLean}, i.e.\ pair $(\mathbb{F},\phi)$ as above, for which $\HF^{*}(\phi,+)$ is defined. Then, we describe a procedure which makes a symplectic monodromy into a graded abstract contact open book, see Definition \ref{def:monodromy_acob}. Our setting is closely related to \emph{Liouville fibrations with singularities} considered in \cite{Seidel_Fukaya-categories}, see Remark \ref{rem:Seidel}.
\smallskip

In Section \ref{sec:spectral-sequence} we recall the definition of Floer cohomology groups $\HF^{*}(\phi,+)$. To be precise,  we will use Floer \emph{homology} introduced by Uljarevic in \cite{Uljarevic}, cf.\ \cite{Seidel_more}: in Section \ref{sec:floerequivalence} we explain how to compare it with the setting of \cite{McLean}. In Proposition \ref{prop:spectral-sequence}, we prove a slight generalization of McLean spectral sequence \cite[(HF3)]{McLean} to the setting which includes the fixed-point component $B_0$. In Proposition \ref{prop:spectral-sequence-monodromy}, we apply this spectral sequence to the monodromy $\phi_{A}$ constructed in Step \ref{step:A'Campo}, using its dynamical properties listed in Proposition \ref{prop:monodromy}. This way, we get a spectral sequence similar to \cite[Theorem 1.2]{McLean}, but with term $B_0$ taken into account. Theorem \ref{theo:Zariski} is deduced in Section \ref{sec:proof}.

In order to make the article more accessible to readers who are not specialists in symplectic geometry, we try to be reasonably self-contained at various points.

\subsection{A brief history of the Zariski multiplicity conjecture}
\label{sec:history}

In his seminal paper \cite{Zariski_conjecture}, Zariski posed eight questions which had a major impact on the current shape of singularity theory. Out of these questions, the only one which remains open is the following.

\begin{question}\label{q:Zariski}
	Let $f,g\colon (\C^n,0)\to (\C,0)$ be holomorphic germs of the same topological type. Is it true that $f$ and $g$ have the same multiplicity?
\end{question}

Here, $f,g$ are said to be \emph{of the same topological type} if there exists a germ of a homeomorphism $\Phi\colon (\CC^n,0)\to (\CC^n,0)$ such that $\Phi(V(f))=V(g)$, where $V(f)$ denotes the zero set of $f$.
\smallskip 

If the above homeomorphism $\Phi$ is assumed to be $\cC^{1}$, then the affirmative answer to Question \ref{q:Zariski} was given by  Ephraim~\cite{Ephraim_C1}, and, independently, by Trotman~\cite{Trotman-thesis}. The answer is also known to be positive for $n=2$, i.e.\ for plane curve singularities \cite{Zar0}, and if one of the germs is smooth \cite{A'Campo,Le1}: in fact, the latter result follows from the A'Campo formula \eqref{eq:ACampo-formula} for the Lefschetz number. It is also true if $n=3$ and one of the germs has multiplicity two, see Navarro-Aznar~\cite{Navarro_Aznar}. If $n=3$ and the homeomorphism $\Phi$ is additionally bilipschitz, then a positive answer to Question \ref{q:Zariski} was given in \cite[Theorem 3.4(1)]{FdBFS} by  Fernandes, Sampaio and the first author.

Assume that $n=3$, and both $f$ and $g$ are germs of isolated surface singularities. Then, the answer to Question \ref{q:Zariski} is known to be positive if both $f,g$ are quasihomogeneous (Xu--Yau~\cite{Xu_Yau,Yau_top-types}); if one has arithmetic genus at most two (Yau~\cite{Yau_n=3_pa<=2}), or geometric genus at most $3$, or  Milnor number at most 26 (Yau--Zuo~\cite{Yau_Zuo-lower_bound}). It is also true for suspensions of irreducible plane curve singularities (Mendris--N\'emethi~\cite{MN_suspensions}). For $n>3$, a positive answer is known if one of the germs has Milnor number at most $2^{n}-1$ \cite{Yau_Zuo-lower_bound}.

In general, if $f$ and $g$ are topologically right-left equivalent by bilipschitz homeomorphisms, Risler--Trotman~\cite{Risler-Trotman_bilipschitz} proved that they have the same multiplicity.
\smallskip

An important special case of Question \ref{q:Zariski}, which was also widely studied since \cite{Zariski_conjecture}, concerns families of topologically equivalent germs. Theorem \ref{theo:Zariski} gives a positive answer to this question in case of isolated singularities. More precisely, we have the following.

\begin{cor}\label{cor:Zariski}
	Let $f_t\colon (\C^n,0)\to (\C,0)$ be a continuous family of holomorphic germs of isolated singularities, parametrized by $t\in [0,1]$. Assume that the topological type of $f_t$ is independent of $t$. Then the multiplicity $\nu(f_t)$ is independent of $t$, too.
\end{cor}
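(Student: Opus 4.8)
The plan is to reduce Corollary \ref{cor:Zariski} to Theorem \ref{theo:Zariski}; the only extra ingredient is the classical fact that the Milnor number of an isolated hypersurface singularity is a topological invariant.

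First I would observe that a continuous family $(f_t)$ of holomorphic germs of isolated singularities is, in particular, a continuous family of power series in the sense of Theorem \ref{theo:Zariski}: writing $f_t=\sum_\iota a_\iota(t)z^\iota$ for the Taylor expansion at the origin, each coefficient $a_\iota$ is continuous in $t$. Moreover, since every $f_t$ has an isolated critical point at $0$, the Milnor number $\mu(f_t)=\dim_\CC\CC\lbr z_1,\dots,z_n\rbr/\langle\tfrac{\d f_t}{\d z_1},\dots,\tfrac{\d f_t}{\d z_n}\rangle$ is finite. Hence Theorem \ref{theo:Zariski} applies to $(f_t)$ as soon as one checks that $\mu(f_t)$ does not depend on $t$, and it then yields precisely the conclusion of the Corollary, namely that $\nu(f_t)$ is independent of $t$.

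Second I would verify that $\mu(f_t)$ is constant, using that all $f_t$ have the same topological type. Fix $t\in[0,1]$; by hypothesis there is a germ of homeomorphism $\Phi\colon(\CC^n,0)\to(\CC^n,0)$ with $\Phi(V(f_t))=V(f_0)$. For an isolated hypersurface singularity the Milnor fiber $F$ is homotopy equivalent to a bouquet of $\mu$ copies of $S^{n-1}$, so $\mu=(-1)^{n-1}(\chi(F)-1)$; and the homology of $F$ is an invariant of the embedded topological type of $(V(f),0)\subset(\CC^n,0)$. Indeed, after choosing compatible Milnor radii, $\Phi$ restricts to a homotopy equivalence of the link complements $\B_\epsilon\setminus V(f_t)\simeq\B_{\epsilon'}\setminus V(f_0)$ carrying meridians to meridians, hence matching up to sign the distinguished classes $[\arg f_t],[\arg f_0]\in H^1(\,\cdot\,;\ZZ)$; the open Milnor fiber is the infinite cyclic cover determined by this class, so $H_*(F)$ depends only on the topological type. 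Therefore $\mu(f_t)=\mu(f_0)$ for all $t$. (This topological invariance of $\mu$ for isolated singularities is standard and may simply be quoted.)

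Combining the two steps, $(f_t)$ is a continuous family of power series with $\mu(f_t)$ constant and finite, so Theorem \ref{theo:Zariski} gives that $\nu(f_t)$ is independent of $t$. I do not expect a genuine obstacle in this argument: all the difficulty of the Zariski question for families of isolated singularities is absorbed into Theorem \ref{theo:Zariski}, and the reduction above is routine once the classical topological invariance of the Milnor number is invoked.
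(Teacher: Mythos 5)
Your proposal is correct and follows exactly the paper's argument: the paper deduces the Corollary from Theorem \ref{theo:Zariski} by invoking the topological invariance of the Milnor number (citing \cite[7.2]{Milnor} and \cite{Le1,Te:1980}). Your additional sketch of why $\mu$ is a topological invariant is a fine elaboration of a fact the paper simply quotes.
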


Indeed, families of isolated singularities with constant topological type have constant Milnor number $\mu$, because $\mu$ is a topological invariant \cite[7.2]{Milnor}, see \cite{Le1,Te:1980}. Thus Corollary \ref{cor:Zariski} follows from Theorem \ref{theo:Zariski}.  We remark that the question whether every $\mu$-constant family has constant topological type has positive answer if $n\neq 3$ \cite{Le-Ramanujam}, and is open if $n=3$, i.e.\ for families of surface singularities. 

In Example \ref{ex:not-in-a-family}, we show that there do exist isolated singularities of the same topological type, which are not connected by a $\mu$-constant family. Therefore, while Corollary \ref{cor:Zariski} gives a positive answer to Question \ref{q:Zariski} for families of isolated singularities, the non-family version remains open. The \emph{non-isolated} case of Question \ref{q:Zariski}, even for families, remains open, too. Nonetheless, David Massey observed that our results imply a positive answer for a particular class of \emph{aligned} singularities, see Corollary \ref{cor:non-isolated}.
\smallskip

Up to now, Theorem \ref{theo:Zariski} and Corollary \ref{cor:Zariski} were established only in certain special cases. Gabrielov and Kushnirenko~\cite{GaKou} proved Theorem \ref{theo:Zariski} for $f_0$ homogeneous. Greuel~\cite{Greuel_quasihomogeneous} and O'Shea~\cite{OShea-quasihomogeneous} proved it for $\mu$-constant deformations of functions $f_0$, with the only hypothesis that $f_0$ is quasi-homogeneous. The case when $f_0$ is Newton-nondegenerate was settled by Abderrahmane \cite{Abderrahmane_Newton-nondegenerate}, cf.\ \cite{Saia_Tomazella}. Other particular cases were studied e.g.\ by Greuel--Pfister~\cite[\sec 3]{GP_Advances}, Massey \cite[7.9]{Massey_Lecture-notes} or Plenat--Trotman~\cite{Plenat_Trotman}. Let us also remark that classical results of Hironaka \cite{Hironaka_normal-cones} imply that Whitney equisingular families are equimultiple. In \cite{TvS_weak-Whitney-implies-equimultiple}, Trotman and van Straten proved that \emph{weak} Whitney equisingularity implies equimultiplicity, too. 
	\smallskip

As a final remark, we note that Theorem \ref{theo:Zariski} is easily reduced to the case when the family $(f_t)$ is algebraic, i.e.\ each coefficient $a_{\iota}$ is a polynomial in $t$. This way, Theorem \ref{theo:Zariski} becomes a purely algebraic statement, which can be formulated over any field. Nonetheless, even in the curve case ($n=2$), no purely algebraic proof of this statement is known. In fact, the only proof over the complex numbers consists in proving that a $\mu$-constant family is topologically trivial, and then applying the affirmative answer to Question \ref{q:Zariski}, known for $n=2$. For families of functions defined over an algebraically closed field of characteristic zero, an application of Lefschetz Principle gives the same conclusion, see Remark \ref{rem:algebraic_Zariski}. Answering this question was one of the main motivations for development of the computer algebra program \textsc{Singular}~\cite{Sing}, see \cite{GP_history-of-singular} for a historical account.

Recently, Nero Budur, L\^e Quy Thuong, Honc Duc Nguyen and the first author have conjectured that the fixed point Floer cohomology of the $m$-th iterate of the monodromy coincides with the compactly supported cohomology of the $m$-th restricted contact locus \cite[Conjecture 1.5]{BBLN_contact-loci}. This might suggest an algebraic approach to the proof of Theorem \ref{theo:Zariski}, based on studying variation of contact loci in $\mu$-constant families. However, this problem does not seem easy at all.
%

\paragraph{Acknowledgments}
Part of this project was realized during the semester \emph{Singularity Theory from Modern Perspectives} at Centre International de Rencontres Mathématiques in Marseille. We would like to thank CIRM for their hospitality and great working conditions. We would also like to thank Norbert A'Campo, Enrique Artal, Nero Budur,  Ailsa Keating, Ignacio Luengo, David Massey, Andr\'as N\'emethi, Adam Parusiński, Mar\'ia Pe Pereira, Patrick Popescu-Pampu, Edson Sampaio, and Duco van Straten for helpful discussions. We also thank the referees for valuable comments.

\section{Preliminaries}\label{sec:prelim}

In this section, we review some standard notation which used throughout the article; and recall those basic definitions from symplectic geometry which will be needed to introduce the fiberwise symplectic structure on the A'Campo space in Section \ref{sec:symplectic}. Further notions, needed to define symplectic monodromy and McLean spectral sequences, will be introduced in Sections \ref{sec:acobs} and \ref{sec:spectral-sequence}.

\subsection{Basic notation}\label{sec:notation}

We write $\log$ for the \emph{natural} logarithm.  For $r\geq 0$ we denote by $\B_{r}^{n}$, or simply $\B_{r}$, the closed Euclidean ball of radius $r$ in $\R^n$, and by $\D_{r} \subseteq \C$ the closed disk of radius $r$ in $\C$. We put $\D_{r}^{*}=\D_{r}\setminus \{0\}$. 

We denote by $\id_{X}$ the identity map $X\to X$; and by $\id_{k\times k}$ the $k\times k$ identity matrix. If $X$ or $k$ is clear from the context, we will sometimes write $\id$ for $\id_X$ or $\id_{k\times k}$. For a map $f\colon X\to X$, we denote its fixed point set by $\Fix f\de \{x\in X: f(x)=x\}$. We write $\pr_{X}$, $\pr_{Y}$ for the projections from a subset of $X\times Y$ to $X$ and $Y$, respectively.

Let $X$ be a topological space. For a subset $U\subseteq X$, we denote by $\bar{U}$ its topological closure, and by $\Int U$ (or $\Int_{X}U$) its interior. Given a sequence $(x_{\nu})_{\nu=1}^{\infty}$ of elements of $X$ (which we abbreviate by $(x_{\nu})\subseteq X$), we write $x_{\nu}\rightarrow x$ for the convergence $\limn x_{\nu}=x$. Similarly, for a subset $Z\subseteq X$ and continuous functions $h$ and $a$ on $X\setminus Z$ and $Z$, respectively, we say that \enquote{$h\rightarrow a$ on $Z$} if $h$ extends to a continuous function on $X$ which restricts to $a$ on $Z$. We denote this continuous extension by $h$, too.

For a smooth manifold $X$, we denote by $\Omega^{k}$ the sheaf of smooth $k$-forms, and put $\Omega^{*}=\bigoplus_{k\geq 0}\Omega^{k}$. Thus for an open subset $U\subseteq X$, the $\R$-algebra $\Omega^{0}(U)$ is the $\R$-algebra $\cC^{\infty}(U)$ of smooth functions on $U$. Whenever no confusion is likely to occur, we will identify a function with its restriction, e.g.\ by writing $\cC^{\infty}(V)\subseteq \cC^{\infty}(U)$ for an open subset $V\subseteq U$.

\subsection{Symplectic manifolds}\label{sec:symplectic-intro}

A $2$-form $\omega\in \Omega^2(M)$ is \emph{symplectic} if it is closed and nondegenerate, i.e.\ $T_{x}M\ni v\mapsto \omega(v,\sdot)\in T^{*}_x M$ is an isomorphism for all $x\in M$. A $1$-form $\lambda$ is \emph{Liouville} if $d\lambda$ is symplectic. Its \emph{Liouville vector field} $X_{\lambda}$ is defined by $d\lambda(X_{\lambda},\sdot)=\lambda$. We say that $(M,\lambda)$ is a \emph{Liouville domain} if $M$ is a compact connected manifold with boundary and $\lambda\in \Omega^{1}(M)$ is a Liouville form such that $X_{\lambda}$ points outwards $\d M$.

Let $J\in \operatorname{End}(TM)$ be an almost complex structure, i.e.\ $J^2=-\id$. We say that $\omega\in \Omega^{2}(M)$ \emph{tames} $J$ if $\omega(v,Jv)>0$ for all nonzero $v\in T_{x}M$, $x\in M$: in this case, $\omega$ is nondegenerate. We say that $\omega$ is $J$-compatible if it tames $J$ and $\omega(J\sdot,J\sdot)=\omega(\sdot,\sdot)$. Note that these definitions vary slightly among the literature; here we follow \cite[\sec 4.1]{McDuff_Salamon}.

Assume that $M$ is a complex manifold, and let $J$ be the multiplication by $\imath=\sqrt{-1}$. A $2$-form on $M$ is \emph{K\"ahler} if it is $J$-compatible and closed, hence symplectic. 

For a smooth function $\varrho\colon M\to \R$ we write $d^{c}\varrho=d\varrho\circ J$. Thus $d^{c}=\imath (\d-\bar{\d})$. Again, we warn the reader that the definition of $d^{c}$ varies among the literature, e.g.\ \cite[Definition 3.13]{Huybrechts} defines $d^{c}$ as $-\imath (\d-\bar{\d})$, and \cite[p.\ 109]{Griffiths-Harris} as $-\frac{\imath}{4\pi}(\d-\bar{\d})$. Our definition of $d^{c}$, adapted from \cite[\sec 2.2]{CE_from-Stein-to-Weinstein}, is used e.g.\ in \cite[\sec 3.1]{CNP_Milnor-open-books} in a context which is very similar to ours, see Example \ref{ex:typical}.

We say that a smooth function $\varrho\colon M\to\R$ is \emph{strictly plurisubharmonic}  if $-dd^{c}\varrho$ tames $J$, see \cite[Definition 7.4.7]{McDuff_Salamon}. In this case, the form $-dd^{c}\varrho$ is K\"ahler. A function $\varrho$ is \emph{exhaustive} if $\varrho^{-1}(-\infty,c]$ is compact for every $c\in \R$. If $\varrho$ is an exhaustive strictly plurisubharmonic function then  $\varrho^{-1}(-\infty,c]$ with $1$-form $-d^{c}\varrho$ is a Liouville domain. Indeed, the Liouville vector field $X_{-d^{c}\varrho}$ is the gradient field of $\varrho$ with respect to the metric $-dd^{c}\varrho(\sdot,J\sdot)$, see \cite[\sec 2.8]{CE_from-Stein-to-Weinstein}. Every Stein manifold admits an exhaustive strictly plurisubharmonic function, see \cite[\sec 5.3]{CE_from-Stein-to-Weinstein}.

\begin{example}\label{ex:J}
	Let $M=\C^{*}$, and let $J(v)=\imath \cdot v$, i.e.\ $J$ is a rotation by $\pi$. Then in polar coordinates $z=r\cdot \exp(2\pi \imath\cdot \theta)$ we have $J\frac{\d}{\d r}= \frac{1}{r}\frac{\d}{\d \theta}$ and $J\frac{\d}{\d \theta}=-r\tfrac{\d}{\d r}$. Hence $d^{c}r=-r\, d\theta$ and $d^{c}\theta=\frac{1}{r}\, d r$. 
	
	Now if $M=\C^n$, $\varrho(z)=\frac{\pi}{2}\cdot ||z||^2=\frac{\pi}{2}\cdot\sum_{i}r_{i}^{2}$, then
	$\lambda\std\de -d^{c}\varrho =\pi\cdot\sum_{i}r_{i}^{2}d\theta_i$, 
	$\omega\std \de -dd^{c}\varrho=2\pi\cdot\sum_{i} r_{i}\, dr_{i}\wedge d\theta_{i}$ is the standard area form, and $X_{\lambda\std}=\pi\cdot\sum_{i}r_{i}\frac{\d}{\d r_{i}}=\nabla\varrho$ is the radial vector field.
	
	
	Let $f\colon \C^n\to \C$ be a holomorphic function with critical value $0\in \C$, and $\epsilon>0$ be such that $\B_{\epsilon}\trans f^{-1}(0)$. Then for $0<\delta\ll \epsilon $, the \emph{Milnor fiber} $F\de f^{-1}(\delta)\cap \B_{\epsilon}$ is smooth, and $f^{-1}(\delta)\trans \B_{\epsilon}$, so $(F,\lambda\std|_{F})$, is a Liouville domain.
	
	For a more general example, replace $\C^{n}$ by a Stein space $Y$, smooth off $f^{-1}(0)$; $\B_{\epsilon}$ by a sub-level set of an exhaustive strictly plurisubharmonic function $\varrho$; and the standard Liouville form by $-d^{c}\varrho$.
\end{example}

Let $f\colon M\to B$ be a submersion. A form $\omega\in \Omega^{2}(M)$ is \emph{fiberwise symplectic} if it is closed, and its restriction to each fiber of $f$ is nondegenerate (hence symplectic). A fiberwise symplectic form defines an Ehressmann connection $\{v\in TM: \forall_{w\in \ker f_{*}}\colon \omega(v,w)=0\}$, called a \emph{symplectic connection} \cite[\sec 6.3]{McDuff_Salamon}. Given a vector field on the base $B$, its lifting to the symplectic connection is called the {\em symplectic lift}. If the flow  the symplectic lift of a vector field in $B$ exists at a given time, then it induces symplectomorphisms between fibers, see \cite[Lemma 6.3.5]{McDuff_Salamon}.

\section{The A'Campo space: \enquote{Milnor fibration at radius zero}}\label{sec:ACampo}

Let $X$ be a complex manifold. Let $f\colon X\to \C$ be a holomorphic function with only one critical value $0\in \C$, such that the central fiber $f^{-1}(0)$ is snc. In this section, we recall the A'Campo \cite{A'Campo} model $A$ of the \enquote{radius zero} monodromy of $f$, see Figure \ref{fig:example}, and construct a smooth atlas on it.

After some preparation in Section \ref{sec:AX-def}, we introduce the topological space $A$ in Definition \ref{def:AX} by a hybrid procedure using tropical coordinates and Kato-Nakayama spaces at radius $0$, and classical geometry at positive radius. We list its basic properties in Section \ref{sec:AX-topo}. In Section \ref{sec:AX-C1}, we endow $A$ with a structure of a $\cC^1$-manifold with boundary, which depends only on $f$. In Section \ref{sec:AX-smooth} we improve this structure to a $\cC^{\infty}$ one, designed so that we can introduce a well-behaved fiberwise symplectic form \eqref{eq:omegaAC} in Section \ref{sec:symplectic}. This $\cC^{\infty}$ structure will depend on some additional data chosen along the construction.

In a topological setting, $A$ was introduced by A'Campo in \cite{A'Campo}, who used it to compute the zeta function of the monodromy. We note that loc.\ cit.\  predates the Kato--Nakayama construction \cite{Kato_Nakayama}, so instead of $X_{\log}$ it uses a real blowup. In turn, \enquote{tropical} (or \enquote{hybrid}) constructions have recently been used, for example in \cite{BJ_tropical, AN_hybrid, EM_Lagrangian-fibrations}, to define topological spaces related to our space $\Gamma$ in Definition \ref{def:AX}, together with some additional structure inherited from $f^{-1}(\delta)$ as $\delta\rightarrow 0$: e.g.\ a measure in \cite[3.4]{BJ_tropical} or a Lagrangian torus fibration in \cite{EM_Lagrangian-fibrations}.

\subsection{Construction of the A'Campo space}\label{sec:AX-def}

Let $X$ be a complex manifold of dimension $n$. Let $f\colon X\to \C$ be a holomorphic function, and let $D$ be the (scheme-theoretic) fiber $f^{-1}(0)$. Assume that $D\redd$ is snc. Since we are only interested in $f^{-1}(0)$ and nearby fibers, we assume $\log |f|<-1$ and that $f|_{X\setminus D}$ is a submersion onto $\D_{e^{-1}}^{*}$, so $0\in \C$ is its only critical value. Whenever no confusion is likely to occur, we identify divisors with their supports, e.g.\ by writing $X\setminus D$ for $X\setminus \operatorname{Supp} D$, i.e.\ for $f^{-1}(\D_{e^{-1}}^{*})$; or writing $x\in D$ instead of $x\in \operatorname{Supp} D$.

Let $D_1,\dots, D_N$ be the irreducible components of $D\de f^{-1}(0)$. Write
\begin{equation*}
	D=\sum_{i=1}^{N}m_{i}D_{i},
\end{equation*}
where each $m_{i}$ is a positive integer. Put $X_{\emptyset}=X\setminus D$, and for a nonempty subset $I\subseteq \{1,\dots, N\}$ put
\begin{equation}\label{eq:stratification}
	X_{I}^{\circ}=\bigcap_{i\in I} D_{i}\setminus \bigcup_{j\not \in I} D_{j}.
\end{equation}
Then $X=\bigsqcup_{I}X_{I}^{\circ}$ is a stratification of $X$.

\subsubsection{Charts adapted to $f$} In order to fix the notation, we recall some basic definitions. We say that $(U_X,\psi_X)$ is a \emph{holomorphic chart} for $X$ if $U_X\subseteq X$ is an open set, and $\psi_X\colon U_X\to \C^{n}$ is a biholomorphism onto its image. The map $\psi_X$ will often  be clear from the context, in which case we will abuse the language and use the term \emph{chart} to refer to $U_X$. We will usually keep the subscript \enquote{$X$} when referring to a chart in $X$, and drop it when taking its preimage in the A'Campo space.

A \emph{holomorphic atlas} is a collection $\cU_X=\{(U_X^{p},\psi_X^{p}):p\in R\}$ of holomorphic charts which cover $X$, i.e.\  $X=\bigcup_{p\in R}U^{p}_X$. 
A \emph{partition of unity inscribed in $\cU_X$} is a collection $\{\tau^{p}:p\in R\}$ of smooth functions $\tau^{p}\colon X\to [0,1]$ such that $\tau^{p}|_{X\setminus U_X^p}=0$, the set $\{p\in R:\tau^{p}(x)\neq 0\}$ is finite for every $x\in X$, and $\sum_{p\in R}\tau^{p}(x)=1$. 
We will use charts of the following form.

\begin{definition}\label{def:adapted-chart}
	Let $(U_X,\psi_X)$ be a holomorphic chart for $X$. Its \emph{associated index set} is 
	\begin{equation}\label{eq:index-set}
		S=\{i: D_i\cap \bar{U}_X\neq \emptyset\}\subseteq \{1,\dots, N\}.
	\end{equation}
	Write $\psi_X=(z_{i_1},\dots,z_{i_n})\colon U\to \C^{n}$ for some positive integers $i_1,\dots,i_n$. We say that the chart  $(U_X,\psi_X)$ is \emph{adapted to $f$} if the following conditions are satisfied.
	\begin{enumerate}[(i)]	
		\item\label{item:zero-locus} For every $i\in \{1,\dots, N\}$, we have $D_{i}\cap U_X=\{x\in U_X: z_{i}(x)=0\}$.
		\item\label{item:f-locally} If $S\neq \emptyset$ then the function $f$ restricts to $f|_{U_X}=\prod_{i\in S}z_{i}^{m_i}$.
		\item\label{item:r-small} For every $i\in S$, we have $\log |z_{i}|<-1$.
		\item\label{item:U-compact} The set $U_X$ has compact closure $\bar{U}_X$, and the map $\psi_X$ extends continuously to $\bar{U}_X$.
	\end{enumerate}
	A holomorphic atlas is \emph{adapted to $f$} if all its charts are. We say that a pair $(\cU_X,\btau)$ is \emph{adapted to $f$} if $\cU_X$ is a holomorphic atlas adapted to $f$, and $\btau$ is a partition of unity inscribed in $\cU_X$. 
\end{definition}
Note that, by \ref{item:zero-locus}, the indices of $z_{j}$'s are either in $S$ -- in which case they correspond to the fixed order of components of $D$ -- or are integers bigger than $N$. Although a bit unusual, this convention will make further notation much more convenient. We also note that by \ref{item:f-locally}, each component of $D$ meeting the closure $\bar{U}_X$ (i.e. whose index is in $S$) actually meets $U_X$.

Because $D\redd$ is snc, $X$ admits an atlas adapted to $f$. Indeed, the definition of snc implies that, after suitably indexing the variables, every point of $X$ lies in a chart satisfying  \ref{item:zero-locus}, \ref{item:f-locally}. Then, conditions  \ref{item:r-small}, \ref{item:U-compact} can be achieved by shrinking those charts.

\subsubsection{The basic functions}
\label{sec:basicfunctions}
We will use a continuous bijection $\eta\colon [0,1]\to [0,1]$, given by
\begin{equation}\label{eq:smooth_simplex}
	\eta(0)=0,\quad \eta(s)=(1-\log s)^{-1}\mbox{ for } s>0, 
\end{equation}
see Figure \ref{fig:smooth_simplex}. Note that the restriction $\eta|_{(0,1]}$ is smooth.  
Since we have assumed $\log |f|<-1$, we have smooth functions
\begin{equation}\label{eq:def-t-g}
	t\de (-\log|f|)^{-1}\colon X\setminus D\to [0,1),\qquad 
	g\de \eta(t)\colon X\setminus D\to [0,1).
\end{equation}
We extend them to continuous functions on $X$ by putting $t|_{D}=0$, $g|_{D}=0$. We have a smooth map
\begin{equation}\label{eq:def-theta}
	\theta\de \frac{f}{|f|}\colon X\setminus D\to \S^{1}.
\end{equation}

Fix a chart $(z_{i_1},\dots, z_{i_n})\colon U_X\to \C^{n}$ adapted to $f$. Fix $i$ in its index set $S=\{i:\bar{U}_X\cap D_i\neq\emptyset\}$, see \eqref{eq:index-set}. By  Definition  \ref{def:adapted-chart}\ref{item:r-small}, $\log|z_{i}|<-1$, so in particular $|z_{i}|<1$. We have radial coordinates
\begin{equation}\label{eq:def-r_i-t_i}
	r_{i}\de |z_{i}|\colon U_X \to [0,1),
	\quad\mbox{and}\quad
	t_{i}\de (-m_{i}\log r_{i})^{-1}\colon U_X\setminus D_i\to (0,1).
\end{equation}
We extend $t_i$ to a continuous function $U_X\to [0,\infty)$ by setting $t_{i}|_{U_X\cap D_{i}}=0$. We also have an angular coordinate:
\begin{equation}\label{eq:def-theta_i}
	\theta_{i}\de \frac{z_{i}}{|z_{i}|}\colon U_X \setminus D_i\to \S^1.
\end{equation}
We will compare the speeds of convergence towards $D_i$ using \enquote{tropical} functions:
\begin{equation}\label{eq:def-w_i-u_i}
	w_{i}\de \frac{t}{t_{i}}\colon U_X\setminus D_i\to [0,1],\qquad
	u_i\de \eta(w_i)\colon U_X\setminus D_i\to [0,1].
\end{equation}
Note that $\sum_{i\in S} w_{i}=1$, see Lemma \ref{lem:intro}\ref{item:intro-sum}. Eventually, we introduce a \enquote{hybrid} coordinate:
\begin{equation}\label{eq:def-v_i}
	v_{i}\de t_{i}-u_{i}\colon U_X\setminus D_i\to [-1,1).
\end{equation}
To simplify the notation, we introduce a map
\begin{equation}\label{eq:def-rest}
	\rest\de ((\theta_{i})_{i\in S},(z_{j})_{j\in \{i_1,\dots,i_n\}\setminus S}).
\end{equation}
It puts together those coordinates which have no \enquote{tropical} part: as we will see, they are easier to handle when defining a smooth structure on the A'Campo space $A$.

\begin{notation}\label{not:S1}
	Throughout the article, we will identify $\S^{1}$ with the \emph{additive} group $\R / \Z$. This way,  $\theta\in \S^1$ denotes the angle $2\pi \theta$, and we have  $\theta+1=\theta$.
\end{notation}

\subsubsection{The Kato--Nakayama space $X_{\log}$}\label{sec:KN}

As a first step of our construction, we will extend polar coordinates on $X$ to radius zero. This can be conveniently phrased in terms of the Kato--Nakayama construction \cite{Kato_Nakayama}. In general, it  associates to a log structure (i.e.\ a scheme with a sheaf of monoids, subject to some additional conditions), a topological space. We will use it only for the monoid sheaf $\cM\de \cO_{X}\cap i_{*}\cO_{X\setminus D}^{*}$, where $i\colon X\setminus D\into X$ is the inclusion. We follow  \cite[1.2]{Kato_Nakayama}.

For $x\in X$, let $\cM_{x}$ be the stalk of $\cM=\cO_{X}\cap i_{*}\cO_{X\setminus D}^{*}$ at $x$, so $\cM_{x}$ consists of holomorphic germs which are invertible off $D$. Let $\cM_{x}\gp$ be the group associated to the monoid $\cM_{x}$, i.e.\ $\cM_{x}\gp=\{ab^{-1} : a,b\in \cM_{x}\}$ with a natural identification $(ab^{-1})\cdot (ba^{-1})=1$. Let
\begin{equation*}
	X_{\log}=\{(x,h): x\in X,\ h\in \Hom(\cM_{X,x}\gp,\S^1),\ h(f)=\tfrac{f(x)}{|f(x)|} \mbox{ if } f\in \cO_{X,x}^{*}\},
\end{equation*}
and let $\tau\colon X_{\log}\to X$ be the first projection. To define a topology on $X_{\log}$, fix a chart $U_X$ adapted to $f$, see Definition \ref{def:adapted-chart}. Reordering the components of $D$ if needed, we can assume that its associated index set is $\{1,\dots, k\}$. Put $U_{\log}=\tau^{-1}(U_{X})\subseteq X_{\log}$. A \emph{Kato--Nakayama chart over $U_X$} is a  map $U_{\log}\to [0,1)^{k}\times (\S^1)^{k}\times \D^{n-k}$, given by 
\begin{equation}\label{eq:KN-chart}
	(x,h)\mapsto (r_1,\dots,r_k;\theta_1,\dots,\theta_k;z_{i_{k+1}},\dots,z_{i_n}), \quad \mbox{where}\quad r_i=|z_i|,\quad  \theta_i=h(z_i).
\end{equation}
Note that the coordinates $r_i$ and $\theta_{i}$ extend to $U_{\log}$ the pullbacks of the maps $r_i$, $\theta_i$ defined in \eqref{eq:def-r_i-t_i}, \eqref{eq:def-theta_i}. The topology on $(X,D)_{\log}$ is the coarsest one which makes these charts continuous. Clearly, $X_{\log}$ is a topological manifold with boundary, whose interior is homeomorphic with $X\setminus D$. 

Applying this construction to $(X,D)=(\C,0)$, we get
\begin{equation*}
	\C_{\log}\de (\C,0)_{\log}\cong [0,\infty)\times \S^1,
\end{equation*}
so $\C_{\log}$ extends the polar coordinates to a \emph{radius zero circle} $\{0\}\times \S^1$, cf.\  \cite[V.1.2]{Ogus_book}. The holomorphic function $f\colon X\to \C$ extends to a continuous map $f_{\log}\colon X_{\log}\to \C_{\log}$. In local coordinates \eqref{eq:KN-chart} over an adapted chart meting $D$, the map $f_{\log}$ is given by $(r_{1}^{m_1}\cdot\ldots\cdot r_{k}^{m_k},m_1\theta_1+\ldots+m_k\theta_k)$: recall from  Notation \ref{not:S1} that we write the angular coordinates additively. We get a commutative diagram
\begin{equation}\label{eq:KN-diagram}
	\begin{tikzcd}
		X_{\log}
		\ar[r]
		\ar[d, "f_{\log}"] 
		& X 
		\ar[d, "f"]  
		\\
		\C_{\log}
		\ar[r] & \C
	\end{tikzcd}
\end{equation}	

For a subset $I\subseteq \{1,\dots, N\}$ we define $(X_{I}^{\circ})_{\log}\subseteq X_{\log}$ as the preimage of the stratum $X_{I}^{\circ}\subseteq X$, see \eqref{eq:stratification}. In coordinates \eqref{eq:KN-chart}, $(X_{I}^{\circ})_{\log}$ is defined by $r_i=0$ for $i\in I$ and $r_j\neq 0$ for $j\not\in I$.

\subsubsection{Definition of the A'Campo space $A$.}
\label{sec:Adef}
Fix a holomorphic atlas $\cU_{X}=\{(U^{p}_{X},\psi^{p}_{X}):p\in R\}$ adapted to $f$, and a partition of unity $\btau=\{\tau^{p}:p\in R\}$ inscribed in $\cU_{X}$. We write $u_i^{p}$, $v_{i}^{p}$ etc.\ for the corresponding maps from Section~\ref{sec:basicfunctions}. For convenience, if $U_{X}^{p}\cap D_{i}=\emptyset$ we set $u_{i}^{p}=0$, $v_{i}^{p}=0$.

Now, we 
define smooth maps $\bar{u}_i,\bar{v}_{i}\colon X\setminus D\to \R$, $\mu\colon X\setminus D\to \R^{N}$ by
\begin{equation}\label{eq:def-vbar-ubar-mu}
	\bar{u}_{i}\de \sum_{p\in R} \tau^{p}u_{i}^{p}, \quad 
	\bar{v}_{i}\de \sum_{p\in R}\tau^{p}v_{i}^{p}, \quad 
	\mu\de (\bar{u}_{1},\dots,\bar{u}_{N}). 
\end{equation}

\begin{definition}\label{def:AX}
	Let $X_{\log}$ be the Kato--Nakayama space associated to $(X,D)$, see Section \ref{sec:KN}. Let $\Gamma\subseteq X\times \R^{N}$ be the closure of the graph of $\mu$. The \emph{A'Campo space} $A$ is the topological space $A=\Gamma \times_{X} X_{\log}$, i.e.\ such that the top-left square of \eqref{eq:AX-diagram} is cartesian.
	\begin{equation}\label{eq:AX-diagram}
		\begin{tikzcd}
			A
			\ar[r] 
			\ar[d, 
			] 
			\ar[dd, bend right=90, looseness=1, "{(g,\theta)\sim f\AC}"'] 
			\ar[dr, phantom, "\ulcorner", very near start]
			\ar[dr, "\pi", start anchor = south east, shorten={5mm}]
			& \Gamma\de \overline{\mbox{\small{graph}}(\mu)} 
			\ar[d, 
			] 
			\ar[r, hook]
			& X\times  \R^{N} 
			\ar[d] 
			\\
			X_{\log}
			\ar[r]
			\ar[d, "f_{\log}"] 
			& X 
			\ar[r, dashed, "\mu", "{(\bar{u}_1,\ldots,\bar{u}_{N})}"'] 
			\ar[d, "f"] 
			& \R^{N} 
			\\
			\C_{\log}
			\ar[r] & \C &
		\end{tikzcd}
	\end{equation}	
\end{definition}
Put $f\AC=(|f| \circ\pi,\theta)\colon A\to \C_{\log}=[0,\infty)\times \S^1$. It is a composition of the natural map $A\to X_{\log}$ with the map $f_{\log}$ induced by $f$ between the Kato--Nakayama spaces, see \eqref{eq:KN-diagram}. Fibers of $f\AC$ agree with those of $(t,\theta)$ and $(g,\theta)$. Figure \ref{fig:construction} in Example \ref{ex:monodromy} shows the fibers of $f_{A}$ and $f_{\log}$ over $\d \C_{\log}$.

We pull back the stratification \eqref{eq:stratification} of $X$ by putting
\begin{equation}\label{eq:stratification-pullback}
	A_{I}^{\circ}=\pi^{-1}(X_{I}^{\circ}).
\end{equation}
The partition $A=\bigsqcup_{I}A_{I}^{\circ}$ obtained this way is not a stratification anymore. 

We have $A\setminus \pi^{-1}(D)=A_{\emptyset}^{\circ}$ and $\pi^{-1}(D_i)=\bigsqcup_{i\in I}A_{I}^{\circ}$. Thus, for each chart $U_X$ adapted to $f$, the zero locus of the function $t_i$ introduced in \eqref{eq:def-r_i-t_i} is $\bigsqcup_{i\in I} A_{I}^{\circ}\cap U$, where $U=\pi^{-1}(U_X)$. The \enquote{deepest} subset of this partition, namely the one corresponding to the associated index set \eqref{eq:index-set}, equals $A_{S}^{\circ}\cap U=\{x\in U : \forall_{i\in S}\colon\, t_{i}(x)=0\}$ and is closed in $U$.

We put $\d A\de \pi^{-1}(D)=\bigsqcup_{I\neq \emptyset}A_{I}^{\circ}$. In the manifold with boundary structure that we will define in Section \ref{sec:AX-C1}, the set $\d A$ will indeed be the boundary of $A$. Note that $\pi|_{A\setminus \d A}\colon A\setminus \d A\to X\setminus D$ is a homeomorphism. Whenever no confusion is likely to occur, we will identify $A\setminus \d A$ with $X\setminus D$. In particular,  $A\setminus \d A$ has a \emph{natural smooth structure}, pulled back from $X\setminus D$ via $\pi|_{A\setminus \d A}$.

\begin{remark}
	The topological space $A$ introduced in Definition \ref{def:AX} agrees with the one originally constructed by A'Campo, see \cite[p.\ 238]{A'Campo}.
\end{remark}

\subsection{Elementary properties of the A'Campo space}\label{sec:AX-topo}

In this section, we list some simple properties of the A'Campo space $A$ introduced in Definition \ref{def:AX}. They are summarized in Lemma \ref{lem:intro}, which studies the lifts of basic functions introduced in Section \ref{sec:basicfunctions}; and in Proposition \ref{prop:AX-topo}, which describes the topological structure of $A$.

\subsubsection{Lifts of the basic functions}
\label{sec:basicfunctions-extensions}

The following notation, which we will keep throughout Section \ref{sec:ACampo}, will help to make our formulas lighter.
\begin{notation}\label{not:pullbacks}
	For a subset $U_X\subseteq X$, we denote a map from $U_X$ and its pullback to $U\de \pi^{-1}(U_X)$ by the same letter. This way, for every $i\in \{1,\dots, N\}$ we have continuous functions $\bar{u}_{i},\bar{v}_{i},g\colon A\to \R$ given by the formulas \eqref{eq:def-t-g} and \eqref{eq:def-vbar-ubar-mu}. 
	
	Let $U_X$ be a chart adapted to $f$. Pulling back the functions \eqref{eq:def-r_i-t_i}--\eqref{eq:def-v_i}, we get functions $r_i,t_{i}\colon U\to [0,1)$, $\theta_{i}\colon U\setminus \d A\to \S^1$ and  $w_{i},u_{i},v_i\colon U\setminus \d A\to \R$. In Lemma \ref{lem:intro}\ref{item:intro-extends} we will see that they extend continuously to $U$, and we will denote these extensions by the same letters, too.
	
	Whenever a chart $U_X$ is fixed, we will put $U=\pi^{-1}(U_X)$, denote its associated index set \eqref{eq:index-set} by $S$, write $k=\# S$ and use the maps $t_i,w_i$ introduced in Section \ref{sec:basicfunctions} without further emphasis on the dependence on $U_X$. If the chart is called $U_X'$ or $U_X^{p}$ for some index $p$, we will call the corresponding maps $t_{i}',w_{i}',\dots $ or $t_{i}^p,w_i^p,\dots$, respectively.
\end{notation}

In Section \ref{sec:AX-top-proofs} we will prove the following result.

\begin{lema}\label{lem:intro}
	Fix a chart $U_X$ adapted to $f$, not necessarily one from the atlas $\cU_X$ fixed in Definition \ref{def:AX}. Let $S$ be its associated index set \eqref{eq:index-set}, and let $U=\pi^{-1}(U_X)$. Assume $S\neq \emptyset$. The following hold.
	\begin{enumerate}
		\item\label{item:intro-extends} For every $i\in S$, the maps $t_{i},w_{i},u_{i},v_{i},\theta_{i}$ from Section \ref{sec:basicfunctions} extend to continuous maps on $U$.
		\item\label{item:intro-w-zero} Fix $i\in S$ and a nonempty subset $I\subseteq \{1,\dots, N\}$ such that $i\not \in I$. We have $w_{i}|_{A_{I}^{\circ}\cap U}\equiv 0$.
		\item\label{item:intro-sum} For every $i\in S$ we have $w_{i}\in [0,1]$, and $\sum_{i\in S} w_{i}=1$. 
		\item\label{item:intro-independence}
		Let $U_{X}'$ be another chart adapted to $f$, let $U'=\pi^{-1}(U_X')$, and let $w_{i}'$ be the corresponding map from \eqref{eq:def-w_i-u_i}. Then on $\d A\cap U\cap U'$ we have $w_{i}'=w_{i}$ for all $i$ such that $U_X\cap U_X'\cap D_i\neq \emptyset$.
		\item\label{item:intro-u=ubar} For every $i\in \{1,\dots, N\}$ we have $\bar{u}_{i}|_{U\cap \d A}=u_{i}|_{U\cap \d A}$ if $i\in S$ and $\bar{u}_{i}|_{U\cap \d A}=0$ if $i\not\in S$.
	\end{enumerate}
\end{lema}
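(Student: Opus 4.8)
The strategy is entirely local: all five statements only concern the behavior of the named functions near $\d A \cap U$, so we may work inside a single Kato--Nakayama chart \eqref{eq:KN-chart} over $U_X$, reordering the components of $D$ so that the index set $S$ equals $\{1,\dots,k\}$. In these coordinates $t_i = (-m_i \log r_i)^{-1}$ is manifestly continuous on all of $U$ once we set $t_i|_{U\cap D_i}=0$, since $r_i \to 0$ forces $-\log r_i \to +\infty$. The global function $t = (-\log|f|)^{-1}$ satisfies, by Definition \ref{def:adapted-chart}\ref{item:f-locally}, $-\log|f| = \sum_{i\in S}(-m_i\log r_i) = \sum_{i\in S} t_i^{-1}$ on $U\setminus D$, hence
\begin{equation*}
	\frac{1}{t} = \sum_{i\in S}\frac{1}{t_i},\qquad\text{equivalently}\qquad \sum_{i\in S} w_i = \sum_{i\in S}\frac{t}{t_i} = 1,
\end{equation*}
which is \ref{item:intro-sum} on $U\setminus \d A$; each $w_i\in[0,1]$ follows because all $t_i \geq 0$. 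This identity also shows $t = \big(\sum_{i\in S} t_i^{-1}\big)^{-1}$ extends continuously by $0$ over $\d A$, and that $w_i = t/t_i$ extends continuously to $U$: near a point of $A_I^\circ \cap U$ with $i\notin I$ we have $t_i$ bounded away from $0$ while $t\to 0$ (because some $t_j\to 0$ with $j\in I\cap S$ makes $1/t\to\infty$), giving $w_i\to 0$, which is \ref{item:intro-w-zero}; near a point with $i\in I$ one checks $w_i = t_i^{-1}/\sum_{j\in S} t_j^{-1} \to $ a finite limit in $[0,1]$ using that exactly the $t_j$ with $j\in I\cap S$ tend to $0$ at comparable logarithmic rates. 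Continuity of $u_i = \eta(w_i)$ then follows since $\eta$ is continuous on $[0,1]$, continuity of $v_i = t_i - u_i$ is immediate, and continuity of $\theta_i = h(z_i)$ on $U$ (not just $U\setminus \d A$) is part of the definition of the Kato--Nakayama chart \eqref{eq:KN-chart}. This disposes of \ref{item:intro-extends}.

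For \ref{item:intro-independence}, the key point is that $w_i$ on $\d A$ depends only on the ratios of the $t_j^{-1} = -m_j\log r_j$, and these ratios are intrinsic: if $U_X'$ is another adapted chart with coordinate $z_i'$ cutting out $D_i$, then $z_i' = \text{unit}\cdot z_i$ on the overlap, so $\log|z_i'| = \log|z_i| + O(1)$, whence $t_i'/t_i \to 1$ as $r_i\to 0$. More precisely, $-\log r_i'$ and $-\log r_i$ differ by a bounded function while both tend to $+\infty$, so $t_i'^{-1}/t_i^{-1}\to 1$ on $D_i\cap U\cap U'$; consequently $w_i' = t_i'^{-1}/\sum_{j} t_j'^{-1}$ and $w_i = t_i^{-1}/\sum_j t_j^{-1}$ have the same limit, namely the limit of $t_i^{-1}/\sum_j t_j^{-1}$, at any boundary point lying in the closure of where the components indexed by the common subset of $S$ vanish. (For indices $j$ where $U_X\cap U_X'\cap D_j=\emptyset$ but $D_j$ still meets one chart, such $j$ contributes $w_j=0$ there by \ref{item:intro-w-zero} and does not affect the ratio, so the two sums agree in the relevant limit.) Finally \ref{item:intro-u=ubar}: by definition $\bar u_i = \sum_{p\in R}\tau^p u_i^p$ with $u_i^p = 0$ whenever $U_X^p\cap D_i=\emptyset$; restricting to $U\cap \d A$, the only charts $U_X^p$ that contribute have $U_X^p\cap D_i\neq\emptyset$, and for those $u_i^p = u_i$ on $U\cap U^p\cap \d A$ by \ref{item:intro-independence} applied to the pair $(U_X, U_X^p)$; since $\sum_p \tau^p = 1$ this gives $\bar u_i|_{U\cap\d A} = u_i|_{U\cap\d A}$ when $i\in S$, and $\bar u_i|_{U\cap\d A}=0$ when $i\notin S$ because then $D_i\cap \bar U_X = \emptyset$, so every $p$ contributing to the sum on $U\cap \d A$ has $u_i^p$ identically $0$ there.

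The main technical obstacle is the continuity of $w_i$ over boundary strata $A_I^\circ$ with $\#(I\cap S)\geq 2$, i.e.\ over the deeper corners: there one must show that the competing logarithmic rates $-\log r_j$, $j\in I\cap S$, have a well-defined ratio in the limit. This is exactly where the fiber-product-with-$\Gamma$ construction in Definition \ref{def:AX} is doing its work — $\Gamma$ is the closure of the graph of $\mu=(\bar u_1,\dots,\bar u_N)$, so by construction the functions $\bar u_i$, and hence (after the overlap comparison above) the local $w_i$'s, extend continuously to $A$. So the honest content of \ref{item:intro-extends} for $w_i$ is really the assertion that this closure operation does not introduce multivaluedness, which we get for free from the definition of $A$ as a fiber product over $X$ with $\Gamma$; the role of \ref{item:intro-independence} is then to identify the abstract limit of $\bar u_i$ with the chart-wise formula $\eta(w_i)$. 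I would carry out the argument in this order: first the algebraic identity $1/t=\sum 1/t_i$ and its consequences (\ref{item:intro-sum}), then the extension statements reading off continuity from $\Gamma$ and the Kato--Nakayama topology (\ref{item:intro-extends}), then \ref{item:intro-w-zero} as a special limit, then the overlap comparison (\ref{item:intro-independence}), and finally assemble \ref{item:intro-u=ubar} from \ref{item:intro-independence} and the partition-of-unity identity.
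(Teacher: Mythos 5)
Your proposal is correct and follows essentially the same route as the paper: the identity $t^{-1}=\sum_{i\in S}t_i^{-1}$ for \ref{item:intro-sum}, the unit comparison $z_i'=\lambda z_i$ (giving $t_i'/t_i\to 1$ and $w_i'-w_i\to 0$ along $D_i$) for \ref{item:intro-independence}, continuity of $\bar u_i$ built into the $\Gamma$-construction combined with the chart comparison to identify $w_i$ with $\eta^{-1}(\bar u_i)$ on $\d A$ for \ref{item:intro-extends}, and the partition-of-unity assembly for \ref{item:intro-u=ubar}. The only organizational difference is that the paper folds the chart comparison and the continuity of $w_i$ into a single preliminary lemma (since, as you note yourself, proving \ref{item:intro-extends} for $w_i$ already requires comparing $U_X$ with the atlas charts $U_X^p$, your ordering ``(a) before (d)'' should really interleave the two), but the mathematical content is the same.
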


Very importantly for us, part \ref{item:intro-independence} expresses the idea that \enquote{at radius zero}, the values of $w_{i}$ do not depend on the choice of the chart. 
Before entering in the technicalities of the proof, we proceed with the topological description of the A'Campo space.

\subsubsection{Rounded simplices}\label{sec:rounded_simplices}
To state the next result, we introduce the following additional notation. By $\Delta^{N-1}\subseteq \R^N$, we denote the image of the standard ($N-1$)-simplex through $(\eta,\ldots,\eta)$, i.e.
\begin{equation*}
	\Delta^{N-1}=\{(\eta(s_1),\dots,\eta(s_{N})): s_{i}\geq 0,\ \textstyle\sum_{i=1}^{N}s_{i}=1\}\subseteq \R^{N},
\end{equation*}	
see Figure \ref{fig:smooth_simplex}.
For $\emptyset\neq I\subseteq \{1,\dots, N\}$ we denote by $\Delta_{I}^{N-1}$ (or simply $\Delta_{I}$) the face $\{u_{j}= 0, j\not\in I\}\subseteq \Delta^{N-1}$ of $\Delta^{N-1}$ corresponding to $I$, and put $\Delta_{I}^{\circ}=\{u_{i}\neq 0, u_{j}=0: i\in I, j\not\in I\}\subseteq \Delta_{I}$. Hence e.g.\  $\Delta_{1}=\Delta_{1}^{\circ}=\{(1,0,\dots,0)\}$. Note that we identify a subscript $\{i\}$ with $i$. 

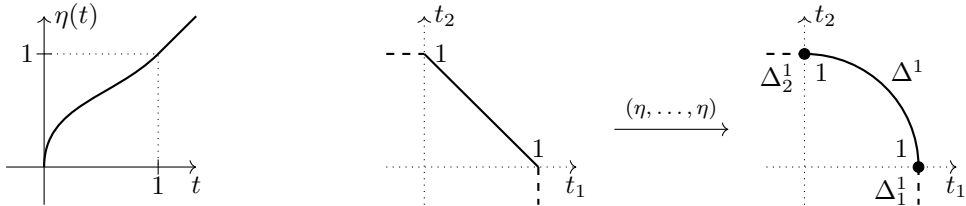
\begin{figure}[htbp]
	\centering
	\captionsetup{width=\linewidth}
	\begin{tikzpicture}
		\begin{scope}[shift={(-5,0)}]
			\path[use as bounding box] (-0.5,-0.5) rectangle (3,2);
			\draw[->] (-0.5,0) -- (2,0);
			\draw[->] (0,-0.5) -- (0,2);
			\draw[thick] (0,0) to[out=90,in=-135] (1.5,1.5) -- (2,2);
			\node [right] at (0,2) {\small{$\eta(t)$}};
			\draw [dotted] (0,1.5) -- (1.5,1.5) -- (1.5,0);
			\draw (-0.1,1.5) -- (0.1,1.5);
			\node [left] at (0,1.5)  {\small{$1$}};
			\draw (1.5,0.1) -- (1.5,-0.1);
			\node [below] at (1.5,0) {\small{$1$}};
			\node [below] at (2,0) {\small{$t$}};
		\end{scope}	
		\begin{scope}
			\path[use as bounding box] (-0.5,-0.5) rectangle (3,2);	
			\draw[->, dotted] (-0.5,0) -- (2,0);
			\node [above] at (1.5,0) {\small{$1$}};
			\draw[->, dotted] (0,-0.5) -- (0,2);
			\node [right] at (0,1.5) {\small{$1$}};
			\draw [thick, dashed] (-0.5,1.5) -- (0,1.5); 
			\draw [thick] (0,1.5) -- (1.5,0);
			\draw [thick, dashed] (1.5,0) -- (1.5,-0.5);
			\node [right] at (0,2) {\small{$t_2$}};
			\node [below] at (2,0) {\small{$t_1$}};			
		\end{scope}
		\node [above] at (3.25,0.5) {\scriptsize{$(\eta,\ldots,\eta)$}};
		\draw [->] (2.5,0.5) -- (4,0.5);
		\begin{scope}[shift={(5,0)}]
			\path[use as bounding box] (-1.5,-0.5) rectangle (2,2);
			\draw[->, dotted] (-0.5,0) -- (2,0);
			\node [above left] at (1.5,0) {\small{$1$}};
			\draw[->, dotted] (0,-0.5) -- (0,2);
			\node [below right] at (0,1.5) {\small{$1$}};
			\draw [thick, dashed] (-0.5,1.5) -- (0,1.5); 
			\draw [thick] (0,1.5) to[out=0,in=90] (1.5,0);
			\draw [thick, dashed] (1.5,0) -- (1.5,-0.5);
			\node [right] at (0,2) {\small{$t_2$}};
			\node [below] at (2,0) {\small{$t_1$}};	
			\node [above right]  at (1,1) {\small{$\Delta^{1}$}};
			\filldraw (1.5,0) circle (2pt);
			\node [below left] at (1.5,0) {\small{$\Delta^{1}_{1}$}};
			\filldraw (0,1.5) circle (2pt);
			\node [below left] at (0,1.5) {\small{$\Delta^{1}_{2}$}};		
		\end{scope}
	\end{tikzpicture}

	\caption{Function $\eta$ from \protect\eqref{eq:smooth_simplex} gives a convenient embedding of an ($N-1$)-simplex to $\R^{N}$}
	\label{fig:smooth_simplex}
\end{figure}

Let $X=\bigsqcup_I X_{I}$ be the stratification \eqref{eq:stratification} of $X$ associated to $D$. The \emph{dual complex} of $D$ is 
\begin{equation}\label{eq:dual-complex}
	\bigcup_{\{I\neq\emptyset\ :\ X_{I}\neq \emptyset\}}\Delta_{I}\subseteq \Delta^{N-1}.
\end{equation}

\subsubsection{Topological structure.} 
The second main result of the current Section \ref{sec:AX-topo} is the following.

\begin{prop}\label{prop:AX-topo}
	Let $(\cU_X,\btau)$ be a pair adapted to $f$ and let $A$ be the A'Campo space corresponding to $(\cU_X,\btau)$. Then the following hold.
	\begin{enumerate}
		\item\label{item:top-pi} The maps $\pi\colon A\to X$, $\mu\colon A\to \R^N$ and $\bar{v}_i\colon A\to [-1,1]$ for $i\in \{1,\dots, N\}$ are continuous.
		\item\label{item:top-homeo-off-D} The map $\pi\colon A\to X$ is proper. Its restriction $\pi|_{A\setminus \d A}\colon A\setminus \d A\to X\setminus D$ is a homeomorphism.
		\item\label{item:top-S1-bundle} For every nonempty subset $I\subseteq \{1,\dots, N\}$, the restriction $(\pi,\mu)|_{A_{I}^{\circ}}\colon A_{I}^{\circ}\to X_{I}^{\circ}\times \Delta_{I}$ is a topological locally trivial fibration, with fiber $(\S^{1})^{\#I}$. Hence $\mu(\d A)$ is the dual complex of $D$.
		\item\label{item:top-d-independent} The subset $\d A \subseteq X_{\log}\times \R^{N}$
		does not depend on the choice of $(\cU_{X},\btau)$.
		\item\label{item:top-canonical-homeo} 
		Let $A'$ be the A'Campo space corresponding to another pair $(\cU_X',\btau')$ adapted to $f$. Using \ref{item:top-homeo-off-D} and \ref{item:top-d-independent}, we can define a map
		$
		\Phi\colon A\to A'
		$ 	
		by  		
		\begin{equation}\label{eq:canonical_homeo}
			\Phi|_{\d A}=\id_{\d A} \quad\mbox{and}\quad \Phi|_{A\setminus \d A}=\pi'|_{A'\setminus \d A}\circ \pi^{-1}|_{X\setminus D}.
		\end{equation}
		Then $\Phi$ is a homeomorphism.
	\end{enumerate}
\end{prop}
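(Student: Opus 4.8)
The plan is to prove parts \ref{item:top-pi}--\ref{item:top-canonical-homeo} essentially in the stated order, with \ref{item:top-d-independent} doing the real work and \ref{item:top-canonical-homeo} being a formal consequence.

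\emph{Continuity (\ref{item:top-pi}).} The map $\pi\colon A\to X$ is continuous because it is a base change (through $\Gamma\to X$, which is the restriction of the projection $X\times\R^N\to X$, hence continuous) of the continuous map $X_{\log}\to X$; the fiber product carries the subspace topology from $X_{\log}\times\Gamma$. The map $\mu\colon A\to \R^N$ is the composite $A\to\Gamma\hookrightarrow X\times\R^N\to\R^N$, hence continuous, and its image lies in the closure of $\mathrm{graph}(\mu)$, so each $\bar u_i$ is continuous. For $\bar v_i$, one checks continuity chart by chart: over a chart $U_X$ adapted to $f$ with index set $S$, Lemma \ref{lem:intro}\ref{item:intro-extends} gives that each $t_i^p$ and $u_i^p$ extends continuously to $U=\pi^{-1}(U_X)$, hence so does $v_i^p=t_i^p-u_i^p$, and therefore $\bar v_i=\sum_p\tau^p v_i^p$ is a finite sum (locally) of continuous functions on $U$; boundedness in $[-1,1]$ follows since $t_i\in[0,1)$ and $u_i\in[0,1]$, so $v_i\in[-1,1]$ and the convex combination stays in $[-1,1]$.

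\emph{Properness and homeomorphism off $\d A$ (\ref{item:top-homeo-off-D}).} That $\pi|_{A\setminus\d A}$ is a homeomorphism onto $X\setminus D$ is already recorded in Section \ref{sec:Adef}: away from $D$ the graph of $\mu$ is genuinely a graph, and $X_{\log}$ is canonically $X\setminus D$ there. For properness, take a compact $K\subseteq X$; then $\tau^{-1}(K)\subseteq X_{\log}$ is compact (the fibers of $\tau$ are compact tori and $X_{\log}\to X$ is proper, being a closed map with compact fibers in local KN-charts), and the $\mu$-coordinate of any point of $\pi^{-1}(K)$ lies in the compact cube $[-1,1]^N\cap\overline{\mathrm{graph}(\mu)}$ (using $\bar u_i\in[0,1]$), so $\pi^{-1}(K)$ is a closed subset of a compact set, hence compact.

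\emph{The stratum fibrations (\ref{item:top-S1-bundle}).} Fix $I\neq\emptyset$ and work locally over a chart $U_X$ adapted to $f$ whose index set $S$ contains $I$ (charts with $I\not\subseteq S$ miss $X_I^\circ$). In KN-coordinates \eqref{eq:KN-chart}, $A_I^\circ\cap U$ is cut out by $r_i=0$ for $i\in I$, $r_j\neq 0$ for $j\in S\setminus I$, together with the constraint that the $\mu$-coordinate equals the boundary value of $\bar u$; by Lemma \ref{lem:intro}\ref{item:intro-u=ubar} that value is $u_i=\eta(w_i)$ for $i\in S$ and $0$ otherwise, and by \ref{item:intro-w-zero} we have $w_i=0$ for $i\notin I$, so the $\mu$-coordinate is determined by $(w_i)_{i\in I}$ which by \ref{item:intro-sum} ranges over the (interior of the rounded) simplex $\Delta_I^\circ$. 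The remaining free data are $(\theta_i)_{i\in I}\in(\S^1)^{\#I}$ (the KN-angles over the vanishing coordinates), together with the honest point of $X_I^\circ$ recorded by $(r_j,\theta_j)_{j\in S\setminus I}$ and $(z_l)_{l\notin S}$; this exhibits $(\pi,\mu)|_{A_I^\circ\cap U}$ as the trivial $(\S^1)^{\#I}$-bundle over $X_I^\circ\cap U_X\times\Delta_I^\circ$. The local trivializations glue because, by Lemma \ref{lem:intro}\ref{item:intro-independence}, the $w_i$ (hence the simplex coordinate) are chart-independent on $\d A$, and the KN-angles transform by the fixed transition functions of $X_{\log}$; chasing these on overlaps gives a genuine locally trivial fibration over $X_I^\circ\times\Delta_I$. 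Taking the union over $I$ identifies $\mu(\d A)$ with $\bigcup_{X_I\neq\emptyset}\Delta_I$, the dual complex \eqref{eq:dual-complex}.

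\emph{Chart-independence of $\d A$ (\ref{item:top-d-independent}).} This is the crux. Write $\d A\subseteq X_{\log}\times\R^N$ as $\{(y,\rho): y\in X_{\log}\setminus(X\setminus D),\ \rho=\text{boundary value of }\mu\text{ at }y\}$. The point is that over $\d A$, the value of $\mu=(\bar u_1,\dots,\bar u_N)$ does not depend on the partition of unity $\btau$ nor on the atlas $\cU_X$: by Lemma \ref{lem:intro}\ref{item:intro-u=ubar}, on $\d A\cap U$ we have $\bar u_i|_{\d A}=u_i|_{\d A}$ for $i\in S$ and $=0$ for $i\notin S$, where $u_i=\eta(w_i)$ and $w_i$ is, by part \ref{item:intro-independence} of the same lemma, independent of the chart used to compute it (it only depends on the relative speeds $t/t_i$, which are intrinsic). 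Since $\sum_p\tau^p=1$, the convex combination $\bar u_i=\sum_p\tau^p u_i^p$ collapses to the common value $u_i$ on the boundary. Hence the boundary locus $\d A$, as a subset of $X_{\log}\times\R^N$, is described purely in terms of $f$ and $X$. The mild technical point to handle carefully: one should phrase this as ``the closure of $\mathrm{graph}(\mu)$ meets $\tau^{-1}(D)\times\R^N$ in a set independent of the choices'', which follows by taking limits of $(y_\nu,\mu(y_\nu))$ with $y_\nu\to y\in\d A$ and invoking the continuous extension from Lemma \ref{lem:intro}\ref{item:intro-extends} together with \ref{item:intro-u=ubar}; the limit of $\bar u_i(y_\nu)$ is forced to be $u_i(y)$ regardless of $\btau$.

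\emph{The canonical homeomorphism (\ref{item:top-canonical-homeo}).} Given $A,A'$ from pairs $(\cU_X,\btau)$, $(\cU_X',\btau')$, define $\Phi$ by \eqref{eq:canonical_homeo}: the identity on $\d A$ (legitimate by \ref{item:top-d-independent}, which identifies $\d A$ and $\d A'$ as the \emph{same} subset of $X_{\log}\times\R^N$), and $\pi'^{-1}\circ\pi$ on the interiors (legitimate by \ref{item:top-homeo-off-D}). This $\Phi$ is a bijection with inverse given symmetrically. Continuity at interior points is clear; continuity along $\d A$ is where one must work: take $x_\nu\to x$ in $A$ with $x\in\d A$; if eventually $x_\nu\in\d A$ there is nothing to prove, and otherwise $\pi(x_\nu)\to\pi(x)\in D$ in $X$, and one must show $\Phi(x_\nu)=\pi'^{-1}(\pi(x_\nu))\to x$ in $A'$. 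Since $A'$ carries the subspace topology from $X_{\log}\times\R^N$, it suffices to show the $X_{\log}$-coordinate and the $\R^N$-coordinate both converge; the former is just $x_\nu$'s $X_{\log}$-coordinate (unchanged by $\Phi$, as $\Phi$ lives over $X_{\log}$), and the latter is $\mu'(\pi(x_\nu))\to$ the boundary value of $\mu'$ at $x$, which by \ref{item:top-d-independent} equals the boundary value of $\mu$ at $x$, i.e.\ the $\R^N$-coordinate of $x$. By symmetry $\Phi^{-1}$ is continuous as well.

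\textbf{Main obstacle.} The genuinely substantive step is \ref{item:top-d-independent} (and its consequence for continuity of $\Phi$ across $\d A$ in \ref{item:top-canonical-homeo}): everything reduces to the assertion that the boundary value of $\mu$ is chart- and partition-independent, which is exactly the content packaged into Lemma \ref{lem:intro}\ref{item:intro-independence}, \ref{item:intro-u=ubar}. Once that lemma is in hand, parts \ref{item:top-pi}, \ref{item:top-homeo-off-D}, \ref{item:top-S1-bundle}, \ref{item:top-canonical-homeo} are bookkeeping with local KN-charts and convex combinations. A secondary care point is proving properness of $\pi$ cleanly — I would base it on properness of $X_{\log}\to X$ plus the a priori bound $\mu(A)\subseteq[-1,1]^N$ rather than trying to argue directly with the closure $\Gamma$.
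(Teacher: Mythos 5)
Parts \ref{item:top-pi} and \ref{item:top-homeo-off-D} of your argument are fine; in particular your route to properness (properness of $X_{\log}\to X$ plus the a priori bound $\mu(A)\subseteq[0,1]^N$) is a legitimate minor variant of the paper's, which instead quotes properness of $\Gamma\to X$. The trouble starts with the fibers of $\Gamma\to X$ over $D$. You describe $\d A$ as $\{(y,\rho):\rho=\text{boundary value of }\mu\text{ at }y\}$, i.e.\ as a graph over $\tau^{-1}(D)$ — but by your own analysis in \ref{item:top-S1-bundle} the fiber of $\Gamma$ over a point of $X_I^\circ$ is an entire simplex, not a point. More importantly, nowhere do you prove that this fiber is \emph{all} of $\Delta_I$: since $A$ is defined via the closure of $\mathrm{graph}(\mu)$, you must exhibit, for each $x\in X_I^\circ$ and each $b\in\Delta_I^\circ$, a sequence in $X\setminus D$ along which $\mu$ converges to $b$. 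The paper does this in Lemma \ref{lem:Gamma_structure}\ref{item:fiber-is-a-simplex} with the explicit curves $z_i(\gamma(s))=s^{b_i/m_i}$, which force $w_i(\gamma(s))=b_i$. Without this surjectivity you have not shown that $(\pi,\mu)|_{A_I^\circ}$ maps \emph{onto} $X_I^\circ\times\Delta_I$ in \ref{item:top-S1-bundle}, and you have not shown the two-sided equality $\d A=\d A'$ in \ref{item:top-d-independent}: knowing that limit values of $\mu$ are ``constrained'' does not show that every $\rho$ occurring as a limit of $\mu$ also occurs as a limit of $\mu'$.

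Relatedly, your limit argument for \ref{item:top-d-independent} is circular as stated: you take $y_\nu\to y$ with $y\in\d A$, but convergence in $A$ (a subspace of $X_{\log}\times\R^N$) already \emph{encodes} $\mu(y_\nu)\to(\text{the }\R^N\text{-coordinate of }y)$, so it cannot be used to determine which $\R^N$-coordinates occur over a given boundary point, nor to compare them with those of $A'$. The same issue resurfaces in \ref{item:top-canonical-homeo}: the claim ``$\mu'(\pi(x_\nu))$ converges to the boundary value of $\mu'$ at $x$'' is exactly what has to be proved and does not follow formally from the set equality $\d A=\d A'$. The paper's proof of \ref{item:top-canonical-homeo} handles this with a genuine sequence argument: for a chart $U_X$ containing the tails, the \emph{local} function $u_i$ extends continuously to $U\subseteq A$ and to $U'\subseteq A'$ separately (Lemma \ref{lem:intro}\ref{item:intro-extends}), whence $\lim u_i(x_\nu)=u_i(y)=\bar u_i(y)$ by Lemma \ref{lem:intro}\ref{item:intro-u=ubar}, and summing against the partition of unity $\btau'$ gives $\bar u_i'(y_\nu')\to\bar u_i(y)$. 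You should either reproduce that comparison of limits along sequences in $X\setminus D$ (using Lemma \ref{lem:w_extends}\ref{item:t-comparison} to relate $w_i$ and $w_i'$ and the vanishing of $t_i$ in the limit), or invoke the explicit identification $A_I^\circ=(X_I^\circ)_{\log}\times\Delta_I$; as written, \ref{item:top-d-independent} and the boundary continuity in \ref{item:top-canonical-homeo} are not established.
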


Later, we will show that $A$ is a topological manifold with boundary. We postpone this statement, since at the same time  we will endow $A$ with a $\cC^1$-structure; in such a way that the map \eqref{eq:canonical_homeo} becomes a $\cC^1$-diffeomorphism. In Section \ref{sec:AX-smooth}, we will even make $A$ a $\cC^\infty$-manifold with boundary. As we will see, the $\cC^\infty$ structure is harder to produce, and the map \eqref{eq:canonical_homeo} will not be a $\cC^\infty$-diffeomorphism in general.  

\begin{example}\label{ex:pictures}
	Let $X=\D^{2}_{\epsilon}$ for some $\epsilon\in (0,\frac{1}{e})$, and let $f=z_1z_2$. Consider $\cU_X$ consisting of just one chart $(z_1,z_2)$, and $\btau=\{1\}$. Then $(\cU_X,\btau)$ is adapted to $f$. The corresponding A'Campo space $A$ is the shaded part of Figure \ref{fig:v2}, multiplied by $(\S^{1})^{2}$. Similarly, for $X=\D^{3}_{\epsilon}$, $f=z_1 z_2 z_3$ and trivial $(\cU_X,\btau)$, the corresponding $A$ is Figure \ref{fig:v3} times $(\S^1)^{3}$.
	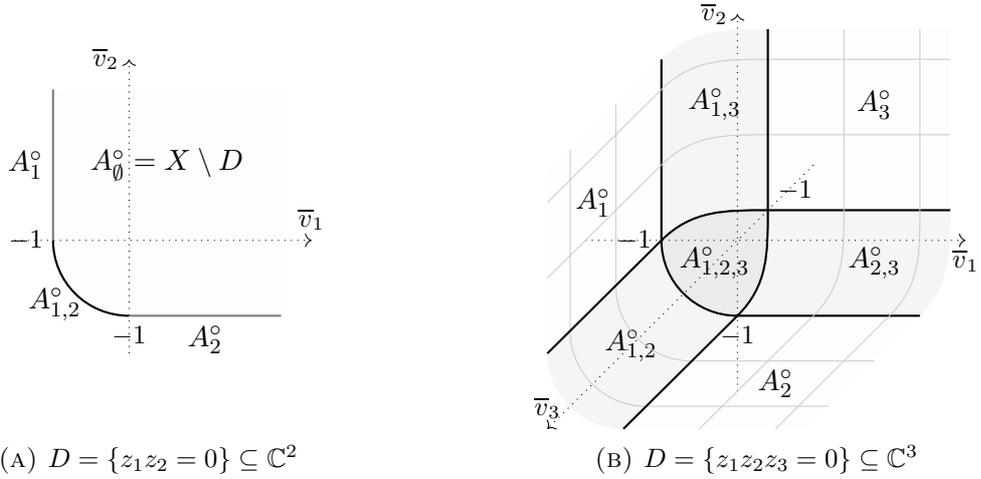
\begin{figure}[htbp]
		\begin{tabular}{cc}
			\begin{subfigure}{0.45\textwidth}
				\centering			
				\begin{tikzpicture}
					\path[use as bounding box] (-2.5,-2.5) rectangle (3,3);
					\path [fill=black!1] (-1,2) -- (-1,0) to [out=270,in=180] (0,-1) -- (2,-1) -- (2,2) -- (-1,2);
					\draw[->, dotted] (-1.5,0) -- (2.4,0);
					\node [left] at (-1,0) {\small{$-1$}};
					\node [above] at (2.4,0) {\small{$\bar{v}_1$}};
					\draw[->, dotted] (0,-1.5) -- (0,2.4);
					\node [below] at (0,-1) {\small{$-1$}};
					\node [left] at (0,2.4) {\small{$\bar{v}_2$}};
					\draw [thick, gray] (-1,2) -- (-1,0); 
					\draw [thick] (-1,0) to [out=270,in=180] (0,-1);
					\draw [thick, gray] (0,-1) -- (2,-1);
					\node at (.5,1) {$A_{\emptyset}^{\circ}=X\setminus D$};
					\node [left] at (-1,1) {$A_{1}^{\circ}$};
					\node [below] at (1,-1) {$A_{2}^{\circ}$};
					\node [below left] at (-.5,-.5) {$A_{1,2}^{\circ}$};
				\end{tikzpicture}
				\caption{$D=\{z_1 z_2=0\}\subseteq \C^2$}
				\label{fig:v2}
			\end{subfigure}
			&
			\begin{subfigure}{0.45\textwidth}
				\centering			
				\begin{tikzpicture}
					\path[use as bounding box] (-2.5,-2.5) rectangle (3,3);
					\path [fill=black!8] (0.4,0.4) to [out=180,in=45] (-1,0) to [out=270,in=180] (0,-1) to [out=45,in=270] (0.4,0.4);
					\node at (-0.3,-0.3) {$A_{1,2,3}^{\circ}$};
					\path [fill=black!4] (0.4,0.4) to [out=180,in=45] (-1,0) -- (-1,2.4) to [out=45,in=180] (.4,2.8) -- (0.4,0.4);
					\node at (-.3,1.8) {$A_{1,3}^{\circ}$};
					\path [fill=black!4] (-1,0) to [out=270,in=180] (0,-1) -- (-1.5,-2.5) to [out=180,in=270] (-2.5,-1.5) -- (-1,0);
					\node at (-1.4,-1.4) {$A_{1,2}^{\circ}$};
					\path [fill=black!4] (0.4,0.4) to [out=270,in=45] (0,-1) -- (2.4,-1) to [out=45,in=270] (2.8,0.4) -- (-0.4,0.4);
					\node at (1.8,-0.3) {$A_{2,3}^{\circ}$};
					\path [fill=black!1] (0.4,0.4) --(0.4,2.8) -- (2.8,2.8) -- (2.8,0.4) -- (0.4,0.4);
					\node at (1.8,1.8) {$A_{3}^{\circ}$};
					\path [fill=black!1] (-1,0) -- (-1,2.4) -- (-2.5,0.9) -- (-2.5,-1.5) -- (-1,0);
					\node at (-1.9,0.5) {$A_{1}^{\circ}$};
					\path [fill=black!1] (0,-1) -- (2.4,-1) -- (0.9,-2.5) -- (-1.5,-2.5) -- (0,-1);
					\node at (0.5,-1.9) {$A_{2}^{\circ}$};				
					\draw[->, dotted] (-2,0) -- (3,0);
					\node [left] at (-1,0) {\small{$-1$}};
					\node [below] at (3,0) {\small{$\bar{v}_1$}};
					\draw[->, dotted] (0,-2) -- (0,3);
					\node [below] at (0,-1) {\small{$-1$}};
					\node [left] at (0,3) {\small{$\bar{v}_2$}};
					\draw[->, dotted] (1,1) -- (-2.5,-2.5);
					\node [above right] at (.4,.4) {\small{$-1$}};
					\node [above] at (-2.5,-2.5) {\small{$\bar{v}_3$}};
					\draw [thick] (-1,2.4) -- (-1,0) to [out=270,in=180]	(0,-1) -- (2.4,-1);
					\draw [black!20] (-1.6,1.8) -- (-1.6,-0.6) to [out=270,in=180] (-0.6,-1.6) -- (1.8,-1.6);
					\draw [black!20] (-2.2,1.2) -- (-2.2,-1.2) to [out=270,in=180] (-1.2,-2.2) -- (1.2,-2.2);
					\draw [thick] (2.8,0.4) -- (.4,.4) to [out=180,in=45] (-1,0) -- (-2.5,-1.5);
					\draw [black!20] (2.8,1.4) -- (.5,1.4) to [out=180,in=45] (-1,1) -- (-2.5,-0.5);
					\draw [black!20] (2.8,2.4) -- (.5,2.4) to [out=180,in=45] (-1,2) -- (-2.5,0.5);
					\draw [thick] (0.4,2.8) -- (.4,.4) to [out=270,in=45] (0,-1) -- (-1.5,-2.5);
					\draw [black!20] (1.4,2.8) -- (1.4,.5) to [out=270,in=45] (1,-1) -- (-0.5,-2.5);
					\draw [black!20] (2.4,2.8) -- (2.4,.5) to [out=270,in=45] (2,-1) -- (0.5,-2.5);
				\end{tikzpicture}
				\caption{$D=\{z_1 z_2 z_3 =0\}\subseteq \C^3$}				
				\label{fig:v3}
			\end{subfigure}
		\end{tabular}
		
		\caption{Radial coordinates $\bar{v}_{i}$ of the A'Campo space, cf.\ \protect\cite[p.\ 239]{A'Campo}.}
		\label{fig:v}
	\end{figure}
\end{example}

\subsubsection{Proofs of \ref{lem:intro}--\ref{prop:AX-topo}}\label{sec:AX-top-proofs}

The remaining part of Section \ref{sec:AX-topo} is devoted to the proof of Lemma~\ref{lem:intro} and Proposition~\ref{prop:AX-topo}. We remind the reader to use Notation~\ref{not:pullbacks}.

First, we give a precise formula relating $t_{i}$, $w_{i}$ with corresponding $t_{i}'$, $w_{i}'$ defined by another chart. We will use it both to prove the equality  $w_{i}|_{\d A}=w_{i}'|_{\d A}$ asserted in Lemma \ref{lem:intro}\ref{item:intro-independence}, and to make further computations in Lemma \ref{lem:dv}.

\begin{lema}\label{lem:w_extends} 
	Let $U_X,U'_X$ be two charts adapted to $f$. Put $V_X=U_X\cap U_X'$, $V=\pi^{-1}(U_X\cap U_X')$, and use Notation \ref{not:pullbacks}. For every $i\in S\cap S'$, the following hold.
	\begin{enumerate}
		\item\label{item:ti-continuous} The functions $t_{i}$, $t_{i}'$ are continuous on $V$.
		\item\label{item:lambda} There is a nonvanishing holomorphic function $\lambda \in \cO_{X}^{*}(V_X)$ such that $z_{i}'=\lambda \cdot z_{i}$.
		\item\label{item:t-comparison} Let $\lambda$ be as in \ref{item:lambda}, and let  $a =-m_i\log|\lambda|$. Then $a \in \cC^{\infty}(V_X)$, $1+at_{i}\neq 0$, and on $V$ we have:
		\begin{equation*}\label{eq:t-comparison}
			t_{i}'=t_{i}\cdot (1+a \cdot t_{i})^{-1}
			\quad \mbox{and} \quad
			w_{i}'=w_{i}\cdot (1+a \cdot t_{i}).
		\end{equation*}
		\item \label{item:w_independent-on-Ai} The functions $w_i$ and $w_i'$ extend to continuous functions on $V$, such that $w_{i}|_{V\cap \d A}=w_{i}'|_{V\cap \d A}$.
	\end{enumerate}
\end{lema}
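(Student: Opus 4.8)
\textbf{Proof plan for Lemma \ref{lem:w_extends}.}

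The plan is to reduce everything to the comparison of the two local coordinate systems on the overlap $V_X = U_X\cap U_X'$, and to exploit the fact that the ``troublesome'' part of $w_i$ is governed by the single holomorphic function relating $z_i$ and $z_i'$. For \ref{item:ti-continuous}: by definition $t_i = (-m_i\log r_i)^{-1}$ on $U\setminus D_i$ with $t_i|_{U\cap D_i}=0$, and since Definition \ref{def:adapted-chart}\ref{item:r-small} gives $\log|z_i|<-1$, we have $r_i<e^{-1}$, so $-m_i\log r_i > m_i \geq 1$ stays bounded away from $0$; hence $t_i$ is continuous on all of $U$, in particular on $V$ (and the same for $t_i'$). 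For \ref{item:lambda}: both $U_X$ and $U_X'$ are adapted to $f$, so by Definition \ref{def:adapted-chart}\ref{item:zero-locus} the functions $z_i$ and $z_i'$ both cut out $D_i\cap V_X$ (with multiplicity one, as $D\redd$ is snc); hence $\lambda \de z_i'/z_i$ extends across $D_i\cap V_X$ to a nonvanishing holomorphic function on $V_X$ — this is the standard fact that two reduced local equations of a smooth divisor differ by a unit.

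For \ref{item:t-comparison}: with $a=-m_i\log|\lambda|$, smoothness of $a$ is clear since $\lambda$ is nonvanishing holomorphic; and on $V\setminus D_i$ one computes directly
\[
t_i'^{-1} = -m_i\log|z_i'| = -m_i\log|z_i| - m_i\log|\lambda| = t_i^{-1} + a,
\]
so $t_i'^{-1} = (1 + a t_i)/t_i$, which gives $t_i' = t_i(1+at_i)^{-1}$ after checking $1+at_i\neq 0$. The inequality $1+at_i\neq 0$ follows because $t_i' > 0$ on $V\setminus D_i$ forces $1+at_i$ to have the same sign as $t_i>0$, and the expression is continuous and never hits $0$ (alternatively, bound $|at_i|<1$ using $t_i<1/m_i$ and compactness of $\bar V_X$, which bounds $|a|$, after possibly shrinking; but the sign argument is cleaner and needs no shrinking). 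Then, since the global function $t$ of \eqref{eq:def-t-g} is intrinsic (does not depend on any chart), $w_i = t/t_i$ and $w_i' = t/t_i'$, so $w_i' = w_i\cdot(t_i/t_i') = w_i\cdot(1+at_i)$ on $V\setminus D_i$. For \ref{item:w_independent-on-Ai}: the continuous extension of $w_i,w_i'$ to $V$ is part of Lemma \ref{lem:intro}\ref{item:intro-extends} (applied to the charts $U_X$, $U_X'$ separately — note the statement of Lemma \ref{lem:intro} allows any adapted chart, not only those from $\cU_X$); then on $V\cap \d A\subseteq \pi^{-1}(D)$ we have $t_i=0$ (if $i\in I$ for the relevant stratum) hence $1+at_i=1$ and so $w_i'=w_i$ there by continuity of both sides and the identity on the dense open $V\setminus D_i$ — and if $i\notin I$ then $w_i|_{A_I^\circ\cap V}\equiv 0 \equiv w_i'|_{A_I^\circ\cap V}$ by Lemma \ref{lem:intro}\ref{item:intro-w-zero}.

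The main obstacle is not any single step but the bookkeeping: one must be careful that Lemma \ref{lem:intro} is being invoked only for its parts that do not themselves depend on \ref{lem:w_extends} (to avoid circularity), and one must verify the extension claim \ref{item:ti-continuous}–\ref{item:w_independent-on-Ai} genuinely on the overlap $V$, where a point of $\d A$ may lie in $A_I^\circ$ for an index set $I$ strictly smaller than $S\cap S'$ — so the vanishing of $t_i$ there is not automatic and must be read off from \eqref{eq:stratification-pullback} together with the description of the zero locus of $t_i$ as $\bigsqcup_{i\in I}A_I^\circ\cap U$. Once this is set up correctly, all the analytic content is in the one-line logarithm computation above; the rest is the algebra of the substitution $t_i'=t_i(1+at_i)^{-1}$.
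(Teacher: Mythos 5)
Parts \ref{item:ti-continuous}--\ref{item:t-comparison} of your argument are correct and essentially identical to the paper's: continuity of $t_i$ follows from continuity of $\pi$ and of the function downstairs, $\lambda=z_i'/z_i$ is a unit because both coordinates are reduced local equations of $D_i$, and the one-line logarithm computation $t_i'^{-1}=t_i^{-1}+a$ gives both formulas in \ref{item:t-comparison} (your sign argument for $1+at_i>0$ off $\pi^{-1}(D_i)$ is fine, and on $\pi^{-1}(D_i)$ one has $t_i=0$ so $1+at_i=1$).

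Part \ref{item:w_independent-on-Ai}, however, has a genuine gap, and it is exactly the circularity you warn yourself about in your last paragraph and then commit anyway. You obtain the continuous extension of $w_i$ and $w_i'$ to $V$ by citing Lemma \ref{lem:intro}\ref{item:intro-extends}; but in the paper that statement is \emph{proved by applying} Lemma \ref{lem:w_extends}\ref{item:ti-continuous} and \ref{item:w_independent-on-Ai}, so you cannot use it here. The extension is also not a soft fact: on $\pi^{-1}(D_i)$ both $t$ and $t_i$ vanish, so $w_i=t\cdot t_i^{-1}$ is a $0/0$ indeterminate form, and nothing in parts \ref{item:ti-continuous}--\ref{item:t-comparison} forces it to have a limit. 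The missing idea is that continuity of $w_i$ at $x\in V\cap\pi^{-1}(D_i)$ must be extracted from the \emph{definition} of the A'Campo space as the closure of the graph of $\mu=(\bar u_1,\dots,\bar u_N)$: take any sequence $(x_\nu)\subset V\setminus\pi^{-1}(D_i)$ with $x_\nu\to x$ and a subsequence along which $w_i(x_\nu)\to w\in[0,1]$; the comparison formula $w_i^{p}=w_i\cdot(1+a^{p}t_i)$ of part \ref{item:t-comparison}, applied with $U_X'$ running over the charts $U_X^{p}$ of the fixed atlas $\cU_X$, together with $t_i(x)=0$, shows that $w_i^{p}(x_\nu)\to w$ for every relevant $p$; summing against the partition of unity and using that $\bar u_i=\sum_p\tau^p\eta(w_i^p)$ is continuous on $A$ by construction yields $\eta(w)=\bar u_i(x)$, hence $w=\eta^{-1}(\bar u_i(x))$ is independent of the subsequence and of the chart. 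Only after this is established do your concluding remarks (the formula extends by density, and $w_i=w_i'=0$ on $A_I^{\circ}$ with $i\notin I$ by Lemma \ref{lem:intro}\ref{item:intro-w-zero}, whose proof is indeed independent of the present lemma) go through.
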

\begin{proof}
	\ref{item:ti-continuous} Recall that $t_{i}$, $t_{i}'$ denote the pullbacks by $\pi$ of continuous functions $t_i$, $t_{i}'$ introduced in \eqref{eq:t-t'}. By Definition \ref{def:AX} of $A$, the map $\pi$ is continuous, so $t_{i},t_{i}'$ are continuous, too.
	
	\ref{item:lambda} By Definition \ref{def:adapted-chart}\ref{item:f-locally}, we have $D_{i}\cap V=\{z_i=0\}=\{z_i'=0\}$. 
	Hence $z_i=\lambda z_i'$ for some $\lambda\in \cO_{X}^{*}(V)$, as claimed.
	
	\ref{item:t-comparison} Since $\lambda$ is smooth and nonvanishing, the function $a=-m_i\log|\lambda|$ is smooth. By definition, on $V\cap \pi^{-1}(D_i)$ both  $t_i$ and $t_i'$ are zero, so $1+at_i\neq 0$ and  the first equality of \ref{item:t-comparison} holds trivially there. Away from $\pi^{-1}(D_i)$, we have $t_i, t_{i}'\neq 0$ and $t_{i}'=-(m_{i}\log|z_{i}'|)^{-1}$ by formula \eqref{eq:def-r_i-t_i}. Part \ref{item:lambda} and the definition of $a$ give
	\begin{equation*}
		t_{i}' \cdot (1+at_i)=-\frac{1+at_i}{m_{i}\log|\lambda z_{i}|}=
		-\frac{1+at_i}{m_{i}\log|z_{i}|+m_{i}\log|\lambda|}
		=-\frac{1+at_i}{-t_{i}^{-1}-a}=t_{i},
	\end{equation*}
	so $1+at_i\neq 0$ and $t_{i}'=t_i\cdot (1+at_i)^{-1}$, as needed.
	
	We will now prove that the equality $w_i'=w_i\cdot (1+at_i)$ holds on $V\setminus \pi^{-1}(D_i)$. There, substituting the equality $t_{i}'=t_i\cdot (1+at_i)^{-1}$ to the definition \eqref{eq:def-w_i-u_i} of $w_{i}'$, we get
	\begin{equation*}
		w_{i}'=t\cdot (t_{i}')^{-1}=t\cdot t_{i}^{-1} \cdot (1+at_{i})=w_{i} \cdot (1+at_i).
	\end{equation*}
	Once we prove in \ref{item:w_independent-on-Ai} that $w_{i}$ and $w_{i}'$ are continuous, the above equality will extend to $\pi^{-1}(D_i)$.
	
	\ref{item:w_independent-on-Ai} As in the proof of \ref{item:ti-continuous}, continuity of $\pi$ implies that $w_i,w_i'$ are continuous on $V \setminus \pi^{-1}(D_i)$. Fix $x\in V\cap \pi^{-1}(D_i)$ and a sequence $(x_{\nu})\subset V\setminus \pi^{-1}(D_i)$ such that $\limn x_{\nu}=x$. Since $w_{i}(x_{\nu})\in [0,1]$, after passing to a subsequence we can assume that both $w_{i}(x_{\nu})$ and $w_{i}'(x_{\nu})$ have limits in $[0,1]$. Call these limits $w$ and $w'$, respectively. Since $t_i(x)=0$, applying the second formula in \ref{item:t-comparison} away from $\pi^{-1}(D_i)$ and passing to the limit, we get $w=w'$. Taking for $U_X'$ a chart $U_X^p$ of the atlas $\cU_X$ fixed in Definition \ref{def:AX}, and writing $w_{i}^{p}$ for the corresponding map, we get $\limn w_{i}^{p}(x_{\nu})=w$, for every $p\in R$ such that $x\in U^p$. Continuity of $\eta$ implies that $\limn u_{i}^{p}(x_{\nu})=\limn \eta(w_{i}^{p}(x_{\nu}))=\eta(w)$. Since each $\tau^{p}$ is continuous and 
	supported in $U_{X}^{p}$, we get $\limn \tau^{p}(x_{\nu})u_{i}^{p}(x_{\nu})=\tau^{p}(x)\eta(w)$ for all $p\in R$. 
	
	By Definition \ref{def:AX}, the function $\bar{u}_{i}$ is continuous. Hence 
	$\bar{u}_{i}(x)=\sum_{p} \limn \tau^{p}(x_{\nu})u_{i}^{p}(x_{\nu})=\sum_{p}\tau^{p}(x)\eta(w)=\eta(w)$. Bijectivity of $\eta$ gives $w=\eta^{-1}(\bar{u}_{i}(x))$. 
	
	Thus from any sequence $(x_{\nu})\subset V\setminus \pi^{-1}(D_i)$ such that $\limn x_{\nu}=x$, we can choose a subsequence $(x_{\nu'})$ such that $\lim_{\nu'\rightarrow \infty} w_{i}(x_{\nu'})=\eta^{-1}(\bar{u}_{i}(x))$. In fact, the same is true for all sequences $(x_{\nu})\subset V$, because by Definition \ref{def:AX} of $A$, every point of $V$ is a limit of a sequence of points in $V\setminus \d A$. It follows that the function  $w_i$ is continuous at $x$, and the value of $w_i$ at $x$ equals $\eta^{-1}(\bar{u}_{i}(x))$, so it does not depend on the choice of the chart $U_{X}$ defining $w_i$, as claimed. 
	
	It follows that the second formula in \ref{item:t-comparison} holds on $\pi^{-1}(D_i)$, too, which completes its proof.
\end{proof}

\begin{proof}[Proof of Lemma \ref{lem:intro}]
	\ref{item:intro-extends} Applying Lemma \ref{lem:w_extends}\ref{item:ti-continuous},\ref{item:w_independent-on-Ai} to $U_X$, we infer that $t_{i}$, $w_{i}$ extend to continuous functions on $U$. Since $\eta$ is continuous, the functions $u_{i}=\eta(w_{i})$ and $v_{i}=t_{i}-u_{i}$ are continuous, too. Eventually, $\theta_{i}$ pulls back to a continuous coordinate on the preimage of $U$ in the Kato--Nakayama space $X_{\log}$, see \eqref{eq:KN-chart}, so its further pullback to $A$ is continuous, too.
	
	\ref{item:intro-w-zero} Since $I\neq \emptyset$, we have $A_{I}^{\circ}\subseteq \d A$, so $t|_{A_{I}^{\circ}}=0$. For every $i\not \in I$, definition of $A_{I}^{\circ}$ gives $U\cap A_{I}^{\circ}\subseteq U\setminus \pi^{-1}(D_{i})$, so $t_{i}|_{U\cap A_{I}^{\circ}}>0$. By definition \eqref{eq:def-w_i-u_i} of $w_i$, on $U\cap A_{I}^{\circ}$ we have $w_{i}=t\cdot t_{i}^{-1}=0$.
	
	\ref{item:intro-sum} Because $U_X$ is adapted to $f$, by Definition \ref{def:adapted-chart}\ref{item:f-locally} we have  $|f|_{U}|=\prod_{i\in S} |z_{i}|^{m_i}$. Hence $\log |f|_{U}|=\sum_{i\in S} m_{i}\log |z_{i}|$. On $U\setminus \d A$, substituting the formulas \eqref{eq:def-t-g}, \eqref{eq:def-r_i-t_i} for $t$ and $t_i$, gives $-t^{-1}=-\sum_{i\in S} t_{i}^{-1}$, so $1=\sum_{i\in S} tt_{i}^{-1}=\sum_{i\in S} w_{i}$ by definition \eqref{eq:def-w_i-u_i} of $w_i$. Since by \ref{item:intro-extends} the functions $w_{i}$ are  continuous on $U$, we infer that the equality $1=\sum_{i\in S}w_{i}$ holds on $U\cap \d A$, too, as claimed.
	
	\ref{item:intro-independence} Put $V=U\cap U'$, and fix $i$ such that $D_i\cap V\neq \emptyset$. The set $V\cap \d A$ is a disjoint union of $V\cap \pi^{-1}(D_{i})$ and $V\cap A_{I}^{\circ}$ for all nonempty $I$ not containing $i$. By Lemma \ref{lem:w_extends}\ref{item:w_independent-on-Ai}, $w_{i}=w_{i}'$ on $V\cap \pi^{-1}(D_{i})$. In turn, by \ref{item:intro-w-zero}, both $w_{i}$ and $w_{i}'$ are zero on $V\cap A_{I}^{\circ}$ whenever $i\not \in I$. Thus $w_i=w_i'$ on the whole set $V\cap \d A$.
	
	\ref{item:intro-u=ubar} Fix $i\in S$. By \ref{item:intro-independence}, for every chart $U_{X}^{p}$ of the atlas $\cU_{X}$ fixed in Definition \ref{def:AX}, the corresponding function $w_{i}^{p}$ satisfies $w_{i}^{p}|_{U\cap U^{p}\cap \d A}=w_{i}|_{U\cap U^{p}\cap \d A}$. Hence  $\bar{u}_{i}=\sum_{p}\tau^{p}\eta(w_{i}^{p})=\sum_{p}\tau^{p}\eta(w_{i})=\eta(w_{i})=u_{i}$ on $U\cap \d A$, as claimed. 
	
	Now, fix $i\not\in S$, and $x\in U\cap \d A$. Since $i\not \in S$, we have $x\in A_{I}^{\circ}$ for some nonempty $I$ not containing $i$. Take any $p\in R$ such that $\tau^{p}(x)\neq 0$, so $x\in U^p$. Applying \ref{item:intro-w-zero} to $U^{p}$, we infer that $w_{i}^{p}(x)=0$, so $u_{i}^{p}(x)=0$. Therefore, $\bar{u}_{i}(x)=\sum_{p} \tau^{p}(x)u_{i}^{p}(x)=0$, as claimed.
\end{proof}

\begin{lema}
	\label{lem:Gamma_structure}
	For a nonempty subset $I\subseteq \{1,\dots, N\}$, the following hold.
	\begin{enumerate}
		\item \label{item:fiber-is-a-simplex} For every $x\in X_{I}^{\circ}$, we have  $\mu(\pi^{-1}(x))=\Delta_{I}\subseteq \Delta^{N-1}$. 
		\item\label{item:mu-proper} The projection $\Gamma\to X$ is proper.
		\item \label{item:trivial-fibration} The subset $A_{I}^{\circ}\subseteq X_{\log}\times \R^{N-1}$ equals $(X_{I}^{\circ})_{\log}\times \Delta_{I}$. 
	\end{enumerate}
\end{lema}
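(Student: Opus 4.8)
The plan is to treat the three items in the order (b), (a), (c): part (b) is elementary, part (a) is the substantive statement, and part (c) is a formal consequence of (a) and the fibre-product description $A=\Gamma\times_X X_{\log}$. For part (b) I would first record that $\mu$ is bounded. By Lemma~\ref{lem:intro}\ref{item:intro-sum} every local function $w_i^p$ takes values in $[0,1]$, and $\eta$ maps $[0,1]$ into $[0,1]$, so $u_i^p=\eta(w_i^p)\in[0,1]$; hence each $\bar u_i=\sum_p\tau^p u_i^p$, being a convex combination, also takes values in $[0,1]$. Thus the graph of $\mu|_{X\setminus D}$ lies in $X\times[0,1]^N$, and therefore so does its closure $\Gamma$. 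The projection $\Gamma\to X$ then factors as the closed embedding $\Gamma\hookrightarrow X\times[0,1]^N$ followed by the proper projection $X\times[0,1]^N\to X$, hence is proper; in particular, for $x\in X$ the fibre $\Gamma_x$, viewed in $\R^N$, is closed and bounded, hence compact, and since $X_{\log}\to X$ is surjective one has $\mu(\pi^{-1}(x))=\Gamma_x$ as subsets of $\R^N$.

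For part (a), fix $x\in X_I^\circ$. For the inclusion $\mu(\pi^{-1}(x))\subseteq\Delta_I$ I would choose, after shrinking, a chart $U_X$ adapted to $f$ with $x\in U_X$ whose associated index set \eqref{eq:index-set} is exactly $I$ — possible because $x$ lies on $D_i$ precisely for $i\in I$ while each $D_j$ with $j\notin I$ is closed. Since $x\in X_I^\circ\cap U_X$ we have $\pi^{-1}(x)\subseteq A_I^\circ\cap U\subseteq\d A$, and on $A_I^\circ\cap U$ Lemma~\ref{lem:intro}\ref{item:intro-u=ubar} identifies $\mu$ with the vector $(\eta(w_i))_{i}$ in which $w_j=0$ for $j\notin I$ and, by Lemma~\ref{lem:intro}\ref{item:intro-sum}, $w_i\in[0,1]$ with $\sum_{i\in I}w_i=1$; this is exactly the defining description of a point of $\Delta_I$. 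For the reverse inclusion I would realise only the relative interior $\Delta_I^\circ$ by explicit sequences and then use compactness. Given a point of $\Delta_I^\circ$, write it as $(\eta(s_\ell))_\ell$ with $s_i>0$ for $i\in I$, $s_j=0$ for $j\notin I$, $\sum_{i\in I}s_i=1$, and in the chart $U_X$ take the sequence $x_\nu$ with $z_i(x_\nu)=\exp(-s_i\nu/m_i)$ for $i\in I$ and all remaining coordinates frozen at their values at $x$. Then $x_\nu\to x$, and using $f|_{U_X}=\prod_{i\in I}z_i^{m_i}$ (Definition~\ref{def:adapted-chart}) a direct computation gives $t(x_\nu)=\nu^{-1}$, $t_i(x_\nu)=(s_i\nu)^{-1}$, hence $w_i(x_\nu)\equiv s_i$ for $i\in I$. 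By Lemma~\ref{lem:w_extends}\ref{item:t-comparison} the corresponding functions $w_i^p$ for the charts $U_X^p$ of the defining atlas satisfy $w_i^p(x_\nu)=s_i\,(1+a^p t_i(x_\nu))\to s_i$, while $w_j^p(x_\nu)\to 0$ for $j\notin I$ because $|z_j^p|$ stays bounded away from $0$; continuity of $\eta$ and of the $\tau^p$ then gives $\mu(x_\nu)\to(\eta(s_\ell))_\ell$, so $(\eta(s_\ell))_\ell\in\Gamma$ and hence lies in $\mu(\pi^{-1}(x))$. Thus $\Delta_I^\circ\subseteq\mu(\pi^{-1}(x))$; since the latter is compact by part (b) and contains the dense subset $\Delta_I^\circ$ of $\Delta_I=\overline{\Delta_I^\circ}$, it equals $\Delta_I$.

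Part (c) then follows formally. Identify $A=\Gamma\times_X X_{\log}$ with the subspace $\{(\xi,\mathbf w)\in X_{\log}\times\R^N:(\tau(\xi),\mathbf w)\in\Gamma\}$ via $((x,\mathbf w),\xi)\mapsto(\xi,\mathbf w)$, a homeomorphism since $x=\tau(\xi)$ is redundant; under this identification $\mu$ becomes the projection to $\R^N$. Then $A_I^\circ=\pi^{-1}(X_I^\circ)=\{(\xi,\mathbf w):\tau(\xi)\in X_I^\circ\text{ and }(\tau(\xi),\mathbf w)\in\Gamma\}$, where $\tau(\xi)\in X_I^\circ$ exactly when $\xi\in(X_I^\circ)_{\log}$, and for such $\xi$, with $x=\tau(\xi)$, the condition $(x,\mathbf w)\in\Gamma$ reads $\mathbf w\in\mu(\pi^{-1}(x))=\Delta_I$ by part (a). Hence $A_I^\circ=(X_I^\circ)_{\log}\times\Delta_I$ (with ambient $\R^N$, correcting the evident typo $\R^{N-1}$ in the statement). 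The only step that is not a formality is the surjectivity $\mu(\pi^{-1}(x))\supseteq\Delta_I$ in part (a); I expect this to be the main obstacle, although after the compactness reduction it comes down to the explicit exponentially-decaying sequences above — the one delicate point being the passage between charts in $\bar u_i=\sum_p\tau^p u_i^p$, which is exactly what the comparison of Lemma~\ref{lem:w_extends}\ref{item:t-comparison} is designed to control.
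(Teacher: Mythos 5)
Your proof is correct and follows essentially the same route as the paper's: the same reduction of the reverse inclusion in (a) to the relative interior $\Delta_I^{\circ}$ via closedness of $\mu(\pi^{-1}(x))$, the same explicit sequence with $|z_i|=e^{-s_i\nu/m_i}$ realising a prescribed point of $\Delta_I^{\circ}$, and the same formal deduction of (b) and (c). If anything you are slightly more careful than the paper at one point: the paper asserts $\mu(\gamma(s))=b$ for $s>0$ outright, whereas you only pass to the limit after comparing the local $w_i$ with the atlas functions $w_i^{p}$ via Lemma~\ref{lem:w_extends}\ref{item:t-comparison}, which is the honest way to handle the partition of unity in $\bar{u}_i$.
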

\begin{proof}
	\ref{item:fiber-is-a-simplex} Let $U_X$ be a chart adapted to $f$ containing $x$, let $S$ be its associated index set, and let $U=\pi^{-1}(U_X)$. Fix $y\in \pi^{-1}(x)\subseteq U\cap A_{I}^{\circ}$. For any $i\not\in S$, we have $\bar{u}_{i}(y)=0$ by Lemma \ref{lem:intro}\ref{item:intro-u=ubar}. Fix $i\in S$. By Lemma \ref{lem:intro}\ref{item:intro-u=ubar}, we have  $\bar{u}_{i}(y)=u_i(y)=\eta(w_i(y))$. Thus Lemma \ref{lem:intro}\ref{item:intro-sum} implies that $\mu(y)\in \Delta^{N-1}$. Moreover, if $i\not \in I$ then by Lemma \ref{lem:intro}\ref{item:intro-w-zero} we have $w_{i}(y)=0$, so $\bar{u}_{i}(y)=0$. We conclude that $\mu(y)\in \Delta_{I}$, so  $\mu(\pi^{-1}(x))\subseteq \Delta_{I}$. 
	
	For the reverse inclusion,  note that the subset $\mu(\pi^{-1}(x))\subseteq \Delta_{I}$ is closed. Indeed, it consists of all possible limits of $\mu(x_{\nu})$, where $(x_{\nu})\subset X\setminus D$ is a sequence converging to $x$. Therefore, it is sufficient to prove that $\Delta_{I}^{\circ}\subseteq \mu(\pi^{-1}(x))$. Fix $b\in \Delta_{I}^{\circ}$. Since the function $\eta|_{[0,1]}$ is bijective, for each $i\in I$ we get a unique $b_{i}\in (0,1]$ such that $\eta(b_{i})$ is the $i$-th coordinate of $b$; and $\sum_{i\in I} b_{i}=1$. Put $b_{i}=0$ for $i\not\in I$.
	
	Recall that $z_{i}$, for $i\in S$, are some of the coordinates of the chart $U_X$. Hence for sufficiently small $\epsilon>0$, there is a curve $\gamma\colon [0,\epsilon]\to U_X$ such that $z_{i}(\gamma(s))=s^{b_i/m_i}$ for all $s\in [0,\epsilon]$ and all $i\in S$. By Definition \ref{def:adapted-chart}\ref{item:f-locally}, we have $f(\gamma(s))=\prod_{i\in S} z_{i}(\gamma(s))^{m_i}=s^{\sum_{i\in S} b_{i}}=s$, so $t(\gamma(s))=-(\log s)^{-1}$ for $s>0$. By definition \eqref{eq:def-r_i-t_i} of $t_i$, for $s>0$ we have $t_{i}(\gamma(s))=-(m_i\log(s^{b_i/m_i}))^{-1}=-(b_i\log s)^{-1}$, and by \eqref{eq:def-w_i-u_i},  $w_{i}(\gamma(s))=t(\gamma(s))\cdot t_{i}(\gamma(s))^{-1}=b_{i}$. Therefore, $\mu(\gamma(s))=b$ for all $s\in (0,\epsilon]$. Passing with $s$ to $0$ we get $y\in A$ such that $\pi(y)=\gamma(0)=x$ and $\mu(y)=b$, as needed.

	\ref{item:mu-proper}, \ref{item:trivial-fibration} Follow immediately from \ref{item:fiber-is-a-simplex}.
\end{proof}

\begin{proof}[Proof of Proposition \ref{prop:AX-topo}]
	\ref{item:top-pi} Follows directly from Definition \ref{def:AX}.
	
	\ref{item:top-homeo-off-D} Properness follows from the properness of the maps $\Gamma\to X$ (see Lemma \ref{lem:Gamma_structure}\ref{item:mu-proper}) and $X_{\log}\to X$, and the rest is immediate by the definitions. 
	
	\ref{item:top-S1-bundle} Let $U_{X}\subseteq X$ be a chart adapted to $f$. Let $\tau\colon X_{\log}\to X$ be the natural map from the Kato--Nakayama space. In coordinates \eqref{eq:KN-chart} on $\tau^{-1}(U_X)$, the restriction of $\tau$ to $\tau^{-1}(U \cap X_{I}^{\circ})$ maps $\theta_{i}$ to $0$ for $i\in I$, and keeps the remaining coordinates intact. It follows that $\tau|_{\tau^{-1}(X_{I}^{\circ})}\colon \tau^{-1}(X_{I}^{\circ})\to X_{I}^{\circ}$ is a topological $(\S^{1})^{\#I}$-bundle. Thus the first part of \ref{item:top-S1-bundle} follows Lemma \ref{lem:Gamma_structure}\ref{item:trivial-fibration}. The second follows from the definition \eqref{eq:dual-complex} of the dual complex. 
	
	\ref{item:top-d-independent} By Lemma \ref{lem:Gamma_structure}\ref{item:trivial-fibration}, for every nonempty $I\subseteq \{1,\dots, N\}$, the subset $A_{I}^{\circ}\subseteq X_{\log}\times \R^{N}$ does not depend on the choice of $(\cU_{X},\btau)$. Therefore, neither does their disjoint union $\bigsqcup_{I\neq \emptyset}A_{I}^{\circ}=\d A$. 
	
	\ref{item:top-canonical-homeo} Let $\pi_{\log}\colon A\to X_{\log}$ be the natural map induced by $\pi$ in Definition \ref{def:AX}. Let $\pi_{\log}'$, $\mu'$, $\bar{u}_{i}'$ etc.\ denote the maps from Definition \ref{def:AX} associated to $A'$. By \ref{item:top-d-independent}, the subsets $\d A,\d A'\subseteq X_{\log}\times \R^{N}$ are equal, so the restriction $\Phi|_{\d A}$, defined as $\id|_{\d A}$, indeed maps $\d A$ to $\d A'$. Clearly, this restriction is continuous. In turn, $\Phi|_{A\setminus \d A}\colon A\setminus \d A\to A'\setminus \d A'$ is continuous by \ref{item:top-homeo-off-D}. Since $\d A$ is a closed subset of $A$, to prove continuity of $\Phi$ it is sufficient to prove that, for every $y\in \d A$, and every sequence $(y_{\nu})\subset A\setminus \d A$ converging to $y$, the values $y_{\nu}'\de \Phi(y_\nu)$ converge to $y$, too. 
	
	Since a point $y'\in A'$ is by definition a pair $(\pi'_{\log}(y'),\mu'(y'))$, we need to show that $\pi_{\log}'(y_{\nu}')\rightarrow \pi_{\log}(y)$ and $\mu'(y'_{\nu})\rightarrow \mu(y)$. By definition of $\Phi|_{A\setminus \d A}$, we have $\pi(y_{\nu})=\pi'(y_{\nu}')\in X\setminus D$, so $\pi_{\log}(y_{\nu})=\pi'_{\log}(y_{\nu}')$, because the map $X_{\log}\to X$ is a homeomorphism away from $D$. This proves the first convergence.
	
For the second one, choose a chart $U_X$ adapted to $f$ containing $\pi'(y_{\nu}')$ for $\nu\gg 1$, and fix $i$ in the associated index set \eqref{eq:index-set}. By Lemma \ref{lem:intro}\ref{item:intro-extends}, the function $u_i$ from \eqref{eq:def-w_i-u_i} extends to continuous functions $u_{i}\colon U\to [0,1]$ and $u_{i}'\colon U'\to [0,1]$, where $U=\pi^{-1}(U_X)$, $U'=(\pi')^{-1}(U_X)$. Put $x_\nu=\pi(y_{\nu})$, $x_{\nu}'=\pi'(y_{\nu}')$. Clearly, $x_{\nu}=x_{\nu}'$. Therefore, we have the equality 
\begin{equation*}
	\limn u_{i}'(y_{\nu}')=
	\limn u_{i}(x_{\nu}')=
	\limn u_{i}(x_{\nu})=
	\limn u_{i}(y_{\nu})=
	u_{i}(y)=
	\bar{u}_{i}(y),
\end{equation*}
where the last equality follows from Lemma \ref{lem:intro}\ref{item:intro-u=ubar}. Applying it to all charts $U_{X}$ of $\cU_{X}'$, we infer that $\bar{u}_{i}'(y_{\nu}')\rightarrow \bar{u}_{i}(y)$. Thus by definition \eqref{eq:def-vbar-ubar-mu} of $\mu$ we get $\mu'(y_{\nu}')\rightarrow \mu(y)$, as needed.
	
	Therefore, $\Phi$ is continuous. Continuity of the inverse follows by reversing the roles of $A$ and $A'$.
\end{proof}

\subsection{The $\cC^1$-atlas on the A'Campo space}\label{sec:AX-C1}

Fix a pair $(\cU_X,\btau)$ adapted to $f$, see Definition \ref{def:adapted-chart}. In this section we will introduce a $\cC^1$-atlas on the A'Campo space and we will show that if $(\cU'_X,\btau')$ is another pair adapted to $f$, then the homeomorphism \eqref{eq:canonical_homeo} connecting the corresponding A'Campo spaces is a $\cC^1$-diffeomorphism. As the reader may expect, this $\cC^1$-structure agrees outside $\partial A$ with the natural smooth structure in $A\setminus\partial A$, i.e.\ the one pulled back from $X\setminus D$ by the homeomorphism $\pi|_{A\setminus \d A}\colon A\setminus \d A\to X\setminus D$.

\subsubsection{Definition of the $\cC^1$-atlas}\label{sec:AX-C1-statements}
Fix a chart $U_X$ adapted to $f$, not necessarily one from the fixed atlas $\cU_X$. Put $U=\pi^{-1}(U_X)$, let $S=\{i: U_X\cap D_i\neq \emptyset\}$ be the associated index set, and let $w_i,v_i,\rest,\dots$ be the maps introduced in Section \ref{sec:basicfunctions}. Assume $S\neq \emptyset$. For every $i\in S$, we define 
\begin{equation}\label{eq:Ui}
	U_{i}=\{x\in U: w_{i}(x)>\tfrac{1}{n+1}\}.
\end{equation}
By Lemma \ref{lem:intro}\ref{item:intro-extends}, the subset $U_{i}\subseteq U$ is open. By Lemma  \ref{lem:intro}\ref{item:intro-sum}, the open sets $U_{i}$ for $i\in S$ cover $U$. Put $k=\#S$ and $Q_{k,n}\de [0,\infty)\times \R^{k-1}\times (\S^1)^{k}\times \C^{n-k}$. By Lemma \ref{lem:intro}\ref{item:intro-extends}, for every $i\in S$, we have a continuous map
\begin{equation}\label{eq:AC1-chart}
	\psi_{i}=(g,({v}_{j})_{j\in S\setminus \{i\}};\rest)\colon 
	U_{i}\to Q_{k,n},
\end{equation}
see \eqref{eq:def-t-g}, \eqref{eq:def-v_i} and \eqref{eq:def-rest} for definitions of $g$, $v_i$ and $\rest$, respectively. 

The pairs $(U_{i},\psi_{i})$ will be called the \emph{$\cC^{1}$-charts corresponding to} the chart $U_{X}$ adapted to $f$. The following lemmas, which we will prove later in this section, assert that these charts indeed make $A$ a $\cC^1$-manifold with boundary.

\begin{lema}
	\label{lem:C1chart}
	Each map \eqref{eq:AC1-chart} is a homeomorphism onto an open subset of $Q_{k,n}$. Moreover the restriction $\psi_i$ to $U_i\setminus\partial A$ is $\cC^\infty$ for the natural smooth structure in $A\setminus \partial A$.
\end{lema}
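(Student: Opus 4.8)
The plan is to analyze the chart map $\psi_i$ in \eqref{eq:AC1-chart} by working locally on the Kato--Nakayama side and tracing through the definitions in Section \ref{sec:basicfunctions}. First I would reduce to a model situation. Fix the chart $U_X$ with associated index set $S$, $\#S=k$, and (after reindexing) write $S=\{1,\dots,k\}$; the coordinates of $U_X$ are $z_1,\dots,z_k,z_{i_{k+1}},\dots,z_{i_n}$ with $f|_{U_X}=\prod_{i\in S}z_i^{m_i}$. By the Kato--Nakayama chart \eqref{eq:KN-chart} and Definition \ref{def:AX}, a point of $U=\pi^{-1}(U_X)$ is described by $(r_1,\dots,r_k;\theta_1,\dots,\theta_k;z_{i_{k+1}},\dots,z_{i_n})\in[0,1)^k\times(\S^1)^k\times\C^{n-k}$ together with the value of $\mu$, which on $\d A$ is determined by the $w_j$ via Lemma \ref{lem:intro}\ref{item:intro-u=ubar} and on $A\setminus\d A$ is a function of $(r_j)$. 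So the task is to show that $\psi_i=(g,(v_j)_{j\in S\setminus\{i\}};\rest)$, viewed through these KN-coordinates, is a homeomorphism onto its open image and is $\cC^\infty$ off $\d A$.

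The key computation is to invert the ``tropical part'' $(g,(v_j)_{j\neq i})$. The coordinates in $\rest=((\theta_j)_{j\in S},(z_{i_\ell})_{\ell>k})$ are literally pulled back from the KN-chart and carry no tropical information, so they cause no trouble: $\psi_i$ being a homeomorphism reduces to the statement that, at each fixed value of $\rest$, the map $(r_1,\dots,r_k)\mapsto (g,(v_j)_{j\in S\setminus\{i\}})$ --- extended to radius zero as in Lemma \ref{lem:intro}\ref{item:intro-extends} --- is a homeomorphism onto an open subset of $[0,\infty)\times\R^{k-1}$. Recall $t_j=(-m_j\log r_j)^{-1}$, $t=(-\log|f|)^{-1}$ so $t^{-1}=\sum_{j\in S}t_j^{-1}$, $w_j=t/t_j$, $u_j=\eta(w_j)$, $v_j=t_j-u_j$, and $g=\eta(t)$. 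On the open set $U_i$ we have $w_i>\tfrac1{n+1}$, hence $w_i$ is bounded away from $0$, which is exactly what makes $t_i=t/w_i$ a smooth function of $(t,(w_j)_{j\neq i})$ there (using $w_i=1-\sum_{j\neq i}w_j$); this is the point of cutting $U$ into the pieces $U_i$. Given target values $(\bar g,(\bar v_j)_{j\neq i})$ one recovers $t=\eta^{-1}(\bar g)$, then solves the system $v_j=t_j-\eta(t/t_j)=\bar v_j$ for each $t_j$ individually: since $s\mapsto s-\eta(t/s)$ is strictly increasing in $s$ (for fixed $t$, as $\eta$ is increasing and $t/s$ decreasing) it has a continuous, indeed smooth for $t>0$, inverse onto its image; this determines $t_1,\dots,t_k$ except $t_i$, and then $t_i$ is forced by $\sum t_j^{-1}=t^{-1}$, with consistency/openness coming from $w_i>\tfrac1{n+1}$. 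Finally $r_j=\exp(-1/(m_j t_j))$. One must check the boundary behavior $t=0\leftrightarrow r_j=0$ for all $j$, i.e.\ that $g=0$ corresponds exactly to $\d A\cap U_i$ and that the reconstruction is continuous up to the boundary --- this is where one invokes Lemma \ref{lem:intro}\ref{item:intro-extends} and the fact that $\eta$ is a homeomorphism $[0,1]\to[0,1]$ with $\eta|_{(0,1]}$ smooth. Openness of the image is then a dimension count plus invariance of domain, or can be read off directly from the explicit inverse formulas.

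For the smoothness claim on $U_i\setminus\d A$: there $t>0$, all $r_j>0$, and the natural smooth structure is the one pulled back from $X\setminus D$, in which $(r_1,\dots,r_k,\theta_1,\dots,\theta_k,z_{i_{k+1}},\dots,z_{i_n})$ (equivalently the holomorphic coordinates $z_{i_1},\dots,z_{i_n}$) are smooth. So it suffices to observe that each of $g=\eta(t)$, $t=(-\sum_{j\in S}(-m_j\log r_j))^{-1}$ --- wait, more precisely $t^{-1}=-\log|f|=-\sum_{j\in S}m_j\log r_j$, and $v_j=t_j-\eta(t/t_j)$ with $t_j=(-m_j\log r_j)^{-1}$, are smooth functions of $(r_1,\dots,r_k)$ on $\{r_j>0\}$, which is clear since $\eta|_{(0,1]}$ and $\log$ are smooth and $w_i>0$ keeps us away from the singular locus of the formulas; and $\rest$ is obviously smooth. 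Hence $\psi_i|_{U_i\setminus\d A}$ is $\cC^\infty$.

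The main obstacle I anticipate is \emph{not} the interior smoothness --- that is a routine check --- but rather proving that $\psi_i$ is a genuine homeomorphism \emph{up to and including} the radius-zero boundary: one needs that the extension of $(g,(v_j))$ provided by Lemma \ref{lem:intro}\ref{item:intro-extends} is not merely continuous but bijective onto an open set, and that its inverse is continuous across $\d A$. The delicate point is that on $\d A$ the coordinate $g$ collapses to $0$ and the information distinguishing strata $A_I^\circ$ with $i\in I$ is carried entirely by which $v_j$ equal $-1$ (equivalently $w_j=0$, $u_j=0$); one must verify that the inverse map correctly reconstructs $r_j=0$ from $v_j=-1$ and that no sequence escapes to the boundary in a way incompatible with $w_i>\tfrac1{n+1}$. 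Making this rigorous amounts to carefully composing the continuity statements of Lemma \ref{lem:intro} with the explicit one-variable inversions above and checking the limits $r_j\to 0$ are detected by $v_j\to -1$; I would organize it as: (1) write down the explicit inverse on the interior, (2) show it extends continuously to $g=0$ using $\eta^{-1}$, (3) show the extension lands in $U_i$ and is two-sided inverse to $\psi_i$ there, (4) conclude via the open mapping property of the explicit formulas.
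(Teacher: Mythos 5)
Your computational core coincides with the paper's: you invert the tropical part by the strictly increasing one--variable function $s\mapsto s-\eta(\check{t}\,s^{-1})$ (this is exactly Lemma \ref{lem:v-check}), recover $t_i$ from $\sum_{j\in S}t_j^{-1}=t^{-1}$ with positivity guaranteed by $w_i>\tfrac{1}{n+1}$, and set $r_j=e^{-(m_jt_j)^{-1}}$; you also need, as the paper checks explicitly, that the target values $\bar v_j$ lie in the image $[\,t-1,\infty)$ of $\check v_t$, which again comes from $w_i>\tfrac{1}{n+1}$ forcing $u_j<1$. The organization differs: the paper proves the interior statement as a diffeomorphism via a Jacobian computation (Lemma \ref{lem:coordinates-off-D}), proves openness of the image by a sequential preimage construction (Lemma \ref{lem:psi-open}), and gets the homeomorphism property from injectivity of the continuous extension of $\psi_i$ to the \emph{compact} closure $\bar U_i$ (Lemma \ref{lem:psi-homeo}), so continuity of the inverse is free from compactness rather than from an explicit inverse plus invariance of domain. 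Your interior smoothness check is as routine as you say.

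The genuine problem is your boundary dictionary, which is precisely the point you single out as delicate and then state incorrectly. On $\d A\cap U_i$ a point lies in $A_I^{\circ}$ for some $I\ni i$, and by Lemma \ref{lem:Ui-simple}\ref{item:Ui-cap-dA} one has $v_j=-u_j\in[-1,0]$ for $j\in I$ and $v_j=t_j>0$ for $j\notin I$. Hence at $g=0$ the locus $r_j=0$ is detected by $v_j\le 0$, and the missing simplex coordinate is recovered as $u_j=-v_j$; it is \emph{not} detected by $v_j=-1$, and "$v_j=-1$, equivalently $w_j=0,u_j=0$" is internally inconsistent ($v_j=-1$ means $u_j=w_j=1$, $t_j=0$). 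With your dictionary the proposed inverse fails on every boundary stratum where some $u_j\in(0,1)$: there $v_j\in(-1,0)$ yet $r_j=0$, while your reconstruction would attempt $r_j=e^{-1/(m_jv_j)}$, which is meaningless for $v_j<0$. Relatedly, your reduction to a fixed-$\rest$ map in the radial variables $(r_1,\dots,r_k)$ does not parametrize $\d A$: boundary points carry the extra simplex factor $\Delta_I$ (Lemma \ref{lem:Gamma_structure}\ref{item:trivial-fibration}), so at $\bar g=0$ the inverse must branch on the sign of each $\bar v_j$ and output both the Kato--Nakayama datum and the simplex datum --- this is exactly what the second case of the paper's Lemma \ref{lem:psi-open} does, and what Lemma \ref{lem:psi-homeo} exploits, stratum by stratum, for injectivity. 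Once this dictionary is corrected, the rest of your plan goes through along the paper's lines.
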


Lemma \ref{lem:C1chart} endows $A$ with a structure of topological manifold with boundary $\d A=\{g=0\}=\pi^{-1}(D)$. The $\cC^1$-structure is provided by Lemma \ref{lem:C1-trans} below, which studies the transition functions between charts \eqref{eq:AC1-chart}, possibly associated with  different charts adapted to $f$.

\begin{lema}\label{lem:C1-trans}
	Let $U_X$, $U'_X$ be charts adapted to $f$, let  $S$, $S'$ be their associated index sets \eqref{eq:index-set}, and let $k=\#S$, $k'=\#S'$. For $i\in S$ and $j\in S'$ let $(U_i,\psi_i)$, $(U_j',\psi_{j}')$ be the $\cC^{1}$-charts defined in \eqref{eq:AC1-chart} above. Then the transition map 
	\begin{equation*}
		\psi'_{j}\circ \psi_{i}^{-1}|_{\psi_{i}(U_i\cap U'_j)}\colon \psi_{i}(U_i\cap U'_j)\to \psi'_j(U_i\cap U'_j)	
	\end{equation*}
	is a $\cC^{1}$-diffeomorphism between open subsets of $Q_{k,n}$ and $Q_{k',n}$.
\end{lema}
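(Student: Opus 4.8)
The transition map decomposes the space $Q_{k,n}$ (and similarly $Q_{k',n}$) into the factor $[0,\infty)$ carrying $g$, the factor $\R^{k-1}$ carrying $(v_j)_{j\in S\setminus\{i\}}$, and the factor $(\S^1)^k\times\C^{n-k}$ carrying $\rest$. I would first dispose of the easy part: away from $\d A$, the chart restricts to a $\cC^\infty$ map in the natural smooth structure by Lemma \ref{lem:C1chart}, and there the transition is a composition of such maps, so it is automatically $\cC^\infty$, hence $\cC^1$. Thus the entire content of the lemma is regularity \emph{across $\d A=\{g=0\}$}, and it suffices to show that $\psi_j'\circ\psi_i^{-1}$ extends to a $\cC^1$ map with $\cC^1$ inverse on a neighborhood of each boundary point.

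\textbf{The core computation.} The strategy is to express each output coordinate of $\psi_j'$ as a function of the input coordinates of $\psi_i$ and check differentiability at $g=0$. The coordinate $g$ is intrinsic (defined from $f$ alone, via \eqref{eq:def-t-g}), so it is preserved; no work there. The angular coordinates $\theta_\ell'$ and the non-tropical $z$-coordinates, bundled into $\rest$, transform by the holomorphic change of charts in $X$ composed with argument-taking; these are smooth in $z$-coordinates, and the point is that $z_\ell=r_\ell\theta_\ell$ with $r_\ell=\exp(-1/(m_\ell t_\ell))$, and $t_\ell$ is, by Lemma \ref{lem:w_extends}\ref{item:t-comparison}, a smooth function of $t_i$ (equivalently of $g$, since $g=\eta(t)$ and the $t_\ell$ for $\ell\in S$ are comparable by Lemma \ref{lem:intro}\ref{item:intro-sum}), vanishing like $g$ — and $\exp(-1/(m_\ell t_\ell))$ together with all its derivatives vanishes at $g=0$. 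So the $\rest$-block is $\cC^\infty$ (in fact flat) across the boundary. The substantive point is the tropical block: I must show $v_j'=t_j'-u_j'$ is $\cC^1$ in the $\psi_i$-coordinates. Using Lemma \ref{lem:w_extends}\ref{item:t-comparison}, $t_j'=t_j(1+a t_j)^{-1}$ and $w_j'=w_j(1+at_j)$ with $a=-m_j\log|\lambda|\in\cC^\infty(V_X)$; since $t_j=g$-flat in the sense above and $w_j=v_j+u_j$ with $u_j=\eta(w_j)$, one reconstructs $w_j$ from $(g,(v_\ell)_{\ell\in S\setminus\{i\}})$ by solving $w_i=1-\sum_{\ell\neq i}w_\ell$ and $w_\ell=\eta^{-1}(v_\ell+\eta(w_\ell))$; the function $\eta$ from \eqref{eq:smooth_simplex} is smooth on $(0,1]$ with a controlled (merely $\cC^0$, not $\cC^1$) singularity at $0$, but $\eta(s)=(1-\log s)^{-1}$ has the feature that $s\,\eta'(s)=\eta(s)^2$, so $v_j=t_j-\eta(w_j)$ and its first derivatives extend continuously to $w_j=0$ — this is exactly why the atlas is only $\cC^1$ and not $\cC^\infty$, and it is the delicate step. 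I would make this precise by writing, on a neighborhood of a boundary point lying in $A_I^\circ$, the explicit formula for $v_j'$ as a $\cC^1$ function of the $v_\ell$ and $g$, using that the correction terms involving $a t_j$ are $g$-flat and hence do not spoil $\cC^1$-regularity, and that $w_\ell\to 0$ precisely for $\ell\notin I$ where the $\eta^{-1}$-singularity is harmless at first order.

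\textbf{Invertibility and conclusion.} Once $\psi_j'\circ\psi_i^{-1}$ is shown to be $\cC^1$, the inverse is handled symmetrically by exchanging the roles of $(U_X,i)$ and $(U_X',j)$, so $\psi_i\circ(\psi_j')^{-1}$ is also $\cC^1$; since they are mutually inverse continuous maps, each is a $\cC^1$-diffeomorphism. One should also note that the open sets $\psi_i(U_i\cap U_j')\subseteq Q_{k,n}$ and $\psi_j'(U_i\cap U_j')\subseteq Q_{k',n}$ are genuinely open (Lemma \ref{lem:C1chart}), so the statement makes sense as written. Finally, combined with Proposition \ref{prop:AX-topo} which already provides the underlying homeomorphism $\Phi$ from \eqref{eq:canonical_homeo}, the same local computations show $\Phi$ is a $\cC^1$-diffeomorphism with respect to the atlases built from $(\cU_X,\btau)$ and $(\cU_X',\btau')$ — though strictly for the present lemma it suffices to check compatibility of charts.

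\textbf{Main obstacle.} The crux is verifying the $\cC^1$-regularity of the tropical coordinate change across $\{g=0\}$: one must track how $v_j'=t_j'-\eta(w_j')$ depends on $(v_\ell)_\ell$ and $g$ through the implicit relations $\sum_{\ell\in S}w_\ell=1$, $w_\ell'=w_\ell(1+at_\ell)$, and the mildly singular bijection $\eta$, and confirm that first derivatives — but not second derivatives — extend continuously. The $g$-flat vanishing of all $t_\ell$-terms (and of the $\rest$-block) is what makes everything else automatic; isolating and controlling the single $\eta$-induced loss of smoothness is the real content.
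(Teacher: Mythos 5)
Your plan is correct in outline and, for the substantive part, follows essentially the same route as the paper: reduce to regularity across $\{g=0\}$, treat the $\rest$-block via the exponential decay of $r_\ell=e^{-(m_\ell t_\ell)^{-1}}$ (the paper's Lemma \ref{lem:v1C1}\ref{item:ri-C1},\ref{item:pullbacks-C1}) and Lemma \ref{lem:dv}\ref{item:theta-theta'}, and treat the tropical block by controlling $v_j'-v_j$ through the comparison formulas of Lemmas \ref{lem:w_extends} and \ref{lem:dv}. Where you genuinely diverge is the invertibility step: the paper computes the Jacobian explicitly — in the special case where only the distinguished index changes it shows $\d_2 v_1=-w_1^{-1}\rho_1^{-1}w_2\rho_2\neq 0$, and in the general case it exhibits the block form $\begin{psmallmatrix}\id&0&0\\ *&\id&*\\ *&0&Z\end{psmallmatrix}$ with $Z$ invertible — whereas you invoke symmetry: both the transition and its inverse are $\cC^1$ by the same argument, and the chain rule then forces the differential to be invertible. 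That shortcut is legitimate and shaves off the explicit computation, at the cost of losing information (the paper reuses the block structure of the Jacobian when building the $\cC^\infty$ atlas in Section \ref{sec:AX-smooth}).

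One caution on the mechanism you cite for the tropical block: the correction terms are \emph{not} $g$-flat. By Lemma \ref{lem:dv}\ref{item:vp-v}, $v_j'-v_j=\frac{at_j^2}{1+at_j}+\frac{u_j^2\log(1+at_j)}{1-u_j\log(1+at_j)}$ is only $O(\sigma_j)$ with $\sigma_j=t_j^2+t_ju_j^2$, and for $j$ not the distinguished index $t_j$ need not tend to $0$ with $g$ at all. What actually saves the day is Lemma \ref{lem:dv}\ref{item:tame_1}: the \emph{differential} $d(v_j'-v_j)$ tends to $0$ as $t_j\to 0$, which gives exactly $\cC^1$ and no more — indeed Example \ref{ex:not-C2} shows the second derivative jumps, so any argument resting on flatness would prove too much. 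Similarly, "$t_\ell$ is a function of $g$" is only true for the distinguished index (where $w_\ell$ is bounded below); for the others you must argue as in Lemma \ref{lem:v1C1}\ref{item:ri-C1} that $t_\ell\,\d_m t_\ell$ stays bounded so that $r_\ell$ remains $\cC^1$ at its zero locus. With these corrections your plan closes.
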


	\begin{figure}[htbp]
\begin{tabular}{cc}
\begin{subfigure}{0.45\textwidth}
	\centering			
			\begin{tikzpicture}
		\path [fill=black!5] (-0.6,2) -- (-0.6,-1.85) 
		to [out=170,in=270] (-2,0) -- (-2,2) -- (-0.6,2);
		\path [fill=black!5] (2,-0.6) -- (-1.85,-0.6) 
		to [out=-80,in=180] (0,-2) -- (2,-2) -- (2,-0.6);
		\path [fill=black!10] (-1.85,-0.6) 
		to [out=-80,in=170] (-0.6,-1.85) -- (-0.6,-0.6) -- (-1.85,-0.6);
				\draw[dashed] (-0.6,2) -- (-0.6,-1.85);
				\draw[dashed] (2,-0.6) -- (-1.85,-0.6);
				\draw[->, dotted] (-2.5,0) -- (2.4,0);
				\draw[->] (-2.2,0.1) -- (-2.2,1);
				\node at (-2.8,0.6) {\small{$r_2>0$}};
				\filldraw (-2,0) circle (2pt);
				\node at (-1.8,0.2) {\tiny{$-1$}}; 
				\draw[->] (-2.2,-0.1) to [out=270,in=130] (-1.9,-1.1);
				\node at (-2.8,-0.6) {\small{$w_2>0$}};
				\draw[->] (0.1,-2.2) -- (1,-2.2);
				\node at (0.7,-2.5) {\small{$r_1>0$}};
				\node at (-0.2,-1.8) {\tiny{$-1$}}; 
				\filldraw (0,-2) circle (2pt);
				\draw[->] (-0.1,-2.2) to [out=180,in=-40] (-1.1,-1.9);
				\node at (-0.7,-2.5) {\small{$w_1>0$}};				
				\node [above] at (2.4,0) {\small{${v}_1$}};
				\draw[->, dotted] (0,-2.5) -- (0,2.4);
				\node [left] at (0,2.4) {\small{${v}_2$}};
				\draw [thick, gray] (-2,2) -- (-2,0); 
				\draw [thick] (-2,0) to [out=270,in=180] (0,-2);
				\draw [thick, gray] (0,-2) -- (2,-2);
				%
				\node at (-1.3,1) {$U_{1}$};
				\node at (1,-1.3) {$U_{2}$};
				\node at (-2.3,1.7) {$A_{1}^{\circ}$};
				\node at (1.8,-2.3) {$A_{2}^{\circ}$};
				%
			\end{tikzpicture}
\end{subfigure}
&
\begin{subfigure}{0.45\textwidth}
\centering			
\begin{tikzpicture}
	%
	\path [fill=black!5] (-1,2.5) -- (-1,0) to [out=270,in=180]	(0,-1) -- (2.5,-1)   
to [out=70,in=-90] (2.8,0.4) -- (2.8,2.8) -- (0.4,2.8) to [out=180, in=20] (-1,2.5);	
	\draw[->, dotted] (-2,-0.15) -- (3,-0.15);
	\node [below] at (3,-0.15) {\small{$v_1$}};
	\draw[->, dotted] (-0.15,-2) -- (-0.15,3);
	\node [left] at (-0.15,3) {\small{${v}_2$}};
	\draw[->, dotted] (1.4,1.4) -- (-2,-2);
	\node [above] at (-2,-2) {\small{$v_3$}};
	\node at (-1.8,0) {\tiny{$-1$}};
	\node at (-0.3,-1.4) {\tiny{$-1$}};
	\node at (0.5,1) {\tiny{$-1$}};
	\draw (-1.55,2.4) -- (-1.55,0) to [out=270,in=180]	(0,-1.55) -- (2.4,-1.55);
	\draw [dashed] (-1,2.5) -- (-1,0) to [out=270,in=180]	(0,-1) -- (2.5,-1) -- (2.5,2.5) -- (-1,2.5);
	\node at (-0.6,2.2) {$U_3$};

	\draw [thick] (2.8,0.8) -- (.4,0.8) to [out=180,in=45] (-1,0.4) -- (-2.4,-1);
	\draw [thick] (0.8,2.8) -- (0.8,.4) to [out=270,in=45] (0.4,-1) -- (-0.9,-2.4);
	\filldraw (-0.15,-1.55)  circle (1.5pt);
	\filldraw (-1.55,-0.15) circle (1.5pt);
\node at (1.8,1.8) {$A_{3}^{\circ}$};
\node at (-2.1,0.7) {$A_{1}^{\circ}$};
\node at (0.7,-2.1) {$A_{2}^{\circ}$};	
\node[draw] at (4.4,-1.2) {\small{$w_1=0$}};
\draw[->] (3.6,-1.2) to[out=180,in=0] (0.9,-0.3);
\node[draw] at (4.4,1.6) {\small{$w_2=0$}};
\draw[->] (3.6,1.6) to[out=180,in=90] (1.8,0.95);	
\node[draw,align=center] at (4.4,0.2) {\small{$w_1=0$} \\ \small{$w_2=0$}};
\filldraw (0.8,0.8) circle (2pt);
\draw[->] (3.6,0.2) to[out=180,in=-45] (0.9,0.7);	
	
\end{tikzpicture}
\end{subfigure}
\end{tabular}
	\caption{Coordinate charts \eqref{eq:AC1-chart} of the A'Campo space, cf.\ Figure \protect\ref{fig:v}.}
\label{fig:charts}
\end{figure}

Since for every $k\in \{1,\dots, n\}$, the space $Q_{k,n}$ is a $2n$-manifold with boundary, refining the $\cC^1$-charts corresponding to each adapted chart $U_X$, we obtain an honest $\cC^1$-atlas for a neighborhood of $\partial A$, whose charts have images in half-euclidean spaces. In order to have a $\cC^1$-atlas of the whole $A$, we notice that by Lemma~\ref{lem:C1chart} this $\cC^1$-structure agrees with the natural smooth structure in $A\setminus\partial A=X\setminus D$.  

Proposition \ref{prop:uniqueC1} below allows to speak about \emph{the A'Campo space of $f$}, as a $\cC^1$-manifold with boundary. The key properties of this space are listed in Proposition \ref{prop:AXC1}.

\begin{prop}
	\label{prop:uniqueC1}
	Fix two pairs $(\cU_X,\btau)$ and $(\cU'_X,\btau')$ adapted to $f$. Let $A$ and $A'$ be the corresponding A'Campo spaces, each endowed with the $\cC^1$-structure defined above. Then the homeomorphism $\Phi:A\to A'$ introduced in Proposition \ref{prop:AX-topo}\ref{item:top-canonical-homeo}  is a $\cC^1$ diffeomorphism.
\end{prop}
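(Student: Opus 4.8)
The plan is to verify that $\Phi$ is $\cC^1$ by checking it chart-by-chart, reducing everything to transition maps of the form treated in Lemma \ref{lem:C1-trans}. The key observation is that $\Phi$ is the identity on $\d A$ and, on $A\setminus\d A\cong X\setminus D$, is literally the identity map of $X\setminus D$; so in the interior there is nothing to prove once we know the $\cC^1$-structures agree there. The only content is therefore at the boundary, and this is precisely the situation Lemma \ref{lem:C1-trans} was designed for, once we recognize the right identification.

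\textbf{Step 1: identifying the charts.} First I would fix a point $y\in A$ and produce $\cC^1$-charts of $A$ and $A'$ around $y$ and $\Phi(y)$ in which $\Phi$ becomes a transition map between $\cC^1$-charts. If $y\in A\setminus\d A$, pick the natural smooth chart pulled back from $X\setminus D$ on both sides; since $\Phi|_{A\setminus\d A}=\pi'|_{A'\setminus\d A}\circ\pi^{-1}|_{X\setminus D}$ is the identity of $X\setminus D$ under these identifications, $\Phi$ is smooth there, so in particular $\cC^1$. If $y\in\d A$, use Proposition \ref{prop:AX-topo}\ref{item:top-d-independent}: the subset $\d A\subseteq X_{\log}\times\R^N$ is \emph{the same} for $A$ and $A'$, and $\Phi|_{\d A}=\id_{\d A}$. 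Now choose a single chart $U_X$ adapted to $f$ with $\pi(y)\in U_X$; this yields $\cC^1$-charts $(U_i,\psi_i)$ of $A$ and $(U_i',\psi_i')$ of $A'$ (built from the same formulas \eqref{eq:AC1-chart}, using the functions $g,v_j,\rest$, which by Lemma \ref{lem:intro} are intrinsic to $U_X$ and do not depend on whether we build $A$ from $(\cU_X,\btau)$ or $A'$ from $(\cU_X',\btau')$). Under these charts, the transition map $\psi_i'\circ\Phi\circ\psi_i^{-1}$ is the identity on the image of $U_i\cap U_i'\cap(A\setminus\d A)$, hence, by continuity and density (every point of $U$ is a limit of points of $U\setminus\d A$, by Definition \ref{def:AX}), the identity on the whole open set $\psi_i(U_i\cap U_i')$. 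An identity map is trivially a $\cC^1$-diffeomorphism.

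\textbf{Step 2: assembling.} The subtlety is that the $\cC^1$-atlas of $A$ is built from \emph{all} charts adapted to $f$, not just those appearing in $\cU_X$, and likewise for $A'$; so a point of $\d A$ may be covered by a chart $U_i$ of $A$ coming from $U_X$ and a chart $(U_j')'$ of $A'$ coming from a \emph{different} adapted chart $U_X'$. To handle this I would simply compose: $\psi_j'{}'\circ\Phi\circ\psi_i^{-1}=\bigl(\psi_j'{}'\circ(\psi_j')^{-1}\bigr)\circ\bigl(\psi_j'\circ\Phi\circ\psi_i^{-1}\bigr)$, where the second factor is the identity by Step 1 (choosing for both $A$ and $A'$ the chart built from $U_X$, say, on the overlap) and the first factor is a $\cC^1$-diffeomorphism by Lemma \ref{lem:C1-trans} applied within $A'$ to the two adapted charts $U_X$ and $U_X'$. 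Finitely many such compositions, together with the interior case, cover $A$, so $\Phi$ is a $\cC^1$-diffeomorphism; applying the same argument to $\Phi^{-1}$ (swapping the roles of $A$ and $A'$) finishes the proof.

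\textbf{Main obstacle.} The genuine difficulty is not in the logical skeleton above—which is essentially bookkeeping—but in making sure the functions $g$, $v_j=t_j-u_j$, $\rest$ defining the charts \eqref{eq:AC1-chart} really are \emph{canonical}, i.e.\ depend only on the adapted chart $U_X$ and not on the partition-of-unity data. For $g=\eta(t)$ and $\rest$ this is clear since they descend from $X_{\log}$. For $v_j$ the point is that $w_j$, and hence $u_j=\eta(w_j)$, extends continuously to $\d A$ with a value independent of all choices—this is exactly Lemma \ref{lem:intro}\ref{item:intro-independence} together with Lemma \ref{lem:w_extends}\ref{item:w_independent-on-Ai}—so $v_j$ is intrinsic. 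Once that is in hand, $\Phi$ really is the identity in these coordinates near $\d A$, and the rest follows. I would therefore spend most of the write-up carefully invoking these earlier lemmas to justify that the two chart systems around a boundary point are literally the same map, rather than merely $\cC^1$-compatible.
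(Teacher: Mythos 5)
Your proposal is correct and follows essentially the same route as the paper: away from $\d A$ the $\cC^1$-structures both agree with the natural one on $X\setminus D$, and at a boundary point one chooses $\cC^1$-charts of $A$ and $A'$ built from the \emph{same} adapted chart $U_X$, for which the transition map through $\Phi$ is the identity because the chart functions $g,v_j,\rest$ depend only on $U_X$. Your Step 2 merely makes explicit the standard fact that $\cC^1$-ness may be checked in any one pair of compatible charts (via Lemma \ref{lem:C1-trans}), which the paper leaves implicit.
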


\begin{prop}\label{prop:AXC1}
	Let $A$ be the A'Campo space of $f$, with the $\cC^1$-structure defined above.
	\begin{enumerate}
		\item\label{item:AX-piC1} The map $\pi\colon A\to X$ is $\cC^1$. Its restriction $\pi|_{A\setminus \d A}\colon A\setminus \d A\to X\setminus D$ is a $\cC^1$-diffeomorphism.
		\item\label{item:AX-gC1} Let $g,\theta$ be as in \eqref{eq:def-t-g}, \eqref{eq:def-theta}. Then the map $(g,\theta)\colon A\to [0,1)\times \S^{1}$ is a $\cC^1$-submersion. In particular, the map $f\AC\colon A\to \C_{\log}$ extending $f$ in \eqref{eq:AX-diagram} is $\cC^1$.
		\item\label{item:AX-vbarC1} For every $i\in \{1,\dots, N\}$, the function $\bar{v}_i:A\to[-1,1]$ associated in \eqref{eq:def-vbar-ubar-mu} to the pair $(\cU_{X},\btau)$ adapted to $f$, is $\cC^1$.
	\end{enumerate}
\end{prop}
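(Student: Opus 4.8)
The plan is to prove Proposition \ref{prop:AXC1} by reducing each of the three statements to local computations in the $\cC^1$-charts \eqref{eq:AC1-chart}, where everything becomes explicit. Throughout, I fix a chart $U_X$ adapted to $f$ with associated index set $S$, $k=\#S$, and an index $i\in S$, so that $(U_i,\psi_i)$ with $\psi_i=(g,(v_j)_{j\in S\setminus\{i\}};\rest)$ is a $\cC^1$-chart covering a neighborhood of a chosen point of $\d A$. By Lemma \ref{lem:C1chart} this is a genuine $\cC^1$-chart, and away from $\d A$ the $\cC^1$-structure is the natural one pulled back from $X\setminus D$, so all three assertions are automatic on $A\setminus\d A$; the content is smoothness (of class $\cC^1$) across $\d A$.

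\emph{Proof of \ref{item:AX-piC1}.} Since $\pi|_{A\setminus\d A}$ is a homeomorphism onto $X\setminus D$ whose inverse pulls back the natural smooth structure, it is a $\cC^1$-diffeomorphism off the boundary by definition. To see $\pi$ is $\cC^1$ globally, I express the components of $\pi$, namely $z_{i_1},\dots,z_{i_n}$, in the chart $\psi_i$. The coordinates in $\rest$ are literally coordinates of $\psi_i$, so $z_j$ for $j\notin S$ and $\theta_\ell=z_\ell/|z_\ell|$ for $\ell\in S$ are smooth in $\psi_i$. It remains to write each radius $r_\ell=|z_\ell|$, $\ell\in S$, as a $\cC^1$-function of $(g,(v_j)_{j\in S\setminus\{i\}})$. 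From \eqref{eq:def-r_i-t_i}--\eqref{eq:def-v_i} one has $r_\ell=\exp(-1/(m_\ell t_\ell))$ with $t_\ell=u_\ell+v_\ell$ (where $u_\ell=\eta(w_\ell)$), and the $w_\ell$ are recovered via $\sum_{\ell\in S}w_\ell=1$ (Lemma \ref{lem:intro}\ref{item:intro-sum}), $w_i$ from the $v_j$'s and $g=\eta(t)$ with $t=\sum_\ell t_\ell$... but a cleaner route is to argue \emph{uniformly}: near $\d A$ each $r_\ell\to 0$ exponentially in the chart coordinates, because $\eta(s)^{-1}=1-\log s$ blows up logarithmically, so $r_\ell=\exp(-1/(m_\ell t_\ell))$ decays faster than any power of $t_\ell$. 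Hence $r_\ell$ extends by $0$ over $\{t_\ell=0\}$ to a function which is $\cC^1$ (indeed $\cC^\infty$) in the chart variables: all partial derivatives of $\exp(-c/t_\ell)$-type terms vanish at $t_\ell=0$, and $t_\ell$ itself is a $\cC^1$ function of $(g,(v_j))$ by the formulas above together with continuity of $\eta^{-1}$ and the relation between $g$, the $v_j$ and $w_i$. I would carry out this bookkeeping once, noting that the only delicate point is that $\eta^{-1}$ is merely continuous at $0$ — but it is composed with functions that already vanish there, and the exponential decay kills the potential non-differentiability.

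\emph{Proof of \ref{item:AX-gC1} and \ref{item:AX-vbarC1}.} The coordinate $g$ is literally the first coordinate of every chart $\psi_i$, so $g\colon A\to[0,1)$ is $\cC^\infty$ in these charts, hence $\cC^1$; likewise $\theta=f/|f|$ pulls back from the Kato--Nakayama coordinate and is smooth in $\rest$ (it equals $\sum_{\ell\in S}m_\ell\theta_\ell$ by the local form of $f_{\log}$), so $(g,\theta)\colon A\to[0,1)\times\S^1$ is $\cC^1$; it is a submersion because $g$ is one coordinate and $\theta$ is a smooth nonzero combination of the other angular coordinates, and off $\d A$ it agrees with the submersion $f$. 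Therefore $f\AC=(|f|\circ\pi,\theta)$, which in the variable $g$ reads $|f|=\exp(-1/t)=\exp(-1/\eta^{-1}(g))$ with the same exponential decay as above, is $\cC^1$ into $\C_{\log}=[0,\infty)\times\S^1$. For \ref{item:AX-vbarC1}, recall $\bar v_i=\sum_{p\in R}\tau^p v_i^p$ where each $\tau^p$ is a smooth function on $X$ (hence $\cC^1$ on $A$ via $\pi$, by \ref{item:AX-piC1}), and each $v_i^p$ is, on $U^p$, a coordinate of the chart $\psi_i^p$ up to relabelling — more precisely $v_i^p$ for $i$ in the index set $S^p$ appears among the $(v_j^p)_{j\in S^p\setminus\{\bullet\}}$ in at least one $\cC^1$-chart, and by Lemma \ref{lem:C1-trans} the transition maps are $\cC^1$, so $v_i^p$ is a $\cC^1$-function on all of $U^p$ (extended by $0$ off $U^p$, using that $v_i^p\to -u_i^p$ and the standard partition-of-unity cutoff). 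Summing over the locally finite family $\{p\}$ gives $\bar v_i\in\cC^1(A)$, taking values in $[-1,1]$ by Proposition \ref{prop:AX-topo}\ref{item:top-pi}.

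\emph{Main obstacle.} The only real subtlety is making the exponential-decay argument for $r_\ell=|z_\ell|$ (and hence for $\pi$ and for $|f|$) rigorous across $\d A$: one must check that $\exp(-1/(m_\ell t_\ell))$, viewed as a function of the chart coordinates $(g,(v_j)_{j\in S\setminus\{i\}};\rest)$ through the relations $t_\ell=\eta^{-1}(w_\ell)+v_\ell$, $w_i=1-\sum_{j\neq i}w_j$, $g=\eta(\sum_\ell t_\ell)$, is $\cC^1$ (ideally $\cC^\infty$) even at points where some $t_\ell=0$, despite $\eta^{-1}$ being non-smooth at $0$. The resolution is that wherever $w_\ell\to 0$ we are on $U_i$ with $w_i>\tfrac1{n+1}$, so $t_i$ stays bounded below, the ``bad'' factors $r_\ell$ are multiplied against nothing problematic, and the composition $\exp(-1/t_\ell)$ has all derivatives $\to 0$ as $t_\ell\to0^+$ regardless of how $t_\ell$ depends on the coordinates, since $t_\ell\geq 0$ is itself continuous and the one-sided derivatives of $s\mapsto\exp(-1/s)$ of all orders vanish at $s=0$. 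I would isolate this as a short lemma (``if $s\colon V\to[0,\infty)$ is $\cC^1$ then $\exp(-1/s)$, extended by $0$ on $s^{-1}(0)$, is $\cC^1$'', proved by bounding difference quotients), and then \ref{item:AX-piC1}--\ref{item:AX-vbarC1} follow formally; the rest is the routine chart bookkeeping sketched above, relying on Lemmas \ref{lem:intro}, \ref{lem:C1chart}, \ref{lem:C1-trans} and Proposition \ref{prop:AX-topo}.
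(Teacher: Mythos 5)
Your skeleton matches the paper's route (everything is reduced to the charts \eqref{eq:AC1-chart}, $g$ and $\theta$ are handled as chart coordinates, $\bar v_i$ via the partition of unity), but the two analytic steps you compress into the ``exponential decay'' lemma and the ``$v_i^p$ is a coordinate somewhere'' remark are exactly where the real work lies, and as written both have gaps. First, your proposed lemma (``if $s\geq 0$ is $\cC^1$ then $e^{-1/s}$ extended by $0$ is $\cC^1$'') cannot be applied, because $t_\ell$ is \emph{not} $\cC^1$ in the chart coordinates across the corner locus where $t_\ell$ and $u_\ell$ vanish simultaneously (for $\ell\neq i$): by Lemma \ref{lem:computations-d}, $\d_\ell t_\ell=\rho_\ell t_\ell=(1+u_\ell^2 t_\ell^{-1})^{-1}$ tends to $1$ along $\{u_\ell=0\}$ but to $0$ along $\{t_\ell=0\}$, so $t_\ell$ is merely continuous there. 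Your fallback claim that the composition is $\cC^1$ ``regardless of how $t_\ell$ depends on the coordinates'' is false for merely continuous $t_\ell$: with $t=(-\log|x|)^{-1}$ one gets $e^{-1/t}=|x|$, and this logarithmic behaviour is precisely the kind occurring in this construction. What actually makes $r_\ell$ (hence $\pi$, hence the pullbacks $\cC^1(U_X)\subseteq\cS^1(U_1)$) work is the quantitative estimate that $t_\ell\,\d_j t_\ell$ is \emph{bounded} — which rests on Lemma \ref{lem:computations}\ref{item:eta-t-bounded}, $\eta'(w_\ell)t'=t_\ell^{\epsilon}\cdot(\text{bounded})$ — so that the prefactor $t_\ell^{-3}r_\ell$ in $\d_j r_\ell=m_\ell^{-1}t_\ell^{-3}r_\ell\cdot t_\ell\d_j t_\ell$ kills it; this is the content of Lemma \ref{lem:v1C1}\ref{item:ri-C1} and you give no substitute for it.

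Second, for \ref{item:AX-vbarC1} your argument that $v_i^p$ is $\cC^1$ on all of $U^p$ because it appears as a coordinate of some chart $\psi_j^p$, $j\neq i$, and the transitions are $\cC^1$, does not cover the points lying only in $U_i^p$ (e.g.\ where $w_i^p=1$ and all other $w_j^p$ vanish, or the case $\#S^p=1$): there $v_i^p$ is not a coordinate of the only available chart. The paper handles this in Lemma \ref{lem:v1C1}\ref{item:viwi-C1} by writing $v_i^p=t(w_i^p)^{-1}-\eta(w_i^p)$ with $w_i^p=1-\sum_{j\neq i}w_j^p$ bounded below, which in turn requires proving that each $w_j^p$ is $\cC^1$ across its zero locus — the delicate computation that $\rho_j w_j$ and $\rho_j t'$ extend continuously by $0$ to the corner. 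This step (and the boundedness estimate above) is the genuine content of the proposition; without it your proof does not close.
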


\begin{example}\label{ex:not-C2}
	The $\cC^1$-structure on $A$ constructed above may not be $\cC^2$. Indeed, let $X=\D_{\epsilon}^2$ for some $\epsilon\in (0,\frac{1}{e})$, and let $f=z_1z_2$. Consider two charts $U_X$, $U_X'$ adapted to $f$: one with the standard coordinates $(z_1,z_2)$, and the other with coordinates $(z_1',z_2')$ given by
	\begin{equation*}
		(z_1',z_2')=(e^{-1} \cdot z_1, e \cdot z_2).
	\end{equation*}
	Let $(U_1,\psi_1)$ and $(U_1',\psi_1')$ be the corresponding $\cC^1$-charts. We will show that the transition map $\psi_{1}'\circ\psi_{1}^{-1}\colon (g,v_2,\theta_1,\theta_2)\mapsto (g,v_2',\theta_2,\theta_2')$ is not $\cC^2$ along  $\{t_2=0,u_2=0\}$. 
	
	Using definitions \eqref{eq:def-r_i-t_i},  \eqref{eq:def-w_i-u_i} of $t_2$ and $u_2$, we compute
	\begin{equation*}
		\begin{split}
			t_2'&=-\frac{1}{\log|z_{2}'|}=-\frac{1}{\log|z_2|+1}=-\frac{1}{-t_2^{-1}+1}=\frac{t_{2}}{1-t_2},\\
			u_2'&=\eta\left(\frac{t}{t_2'}\right)=\frac{1}{1-(\log\frac{t}{t_2}+\log(1-t_2))}=\frac{1}{u_2^{-1}-\log(1-t_2)}=\frac{u_2}{1-u_2\log(1-t_2)},
		\end{split}
	\end{equation*}
	so $t_{2}'-t_{2}=\frac{t_{2}^{2}}{1-t_2}$ and $u_2'-u_2=\frac{u_{2}^{2}\log(1-t_2)}{1-u_2\log(1-t_2)}$, cf.\ Lemmas \ref{lem:w_extends}\ref{item:t-comparison} and \ref{lem:dv}\ref{item:vp-v}. Substituting these equalities to the definition \eqref{eq:def-v_i} of $v_2$, we get
	\begin{equation}\label{eq:ex-not-C2}
		v_{2}'=v_{2}+\frac{t_{2}^{2}}{1-t_{2}}-\frac{u_2^2 \log(1-t_2)}{1-u_2\log (1-t_2)}.
	\end{equation}
	Now, we will approach $\{t_2=0,u_2=0\}$ along $\{t_2=0,u_2>0\}=\Int_{\d A}A_{1,2}^{\circ}$ and $\{t_2>0, u_2=0\}=A_{1}^{\circ}$. In {Figure \ref{fig:v2}}, these are, respectively, the circle-arc and the bold vertical line. If $t_{2}=0$, $u_2>0$ then \eqref{eq:ex-not-C2} reads as $v_2'=v_2$, so $(\frac{\d}{\d v_{2}})^2 v_2'=0$. In turn, if $t_2> 0$, $u_2=0$ then $v_2=t_2$, so \eqref{eq:ex-not-C2} reads as $v_2'=v_2+\frac{v_2^2}{1-v_2}=v_2+v_2^2+v_2^3+\dots$, and therefore $(\frac{\d}{\d v_{2}})^2 v_2'=1$. This shows that the transition map $v_2\mapsto v_2'$ is not $\cC^2$, as claimed.
\end{example}

In the remaining part of Section \ref{sec:AX-C1}, we prove Lemmas \ref{lem:C1chart}, \ref{lem:C1-trans} and Propositions \ref{prop:uniqueC1}, \ref{prop:AXC1} stated above. First, in Section \ref{sec:top-atlas} we prove Lemma \ref{lem:C1chart}, which makes $A$ a topological manifold with boundary. To prove that this manifold is $\cC^1$, we will need some preparatory results, which will be useful for the construction of a $\cC^\infty$ atlas, too. In Section \ref{sec:C1-computation}, we gather some handy identities. Next, in Section \ref{sec:bounded-forms} we prove that, as we approach $\d A$, the $\cC^1$-charts corresponding to different charts adapted to $f$ become $\cC^1$-close to each other. To state it in a precise way, we introduce a technical notion of differential forms \emph{bounded from $X$}. With these preparations at hand, we will prove Lemma \ref{lem:C1-trans} and Propositions \ref{prop:uniqueC1}, \ref{prop:AXC1} in Section \ref{sec:C1-proof}.

\subsubsection{The A'Campo space is a topological manifold}\label{sec:top-atlas}
In this section, we prove Lemma \ref{lem:C1chart}, which implies that the formulas \eqref{eq:AC1-chart} define topological charts. We begin with two simple observations.

\begin{lema}\label{lem:Ui-simple}
	Let $U_X$ be a chart adapted to $f$. Let $S=\{i:U_X\cap D_i\neq \emptyset\}$ be the associated index set \eqref{eq:index-set}, and for $i\in S$ let $U_i=\{w_i>\frac{1}{n+1}\}$ be the open subset of $U\de \pi^{-1}(U_X)$ defined in \eqref{eq:Ui}. Then 
	\begin{enumerate}
		\item\label{item:Ui-in-Ai} The intersection $U_{i}\cap \d A$ is an open subset of $\pi^{-1}(D_i)$.
		\item\label{item:Ui-cap-dA} Fix $x\in U_i\cap \d A$. Then $x\in A_{I}^{\circ}$ for some $I\subseteq S$ containing $i$. Moreover, for every $j\in S$ we have $v_{j}(x)=-u_{j}(x)\leq 0$ if $j\in I$ and $v_j(x)=t_j(x)>0$ if $j\not \in I$.
		\item\label{item:vi-smooth-on-strata} Fix $i\in S$ and $I\subseteq S$ not containing $i$. Then the restrictions of $t_i,w_i,u_i$ and $v_i$ to $A_{I}^{\circ}\cap U$ are pullbacks of smooth functions on $X_{I}^{\circ}\cap U_{X}$.
	\end{enumerate}
	
\end{lema}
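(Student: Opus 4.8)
The plan is to dispatch the three assertions in turn; the main input throughout is the continuity and vanishing behaviour of the tropical coordinate $w_i$ recorded in Lemma \ref{lem:intro}. For part \ref{item:Ui-in-Ai}, I would first observe that $U_i$ is open in $A$: the set $U=\pi^{-1}(U_X)$ is open by continuity of $\pi$ (Proposition \ref{prop:AX-topo}\ref{item:top-pi}), and $w_i\colon U\to\R$ is continuous by Lemma \ref{lem:intro}\ref{item:intro-extends}, so $U_i=\{w_i>\tfrac{1}{n+1}\}$ is open in $U$, hence in $A$. Next, recalling the decompositions $\d A=\bigsqcup_{J\neq\emptyset}A_J^\circ$ and $\pi^{-1}(D_i)=\bigsqcup_{J\ni i}A_J^\circ$, I would use Lemma \ref{lem:intro}\ref{item:intro-w-zero} (which gives $w_i|_{A_J^\circ\cap U}\equiv 0$ for every nonempty $J$ not containing $i$) to conclude $U_i\cap A_J^\circ=\emptyset$ for all such $J$; hence $U_i\cap\d A=U_i\cap\pi^{-1}(D_i)$, which is open in $\pi^{-1}(D_i)$ since $U_i$ is open.

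For part \ref{item:Ui-cap-dA}, take $x\in U_i\cap\d A$; by part \ref{item:Ui-in-Ai} it lies in some $A_I^\circ$ with $i\in I$, and $I\subseteq S$ because $\pi(x)\in\bigcap_{j\in I}D_j\cap U_X$ forces each $D_j$ with $j\in I$ to meet $U_X$, i.e.\ $j\in S$ by \eqref{eq:index-set}. The asserted values of $v_j=t_j-u_j$ then follow by unwinding the extension conventions: if $j\in I$ then $x\in\pi^{-1}(D_j)$, so $t_j(x)=0$ by \eqref{eq:def-r_i-t_i} and $v_j(x)=-u_j(x)=-\eta(w_j(x))\leq 0$ since $\eta\geq 0$; if $j\in S\setminus I$ then $\pi(x)\notin D_j$ gives $t_j(x)>0$, while Lemma \ref{lem:intro}\ref{item:intro-w-zero} gives $w_j(x)=0$, hence $u_j(x)=\eta(0)=0$ and $v_j(x)=t_j(x)>0$.

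For part \ref{item:vi-smooth-on-strata}, the point is that $X_I^\circ\cap U_X$ lies in $U_X\setminus D_i=\{z_i\neq 0\}$ (as $i\notin I$), where $r_i=|z_i|$ and $t_i=(-m_i\log r_i)^{-1}$ are honest smooth functions of the chart coordinates; so $t_i|_{A_I^\circ\cap U}$ is the $\pi$-pullback of the smooth function $t_i|_{X_I^\circ\cap U_X}$, since on $A$ the function $t_i$ is by definition $t_i\circ\pi$ and $\pi$ maps $A_I^\circ\cap U$ into $X_I^\circ\cap U_X$. By Lemma \ref{lem:intro}\ref{item:intro-w-zero}, $w_i$ --- hence $u_i=\eta(w_i)$ --- vanishes identically on $A_I^\circ\cap U$, so each is the pullback of the smooth constant $0$, and $v_i=t_i-u_i=t_i$ there, again a pullback of a smooth function.

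None of the steps is substantial. The only places asking for a little care are the topological bookkeeping in part \ref{item:Ui-in-Ai} --- namely that openness in $\d A$ passes to openness in the subspace $\pi^{-1}(D_i)$ --- and keeping straight the extension conventions for $t_j$, $w_j$ and $u_j$ on the various strata $A_I^\circ$; both are already taken care of by Lemma \ref{lem:intro}.
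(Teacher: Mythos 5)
Parts \ref{item:Ui-in-Ai} and \ref{item:Ui-cap-dA} of your argument are correct and essentially the paper's proof. In \ref{item:Ui-in-Ai} you derive $U_i\cap\d A\subseteq\pi^{-1}(D_i)$ from the vanishing $w_i|_{A_J^{\circ}\cap U}\equiv 0$ for nonempty $J\not\ni i$ (Lemma \ref{lem:intro}\ref{item:intro-w-zero}), whereas the paper uses the one-line identity $t_i=t\cdot w_i^{-1}$ together with $t|_{\d A}=0$; both are immediate consequences of the same lemma, so the difference is cosmetic. Part \ref{item:Ui-cap-dA} matches the paper's proof step for step.

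In part \ref{item:vi-smooth-on-strata} there is a small but genuine gap: the statement allows $I=\emptyset$ (it only requires $I\subseteq S$ with $i\notin I$), and for $I=\emptyset$ your appeal to Lemma \ref{lem:intro}\ref{item:intro-w-zero} is not available --- that lemma is stated for \emph{nonempty} $I$, and indeed $w_i$ does \emph{not} vanish on $A_{\emptyset}^{\circ}\cap U=U\setminus\d A$, where generically $w_i=t\,t_i^{-1}\in(0,1]$. So the claim ``$w_i$, hence $u_i$, vanishes identically on $A_I^{\circ}\cap U$'' is false in that case, and your identification $v_i=t_i$ fails there as well. The case is not vacuous: the paper later invokes \ref{item:vi-smooth-on-strata} with $I=\emptyset$ (e.g.\ in Lemma \ref{lem:dv}). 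The fix is trivial and is what the paper does --- for $I=\emptyset$ one has $A_{\emptyset}^{\circ}\cap U=U_X\setminus D$, where $t$, $t_i$, $w_i=t\,t_i^{-1}$, $u_i=\eta(w_i)$ and $v_i=t_i-u_i$ are all manifestly smooth from their defining formulas \eqref{eq:def-t-g}--\eqref{eq:def-v_i}, since $\eta$ is smooth on $(0,1]$ --- but as written your proof does not cover it. Your treatment of the nonempty case, and of $t_i$ for all $I$, is correct.
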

\begin{proof}
	\ref{item:Ui-in-Ai} By Lemma \ref{lem:intro}\ref{item:intro-extends}, the function $w_i$ is continuous, so $U_i=\{w_i>\frac{1}{n+1}\}$ is an open subset of $A$. To see that $U_i\cap \d A\subseteq \pi^{-1}(D_i)$, fix $x\in U_i\cap\d A$. Since $w_{i}(x)>0$, the formula \eqref{eq:def-w_i-u_i} implies that $t_{i}(x)=t(x)\cdot w_{i}(x)^{-1}$. If $x\in \d A$ then $t(x)=0$, so we get $t_i(x)=0$, i.e.\ $x\in \pi^{-1}(D_i)$, as claimed.
	
	\ref{item:Ui-cap-dA} By \ref{item:Ui-in-Ai}, we have $i\in I$. If $j\in I$ then by definition \eqref{eq:def-r_i-t_i} of $t_j$ we have $t_j(x)=0$, so $v_j(x)=-u_j(x)\leq 0$ by \eqref{eq:def-v_i}. If $j\not\in I$ then $u_j(x)=0$ by Lemma \ref{lem:intro}\ref{item:intro-w-zero}, so $v_j(x)=t_j(x)>0$.
	
	\ref{item:vi-smooth-on-strata} For $I=\emptyset$ we have $A_{I}^{\circ}\cap U=U\setminus \d A=U_X\setminus D$, where the assertion is clear. Assume $I\neq \emptyset$. Since $i\not \in I$, by Lemma \ref{lem:intro}\ref{item:intro-w-zero} the restrictions to $A_{I}^{\circ}\cap U$ of $w_i$, and hence of $u_i$, are zero. Thus $v_i$ restricts to $t_i$, which is a smooth function on $U_X\setminus D_i$, in particular on $X_{I}^{\circ}\cap U_X$.
\end{proof}

The function defining the \enquote{hybrid} coordinate $v_i$ in \eqref{eq:def-v_i} has the following useful property.

\begin{lema}\label{lem:v-check}
	Fix a positive number $\check{t}>0$ and consider a function $\check{v}_{\check{t}}\colon [\check{t},\infty)\ni s\mapsto s-\eta(\check{t}\cdot s^{-1})\in \R$. 
	\begin{enumerate}
		\item \label{item:v-increasing} The function $\check{v}_{\check{t}}$ is strictly increasing. Its image equals $[\check{t}-1,\infty)$.
		\item \label{item:v=vi} Let $U_X$ be a chart adapted to $f$, and for $i$ in the associated index set \eqref{eq:index-set} let $v_i$ be the function defined in \eqref{eq:def-v_i}. Then $v_{i}(x)=\check{v}_{t(x)}(t_i(x))$ for every $x\in U_X\setminus D$.
	\end{enumerate}
\end{lema}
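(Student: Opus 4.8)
The plan is to analyze the one-variable function $\check{v}_{\check{t}}(s) = s - \eta(\check{t}\cdot s^{-1})$ directly on the interval $[\check{t},\infty)$, and then identify it with $v_i$ by a straightforward substitution. For part \ref{item:v-increasing}, first I would record that on $[\check t,\infty)$ the argument $\check t\cdot s^{-1}$ ranges over $(0,1]$, so $\eta(\check t\cdot s^{-1})$ is given by the smooth branch $\eta(x)=(1-\log x)^{-1}$ from \eqref{eq:smooth_simplex}, except that we must be slightly careful about the behaviour at the right endpoint $s=\check t$ (where the argument equals $1$ and $\eta(1)=1$); in fact the formula $(1-\log x)^{-1}$ already gives $\eta(1)=1$, so no case distinction is needed and $\check v_{\check t}$ is smooth on all of $[\check t,\infty)$. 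Then I would compute the derivative: with $x = \check t\cdot s^{-1}$ we have $\frac{dx}{ds} = -\check t\cdot s^{-2} = -x/s < 0$, and $\eta'(x) = (1-\log x)^{-2}\cdot x^{-1} = \eta(x)^2/x$, so
\begin{equation*}
	\check v_{\check t}'(s) = 1 - \eta'(x)\cdot\frac{dx}{ds} = 1 + \frac{\eta(x)^2}{x}\cdot\frac{x}{s} = 1 + \frac{\eta(x)^2}{s} > 0.
\end{equation*}
Hence $\check v_{\check t}$ is strictly increasing. For the image: at $s=\check t$ we get $\check v_{\check t}(\check t) = \check t - \eta(1) = \check t - 1$, and as $s\to\infty$ the term $\eta(\check t\cdot s^{-1})\to\eta(0)=0$ while $s\to\infty$, so $\check v_{\check t}(s)\to\infty$; by continuity and monotonicity the image is exactly $[\check t - 1,\infty)$.

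For part \ref{item:v=vi}, fix a chart $U_X$ adapted to $f$ and $i$ in its index set, and fix $x\in U_X\setminus D$. By the definitions \eqref{eq:def-r_i-t_i}, \eqref{eq:def-w_i-u_i}, \eqref{eq:def-v_i}, we have $w_i(x) = t(x)/t_i(x)$, $u_i(x) = \eta(w_i(x)) = \eta\bigl(t(x)\cdot t_i(x)^{-1}\bigr)$, and $v_i(x) = t_i(x) - u_i(x)$. On the other hand $\check v_{t(x)}(t_i(x)) = t_i(x) - \eta\bigl(t(x)\cdot t_i(x)^{-1}\bigr)$, so the two expressions agree provided $t_i(x)$ lies in the domain $[t(x),\infty)$ of $\check v_{t(x)}$, i.e.\ provided $t_i(x)\geq t(x)$, equivalently $w_i(x)\leq 1$. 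But that is exactly Lemma \ref{lem:intro}\ref{item:intro-sum} (each $w_i\in[0,1]$), applied at the point $x$ (which lies off $D$, so all quantities are the honest smooth ones). This gives $v_i(x) = \check v_{t(x)}(t_i(x))$, as claimed.

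I do not expect any serious obstacle here; this lemma is a routine single-variable computation. The only points requiring a moment's attention are (a) checking that $\eta$ is given by its smooth branch on the relevant range so that differentiation is legitimate — handled by noting the argument $\check t\cdot s^{-1}$ stays in $(0,1]$ and $\eta$ is smooth on $(0,1]$ with $\eta(1)=1$ matching the closed formula; and (b) verifying the domain condition $t_i\geq t$ in part \ref{item:v=vi}, which is precisely where Lemma \ref{lem:intro}\ref{item:intro-sum} enters. The derivative identity $\eta'(x)=\eta(x)^2/x$ is the computational heart and makes the positivity of $\check v_{\check t}'$ transparent.
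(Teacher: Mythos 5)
Your proof is correct and follows essentially the same route as the paper's: compute $\check v_{\check t}'(s)=1+\eta'(\check t s^{-1})\cdot\check t s^{-2}>0$ using that $\eta$ is increasing on $(0,1]$, evaluate at the endpoint $s=\check t$ to get the image, and observe that part (b) is just the definition of $v_i$ unwound. Your extra check that $t_i(x)\geq t(x)$ (via Lemma \ref{lem:intro}\ref{item:intro-sum}) so that $t_i(x)$ lies in the domain of $\check v_{t(x)}$ is a small point the paper leaves implicit, and is a welcome addition.
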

\begin{proof}
	For $s\in [\check{t},\infty)$ we have $\check{t}s^{-1}\in (0,1]$, so the function $\check{v}_{\check{t}}$ is well defined. Its derivative is $\check{v}_{\check{t}}'(s)=1+\eta'(\check{t}s^{-1})\cdot \check{t}s^{-2}>1$, since $\eta$ is strictly increasing. Thus $\check{v}_{\check{t}}$ is strictly increasing, too. Since  $\check{v}_{\check{t}}(\check{t})=\check{t}-\eta(1)=\check{t}-1$, this proves \ref{item:v-increasing}. Part \ref{item:v=vi} follows directly from the definition \eqref{eq:def-v_i} of $v_i$.
\end{proof}

Recall that on $A\setminus \d A$ we have the natural smooth structure pulled back from $X\setminus D$. The following lemma asserts that the candidate charts \eqref{eq:AC1-chart} are compatible with this structure.

\begin{lema}\label{lem:coordinates-off-D}
	Let $U_{X}$ be a chart adapted to $f$, and let $\psi_{i}\colon U_i\to Q_{k,n}$ be an associated chart \eqref{eq:AC1-chart}.  Then the restriction $\psi_{i}|_{U_i\setminus \d A}$ is a diffeomorphism onto its image.
\end{lema}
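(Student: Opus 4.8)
The statement to prove, Lemma~\ref{lem:coordinates-off-D}, asserts that the restriction $\psi_{i}|_{U_i\setminus \d A}$ of the candidate chart \eqref{eq:AC1-chart} to $U_i\setminus\d A$ is a diffeomorphism onto its image, where the source carries the natural smooth structure pulled back from $X\setminus D$ via $\pi$.

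\textbf{The plan.} On $U_i\setminus\d A$, all the functions appearing in $\psi_i=(g,(v_j)_{j\in S\setminus\{i\}};\rest)$ are pullbacks of genuine smooth functions on $U_X\setminus D$ (this uses that on $A\setminus\d A$ the map $\pi$ is a homeomorphism and the smooth structure is the pulled-back one). So it suffices to work entirely on $U_X\setminus D\subseteq X$, a complex $n$-manifold, and show that the map
\[
	\Psi_i\de (g,(v_j)_{j\in S\setminus\{i\}};(\theta_j)_{j\in S},(z_\ell)_{\ell\notin S})\colon \{x\in U_X\setminus D: w_i(x)>\tfrac{1}{n+1}\}\to Q_{k,n}
\]
is a smooth open embedding, i.e.\ smooth, injective, with everywhere-injective (hence, by dimension count, bijective) differential. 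First I would record smoothness: $g=\eta(t)$ and $t=(-\log|f|)^{-1}$ are smooth away from $D$ since $\eta|_{(0,1]}$ is smooth and $f\neq 0$ there; each $v_j=t_j-u_j=t_j-\eta(w_j)$ is smooth away from $D_j$ (in particular on $U_X\setminus D$) because $t_j=(-m_j\log r_j)^{-1}$ and $w_j=t/t_j$ are smooth there; the angular coordinates $\theta_j=z_j/|z_j|$ are smooth away from $D_j$; and the remaining $z_\ell$ are holomorphic. So $\Psi_i$ is smooth.

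\textbf{Injectivity and the differential.} The key point is that on the region $\{w_i>\frac1{n+1}\}$ one can reconstruct all the standard polar data $(r_j,\theta_j)_{j\in S}$, $(z_\ell)_{\ell\notin S}$ from $\Psi_i$, which both proves injectivity and, by a cleaner variant, invertibility of the differential. The components $\theta_j$ ($j\in S$) and $z_\ell$ ($\ell\notin S$) are already there. For the radial part: given $g$ we recover $t=\eta^{-1}(g)\in(0,1)$; for $j\in S\setminus\{i\}$, given $v_j$ and $t$, Lemma~\ref{lem:v-check}\ref{item:v=vi} says $v_j=\check v_t(t_j)$ where $\check v_t$ is strictly increasing with image $[t-1,\infty)$ by Lemma~\ref{lem:v-check}\ref{item:v-increasing}, so $t_j=\check v_t^{-1}(v_j)$ is determined (and the value $v_j\geq t-1>-1$ lies in the image since $t_j\geq t$ when $w_i>\frac1{n+1}$ forces all $w_j\leq 1$, hence $t_j\geq t$); then $w_j=t/t_j$; and finally $w_i=1-\sum_{j\in S\setminus\{i\}}w_j$ by Lemma~\ref{lem:intro}\ref{item:intro-sum}, giving $t_i=t/w_i$, and $w_i>\frac1{n+1}$ confirms we are over $U_i$. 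From $t_j=(-m_j\log r_j)^{-1}$ we recover $r_j=\exp(-(m_j t_j)^{-1})$ for all $j\in S$, hence $z_j=r_j\theta_j$. Thus $\Psi_i$ is injective with a continuous (indeed smooth, as the reconstruction formulas involve only $\eta^{-1}$, $\check v_t^{-1}$, $\exp$, all smooth on the relevant ranges — note $\eta'>0$ and $\check v_t'>1$ never vanish, so these inverses are smooth by the inverse function theorem) left inverse. Since both $U_X\setminus D$ and $\Int Q_{k,n}$ are $2n$-manifolds and $\Psi_i$ maps into the interior (as $t>0$ off $D$), a smooth injection with a smooth local inverse on the image is a diffeomorphism onto an open subset.

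\textbf{Main obstacle.} The only real subtlety is bookkeeping the change of variables $(r_j)_{j\in S}\leftrightarrow(g,(v_j)_{j\in S\setminus\{i\}})$ and checking it is a smooth diffeomorphism on the open region cut out by $w_i>\frac1{n+1}$: one must verify that $v_j$ stays in the image $[t-1,\infty)$ of $\check v_t$ (using $0<w_j\le 1$, equivalently $t\le t_j$), that the constraint $\sum_{j\in S}w_j=1$ is exactly what lets the suppressed coordinate $w_i$ be recovered, and that the composite inverse is smooth — which boils down to $\eta'>0$ on $(0,1]$ and $\check v_{t}'>1$, both already established in \eqref{eq:smooth_simplex} and Lemma~\ref{lem:v-check}. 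Everything else is a direct unwinding of the definitions \eqref{eq:def-t-g}--\eqref{eq:def-rest}.
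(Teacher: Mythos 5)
Your proof is correct, and it rests on exactly the two facts the paper uses — the strict monotonicity of $\check v_{t}$ from Lemma~\ref{lem:v-check} and the constraint $\sum_{j\in S}w_j=1$ from Lemma~\ref{lem:intro}\ref{item:intro-sum} — but you organize the argument differently. The paper factors $\psi_i$ through the intermediate coordinate system $(t,t_2,\dots,t_k,\rest)$, proves each stage is a diffeomorphism by computing the relevant Jacobian entries (e.g.\ $\tfrac{\d t}{\d t_1}=t^2t_1^{-2}>0$ and $\tfrac{\d v_j}{\d t_j}>0$), and then runs a separate injectivity argument; you instead write down an explicit global smooth left inverse on a neighborhood of the image (recover $t=\eta^{-1}(g)$, then $t_j=\check v_t^{-1}(v_j)$ for $j\neq i$, then $w_i=1-\sum_{j\neq i}w_j$ and $t_i=t/w_i$, then $r_j=e^{-(m_jt_j)^{-1}}$), which delivers injectivity and invertibility of the differential in one stroke. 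The one point to state a bit more carefully is the smoothness of the map $(g,v_j)\mapsto t_j$ \emph{jointly} in its arguments: this is the implicit function theorem applied to $F(t,s)=s-\eta(t/s)$ with $\d_s F=\check v_t'(s)>1$ (equivalently, the inverse function theorem for $(t,s)\mapsto(t,\check v_t(s))$, whose Jacobian is triangular with nonvanishing diagonal), rather than the one-variable inverse function theorem alone; and one should note that the reconstruction formulas are defined on an \emph{open} neighborhood of the image (the conditions $g>0$, $v_j>\eta^{-1}(g)-1$ and $1-\sum_{j\neq i}w_j>0$ are all open), so the left inverse is genuinely differentiable at image points. With those remarks your argument is complete and, if anything, a little more direct than the paper's staged Jacobian computation.
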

\begin{proof}
	Reordering the components of $D$ if needed, we can assume that the index set associated to $U_X$ is $\{1,\dots, k\}$, and our fixed chart is $\psi_{1}=(g,v_2,\dots,v_k,\rest)$. 
	
	Recall from Definition \ref{def:adapted-chart} that $(z_{i_1},\dots,z_{i_n})$ is a holomorphic coordinate system on $U_X$, and $D\cap U_X$ is the zero locus of $f|_{U_X}=\prod_{i\in S} z_{i}^{m_i}$. Therefore, the map $U_X\setminus D\to Q_{k,n}$ given by polar coordinates $(r_1,\dots,r_k,\rest)$ is a diffeomorphism onto its image.
	Since by definition \eqref{eq:def-r_i-t_i} we have $t_i=-(m_i\log r_i)^{-1}$, the same is true for the map $(t_1,\dots,t_k,\rest)$. Thus $(t_1,\dots,t_k,\rest)$ is a smooth coordinate system on $U_1\setminus \d A$, for the smooth structure inherited from $X$.
	
	We claim that the map $\varphi\de(t,t_2,\dots,t_k,\rest)\colon U_1\setminus \d A\to Q_{k,n}$ is a diffeomorphism onto its image as well. Recall that $|f||_{U_X}=\prod_{i=1}^{k}r_{i}^{m_i}$, $t=-(\log|f|)^{-1}$ and $t_i=-(m_i\log r_i)^{-1}$, so 
	\begin{equation}\label{eq:t-sum}
		t^{-1}=-\log|f|=-\textstyle\sum_{i=1}^{k}m_{i}\log|z_{i}|=\textstyle\sum_{i=1}^{k}t_{i}^{-1},
	\end{equation}
	thus in our smooth coordinates $(t_1,\dots,t_k,\rest)$ we have
	\begin{equation*}
		\tfrac{\d t}{\d t_1}=\tfrac{\d}{\d t_{1}}\textstyle(\sum_{i=1}^{k}t_{i}^{-1})^{-1}=-t^{2}\tfrac{\d}{\d t_1} t_{1}^{-1}=t^{2}t_{1}^{-2}>0.
	\end{equation*}
	Therefore, $\varphi$ is a local diffeomorphism. Suppose that for some $x,y\in U \setminus \d A$ we have $\varphi(x)=\varphi(y)$. Then $t(x)=t(y)>0$ and $t_{i}(x)=t_{i}(y)>0$ for all $i\in \{2,\dots, k\}$, so $t_1(x)=t_1(y)$ by \eqref{eq:t-sum}. Since  $\rest(x)=\rest(y)$, too, we get $x=y$. Thus $\varphi$ is injective, hence a diffeomorphism onto its image, as claimed.
	
	The above claim shows that $(t,t_2,\dots, t_k,\rest)$ is another smooth coordinate system on $U\setminus \d A$. With respect to these coordinates, we have $\frac{\d g}{\d t}=\eta'(g)>0$ by \eqref{eq:def-t-g} and $\frac{\d v_{i}}{\d t_{j}}=0$ for $i\neq j$ by \eqref{eq:def-v_i}. For a fixed $t>0$, the function $t_{i}\mapsto t_{i}-\eta(t^{-1}\cdot t_i)=v_i$ is strictly increasing by Lemma \ref{lem:v-check}, so $\frac{\d v_{i}}{\d t_{i}}>0$. Thus the Jacobian determinant of $\psi_1$ in coordinates $(t,t_2,\dots, t_k,\rest)$ is $\frac{\d g}{\d t}\cdot \prod_{i=2}^{k} \frac{\d v_i}{\d t_i}>0$, so $\psi_1$ is a local diffeomorphism.
	
	Assume  $\psi_{1}(x)=\psi_{1}(y)$ for some $x,y\in U\setminus \d A$. Then $g(x)=g(y)>0$, so both $t(x)$ and $t(y)$ are equal to some number $\check{t}>0$. By Lemma \ref{lem:v-check}\ref{item:v=vi}, $v_{i}(x)=\check{v}_{\check{t}}(t_{i}(x))$ for every $i\in \{2,\dots, k\}$, and similarly $v_{i}(y)=\check{v}_{\check{t}}(t_{i}(y))$. Since $v_i(x)=v_i(y)$, Lemma \ref{lem:v-check}\ref{item:v-increasing} implies that $t_i(x)=t_{i}(y)$ for all $i\in \{2,\dots, k\}$. The equality $\psi_1(x)=\psi_1(y)$ implies that $\rest(x)=\rest (y)$, too, so the values of all coordinates $(t,t_2,\dots,t_k,\rest)$ at $x$ and $y$ are equal. Thus $x=y$, so $\psi_1$ is injective, as claimed.
\end{proof}

\begin{lema}\label{lem:psi-open}
	The image of each chart \eqref{eq:AC1-chart} is an open subset of $Q_{k,n}$.
\end{lema}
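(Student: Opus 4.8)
The plan is to show that the image $\psi_i(U_i)\subseteq Q_{k,n}$ is open by verifying the statement separately on its interior part and its boundary part. Since $\psi_i|_{U_i\setminus\d A}$ is already known to be a diffeomorphism onto its image by Lemma \ref{lem:coordinates-off-D}, the image of $U_i\setminus\d A$ is open in $\Int Q_{k,n}$. So the only real work is to understand what happens near a point $y\in U_i\cap \d A$, i.e.\ near the image $\psi_i(y)$ which lies in $\d Q_{k,n}=\{g=0\}$. Reordering components of $D$ we may take the associated index set to be $\{1,\dots,k\}$ and our chart to be $\psi_1=(g,v_2,\dots,v_k;\rest)$.

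The key step is to produce, for each point $y\in U_1\cap\d A$, an explicit local right inverse to $\psi_1$ defined on a full neighborhood of $\psi_1(y)$ in $Q_{k,n}$, thereby showing $\psi_1(U_1)$ contains such a neighborhood. By Lemma \ref{lem:Ui-simple}\ref{item:Ui-cap-dA}, if $y\in A_I^\circ$ with $i=1\in I\subseteq S$, then $v_j(y)=-u_j(y)\le 0$ for $j\in I$ and $v_j(y)=t_j(y)>0$ for $j\notin I$; moreover $g(y)=0$. The recipe to invert: prescribe the angular/rest data $\rest$ directly (these are honest coordinates on the Kato--Nakayama factor, pulled back from $X$), prescribe $g$ small (hence $t=\eta^{-1}(g)$ small, possibly $0$), and then recover each radial coordinate $t_j$ from the prescribed value of $v_j$. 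This last recovery is exactly Lemma \ref{lem:v-check}: for $t>0$ the map $t_j\mapsto \check v_t(t_j)=v_j$ is a strictly increasing bijection $[t,\infty)\to[t-1,\infty)$, so $t_j$ is determined; as $t\to 0$ one tracks the limiting behavior, recovering either $t_j=0$ together with $u_j=-v_j\in[0,1)$ (when the prescribed $v_j<1$, the case of approaching a deeper stratum) or $t_j=v_j>0$ (when $v_j\ge $ something forcing $u_j=0$). One must check this assignment is continuous in all the prescribed data and lands in $U_1$ (the condition $w_1>\tfrac1{n+1}$ is open and holds at $y$, so it persists nearby), and that $\psi_1$ composed with it is the identity. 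Continuity of $w_1$, and the relation $\sum_{j\in S}w_j=1$, keep the tropical data consistent.

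I expect the main obstacle to be handling the \enquote{mixed} points $y\in A_I^\circ$ cleanly: near such a $y$, some coordinates $t_j$ ($j\in I$) are going to $0$ while others ($j\notin S$ or $j\in S\setminus I$) stay positive, and the target neighborhood in $Q_{k,n}$ must be described in a way that respects which $v_j$'s are allowed to cross the threshold between \enquote{$u_j$-regime} ($v_j\le 0$) and \enquote{$t_j$-regime} ($v_j>0$). The clean way to organize this is to note that for each $j$ the function $\check v_t$ from Lemma \ref{lem:v-check} depends continuously on $t\ge 0$ (interpreting $\check v_0$ as $s\mapsto s$ on $(0,\infty)$ glued to $u_j\mapsto -u_j$ on $\{t_j=0\}$ via $\eta$), so the joint inverse map $(g,v_2,\dots,v_k;\rest)\mapsto$ (point of $A$) is globally continuous on the relevant neighborhood. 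Once this joint inverse is in hand and shown to be a section of $\psi_1$, openness of $\psi_1(U_1)$ is immediate. The argument is essentially a neighborhood-by-neighborhood repackaging of the computations already established in Lemmas \ref{lem:v-check} and \ref{lem:Ui-simple}, so no new identities should be needed beyond bookkeeping.
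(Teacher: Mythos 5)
Your plan is correct and matches the paper's proof in all essentials: the paper also reduces to boundary points via Lemma \ref{lem:coordinates-off-D}, splits into the two regimes $g>0$ and $g=0$, and inverts $v_j\mapsto t_j$ using the strict monotonicity of $\check v_t$ from Lemma \ref{lem:v-check} before reassembling the preimage point in the Kato--Nakayama chart. The only cosmetic difference is that the paper phrases the argument with convergent sequences $y^\nu\to\psi_1(x)$ rather than a continuous local section, which sidesteps having to prove joint continuity of the inverse at the mixed strata.
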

\begin{proof}
	Fix a chart $U_X$ adapted to $f$. As before, we order the components of $D$ so that the index set \eqref{eq:index-set} associated to $U_X$ is $\{1,\dots, k\}$, and study the map $\psi_{1}\colon U_1\to Q_{k,n}$ introduced in \eqref{eq:AC1-chart}. 
	
	Fix a point $x\in U_1$. We need to prove that some neighborhood of $\psi_1(x)$ in $Q_{n,k}$ lies in $\psi_1(U_1)$. This is clear if $x\not\in \d A$, since by Lemma \ref{lem:coordinates-off-D} the restriction $\psi_{1}|_{U\setminus \d A}$ is a diffeomorphism onto its image. Hence we can assume $x\in \d A$. Take a sequence $(y^{\nu})\subseteq Q_{k,n}$ such that $y^{\nu}\rightarrow \psi_{1}(x)$. We need to show that $y^{\nu}\in \psi_{1}(U_1)$ for all $\nu\gg 1$. To do this, we will construct a sequence $(x^{\nu})$ such that $\psi_{1}(x^{\nu})=y^{\nu}$ and $x^{\nu}\in U_1$ for $\nu\gg 1$. 
	
	Write $y^{\nu}=(g^{\nu},v^{\nu}_1,\dots,v_{k}^{\nu},\rest^{\nu})$ for some $g^{\nu}\geq 0$, $v_i^{\nu}\in \R$ and $\rest^{\nu}\in (\S^{1})^{k}\times \C^{n-k}$. Then $g^{\nu}\rightarrow g(x)=0$, $v_{i}^{\nu}\rightarrow v_i(x)$ and $\rest^{\nu}\rightarrow \rest(x)$. Passing to a subsequence, we can assume that either $g^{\nu}>0$ for all $\nu$, or $g^{\nu}=0$ for all $\nu$: in other words, $y^{\nu}$ approaches $\psi_1(x)$ either from $\Int Q_{k,n}$, or from $\d Q_{k,n}$.
	\smallskip

	Consider the case $g^{\nu}>0$. Put $t^{\nu}:=\eta^{-1}(g^{\nu})$. Since $\eta^{-1}$ is continuous, we have $t^{\nu}\rightarrow \eta^{-1}(0)=0$. By definition \eqref{eq:Ui} of $U_i$, we have $w_1(x)>\tfrac{1}{n+1}$, so by Lemma \ref{lem:intro}\ref{item:intro-sum}, for $i\in \{2,\dots, k\}$ we have $w_{i}(x)=1-w_1(x)-\sum_{j\neq 1,i} w_j(x)<1-\tfrac{1}{n+1}<1$. The formula \eqref{eq:def-v_i} for $v_i$ implies that $v_{i}(x)>-1$. Hence there is a positive number $\epsilon>0$ such that, after taking $\nu$ sufficiently large, we have $t^{\nu}<\epsilon$ and $v_{i}^{\nu}>\epsilon-1$ for $i\in \{2,\dots, k\}$. Therefore, each number $v_{i}^{\nu}$ lies in the image of the function $\check{v}_{t^{\nu}}$ from Lemma \ref{lem:v-check}\ref{item:v-increasing}. In other words, there is a number $t_{i}^{\nu}\geq t^{\nu}$ such that $\check{v}_{t^{\nu}}(t_{i}^{\nu})=v_{i}^{\nu}$. 
	
	We claim that for each $i\in \{2,\dots, k\}$, the sequence $(t_{i}^{\nu})$ is bounded. Suppose the contrary. Passing to a subsequence, we get $t_{i}^{\nu}\rightarrow \infty$. Since $t^{\nu}\rightarrow 0$, we get $t^{\nu}\cdot (t_{i}^{\nu})^{-1}\rightarrow 0$, so $\eta(t^{\nu}\cdot (t_{i}^{\nu})^{-1})\rightarrow \eta(0)=0$. Now $v_{i}^{\nu}=\check{v}_{t^{\nu}}(t_{i}^{\nu})=t_{i}^{\nu}-\eta(t^{\nu}\cdot (t_{i}^{\nu})^{-1})\rightarrow \infty-0=\infty$, a contradiction since $v_{i}^{\nu}\rightarrow v_i(x)\in \R$.
	
	Thus passing to a subsequence, we can assume that $t_{i}^{\nu}\rightarrow \check{t}_{i}$ for some $\check{t}_{i}\geq 0$. We claim that $\check{t}_{i}=t_i(x)$. If $\check{t}_{i}>0$, then the sequence $(t_{i}^{\nu})$ is bounded from below by a positive number, so $t^{\nu}\cdot (t_{i}^{\nu})^{-1}\rightarrow 0$, and therefore $\check{v}_{t^{\nu}}(t_{i}^{\nu})=t_i^{\nu}-\eta(t^{\nu}\cdot (t_{i}^{\nu})^{-1})\rightarrow \check{t}_{i}$. Hence $\check{t}_{i}=v_{i}(x)$. In particular, $v_i(x)> 0$, so by Lemma \ref{lem:Ui-simple}\ref{item:Ui-cap-dA} we have $v_i(x)=t_i(x)$, and therefore $\check{t}_i=t_i(x)$, as claimed. On the other hand, if $\check{t}_{i}=0$ then $v_{i}(x)=\limn \check{v}_{t^{\nu}}(t_{i}^{\nu})=\limn -\eta(t^{\nu}\cdot (t_{i}^{\nu})^{-1})\leq 0$, so Lemma \ref{lem:Ui-simple}\ref{item:Ui-cap-dA} gives $t_{i}(x)=0$, as needed.
	
	Thus for all $i\in \{2,\dots, k\}$ we have $t_{i}^{\nu}\rightarrow t_{i}(x)$. Put $w_{i}^{\nu}:=t^{\nu}\cdot (t^{\nu}_{i})^{-1}$. By definition of $t_{i}^{\nu}$ we have  $v_{i}^{\nu}=\check{v}_{t^{\nu}}(t_{i}^{\nu})$, so $v_{i}^{\nu}=t_{i}^{\nu}-\eta(w_{i}^{\nu})$ by definition of the function $\check{v}_{t^{\nu}}$. Hence $w_{i}^{\nu}=\eta^{-1}(t_{i}^{\nu}-v_{i}^{\nu})\rightarrow \eta^{-1}(t_{i}(x)-v_{i}(x))=\eta^{-1}(u_i(x))=w_{i}(x)$.
	
	By definition \eqref{eq:Ui} of $U_1$, we have $w_{1}(x)>\frac{1}{n+1}$. Lemma \ref{lem:intro}\ref{item:intro-sum} implies that $\sum_{i=2}^{k}w_{i}(x)=1-w_{1}(x)< \tfrac{n}{n+1}$. Hence for $\nu\gg 1$ we have $\sum_{i=2}^{k}w_{i}^{\nu}<\tfrac{n}{n+1}$, too. By definition of $w_{i}^{\nu}$ we get $\sum_{i=2}^{k}(t_{i}^{\nu})^{-1}<\tfrac{n}{n+1} (t^{\nu})^{-1}<(t^{\nu})^{-1}$, so the positive number $t^{\nu}_{1}\de ((t^{\nu})^{-1} - \sum_{i=2}^{k}(t_{i}^{\nu})^{-1})^{-1}$ satisfies $(t^{\nu}_1)^{-1}=(t^{\nu})^{-1} - \sum_{i=2}^{k}(t_{i}^{\nu})^{-1}>\tfrac{1}{n+1}(t^{\nu})^{-1}$, and therefore $t_{1}^{\nu}<(n+1)t^{\nu}$. Since $t^{\nu}\rightarrow 0$, we infer that $t_{1}^{\nu}\rightarrow 0$, too. By Lemma \ref{lem:Ui-simple}\ref{item:Ui-in-Ai} we have $t_{1}(x)=0$, so the convergence $t_{i}^{\nu}\rightarrow t_i(x)$ holds for all $i\in \{1,\dots, k\}$.
	
	For $i\in \{1,\dots, k\}$ put $r_{i}^{\nu}:=e^{-(m_{i}t_{i}^{\nu})^{-1}}$. By \eqref{eq:def-r_i-t_i}, we have $r_{i}(x)=0$ if $t_i(x)=0$ and $r_{i}(x)=e^{-(m_i t_i(x))^{-1}}$ otherwise, so $r_i^{\nu}\rightarrow r_i(x)$ for all $i\in \{1,\dots, k\}$. 
	
	Let $x_{\log}$ be the image of $x$ in the Kato--Nakayama space $X_{\log}$. In the chart \eqref{eq:KN-chart}, the coordinates of $x_{\log}$ are $(r_1(x),\dots,r_k(x),\rest(x))$. Since $r_{i}^{\nu}\rightarrow r_{i}(x)$ and $\rest^{\nu}\rightarrow \rest(x)$ by definition of $\rest^{\nu}$, for $\nu\gg 1$ the point $(r_1^{\nu},\dots, r_k^{\nu},\rest^{\nu})$ lies in the image of that chart, too. Let $x_{\log}^{\nu}$ be the corresponding point of $X_{\log}$, and let $x^{\nu}$ be its image in $X$. Since ${r}^{\nu}_{i}>0$ for all $i\in \{1,\dots,k\}$, we have $x^{\nu}\not\in D$, so we can identify $x^{\nu}$ with its preimage in $A\setminus \d A$. This way, $\psi_{1}(x^{\nu})=y^{\nu}$ and $x^{\nu}\rightarrow x$, so $x^{\nu}\in U_1$ for $\nu\gg 1$ because $U_1$ is open. This ends the proof in case $g^{\nu}>0$.
	\smallskip
	
	Consider now the case when $g^{\nu}=0$ for all $\nu$. Say that $x\in A_{I}^{\circ}$ for some $I\subseteq \{1,\dots, k\}$. By Lemma \ref{lem:Ui-simple}\ref{item:Ui-cap-dA} we have $1\in I$, $v_{i}(x)=-u_{i}(x)$ if $i\in I$ and $v_i(x)=t_i(x)>0$ if $i\not \in I$. Since $v_{i}^{\nu}\rightarrow v_{i}(x)$ for all $i\in \{2,\dots, k\}$, after passing to a subsequence we get a subset $J\subseteq I\setminus \{1\}$ such that $v_{i}^{\nu}\leq 0$ if $i\in J$ and $v_{i}^{\nu}> 0$ if $i\not\in J$. 
	
	For $i\in J$ put $w_{i}^{\nu}=\eta^{-1}(-v_{i}^{\nu})$. Since $\eta^{-1}$ is continuous, we get $w_{i}^{\nu}\rightarrow \eta^{-1}(-v_{i}(x))=w_{i}(x)$. For $i\in I\setminus J$ we have $-u_{i}(x)=v_{i}(x)=0$, so $w_{i}(x)=0$. For $i\not\in I$ we have $w_{i}(x)=0$ by Lemma \ref{lem:intro}\ref{item:intro-w-zero}. Thus by Lemma \ref{lem:intro}\ref{item:intro-sum} we have $\sum_{i\in J}w_i(x)=1-w_{1}(x)<\tfrac{n}{n+1}$, where the inequality follows from definition \eqref{eq:Ui} of $U_i$. Since $w_{i}^{\nu}\rightarrow w_{i}(x)$ for $i\in J$, we infer that for $\nu\gg 1$, the numbers $w_{1}^{\nu}\de 1-\sum_{i\in J}w_{i}^{\nu}$ are positive. Put $v_{1}^{\nu}=-\eta(w_{1}^{\nu})$. Then $v_{1}^{\nu}\rightarrow -\eta(w_1(x))=v_{1}(x)$ by Lemma \ref{lem:Ui-simple}\ref{item:Ui-cap-dA}. Let $u^{\nu}$ be the point of $\R^{N}$ whose $i$-th coordinate, for $i\in J\cup\{1\}$, is $-v_{i}^{\nu}$, and the remaining coordinates are zero. Then $u^{\nu}\in \Delta_{J\cup \{1\}}\subseteq \Delta^{N-1}$.
	
	For $i \not\in J$ put $r_{i}^{\nu}=e^{-(m_{i}v_{i}^{\nu})^{-1}}$. Like before, we have $r_{i}^{\nu}\rightarrow e^{-(m_{i}v_{i}(x))^{-1}}=e^{-(m_{i}t_{i}(x))^{-1}}=r_i(x)$. For $i\in J\cup \{1\}$ put $r_{i}^{\nu}=0$, so $r_{i}^{\nu}\rightarrow r_i(x)$, too. The image of $x$ in the chart \eqref{eq:KN-chart} of $X_{\log}$ has coordinates $(r_1(x),\dots,r_k(x),\rest(x))$, so for $\nu\gg 1$ the point $(r_{1}^{\nu},\dots,r_{k}^{\nu},\rest^{\nu})$ lies in the image of this chart, too. The corresponding point $x_{\log}^{\nu}\in X_{\log}$ lies in $(X_{J\cup\{1\}}^{\circ})_{\log}$.  Since $u^{\nu}\in \Delta_{J\cup \{1\}}$, the pair $x^{\nu}\de (x_{\log}^{\nu},u^{\nu})\in X_{\log}\times \R^{N}$ lies in $A_{J\cup\{1\}}^{\circ}$ by Lemma \ref{lem:Gamma_structure}\ref{item:trivial-fibration}. By construction, $x^{\nu}\rightarrow x$ and $\psi_{1}(x^{\nu})=y^{\nu}$. Since $U_{1}$ is open, we conclude that $x^{\nu}\in U_1$ for $\nu\gg 1$, as required.
\end{proof}

\begin{lema}\label{lem:psi-homeo}
	Each chart \eqref{eq:AC1-chart} is a homeomorphism onto its image.
\end{lema}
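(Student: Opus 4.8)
The plan is to check separately that $\psi_i$ is continuous, injective, and has continuous inverse. After reordering the components of $D$ we may assume the index set associated with $U_X$ is $\{1,\dots,k\}$ and that $i=1$, so $\psi_1=(g,v_2,\dots,v_k;\rest)\colon U_1\to Q_{k,n}$; the general case follows by relabeling. \emph{Continuity} is immediate from Lemma \ref{lem:intro}\ref{item:intro-extends}: the functions $g,v_2,\dots,v_k$ and the angular components $\theta_j$ ($j\in S$) of $\rest$ extend continuously to $U$, while the remaining components of $\rest$ (the $z_j$ with $j\notin S$) are pullbacks of continuous coordinates on $X_{\log}$.

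For \emph{injectivity}, note that $g$ is a coordinate of $\psi_1$ and $\{g=0\}=\d A$, so two points with the same $\psi_1$-image lie either both in $U_1\setminus\d A$ — where Lemma \ref{lem:coordinates-off-D} applies — or both in $U_1\cap\d A$. In the latter case I would recover $x\in U_1\cap\d A$ from $\psi_1(x)$. Writing $x\in A_I^{\circ}$, Lemma \ref{lem:Ui-simple}\ref{item:Ui-cap-dA} gives $1\in I\subseteq S$, and for $j\in S\setminus\{1\}$ the sign of $v_j(x)$ distinguishes $j\in I$ (where $v_j(x)=-u_j(x)\le 0$) from $j\notin I$ (where $v_j(x)=t_j(x)>0$), so $I$ is determined by $\psi_1(x)$. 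Then $\pi_{\log}(x)\in X_{\log}$ is determined: in the Kato--Nakayama chart \eqref{eq:KN-chart} its coordinates are the $\theta_j(x),z_j(x)$ read off from $\rest(x)$, together with $r_j(x)$, which is $0$ for $j\in I$ and $\exp(-(m_jv_j(x))^{-1})$ for $j\in S\setminus I$ (there $v_j(x)=t_j(x)$ and $t_j=-(m_j\log r_j)^{-1}$), and the Kato--Nakayama chart is injective. Finally $\mu(x)=(\bar u_1(x),\dots,\bar u_N(x))$ is determined: by parts \ref{item:intro-u=ubar} and \ref{item:intro-w-zero} of Lemma \ref{lem:intro}, $\bar u_\ell(x)=0$ for $\ell\notin I$; $\bar u_\ell(x)=u_\ell(x)=-v_\ell(x)$ for $\ell\in I\setminus\{1\}$ by Lemma \ref{lem:Ui-simple}\ref{item:Ui-cap-dA}; and $\bar u_1(x)=u_1(x)=\eta\big(1-\sum_{m\in I\setminus\{1\}}\eta^{-1}(-v_m(x))\big)$, using the relation $\sum_{j\in S}w_j=1$ of Lemma \ref{lem:intro}\ref{item:intro-sum}. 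Since by Definition \ref{def:AX} a point of $A$ is the pair $(\pi_{\log}(x),\mu(x))\in X_{\log}\times\R^{N}$, this shows $\psi_1$ is injective on $U_1\cap\d A$, hence on $U_1$.

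For \emph{continuity of $\psi_1^{-1}$} I would recycle the construction carried out inside the proof of Lemma \ref{lem:psi-open}. Fix $x^0\in U_1$ and a sequence $(y^\nu)\subseteq\psi_1(U_1)$ with $y^\nu\to\psi_1(x^0)$, and put $x^\nu=\psi_1^{-1}(y^\nu)$ (well defined by injectivity). To prove $x^\nu\to x^0$ it suffices to show that every subsequence of $(x^\nu)$ has a further subsequence converging to $x^0$. Applying the argument of the proof of Lemma \ref{lem:psi-open} to any subsequence of $(y^\nu)$ — which still converges to $\psi_1(x^0)$ — one obtains, after passing to a further subsequence, points $\tilde x^\nu\in U_1$ with $\psi_1(\tilde x^\nu)=y^\nu$ and $\tilde x^\nu\to x^0$; injectivity forces $\tilde x^\nu=x^\nu$, which supplies the required sub-subsequence. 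Since $Q_{k,n}$ is metrizable, this proves $\psi_1^{-1}$ continuous, so $\psi_1$ is a homeomorphism onto its image.

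The main obstacle is the injectivity statement over $\d A$: one must pinpoint which combination of the coordinates of $\psi_1$ encodes the stratum $I$, the point of $X_{\log}$, and the tropical point $\mu(x)\in\Delta_I$, the subtlety being that the coordinate $v_1$ is \emph{not} among the coordinates of $\psi_1$ and must be recovered from the affine relation $\sum_{j\in S}w_j=1$. The remaining steps are routine bookkeeping with the identities of Lemmas \ref{lem:intro} and \ref{lem:Ui-simple}, plus the reuse of the proof of Lemma \ref{lem:psi-open}.
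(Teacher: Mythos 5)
Your argument is correct, and the injectivity part coincides with the paper's: both recover the stratum $I$ from the signs of $v_2,\dots,v_k$ via Lemma \ref{lem:Ui-simple}\ref{item:Ui-cap-dA}, then reconstruct the Kato--Nakayama point from $\rest$ and the positive $v_j$'s, and the simplex point from the non-positive $v_j$'s together with the affine relation $\sum_{j\in S}w_j=1$ (the paper phrases the last step as ``$x_\Delta$ lies in the rounded simplex $\Delta_I$'', which is the same constraint). Where you genuinely diverge is the continuity of $\psi_1^{-1}$. The paper avoids any sequential argument: using Definition \ref{def:adapted-chart}\ref{item:U-compact} and Lemma \ref{lem:intro}\ref{item:intro-extends}, it extends $\psi_1$ to a continuous map on the compact closure $\bar{U}_1$, proves that this \emph{extension} is injective, and concludes by the standard fact that a continuous injection from a compact space into a Hausdorff space is a homeomorphism onto its image. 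You instead prove injectivity only on $U_1$ and obtain continuity of the inverse by rerunning the surjectivity construction from the proof of Lemma \ref{lem:psi-open} along subsequences, pinning the resulting preimages down by injectivity. Both routes are valid. The compactness route is shorter and makes Lemma \ref{lem:psi-homeo} logically independent of the proof (not just the statement) of Lemma \ref{lem:psi-open}, at the cost of having to run the injectivity argument on the slightly larger set $\bar{U}_1$; your route keeps injectivity on $U_1$ only, but imports the full strength of the convergence construction inside the proof of Lemma \ref{lem:psi-open} (including the case split $g^\nu>0$ versus $g^\nu=0$, and the case $x^0\notin\d A$ handled via Lemma \ref{lem:coordinates-off-D}), and relies on first countability of $Q_{k,n}$ to pass from sequential continuity to continuity. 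No gap either way.
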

\begin{proof}
	Fix a chart \eqref{eq:AC1-chart} corresponding to a chart $U_X$ adapted to $f$, say $\psi_{1}\colon U_1\to Q_{k.n}$. Let $S$ be the index set \eqref{eq:index-set} associated to $U_X$. By Definition \ref{def:adapted-chart}\ref{item:U-compact}, the coordinates of $U_X$ extend continuously to the compact closure $\bar{U}_X$. Together with Lemma \ref{lem:intro}\ref{item:intro-extends}, this implies that $\psi_1$ extends to a continuous map $\bar{U}_{1}\to Q_{k,n}$, which we will denote by the same letter. Since $\bar{U}_1$ is compact, it is sufficient to prove that this extension is injective.
	
	For a subset $I\subseteq  S$ put $V_{I}=\bar{U}_{1}\cap A_{I}^{\circ}$. We have $\psi_{1}(V_{\emptyset})\subseteq \Int Q_{k,n}$ and $\psi(V_{I})\subseteq \d Q_{k,n}$ for all $I\neq \emptyset$, so $\psi_{1}(V_{\emptyset})\cap \psi_{1}(V_{I})=\emptyset$. Lemma \ref{lem:Ui-simple}\ref{item:Ui-cap-dA} implies that $\psi_{1}(V_{I})\cap \psi_{1}(V_{J})=\emptyset$ for all $I\neq J$. Therefore, it is sufficient to prove the injectivity of each restriction $\psi_{1}|_{V_{I}}$. 
	
	For $I=\emptyset$, this follows Lemma \ref{lem:coordinates-off-D}. Assume $I\neq \emptyset$ and fix $x\in V_I$. By  Lemma \ref{lem:Gamma_structure}\ref{item:trivial-fibration}, the point $x$ is a pair $(x_{\log},x_{\Delta})\in (X_{I}^{\circ})_{\log}\times \Delta_{I}$. In coordinates \eqref{eq:KN-chart} on $X_{\log}$, the point $x_{\log}$ is given by $((r_{i}(x))_{i\in S},\rest(x))$, where $r_{i}(x)=0$ if $i\in I$ and, by \eqref{eq:def-r_i-t_i}, $r_{i}(x)=e^{-(m_i t_i(x))^{-1}}$ if $i\not\in I$. By Lemma \ref{lem:Ui-simple}\ref{item:Ui-cap-dA}, for $i\not\in I$ we have $i\neq 1$ and  $t_i(x)=v_i(x)$, so the point $x_{\log}$ is determined by the value of $\psi_1$ at $x$. In turn, the $i$-th coordinate of $x_{\Delta}\in \Delta_{I}\subseteq \R^N$ is $\bar{u}_{i}(x)$ if $i\in I$ and zero otherwise. For $i\in I$, we have $\bar{u}_{i}(x)=u_{i}(x)$ by Lemma \ref{lem:intro}\ref{item:intro-u=ubar} and $u_i(x)=-v_{i}(x)$ by Lemma  \ref{lem:Ui-simple}\ref{item:Ui-cap-dA}. Hence for $i\neq 1$, the $i$-th coordinate of $x_{\Delta}\in \R^{N}$ is determined by the value of $\psi_1$ at $x$. The remaining coordinate is determined by the fact that $x_{\Delta}$ lies in the rounded simplex $\Delta_{I}$.
\end{proof}

\begin{proof}[Proof of Lemma \ref{lem:C1chart}]
	By Lemma \ref{lem:psi-homeo}, each chart \eqref{eq:AC1-chart} is a homeomorphism onto its image, which is an open subset of $Q_{k,n}$ by Lemma \ref{lem:psi-open}. By Lemma \ref{lem:coordinates-off-D}, the chart \eqref{eq:AC1-chart} restricts to a diffeomorphism away from $\d A$, as needed.
\end{proof}

\subsubsection{Useful computations}\label{sec:C1-computation} 
By Lemma \ref{lem:C1chart}, our atlas makes $A$ a topological manifold. Now, our goal is to show that this manifold is $\cC^1$. Lemmas \ref{lem:computations}--\ref{lem:computations-d} gather some technical computations which will be useful throughout the remaining part of Section \ref{sec:ACampo}. To state them in a convenient way, we introduce the following notation.

Recall that in \eqref{eq:def-t-g} we defined $t=-(\log|f|)^{-1}$ and $g=\eta(t)$. We put $t'=\tfrac{d}{dg} t$, so $t'=\zeta(g)$, where $\zeta\colon s\mapsto s^{-2}e^{1-s^{-1}}$ is the derivative of the inverse of $\eta$, see \eqref{eq:smooth_simplex}. More generally, for every $l\geq 0$ we put
\begin{equation}\label{eq:def-tl}
	t^{(l)}=(\tfrac{d}{d g})^{l}\, t=\zeta^{(l-1)}(g).
\end{equation}

Let $U_X$ be a chart adapted to $f$. For every $i$ in its associated index set \eqref{eq:index-set}, we define functions 
\begin{equation}\label{eq:rho-sigma}
	\rho_{i}=(t_{i}+u_i^2)^{-1}
	\quad \mbox{and}\quad
	\sigma_{i}=t_{i}^{2}+t_{i}u_{i}^{2},
\end{equation}
where $t_{i}, u_{i}$ are the functions introduced in \eqref{eq:def-r_i-t_i} and \eqref{eq:def-w_i-u_i}. Note that $\sigma_{i}=t_{i}\rho_{i}^{-1}$.

\begin{lema}\label{lem:computations} 
	Let $U_X$ be a chart adapted to $f$. Fix $i$ in its associated index set \eqref{eq:index-set}. The functions introduced in Section \ref{sec:basicfunctions} and in \eqref{eq:def-tl}, \eqref{eq:rho-sigma} above satisfy the following identities on $U_X\setminus D$:
	\begin{enumerate}
		\item\label{item:t-r} $r_{i}=e^{-(m_{i}t_{i})^{-1}}$,
		\item\label{item:t=tiwi} $t=t_{i}\cdot w_{i}$,
		\item\label{item:t'} $t'=t\cdot g^{-2}=t\cdot (1-\log t)^{2}$,
		\item \label{item:tl} for every integer $l\geq 0$, there is a polynomial $p_{l}\in \R[s]$ such that $t^{(l)}=t\cdot p_{l}(\log t)$,
		\item\label{item:u-gt} $u_{i}=g\cdot(1+g\log t_{i})^{-1}$, 
		\item \label{item:rho-gt} $\rho_{i}=(1+g\log t_{i})^{2}\cdot (t_{i}(1+g\log t_{i})^{2}+g^2)^{-1}$,
		\item\label{item:eta-u} $\eta'(w_{i})=u_{i}^{2}w_{i}^{-1}$, where $\eta\colon s\mapsto (1-\log s)^{-1}$ is the function defined in \eqref{eq:smooth_simplex},
		\item\label{item:eta-t} $\eta'(w_i)t'=t_{i}(1+g\log t_{i})^{-2}$,
		\item\label{item:eta-t-bounded} for every $\epsilon\in (0,1)$, there is a bounded function $l\in \cC^{\infty}(U_X\setminus D)$ such that $\eta'(w_{i})t'=t_{i}^{\epsilon}\cdot l$.
	\end{enumerate}
\end{lema}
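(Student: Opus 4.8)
The plan is to prove these identities in order, since later ones build on earlier ones, and almost all reduce to bookkeeping with the definitions in Section \ref{sec:basicfunctions} together with $\eta(s)=(1-\log s)^{-1}$ from \eqref{eq:smooth_simplex}. For \ref{item:t-r}, invert the definition $t_{i}=(-m_i\log r_i)^{-1}$ from \eqref{eq:def-r_i-t_i}, which gives $\log r_i = -(m_i t_i)^{-1}$, hence $r_i=e^{-(m_it_i)^{-1}}$. For \ref{item:t=tiwi}, this is just the definition $w_i=t/t_i$ from \eqref{eq:def-w_i-u_i} rearranged. For \ref{item:t'}, note $g=\eta(t)=(1-\log t)^{-1}$, so $t=e^{1-g^{-1}}$ and differentiating gives $\tfrac{dt}{dg}=e^{1-g^{-1}}\cdot g^{-2}=t\,g^{-2}$; substituting $g^{-1}=1-\log t$ gives $t'=t(1-\log t)^2$. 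For \ref{item:tl}, argue by induction on $l$: the base case $l=0$ is trivial with $p_0=1$, and if $t^{(l)}=t\cdot p_l(\log t)$ then differentiating with respect to $g$ and using $t'=t(1-\log t)^2$ and $\tfrac{d}{dg}\log t = t'/t = (1-\log t)^2$ yields $t^{(l+1)}=t\cdot\big((1-\log t)^2 p_l(\log t)+(1-\log t)^2 p_l'(\log t)\big)$, so $p_{l+1}(s)=(1-s)^2(p_l(s)+p_l'(s))$ works.

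For \ref{item:u-gt}, compute $u_i=\eta(w_i)=(1-\log w_i)^{-1}=(1-\log t+\log t_i)^{-1}$; since $g^{-1}=1-\log t$, this is $(g^{-1}+\log t_i)^{-1}=g(1+g\log t_i)^{-1}$. For \ref{item:rho-gt}, substitute \ref{item:u-gt} and the identity $t_i = t_i$ into $\rho_i=(t_i+u_i^2)^{-1}=\big(t_i + g^2(1+g\log t_i)^{-2}\big)^{-1}$ and clear denominators to get $(1+g\log t_i)^2\big(t_i(1+g\log t_i)^2+g^2\big)^{-1}$. For \ref{item:eta-u}, differentiate $\eta(s)=(1-\log s)^{-1}$ to get $\eta'(s)=(1-\log s)^{-2}\cdot s^{-1}=\eta(s)^2 s^{-1}$, so $\eta'(w_i)=u_i^2 w_i^{-1}$. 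For \ref{item:eta-t}, combine \ref{item:eta-u} with $w_i=t/t_i$ (so $w_i^{-1}=t_i/t=t_i t^{-1}$) and \ref{item:t'}: $\eta'(w_i)t' = u_i^2 \cdot t_i t^{-1}\cdot t\,(1-\log t)^2 = u_i^2 t_i (1-\log t)^2$; now use \ref{item:u-gt}, i.e. $u_i=g(1+g\log t_i)^{-1}$ and $g=(1-\log t)^{-1}$, to get $u_i^2(1-\log t)^2 = g^2(1+g\log t_i)^{-2}(1-\log t)^2=(1+g\log t_i)^{-2}$, hence $\eta'(w_i)t'=t_i(1+g\log t_i)^{-2}$.

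The one step requiring a genuine estimate, rather than pure algebra, is \ref{item:eta-t-bounded}: given $\epsilon\in(0,1)$, we must write $\eta'(w_i)t'=t_i^{\epsilon}\cdot l$ with $l$ bounded and smooth on $U_X\setminus D$. By \ref{item:eta-t} we have $\eta'(w_i)t'=t_i(1+g\log t_i)^{-2}$, so $l = t_i^{1-\epsilon}(1+g\log t_i)^{-2}$, and the task is to show this is bounded. Since $g\in[0,1)$ and $t_i\in(0,1)$ on $U_X\setminus D$, we have $\log t_i<0$, so $1+g\log t_i$ could approach $0$; the key point is to bound $t_i^{1-\epsilon}/(1+g\log t_i)^2$ uniformly. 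I expect this to be the main obstacle, but it is handled as follows: write $s=-\log t_i>0$, so $t_i^{1-\epsilon}=e^{-(1-\epsilon)s}$ and the denominator is $(1-gs)^2$; since $g=(1-\log t)^{-1}$ with $t=t_i w_i\le t_i$ (by \ref{item:t=tiwi} and $w_i\le 1$ from Lemma \ref{lem:intro}\ref{item:intro-sum}), one gets $g\le \eta(t_i) = (1+s)^{-1}$, hence $1-gs\ge 1-\tfrac{s}{1+s}=\tfrac{1}{1+s}>0$, giving $(1-gs)^{-2}\le (1+s)^2$. Therefore $|l|\le e^{-(1-\epsilon)s}(1+s)^2$, which is bounded on $s\in(0,\infty)$ since $1-\epsilon>0$, and $l$ is smooth on $U_X\setminus D$ because $t_i$ and $g$ are. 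This completes \ref{item:eta-t-bounded} and the lemma.
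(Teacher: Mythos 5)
Your proof is correct and follows essentially the same route as the paper: parts (a)--(h) are the same definitional manipulations, and for (i) your bound $g\le\eta(t_i)=(1+s)^{-1}$ (from $t=t_iw_i\le t_i$ and monotonicity of $\eta$), giving $|l|\le e^{-(1-\epsilon)s}(1+s)^2$, is a valid minor variant of the paper's estimate, which instead rewrites $(1+g\log t_i)^{-1}$ as $1-\tfrac{\log t_i}{1-\log w_i}$ and absorbs the logarithm into a factor $t_i^{\delta}$ with $\delta=\tfrac12(1-\epsilon)$. Both arguments rest on the same observation that the offending factor grows only polynomially in $-\log t_i$ while $t_i^{1-\epsilon}$ decays exponentially.
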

\begin{proof}
	\ref{item:t-r} In \eqref{eq:def-r_i-t_i}, the function $t_i$ was defined as $t_{i}=-(m_{i}\log r_{i})^{-1}$, so $\log r_{i}=-(m_it_i)^{-1}$, and therefore $r_{i}=e^{-(m_{i}t_{i})^{-1}}$, as claimed.
	
	\ref{item:t=tiwi} In \eqref{eq:def-w_i-u_i}, the function $w_i$ was defined as $w_{i}=tt_{i}^{-1}$, so $t=t_iw_i$.
	
	\ref{item:t'} Recall from \eqref{eq:def-t-g} and \eqref{eq:smooth_simplex} that $g=\eta(t)=(1-\log t)^{-1}$, so $t=e^{1-g^{-1}}$. Now $t'=\frac{d t}{d g}=e^{1-g^{-1}} g^{-2}=tg^{-2}=t(1-\log t)^{2}$, as needed. 
	
	\ref{item:tl} We argue by induction on $l\geq 0$. For $l=0$ we put $p_0=1$. Assume that for some $l\geq 0$, there is $p_{l}\in \R[s]$ such that $t^{(l)}=p_{l}(\log t)t$. Since $t=e^{1-g^{-1}}$ and $\log t=1-g^{-1}$, we have
	\begin{equation*}
		\begin{split}
			t^{(l+1)}&=\tfrac{d}{d g}t^{(l)}=\tfrac{d}{dg} [p_{l}(1-g^{-1})e^{1-g^{-1}}]=p_{l}'(1-g^{-1})g^{-2}e^{1-g^{-1}}+p_{l}(1-g^{-1})g^{-2}e^{1-g^{-1}}=\\
			&= (p_{l}(1-g^{-1})+p_{l}'(1-g^{-1}))g^{-2} e^{1-g^{-1}}
			=(p_{l}(\log t)+p_{l}'(\log t)) (1-\log t)^{2} t.
		\end{split}
	\end{equation*}
	The inductive claim follows by putting $p_{l+1}(s)=(p_{l}(s)+p_{l}'(s))(1-s)^{2}\in \R[s]$.
	
	\ref{item:u-gt} The function $u_i$ is defined in \eqref{eq:def-w_i-u_i} as $u_{i}=\eta(w_{i})=(1-\log w_{i})^{-1}$. By \eqref{eq:def-w_i-u_i}, we have $(1-\log w_{i})^{-1}=(1-\log t+\log t_{i})^{-1}$. Since by definition $g=(1-\log t)^{-1}$, we have $(1-\log t+\log t_{i})^{-1}=(g^{-1}+\log t_{i})^{-1}=g\cdot (1+g\log t_{i})^{-1}$. Thus eventually $u_{i}=g\cdot (1+g\log t_{i})^{-1}$, as claimed.
	
	\ref{item:rho-gt} The function $\rho_{i}$ was defined in \eqref{eq:rho-sigma} by $\rho_i=(t_{i}+u_{i}^{2})^{-1}$. Substituting the formula for $u_i$ from \ref{item:u-gt}, we get $\rho_{i}=(t_{i}+g^2\cdot (1+g \log t_i)^{-2})^{-1}=(1+g\log t_{i})^{2}\cdot (t_{i}(1+g\log t_{i})^2+g^2)^{-1}$.
	
	\ref{item:eta-u} By \eqref{eq:def-w_i-u_i}, we have $u_{i}=\eta(w_i)=(1-\log w_{i})^{-1}$, so $\eta'(w_{i})=(1-\log w_{i})^{-2}w_{i}^{-1}=u_{i}^2w_{i}^{-1}$.
	
	\ref{item:eta-t} 	Substituting  the formula for $u_i$ from \ref{item:u-gt} to \ref{item:eta-u}, we get $\eta'(w_{i})=g^{2}\cdot (1+g\log t_{i})^{-2}\cdot w_{i}^{-1}$. By \eqref{eq:def-w_i-u_i} we have $w_{i}=t\cdot t_{i}^{-1}$, so $\eta'(w_{i})=g^{2}\cdot (1+g\log t_{i})^{-2} \cdot t^{-1}t_{i}$. In turn, by \ref{item:t'} we have $t'=tg^{-2}$, so $t'\eta'(w_i)=tg^{-2}\cdot g^{2}(1+g\log t_{i})^{-2}\cdot t^{-1}t_{i}=t_{i}(1+g\log t_{i})^{-2}$, as claimed.
	
	\ref{item:eta-t-bounded} Recall from \eqref{eq:def-t-g} that $g=\eta(t)=(1-\log t)^{-1}$. By \ref{item:t=tiwi}, we have  $t=t_{i}w_{i}$, so $g=(1-\log t_{i}-\log w_{i})^{-1}$. Substituting this formula to \ref{item:eta-t}, we get
	\begin{equation*}
		\eta'(w_{i})t'=t_{i}\cdot\left(1+\frac{\log t_{i}}{1-\log t_{i}-\log w_{i}}\right)^{-2}=
		t_{i}\cdot \left( \frac{1-\log w_{i}}{1-\log t_{i}-\log w_{i}}\right)^{-2}=
		t_{i} \cdot \left( 1-\frac{\log t_{i}}{1-\log w_{i}}\right)^{2}.
	\end{equation*}
	Since $w_{i}\in (0,1]$, we have $\log w_{i}\leq 0$, so the function $s\de (1-\log w_{i})^{-1}$ is bounded. Fix $\epsilon\in (0,1)$ and put $\delta=\frac{1}{2}(1-\epsilon)>0$. Then 
	\begin{equation*}
		\eta'(w_{i})t'=t_{i}\cdot (1-s\cdot \log t_{i})^{2}=t_{i}^{\epsilon}\cdot (t_{i}^{\delta}-s\cdot t_{i}^{\delta}\log t_{i})^{2}.
	\end{equation*}
	Since $\delta>0$, we have $t_{i}^{\delta}\log t_{i}\rightarrow 0$ as $t_{i}\rightarrow 0$, so $l\de (t_{i}^{\delta}-s\cdot t_{i}^{\delta}\log t_{i})^{2}$ is bounded, as claimed.
\end{proof}

We write $d$ for the exterior derivative. We will apply it to smooth functions on (subsets of) $X\setminus D$, and study the behavior of the resulting forms as we approach $D$. 

\begin{lema}\label{lem:computations-d}
	In the setting of Lemma \ref{lem:computations}, the following identities hold on $U_X\setminus D$: 
	\begin{enumerate}
		\item \label{item:dr} $dr_{i}=m_{i}^{-1}t_{i}^{-2}r_{i}\, dt_{i}$,	
		\item \label{item:dv} $	dv_{i}=(1+t_{i}^{-1}u_{i}^{2})\, dt_{i}-t_{i}^{-1}\eta'(w_i)t'\, dg$,
		\item \label{item:dt-rho} $d t_i = \rho_{i}(t_{i}\, dv_{i}+u_{i}^{2}w_{i}^{-1}t'\, dg)$,
		\item \label{item:dt-g} $dt_i=t_{i}\cdot (t_i(1+g\log t_i)^{2}+g^2)^{-1}\cdot ((1+g\log t_i)^{2}\, dv_{i}+ dg)$,	
		\item \label{item:dw} $d w_{i}=-\rho_{i}(w_{i}\, dv_{i}-t'\, dg)$,
		\item \label{item:du} $d u_{i}=-\rho_{i}(u_{i}^{2}\, dv_{i}-\eta'(w_i)t'\, dg)$,
		\item \label{item:du-dt} $du_{i}=-t_{i}^{-1}(u_{i}^{2}\, dt_{i}-\eta'(w_{i})t'\, dg)$.
	\end{enumerate}
\end{lema}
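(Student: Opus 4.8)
The plan is to verify all seven identities by direct chain-rule computations on $U_X\setminus D$, where every function involved is smooth, substituting along the way the closed forms collected in Lemma \ref{lem:computations}. The only nontrivial inputs beyond calculus are: the relation $dt=t'\,dg$ (this is the definition of $t'$ together with the fact that $t$ is a smooth function of $g$, since $g=\eta(t)$); the identity $\eta'(w_i)\,w_i=u_i^2$ from Lemma \ref{lem:computations}\ref{item:eta-u}; and the two algebraic simplifications $1+t_i^{-1}u_i^2=(t_i\rho_i)^{-1}$ and $1-\rho_i u_i^2=\rho_i t_i$, both immediate from $\rho_i^{-1}=t_i+u_i^2$ (see \eqref{eq:rho-sigma}). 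Recognizing these last two is precisely what makes the inversions in \ref{item:dt-rho} and \ref{item:dw} come out cleanly; everything else is bookkeeping.

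First I would prove \ref{item:dr}: by Lemma \ref{lem:computations}\ref{item:t-r} we have $r_i=e^{-(m_i t_i)^{-1}}$, so $dr_i=r_i\,d\bigl(-(m_i t_i)^{-1}\bigr)=m_i^{-1}t_i^{-2}r_i\,dt_i$. Next, \ref{item:du-dt}: from \eqref{eq:def-w_i-u_i}, $u_i=\eta(w_i)$ and $w_i=t\,t_i^{-1}$, hence $dw_i=t_i^{-1}(dt-w_i\,dt_i)=t_i^{-1}(t'\,dg-w_i\,dt_i)$ and $du_i=\eta'(w_i)\,dw_i=t_i^{-1}\bigl(\eta'(w_i)t'\,dg-\eta'(w_i)w_i\,dt_i\bigr)=-t_i^{-1}\bigl(u_i^2\,dt_i-\eta'(w_i)t'\,dg\bigr)$, using $\eta'(w_i)w_i=u_i^2$. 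Then \ref{item:dv} is just $dv_i=dt_i-du_i$ combined with \ref{item:du-dt} and the rewriting $1+t_i^{-1}u_i^2=(t_i\rho_i)^{-1}$.

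Identity \ref{item:dt-rho} is obtained by solving \ref{item:dv} for $dt_i$: from $dv_i=(t_i\rho_i)^{-1}dt_i-t_i^{-1}\eta'(w_i)t'\,dg$ we get, after multiplying by $t_i\rho_i$, that $dt_i=\rho_i(t_i\,dv_i+\eta'(w_i)t'\,dg)=\rho_i(t_i\,dv_i+u_i^2 w_i^{-1}t'\,dg)$, the last step again by Lemma \ref{lem:computations}\ref{item:eta-u}. Substituting into this the explicit formula $\rho_i=(1+g\log t_i)^2\bigl(t_i(1+g\log t_i)^2+g^2\bigr)^{-1}$ from Lemma \ref{lem:computations}\ref{item:rho-gt} and $\eta'(w_i)t'=t_i(1+g\log t_i)^{-2}$ from Lemma \ref{lem:computations}\ref{item:eta-t}, then cancelling $(1+g\log t_i)^2$, yields \ref{item:dt-g}. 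For \ref{item:dw}, insert the expression for $dt_i$ from \ref{item:dt-rho} into $dw_i=t_i^{-1}(t'\,dg-w_i\,dt_i)$; collecting terms, the coefficient of $t'\,dg$ becomes $t_i^{-1}(1-\rho_i u_i^2)=\rho_i$ and the coefficient of $dv_i$ becomes $-\rho_i w_i$, so $dw_i=-\rho_i(w_i\,dv_i-t'\,dg)$. Finally \ref{item:du} is $du_i=\eta'(w_i)\,dw_i$ with \ref{item:dw} substituted and $\eta'(w_i)w_i=u_i^2$ applied once more.

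I do not expect a genuine obstacle in this lemma: it is entirely a matter of careful algebra. The only point requiring attention is to keep the role of $\rho_i^{-1}=t_i+u_i^2$ and of Lemma \ref{lem:computations}\ref{item:eta-u},\ref{item:eta-t},\ref{item:rho-gt} explicit, so that the two cancellations producing the factor $\rho_i$ in \ref{item:dt-rho} and \ref{item:dw}, as well as the cancellation of $(1+g\log t_i)^2$ in passing from \ref{item:dt-rho} to \ref{item:dt-g}, are transparent rather than appearing by magic.
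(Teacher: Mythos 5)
Your computations are correct and follow essentially the same route as the paper: direct chain-rule manipulations using $dt=t'\,dg$, the identity $\eta'(w_i)w_i=u_i^2$, and the algebra of $\rho_i^{-1}=t_i+u_i^2$. The only difference is cosmetic ordering — you derive \ref{item:du-dt} first and obtain \ref{item:dv} from it via $dv_i=dt_i-du_i$, while the paper computes \ref{item:dv} directly and gets \ref{item:du-dt} last by substitution — but the substance is identical.
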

\begin{proof}
	\ref{item:dr} By Lemma \ref{lem:computations}\ref{item:t-r}, we have 
	$r_{i}=e^{-(m_{i}t_{i})^{-1}}$, so $dr_{i}=m_{i}\cdot (m_{i}t_{i})^{-2} \cdot e^{-(m_{i}t_{i})^{-1}}\, dt_{i}=m_{i}^{-1}t_{i}^{-2}r_{i}\, dt_{i}$, as claimed.
	
	\ref{item:dv} The functions $v_i$, $w_i$ were defined in \eqref{eq:def-v_i} and \eqref{eq:def-w_i-u_i} as  $v_{i}=t_{i}-\eta(w_i)$ and $w_{i}=tt_{i}^{-1}$. Hence
	\begin{equation*}
		dv_{i}=dt_{i}-\eta'(w_{i})\cdot d(tt_{i}^{-1})=dt_{i}-\eta'(w_{i})\cdot (-tt_{i}^{-2}\, dt_{i}+t_{i}^{-1}\, dt)=(1+\eta'(w_{i})tt_{i}^{-2})\, dt_{i}-t_{i}^{-1}\eta'(w_{i})\, dt.
	\end{equation*}
	By Lemma \ref{lem:computations}\ref{item:eta-u}, we have $\eta'(w_{i})=u_{i}^{2}w_{i}^{-1}$. Since $t=t_{i}w_{i}$ by Lemma \ref{lem:computations}\ref{item:t=tiwi}, we get  $\eta'(w_{i})tt_{i}^{-2}=u_{i}^{2}w_{i}^{-1}\cdot t_{i}w_{i}\cdot t_{i}^{-2}=u_{i}^{2}t_{i}^{-1}$. 
	In turn, $dt=t' dg$ by definition \eqref{eq:rho-sigma} of $t'$. Thus $dv_{i}=(1+u_{i}^{2}t_{i}^{-1})dt_{i}-t_{i}^{-1}\eta'(w_{i})t'\, dg$, as claimed.
	
	\ref{item:dt-rho} Solving \ref{item:dv} for $dt_{i}$, we get 
	\begin{equation*}
		dt_{i}=(1+t_{i}^{-1}u_{i}^{2})^{-1}\cdot (dv_{i}+t_{i}^{-1}\eta'(w_{i})t'\, dg)=(t_{i}+u_{i}^{2})^{-1}\cdot (t_{i}\, dv_{i}+\eta'(w_{i})t'\, dg).
	\end{equation*}
	We have $(t_{i}+u_{i}^{2})^{-1}=\rho_{i}$ by \eqref{eq:rho-sigma} and $\eta'(w_{i})=u_{i}^{2}w_{i}^{-1}$ by Lemma  \ref{lem:computations}\ref{item:eta-u}, so  $dt_{i}=\rho_{i}(t_{i}\, dv_{i}+u_{i}^{2}w_{i}^{-1}t'\, dg)$, as claimed.
	
	\ref{item:dt-g}  Substituting the formula for $\eta'(w_i)$ from Lemma \ref{lem:computations}\ref{item:eta-u} to Lemma \ref{lem:computations}\ref{item:eta-t}, we get the identity $u_{i}^{2}w_{i}^{-1}t'=t_{i}(1+g\log t_{i})^{-2}$. Substituting it to \ref{item:dt-rho} gives
	\begin{equation*}
		dt_{i}=\rho_{i}(t_{i}\,dv_{i}+t_{i}(1+g\log t_{i})^{-2}\, dg)=\rho_{i}t_{i} (1+g\log t_{i})^{-2} \cdot((1+g\log t_{i})^{2} dv_{i}+dg).
	\end{equation*}
	The claim follows by substituting the formula for $\rho_{i}$ from Lemma \ref{lem:computations}\ref{item:rho-gt}.
	
	\ref{item:dw} The function $w_i$ was defined in \eqref{eq:def-w_i-u_i} as $w_{i}=tt_{i}^{-1}$, so $dw_{i}=-tt_{i}^{-2}\, dt_{i}+t_{i}^{-1}\, dt=-w_{i}t_{i}^{-1}\, dt_{i}+t_{i}^{-1}\, dt$. We have $dt_{i}=\rho_{i}(t_{i}\, dv_{i}+u_{i}^{2}w_{i}^{-1}t'\, dg)$ by \ref{item:dt-rho} and $dt=t'\, dg$ by definition \eqref{eq:rho-sigma} of $t'$, so
	\begin{equation*}
		dw_{i}=-w_{i}t_{i}^{-1}\rho_{i}(t_{i}\, dv_{i}+u_{i}^{2}w_{i}^{-1}t'\, dg)+t_{i}^{-1}t'\, dg=
		-\rho_{i}(w_{i}\, dv_{i}+(u_{i}^{2}-\rho_{i}^{-1})t_{i}^{-1}t'\, dg).
	\end{equation*}
	By definition \eqref{eq:rho-sigma} of $\rho_{i}$, we have $(u_{i}^{2}-\rho_{i}^{-1})t_{i}^{-1}=(u_{i}^{2}-t_{i}-u_{i}^{2})t_{i}^{-1}=-1$, which proves the claim.
	
	\ref{item:du} By \eqref{eq:def-w_i-u_i}, we have $u_i=\eta(w_i)$, so using \ref{item:dw} we get   $du_{i}=\eta'(w_i)dw_{i}=-\rho_{i}(\eta'(w_i)w_{i}\, dv_{i}-\eta'(w_i)t'\, dg)$. By Lemma \ref{lem:computations}\ref{item:eta-u}, we have $\eta'(w_{i})w_{i}=u_{i}^{2}$, so $du_{i}=-\rho_{i}(u_{i}^{2}\, dv_{i}-\eta'(w_i)t'\, dg)$, as claimed.
	
	\ref{item:du-dt} Substituting \ref{item:dv} to \ref{item:du}, we get
	\begin{equation*}
		\begin{split}
			d u_{i}&=-\rho_{i}(u_{i}^{2}\cdot ((1+t_{i}^{-1}u_{i}^{2})\, dt_{i}-t_{i}^{-1}\eta'(w_i)t'\, dg)-\eta'(w_i)t'\, dg)=\\
			&= 
			-\rho_{i}(u_{i}^{2}(1+t_{i}^{-1}u_{i}^{2})\, dt_{i}-(u_{i}^{2}t_{i}^{-1}+1)\eta'(w_{i})t'\, dg)=\\
			&=
			-\rho_{i}(1+t_{i}^{-1}u_{i}^{2}) (u_{i}^{2}\, dt_{i}-\eta'(w_{i})t'\, dg)=
			-t_{i}^{-1}(u_{i}^{2}\, dt_{i}-\eta'(w_{i})t'\, dg),
		\end{split}
	\end{equation*}
	where the last equality follows from definition \eqref{eq:rho-sigma} of $\rho_{i}$.
\end{proof}

\subsubsection{Forms bounded from $X$}\label{sec:bounded-forms}
Once we endow $A$ with a smooth structure, it will be important to distinguish those forms on $A$ which come from bounded forms on $X$. To this end, we introduce the following definition.

\begin{definition}\label{def:bounded-from-X}
	Let $U_{X}$ be a chart adapted to $f$, and let $V$ be an open subset of $\pi^{-1}(U_X)$. On $V\setminus \d A$, we consider the natural smooth structure pulled back from $\pi(V)\setminus D$. We say that a form $\beta\in \Omega^{*}(V\setminus \d A)$ is \emph{bounded from $X$} if $\beta=\pi^{*}\beta'$ for some $\beta'\in \Omega^{*}(\pi(V)\setminus D)$ which is bounded in the usual sense, with respect to coordinates $(z_{i_1},\dots,z_{i_n})$ of the adapted chart $U_X$.
\end{definition}
\begin{remark}\label{rem:bounded-from-X}
	Let $U_i$ be a domain of a chart \eqref{eq:AC1-chart}. A function on $U_i\setminus \d A$ is bounded from $X$ if and only if it is bounded. Since the closure $\bar{U}_i$ is compact by Definition \ref{def:adapted-chart}\ref{item:U-compact}, all continuous functions on $\bar{U}_i$, in particular those from Lemma \ref{lem:intro}\ref{item:intro-extends}, are bounded from $X$. 
	
	If two forms $\beta$ and $\beta'$ are bounded from $X$, then so are $\beta+\beta'$ and $\beta\wedge \beta'$. Nonetheless, $d\beta$ might not be bounded from $X$. For example, functions $\theta_i$, $t_{i}$ are bounded from $X$, but the $1$-forms $d\theta_i$, $d t_i$ are not. Indeed, $d\theta_{i}=-\frac{1}{r_i}\, d^{c}r_i$ has a \enquote{logarithmic pole} along $D_i$. For $dt_{i}$, note that by Lemma \ref{lem:computations-d}\ref{item:dr} we have $d t_{i}=m_{i}t_{i}^{2}r_{i}^{-1}\, dr_{i}$, so Lemma \ref{lem:computations}\ref{item:t-r} gives
	\begin{equation*}
		dt_{i}=m_{i}t_{i}^{2}e^{(m_{i}t_{i})^{-1}}\, dr_{i}\rightarrow \infty\quad \mbox{as } t_i\rightarrow 0. 
	\end{equation*}	
\end{remark}

In principle, one should think of $1$-forms bounded from $X$ as the ones \enquote{decaying exponentially fast as we approach $\d A$}. One way of making this precise will be given in Lemma \ref{lem:Qi}\ref{item:Qi-1-form}.
\smallskip

In the next computation, the following elementary observation will be useful.

\begin{lema}\label{lem:inequality}
	Let $a,b,x,y\in \R$. Assume $a,b \geq 0$ and $a+b>0$. Then the following inequality holds:
	\begin{equation*}
		\frac{|ax+by|}{a+b}\leq |x|+|y|.
	\end{equation*}
\end{lema}
\begin{proof}
	By symmetry, we can assume $a\geq b$. Then $a>0$ and $1\geq \frac{b}{a} \geq  0$, so
	\begin{equation*}
		\frac{|ax+by|}{a+b}=\frac{|x+\frac{b}{a}y|}{1+\frac{b}{a}}\leq \frac{|x|+\frac{b}{a}|y|}{1+\frac{b}{a}} \leq |x|+|y|. \qedhere
	\end{equation*} 
\end{proof}

We will now use Lemma \ref{lem:w_extends}\ref{item:t-comparison} to compare the $1$-forms $d v_{i}$ defined by two charts adapted to $f$. We warn the reader that functions $b,c,q,s$ and a $1$-form $\gamma$ introduced below depend on the fixed charts $U_X$, $U_X'$ and index $i$; even though the notation does not reflect this explicitly. The same warning applies to functions $\lambda,a$ from Lemma \ref{lem:w_extends}\ref{item:lambda},\ref{item:t-comparison}.  

\begin{lema}\label{lem:dv}
	Let $U_X,U'_X$ be two charts adapted to $f$. As in Lemma \ref{lem:w_extends}, we put $V_X=U_X\cap U_X'$, $V=\pi^{-1}(U_X\cap U_X')$, and use Notation \ref{not:pullbacks}. Fix $i\in S\cap S'$ and a subset $I\subseteq S\cap S'$ not containing $i$. Put $V_{X,I}^{\circ}=V_{X}\cap X_{I}^{\circ}$. Then the following hold.
	\begin{enumerate}
		\item\label{item:theta-theta'} The map $\theta'_i-\theta_i\colon V\to \S^1$ is a pullback of a smooth map $V_{X}\to \S^1$.
		\item\label{item:a-bounded} Let $a\in \cC^{\infty}(V_X)$ be as in Lemma \ref{lem:w_extends}\ref{item:t-comparison}. Then the functions $|a|$, $|1+at_{i}|^{-1}$, $|\log(1+at_{i})|$ and $|1-u_i\log(1+at_i)|^{-1}$ are continuous and bounded on $V$.
		\item\label{item:vp-v} Let $a\in \cC^{\infty}(V_X)$ be as in \ref{item:a-bounded}. The following identity holds on $V$
		\begin{equation*}\label{eq:vp-v}
			v_{i}-v_{i}'=\frac{a t_{i}^{2}}{1+a t_{i}}+\frac{u_{i}^{2}\log(1+a t_{i})}{1-u_{i}\log(1+a t_i)}.
		\end{equation*}
		\item \label{item:tame_0} There is a bounded function $b\in \cC^{\infty}(V_{X,I}^{\circ})$ such that $v_i-v_i'=\sigma_i b$ on $V_{X,I}^{\circ}$. 
		\item \label{item:tame_1} For every $\epsilon\in (0,1)$, there are bounded functions $c,q\in \cC^{\infty}(V_{X,I}^{\circ})$, and a bounded 
		$1$-form $\gamma\in \Omega^{1}(V_{X,I}^{\circ})$, such that the following equality holds on $V_{X,I}^{\circ}$: 
		\begin{equation*}
			d(v_i-v'_{i})
			= t_{i}\cdot c\, dv_{i}+t_i^{\epsilon}\cdot q\, dg+\sigma_{i}\cdot \gamma.
		\end{equation*}
	\end{enumerate}
	In particular, taking $I=\emptyset$, we get that the equalities from \ref{item:tame_0}, \ref{item:tame_1} hold on $V_{X}\setminus D=V\setminus \d A$, where functions $b,c,q$ are bounded, and the $1$-form $\gamma$ is bounded from $X$.
\end{lema}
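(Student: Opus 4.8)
The plan is to establish parts \ref{item:theta-theta'}--\ref{item:tame_1} essentially in order, each time reducing to the explicit change-of-chart formulas from Lemma \ref{lem:w_extends} and the computational identities from Lemmas \ref{lem:computations}, \ref{lem:computations-d}. For \ref{item:theta-theta'}: since $z_i'=\lambda z_i$ with $\lambda\in\cO_X^{*}(V_X)$ nonvanishing holomorphic (Lemma \ref{lem:w_extends}\ref{item:lambda}), we have $\theta_i'=\theta_i+\frac{\lambda}{|\lambda|}$ in the additive notation of Notation \ref{not:S1}; the function $\frac{\lambda}{|\lambda|}\colon V_X\to\S^1$ is smooth, and its pullback to $V$ (in the Kato--Nakayama coordinates \eqref{eq:KN-chart}, where $\theta_i=h(z_i)$) is exactly $\theta_i'-\theta_i$. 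For \ref{item:a-bounded}: recall $a=-m_i\log|\lambda|\in\cC^\infty(V_X)$, so $|a|$ is bounded on the relatively compact $\bar V_X$ (Definition \ref{def:adapted-chart}\ref{item:U-compact}); since $t_i\in[0,1)$ and $1+at_i\neq0$ on $V$ by Lemma \ref{lem:w_extends}\ref{item:t-comparison}, and $1+at_i\to 1$ as $t_i\to0$, the function $1+at_i$ is bounded away from $0$ on $V$ (using continuity of $t_i$ from Lemma \ref{lem:w_extends}\ref{item:ti-continuous} and compactness), giving boundedness of $|1+at_i|^{-1}$ and $|\log(1+at_i)|$; similarly $1-u_i\log(1+at_i)\to1$ as we approach $\d A$ (since $u_i\log(1+at_i)\to0$), so $|1-u_i\log(1+at_i)|^{-1}$ is bounded.

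For \ref{item:vp-v}: combine the two identities of Lemma \ref{lem:w_extends}\ref{item:t-comparison}, namely $t_i'=t_i(1+at_i)^{-1}$ and $w_i'=w_i(1+at_i)$. From the first, $t_i-t_i' = at_i^2/(1+at_i)$. For the $u$-part, apply $\eta$ (as in \eqref{eq:def-w_i-u_i}, $u_i'=\eta(w_i')$) and compute as in Example \ref{ex:not-C2}: using $\eta(s)=(1-\log s)^{-1}$ we get $u_i' = \eta(w_i)\cdot\big(1+\eta(w_i)\log(1+at_i)\big)^{-1}$, i.e.\ $u_i'=u_i/(1+u_i\log(1+at_i))$, hence $u_i-u_i' = u_i^2\log(1+at_i)/(1+u_i\log(1+at_i))$; then use $v_i-v_i'=(t_i-t_i')-(u_i-u_i')$ from \eqref{eq:def-v_i}, adjusting the sign in the denominator to match the stated form (which is a harmless rewriting since $\log(1+at_i) = -\log((1+at_i)^{-1})$ and one may absorb signs into $a$; I will instead just verify the stated signed identity directly). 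This is a routine algebraic manipulation.

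For \ref{item:tame_0}: on $V_{X,I}^\circ=V_X\cap X_I^\circ$ with $i\notin I$, the functions $t_i,u_i,w_i$ are pullbacks of smooth functions on $X_I^\circ\cap V_X$ by Lemma \ref{lem:Ui-simple}\ref{item:vi-smooth-on-strata}, and $\sigma_i=t_i^2+t_iu_i^2=t_i\rho_i^{-1}$. Factor $t_i$ out of the formula in \ref{item:vp-v}: $v_i-v_i' = t_i\cdot\Big(\frac{at_i}{1+at_i}+\frac{u_i^2\log(1+at_i)}{t_i(1-u_i\log(1+at_i))}\Big)$ — the apparent $t_i^{-1}$ in the second summand is cancelled because $\log(1+at_i)=at_i+O(t_i^2)$, so $t_i^{-1}\log(1+at_i)$ extends to a bounded smooth function; thus $v_i-v_i' = \sigma_i\cdot b$ where $b$ collects everything times $\rho_i=(t_i+u_i^2)^{-1}$ — here I must check $\rho_i$ is bounded, which holds because on $V_{X,I}^\circ$ one does *not* generally have $u_i$ bounded away from $0$... actually $\rho_i$ need not be bounded, so the correct move is: write $v_i-v_i' = t_i\cdot \tilde b$ with $\tilde b$ bounded smooth, then note $t_i = \sigma_i\rho_i$ is the wrong direction. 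I would instead directly verify that $t_i^2/(1+at_i)$ and $u_i^2\log(1+at_i)/(1-u_i\log(1+at_i))$ are each of the form $\sigma_i$ times a bounded smooth function: the first equals $\sigma_i\cdot\frac{t_i}{(t_i+u_i^2)(1+at_i)}$ — still problematic when $u_i\to0$ with $t_i$ fixed. The resolution is that on $A_I^\circ$ with $i\notin I$ we have $t_i>0$ bounded away from $0$ locally (Lemma \ref{lem:Ui-simple}), so $\sigma_i\asymp t_i\asymp 1$ and boundedness of $b$ is automatic from boundedness of the individual pieces via \ref{item:a-bounded}; this is the point where relative compactness of $V_X$ and $t_i|_{X_I^\circ}>0$ do the work. \textbf{I expect \ref{item:tame_1} to be the main obstacle}, since it requires differentiating the identity of \ref{item:vp-v} and sorting the resulting $1$-form into the three prescribed types. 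The strategy is to differentiate $v_i-v_i'$, substitute $dt_i,du_i,dw_i$ via Lemma \ref{lem:computations-d}\ref{item:dt-rho},\ref{item:dw},\ref{item:du}, and $d a$ (bounded from $X$), then regroup: terms with an explicit $dv_i$ coefficient carry a factor $t_i$ (because each $dt_i$-contribution in \ref{lem:computations-d}\ref{item:dt-rho} comes with $\rho_i t_i=\sigma_i$ and each $du_i$ with $\rho_i u_i^2$, and the $u_i^2$ is absorbed using $v_i-v_i'=\sigma_i b$), terms with $dg$ carry $\eta'(w_i)t' = t_i^\epsilon\cdot(\text{bounded})$ by Lemma \ref{lem:computations}\ref{item:eta-t-bounded} (this is exactly why the exponent $\epsilon\in(0,1)$ rather than $1$ appears), and the remaining terms — those coming from $da$ — carry a factor $\sigma_i$ and form the bounded $1$-form $\gamma$. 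The final sentence (case $I=\emptyset$) is immediate since then $V_{X,I}^\circ = V_X\setminus D$ and "bounded from $X$" for a $1$-form means precisely a pullback of a bounded form on $V_X\setminus D$, which $dv_i$ and $dg$-free combinations times smooth bounded coefficients provide; I will note that $c,q$ bounded and $\gamma$ bounded from $X$ follows by the same computation read over $V_X\setminus D$.
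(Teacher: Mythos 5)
Parts \ref{item:theta-theta'}--\ref{item:vp-v} of your plan follow the paper's route and are essentially fine, modulo two small points: the sign in $u_i'=u_i/(1-u_i\log(1+at_i))$ (which you flag yourself), and the fact that in \ref{item:a-bounded} your argument for $|1-u_i\log(1+at_i)|^{-1}$ only controls the function near $\d A$; away from $\d A$ one needs the identity $1-u_i\log(1+at_i)=u_i/u_i'>0$ (equivalently, the relation \eqref{eq:u-u'}) together with compactness of $\bar V$ to rule out zeros in the interior. The genuine gap is in \ref{item:tame_0}, and it propagates into \ref{item:tame_1}. You correctly arrive at $v_i-v_i'=\check b\,t_i^2+\check h\,t_iu_i^2$ with $\check b=a/(1+at_i)$ and $\check h=t_i^{-1}\log(1+at_i)\cdot(1-u_i\log(1+at_i))^{-1}$ bounded, and you correctly observe that $\rho_i=(t_i+u_i^2)^{-1}$ is unbounded, so one cannot just multiply through by it. But your proposed fix --- that $t_i$ is bounded away from $0$ on $V_{X,I}^{\circ}$, so $\sigma_i\asymp t_i\asymp 1$ --- is false: $X_I^{\circ}$ is only locally closed, so sequences in $V_{X,I}^{\circ}$ can converge to points of $D_i$ (where $t_i\rightarrow 0$), and in the case $I=\emptyset$ singled out in the last sentence of the lemma we have $V_{X,\emptyset}^{\circ}=V_X\setminus D$, on which $t_i\rightarrow 0$ near every point of $D_i\cap V_X$. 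So your argument proves nothing precisely where the estimate matters.

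The missing idea is the elementary weighted-average inequality of Lemma \ref{lem:inequality}: for $a,b\geq 0$ with $a+b>0$ one has $|ax+by|/(a+b)\leq |x|+|y|$. Applied with $(a,b)=(t_i,u_i^2)$ it gives $|v_i-v_i'|/\sigma_i=|\check b t_i+\check h u_i^2|/(t_i+u_i^2)\leq |\check b|+|\check h|$, with no lower bound on $t_i$ or $u_i$ required; this is the whole proof of \ref{item:tame_0}. The same device is needed twice more in \ref{item:tame_1}: after differentiating \eqref{eq:t-t'} and \eqref{eq:u-u'} and substituting $du_i=-t_i^{-1}(u_i^2\,dt_i-\eta'(w_i)t'\,dg)$ (Lemma \ref{lem:computations-d}\ref{item:du-dt}) and $\eta'(w_i)t'=t_i^{\epsilon}l$ (Lemma \ref{lem:computations}\ref{item:eta-t-bounded}), the $dt_i$-coefficient has the form $t_i\tilde c+u_i^2\check q$ and the residual $1$-form has the form $t_i^2\tilde\beta+t_iu_i^2\tilde\gamma$; both are converted into $\sigma_i$ times a bounded object only via Lemma \ref{lem:inequality} (applied coefficientwise for the form). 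Only then does substituting $dt_i=\rho_i(t_i\,dv_i+u_i^2w_i^{-1}t'\,dg)$ from Lemma \ref{lem:computations-d}\ref{item:dt-rho}, together with $\sigma_it_i^{-1}\rho_i=1$, produce the stated shape $t_ic\,dv_i+t_i^{\epsilon}q\,dg+\sigma_i\gamma$. Your outline of \ref{item:tame_1} has the right overall architecture but silently relies on the same unjustified lower bound at each regrouping step.
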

\begin{proof}
	\ref{item:theta-theta'} By Lemma \ref{lem:w_extends}\ref{item:lambda}, there is a nonvanishing holomorphic function $\lambda \in \cO_{X}^{*}(V_X)$ such that $z_{i}'=\lambda z_{i}$. In polar coordinates, we have $\lambda=|\lambda|e^{2\pi\imath\cdot \beta}$ for some smooth function $\beta$. Using the additive notation \ref{not:S1} on $\S^1$, we get $\theta'_i-\theta_i=\beta$, as claimed.
	
	\ref{item:a-bounded} Recall that by Definition \ref{def:adapted-chart}\ref{item:U-compact}, the coordinates of an adapted chart $U_X$ extend continuously to the closure $\bar{U}_X$, which is compact. Therefore, the function $a\in \cC^{\infty}(V_X)$ introduced in Lemma \ref{lem:w_extends}\ref{item:t-comparison} extends to a continuous function on a compact set $\bar{V}_X$. In particular, the function $|a|$ is bounded. 
	
	Similarly, the function $1+at_{i}$ extends to a continuous function on $\bar{V}_X$, which does not vanish by Lemma \ref{lem:w_extends}\ref{item:t-comparison}. Thus the functions $|1+at_{i}|^{-1}$ and $|\log(1+at_{i})|$ are continuous on $\bar{V}_X$, hence bounded because $\bar{V}_{X}$ is compact.
	
	On $U_X\setminus D_i$, we have $u_{i}=\eta(w_i)=(1-\log w_{i})^{-1}$ by the formulas \eqref{eq:def-w_i-u_i} and \eqref{eq:smooth_simplex}. Using the second equation from Lemma \ref{lem:w_extends}\ref{item:t-comparison}, we get that on $V \setminus \pi^{-1}(D_i)$, the following identity holds:
	\begin{equation*}
		u_{i}'-u_{i}=
		\frac{1}{1-\log w_{i}-\log(1+at_{i})}-u_i=
		\frac{1}{u_{i}^{-1}-\log(1+at_{i})}-u_i=
		\frac{u_i}{1-u_{i}\log(1+at_{i})}-u_i.
	\end{equation*}
	Therefore, 
	\begin{equation}\label{eq:u-u'}
		u_{i}'-u_{i}=\frac{u_{i}^{2}\log(1+at_{i})}{1-u_{i}\log(1+at_{i})}.
	\end{equation}
	On $V\cap \pi^{-1}(D_i)$ we have $t_i=0$ by definition \eqref{eq:def-r_i-t_i} of $t_i$, so the right-hand side of \eqref{eq:u-u'} is well defined there. By Lemma \ref{lem:intro}\ref{item:intro-independence} we have $u'_i=u_i$ on $V\cap \pi^{-1}(D_i)$, so both sides of \eqref{eq:u-u'} are zero. Thus \eqref{eq:u-u'} holds everywhere on $V$. As before, it extends to continuously to $\bar{V}$. In particular, the function $|1-u_{i}\log(1+at_{i})|^{-1}$ is continuous on $\bar{V}$: to see this, divide both sides of \eqref{eq:u-u'} by $u_i^2 \log(1+at_i)$ and use the right or left side of the equality when $u_i$ is close or away from $0$, respectively. Since the map $\pi$ is proper by Proposition \ref{prop:AX-topo}\ref{item:top-homeo-off-D}, the set $\bar{V}=\pi^{-1}(\bar{V}_{X})$ is compact. Thus $|1-u_{i}\log(1+at_{i})|^{-1}$ is bounded, as claimed.
	
	\ref{item:vp-v} 
	By Lemma \ref{lem:w_extends}\ref{item:t-comparison}, we have 
	\begin{equation}\label{eq:t-t'}
		t_{i}-t_{i}'=t_{i}-\frac{t_{i}}{1+at_{i}}=\frac{at_{i}^2}{1+at_{i}}.
	\end{equation}
	By \eqref{eq:def-v_i}, we have $v_i-v_i'=(t_i-u_i)-(t_i'-u_i')=(t_i-t_i')+(u_i'-u_i)$, so \ref{item:vp-v} follows from  \eqref{eq:t-t'} and \eqref{eq:u-u'}.

	\ref{item:tame_0}
	Since $i\not \in I$, by Lemma \ref{lem:Ui-simple}\ref{item:vi-smooth-on-strata} the functions $t_{i}$, $u_{i}$, $v_i$, $v_i'$ are smooth on $V_{X,I}^{\circ}$. Moreover, since $t_i>0$ on $V_{X,I}^{\circ}$, it  follows that $\sigma_{i}|_{V_{X,I}^{\circ}}$ is a smooth, positive function, hence the restriction  $\sigma_{i}^{-1}(v_i-v_{i}')|_{V_{X,I}^{\circ}}$ is smooth. We need to prove that it is bounded. 
	
	By \ref{item:a-bounded}, the functions $\check{b}\de \frac{a}{1+at_{i}}$ and  $(1-u_{i}\log(1+at_{i}))^{-1}$ are bounded. Since $\frac{1}{t_i}\log(1+a t_{i})=a-\tfrac{1}{2}a^{2}t_{i}+\dots$ is bounded, too, we infer that $\check{h} \de \frac{1}{t_{i}} \cdot \frac{\log(1+a t_{i})}{1-u_{i}\log(1+a t_{i})}$ is bounded as well. 
	By \ref{item:vp-v}, we have $v_{i}-v_{i}'=\check{b}t_{i}^2+\check{h} t_{i}u_{i}^{2}$. By definition \eqref{eq:rho-sigma} of $\sigma_i$, we have $\sigma_{i}=t_{i}^2+t_{i}u_{i}^2$.  Lemma \ref{lem:inequality} implies that
	\begin{equation*}
		\frac{|v_{i}-v_{i}'|}{\sigma_{i}}=\frac{|\check{b} t_{i}^{2}+\check{h} t_{i}u_{i}^{2}|}{t_{i}^{2}+t_{i}u_{i}^{2}} \leq |\check{b}|+|\check{h}|.
	\end{equation*}
	Therefore, $v_{i}-v_{i}'=\sigma_{i}b $ for some bounded function $b$, as claimed. 
	
	\ref{item:tame_1} As before, we note that since $i\not \in I$, the functions $t_{i}$ and $u_{i}$ are smooth on $V_{X,I}^{\circ}$. Thus we can apply the exterior derivative $d$ to the formulas \eqref{eq:t-t'} and \eqref{eq:u-u'}. We get bounded functions $\tilde{c},\tilde{q},\tilde{s} \in \cC^{\infty}(V_{X,I}^{\circ})$ and bounded $1$-forms $\tilde{\beta}$, $\tilde{\gamma} \in \Omega^{1}(V_{X,I}^{\circ})$, such that the following identities hold on $V_{X,I}^{\circ}$: 
	\begin{equation*}
		\begin{split}
			& d(t_{i}-t_{i}') =\frac{t_{i}^{2}\, da +2a t_{i}\, dt_{i}}{1+a t_{i}}-
			\frac{a t_{i}^{2}(a \, dt_{i}+t_{i}\, da )}{(1+a t_{i})^{2}}=
			t_{i} \tilde{c} \, dt_{i}+t_{i}^{2}\, \tilde{\beta}, \\
			& d(u_{i}' -u_{i})=\frac{2u_{i}\log(1+a t_{i})\, du_i}{1-u_{i}\log(1+a t_{i})}
			+\frac{u_{i}^{2}(a \, dt_{i}+t_{i}\ da )}{(1+a t_{i})(1-u_{i}\log(1+a t_{i}))}+\\
			+&
			\frac{u_{i}^{2}\log(1+a t_{i})}{(1-u_{i}\log(1+a t_{i}))^{2}}\cdot \left(
			\log(1+a t_{i})\, du_{i}+
			\frac{u_{i}(a \, dt_{i}+t_{i}\, da )}{1+a t_{i}}
			\right)= u_{i}t_{i}\tilde{s} \, du_{i}+
			u_{i}^{2}\tilde{q} \, dt_{i}+t_{i}u_{i}^{2}\, \tilde{\gamma} .
		\end{split}
	\end{equation*}
	By Lemma \ref{lem:computations-d}\ref{item:du-dt}, we have $du_{i}=-t_{i}^{-1}(u_{i}^{2}\, dt_{i}-\eta'(w_i)t'\, dg)$. Fix $\epsilon\in (0,1)$. By Lemma \ref{lem:computations}\ref{item:eta-t-bounded}, $\eta'(w_i)t'=t_{i}^{\epsilon}l$ for some bounded function $l$. Therefore, $du_{i}=-t_{i}^{-1}(u_{i}^{2}\, dt_{i}-t_{i}^{\epsilon}l\, dg)$. Substituting this to the above formula for $d(u_{i}'-u_{i})$, we get that for some bounded functions $\check{q}$, $s$, the following holds:
	\begin{equation*}
		d(u_{i}' -u_{i})=-u_{i}\tilde{s} (u_{i}^{2}\, dt_{i}-t_{i}^{\epsilon}l\, dg)+
		u_{i}^{2} \tilde{q} \, dt_{i}+t_{i}u_{i}^{2}\, \tilde{\gamma}=
		u_{i}^{2}\check{q}\, dt_{i}+t_{i}^{\epsilon}u_{i}s\, dg+t_{i}u_{i}^{2}\, \tilde{\gamma}.
	\end{equation*}
	Adding this equality to the above formula for $d(t_{i}-t_{i}')$ gives
	\begin{equation*}
		d(v_{i}'-v_{i})=(t_{i}\tilde{c}  + u_{i}^2\check{q} )\, dt_{i}+
		t_i^{\epsilon}u_{i} s \, dg+
		(t_{i}^{2}\, \tilde{\beta} +t_{i}u_{i}^{2}\, \tilde{\gamma} ).	
	\end{equation*}
	By Lemma \ref{lem:inequality}, $\frac{|t_{i}\tilde{c}+u_{i}^{2}\check{q}|}{t_{i}+u_{i}^{2}}\leq |\tilde{c}|+|\check{q}|$, so $t_{i}\tilde{c}+u_{i}^{2}\check{q}=(t_{i}+u_{i}^{2})c$ for some bounded function $c$. By definition \eqref{eq:rho-sigma} of $\sigma_i$, we get $t_{i}\tilde{c}+u_{i}^{2}\check{s}=\sigma_{i}t_{i}^{-1}c$. 
	
	Recall that, by Definition \ref{def:adapted-chart}, the holomorphic coordinates on $U_X$ are $(z_{i_1},\dots,z_{i_n})$. Write $x_{j}$, $y_{j}$ for the real and imaginary part of $z_{i_j}$, respectively, and put $\gamma_{2j-1}=dx_{j}$, $\gamma_{2j}=dy_{j}$. Then we can write $\tilde{\beta}=\sum_{j=1}^{2n} \tilde{b}_{j}\gamma_{j}$ and $\tilde{\gamma}=\sum_{j=1}^{2n} \tilde{q}_{j}\gamma_{j}$ for some bounded functions $\tilde{b}_{j}$, $\tilde{q}_{j}$. As before, Lemma \ref{lem:inequality} shows that there is a bounded function $\tilde{c}_{j}$ such that   $t_{i}^{2}\tilde{b}_{j}+t_{i}u_{i}^{2}\tilde{q}_{j}=(t_{i}^{2}+t_{i}u_{i}^{2})\tilde{c}_{j}=\sigma_{i}\tilde{c}_{j}$, where the last equality follows from definition \eqref{eq:rho-sigma} of $\sigma_{i}$. Therefore,  $t_{i}^{2}\tilde{\beta}+t_{i}u_{i}^{2}\tilde{\gamma}=\sigma_{i}\sum_{j=1}^{2n}\tilde{c}_{j}\gamma_{j}=\sigma_{i}\gamma$ for some $1$-form $\gamma$ bounded from $X$. Substituting the above equalities to the formula for $d(v_i'-v_i)$, we get
	\begin{equation*}
		d(v_{i}'-v_i)=\sigma_{i}t_{i}^{-1}c\, dt_{i}+
		t_i^{\epsilon}u_{i} s \, dg+
		\sigma_{i}\gamma=
		\sigma_{i}(ct_{i}^{-1}\, dt_{i}+\gamma)+
		t_i^{\epsilon}u_{i} s \, dg.
	\end{equation*}
	In case $I\neq \emptyset$, we have $u_{i}=0$, so $v_{i}=t_{i}$ and $\sigma_{i}=t_{i}^2$, and therefore, the above equality reads as $d(v_{i}-v_{i}')=t_{i}c\, dv_{i}+\sigma_{i}\gamma$, which is what we wanted to prove. In case $I=\emptyset$, we substitute the formula for $dt_{i}$ from Lemma \ref{lem:computations-d}\ref{item:dt-rho}. Since by \eqref{eq:rho-sigma} we have $\sigma_{i}t_{i}^{-1}\rho_{i}=1$, we get
	\begin{equation*}
		\begin{split}
			& d(v_{i}'-v_i)=
			\sigma_{i}(ct_{i}^{-1}\, dt_{i}+\gamma)+
			t_i^{\epsilon}u_{i} s \, dg
			=\\
			=\ &
			\sigma_{i}ct_{i}^{-1}\, (\rho_{i}(t_{i}\, dv_{i}+u_{i}^{2}w_{i}^{-1}t'\, dg)) +\sigma_{i}\gamma+
			t_i^{\epsilon}u_{i} s \, dg
			= \\
			=\ & 
			ct_{i}\, dv_{i}+(cu_{i}^{2}w_{i}^{-1}t'+t_{i}^{\epsilon}u_{i}s)\, dg+\sigma_{i}\gamma.
		\end{split}
	\end{equation*}
	By Lemma \ref{lem:computations}\ref{item:eta-u}, $u_{i}^{2}w_{i}^{-1}=\eta'(w_{i})$, and by Lemma \ref{lem:computations}\ref{item:eta-t-bounded}, $\eta'(w_{i})t'=t_{i}^{\epsilon}l$ for some bounded function $l$. Hence the coefficient near $dg$ above equals $t_{i}^{\epsilon}q$ for some bounded function $q$, which proves 
	\ref{item:tame_1}. 
\end{proof}

\subsubsection{The functions $r_j$, $w_j$ are $\cC^1$.} We aim at proving Lemma \ref{lem:C1-trans}, which asserts that the  transition maps for our atlas are $\cC^1$. The key step is to prove that in a chart $(U_i,\psi_i)$ introduced in \eqref{eq:AC1-chart}, all functions $r_j$, $w_j$ for $j\neq i$ are $\cC^1$. As a consequence, in Lemma \ref{lem:v1C1} we will infer that the function $v_i$, which is not a coordinate of $\psi_i$ (see formula \eqref{eq:AC1-chart}); as well as the pullbacks of all smooth functions from $U_X$, are $\cC^1$. In Lemmas \ref{lem:w_smooth}\ref{item:w-smooth} and \ref{lem:pullbacks}\ref{item:pullbacks} we will prove a stronger result, replacing  $\cC^1$ by $\cC^{\infty}$. Nonetheless, we provide an independent, direct argument here, too. This way, we give a complete construction of a $\cC^1$-atlas without introducing further technical tools, and illustrate computations behind the proof of Lemmas \ref{lem:w_smooth}, \ref{lem:pullbacks}, which is more complicated.

\begin{notation}\label{not:S}
	Fix a chart $U_X$ adapted to $f$ such that $U_X\cap D\neq \emptyset$. We reorder the components of $D$ so that the associated index set \eqref{eq:index-set} is $S=\{1,\dots, k\}$. By Lemma \ref{lem:C1chart} the map $\psi_{1}\colon U_1\to Q_{k,n}$ is a homeomorphism onto an open subset of a smooth manifold with boundary, so  it endows $U_1$ with a smooth structure. 
	In other words, we take $(g,v_2,\dots,v_k,\rest)$ as smooth coordinates on $U_1$. 
	
	For a subset $V\subseteq U_1$, we denote by $\cS(V)$ (respectively, $\cS^{l}(V)$ for $l\in \N$) the $\R$-algebra of functions on $V$ which are smooth (respectively, $\cC^{l}$), with respect to this smooth structure. We do not use the symbol \enquote{$\cC^{\infty}$} to avoid confusion with the eventual $\cC^{\infty}$-structure on $A$, which we will introduce in Section \ref{sec:AX-smooth}, and whose restriction to $U_1$ will differ from the $\cC^{\infty}$-structure that $U_1$ inherits via $\psi_1$. Since by Lemma \ref{lem:C1chart} the map $\pi|_{U_1\setminus \d A}$ is a diffeomorphism onto its image,  we can -- identifying functions on $X$ with their pullbacks to $A$  -- write an inclusion $\cC^{\infty}(U_X\setminus D)\subseteq \cS(U_1\setminus \d A)$.
	
	For $j\in \{1,\dots, k\}$, we introduce differential operators $\d_{j}\colon \cS^{l}(V)\to \cS^{l-1}(V)$ by 
	\begin{equation*}
		\d_{1}=\tfrac{\d}{\d g},\quad \d_{i}=\tfrac{\d}{\d v_{i}}\quad \mbox{for } i\in \{2,\dots, k\},
	\end{equation*}
	and for $\alpha=(\alpha_1,\dots,\alpha_k)\in \N^k=\Z_{\geq 0}^k$ we write $\d^{\alpha}=\d_{1}^{\alpha_{1}}\dots\d^{\alpha_k}_{k}$, $|\alpha|=\alpha_1+\dots+\alpha_k$. 
	
	For $i\in \{2,\dots, k\}$, we put
	\begin{equation*}
		W_{i}=\{x\in U_1: w_{i}(x)>0\}\quad\mbox{and}\quad T_{i}=\{x\in U_1:t_{i}(x)>0\}.
	\end{equation*}
\end{notation}

Using the above notation, we have the following immediate corollary of Lemma \ref{lem:computations-d}.

\begin{lema}\label{lem:mixed-vanish}
	For every $i,j\in \{2,\dots,k\}$, such that $i\neq j$, we have
	\begin{equation*}
		\d_j v_i=0,\quad 
		\d_j r_i=0,\quad 
		\d_j t_i=0,\quad
		\d_j w_i=0,\quad
		\d_j u_i=0\quad\mbox{and}\quad
		\d_{j}t^{(l)}=0\mbox{ for any } l\geq 0.
	\end{equation*}
\end{lema}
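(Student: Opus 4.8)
The plan is to obtain the lemma directly from the differential identities of Lemma~\ref{lem:computations-d}. First I would work on $U_1\setminus\partial A=U_X\setminus D$, where by Lemma~\ref{lem:C1chart} — and the proof of Lemma~\ref{lem:coordinates-off-D} — the tuple $(g,v_2,\dots,v_k,\rest)$ is an honest smooth coordinate system agreeing with the smooth structure pulled back from $X\setminus D$; thus the $1$-form equalities of Lemma~\ref{lem:computations-d}, stated on $U_X\setminus D$, can legitimately be read in these coordinates. For a smooth $\phi$ on $U_1\setminus\partial A$ one then has $\partial_j\phi=\langle d\phi,\,\partial/\partial v_j\rangle$, and since $g$ and $v_i$ (with $i\in\{2,\dots,k\}$, $i\neq j$) are coordinates distinct from $v_j$, the contractions $\langle dg,\partial/\partial v_j\rangle$ and $\langle dv_i,\partial/\partial v_j\rangle$ both vanish. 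Hence it suffices to verify that each of $dv_i$, $dr_i$, $dt_i$, $dw_i$, $du_i$ lies in the $\cC^{\infty}(U_X\setminus D)$-span of $\{dg,dv_i\}$.

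Each of these is read off from Lemma~\ref{lem:computations-d}. For $v_i$ it is trivial. For $t_i$ it is part~\ref{item:dt-g} (equivalently part~\ref{item:dt-rho}), which expresses $dt_i$ as a combination of $dv_i$ and $dg$; feeding this into part~\ref{item:dr}, namely $dr_i=m_i^{-1}t_i^{-2}r_i\,dt_i$, gives the same for $dr_i$. For $w_i$ and $u_i$ the formulas in parts~\ref{item:dw} and~\ref{item:du} already display $dw_i$ and $du_i$ as combinations of $dv_i$ and $dg$. Contracting each of these five forms with $\partial/\partial v_j$ produces the first five vanishing statements. Finally, by~\eqref{eq:def-tl} the function $t^{(l)}$ depends only on the coordinate $g$, so $dt^{(l)}$ is a multiple of $dg$ and therefore $\partial_j t^{(l)}=0$ for every $l\geq0$ and every $j\in\{2,\dots,k\}$.

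I do not expect any genuine obstacle: as the surrounding text indicates, this is an immediate corollary of Lemma~\ref{lem:computations-d}. The only point requiring care is the bookkeeping just described — that on $U_1\setminus\partial A$ the coordinate differentials $dg$, $dv_i$ are the right objects to pair against the operators $\partial_j$ of Notation~\ref{not:S}, and that all the relevant $1$-forms from Lemma~\ref{lem:computations-d} contain neither a $dv_j$ component ($j\neq i$) nor a $d\rest$ component, so that the contractions vanish identically.
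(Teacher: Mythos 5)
Your argument is correct and is essentially the paper's own proof: both express each of $dv_i$, $dr_i$, $dt_i$, $dw_i$, $du_i$ via Lemma~\ref{lem:computations-d} as combinations of $dg$ and $dv_i$ only, and then read off that the $dv_j$-coefficient (i.e.\ $\d_j$ of the function) vanishes for $j\neq i,1$, with $t^{(l)}$ handled by its dependence on $g$ alone. The bookkeeping point you flag — that on $U_1\setminus\d A$ the coframe $dg,dv_2,\dots,dv_k,d\rest$ is dual to the operators $\d_j$ of Notation~\ref{not:S} — is exactly the observation the paper makes.
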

\begin{proof}
	Since $(g,v_2,\dots,v_k)$ are coordinates on $U_1$, we have $\d_j v_i=0$ for $j\neq i$ and $\d_{j}g=0$ for $j\neq 1$. This proves the first equality. The last one follows by definition \eqref{eq:def-tl} of $t^{(l)}$. 
	
	For the remaining ones, note that the basis of $T^{*}_{y}U_1$ at any point $y\in U_1$ is given by $dg,dv_2,\dots,d v_k$ and $1$-forms $dz$, where $z$ ranges through the coordinates of $\rest$. For a function $h$, the coefficient of $dh$ near $dv_j$ is $\d_{j} h$. By Lemma \ref{lem:computations-d}\ref{item:dr}, \ref{item:dt-rho}, \ref{item:dw} and \ref{item:du}, this coefficient is zero for $h=r_i,t_i,w_i,u_i$ whenever $i\neq 1,j$.
\end{proof}

\begin{lema}\label{lem:v1C1}
	In the setting of Notation \ref{not:S}, we have 
	\begin{enumerate}
		\item\label{item:t-smooth}	The function $t$ is smooth on $U_1$, i.e.\ $t\in \cS(U_1)$,
		\item\label{item:viwi-C1} For all $i\in \{1,\dots, k\}$, the functions $v_i,w_{i}$ are in $\cS^{1}(U_1)$.
		\item\label{item:ri-C1} For all $i\in \{1,\dots, k\}$, the function $r_{i}$ is in $\cS^{1}(U_1)$.
		\item\label{item:pullbacks-C1} The pullback of any $\cC^{1}$-function $h$ on $U_X$ is in $\cS^{1}(U_1)$, i.e.\ $\cC^{1}(U_X)\subseteq \cS^{1}(U_1)$.
	\end{enumerate}
\end{lema}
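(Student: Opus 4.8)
The claim is Lemma \ref{lem:v1C1}, which upgrades some of the continuity statements of Lemma \ref{lem:intro} to $\cC^1$-regularity in the coordinates $(g,v_2,\dots,v_k,\rest)$ that $\psi_1$ puts on $U_1$. My plan is to prove the four parts in the order \ref{item:t-smooth}, \ref{item:viwi-C1}, \ref{item:ri-C1}, \ref{item:pullbacks-C1}, since each relies on the previous, and to work separately on the open dense set $U_1\setminus\d A$ (where I can use the explicit formulas of Lemmas \ref{lem:computations}, \ref{lem:computations-d} and Notation \ref{not:S}'s operators $\d_j$) and on the boundary $\d A\cap U_1$ (where I need to check that partial derivatives extend continuously, using boundedness estimates of the type proved in Lemma \ref{lem:dv} and Lemma \ref{lem:computations}\ref{item:eta-t-bounded}).

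For \ref{item:t-smooth}: $t=\eta^{-1}(g)=e^{1-g^{-1}}$ away from $\{g=0\}$ and $t|_{\d A}=0$; since $\zeta(s)=s^{-2}e^{1-s^{-1}}$ and all its derivatives extend continuously by $0$ at $s=0$, the function $g\mapsto t$ is $\cC^\infty$ on $[0,\infty)$, and $t$ depends only on $g$, hence $t\in\cS(U_1)$. For \ref{item:viwi-C1}: on $U_1\setminus\d A$, Lemma \ref{lem:mixed-vanish} shows $\d_j v_i=0=\d_j w_i$ for $j\neq i$, so I only need to control $\d_1$ and $\d_i$. From $v_i=t_i-u_i$ and $w_i=t/t_i$ I invert Lemma \ref{lem:computations-d}\ref{item:dt-g} to express $dt_i$ — hence $dv_i$ and $dw_i$ in terms of $dt_i$, $dg$. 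On $U_1$ we have $w_1>\tfrac1{n+1}$ and $\sum w_i=1$ (Lemma \ref{lem:intro}\ref{item:intro-sum}), so the denominators $t_i(1+g\log t_i)^2+g^2$ and $1+g\log t_i$ appearing in Lemma \ref{lem:computations}\ref{item:rho-gt},\ref{item:u-gt} stay bounded away from zero as we approach the stratum $\{t_i>0\}$; and on the stratum $\{t_i=0\}$ one has $v_i=-u_i$, $g=(1-\log w_i)^{-1}$ with $w_i$ bounded below, so everything is smooth there. The only delicate interface is near $\{t_i=0,\,w_i=0\}$, i.e. where $u_i\to 0$; there I use Lemma \ref{lem:computations}\ref{item:eta-t-bounded} to see that the derivative $\eta'(w_i)t'$ — which controls the $dg$-component of $dv_i$ via Lemma \ref{lem:computations-d}\ref{item:dv} — is $t_i^\epsilon\cdot(\text{bounded})$, hence tends to $0$; this matches the value of $\d_1 v_i$ computed on the adjacent stratum, so $\d_1 v_i$ and $\d_i v_i$ extend continuously across $\d A$. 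The same bookkeeping, using Lemma \ref{lem:computations-d}\ref{item:dw},\ref{item:du}, handles $w_i$ (note $\d_i w_i=-\rho_i w_i$, which extends continuously since $\rho_i$ does by \ref{item:rho-gt}, and $\d_1 w_i=\rho_i t'$ which tends to $0$ as $t_i\to 0$ by \ref{item:eta-t-bounded}-type estimates). Thus $v_i,w_i\in\cS^1(U_1)$.

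For \ref{item:ri-C1}: on $\{t_i>0\}$, $r_i=e^{-(m_i t_i)^{-1}}$ with $t_i\in\cS^1$ by part \ref{item:viwi-C1} (expressing $t_i$ via $v_i,g,w_i$), and $r_i|_{\{t_i=0\}}=0$; since $s\mapsto e^{-(m_is)^{-1}}$ together with its first derivative extends continuously by $0$ at $s=0$, and since $dt_i$ stays controlled by the computations above (the potentially singular factor $e^{(m_it_i)^{-1}}$ in $dt_i$, Remark \ref{rem:bounded-from-X}, is killed against $e^{-(m_it_i)^{-1}}$), the chain rule gives $r_i\in\cS^1(U_1)$. For \ref{item:pullbacks-C1}: the coordinates $z_{i_j}$ of $U_X$ are, up to the smooth-and-bounded angular factors $\theta_{i_j}$ (which are coordinates of $\rest$, hence in $\cS$), products of powers of $r_j$'s; combining \ref{item:ri-C1} with the algebra structure of $\cS^1(U_1)$ shows each $z_{i_j}$, and hence any $\cC^1$-function of them, lies in $\cS^1(U_1)$.

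The main obstacle is the boundary analysis in part \ref{item:viwi-C1}: one must show that the first partial derivatives of $v_i$ and $w_i$, computed by the rational expressions of Lemma \ref{lem:computations-d} on each stratum $A_I^\circ\cap U_1$, glue to a single continuous function on $U_1$. The subtlety is precisely that the relevant denominators involve $g$, $\log t_i$, and $\log w_i$, which individually blow up; the resolution is the hybrid scaling baked into the functions $\rho_i,\sigma_i$ and the estimate of Lemma \ref{lem:computations}\ref{item:eta-t-bounded}, which show these apparent singularities cancel, leaving $t_i^\epsilon$- or $\sigma_i$-type factors that vanish on $\d A$. Once this continuity-across-strata is established for $\d_1,\d_i$ — and $\d_j$ ($j\neq 1,i$) vanishes identically by Lemma \ref{lem:mixed-vanish} — the rest is routine.
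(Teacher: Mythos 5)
Your overall strategy for part \ref{item:viwi-C1} — read off $\d_1$ and $\d_i$ of $w_i$, $v_i$ from the identities of Lemma \ref{lem:computations-d} on $U_1\setminus\d A$, and use the cancellations encoded in $\rho_i$ and in Lemma \ref{lem:computations}\ref{item:eta-t-bounded} to show these derivatives extend continuously across $\d A$ — is exactly the paper's, and it works for $i\in\{2,\dots,k\}$ (where, in fact, $v_i$ is literally a coordinate of $\psi_1$, so no analysis is needed for it at all). The genuine gap is the case $i=1$. You reduce to controlling only $\d_1$ and $\d_i$ by citing Lemma \ref{lem:mixed-vanish}, but that lemma is stated, and is only true, for $i,j\in\{2,\dots,k\}$. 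For $i=1$ the reduction fails: since $\sum_{i\in S}w_i=1$ one has $\d_jw_1=-\d_jw_j\neq0$ for $j\geq2$, and likewise $\d_jv_1\neq0$ (this nonvanishing is essential later — the proof of Lemma \ref{lem:C1-trans} computes $\d_2v_1=-w_1^{-1}\rho_1^{-1}w_2\rho_2\neq0$). Moreover $v_1$ is not a coordinate, and Lemma \ref{lem:computations-d}\ref{item:dw} writes $dw_1$ in terms of $dv_1$, which is not a coordinate differential, so you cannot extract $\d_jw_1$ from it directly. The missing argument is the one the paper uses: first get $w_i\in\cS^1(U_1)$ for $i\geq2$ as you do, then $w_1=1-\sum_{i=2}^kw_i\in\cS^1(U_1)$, and finally $v_1=t_1-u_1=tw_1^{-1}-\eta(w_1)\in\cS^1(U_1)$ because $t\in\cS(U_1)$ by part \ref{item:t-smooth}, $\eta|_{(0,1]}$ is smooth, and $w_1>\tfrac1{n+1}$ on $U_1$ by \eqref{eq:Ui}.

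Two further points. First, in part \ref{item:ri-C1} your parenthetical ``$t_i\in\cS^1$ by part \ref{item:viwi-C1}'' is not established for $i\geq2$ and cannot be obtained by composing with $\eta$ (which is not $\cC^1$ at $0$); the paper only proves $t_i\in\cS^1(T_i)$ and must argue separately on the zero locus of $r_i$, showing that $t_i\,\d_jt_i$ is bounded and $t_i^{-3}r_i\to0$, so that $\d_jr_i\to0$ there. Your ``exponential kills the singularity'' remark is the right mechanism, but it has to be applied to $\d_jt_i$ (which may blow up), not to a purported $\cC^1$ function $t_i$. Only $t_1=tw_1^{-1}$ is in $\cS^1(U_1)$, again via the lower bound on $w_1$. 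Second, a small slip: on $\{t_i=0\}$ it is $u_i$, not $g$, that equals $(1-\log w_i)^{-1}$ ($g$ vanishes on $\d A$), and the denominator $t_i(1+g\log t_i)^2+g^2$ does degenerate as one approaches $\d A\cap\{t_i=0\}$; the continuity of $\d_iw_i=-\rho_iw_i$ and $\d_1w_i=\rho_it'$ comes instead from rewriting them with the denominator $t_i(1-\log w_i)^2+1\geq1$ and numerators vanishing with $w_i$ (resp.\ $t_i,w_i$), as in the paper. Parts \ref{item:t-smooth} and \ref{item:pullbacks-C1} are fine and match the paper.
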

\begin{proof}
	\ref{item:t-smooth} Since $t=\eta^{-1}(g)=e^{1-g^{-1}}$ by \eqref{eq:def-t-g}, and $g$ is one of the coordinates on $U_1$, we have $t\in \cS(U_1)$.
	
	\ref{item:viwi-C1} Fix $i\in \{2,\dots, k\}$. By definition \eqref{eq:AC1-chart} of the coordinate chart $\psi_{1}$, we have  $v_{i}\in \cS^{1}(U_1)$. 
	
	By Lemma \ref{lem:computations-d}\ref{item:dw}, we have $\d_i w_i=-\rho_{i}w_{i}$, $\d_1 w_i=\rho_{i}t'$ and $\d_j w_i=0$ for $j\neq i,1$. Recall that $\rho_i$ was defined in \eqref{eq:rho-sigma} by  $\rho_{i}=(t_{i}+u_{i}^{2})^{-1}$, so since $t_i,u_i$ are continuous by Lemma \ref{lem:intro}\ref{item:intro-extends}, the function $\rho_{i}$ extends to a continuous function away from the common zero locus of $t_i$ and $u_i$, i.e.\ on $T_i\cup W_i$. It follows that $w_{i}\in \cS^{1}(T_i\cup W_i)$. We claim that $w_{i}\in \cS^{1}(U_1)$. 
	
	By definitions \eqref{eq:rho-sigma} and \eqref{eq:def-w_i-u_i}, we have $\rho_{i}=(t_{i}+u_i^2)^{-1}$ and $u_i=\eta(w_i)=(1-\log w_{i})^{-1}$,  so
	\begin{equation*}
		-\d_{i}w_{i}=\rho_{i}w_{i}=\frac{w_{i}}{t_{i}+u_{i}^{2}}=\frac{w_{i}(1-\log w_{i})^{2}}{t_{i}(1-\log w_{i})^{2}+1}\rightarrow 0\quad \mbox{as } w_{i}\rightarrow 0.
	\end{equation*}
	By Lemma \ref{lem:computations}\ref{item:t'} and \ref{item:t=tiwi}, we have $t'=t(1-\log t)^{2}$ and $t=t_{i}w_i$, so, using Lemma~\ref{lem:computations-d}\ref{item:dw}, we get
	\begin{equation*}
		\d_{1}w_{i}=\rho_{i} t'=\frac{t'(1-\log w_{i})^{2}}{t_{i}(1-\log w_{i})^{2}+1}=\frac{t_{i}w_{i}(1-\log t_{i}-\log w_{i})^{2}(1-\log w_{i})^{2}}{t_{i}(1-\log w_{i})^2+1} \rightarrow 0 \quad\mbox{as } t_{i},w_i\rightarrow 0.
	\end{equation*}
	Therefore, $\d_{i}w_{i}$ and $\d_1 w_{i}$ extend to continuous functions on $U_1$, so $w_i\in \cS^{1}(U_1)$, as claimed.
	
	By Lemma \ref{lem:intro}\ref{item:intro-sum}, we have $w_{1}=-\sum_{i=2}^{k}w_{i}$, so $w_{1}\in \cS^{1}(U_1)$, too. The remaining function $v_1$ is defined in \eqref{eq:def-v_i} as $v_{1}=t_{1}-u_{1}=tw_{1}^{-1}-\eta(w_1)$. Recall that $t\in \cS(U_1)$ by \ref{item:t-smooth}, and the restriction $\eta|_{(0,1]}$ is smooth. Therefore, since $w_{1}|_{U_{1}}>0$ by definition \eqref{eq:Ui} of $U_{1}$, we infer that $v_{1}\in \cS^{1}(U_1)$, as claimed.
	
	\ref{item:ri-C1} Consider first the case $i\in \{2,\dots, k\}$. Lemma \ref{lem:computations-d}\ref{item:dt-g} implies that $\d_{1}t_{i}$ and $\d_{i}t_{i}$ are continuous on $T_i$, and $\d_{j}t_{i}=0$ for $j\neq i,1$, so like before we conclude that $t_{i}\in \cS^{1}(T_i)$. By Lemma \ref{lem:computations}\ref{item:t-r}, we have $r_i=e^{(-m_it_i)^{-1}}$, so $r_i\in \cS^{1}(T_i)$, too. We need to show that $r_i$ is $\cC^1$ on its zero locus $U_1\setminus T_i$.
	
	First, we claim that for every $j\in \{1,\dots, k\}$, the function $t_{i}\cdot \d_{j} t_{i}$ is bounded. By Lemma \ref{lem:computations-d}\ref{item:dt-rho}, we have $\d_{i}t_{i}=\rho_{i}t_{i}$, $\d_{1}t_{i}=\rho_{i}u_{i}^{2}w_{i}^{-1}t'$ and $\d_jt_i=0$ for $j\neq 1,i$. By definition \eqref{eq:rho-sigma} of $\rho_{i}$, we have $\rho_{i}t_{i}=(1+u_{i}^{2}t_{i}^{-1})^{-1}\leq 1$, so $\d_{i}t_{i}$ is bounded, hence $t_i \cdot \d_i t_i$ is bounded, too, as needed. To deal with $\d_{1}t_{i}=\rho_{i}u_{i}^{2}w_{i}^{-1}t'$, recall that $u_{i}^{2}w_{i}^{-1}=\eta'(w_{i})$ by Lemma \ref{lem:computations}\ref{item:eta-u}, and by Lemma \ref{lem:computations}\ref{item:eta-t-bounded}, we have $\eta'(w_i)t'=t_{i}^{\epsilon}l$ for some bounded function $l$ and $\epsilon>0$. It follows that  $\d_1t_i=\rho_it_{i}^{\epsilon} l$. We have seen that $\rho_{i}t_{i}=(1+t_{i}^{-1}u_{i}^{2})^{-1}$ is bounded, so $t_{i}\d_{1}t_{i}=\rho_{i}t_{i}^{\epsilon+1}l$ is bounded, too, which proves the claim. 
	
	Now by Lemma \ref{lem:computations-d}\ref{item:dr}, we have $\d_{j}r_{i}=m_{i}^{-1}\cdot t_{i}^{-3}r_{i}\cdot t_{i}\d_j t_i$. By Lemma \ref{lem:computations}\ref{item:t-r}, we have $r_{i}=e^{(-m_{i}t_{i})^{-1}}$, so $t_{i}^{-3}r_{i}\rightarrow 0$ as $t_{i}\rightarrow 0$. Since we have shown that $t_{i}\d_{j}t_{i}$ is bounded, we infer that $\d_j r_{i}\rightarrow 0$ as $t_{i}\rightarrow 0$, so $r_{i}\in \cS^{1}(U_1)$, as needed.
	
	It remains to show that $r_{1}\in \cS^{1}(U_1)$. By Lemma \ref{lem:computations}\ref{item:t=tiwi} we have  $t_{1}=tw_{1}^{-1}$, and $w_{1}>0$ on $U_1$ by definition  \eqref{eq:Ui}. By \ref{item:t-smooth} and \ref{item:viwi-C1}, we have $t,w_{1}\in \cS^{1}(U_1)$, so $t_1\in \cS^{1}(U_1)$. Now Lemma \ref{lem:computations}\ref{item:t-r} gives $r_{1}=e^{-(m_{1}t_{1})^{-1}}\in \cS^{1}(U_1)$, as claimed.
	
	\ref{item:pullbacks-C1} For $i\in \{1,\dots,k\}$ we have $r_{i}\in \cS^{1}(U_1)$ by \ref{item:pullbacks} and $\theta_{i}\in \cS^{1}(U_1)$ by definition \eqref{eq:AC1-chart} of the chart. Thus the smooth coordinates $r_i\cos(2\pi\theta_i)$ and $r_i\sin(2\pi\theta_i)$ on $U_X$ are in $\cS^1(U_1)$. By \eqref{eq:AC1-chart}, the remaining coordinates $z_{i_{k+1}},\dots,z_{i_n}$ are in $\cS^{1}(U_1)$, too. Now, part \ref{item:pullbacks-C1} follows from the chain rule.
\end{proof}

\subsubsection{The A'Campo space is a $\cC^1$-manifold}\label{sec:C1-proof} We are now in the position to prove the remaining results stated in Section \ref{sec:AX-C1-statements}, which endow $A$ with a $\cC^1$-structure.

\begin{proof}[Proof of Lemma \ref{lem:C1-trans}]
	Fix two $\cC^{1}$-charts $(U_i,\psi_i)$ and $(U_j',\psi_{j}')$ associated to charts $(U_X,\psi_X)$ and $(U'_{X},\psi_{X}')$ adapted to $f$. By Lemma \ref{lem:C1chart}, the transition map $\psi_{j}'\circ \psi_{i}^{-1}|_{\psi_{i}(U_i\cap U_j')}$ is a homeomorphism between open subsets of $Q_{k,n}$ and $Q_{k',n}$. To prove that it is a $\cC^1$-diffeomorphism, we need to prove that it is $\cC^1$ at $\psi_{i}(x)$ for any $x\in U_i\cap U_{j}'$, and that its Jacobian determinant does not vanish there.
	
	If $x\not\in \d A$ then near $x$ we have the natural smooth structure pulled back from $X\setminus D$, and by Lemma \ref{lem:C1chart} both $\psi_{i}$ and $\psi_{j}$ are diffeomorphisms with respect to this structure; so the assertion follows.
	
	Assume $x\in \d A$, so $x\in A_{I}^{\circ}$ for some nonempty $I\subseteq \{1,\dots, N\}$, say $I=\{1,\dots, k\}$. Without loss of generality, we can assume that $U_{X}\subseteq U_{X}'$,  and that the associated index set \eqref{eq:index-set} for $U_X$ is $\{1,\dots, k\}$. 
	
	Assume first that $\psi_{X}=\psi_{X}'|_{U_X}$. Then the index set associated to $U_{X}'$ is $\{1,\dots, k\}$, too. Say that $i=1$, $j=2$, and put $V=U_1\cap U_2$. The map $\psi_1$ is the smooth chart used in Notation \ref{not:S}, and $\psi_2$ differs from it by replacing $v_2$ with $v_1$. Thus it is sufficient to show that $v_2\in \cS^{1}(V)$, and that the Jacobian determinant of $\psi_{2}\circ\psi_{1}^{-1}$, which equals $\d_{2}v_1$, does not vanish on $V$.
	
	The first assertion follows from Lemma \ref{lem:v1C1}\ref{item:viwi-C1}. For the second one, recall from \eqref{eq:def-v_i} that $v_1=t_{1}-u_{1}=tw_1^{-1}-\eta(w_1)$, and this formula is valid on $U_1$ since $w_1|_{U_1}>0$ by definition \eqref{eq:Ui} of $U_1$. By Lemma \ref{lem:mixed-vanish} we have $\d_{2} t=0$, so by Lemma \ref{lem:computations}\ref{item:t=tiwi} and \ref{item:eta-u}:
	\begin{equation*}
		\d_2 v_1=-(tw_1^{-2}+\eta'(w_1))\cdot \d_2 w_1=-w_{1}^{-1}\cdot (t_{1}+u_{1}^{2})\cdot \d_2 w_1=
		-w_{1}^{-1}\rho_{1}^{-1}\cdot \d_2 w_1,
	\end{equation*}
	where the last equality follows from definition \eqref{eq:rho-sigma} of $\rho_1$. 
	
	By Lemma \ref{lem:intro}\ref{item:intro-sum}, we have $w_{1}=-\sum_{i=2}^{k} w_i$, so Lemma \ref{lem:mixed-vanish} implies that  $\d_{2} w_{1}=-\d_2 w_2$. Now by Lemma  \ref{lem:computations-d}\ref{item:dw}, we have $\d_2 w_2=-\rho_2 w_2$, so $\d_2 w_1=\rho_2 w_2$, and therefore 
	\begin{equation*}
		\d_2 v_1=-w_{1}^{-1}\rho_{1}^{-1}\cdot w_{2}\rho_2.
	\end{equation*}
	By definition \eqref{eq:Ui} of $U_i$, we have $w_1,w_2>0$ on $V=U_1\cap U_2$, hence $\rho_1,\rho_2>0$ on $V$ by  definition \eqref{eq:rho-sigma} of $\rho_i$. We conclude that $\d_2 v_1\neq 0$, as needed.
	\smallskip
	
	Consider now the general case, where $\psi_{X}$ may be different than $\psi_{X}'$. Since $U_X\subseteq U_X'$, the index set $S'$ associated to $U_{X}'$ contains $\{1,\dots, k\}$, so, say, $S'=\{1,\dots, l\}$ for some $l\geq k$. The special case considered above allows to assume $i=j=1$. We can write the transition map $\psi_{1}'\circ \psi_{1}^{-1}$ as a composition $\varphi_{2}\circ\varphi_1$, where
	\begin{equation*}
		\begin{tikzcd}
			(g,v_2,\dots,v_k,\rest)
			\ar[r, mapsto, "\varphi_1"]
			& 
			(g,v_2',\dots,v_k',\theta'_1,\dots,\theta'_k,z'_{i_{k+1}},\dots z'_{i_{n}})
			\ar[r, mapsto, "\varphi_2"] 
			&
			(g,v_2',\dots, v_{l}',\rest').
		\end{tikzcd}
	\end{equation*}
	
	We will first show that $\varphi_1$ is a $\cC^{1}$-diffeomorphism at $x$. By Lemma \ref{lem:v1C1}\ref{item:pullbacks-C1}, the pullbacks of smooth functions on $U_X$ are in $\cS^{1}(U_1)$, so $z_{i_{k+1}}',\dots,z_{i_n}'\in \cS^{1}(U_1)$, and, by Lemma \ref{lem:dv}\ref{item:theta-theta'}, $\theta_{i}'-\theta_{i}\in \cS^{1}(U_1)$ for all $i\in \{1,\dots, k\}$. Since $g,\theta_{1},\dots,\theta_{k}$ are coordinates on $U_1$, we infer that $g,\theta_{1}',\dots,\theta_{k}'\in \cS^{1}(U_1)$.
	
	Fix $i\in \{2,\dots,k\}$. We claim that $v_{i}'\in \cS^{1}(T_i)$. By Lemma  \ref{lem:dv}\ref{item:vp-v}, it is enough to show that the functions $a$, $t_{i}$ and $u_{i}$ are in $\cS^{1}(T_{i})$. Since the function $a$ is smooth on $U_{X}$, it is in $\cS^{1}(U_1)$ by Lemma \ref{lem:v1C1}\ref{item:pullbacks-C1}. By Lemma  \ref{lem:v1C1}\ref{item:ri-C1}, $r_{i}\in \cS^{1}(U_1)$, so since $t_{i}=-(m_i\log r_i)^{-1}$, we have $t_{i}\in \cS^{1}(T_i)$. Now by Lemma \ref{lem:computations-d}\ref{item:du} we have $\d_{j}u_{i}=0$ for $j\neq 1,i$ and $\d_{i}u_{i}=-\rho_{i}u_{i}^{2}=-(t_{i}u_{i}^{-2}+1)^{-1}$, so $\d_{j}u_{i}$ is continuous on $U_1$ for $j\neq 1$. Eventually, $\d_{1}u_{i}=\rho_{i}\eta'(w_{i})t'=\rho_{i}t_{i}(1+g\log t_{i})^{-1}$ by Lemma \ref{lem:computations}\ref{item:eta-t}, so $\d_{1}u_i$ is continuous on $T_{i}$. Hence $u_{i}\in \cS^{1}(T_i)$, and therefore $v_{i}'\in \cS^{1}(T_i)$, as claimed. 
	In turn, by Lemma \ref{lem:dv}\ref{item:tame_1} we have $d(v_i'-v_{i})\rightarrow 0$ as $t_{i}\rightarrow 0$, so $v_{i}'-v_{i}\in \cS^1(U_1)$, and therefore $v_{i}'\in \cS^{1}(U_1)$. Thus $\varphi_{1}$ is $\cC^{1}$.
	
	To prove that $\varphi_1$ is a $\cC^1$-diffeomorphism at $x$, it remains to show that its Jacobian matrix is invertible at $x$. To this end, we will first write the Jacobian matrix of the transition map $\psi_{X}'\circ \psi_{X}^{-1}\colon (z_{i_1},\dots, z_{i_n})\mapsto (z_{i_1}',\dots,z_{i_n}')$. 
	By Lemma \ref{lem:w_extends}\ref{item:lambda}, for any $i\in \{1,\dots,k\}$ there is a nonvanishing holomorphic function  $\lambda_{i}\in \cO_{X}^{*}(V)$ such that $z_{i}'=\lambda_i z_{i}$. Hence for $j\neq i$ we have $\frac{\d z_{i}'}{\d z_{j}}=z_{i}\frac{\d \lambda_{i}}{\d z_{j}}$, so $\frac{\d z_{i}'}{\d z_{j}}|_{D_{i}}=0$. It follows that the Jacobian matrix of  $\psi_{X}^{-1}\circ \psi_{X}'$ at $\pi(x)$  has a block form $\begin{psmallmatrix} * & 0 \\ * & Z \end{psmallmatrix}$, for some $2(n-k)\times 2(n-k)$ matrix $Z$. Since this block matrix is invertible, so is $Z$.
	
	Now, we claim that the Jacobian matrix of $\varphi_1$ at $x$ has the form 
	\begin{equation*}
		\left[
		\begin{matrix}
			\id_{k\times k} & 0 & 0 \\
			* & \id_{k\times k} & * \\
			* & 0 & Z
		\end{matrix}
		\right].
	\end{equation*} 
	The first row consists of partial derivatives of the first coordinate $g$, so it is the first unit vector. For $i\in \{2,\dots, k\}$ we have $t_i(x)=0$, so $\sigma_{i}(x)=0$ by definition \eqref{eq:rho-sigma} of $\sigma_i$. Thus by Lemma \ref{lem:dv}\ref{item:tame_1} we have $dv_{i}'=dv_{i}$ at $x$, so the $i$-th row is the $i$-th unit vector, as claimed.
	
	The middle $k\times k$ block is $[\frac{\d \theta_{j}'}{\d \theta_{i}}]_{1\leq i,j\leq k}$. By Lemma \ref{lem:dv}\ref{item:theta-theta'}, we have $\theta_{j}'=\theta_{j}+\beta$ for some smooth $\beta\colon U_1\to \S^1$ (recall that we use additive notation \ref{not:S1} for $\S^1=\R/\Z$). Write $z_{i}=x_{i}+\imath y_{i}$. Then $\frac{\d \beta}{\d \theta_{i}}=-r_{i}\sin(2\pi\theta_{i}) \frac{\d \beta}{\d x_{i}}+r_{i}\cos(2\pi\theta_{i})\tfrac{\d \beta}{\d y_{i}}$, so $\frac{\d \beta}{\d \theta_{i}}=0$ at $x$. Thus the middle block is indeed $\id_{k\times k}$.
	
	The last $2(n-k)$ rows correspond to the (real and imaginary parts of) the functions $z_{i_{k+1}}',\dots, z_{i_{n}}'$. The above computation shows that their partial derivatives with respect to $\theta_{1},\dots, \theta_{k}$ vanish. The ones with respect to (real and imaginary parts of) $z_{i_j}$ give the matrix $Z$. This proves that the Jacobian of $\varphi_1$ has the required form, so it is invertible because $Z$ is.
	\smallskip
	
	Now, consider the map $\varphi_2$. Its inverse $\varphi_{2}^{-1}$ is defined away from the zero locus of $r_{j}$, for $j\in \{k+1,\dots, l\}$, and it replaces the coordinates $(v_{j}',\theta_{j}')$ of $U_{1}'$ by real and imaginary parts of $r_{j}'\cdot \exp(2\pi\imath \theta_{j}')$. We will show  $\varphi_{2}^{-1}$ is a $\cC^{1}$-diffeomorphism. It is sufficient to show it for the map $v_{j}'\mapsto r_{j}'$: the remaining part is passing from polar to standard coordinates. Since $r_{j}'$ is a pullback of a smooth function on $U_{X}'$, by Lemma \ref{lem:v1C1}\ref{item:ri-C1} it is $\cC^1$ in the coordinates of $U_{1}'$. Hence our map is $\cC^1$. By Lemma \ref{lem:mixed-vanish}, we have $\frac{\d r_{j}'}{\d v_{i}'}=0$ if $i\neq j$, so its Jacobian determinant is $\prod_{j=k+1}^{l}\frac{\d r_{j}'}{\d v_{j}'}$.  By Lemma \ref{lem:computations-d}\ref{item:dr},\ref{item:dt-rho}, we have $\frac{\d r_{j}'}{\d v_{j}'}=m_{j}^{-1}(t_{j}')^{-2}r_{j}'\cdot \frac{\d t_{j}'}{\d v_{j}'}=m_{j}^{-1}(t_{j}')^{-2}r_{j}'\cdot \rho_{j}'t_{j}'$, which is nonzero whenever $r_{j}'$ is, as needed.
\end{proof}

\begin{proof}[Proof of Proposition \ref{prop:uniqueC1}]
	Since the $\cC^{1}$-structure on $A\setminus \d A$ agrees with the one on $X\setminus D$, the restriction $\Phi|_{A\setminus \d A}$ is a $\cC^{1}$-diffeomorphism. We will show that $\Phi$ is a $\cC^{1}$-diffeomorphism at every point $x\in \d A$. Fix a chart $U_{X}$ adapted to $f$ containing $\pi(x)$, and let $U$ and $U'$ be its preimages in $A$ and $A'$, respectively. Fix $i$ in the associated index set such that  $w_{i}(x)>\tfrac{1}{n+1}$. Then $x$ lies in the subset $U_{i}\subseteq U$ defined in \eqref{eq:Ui}, as well as in its counterpart $U_{i}'\subseteq U'$. To prove that $\Phi$ is a $\cC^{1}$ diffeomorphism at $x$, we need to prove that the composition $\psi_{i}'|_{\Phi(U_i)\cap U_{i}'}\circ \Phi|_{U_{i}} \circ \psi_{i}^{-1}$ is a $\cC^{1}$-diffeomorphism between open subsets of $Q_{k,n}$. Since both $\psi_{i}$ and $\psi_{i}'$ correspond to the same adapted chart $U_{X}$, this map is the identity.
\end{proof}

\begin{proof}[Proof of Proposition \ref{prop:AXC1}]
	\ref{item:AX-piC1} By Lemma \ref{lem:v1C1}\ref{item:pullbacks}, the map $\pi$ pulls back $\cC^{1}$-functions from the charts $U_X\subseteq X$ to $\cC^{1}$-functions on their preimages in $A$, so it is $\cC^{1}$. Its restriction $\pi|_{A\setminus \d A}$ is a $\cC^{1}$-diffeomorphism by Lemma \ref{lem:C1chart}.
	
	\ref{item:AX-gC1} The function $g\colon A\to [0,1)$ is a coordinate in each chart \eqref{eq:AC1-chart} meeting $\d A$, hence it is a $\cC^{1}$-submersion (onto its image) near $\d A$. Away from $\d A$, it is a composition of a $\cC^{1}$-diffeomorphism $\pi|_{A\setminus \d A}$ with a submersion $|f||_{X\setminus D}$, so it is a $\cC^{1}$-submersion there, too. Similarly, over each chart adapted to $f$ we have $\theta=\sum_{i\in S}m_{i}\theta_{i}$ by Definition \ref{def:adapted-chart}\ref{item:f-locally}, so $\theta$ is a nonzero linear combination of coordinates of each chart \eqref{eq:AC-chart}, hence a $\cC^1$-submersion near $\d A$. In turn, $\theta|_{A\setminus \d A}$ is a pullback of a submersion $\frac{f}{|f|}$. Thus $(g,\theta)$ is a $\cC^{1}$-submersion. It follows that the map $f\AC=(\exp(-\exp(g^{-1}-1)),\theta)$ is $\cC^{1}$.
	
	\ref{item:AX-vbarC1} Let $U_{X}^{p}$ be the chart adapted to $f$ from the atlas $\cU_{X}$ used in Definition \ref{def:bounded-from-X}, and for $i$ in the associated index set let $v_{i}^{p}$ be the corresponding function \eqref{eq:def-v_i}. By Lemma \ref{lem:v1C1}\ref{item:viwi-C1}, $v_{i}^{p}$ is $\cC^{1}$ on $\pi^{-1}(U_{X}^{p})$. Let $\tau^{p}$ be the element of the smooth partition of unity $\btau$ inscribed in $\cU_{X}$, supported in $U_{X}^{p}$. Since the map $\pi$ is $\cC^{1}$ by \ref{item:AX-piC1}, the pullback of $\tau^{p}$ is $\cC^{1}$; hence $\tau^{p}v_{i}^{p}$ is a $\cC^{1}$-function on $A$. Therefore, the function $\bar{v}_{i}$ defined in \eqref{eq:def-vbar-ubar-mu} as $\sum_{p}\tau^{p}v_{i}^{p}$ is $\cC^{1}$, too, as needed.
\end{proof}

\subsection{\texorpdfstring{$\cC^\infty$-}{Smooth }atlases on the A'Campo space}\label{sec:AX-smooth}

Let $(\cU_X,\btau)$ be a pair adapted to $f$, and let $A$ be its associated A'Campo space. We will now use the function $g\colon A\to [0,1)$, see \eqref{eq:def-t-g}, and the distinguished global functions $\bar{v}_i\colon A\to [-1,1]$ corresponding to $(\cU_X,\btau)$, see \eqref{eq:def-vbar-ubar-mu}, to produce a $\cC^\infty$-atlas on $A$. This atlas will be compatible with the $\cC^1$-atlas defined in Section \ref{sec:AX-C1}. However, if $(\cU_X,\btau)$ and $(\cU'_X,\btau')$ are pairs adapted to $f$, and $A$, $A'$ are corresponding A'Campo spaces, then the $\cC^1$-diffeomorphism $\Phi\colon A\to A'$ from Proposition~\ref{prop:uniqueC1} will not be $\cC^\infty$ in general.  

\subsubsection{Definition of the \texorpdfstring{$\cC^\infty$-}{smooth }atlas associated with $(\cU_X,\btau)$}\label{sec:AX-smooth-def}
Fix a pair $(\cU_X,\btau)$ adapted to $f$. Let $U_X$ be a chart adapted to $f$, which perhaps does not belong to the fixed atlas $\cU_X$. Let $S$ be the index set \eqref{eq:index-set} associated with $U_X$. Assume $S\neq \emptyset$, let $U\de \pi^{-1}(U_X)$ and let $\bigcup_{i\in S}U_i$ be the covering of $U$ introduced in \eqref{eq:Ui}. For every $i\in S$, we define 
\begin{equation}\label{eq:AC-chart}
	\bar{\psi}_{i}=(g,(\bar{v}_{j})_{j\in S\setminus \{i\}};\rest)\colon 
	U_{i}\to Q_{k,n}.
\end{equation}
It differs from the $\cC^{1}$-chart $\psi_{i}$ introduced in \eqref{eq:AC1-chart} by replacing local functions $v_{j}$ by global $\bar{v}_{j}$. In particular, by Proposition \ref{prop:AXC1}\ref{item:AX-vbarC1}, $\bar{\psi}_i$ is $\cC^{1}$ with respect to the $\cC^{1}$-structure on $A$ introduced in Section \ref{sec:AX-C1}. Note that the formula \eqref{eq:def-vbar-ubar-mu} for the function $\bar{v}_j$ used in \eqref{eq:AC-chart} depends on the fixed pair $(\cU_X,\btau)$.

We will now use charts \eqref{eq:AC-chart} to define a $\cC^{\infty}$ structure on $A$. As in Section \ref{sec:AX-C1}, we state the main results first, and postpone their proofs to subsequent Sections \ref{sec:smoothchart}--\ref{sec:AX-smooth-proofs}.

\begin{lema}
	\label{lem:smoothchart}
	Consider in $A$ the $\cC^1$-structure constructed in the previous section. For every $x\in D$ there exists a chart $U_X$ adapted to $f$ containing $x$, such that each map \eqref{eq:AC-chart} is a $\cC^{1}$-diffeomorphism onto an open subset of $Q_{k,n}$. Moreover, the restriction $\psi_i|_{U_i\setminus\partial A}$ is $\cC^\infty$ for the natural smooth structure in $A\setminus \partial A$.
\end{lema}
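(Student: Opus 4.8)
The plan is to prove Lemma \ref{lem:smoothchart} by comparing the new chart $\bar\psi_i$ with the $\cC^1$-chart $\psi_i$ from \eqref{eq:AC1-chart}, whose properties are already established in Lemma \ref{lem:C1chart}. Since $\psi_i$ is a $\cC^1$-diffeomorphism onto an open subset of $Q_{k,n}$, and $\bar\psi_i$ differs from $\psi_i$ only by replacing the local coordinates $v_j$ ($j\in S\setminus\{i\}$) with the global functions $\bar v_j$, it suffices to show that the transition map $\bar\psi_i\circ\psi_i^{-1}$ is a $\cC^1$-diffeomorphism between open subsets of $Q_{k,n}$, for a \emph{suitable} choice of adapted chart $U_X$ around $x$. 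This transition map fixes the coordinates $g$ and $\vartheta$ and sends $(v_j)_{j\in S\setminus\{i\}}\mapsto(\bar v_j)_{j\in S\setminus\{i\}}$, so everything reduces to understanding the Jacobian block $[\partial \bar v_j/\partial v_l]_{j,l\in S\setminus\{i\}}$ and checking it is invertible at $x$.

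First I would recall that $\bar v_j=\sum_{p\in R}\tau^p v_j^p$, where $v_j^p$ is the hybrid coordinate associated to the atlas chart $U_X^p$. Using Lemma \ref{lem:dv}\ref{item:tame_1} (with $I=\emptyset$, in the ``bounded from $X$'' form stated at the end of that lemma), we have $d(v_j^p-v_j)=t_j c^p\,dv_j+t_j^\epsilon q^p\,dg+\sigma_j\gamma^p$ for bounded $c^p,q^p$ and a $1$-form $\gamma^p$ bounded from $X$. In particular, at a point $x\in A_I^\circ$ with $i\in I$ and $w_i(x)>\tfrac1{n+1}$, for $j\in S\setminus\{i\}$ we have $t_j(x)=0$ if $j\in I$ (so the right-hand side vanishes at $x$) and $t_j(x)>0$ if $j\notin I$; in the latter case $v_j=t_j$ so $\bar v_j$ and $v_j$ differ by a function vanishing to order one along $\{t_j=0\}$ but this does not affect invertibility of the Jacobian. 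The upshot is that, at $x$, $d\bar v_j=dv_j$ for all $j\in I\setminus\{i\}$, and for $j\in S\setminus I$ the relevant diagonal entry $\partial\bar v_j/\partial v_j$ is close to (in fact, computably equal to) a nonzero value; the mixed entries $\partial\bar v_j/\partial v_l$ for $l\ne j$ either vanish by Lemma \ref{lem:mixed-vanish} or are controlled by the same estimate. Hence the Jacobian of $\bar\psi_i\circ\psi_i^{-1}$ at $\psi_i(x)$ is (block-)triangular with nonzero diagonal, so invertible, and the transition map is a $\cC^1$-diffeomorphism near $\psi_i(x)$.

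For the choice of $U_X$: the point is that we may need to shrink or choose the adapted chart around $x$ so that the partition-of-unity functions $\tau^p$ entering $\bar v_j$ behave well — e.g.\ so that near $x$ only finitely many $\tau^p$ are nonzero and their supports do not introduce spurious degeneracy of $d\bar v_j$. Since the $\tau^p$ are smooth and the set $\{p:\tau^p(x)\ne 0\}$ is finite, any sufficiently small adapted chart around $x$ works; I would make this precise by first fixing $U_X$ small enough that it meets only finitely many charts $U_X^p$ of the atlas, then applying the estimates above on that chart. The final sentence of the lemma — that $\bar\psi_i|_{U_i\setminus\partial A}$ is $\cC^\infty$ for the natural smooth structure on $A\setminus\partial A$ — is immediate, since off $\partial A$ the functions $g,\bar v_j,\vartheta$ are all pullbacks of smooth functions on $X\setminus D$ (using that $\tau^p$ are smooth and the $v_j^p$ are smooth on $X\setminus D$).

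The main obstacle I anticipate is the invertibility of the Jacobian block at points $x\in A_I^\circ$ with $I\subsetneq S$ nontrivially — i.e.\ when some but not all $t_j$ vanish at $x$, so that the transition map mixes genuinely ``boundary'' coordinates ($j\in I$) with ``interior-like'' coordinates ($j\in S\setminus I$). One must verify that Lemma \ref{lem:dv}\ref{item:tame_1}, which is stated with a $t_i$ or $t_i^\epsilon$ prefactor on each error term, genuinely forces $d\bar v_j=dv_j$ at $x$ for $j\in I$ while keeping the $j\notin I$ diagonal entries bounded away from zero; combined with the vanishing of mixed partials from Lemma \ref{lem:mixed-vanish}, this should give a triangular structure, but getting the bookkeeping of which entries are small versus which are nonzero exactly right is the delicate part. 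The computation is essentially local and of the same flavor as the proof of Lemma \ref{lem:C1-trans}, so I expect no new ideas are needed beyond careful use of the already-proven estimates.
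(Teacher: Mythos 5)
Your first half is essentially the paper's argument: one compares $\bar{\psi}_i$ with the $\cC^1$-chart $\psi_i$ and shows the relative Jacobian is invertible, using the partition-of-unity average of Lemma \ref{lem:dv}\ref{item:tame_0},\ref{item:tame_1} (this averaged statement is Lemma \ref{lem:dvbar}; note it must also absorb the terms $\sum_p(v_j-v_j^p)\,d\tau^p$ coming from differentiating the $\tau^p$, which your sketch omits). But there are two genuine gaps.

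The serious one is injectivity. You reduce ``$\bar{\psi}_i$ is a diffeomorphism onto an open subset of $Q_{k,n}$'' to ``the Jacobian of $\bar{\psi}_i\circ\psi_i^{-1}$ is invertible at $x$''. That only makes $\bar{\psi}_i$ a \emph{local} $\cC^1$-diffeomorphism near the fiber $\pi^{-1}(x)\cap U_i$, and this fiber is positive-dimensional (a torus times a simplex, by Proposition \ref{prop:AX-topo}\ref{item:top-S1-bundle}), so local invertibility along it does not imply injectivity on a neighborhood of it; nor does the fact that the transition map fixes $(g,\vartheta)$, since the fibers of $(g,\vartheta)$ in $\psi_i(U_i)$ need not be convex or connected. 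The paper spends half the proof on exactly this point: assuming two sequences $y_1^\nu\neq y_2^\nu$ with $\bar{\psi}_i(y_1^\nu)=\bar{\psi}_i(y_2^\nu)$ and $\pi(y_l^\nu)\to x$, it extracts limits in $\pi^{-1}(x)$ via properness of $\pi$, uses the \emph{function-level} identity $\bar{v}_j=v_j$ on $\pi^{-1}(x)$ (Lemma \ref{lem:dvbar}\ref{item:tame_0-bar}, not just the statement about differentials) to conclude $\psi_i(y_1)=\psi_i(y_2)$, invokes the injectivity of $\psi_i$ from Lemma \ref{lem:C1chart} to get $y_1=y_2$, and then contradicts local injectivity. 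Without some such argument your proof does not establish the lemma.

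The second gap is the choice of chart, where your stated reason (local finiteness of the $\tau^p$, which is automatic) misses the actual mechanism. One takes $U_X$ with associated index set equal to the full stratum $S=\{j:x\in D_j\}$; then $t_j$, hence $\sigma_j$, vanishes for \emph{every} $j\in S$ at \emph{every} point of $\pi^{-1}(x)$, so by Lemma \ref{lem:dvbar}\ref{item:tame_1-bar} the relative Jacobian is exactly the identity on the whole fiber. The inverse function theorem then gives neighborhoods $V_y$ of the fiber points, and properness of $\pi$ lets one shrink $V_X$ so that $\pi^{-1}(V_X)\cap U_i\subseteq\bigcup_y V_y$ — no direct Jacobian computation at points of $A_I^\circ$ with $I\subsetneq S$ (where some $t_j>0$) is needed. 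Your attempt to verify invertibility directly at such points is unsubstantiated as written: the diagonal entry has the form $1-t_jc+\cdots$ with $c$ merely bounded, so it is bounded away from zero only after shrinking the chart to force $t_j$ small, a step your argument gestures at but does not carry out.
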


By Lemma \ref{lem:smoothchart}, pairs $(U_{i},\bar{\psi}_{i})$ defined in \eqref{eq:AC-chart} are candidates for $\cC^\infty$ charts on $A$. We will call them the {\em smooth charts} corresponding to the adapted chart $U_X$ \emph{and to the pair $(\cU_X,\btau)$}. As before, we abuse the definition of a $\cC^\infty$ atlas by allowing open subsets of $Q_{k,n}$ as targets for the charts. Nonetheless, the obvious refinement produces charts whose targets are open subsets of half-euclidean spaces. 

In order to complete the definition of the $\cC^\infty$ atlas we need to prove that the corresponding transition functions are $\cC^\infty$. This is the content of Lemma \ref{lem:transsmooth} below.

\begin{lema}\label{lem:transsmooth}
	Let $U_X$, $U'_X$ be charts adapted to $f$,  satisfying the statement of Lemma~\ref{lem:smoothchart}. Let  $S$, $S'$ be their associated index sets \eqref{eq:index-set} and let $k=\#S'$, $k'=\#S'$. For $i\in S$ and $j\in S'$ let $(U_i,\psi_i)$, $(U_j',\psi_{j}')$ be the charts defined in \eqref{eq:AC-chart} above. Then the transition map 
	\begin{equation*}
		\bar{\psi}'_{j}\circ \bar{\psi}_{i}^{-1}|_{\bar{\psi}_{i}(U_i\cap U'_j)}\colon \bar{\psi}_{i}(U_i\cap U'_j)\to \bar{\psi}'_j(U_i\cap U'_j)	
	\end{equation*}
	is a $\cC^{\infty}$-diffeomorphism between open subsets of $Q_{k,n}$ and $Q_{k',n}$.
\end{lema}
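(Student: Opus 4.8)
The plan is to reduce the claim to the $\cC^1$-case (Lemma \ref{lem:C1-trans}) plus a statement about how the \emph{global} functions $\bar v_j$ differ from the \emph{local} ones $v_j$, and then show that difference is $\cC^\infty$ in the relevant coordinates. Concretely, write $\bar\psi'_j\circ\bar\psi_i^{-1}$ as a composition $\bar\psi'_j\circ(\psi'_j)^{-1}\,\circ\,\psi'_j\circ\psi_i^{-1}\,\circ\,\psi_i\circ\bar\psi_i^{-1}$. The middle factor $\psi'_j\circ\psi_i^{-1}$ is the $\cC^1$-transition map which, under the hypothesis that $U_X$, $U'_X$ satisfy Lemma~\ref{lem:smoothchart}, we will upgrade to a $\cC^\infty$-map; the outer two factors only swap $v_j\leftrightarrow\bar v_j$ in one slot, so it suffices to show that, in the smooth coordinates $(g,(\bar v_l)_{l\in S\setminus\{i\}};\rest)$ of $\bar\psi_i$, each local function $v_j$ — equivalently each $\bar v_j - v_j = \sum_p\tau^p(v_j^p - v_j)$ — is a $\cC^\infty$ function, and that the resulting Jacobian is invertible near $\d A$.

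The heart of the matter is therefore a \emph{smoothness} (not merely $\cC^1$) version of the estimates in Lemma~\ref{lem:dv} and Lemma~\ref{lem:v1C1}. First I would establish, in the notation of Notation~\ref{not:S}, that the functions $t_j$, $w_j$, $u_j$, $r_j$ and the pullbacks $\cC^\infty(U_X)$ all lie in $\cS^\infty(U_i)$ — i.e.\ are $\cC^\infty$ with respect to the coordinates $(g,v_2,\dots,v_k,\rest)$ — by iterating the differential identities of Lemma~\ref{lem:computations-d}. The key structural input is that each application of $\d_l$ to these functions produces again an expression which is a rational function in $t_l$, $u_l$, $g$, $\log t_l$, multiplied by a power of $r_l = e^{-(m_l t_l)^{-1}}$ when $r_l$ is involved, and that such expressions extend smoothly across $\{t_l=0\}$ because $e^{-(m_l t_l)^{-1}}$ and its derivatives vanish to infinite order there, while $\log t_l$ grows only polylogarithmically; Lemma~\ref{lem:computations}\ref{item:eta-t-bounded} and \ref{item:tl} are exactly the tools controlling this. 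This is presumably the content of the not-yet-stated Lemmas \ref{lem:w_smooth}, \ref{lem:pullbacks} referenced in the excerpt, so I would invoke (or reprove) them here.

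Given that, the remaining work is bookkeeping. For the composition $\psi'_j\circ\psi_i^{-1}$: decompose it exactly as in the proof of Lemma~\ref{lem:C1-trans} into the model swap $v_2\leftrightarrow v_1$ (whose Jacobian entry $\d_2 v_1 = -w_1^{-1}\rho_1^{-1}w_2\rho_2$ is now seen to be $\cC^\infty$ and nonvanishing using the smoothness statements above), and the change-of-adapted-chart map $\varphi_2\circ\varphi_1$. For $\varphi_1$ one shows $v'_i - v_i\in\cS^\infty(U_1)$ using the explicit formula Lemma~\ref{lem:dv}\ref{item:vp-v} together with $a\in\cC^\infty(U_X)$, $t_i,u_i\in\cS^\infty(U_1)$ and the fact that $\log(1+at_i)/t_i$, $(1-u_i\log(1+at_i))^{-1}$ are smooth functions of smooth functions; the Jacobian computation is identical to the $\cC^1$ case. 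For $\varphi_2$, passing from $v'_j$ to $r'_j$ and then to standard coordinates is smooth by the smoothness of $r'_j$. Finally, the factor $\psi_i\circ\bar\psi_i^{-1}$ replaces $\bar v_j$ by $v_j = \bar v_j - \sum_p\tau^p(v_j^p-v_j)$; since the $\tau^p$ are genuinely smooth on $X$, hence pull back to $\cC^\infty$ functions on $A$ (the map $\pi$ being $\cC^\infty$ off $\d A$ and the pullback extending smoothly by the same infinite-order vanishing mechanism), and each $v_j^p - v_j\in\cS^\infty$, the Jacobian of this swap is of the form $\id + (\text{nilpotent off-diagonal block that vanishes at }\d A\text{ by }\sigma_j\to 0)$, hence invertible near $\d A$. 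Assembling the three factors proves the lemma.

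I expect the main obstacle to be precisely the promotion of Lemma~\ref{lem:dv}-type bounds from $\cC^1$ to $\cC^\infty$: one must control \emph{all} iterated partial derivatives $\d^\alpha$ of quantities like $v'_i-v_i$, $w_j$, $r_j$, and verify they remain bounded (indeed vanish) as $t_j\to 0$. The clean way is to prove a closure statement — that the class of functions expressible as (rational in $t_j,u_j,g$) $\times$ (polynomial in $\log t_j$) $\times$ $r_j^{\,\N_{\geq 0}}$, with denominators bounded away from zero, is stable under each $\d_l$ — and then observe every function in sight belongs to this class; combined with $r_j^k\cdot(\text{polylog})\to 0$ this gives $\cC^\infty$ extension across $\d A$ in one stroke. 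The invertibility of Jacobians is comparatively easy, being inherited verbatim from the $\cC^1$ arguments once smoothness is in hand.
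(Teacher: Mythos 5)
Your reduction breaks at the middle factor. You propose to write $\bar{\psi}'_j\circ\bar{\psi}_i^{-1}$ as a composition through the auxiliary $\cC^1$-charts and to ``upgrade'' the transition $\psi'_j\circ\psi_i^{-1}$ of Lemma \ref{lem:C1-trans} to a $\cC^{\infty}$-map; in particular you assert $v'_i-v_i\in\cS^{\infty}(U_1)$ from the explicit formula in Lemma \ref{lem:dv}\ref{item:vp-v}. This is exactly what Example \ref{ex:not-C2} refutes: for $(z_1',z_2')=(e^{-1}z_1,ez_2)$ one computes $(\tfrac{\d}{\d v_2})^2 v_2'$ to be $0$ along $\{t_2=0,u_2>0\}$ but $1$ along $\{t_2>0,u_2=0\}$, so $v_2'-v_2$ is not even $\cC^2$ in the coordinates $(g,v_2,\dots)$. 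The same mechanism kills your outer factors: $\psi_i\circ\bar{\psi}_i^{-1}$ requires $v_l\in\bar{\cS}(U_i)$, which fails whenever a chart of the fixed atlas differs from $U_X$ by such a coordinate change. The entire reason for introducing the global $\bar{v}_l$ via a fixed partition of unity is that smoothness holds only for the composite $\bar{\psi}'_j\circ\bar{\psi}_i^{-1}$, never for its factorization through the local charts.

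Your second paragraph (smoothness of $t_j,w_j,u_j,r_j$ and of pullbacks of $\cC^{\infty}(U_X)$ with respect to the \emph{auxiliary} coordinates $(g,v_2,\dots,v_k,\rest)$, via a derivation-closed class of expressions flattened by $e^{-(m_jt_j)^{-1}}$) is correct and is indeed the content of Lemmas \ref{lem:w_smooth} and \ref{lem:pullbacks}. But that is only half the argument. The missing — and hardest — step is transferring smoothness from the auxiliary coordinates to the actual smooth coordinates $(g,\bar{v}_2,\dots,\bar{v}_k,\rest)$: the coordinate vector fields $\bar{\d}_l$ are a Neumann series $\sum_{s\ge 0}(\Psi^s)^{\top}[\d_1,\dots,\d_k]^{\top}$ whose coefficient matrix $\Psi$ is \emph{not} a smooth matrix in $\cS(U_1)$ (its entries live only in $\cA_i\cdot\cP\subseteq\cS(W_i\cup T_i)$), so one cannot simply invoke the inverse function theorem plus chain rule. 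The paper handles this with the matrix ring $\cM$ and Lemmas \ref{lem:jacobian}, \ref{lem:M-ring}, \ref{lem:operators-M}, which show that the specific combinations $\d^{\Psi^s}_{\boldsymbol j}v_1$ and $\d^{\Psi^s}_{\boldsymbol j}h$ land in ideals ($\cP$, $\cA_i\cW_i'\cP$) that are smooth on all of $U_1$; Lemma \ref{lem:v-smooth} then gives $\bar{v}_l,\,h\in\bar{\cS}(U_1)$, and the transition map is smooth because every coordinate of $\bar{\psi}'_j$ — namely $g$, the global $\bar{v}_l$, and $\theta'_l=\theta_l+(\text{smooth on }U_X)$ — is such a function. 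Your proposal has no substitute for this transfer step.
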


Lemmas \ref{lem:smoothchart}, \ref{lem:transsmooth} endow $A$ with a $\cC^{\infty}$ structure, compatible with the $\cC^{1}$-structure introduced in Section \ref{sec:AX-C1}, and with the natural smooth structure on $A\setminus \d A=X\setminus D$.

\begin{remark}\label{rem:not-C2}
	Example \ref{ex:not-C2} shows that the $\cC^{1}$-diffeomorphism $\Phi$ comparing the A'Campo spaces defined using different adapted pairs is not $\cC^2$ in general. Indeed, let $X=\D^{2}_{\epsilon}$, $f\colon \D^2_{\epsilon}\ni (z_1,z_2)\mapsto z_1z_2\in \C$; and let $\cU_{X}$, $\cU_{X}'$ be atlases adapted to $f$, each consisting of one chart: for $\cU_{X}$, we take the standard one $(z_1,z_2)$, and for $\cU_{X}'$ we take $(z_1',z_2')=(e^{-1}z_1,ez_2)$  as in Example \ref{ex:not-C2}. Let $A$ and $A'$ be the A'Campo spaces corresponding to $(\cU_{X},\{1\})$ and $(\cU_{X}',\{1\})$. Since the partition of unity is trivial in each case, the smooth charts \eqref{eq:AC-chart} are the same as the $\cC^{1}$-charts \eqref{eq:AC1-chart}. The map $\Phi\colon A\to A'$ is defined in \eqref{eq:canonical_homeo} so that $\Phi|_{\d A}=\id_{\d A}$. Example \ref{ex:not-C2} shows that the identity is not a $\cC^{2}$-diffeomorphism between these two smooth structures on $\d A$: thus $\Phi$ is not $\cC^2$.
\end{remark}

The $\cC^{\infty}$ structure defined above has the following important properties, cf.\ Proposition \ref{prop:AXC1}.
\begin{prop}\label{prop:AXsmooth}
	Let $(\cU_{X},\btau)$ be a pair adapted to $f$, and let $A$ be the corresponding A'Campo space, with the $\cC^\infty$-structure defined above.
	\begin{enumerate}
		\item\label{item:AX-pismooth} The map $\pi\colon A\to X$ is smooth. Its restriction $\pi|_{A\setminus \d A}\colon A\setminus \d A\to X\setminus D$ is a diffeomorphism.
		\item\label{item:AX-gsmooth} The map $(g,\theta)\colon A\to [0,1)\times \S^{1}$ is a smooth submersion. In particular, $f\AC\colon A\to \C_{\log}$ is smooth.
		\item \label{item:AX-vbar-smooth}For every $i\in \{1,\dots, N\}$ the function $\bar{v}_i:A\to [-1,1]$ defined in \eqref{eq:def-vbar-ubar-mu} for $(\cU_{X},\btau)$ is smooth.
		\item\label{item:AX-stratification} For every nonempty subset $I \subseteq \{1,\dots, N\}$, the restriction $(\pi,\mu)\colon \Int_{\d A} A_{I}^{\circ}\to X_{I}^{\circ}\times \Delta_{I}^{\circ}$ is a smooth  $(\S^{1})^{\#I}$-bundle.
	\end{enumerate}
\end{prop}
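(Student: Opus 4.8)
The plan is to prove the four items in order, each time reducing the $\cC^\infty$ statement to a local computation in the smooth charts $\bar\psi_i$ from \eqref{eq:AC-chart}. For \ref{item:AX-vbar-smooth}, I would argue exactly as in the proof of Proposition \ref{prop:AXC1}\ref{item:AX-vbarC1}, but now upgrading $\cC^1$ to $\cC^\infty$: the coordinates of the smooth chart $\bar\psi_i$ are $g$ and the functions $\bar{v}_j$ for $j\in S\setminus\{i\}$ together with $\rest$, so each $\bar{v}_j$ with $j\neq i$ is literally a coordinate of $\bar\psi_i$, hence smooth there; and the remaining $\bar{v}_i$ is smooth once we know Lemma \ref{lem:w_smooth}/Lemma \ref{lem:pullbacks} (the $\cC^\infty$-refinements of Lemma \ref{lem:v1C1} announced in Section \ref{sec:bounded-forms}), which give that $v_i^p$ and hence $\sum_p\tau^p v_i^p=\bar v_i$ is smooth on each $U_i$. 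More precisely, since $w_i>\tfrac{1}{n+1}$ on $U_i$, the function $v_i=t_iw_i^{-1}-\eta(w_i)$ is a smooth function of $t$ and the smooth functions $w_j$, so it is smooth once $w_j$ are; and $w_j$ smoothness is Lemma \ref{lem:w_smooth}\ref{item:w-smooth}.

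For \ref{item:AX-pismooth} I would invoke the $\cC^\infty$-version Lemma \ref{lem:pullbacks}\ref{item:pullbacks}, which says that $\pi$ pulls back smooth functions on adapted charts $U_X\subseteq X$ to smooth functions on their preimages in $A$; this is precisely the statement that $\pi$ is smooth, and its restriction to $A\setminus\d A$ is a diffeomorphism by Lemma \ref{lem:smoothchart} (or directly because the $\cC^\infty$-structure on $A\setminus\d A$ was defined by pullback via $\pi|_{A\setminus\d A}$). For \ref{item:AX-gsmooth}, the argument mirrors Proposition \ref{prop:AXC1}\ref{item:AX-gC1}: near $\d A$, $g$ is one of the coordinates of every smooth chart \eqref{eq:AC-chart}, so $g$ is a smooth submersion onto its image there, and away from $\d A$ it is $|f|$ composed with the diffeomorphism $\pi|_{A\setminus\d A}$, hence a smooth submersion. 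For $\theta$, over each adapted chart with index set $S$ we have $\theta=\sum_{i\in S}m_i\theta_i$ by Definition \ref{def:adapted-chart}\ref{item:f-locally}, a nonzero linear combination of the coordinates $\theta_i$ appearing in \eqref{eq:AC-chart}, so $\theta$ is a smooth submersion near $\d A$, and away from $\d A$ it is the pullback of $f/|f|$. Since $d g$ and $d\theta$ are pointwise linearly independent (the first involves only the $g$-coordinate, the second only the $\theta_i$-coordinates), $(g,\theta)$ is a smooth submersion; composing with the smooth map $[0,1)\times\S^1\to\C_{\log}$, $(g,\theta)\mapsto(\exp(1-\exp(g^{-1}-1)^{-1})\text{ rescaled},\theta)$—more precisely $\bar v\mapsto e^{1-g^{-1}}$ in the radial coordinate—shows $f\AC$ is smooth.

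For \ref{item:AX-stratification} I would combine the topological statement Proposition \ref{prop:AX-topo}\ref{item:top-S1-bundle}, which already identifies $(\pi,\mu)|_{A_I^\circ}\colon A_I^\circ\to X_I^\circ\times\Delta_I$ as a topological $(\S^1)^{\#I}$-bundle, with smoothness in charts. Restricting to $\Int_{\d A}A_I^\circ$, which by Lemma \ref{lem:Gamma_structure}\ref{item:trivial-fibration} equals $(X_I^\circ)_{\log}\times\Delta_I^\circ$, one sees that $(\pi,\mu)$ is, in coordinates \eqref{eq:KN-chart} on $(X_I^\circ)_{\log}$ (a genuine smooth $(\S^1)^{\#I}$-bundle over $X_I^\circ$, since it just forgets the angular coordinates $\theta_i$ for $i\in I$) and the tropical coordinates $u_i=\bar u_i$ on $\Delta_I^\circ$, smooth: indeed on $\Int_{\d A}A_I^\circ$ the coordinate $\bar v_i=-\bar u_i$ for $i\in I$ by Lemma \ref{lem:Ui-simple}\ref{item:Ui-cap-dA} and Lemma \ref{lem:intro}\ref{item:intro-u=ubar}, while for $i\notin I$, $\bar v_i=t_i$ is a smooth function of the $r_i$-coordinate; so the change between chart coordinates \eqref{eq:AC-chart} and the product coordinates on $(X_I^\circ)_{\log}\times\Delta_I^\circ$ is a smooth diffeomorphism (one checks its Jacobian is block-triangular with invertible blocks, as in the proof of Lemma \ref{lem:transsmooth}). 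This makes $(\pi,\mu)$ a smooth locally trivial fibration with structure group acting by rotations on $(\S^1)^{\#I}$, hence a smooth $(\S^1)^{\#I}$-bundle.

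The main obstacle is \ref{item:AX-vbar-smooth} (equivalently the smoothness of $\bar\psi_i$ itself, Lemma \ref{lem:smoothchart}): this is where one really uses the careful exponential-decay design of the charts. The delicate point is that, unlike the $\cC^1$ case in Lemma \ref{lem:v1C1}, one must control \emph{all} higher derivatives $\d^\alpha$ of the functions $t_i$, $w_i$, $u_i$, $r_i$ near $\d A$; the recursive identities in Lemma \ref{lem:computations-d}, together with the estimate $t^{(l)}=t\cdot p_l(\log t)$ from Lemma \ref{lem:computations}\ref{item:tl} and the key bound from Lemma \ref{lem:computations}\ref{item:eta-t-bounded} that $\eta'(w_i)t'=t_i^\epsilon\cdot(\text{bounded})$, show that every such derivative is a polynomial in $\log t_i$, $\log w_i$ times a power of $t_i$ (or of $w_i$), hence extends continuously by zero across the corresponding face — this is the content of the $\cC^\infty$-refinement Lemmas \ref{lem:w_smooth}, \ref{lem:pullbacks}, on which the whole proposition rests. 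Once those are in hand, the four items follow as above by routine chart computations.
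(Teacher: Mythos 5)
Your items (a), (b) and (d) are organised the way the paper does them, but the argument you give for (c) — and hence the foundation of the whole plan — has a genuine gap: you conflate two different smooth structures. Lemmas \ref{lem:w_smooth} and \ref{lem:pullbacks} establish membership in $\cS(U_1)$, i.e.\ smoothness with respect to the \emph{auxiliary} coordinates $(g,v_2,\dots,v_k,\rest)$ coming from the local $\cC^1$-chart $\psi_1$ of \eqref{eq:AC1-chart}; but the $\cC^\infty$-structure on $A$ in Proposition \ref{prop:AXsmooth} is defined by the charts $\bar{\psi}_i=(g,(\bar{v}_j)_{j\in S\setminus\{i\}},\rest)$ of \eqref{eq:AC-chart}, which use the \emph{global} functions $\bar{v}_j$. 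The change of coordinates between these two systems is, a priori, only a $\cC^1$-diffeomorphism (Lemma \ref{lem:smoothchart}), and Example \ref{ex:not-C2} / Remark \ref{rem:not-C2} show that such hybrid-coordinate changes genuinely fail to be $\cC^2$ in general. So "$v_i^p$ is smooth in the $v$-coordinates, hence $\bar{v}_i=\sum_p\tau^pv_i^p$ is smooth on $A$" does not follow; smoothness must be re-established with respect to the $\bar{v}$-coordinates, i.e.\ one must show $\bar{v}_i\in\bar{\cS}(U_1)$, not merely $\in\cS(U_1)$.

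This transfer is precisely the technical core that your plan omits and that the paper supplies in Sections \ref{sec:M}--\ref{sec:dM}: one writes the Jacobian of $\bar{\psi}_1$ with respect to $\psi_1$ as $\id-\Psi$, proves $\Psi$ lies in the matrix ring $\cM$ of Definition \ref{def:M} (Lemma \ref{lem:jacobian}), uses closedness of $\cM$ under products (Lemma \ref{lem:M-ring}) and the stability of the algebras $\cP$ and $\cA_i\cdot\cW_i'\cdot\cP$ under the induced operators $\d^{M}$ (Lemma \ref{lem:operators-M}), and then expands $\bar{\d}_j$ via the Neumann series $(\id-\Psi)^{-1}=\sum_{s\geq0}\Psi^{s}$ to control \emph{all} derivatives of $v_1$, of pullbacks $h\in\cC^{\infty}(U_X)$, and finally of every $\bar{v}_i$ (including $i\notin S$) with respect to the $\bar{v}$-coordinates; this is Lemma \ref{lem:v-smooth}, and it — not Lemmas \ref{lem:w_smooth}/\ref{lem:pullbacks} alone — is what Proposition \ref{prop:AXsmooth}\ref{item:AX-pismooth} and \ref{item:AX-vbar-smooth} actually rest on. (The same issue affects your item (a): the pullback statement you invoke, Lemma \ref{lem:pullbacks}\ref{item:pullbacks}, is again a statement about $\cS$, whereas the paper cites Lemma \ref{lem:v-smooth} for $h\in\bar{\cS}(U_1)$.) Once Lemma \ref{lem:v-smooth} is in place, your treatment of (b) and (d) matches the paper's, with (d) in the paper being even shorter: smoothness of $\mu|_{\Int_{\d A}A_I^\circ}$ follows directly from $\bar{u}_i=-\bar{v}_i$ for $i\in I$ and $\bar{u}_i=0$ otherwise (Lemma \ref{lem:intro}\ref{item:intro-u=ubar}), combined with the topological bundle statement of Proposition \ref{prop:AX-topo}\ref{item:top-S1-bundle}.
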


\begin{remark}\label{rem:Ehressmann}
	Proposition \ref{prop:AXsmooth}\ref{item:AX-gsmooth} and the Ehressmann lemma give a diffeomorphism $A\cong g^{-1}(\delta)\times [0,1)$ for any $\delta\in [0,1)$. By definition \eqref{eq:def-t-g} of $g$, for every $\delta>0$ we have $g^{-1}(\delta)=f^{-1}(\S_{\epsilon}^{1})$, where $\epsilon=\exp(-\exp(\delta^{-1}-1))>0$. Therefore, the A'Campo space $A$ is diffeomorphic to $f^{-1}(\S^{1}_{\epsilon})\times [0,1)$ for any $0<\epsilon\ll 1$. In particular, its diffeomorphism type does not depend on the choice of $(\cU_X,\btau)$. Nonetheless, as we have seen in Remark \ref{rem:not-C2}, given two A'Campo spaces $A$ and $A'$ the composition of any diffeomorphism from $A$ to $f^{-1}(\S^{1}_{\epsilon})\times [0,1)$ with the inverse of any diffeomorphism from $A'$ to $f^{-1}(\S^{1}_{\epsilon})\times [0,1)$ cannot be equal to the canonical map $\Phi\colon A\to A'$ defined in \eqref{eq:canonical_homeo}.
\end{remark}

Lemma \ref{lem:smoothchart} will be proved in Section \ref{sec:smoothchart} below. The proofs of Lemma~\ref{lem:transsmooth} and Proposition~\ref{prop:AXsmooth} are somewhat intricate and need preparation which will be developed in the next sections.

\subsubsection{Compatibility with the $\cC^1$-structure}\label{sec:smoothchart} 
In this section, we prove Lemma \ref{lem:smoothchart}. It asserts that charts \eqref{eq:AC-chart} yield a particular $\cC^{1}$-atlas on for the $\cC^1$-structure on $A$ defined in Section \ref{sec:AX-C1}. To this end, we will use the following consequence of Lemma \ref{lem:dv}\ref{item:tame_0},\ref{item:tame_1}. We use the notion of forms bounded from $X$, introduced in Definition \ref{def:bounded-from-X}, and the function $\sigma_{i}=t_{i}^{2}+t_iu_i^2$ defined in \eqref{eq:rho-sigma}.

\begin{lema}\label{lem:dvbar}
	Let $U_X$ be a chart adapted to $f$, with associated index set $S$. Fix $i\in S$, and a subset $I\subseteq S$ not containing $i$. Put $U_{X,I}^{\circ}=U_{X}\cap X_{I}^{\circ}$. Then the following hold.
	\begin{enumerate}
		\item \label{item:tame_0-bar} There is a bounded function $b\in \cC^{\infty}(U_{X,I}^{\circ})$ such that on $U_{X,I}^{\circ}$ we have $v_i-\bar{v}_i=\sigma_{i}b$.
		\item \label{item:tame_1-bar} For every $\epsilon\in (0,1)$, there are bounded functions $c,q\in \cC^{\infty}(U_{X,I}^{\circ})$, and a bounded $1$-form $\gamma\in \Omega^{1}(U_{X,I}^{\circ})$ such that on $U_{X,I}^{\circ}$ we have 
		\begin{equation*}
			d(v_i-\bar{v}_{i})
			= t_{i}\cdot c\, dv_{i}+t_i^{\epsilon}\cdot q\, dg+\sigma_{i}\cdot \gamma.
		\end{equation*}
	\end{enumerate}
In particular, taking $I=\emptyset$, we get that the above equalities hold on $U_{X,\emptyset}^{\circ}=U_{X}\setminus D=U\setminus \d A$, with $b,c,q$ and $\gamma$ bounded from $X$.
\end{lema}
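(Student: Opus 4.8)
The plan is to deduce Lemma \ref{lem:dvbar} from Lemma \ref{lem:dv}\ref{item:tame_0},\ref{item:tame_1} by expanding $\bar v_i$ via its defining formula \eqref{eq:def-vbar-ubar-mu}, $\bar v_i=\sum_{p\in R}\tau^p v_i^p$, and then treating each summand with the already-established comparison estimates between $v_i$ and $v_i^p$. First I would fix $x\in U_{X,I}^\circ$ and observe that, by the locally finiteness of $\btau$, only finitely many $p\in R$ contribute near $x$; so it suffices to prove the two claims in a neighborhood of an arbitrary point, with $R$ replaced by a finite set. For each such $p$, either $U_X^p\cap D_i=\emptyset$, in which case $v_i^p=0$ by the convention fixed in Section \ref{sec:Adef} and the difference $v_i-v_i^p=v_i$ needs a separate (trivial) treatment, or $i\in S^p$, in which case Lemma \ref{lem:dv} applies directly to the pair of adapted charts $(U_X,U_X^p)$.

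The key algebraic identity is $v_i-\bar v_i=\sum_p\tau^p(v_i-v_i^p)$, which holds because $\sum_p\tau^p=1$. For \ref{item:tame_0-bar}: on $U_{X,I}^\circ$, Lemma \ref{lem:dv}\ref{item:tame_0} (applied on $V_{X,I}^\circ=U_X^p\cap U_X\cap X_I^\circ$, then the bounded function extended by zero outside the support of $\tau^p$) gives $v_i-v_i^p=\sigma_i b^p$ with $b^p$ bounded; hence $v_i-\bar v_i=\sigma_i\sum_p\tau^p b^p=\sigma_i b$ where $b=\sum_p\tau^p b^p$ is a finite sum of bounded smooth functions, hence bounded. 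The case $i\notin S^p$ is subsumed: there $v_i=t_i$ (since $w_i\equiv 0$ on the relevant stratum by Lemma \ref{lem:intro}\ref{item:intro-w-zero}, forcing $u_i=0$), so $v_i-v_i^p=t_i$; but wait — this is not of the form $\sigma_i\cdot(\text{bounded})$ since $\sigma_i=t_i^2$ when $u_i=0$. I need to be careful here: if $i\notin S^p$ then $\bar U_X^p\cap D_i=\emptyset$, so $\tau^p$ is supported away from a neighborhood of $D_i\cap U_X$, meaning $t_i$ is bounded below by a positive constant on $\operatorname{supp}\tau^p\cap U_X$; there $\sigma_i^{-1}v_i=\sigma_i^{-1}t_i$ is a bounded smooth function, so $\tau^p(v_i-v_i^p)=\sigma_i\cdot(\tau^p\sigma_i^{-1}t_i)$ with bounded coefficient. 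This resolves the apparent difficulty.

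For \ref{item:tame_1-bar}: fix $\epsilon\in(0,1)$ and differentiate $v_i-\bar v_i=\sum_p\tau^p(v_i-v_i^p)$ by the Leibniz rule, obtaining $d(v_i-\bar v_i)=\sum_p\tau^p\,d(v_i-v_i^p)+\sum_p(v_i-v_i^p)\,d\tau^p$. The first sum: apply Lemma \ref{lem:dv}\ref{item:tame_1} to each $p$ with $i\in S^p$ to write $d(v_i-v_i^p)=t_i c^p\,dv_i+t_i^\epsilon q^p\,dg+\sigma_i\gamma^p$, and combine via $\sum_p\tau^p$; for $p$ with $i\notin S^p$, use the previous-paragraph observation that on $\operatorname{supp}\tau^p$ we have $v_i-v_i^p=t_i$ bounded below, so $d(v_i-v_i^p)=dt_i$ and by Lemma \ref{lem:computations-d}\ref{item:dt-rho} (with $u_i=0$, $w_i$ irrelevant — rather, on this region $t_i$ is bounded below so all of $\rho_i,\sigma_i$ are comparable to positive constants and $dt_i$ is a bounded combination of $dv_i,dg$ plus a $\sigma_i\gamma$ term) one reduces to the required form. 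The second sum: $d\tau^p$ is a pullback of a smooth, hence (on the compact-closure charts) bounded, $1$-form on $X$ — i.e.\ bounded from $X$ — and we just showed $v_i-v_i^p=\sigma_i\cdot(\text{bounded})$, so $(v_i-v_i^p)d\tau^p=\sigma_i\cdot(\text{$1$-form bounded from }X)$, which is absorbed into the $\sigma_i\gamma$ term. Collecting, using Lemma \ref{lem:inequality} as in the proof of Lemma \ref{lem:dv}\ref{item:tame_1} to recombine the $t_i c\,dv_i$ and $\sigma_i\gamma$ pieces into single terms of the stated shape, yields the claim. The last sentence of the lemma (the $I=\emptyset$ case) is then immediate, exactly as in Lemma \ref{lem:dv}: on $U\setminus\d A$ there is no stratum restriction, $b,c,q$ are bounded and $\gamma$ is bounded from $X$.

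The main obstacle I anticipate is bookkeeping the summands with $i\notin S^p$ cleanly — i.e.\ verifying that the convention $v_i^p=0$ there does not break the $\sigma_i$-divisibility, which is where the geometric fact that $\tau^p$ is supported away from $D_i$ (because $\bar U_X^p\cap D_i=\emptyset$, by Definition \ref{def:adapted-chart}\ref{item:index-set} — precisely, by \eqref{eq:index-set}) is essential, together with the compactness of closures of adapted charts from Definition \ref{def:adapted-chart}\ref{item:U-compact}. Everything else is a routine application of Lemma \ref{lem:dv} combined with the partition of unity and Lemma \ref{lem:inequality}.
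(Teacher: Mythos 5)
Your proposal is correct and follows essentially the same route as the paper: decompose $v_i-\bar v_i=\sum_p\tau^p(v_i-v_i^p)$, apply Lemma \ref{lem:dv}\ref{item:tame_0},\ref{item:tame_1} for the charts $U_X^p$ meeting $D_i$, and handle the charts with $\bar U_X^p\cap D_i=\emptyset$ by noting that $t_i$, hence $\sigma_i$, is bounded below on $\operatorname{supp}\tau^p$ (the paper takes $b^p=v_i\sigma_i^{-1}$ and $c^p=t_i^{-1}$, $q^p=0$, $\gamma^p=0$ there). Your intermediate claim $v_i=t_i$ on $U_{X,I}^\circ$ only holds for $I\neq\emptyset$, but your actual fix via boundedness below of $t_i$ on the support works uniformly, so there is no gap.
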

\begin{proof}
	For $p\in R$, let $U_{X}^{p}$ be the corresponding chart from $\cU_{X}$, and let $v_{i}^{p}$ be its associated function \eqref{eq:def-v_i}. By \eqref{eq:def-vbar-ubar-mu}, we have $\bar{v}_{i}=\sum_{p\in R}\tau^{p}v_{i}^{p}$. Put $V_{X,I}^{p}=U_{X}^{p}\cap U_{X,I}^{\circ}$.
	
	\ref{item:tame_0-bar} Since $\sum_{p\in R}\tau^{p}=1$, we have $v_{i}-\bar{v}_{i}=\sum_{p\in R}\tau^{p}(v_{i}-v_{i}^{p})$. For every $p\in R$ there is a bounded function $b^{p}\in \cC^{\infty}(V_{X,I}^{p})$ such that $v_{i}-v_{i}^{p}=\sigma_{i}b^{p}$: indeed, if $D_{i}$ meets $\bar{U}_{X}^{p}$ then it follows from Lemma \ref{lem:dv}\ref{item:tame_0}; otherwise $v_{i}^{p}=0$ and $b^{p}\de v_{i}\sigma_{i}^{-1}|_{V_{X,I}^{p}}$ is bounded. Since $\tau^{p}|_{U_{X,I}^{\circ}}$ is supported on $V_{X,I}^{p}$, the equality $\tau^{p}(v_{i}-v_{i}^{p})=\sigma_{i}\tau^{p}b^{p}$ holds on the whole $U_{X,I}^{\circ}$. Thus $v_{i}-\bar{v}_{i}=\sigma_{i}\sum_{p\in R} \tau^{p}b^{p}=\sigma_{i}b$, where $b\de \sum_{p\in R}\tau^{p}b^{p}\in \cC^{\infty}(U_{X,I}^{\circ})$ is bounded, as needed.
	
	\ref{item:tame_1-bar} Since $\sum_{p\in R}\tau^{p}=1$, we have  $d(v_{i}-\bar{v}_{i})=\sum_{p\in R}d[\tau^{p}(v_{i}-v_{i}^{p})]=\sum_{p\in R}(v_{i}-v_{i}^{p})\, d\tau^{p}+\sum_{p\in R}\tau^{p}\, d(v_{i}-v_{i}^{p})$. As before, Lemma \ref{lem:dv}\ref{item:tame_0} implies that on $V^{p}_{X,I}$ we have $v_{i}-v_{i}^{p}=\sigma_{i}b^{p}$ for some bounded $b^{p}$, so on $U_{X,I}^{\circ}$ we have  $\sum_{p\in R}(v_{i}-v_{i}^{p})d\tau^{p}=\sigma_{i}\sum_{p\in R}b^{p}d\tau^{p}=\sigma_{i}\gamma_{0}$ for some $1$-form $\gamma_{0}$ bounded from $X$. 
	
	Similarly, Lemma \ref{lem:dv}\ref{item:tame_1} implies that on $V^{p}_{X,I}$ we have $d(v_i-v_{i}^{p})=t_{i}c^{p}\, dv_{i}+t_{i}^{\epsilon}q^p\, dg+\sigma_{i}\gamma^{p}$ for some bounded $c^p,q^p\in \cC^{\infty}(V_{X,I}^{p})$ and $\gamma^{p}\in \Omega^{1}(V_{X,I}^{p})$. Indeed, if $D_i$ meets $\bar{U}_{X}^{p}$ then this is the statement of \ref{lem:dv}\ref{item:tame_1} for $U_X'=U_X^p$. Otherwise, the restriction of $t_{i}^{-1}$ to $V_{X,I}^p$ is bounded, and by convention $v_{i}^{p}=0$, so we can take $c^{p}=t_{i}^{-1}$, $q^{p}=0$ and $\gamma^{p}=0$. Multiplying by $\tau^{p}$ and taking the sum we get a formula $\sum_{p\in R}\tau^{p}d(v_{i}-v_{i}^{p})=t_{i}c\, dv_{i}+t_{i}^{\epsilon}q\, dg+\sigma_{i}\gamma_{1}$ valid on the whole $U_{X,I}^{\circ}$, where the functions $c\de \sum_{p\in R}\tau^{p}c^{p}$, $q\de \sum_{p\in R}\tau^{p}q^p$ and $1$-form $\gamma_{1}\de \sum_{p\in R}\tau^{p}\gamma^{p}$ are smooth and bounded on $U_{X,I}^{\circ}$.
	
	Adding these formulas together and taking $\gamma=\gamma_0+\gamma_1$ gives \ref{item:tame_1-bar}. 
\end{proof}

\begin{proof}[Proof of Lemma \ref{lem:smoothchart}]
	Fix $x\in D$, so $x\in X_{S}^{\circ}$ for some nonempty $S\subseteq \{1,\dots, N\}$. Let $U_{X}$ be a chart adapted to $f$ containing $x$, whose associated index set is $S$. For $i\in S$, let $U_{i}$ be the domain of the corresponding chart $\bar{\psi}_i$  introduced in \eqref{eq:AC-chart}. We need to show that there is a neighborhood $V_{X}$ of $x$ in $U_X$, such that for every $i\in S$, the map $\bar{\psi}_{i}$ is a $\cC^{1}$-diffeomorphism on $U_{i}\cap \pi^{-1}(V_X)$.
	
	Fix $y\in \pi^{-1}(x)\cap U_{i}$. Then $t_{j}(y)=0$ for all $j\in S$, so $\sigma_{j}(y)=0$  by definition \eqref{eq:rho-sigma} of $\sigma_{j}$. Thus by Lemma \ref{lem:dvbar}\ref{item:tame_1-bar}, the forms $dv_{j}$ and $d\bar{v}_{j}$ are equal at $y$. Since the map $\bar{\psi}_{i}$ introduced in \eqref{eq:AC-chart} differs from the $\cC^{1}$-chart $\psi_{i}$ introduced in \eqref{eq:AC1-chart} only by replacing $v_{j}$ by $\bar{v}_{j}$, we infer that the Jacobian matrix of $\bar{\psi}_{i}$ with respect to the $\cC^{1}$-coordinates $\psi_{i}$ is the identity at $y$. By the inverse function theorem, there is a neighborhood $V_{y}$ of $y$ in $U_i$ such that $\bar{\psi}_{i}|_{V_{y}}$ is a $\cC^{1}$-diffeomorphism. 
	
	We claim that there is a neighborhood $V_{X}$ of $x$ in $U_X$, such that $\bar{\psi}_{i}|_{U_i\cap \pi^{-1}(V_X)}$ is injective. Suppose the contrary. Then there are sequences $(y^{\nu}_1),(y^{\nu}_2)\subseteq U_i$ such that both  $\pi(y^{\nu}_1)$ and $\pi(y^{\nu}_2)$ converge to $x$, and we have $y_{1}^{\nu}\neq y_{2}^{\nu}$,  $\bar{\psi}_{1}(y^{\nu}_1)=\bar{\psi}_1(y^{\nu}_2)$ for all $\nu$. By Proposition \ref{prop:AX-topo}\ref{item:top-homeo-off-D}, the map $\pi$ is proper, so passing to subsequences we can assume that for $l\in \{1,2\}$ we have $y^{\nu}_l\rightarrow y_l$ for some $y_l\in \pi^{-1}(x)$. Since $\bar{\psi}_i$ is continuous, we have $\bar{\psi}_{i}(y_1)=\bar{\psi}_i(y_2)$. 
	
	Since $x\in X_{S}^{\circ}$, for every $j\in S$ and $l\in \{1,2\}$ we have, as before, $t_{j}(y_l)=0$ and $\sigma_{j}(y_l)=0$. Lemma \ref{lem:dvbar}\ref{item:tame_0-bar} implies that $\bar v_{j}(y_l)={v}_{j}(y_l)$, so $\bar{\psi}_{i}(y_l)=\psi_{i}(y_{l})$. Therefore, $\psi_{i}(y_1)=\bar\psi_i(y_1)=\bar{\psi}_i(y_2)=\psi_i(y_2)$. Since $\psi_{i}$ is injective by Lemma \ref{lem:C1chart}, we get $y_1= y_2$, so the sequences $(y_{1}^{\nu})$ and $(y_{2}^{\nu})$ converge to the same limit $y_1$. Thus for $\nu \gg 1$ the points $y_{l}^{\nu}$ lie in the neighborhood $V_{y_1}$ of $y_1$ where $\psi_{i}$ is a $\cC^1$-diffeomorphism. Since by assumption $\bar{\psi}_i(y_1^{\nu})=\bar{\psi}_{i}(y_2^{\nu})$, we get $y_{1}^{\nu}=y_{2}^{\nu}$, a contradiction.
	
	By Proposition \ref{prop:AX-topo}\ref{item:top-homeo-off-D} the map $\pi$ is proper, so we can shrink the neighborhood $V_{X}$ of $x$ so that $\pi^{-1}(V_{X})\cap U_{i}\subseteq \bigcup_{y\in \pi^{-1}(x)}V_{y}$. This way, $\bar{\psi}_{i}|_{U_i\cap \pi^{-1}(V_{X})}$ is a $\cC^{1}$-diffeomorphism, as needed.
\end{proof}

The remaining part of Section \ref{sec:AX-smooth} is devoted to the proof of Lemma \ref{lem:transsmooth} and Proposition \ref{prop:AXsmooth}. Throughout this proof, we will use the following notation. 

\begin{notation}\label{not:S-smooth}
	We fix a chart $U_{X}$ adapted to $f$ satisfying the statement of Lemma \ref{lem:smoothchart}, and put $U=\pi^{-1}(U_X)$. We reorder the components of $D$ so that the associated index set \eqref{eq:index-set} of $U_X$ is $S=\{1,\dots, k\}$. As in \eqref{eq:Ui}, we put $U_1=\{w_1>\tfrac{1}{n+1}\}\subseteq U$. Now on $U_1$, we have a smooth structure given by the $\cC^{1}$-chart $\psi_{1}=(g,v_2,\dots,v_k,\rest)$, see \eqref{eq:AC1-chart}. We use Notation \ref{not:S} for this chart. That is, for an open subset $V\subseteq U_1$ we denote by $\cS(V)$ and $\cS^{l}(V)$ the algebras of smooth (resp.\ $\cC^{l}$) functions on $V$; put $\d_1=\frac{\d}{\d g}$, $\d_{i}=\frac{\d}{\d v_i}$ and $T_{i}=\{t_{i}>0\}\subseteq U_1$, $W_{i}=\{w_i>0\}\subseteq U_1$ for $i\in \{2,\dots, k\}$.
\end{notation}

Recall that by Lemma \ref{lem:C1chart}, the restriction $\pi|_{U_1\setminus \d A}$ is a $\cC^{\infty}$-diffeomorphism. Thus, with our usual abuse of notation, we can write $\cC^{\infty}(U_X\setminus D)\subseteq \cS(U_1\setminus \d A)$. 

We end this section with a simple consequence of Lemma \ref{lem:computations-d}. 

\begin{lema}\label{lem:simple-smoothness} 
	For all $i\in \{2,\dots, k\}$, we have $w_{i}\in \cS(W_{i})$ and $t_i\in \cS(T_i\cup W_i)$.
\end{lema}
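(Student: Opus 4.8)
The plan is to work in the smooth coordinates $(g,v_2,\dots,v_k,\rest)$ of Notation \ref{not:S-smooth} and use the explicit partial-derivative formulas from Lemma \ref{lem:computations-d}, which are valid on $U_1\setminus\d A=U_1\cap\bigcap_{i=2}^{k}T_i$, together with the mixed-vanishing identities of Lemma \ref{lem:mixed-vanish}. Fix $i\in\{2,\dots,k\}$. For $w_i$ on the open set $W_i=\{w_i>0\}$: by Lemma \ref{lem:computations-d}\ref{item:dw} we have $\d_i w_i=-\rho_i w_i$, $\d_1 w_i=\rho_i t'$, and $\d_j w_i=0$ for $j\neq 1,i$; and $\d_z w_i=0$ for the $\rest$-coordinates $z$ since those do not enter the defining formula for $w_i$. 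Here $\rho_i=(t_i+u_i^2)^{-1}$, which on $W_i$ is a smooth function of $(t_i,u_i)$ away from the common zero locus of $t_i$ and $u_i$; since $w_i>0$ forces $u_i=\eta(w_i)>0$, the denominator $t_i+u_i^2$ is strictly positive on $W_i$. By Lemma \ref{lem:computations}\ref{item:u-gt},\ref{item:rho-gt} one can further express $u_i$ and $\rho_i$ as explicit smooth functions of $g$ and $\log t_i$, and by \ref{item:t'} one has $t'=t(1-\log t)^2$, a smooth function of $g$ alone (since $t=e^{1-g^{-1}}$). Writing $w_i=t/t_i$, Lemma \ref{lem:computations}\ref{item:t=tiwi} gives $t=t_iw_i$, so $\log t_i$ can be eliminated in favour of $\log t$ and $\log w_i$; the upshot is that the first partials of $w_i$ are themselves smooth functions of $(g,w_i)$ times smooth functions of $g$. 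Since $w_i$ is a coordinate-like variable with $\d_i w_i$ never vanishing on $W_i$ (it equals $-\rho_i w_i<0$ there), an induction on the order of derivatives, each time re-differentiating an expression that is a rational function of $g$, $t$, $t_i$, $w_i$, $u_i$, $\rho_i$ with the only possible singularities at $\{t_i=u_i=0\}\subseteq U_1\setminus W_i$, shows all higher partials exist and are continuous on $W_i$; hence $w_i\in\cS(W_i)$.

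For $t_i$ on $T_i\cup W_i$: on $T_i=\{t_i>0\}$, Lemma \ref{lem:computations}\ref{item:t-r} gives $r_i=e^{-(m_it_i)^{-1}}$, and since $r_i$ (being a pullback of a holomorphic coordinate) is a smooth function on $U_X$, and on $T_i$ the relation $t_i=-(m_i\log r_i)^{-1}$ is a smooth inverse, we get $t_i\in\cS(T_i)$; alternatively, Lemma \ref{lem:computations-d}\ref{item:dt-g} exhibits $\d_1 t_i$ and $\d_i t_i$ as explicit rational expressions in $g,t_i$ which are smooth on $\{t_i>0\}$, and $\d_j t_i=0$ for $j\neq 1,i$ and for $\rest$-coordinates by Lemma \ref{lem:mixed-vanish}; again an induction gives smoothness on $T_i$. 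On $W_i$, combine $t_i=t/w_i$ (valid where $w_i>0$) with $t\in\cS(U_1)$ — which holds because $t=e^{1-g^{-1}}$ and $g$ is a coordinate, cf.\ Lemma \ref{lem:v1C1}\ref{item:t-smooth} — and $w_i\in\cS(W_i)$ from the previous paragraph, with $w_i>0$ on $W_i$; the quotient of smooth functions with nonvanishing denominator is smooth, so $t_i\in\cS(W_i)$. Since $T_i$ and $W_i$ are open and $t_i\in\cS(T_i)$, $t_i\in\cS(W_i)$, and smoothness is a local property, we conclude $t_i\in\cS(T_i\cup W_i)$.

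The one point deserving care — and the main (mild) obstacle — is the behaviour of $w_i$ near the locus where $u_i\to 0$ while $t_i$ stays positive, and dually of $t_i$ near $\{t_i=0, w_i>0\}$: one must be sure the denominators $\rho_i^{-1}=t_i+u_i^2$ and $w_i$ never vanish on the respective open sets $W_i$, $T_i$, $W_i$ where we claim smoothness, and that the recursive differentiation never manufactures a genuine singularity there. For $w_i\in\cS(W_i)$ this is exactly the observation that $w_i>0\Rightarrow u_i=\eta(w_i)>0\Rightarrow t_i+u_i^2>0$, so $\rho_i$ is smooth on $W_i$; and all the auxiliary functions $t,t',t^{(l)},u_i,\rho_i$ appearing in the higher-order formulas are, by Lemma \ref{lem:computations}\ref{item:t'},\ref{item:tl},\ref{item:u-gt},\ref{item:rho-gt}, smooth functions of $g$ and of $\log t_i$ (equivalently, of $g$, $\log t$ and $\log w_i$), hence smooth on $W_i$. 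For $t_i$, on $T_i$ smoothness is immediate from $r_i=e^{-(m_it_i)^{-1}}$, and on $W_i$ it reduces to the quotient $t/w_i$ as above; there is nothing transcendental left to check. I would therefore write the proof as: (1) state the partial-derivative formulas from Lemma \ref{lem:computations-d} and Lemma \ref{lem:mixed-vanish}; (2) observe the denominators are nonvanishing on the relevant open sets; (3) run the induction on derivative order using Lemma \ref{lem:computations}\ref{item:tl} to control iterated $g$-derivatives of $t$; (4) deduce $t_i\in\cS(T_i\cup W_i)$ from the two local statements.

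\begin{proof}
	Fix $i\in\{2,\dots,k\}$ and use Notation \ref{not:S-smooth}. All formulas below are taken on $U_1\setminus\d A$, where $(g,v_2,\dots,v_k,\rest)$ are smooth coordinates; we then check they extend smoothly.

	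\emph{Claim: $w_i\in\cS(W_i)$.} On $W_i=\{w_i>0\}$ we have $u_i=\eta(w_i)>0$, so $\rho_i^{-1}=t_i+u_i^2>0$, whence $\rho_i$ is finite and positive on $W_i$. By Lemma \ref{lem:computations-d}\ref{item:dw}, $\d_i w_i=-\rho_i w_i$, $\d_1 w_i=\rho_i t'$, and $\d_j w_i=0$ for $j\in\{2,\dots,k\}\setminus\{i\}$; moreover $\d_z w_i=0$ for every coordinate $z$ of $\rest$, since $w_i$ is a function of $|f|$ and $r_i$ only. By Lemma \ref{lem:computations}\ref{item:t'}, \ref{item:u-gt}, \ref{item:rho-gt}, the functions $t'$, $u_i$ and $\rho_i$ are, on $W_i$, smooth functions of $g$ and $\log t_i$; and by Lemma \ref{lem:computations}\ref{item:t=tiwi} we may write $\log t_i=\log t-\log w_i=(1-g^{-1})-\log w_i$, so these are smooth functions of $(g,w_i)$ on $W_i$ (using $w_i>0$). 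Hence $\d_i w_i$ and $\d_1 w_i$ are smooth functions of $(g,w_i)$. Differentiating again, every $\d_j$ applied to a smooth function of $(g,w_i)$ produces, by the chain rule and the formulas just recalled for $\d_j g$ and $\d_j w_i$, another smooth function of $(g,w_i)$; the iterated $g$-derivatives of $t$ entering these expressions are controlled by Lemma \ref{lem:computations}\ref{item:tl}, which writes $t^{(l)}=t\cdot p_l(\log t)$ with $p_l$ a polynomial, hence smooth on $W_i$. By induction on the order, all partial derivatives of $w_i$ of all orders exist and are continuous on $W_i$. Therefore $w_i\in\cS(W_i)$.

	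\emph{Claim: $t_i\in\cS(T_i)$.} By Lemma \ref{lem:computations}\ref{item:t-r}, $r_i=e^{-(m_it_i)^{-1}}$ on $U_1\setminus\d A$; since $r_i$ is the pullback of a smooth function on $U_X$, it lies in $\cS(U_1\setminus\d A)$, and on $T_i=\{t_i>0\}$ the relation $t_i=-(m_i\log r_i)^{-1}$ inverts this smoothly (as $r_i\in(0,1)$ there). Equivalently, Lemma \ref{lem:computations-d}\ref{item:dt-g} gives $\d_1 t_i$ and $\d_i t_i$ as explicit rational expressions in $g$ and $t_i$ with denominator $t_i(1+g\log t_i)^2+g^2$; on $T_i$ this denominator is nonzero (it is positive, being a sum of a nonnegative and a positive term, or one invokes Lemma \ref{lem:w_extends}\ref{item:t-comparison}), and $\d_j t_i=0$ for $j\in\{2,\dots,k\}\setminus\{i\}$ as well as for coordinates of $\rest$, by Lemma \ref{lem:mixed-vanish}. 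An induction as above, using Lemma \ref{lem:computations}\ref{item:tl} to control iterated $g$-derivatives, shows $t_i\in\cS(T_i)$.

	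\emph{Claim: $t_i\in\cS(W_i)$.} On $W_i$ we have $w_i>0$, and by Lemma \ref{lem:computations}\ref{item:t=tiwi} the identity $t=t_i w_i$ gives $t_i=t/w_i$. Since $t=e^{1-g^{-1}}$ is a smooth function of the coordinate $g$, we have $t\in\cS(U_1)$; by the first claim $w_i\in\cS(W_i)$ with $w_i>0$ there; hence the quotient $t/w_i\in\cS(W_i)$, i.e.\ $t_i\in\cS(W_i)$.

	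Finally, $T_i$ and $W_i$ are open subsets of $U_1$, and smoothness of a function is a local condition; since $t_i\in\cS(T_i)$ and $t_i\in\cS(W_i)$, we conclude $t_i\in\cS(T_i\cup W_i)$, as claimed.
\end{proof}
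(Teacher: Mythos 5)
Your proof is correct and follows essentially the same route as the paper: an induction on the order of derivatives, using the explicit formulas of Lemma \ref{lem:computations-d} for $\d_j w_i$ and $\d_j t_i$ together with the observation that the relevant denominators ($t_i+u_i^2$ on $W_i$, and $t_i(1+g\log t_i)^2+g^2$ on $T_i$) do not vanish on the open sets in question. The only real difference is organisational: the paper runs a single joint induction on the pair $(w_i,t_i)$ over $W_i$ via Lemma \ref{lem:computations-d}\ref{item:dw},\ref{item:dt-rho}, whereas you first prove $w_i\in\cS(W_i)$ alone (rewriting everything as smooth functions of $(g,w_i)$ via $t=t_iw_i$) and then obtain $t_i=t\,w_i^{-1}\in\cS(W_i)$ as an immediate corollary; this is a perfectly good, arguably cleaner, variant. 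One small caveat: your first justification of $t_i\in\cS(T_i)$ by inverting $r_i=e^{-(m_it_i)^{-1}}$ is circular at this stage, since $r_i\in\cS(T_i)$ (as opposed to $\cS(U_1\setminus\d A)$) is only established later, in Lemma \ref{lem:pullbacks}\ref{item:r-smooth}; but your ``equivalently'' argument via Lemma \ref{lem:computations-d}\ref{item:dt-g} is the complete one and coincides with the paper's.
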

\begin{proof}
	Recall that $w_i,t_i$ are continuous on $U_1$ by Lemma \ref{lem:intro}\ref{item:intro-extends}. Assume that for some $l\geq 0$ we have $w_i,t_i\in \cS^{l}(W_i)$, that is, the restrictions $w_i|_{W_{i}}$, $t_i|_{W_{i}}$ are of class $\cC^{l}$ with respect to the smooth coordinates $(g,v_2,\dots,v_k,\rest)$. Then by Lemma \ref{lem:computations-d}\ref{item:dw},\ref{item:dt-rho}, all partial derivatives of $w_i|_{W_i}$, $t_i|_{W_i}$ are of class $\cC^{l}$, too. Hence $w_{i}|_{W_{i}},t_{i}|_{W_{i}}\in \cS^{l+1}(W_i)$. By induction, $t_{i},w_i\in \cS(W_i)$. Similarly, Lemma \ref{lem:computations-d}\ref{item:dt-g} implies that on $T_{i}$, all  partial derivatives of $t_i$ are of class $\cC^{l}$ whenever $t_{i}$ is, so $t_i\in \cS (T_i)$.
\end{proof}

\subsubsection{Flattening algebras}
We will now introduce a practical tool which allows to extend the smoothness domain of a function along its zero locus. First, we settle the following notation.

Let $V$ be a subset of $U_1$. Given a subset $\cF\subseteq \cS(V)$, we denote by $\langle \cF \rangle$ the $\R$-algebra (possibly without $1$) generated by $\cF$. For two $\R$-subalgebras $\cA,\cB\subseteq \cS(V)$ we put $\cA+\cB=\langle \cA\cup \cB\rangle$ and $\cA\cdot \cB=\langle a\cdot b: a\in \cA, b\in \cB\rangle=\{\sum_{i} \lambda_{i} a_{i}b_{i}: \lambda_{i}\in \R,a_i\in \cA,b_i\in \cB\}$. In particular, if $\cF$ is a subset of $\cA$ then $\langle \cF \rangle \cdot \cA$ is the ideal of $\cA$ generated by $\cF$. For a function $h\in \cS(V)$, we write $h\cdot \cA$ for the algebra $\langle h\rangle \cdot \cA$. Thus if $h\in \cA$, then $h\cdot \cA$ is, as usual, the principal ideal of $\cA$ generated by $h$.

Recall that we have introduced differential operators $\d_{1}=\frac{\d}{\d g}$, $\d_{i}=\frac{\d}{\d v_i}$ for $i\in \{2,\dots, k\}$. For an $\R$-subalgebra $\cA\subseteq \cS(V)$ we write $\d_{j}\cA=\{\d_j a: a\in \cA\}\subseteq \cS(V)$. We say that $\cA$ is \emph{closed under derivation by $\d$} (or simply \emph{closed under $\d$}) if $\d_{j}\cA\subseteq \cA$ for all $j\in \{1,\dots, k\}$. An easy application of Leibniz rule shows that an algebra $\cA=\langle \cF \rangle$ generated by $\cF$ is closed under $\d$ if and only if $\d_{j}h\in \cA$ for every generator $h\in \cF$ and every $j\in \{1,\dots, k\}$.

\begin{definition}\label{def:flattening}
	Fix a closed subset $Z\subseteq U_1\cap \d A$ and a function $h\in \cS(U_1\setminus Z)$. Let $\cA$ be an $\R$-subalgebra of $\cS(U_1\setminus Z)$ which is closed under derivation by $\d$. 
	
	We say that \emph{$h$ flattens $\cA$ on $Z$} if for every $a\in \cA$, and every $\epsilon>0$ we have 
	\begin{equation}\label{eq:flattens}
		|h|^{\epsilon}\cdot a \rightarrow 0\quad \mbox{ on } Z.
	\end{equation}
\end{definition}

The following Lemma \ref{lem:flattening} lists some elementary consequences of Definition \ref{def:flattening}. Part \ref{item:flat-on-gens} asserts that condition \eqref{eq:flattens} -- just like closedness under $\d$ -- can be checked on generators. Part \ref{item:flat-by-ders} gives a practical condition under which not only $h$, but all its derivatives flatten $\cA$. Together with part \ref{item:flat-is-smooth}, it will be used to infer smoothness of $h$.

\begin{lema}\label{lem:flattening}
	Fix a closed subset $Z\subseteq U_1$ and a continuous function $h$ on $U_1$ such that $h\in \cS(U_1\setminus Z)$ and $h|_{Z}=0$. Let $\cF\subseteq \cS(U_1\setminus Z)$, and let $\cA \de \langle \cF \rangle$ be the $\R$-algebra generated by $\cF$. Assume that $\cA$ is closed under $\d$. Then the following hold.
	\begin{enumerate}
		\item\label{item:flat-on-gens} If for every $a\in \mathcal{F}$ and every $\epsilon>0$ we have $|h|^{\epsilon}\cdot a\rightarrow 0$ on $Z$, then $h$ flattens $\cA$ on $Z$.
		\item\label{item:flat-by-ders} Assume that $h$ flattens $\cA$ on $Z$, and  for every $j\in \{1,\dots, k\}$ we have  $h^{-1}\cdot \d_{j}h\in \cA$. Then for all $l\geq 0$ and all $j_1,\dots,j_l\in \{1,\dots, k\}$, the function $\d_{j_1}\dots\d_{j_l} h$ flattens $\cA$ on $Z$.
		\item\label{item:flat-is-smooth} Assume that all derivatives $\d_{j_1}\dots\d_{j_l} h$ flatten $\cA$ on $Z$. Then for every $a\in \cA$, we have $ha\in \cS(U_1)$, and $\d^{\alpha}(ha)|_{Z}=0$ for all $\alpha\in \N^k$. In particular, if $1\in \cA$ then $h\in \cS(U_1)$ and all derivatives of $h$ vanish on $Z$.
	\end{enumerate}
\end{lema}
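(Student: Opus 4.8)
\textbf{Proof plan for Lemma \ref{lem:flattening}.}

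The plan is to prove the three parts in order, since each relies on the previous ones, and to reduce everything to elementary manipulations with the Leibniz rule and the defining convergence \eqref{eq:flattens}.

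For part \ref{item:flat-on-gens}, I would argue that an arbitrary $a\in\cA=\langle\cF\rangle$ is a finite $\R$-linear combination of products $a_1\cdots a_r$ with each $a_s\in\cF$. By linearity it suffices to treat a single such product. Given $\epsilon>0$, write $|h|^{\epsilon}\cdot a_1\cdots a_r = \prod_{s=1}^{r}(|h|^{\epsilon/r}\cdot a_s)$. Each factor $|h|^{\epsilon/r}\cdot a_s$ extends continuously by $0$ to $Z$ by hypothesis, hence is bounded near $Z$ and tends to $0$ on $Z$; a product of finitely many functions, each bounded near $Z$ and each tending to $0$ on $Z$, tends to $0$ on $Z$. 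This gives $|h|^{\epsilon}\cdot a\to 0$ on $Z$, i.e.\ $h$ flattens $\cA$.

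For part \ref{item:flat-by-ders}, I would induct on $l$. The case $l=0$ is the hypothesis. For the inductive step, suppose $\d_{j_1}\cdots\d_{j_l}h$ flattens $\cA$ on $Z$; I want the same for $\d_{j}\d_{j_1}\cdots\d_{j_l}h$. The key observation is that, because $\cA$ is closed under $\d$ and $h^{-1}\d_j h\in\cA$ for every $j$, one can show by a secondary induction that for every multi-index there is an identity of the form $\d_{j_1}\cdots\d_{j_l}h = h\cdot b$ for some $b\in\cA$; equivalently $h^{-1}\d_{j_1}\cdots\d_{j_l}h\in\cA$. Indeed $\d_j(h\cdot b) = (\d_j h)b + h(\d_j b) = h\big((h^{-1}\d_j h)b + \d_j b\big)$, and $(h^{-1}\d_j h)b+\d_j b\in\cA$ since $\cA$ is an algebra closed under $\d$ containing $h^{-1}\d_j h$. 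Now to see that $\d_j\d_{j_1}\cdots\d_{j_l}h$ flattens $\cA$: fix $a\in\cA$ and $\epsilon>0$. Writing $\d_{j_1}\cdots\d_{j_l}h = h b$ with $b\in\cA$, we get $\d_j\d_{j_1}\cdots\d_{j_l}h = h\big((h^{-1}\d_j h)b+\d_j b\big) = h\cdot b'$ with $b' \de (h^{-1}\d_j h)b+\d_j b\in\cA$. Then $|h|^{\epsilon}\cdot a\cdot\d_j\d_{j_1}\cdots\d_{j_l}h = |h|^{\epsilon}\cdot h\cdot(ab')$; bounding $|h|$ near $Z$ (it is continuous and vanishes on $Z$) and applying the flattening of $\cA$ by $h$ to the element $ab'\in\cA$ with exponent $\epsilon$, this tends to $0$ on $Z$. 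Hence $\d_j\d_{j_1}\cdots\d_{j_l}h$ flattens $\cA$, completing the induction.

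For part \ref{item:flat-is-smooth}, fix $a\in\cA$. I would show by induction on $|\alpha|$ that $ha$ is of class $\cC^{|\alpha|}$ on $U_1$ and $\d^{\alpha}(ha)|_Z=0$. On $U_1\setminus Z$ the function $ha$ is smooth, so the only issue is extendability of derivatives across $Z$ together with their vanishing there. By the Leibniz rule, $\d^{\alpha}(ha)$ is a finite $\R$-linear combination of terms $(\d^{\beta}h)(\d^{\gamma}a)$ with $\beta+\gamma=\alpha$; each $\d^{\beta}h$ flattens $\cA$ on $Z$ by hypothesis (using the multi-index reformulation of part \ref{item:flat-by-ders}), and $\d^{\gamma}a\in\cA$ since $\cA$ is closed under $\d$, so each term extends continuously by $0$ to $Z$. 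A standard argument — a function smooth on $U_1\setminus Z$ whose partial derivatives up to order $N$ all extend continuously to $U_1$ is of class $\cC^N$ on $U_1$, with those extensions as its derivatives — then yields $ha\in\cS(U_1)$ and $\d^{\alpha}(ha)|_Z=0$ for all $\alpha$. Taking $a=1$ (legitimate when $1\in\cA$) gives $h\in\cS(U_1)$ with all derivatives vanishing on $Z$.

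I expect the only genuine subtlety to be the bookkeeping in part \ref{item:flat-by-ders} — establishing the identity $h^{-1}\d_{j_1}\cdots\d_{j_l}h\in\cA$ by a double induction and then feeding it back into the flattening condition — and, in part \ref{item:flat-is-smooth}, invoking cleanly the elementary real-analysis fact that continuous extendability of all lower-order partials across the closed set $Z$ upgrades $\cC^{k-1}$-with-continuous-$k$-th-partials to $\cC^{k}$; both are routine but deserve to be spelled out.
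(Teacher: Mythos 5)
Parts \ref{item:flat-on-gens} and \ref{item:flat-is-smooth} of your proposal are correct and essentially match the paper's argument: for \ref{item:flat-on-gens} the paper closes the set of functions satisfying \eqref{eq:flattens} under sums and products via $|h|^{\epsilon}=|h|^{\epsilon/2}\cdot|h|^{\epsilon/2}$ rather than decomposing $a$ into monomials, but this is the same computation; for \ref{item:flat-is-smooth} the paper also expands $\d^{\alpha}(ha)$ by Leibniz and relies (implicitly) on the same elementary real-analysis fact that you state explicitly at the end.

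For part \ref{item:flat-by-ders} you take a genuinely different route: you upgrade the hypothesis $h^{-1}\d_j h\in\cA$ to the structural identity $\d_{j_1}\cdots\d_{j_l}h=h\cdot b$ with $b\in\cA$ for every multi-index, whereas the paper substitutes $\d_{j_l}h=h\cdot(h^{-1}\d_{j_l}h)$ only once, expands $\d_{\boldsymbol{j}'}(hb)$ by Leibniz into terms $\d_{\boldsymbol{i}}h\cdot\d_{\boldsymbol{i}'}b$ with $|\boldsymbol{i}|<l$, and closes an induction on $l$ (after first reducing to $\epsilon=1$). Your identity is correct and correctly proved, and it is in fact a cleaner intermediate statement. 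However, your concluding verification controls the wrong quantity: Definition \ref{def:flattening} requires $|\d_{j}\d_{j_1}\cdots\d_{j_l}h|^{\epsilon}\cdot a\to 0$ on $Z$, whereas you prove $|h|^{\epsilon}\cdot a\cdot\d_{j}\d_{j_1}\cdots\d_{j_l}h\to 0$. In the latter the derivative enters to the first power and $h$ to the power $\epsilon$; this is a different statement and does not imply the required one. The gap is reparable directly from your identity: writing $\d_{\boldsymbol{j}}h=hb$ with $b\in\cA$ and choosing an even integer $2m\geq\epsilon$, one has $|b|^{\epsilon}\leq 1+b^{2m}$, hence
\begin{equation*}
	|\d_{\boldsymbol{j}}h|^{\epsilon}\cdot|a|=|h|^{\epsilon}\cdot|b|^{\epsilon}\cdot|a|\leq |h|^{\epsilon}\cdot|a|+|h|^{\epsilon}\cdot|b^{2m}a|,
\end{equation*}
and both summands tend to $0$ on $Z$ because $h$ flattens $\cA$ and $a,\ b^{2m}a\in\cA$. (Alternatively, reduce first to $\epsilon=1$ as the paper does; then $|\d_{\boldsymbol{j}}h|\cdot|a|=|h|\cdot|ba|$ with $ba\in\cA$ finishes immediately.) With that one step inserted, your proof of \ref{item:flat-by-ders} is complete.
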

\begin{proof}
	\ref{item:flat-on-gens} Since by assumption $h|_{Z}=0$ and $h$ is continuous, all constant functions $a$ satisfy \eqref{eq:flattens}. Now if $a,b\in \cA$ satisfy \eqref{eq:flattens} then $|h|^{\epsilon}\cdot (a+b)=|h|^{\epsilon}\cdot a+|h|^{\epsilon}\cdot b\rightarrow 0$, and $|h|^{\epsilon}\cdot (ab)=(|h|^{\epsilon/2}\cdot a)\cdot (|h|^{\epsilon/2}\cdot b)\rightarrow 0$ on $Z$, so $a+b$ and $ab$ satisfy \eqref{eq:flattens}, too, as needed. 
	
	\ref{item:flat-by-ders}  
	We need to prove that for every $\epsilon>0$ and every $a\in \cA$, we have  $|\d_{j_1}\dots\d_{j_l} h|^{\epsilon}\cdot a\rightarrow 0$ on $Z$. It is sufficient to prove this claim for all $a\in \cA$ and $\epsilon=1$. Indeed, by continuity it is sufficient to consider the case $\epsilon\in \Q$, which follows from case $\epsilon=1$ applied to some power of $a$.
	
	For $\boldsymbol{j}=(j_1,\dots,j_l)\in \{1,\dots, k\}^{l}$ put $\d_{\boldsymbol{j}}=\d_{j_1}\dots\d_{j_l}$ and $|\boldsymbol{j}|=l$. 
	
	We argue by induction on $l\geq 0$. Case $l=0$ holds by assumption. Fix $l\geq 1$ and assume that the claim holds for all $l'<l$. Fix $\boldsymbol{j}=(j_1,\dots,j_l)$ and put $\boldsymbol{j}'=(j_1,\dots,j_{l-1})$. By assumption, the function $b\de h^{-1}\cdot \d_{j_{l}}h$ belongs to $\cA$. Now $a\cdot \d_{\boldsymbol{j}}h=a\cdot \d_{\boldsymbol{j}'}(hb)=a\cdot \sum_{|\boldsymbol{i}|,|\boldsymbol{i}'|<l} c_{\boldsymbol{i},\boldsymbol{i}'} \d_{\boldsymbol{i}} h \cdot \d_{\boldsymbol{i}'}b$ for some $c_{\boldsymbol{i},\boldsymbol{i}'}\in \R$. Since $\cA$ is closed under $\d$, we have $a\cdot \d_{\boldsymbol{i}'}b\in \cA$, so by inductive assumption, $\d_{\boldsymbol{i}} h \cdot (a\cdot \d_{\boldsymbol{i}'}b)\rightarrow 0$ on $Z$. Thus $a\cdot \d_{\boldsymbol{j}} h\rightarrow 0$ on $Z$, as needed.
	
	\ref{item:flat-is-smooth} By assumption, $ha\in \cS(U_1\setminus Z)$. As before, for any $\boldsymbol{j}\in \{1,\dots,k\}^{l}$ we can write $\d_{\boldsymbol{j}}(ah)=\sum_{|\boldsymbol{i}|,|\boldsymbol{i'}|}c_{\boldsymbol{i},\boldsymbol{i}'} \cdot \d_{\boldsymbol{i}} a\cdot \d_{\boldsymbol{i}'}h$ for some $c_{\boldsymbol{i},\boldsymbol{i}'}\in \R$. Since $\cA$ is closed under $\d$, we have $\d_{\boldsymbol{i}} a\in \cA$. Since $\d_{\boldsymbol{i}'}h$ flattens $\cA$, we have $\d_{\boldsymbol{i}} a\cdot \d_{\boldsymbol{i}'}h\rightarrow 0$ on $Z$. Hence $\d_{\boldsymbol{j}}(ah)$ extends to a continuous function on $U_1$, which vanishes on $Z$, as claimed. If furthermore $1\in \cA$ then substituting $a=1$ to the first claim we get $h\in \cS(U_1)$ and $\d^{\alpha}h|_{Z}=0$, as claimed.
\end{proof}

\subsubsection{The functions $r_j$, $w_j$ are smooth.} 
In this section, we establish a \enquote{smooth version} of Lemma \ref{lem:v1C1}. Its most important consequence will be the fact that, for all $i\in \{2,\dots, k\}$, the functions $w_{i},r_{i}$ are in $\cS(U_1)$, and for all $\alpha\in \N^k$, their derivatives $\d^{\alpha}w_{i}$, $\d^{\alpha}r_{i}$, vanish on the zero locus of $w_i$ and $r_i$, respectively. To prove this result, for each $i\in \{2,\dots, k\}$ we introduce the following $\R$-algebras:
\begin{equation}\label{eq:RW}
	\begin{split}
		\cW_i& =\langle 1, t_i, w_i, u_i, \rho_{i}, t^{(l)}w_{i}^{-1}\ :\ l\geq 0 \rangle,\\
		\cR_{i}& = \langle 1, t_{i}, t_{i}^{-1}, \log t_{i}, g, (1+g\log t_i)^{-1},(t_{i}(1+g\log t_i)^{2}+g^{2})^{-1} \rangle.
	\end{split}
\end{equation}
We will study their properties in Lemmas \ref{lem:w_smooth} and \ref{lem:pullbacks}, respectively.

\begin{lema}\label{lem:w_smooth} 
	Fix $i\in \{2,\dots,k\}$, and let $\cW_{i}$ be as in \eqref{eq:RW}. Then the following hold.
	\begin{enumerate} 
		\item\label{item:w-inverse} For every $j\in \{1,\dots, k\}$ we have $w_{i}^{-1}\cdot \d_{j}w_{i}\in \cW_i$.		
		\item \label{item:w-closed} The algebra $\cW_{i}$ is contained in $\cS(W_i)$ and is closed under derivation by $\d$.
		\item\label{item:w-algebra} For every $\alpha\in \N^k$, the function $\d^{\alpha}w_{i}$ flattens $\cW_{i}$ on $U_{1}\setminus W_{i}$.
		\item\label{item:w-smooth} We have $w_{i}\in \cS(U_1)$, for every $i\in \{2,\dots, k\}$.
		\item\label{item:t1v1-smooth} We have $w_1,t_1,v_1\in \cS(U_1)$.
	\end{enumerate}
\end{lema}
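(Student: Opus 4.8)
The five parts should be proved in the order they are listed, each feeding into the next, with parts \ref{item:w-inverse}--\ref{item:w-closed} being essentially the computational bookkeeping set up in Section \ref{sec:C1-computation}, part \ref{item:w-algebra} being the genuine analytic content, and parts \ref{item:w-smooth}--\ref{item:t1v1-smooth} being formal consequences via the flattening machinery of Lemma \ref{lem:flattening}. So the plan is as follows.

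\emph{Parts \ref{item:w-inverse} and \ref{item:w-closed}.} First I would verify that $\cW_i\subseteq \cS(W_i)$: on $W_i=\{w_i>0\}$ each of the generators $t_i,w_i,u_i,\rho_i$ is smooth by Lemma \ref{lem:simple-smoothness} together with $\rho_i=(t_i+u_i^2)^{-1}$ and $u_i=\eta(w_i)$ with $\eta|_{(0,1]}$ smooth; and $t^{(l)}w_i^{-1}$ is smooth since $t=t_iw_i$ is smooth and $w_i^{-1}$ is smooth on $W_i$. For \ref{item:w-inverse}, Lemma \ref{lem:computations-d}\ref{item:dw} gives $\d_i w_i=-\rho_i w_i$, $\d_1 w_i=\rho_i t'$ and $\d_j w_i=0$ for $j\ne 1,i$; dividing by $w_i$ yields $w_i^{-1}\d_i w_i=-\rho_i\in\cW_i$, $w_i^{-1}\d_1 w_i=\rho_i t' w_i^{-1}=\rho_i t^{(1)}w_i^{-1}\in\cW_i$ (using $t'=t^{(1)}$ and that $\cW_i$ is closed under products). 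For closedness under $\d$ in \ref{item:w-closed}, by the Leibniz remark it suffices to differentiate each generator: $\d_j t_i$ comes out of Lemma \ref{lem:computations-d}\ref{item:dt-rho} as a $\cW_i$-combination of $dv_i,dg$ coefficients (note $u_i^2 w_i^{-1}t'=(t^{(1)}w_i^{-1})u_i^2$ and $\rho_i t_i\in\cW_i$); $\d_j u_i$ from \ref{item:du}; $\d_j w_i$ from \ref{item:dw}; $\d_j\rho_i$ by expanding $\rho_i=(t_i+u_i^2)^{-1}$ so $\d_j\rho_i=-\rho_i^2(\d_j t_i+2u_i\,\d_j u_i)\in\cW_i$; and $\d_j(t^{(l)}w_i^{-1})=(\d_j t^{(l)})w_i^{-1}-t^{(l)}w_i^{-2}\d_j w_i$, where $\d_1 t^{(l)}=t^{(l+1)}$ and $\d_j t^{(l)}=0$ for $j\ne1$ by definition \eqref{eq:def-tl}, and $w_i^{-2}\d_j w_i=(w_i^{-1}\d_j w_i)w_i^{-1}$, with $t^{(l)}w_i^{-1}\cdot w_i^{-1}\d_j w_i$ reorganized using \ref{item:w-inverse} — here one uses that $\rho_i t^{(m)}w_i^{-1}\in\cW_i$ and similar products stay in $\cW_i$. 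This is routine but needs care to see every term lands back in $\cW_i$.

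\emph{Part \ref{item:w-algebra}} is the heart. By Lemma \ref{lem:flattening}\ref{item:flat-by-ders}, since $\cW_i$ is closed under $\d$ (part \ref{item:w-closed}) and $w_i^{-1}\d_j w_i\in\cW_i$ (part \ref{item:w-inverse}), it is enough to show that $w_i$ itself flattens $\cW_i$ on $Z\de U_1\setminus W_i$, i.e.\ that $|w_i|^{\epsilon}\cdot a\to 0$ on $Z$ for every generator $a$ of $\cW_i$ and every $\epsilon>0$ (Lemma \ref{lem:flattening}\ref{item:flat-on-gens}). For $a\in\{1,t_i,w_i,u_i\}$ this is clear since these are bounded and $w_i\to 0$ on $Z$. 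For $a=\rho_i=(t_i+u_i^2)^{-1}$: using Lemma \ref{lem:computations}\ref{item:rho-gt}, $\rho_i=(1+g\log t_i)^2(t_i(1+g\log t_i)^2+g^2)^{-1}$; along $Z$ one has $w_i\to 0$, so $g=(1-\log t_i-\log w_i)^{-1}\to 0$ whenever $t_i$ stays away from $0$, hence $\rho_i\to t_i^{-1}$ which is bounded on any neighbourhood where $t_i$ is bounded below; and where $t_i\to 0$ as well, one estimates $\rho_i\le g^{-2}(1+g\log t_i)^2$ and $|w_i|^\epsilon \rho_i\to 0$ using $w_i\le g$ and crude bounds — this is the delicate estimate and the place to be most careful. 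For $a=t^{(l)}w_i^{-1}$: by Lemma \ref{lem:computations}\ref{item:tl}, $t^{(l)}=t\, p_l(\log t)$, and $t=t_iw_i$, so $t^{(l)}w_i^{-1}=t_i\, p_l(\log t_i+\log w_i)$; since $t_i$ is bounded and $p_l(\log t_i+\log w_i)$ grows only polynomially in $\log w_i$, $|w_i|^{\epsilon}\cdot t_i\, p_l(\log t_i+\log w_i)\to 0$ on $Z$. Thus $w_i$ flattens $\cW_i$, and \ref{item:w-algebra} follows.

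\emph{Parts \ref{item:w-smooth} and \ref{item:t1v1-smooth}.} Part \ref{item:w-smooth} is immediate: $w_i$ is continuous on $U_1$ (Lemma \ref{lem:intro}\ref{item:intro-extends}), vanishes on $Z$, equals a smooth function on $U_1\setminus Z$, and all its $\d$-derivatives flatten $\cW_i$ which contains $1$; so Lemma \ref{lem:flattening}\ref{item:flat-is-smooth} gives $w_i\in\cS(U_1)$ (with all derivatives vanishing on $Z$). For \ref{item:t1v1-smooth}: by Lemma \ref{lem:intro}\ref{item:intro-sum}, $w_1=1-\sum_{i=2}^k w_i\in\cS(U_1)$; since $w_1>\tfrac1{n+1}$ on $U_1$ and $t\in\cS(U_1)$ (Lemma \ref{lem:v1C1}\ref{item:t-smooth}), the formula $t_1=t w_1^{-1}$ from Lemma \ref{lem:computations}\ref{item:t=tiwi} gives $t_1\in\cS(U_1)$; finally $v_1=t_1-u_1=t_1-\eta(w_1)$, and $\eta|_{(0,1]}$ is smooth with $w_1$ bounded away from $0$ on $U_1$, so $v_1\in\cS(U_1)$.

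The main obstacle is the flattening estimate for $\rho_i$ in part \ref{item:w-algebra}: one must control $(t_i+u_i^2)^{-1}$ uniformly as $w_i\to 0$ in the regime where $t_i$ may simultaneously tend to $0$, which is exactly where $\rho_i$ can blow up, and reconcile this blow-up with the vanishing factor $|w_i|^\epsilon$; the substitutions $u_i=\eta(w_i)$, $g=(1-\log t_i-\log w_i)^{-1}$ and the inequalities $w_i\le g\le 1$ together with Lemma \ref{lem:computations}\ref{item:rho-gt} are the tools to push it through.
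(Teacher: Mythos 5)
Your plan coincides with the paper's proof: parts \ref{item:w-inverse}--\ref{item:w-closed} are the same generator-by-generator computations from Lemma \ref{lem:computations-d}, part \ref{item:w-algebra} reduces via Lemma \ref{lem:flattening}\ref{item:flat-on-gens},\ref{item:flat-by-ders} to showing that $w_i$ itself flattens each generator of $\cW_i$, and parts \ref{item:w-smooth}--\ref{item:t1v1-smooth} follow from Lemma \ref{lem:flattening}\ref{item:flat-is-smooth} and the identities $w_1=1-\sum_{i\geq 2}w_i$, $t_1=tw_1^{-1}$, $v_1=t_1-\eta(w_1)$ exactly as you say. The one step to repair is your treatment of the generator $\rho_i$, which you flag as delicate and leave incomplete. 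The detour through Lemma \ref{lem:computations}\ref{item:rho-gt} and the coordinate $g$ is unnecessary, and the inequality $w_i\leq g$ that you invoke is not true in general: $w_i\leq g$ is equivalent to $w_i(1-\log t_i-\log w_i)\leq 1$, which fails for instance along $w_i=2(1-\log t_i)^{-1}$ with $t_i\rightarrow 0$, a regime in which $w_i\rightarrow 0$, so the failure occurs exactly where you need the bound. The estimate is in fact immediate from the definitions: since $u_i=\eta(w_i)=(1-\log w_i)^{-1}$,
\[
\rho_i w_i^{\epsilon}=\frac{w_i^{\epsilon}}{t_i+(1-\log w_i)^{-2}}=\frac{w_i^{\epsilon}\cdot(1-\log w_i)^{2}}{t_i(1-\log w_i)^{2}+1}\leq w_i^{\epsilon}(1-\log w_i)^{2}\rightarrow 0\quad\mbox{as }w_i\rightarrow 0,
\]
uniformly in $t_i$, because the denominator is bounded below by $1$. (Equivalently, your own chain $\rho_i\leq g^{-2}(1+g\log t_i)^2$ already equals $u_i^{-2}=(1-\log w_i)^2$ by Lemma \ref{lem:computations}\ref{item:u-gt}, so no comparison of $w_i$ with $g$ is needed.) This is precisely the computation in the paper; with this substitution your argument is complete.
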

\begin{proof}
	\ref{item:w-inverse} Recall that by Lemma \ref{lem:mixed-vanish} we have $\d_{j}w_{i}=0$ for all $j\neq i,1$. By Lemma \ref{lem:computations-d}\ref{item:dw}, we have
	\begin{equation*}
		w_{i}^{-1}\cdot \d_{i} w_{i}=-\rho_{i}\in \cW_{i},
		\quad\mbox{and}\quad 
		w_{i}^{-1}\cdot \d_1 w_i=\rho_{i}\cdot (t'w_{i}^{-1}) \in \cW_i.
	\end{equation*}
	
	\ref{item:w-closed} By Lemma \ref{lem:v1C1}\ref{item:t-smooth}, we have $t^{(l)}\in \cS(U_1)$ for all $l\geq 0$. By Lemma \ref{lem:simple-smoothness}, we have $w_{i}\in \cS(W_{i})$: recall that, by definition, $W_{i}=\{w_{i}>0\}$ is the complement of the zero locus of $w_i$. Hence $t^{(l)}w_{i}^{-1}\in \cS(W_i)$. The function $u_i$ was defined in \eqref{eq:def-w_i-u_i} as $u_{i}=\eta(w_i)$, where $\eta|_{(0,1]}\colon (0,1]\to (0,1]$ is smooth. Therefore, $u_{i}\in \cS(W_i)$ and $u_{i}|_{W_{i}}>0$. It follows that $\rho_{i}=(t_{i}+u_{i}^{2})^{-1}\in \cS(W_i)$, so $\cW_{i}\subseteq \cS(W_i)$, as claimed.
	
	Now, we check that $\cW_i$ is closed under $\d$. It is sufficient to check $\d_{j}a\in \cW_{i}$ for all generators $a$ of $\cW_{i}$, and all $j\in \{1,\dots, k\}$. 
	By Lemma \ref{lem:computations-d}\ref{item:dt-rho}, we have 
	\begin{equation*}
		\d_{i}t_{i}=\rho_{i}t_{i}\in \cW_i,\quad
		\d_{1}t_{i}=\rho_{i}u_{i}^{2}\cdot w_{i}^{-1}t'\in \cW_i,\quad \mbox{and}\quad 
		\d_{j} t_{i}=0\mbox{ for }j\neq i,1.
	\end{equation*}
	Thus $\d_{j}t_{i}\in \cW_{i}$. Since $w_i\in \cW_i$, part \ref{item:w-inverse} implies that $\d_{j} w_{i}\in \cW_{i}$, too. Recall that $u_{i}=\eta(w_{i})$. By Lemma \ref{lem:computations}\ref{item:eta-u}, we have $\eta'(w_i)=u_{i}^{2}w_{i}^{-1}$, so $\d_{j} u_{i}=\eta'(w_{i})\d_{j}w_{i}=u_{i}^{2}\cdot w_{i}^{-1}\d_{j}w_{i}\in \cW_{i}$ by \ref{item:w-inverse}. In \eqref{eq:rho-sigma}, we have defined $\rho_i$ as $(t_{i}+u_{i}^{2})^{-1}$, so $\d_{j}\rho_{i}=-\rho_{i}^{2}(\d_{j}t_{i}+2u_{i}\d_j u_i)\in \cW_{i}$. Eventually, 
	\begin{equation*}
		\d_{j}(w_{i}^{-1} t^{(l)})=-w_{i}^{-1}t^{(l)}\cdot w_{i}^{-1}\d_{j}w_{i}+\delta_{j}^{1}\cdot w_{i}^{-1}t^{(l+1)}\in \cW_{i}.
	\end{equation*}
	
	\ref{item:w-algebra}, \ref{item:w-smooth} It is sufficient to show that $w_{i}$ flattens $\cW_{i}$ on $U_1\setminus W_i$. Indeed, since by \ref{item:w-closed} $\cW_{i}$ is a subalgebra of $\cS(W_i)$ closed under $\d$, and by \ref{item:w-inverse} it contains $w_i^{-1}\cdot \d_{j}w_{i}$, Lemma \ref{lem:flattening}\ref{item:flat-by-ders} shows that if $w_{i}$ flattens $\cW_{i}$ on $U_1\setminus W_i$, then so do all its partial derivatives (taken in any order). Once this is shown, Lemma \ref{lem:flattening}\ref{item:flat-is-smooth} will imply that $w_i\in \cS(U_1)$.
	
	By Lemma \ref{lem:flattening}\ref{item:flat-on-gens}, it is sufficient to check that, for every generator $a$ of $\cW_{i}$ and every $\epsilon>0$, we have $w_{i}^{\epsilon}a\rightarrow 0$ as $w_{i}\rightarrow 0$. This is clear for 
	all bounded generators, i.e.\ for $1$, $t_i$, $w_i$ and $u_{i}$. Recall from \eqref{eq:rho-sigma} and  \eqref{eq:def-w_i-u_i} that $\rho_{i}=(t_i+u_i^2)^{-1}$ and $u_{i}=\eta(w_i)=(1-\log w_i)^{-1}$, so
	\begin{equation*}
		\rho_{i}w_{i}^{\epsilon}=\frac{w_{i}^{\epsilon}}{t_{i}+(1-\log w_{i})^{-2}}=
		\frac{w_{i}^{\epsilon}\cdot(1-\log w_{i})^{2}}{t_{i}(1-\log(w_{i}))^{2}+1}\rightarrow 0
		\quad\mbox{as }w_i\rightarrow 0,	
	\end{equation*}
	because $t_{i}(1-\log w_{i})^{2}\geq 0$. Eventually, by Lemma \ref{lem:computations}\ref{item:tl} there is a polynomial $p\in \R[s]$ such that $t^{(l)}=tp(\log t)$. Since by Lemma \ref{lem:computations}\ref{item:t=tiwi} we have $t=t_iw_i$, we get
	\begin{equation*}
		w_{i}^{\epsilon}\cdot t^{(l)}w_{i}^{-1}=w_{i}^{\epsilon}t_{i}\cdot p(\log t_i+\log w_i)\rightarrow 0, 
		\quad\mbox{as }w_i\rightarrow 0. 		
	\end{equation*}
	%
	
	\ref{item:t1v1-smooth} We argue as in the proof of Lemma \ref{lem:v1C1}\ref{item:viwi-C1}. Part \ref{item:w-smooth} applied to $i\in \{2,\dots, k\}$ gives $\sum_{i=2}^{k}w_i\in \cS(U_1)$. By Lemma \ref{lem:intro}\ref{item:intro-sum}, we have  $w_{1}=1-\sum_{i=2}^{k}w_{i}\in \cS(U_1)$. Because  $t\in \cS(U_1)$ by Lemma \ref{lem:v1C1}\ref{item:t-smooth}, and  $w_{1}|_{U_1}>0$ by \eqref{eq:Ui}, we have $t_{1}=tw_{1}^{-1}\in \cS(U_1)$, and $v_1=t_{1}-\eta(w_1)\in \cS(U_1)$, as claimed.
\end{proof}

\begin{lema}\label{lem:pullbacks} 
	Fix $i\in \{2,\dots,k\}$, and let $\cR_{i}$ be as in \eqref{eq:RW}. Then the following hold.
	\begin{enumerate} 
		\item\label{item:r-inverse} We have $u_i,\rho_i\in \cR_i$, and $r_{i}^{-1}\cdot \d_{j} r_{i}\in \cR_{i}$ for every $j\in \{1,\dots, k\}$.
		\item \label{item:r-closed} The algebra $\cR_{i}$ is contained in $\cS(T_i)$ and is closed under derivation by $\d$.
		\item\label{item:r-algebra} For every $\alpha\in \N^k$, the functions $\d^{\alpha}r_{i}$ and $\d^{\alpha}r_1$ flatten $\cR_{i}$ on $U_{1}\setminus T_{i}$ and on $U_1\cap \d A$, respectively.
		\item\label{item:r-smooth} We have $r_{i},r_{1}\in \cS(U_1)$.
		\item\label{item:pullbacks} For every $h\in \cC^{\infty}(U_X)$, the pullback of $h$ to $U_1$ is in $\cS(U_1)$ (in short: $\cC^{\infty}(U_X)\subseteq \cS(U_1)$). 
	\end{enumerate}
\end{lema}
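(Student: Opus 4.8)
The plan is to run the argument of Lemma~\ref{lem:w_smooth} almost verbatim, with the algebra $\cR_i$ of \eqref{eq:RW} in the role of $\cW_i$; throughout I keep Notation~\ref{not:S-smooth} and use the identities of Lemmas~\ref{lem:computations}, \ref{lem:computations-d} and \ref{lem:mixed-vanish}. For \ref{item:r-inverse}, I first note that $1+g\log t_i\in\cR_i$ (it is a polynomial in the generators $1,g,\log t_i$), so Lemma~\ref{lem:computations}\ref{item:u-gt},\ref{item:rho-gt} give $u_i=g\,(1+g\log t_i)^{-1}\in\cR_i$ and $\rho_i=(1+g\log t_i)^2\,(t_i(1+g\log t_i)^2+g^2)^{-1}\in\cR_i$; and Lemma~\ref{lem:computations-d}\ref{item:dr} gives $r_i^{-1}\d_j r_i=m_i^{-1}t_i^{-2}\d_j t_i$, which by Lemma~\ref{lem:computations-d}\ref{item:dt-g} and Lemma~\ref{lem:mixed-vanish} is a product of generators of $\cR_i$ (the only nonzero cases being $j\in\{1,i\}$). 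For \ref{item:r-closed}, smoothness of the generators on $T_i$ follows from $t_i\in\cS(T_i)$ (Lemma~\ref{lem:simple-smoothness}) once one checks that $1+g\log t_i$ and $t_i(1+g\log t_i)^2+g^2$ are nowhere zero on $T_i$: the former equals $1$ on $T_i\cap\d A$ (where $g=0$) and equals $g/u_i>0$ on $T_i\setminus\d A\subseteq W_i$ by Lemma~\ref{lem:computations}\ref{item:u-gt}, so the latter has positive first summand; closure under $\d$ is checked on generators using Lemmas~\ref{lem:computations-d}, \ref{lem:mixed-vanish} and part \ref{item:r-inverse}.

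The core is \ref{item:r-algebra}. For $r_i$: $\cR_i$ is a subalgebra of $\cS(T_i)=\cS\bigl(U_1\setminus(U_1\setminus T_i)\bigr)$ closed under $\d$ and containing $r_i^{-1}\d_j r_i$, so by Lemma~\ref{lem:flattening}\ref{item:flat-by-ders} it suffices to show $r_i$ flattens $\cR_i$ on $U_1\setminus T_i$, and by Lemma~\ref{lem:flattening}\ref{item:flat-on-gens} it suffices to check the generators. Using $r_i=e^{-(m_it_i)^{-1}}$ (Lemma~\ref{lem:computations}\ref{item:t-r}) together with the elementary bounds $(1+g\log t_i)^{-1}\le 1-\log t_i$ (from $1+g\log t_i=\tfrac{1-\log w_i}{1-\log t_i-\log w_i}$ and $1-\log w_i\ge 1$) and $(t_i(1+g\log t_i)^2+g^2)^{-1}=\rho_i(1+g\log t_i)^{-2}\le t_i^{-1}(1-\log t_i)^2$, one sees that every generator of $\cR_i$ is dominated by a polynomial in $t_i^{-1}$ and $\log t_i^{-1}$, which any positive power of $r_i$ kills as $t_i\to 0$. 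For $r_1$: since $t_1\in\cS(U_1)$ (Lemma~\ref{lem:w_smooth}\ref{item:t1v1-smooth}), $r_1=\phi\circ t_1$ with $\phi(s)=e^{-(m_1s)^{-1}}$ flat at $0$, so each $\d^\alpha r_1$ is a finite sum of terms $\phi^{(m)}(t_1)\cdot(\text{products of }\d^\beta t_1)$, whence $|\d^\alpha r_1|$ is bounded by a function of $t_1$ vanishing faster than any power of $t_1^{-1}$; combined with $t\le t_1\le (n+1)t$ on $U_1$ (from $t_1=tw_1^{-1}$, $w_1>\tfrac1{n+1}$) and the estimates $t_i^{-1}\le t^{-1}$, $|\log t_i|\le 2|\log t|$ (valid since $w_i\le 1$ and $t_i<1$, whence $t_i=t/w_i\ge t$ and $w_i=t/t_i>t$), every element of $\cR_i$ grows at most polynomially in $t^{-1}$ and $\log t^{-1}$, so $|\d^\alpha r_1|^{\epsilon}\cdot a\to 0$ on $U_1\cap\d A$.

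Parts \ref{item:r-smooth} and \ref{item:pullbacks} are then formal. By \ref{item:r-algebra} all $\d^\alpha r_i$ flatten $\cR_i$ on $U_1\setminus T_i$, and $r_i$ is continuous on $U_1$, smooth on $T_i$, and vanishes on $U_1\setminus T_i$; since $1\in\cR_i$, Lemma~\ref{lem:flattening}\ref{item:flat-is-smooth} yields $r_i\in\cS(U_1)$, while $r_1=\phi\circ t_1\in\cS(U_1)$ as noted. For \ref{item:pullbacks}, for each $i\in\{1,\dots,k\}$ we now have $r_i\in\cS(U_1)$ by \ref{item:r-smooth} and $\theta_i\in\cS(U_1)$ since $\theta_i$ is one of the coordinates $\rest$ of the chart $\psi_1$; hence the real and imaginary parts $r_i\cos(2\pi\theta_i),\,r_i\sin(2\pi\theta_i)$ of $z_i$ lie in $\cS(U_1)$, and the remaining holomorphic coordinates $z_{i_{k+1}},\dots,z_{i_n}$ of $U_X$ are components of $\rest$, hence in $\cS(U_1)$ too; the chain rule then gives $\cC^\infty(U_X)\subseteq\cS(U_1)$, exactly as in Lemma~\ref{lem:v1C1}\ref{item:pullbacks-C1}.

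The main obstacle is the $r_1$-part of \ref{item:r-algebra}: unlike the $r_i$-case it is not a one-variable estimate in $t_i$ but requires controlling arbitrarily high derivatives of $r_1$ and all elements of $\cR_i$ simultaneously as one approaches $\d A$ from inside $U_1$, which is precisely why the comparison $t\le t_1\le (n+1)t$ and the ``superpolynomial decay beats polynomial growth'' bookkeeping are needed. A secondary nuisance is the nonvanishing of the denominators $1+g\log t_i$ and $t_i(1+g\log t_i)^2+g^2$ on $T_i$ in part \ref{item:r-closed}, where the identity $u_i=g(1+g\log t_i)^{-1}$ is the crucial input.
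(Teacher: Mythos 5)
Your proof is correct and, for parts \ref{item:r-inverse}, \ref{item:r-closed}, \ref{item:r-smooth}, \ref{item:pullbacks} and the $r_i$-half of \ref{item:r-algebra}, follows the paper's argument essentially verbatim: generator-by-generator verification combined with Lemma \ref{lem:flattening}. (Your direct bounds $(1+g\log t_i)^{-1}\le 1-\log t_i$ and $c\le t_i^{-1}(1-\log t_i)^2$ replace the paper's route through Lemma \ref{lem:computations}\ref{item:eta-t},\ref{item:eta-t-bounded}, and your explicit check that the denominators are nonvanishing on $T_i$ is a point the paper leaves implicit; both are fine.) The one place where you take a genuinely different route is the $r_1$-half of \ref{item:r-algebra}. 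The paper enlarges the algebra to $\cR_i'=\cR_i+\langle t_1^{-1},\,\d^{\alpha}t_1\rangle$, proves that $r_1$ itself flattens $\cR_i'$ using the pointwise comparison $r_1<r_i^{\gamma}$ of \eqref{eq:r1<ri}, and then propagates to all derivatives via $r_1^{-1}\d_j r_1=m_1^{-1}t_1^{-2}\d_j t_1\in\cR_i'$ and Lemma \ref{lem:flattening}\ref{item:flat-by-ders}. You instead estimate each $\d^{\alpha}r_1$ directly by Fa\`a di Bruno applied to $r_1=e^{-(m_1t_1)^{-1}}$ with $t_1\in\cS(U_1)$, and dominate every element of $\cR_i$ by a polynomial in $t^{-1}$ and $|\log t|$ via $t\le t_i$ and $t\le t_1\le(n+1)t$; the latter inequality plays exactly the role of \eqref{eq:r1<ri}. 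Your version is somewhat more self-contained, at the cost of the same implicit assumption the paper also makes, namely that all derivatives $\d^{\beta}t_1$ are bounded on $U_1$; both arguments are sound.
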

\begin{proof}
	Denote the last two generators of $\cR_i$ by $b=(1+g\log t_{i})^{-1}$ and $c=(t_i(1+g\log t_{i})^{2}+g^2)^{-1}$. 
	
	\ref{item:r-inverse}  
	By Lemma \ref{lem:computations}\ref{item:u-gt} we have $u_{i}=gb\in \cR_{i}$, and by Lemma \ref{lem:computations}\ref{item:rho-gt} we have $\rho_{i}=(1+g\log t_{i})^{2}\cdot c\in \cR_{i}$. By Lemma \ref{lem:computations-d}\ref{item:dt-g}, we have 
	\begin{equation*}
		\d_{i}t_{i}=t_{i}c(1+g\log t_{i})^{2}\in \cR_{i},\quad
		\d_{1}t_{i}=t_{i}c\in \cR_{i}\quad
		\mbox{and} \quad
		\d_{j}t_{i}=0\mbox{ for } j\neq 1,i.
	\end{equation*}
	Hence by Lemma \ref{lem:computations-d}\ref{item:dr}, $r_{i}^{-1}\cdot \d_{j}r_{i}=m_{i}^{-1}t_{i}^{-2} \d_{j}t_{i}\in \cR_{i}$.
	
	\ref{item:r-closed} Recall that $T_{i}=\{t_{i}>0\}$. Clearly, the coordinate $g$ is in $\cS(T_i)$. By Lemma \ref{lem:simple-smoothness} we have $t_{i}\in \cS(T_i)$, so $t_{i}^{-1},\log t_{i}\in \cS(T_i)$, and in consequence $b,c\in \cS(T_i)$, so $\cR_{i}\subseteq \cS(T_i)$, as needed.
	
	Now, we check that $\cR_{i}$ is closed under $\d$. We have already seen that $\d_{j}t_{i}\in \cR_{i}$. Hence $\d_{j}t_{i}^{-1}=-t_{i}^{-2}\d_j t_i\in \cR_{i}$ and $\d_{j} \log t_{i}=t_{i}^{-1} \d_{j}t_{i}\in \cR_{i}$. By definition of $\d_{j}$, we have $\d_{j}g=\delta_{j}^{1}\in \cR_{i}$.  Eventually, 
	\begin{equation*}
		\begin{split}
			\d_{j} b &=-b^{2} \cdot (\d_{j} g\cdot \log t_{i}+gt_{i}^{-1}\cdot \d_j t_i)\in \cR_i,\\
			\d_j c & = -c^{2}\cdot (\d_{j}t_{i} \cdot (1+g\log t_i)^2+2t_{i}(1+g\log t_i)(\log t_{i}\cdot \d_{j} g+ gt_{i}^{-1}\cdot \d_{j} t_{i})+2g\d_j g)\in \cR_i.
		\end{split}
	\end{equation*}
	
	\ref{item:r-algebra}, \ref{item:r-smooth} We claim that $r_{i}$ flattens $\cR_{i}$ on $U_{1}\setminus T_i$. Since $\cR_{i}\subseteq \cS(U_1\setminus T_i)$, by Lemma \ref{lem:flattening}\ref{item:flat-on-gens} it is enough to show that all generators of $\cR_{i}$ satisfy \eqref{eq:flattens}. Since $r_{i}|_{U_1\setminus T_i}=0$, this condition is clear for all bounded functions, hence for $1$, $t_{i}$, $g$ and $b$. By Lemma \ref{lem:computations}\ref{item:t-r}, we have $r_{i}^{\epsilon}=e^{-\epsilon(m_it_i)^{-1}}$, so $r_{i}^{\epsilon}t_{i}^{-1}\rightarrow 0$ and $r_{i}^{\epsilon}\log t_{i}\rightarrow 0$. It remains to check that $r_{i}^{\epsilon}c\rightarrow 0$. By Lemma \ref{lem:computations}\ref{item:eta-t}, we have $(1+g\log t_{i})^{2}=t_{i}\cdot (\eta'(w_i)t')^{-1}$, and by Lemma \ref{lem:computations}\ref{item:eta-t-bounded}, $\eta'(w_i)t'=t_{i}^{\delta}l$ for some $\delta\in (0,1)$ and a bounded function $l$. Hence $(1+g\log t_{i})^{2}=t_{i}^{1-\delta} l^{-1}$, and therefore
	\begin{equation*}
		c=(t_i(1+g\log t_{i})^{2}+g^2)^{-1}=(t_{i}^{2-\delta}l^{-1}+g^{2})^{-1}=t_{i}^{\delta-2}l\cdot q,
	\end{equation*}
	where $q\de (1+g^{2}t_{i}^{\delta-2}l)^{-1}$ is bounded. Hence $r_{i}^{\epsilon}c=e^{-\epsilon(m_i t_i)^{-1}}t_{i}^{\delta-2} q\rightarrow 0$ as $t_i\rightarrow 0$, as claimed.

	Thus $r_i$ flattens $\cR_i$ on $U_1\setminus T_i$. Since $r_{i}^{-1}\d_jr_i\in \cR_{i}$ by \ref{item:r-inverse}, Lemma \ref{lem:flattening}\ref{item:flat-by-ders} implies that all derivatives $\d_{j_1}\dots\d_{j_l}r_{i}$ flatten $\cR_{i}$ on $U_i\setminus T_i$, too, as claimed. Part \ref{item:r-inverse} and Lemma \ref{lem:flattening}\ref{item:flat-is-smooth} imply that $r_{i}\in \cS(U_1)$.
	\smallskip
	
	Recall that $t_1$, hence $r_1$, are in $\cS(U_1)$ by Lemma \ref{lem:w_smooth}\ref{item:w-smooth}. We claim that $r_1$ flattens $\cR_{i}$ on $U_1\cap \d A$.
	
	By definition \eqref{eq:Ui} of $U_1$, we have $w_1>0$, so by Lemma \ref{lem:computations}\ref{item:t=tiwi}, on $U_1\cap \d A$ we have $t_1=tw_{1}^{-1}=0$, and therefore $r_1=0$. In \ref{item:r-closed}, we have shown that $\cR_{i}\subseteq \cS(T_i)$, so all elements of $\cR_{i}$ are bounded on $T_i$. Hence $r_1$ flattens $\cR_{i}$ on $T_{i}\cap \d A$. We claim that $r_1$ flattens $\cR_{i}$ on $U_1\setminus T_i$, too.
	
	Define $\cR_{i}'=\cR_{i}+\langle t_{1}^{-1}, \d^{\alpha}t_{1} :\alpha\in \N^k \rangle$. Because $\cR_{i}$ is closed under $\d$ by \ref{item:r-closed}, so is $\cR_{i}'$. We claim that $r_1$ flattens $\cR_{i}'$ on $U_1\setminus T_i$. By Lemma \ref{lem:flattening}\ref{item:flat-on-gens}, we need to prove that  all elements of $\cR_{i}$, and all functions $t_{1}^{-1}$, $\d^{\alpha}t_{1}$ satisfy \eqref{eq:flattens}. By Lemma \ref{lem:computations}\ref{item:t-r},\ref{item:t=tiwi}, we have on $U_{1}\setminus \d A$
	\begin{equation}\label{eq:r1<ri}
		r_{1}=\exp(-\tfrac{1}{m_1t_{1}})=
		\exp(- \tfrac{1}{m_1}\tfrac{w_1}{t})=
		\exp(-\tfrac{m_i}{m_1}\tfrac{1}{m_i}\tfrac{1}{t_i}\tfrac{w_1}{w_i})=r_{i}^{\frac{m_i}{m_1}\frac{w_1}{w_i}}<
		r_{i}^{\gamma},\quad \mbox{where } \gamma=\tfrac{m_i}{m_1(n+1)}.
	\end{equation}
	For the last inequality, we have used the fact that $r_1<1$ by Definition \ref{def:adapted-chart} of a chart adapted to $f$; $w_{i}\leq 1$ by Lemma \ref{lem:intro}\ref{item:intro-sum}; and $w_1>\tfrac{1}{n+1}$ by definition \eqref{eq:Ui} of $U_1$. 
	
	We have seen that $r_i$ flattens $\cR_{i}$ on $U_1\setminus T_i$, so for every $\epsilon>0$ and every $a\in \cR_{i}$ we have $|r_{1}^{\epsilon}a|<|r_{i}^{\gamma \epsilon}a|\rightarrow 0$ as $t_i\rightarrow 0$. Hence all elements of $\cR_{i}$ satisfy \eqref{eq:flattens}, as needed.
	
	To prove \eqref{eq:flattens} for $t_{1}^{-1}$, recall from Lemma \ref{lem:computations}\ref{item:t-r} that $r_1=e^{-\epsilon(m_1t_1)^{-1}}$, so $t_{1}^{-1}r_{1}^{\epsilon}=t_{1}^{-1}e^{-\epsilon(m_1t_1)^{-1}}$. By \eqref{eq:r1<ri}, we have $t_{1}|_{U_1\setminus T_i}=0$, so $t_{1}^{-1}r_{1}^{\epsilon}\rightarrow 0$ on $U_1\setminus T_i$, as claimed. Eventually, by Lemma \ref{lem:w_smooth}\ref{item:w-smooth}, we have $t_{1}\in \cS(U_1)$, so for all $\alpha\in \N^k$, the function $\d^{\alpha}t_1$ is bounded on $U_1$, in particular $r_{1}^{\epsilon}\d^{\alpha}t_{1}\rightarrow 0$ on $U_1\setminus T_i$. Thus we have shown that $r_{1}$ flattens $\cR_{i}'$.
	
	By Lemma \ref{lem:computations-d}\ref{item:dr}, we have $r_{1}^{-1}\d_j r_1=m_{1}^{-1}t_{1}^{-2}\d_j t_1\in \cR_{i}'$ for all $j\in \{1,\dots, k\}$, so by Lemma \ref{lem:flattening}\ref{item:flat-by-ders} all derivatives of $r_1$ flatten $\cR_{i}'$, too. Since $\cR_i\subseteq \cR_i'$, it follows that they flatten $\cR_{i}$, as claimed. 
	
	\ref{item:pullbacks} Follows from \ref{item:r-smooth} by the chain rule applied as in the proof of Lemma \ref{lem:v1C1}\ref{item:pullbacks-C1}. 
\end{proof}

\subsubsection{The Jacobian matrix of \texorpdfstring{$\bar{\psi}_1$}{psi-bar}}\label{sec:M}
In Section \ref{sec:smoothchart}, we have used the fact that the Jacobian matrix of $\bar{\psi}_{1}=(g,\bar{v}_2,\dots,\bar{v}_{k},\rest)$ with respect to our auxiliary coordinates $\psi_{1}=(g,v_2,\dots,v_{k},\rest)$ is a small deformation of the identity. The difference can be computed using Lemma \ref{lem:dvbar}\ref{item:tame_1-bar}. We will now study this difference in more detail. In Lemma \ref{lem:jacobian}, proved at the end of this section, we will see that this difference belongs to a particular matrix ring $\cM$ introduced in Definition \ref{def:M}. Properties of $\cM$ will allow to conveniently express the coordinate vector fields of the smooth chart $\bar{\psi}_{1}$.
\smallskip

Recall that we work with a fixed $(\cU_{X},\btau)$, where $\cU_{X}=\{U_{X}^{p}\}_{p\in R}$ is an atlas adapted to $f$, and $\btau=\{\tau^{p}\}_{p\in R}$ is a partition of unity inscribed in $\cU_{X}$. 
To simplify the notation, we will now restrict our attention to charts which are small in the following sense.

\begin{lema}\label{lem:small-chart}
	Every point $x\in X$ lies in a chart $U_X$ adapted to $f$ with the following property. For every $p\in R$ such that $\tau^{p}$ is not identically zero on $U_X$, we have $U_{X}\subseteq U_{X}^{p}$.
\end{lema}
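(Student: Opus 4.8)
The statement is a standard compactness/shrinking argument, so the plan is as follows. Fix $x\in X$. First I would choose any chart $U_X^0$ adapted to $f$ with $x\in U_X^0$; this exists because $D\redd$ is snc, as noted after Definition \ref{def:adapted-chart}. Let $S$ be the index set \eqref{eq:index-set} associated to $U_X^0$, i.e.\ $S=\{i:\bar U_X^0\cap D_i\neq\emptyset\}$. Since the partition of unity $\btau$ is inscribed in $\cU_X$, the point $x$ has an open neighborhood $V$ meeting only finitely many of the supports $\supp\tau^p$; enumerate those indices $p$ with $x\in\supp\tau^p$ (equivalently $\tau^p(x)\neq 0$) as $p_1,\dots,p_\ell$. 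For each such $p_j$ we have $x\in U_X^{p_j}$, because $\tau^{p_j}$ is supported in $U_X^{p_j}$ and $\tau^{p_j}(x)\neq 0$. Hence $U_X^0\cap\bigcap_{j=1}^{\ell}U_X^{p_j}$ is an open neighborhood of $x$.

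\textbf{Key steps.} Next I would shrink to get a chart with the required containment. Set $U_X\subseteq U_X^0\cap\bigcap_{j=1}^{\ell}U_X^{p_j}$ to be a sufficiently small open neighborhood of $x$, on which we restrict the coordinate system $\psi_X^0=(z_{i_1},\dots,z_{i_n})$ of $U_X^0$. Conditions \ref{item:zero-locus} and \ref{item:f-locally} of Definition \ref{def:adapted-chart} are inherited from $U_X^0$; condition \ref{item:r-small} is inherited as well since it is an inequality on $|z_i|$; and condition \ref{item:U-compact} can be arranged by taking $U_X$ with compact closure contained in $U_X^0\cap\bigcap_j U_X^{p_j}$, which is possible since $X$ is locally compact and the latter set is an open neighborhood of $x$. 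Thus $(U_X,\psi_X^0|_{U_X})$ is a chart adapted to $f$ containing $x$.

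\textbf{Verifying the property.} Finally I would check the stated property. Suppose $\tau^p$ is not identically zero on $U_X$, say $\tau^p(y)\neq 0$ for some $y\in U_X$. I would like to conclude $p\in\{p_1,\dots,p_\ell\}$, hence $U_X\subseteq U_X^p$ by construction. This requires the finite set $\{p: \tau^p|_{U_X}\not\equiv 0\}$ to coincide with $\{p:\tau^p(x)\neq 0\}$; this need not hold for an arbitrary small $U_X$, but it does hold after a further shrinking: by local finiteness of $\btau$ there is a neighborhood $W$ of $x$ meeting $\supp\tau^p$ for only finitely many $p$, say $p\in\{q_1,\dots,q_m\}\supseteq\{p_1,\dots,p_\ell\}$, and for each $q_r\notin\{p_1,\dots,p_\ell\}$ we have $x\notin\supp\tau^{q_r}$, so we may shrink $W$ further to avoid these closed sets. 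Replacing $U_X$ by $U_X\cap W$ (still adapted to $f$, by the argument above), every $p$ with $\tau^p|_{U_X}\not\equiv 0$ satisfies $\tau^p(x)\neq 0$, hence $p\in\{p_1,\dots,p_\ell\}$ and $U_X\subseteq U_X^{p}$, as required.

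\textbf{Main obstacle.} There is no real obstacle here: the argument is a routine combination of local finiteness of the partition of unity, local compactness of $X$, and the fact that each defining condition of an adapted chart is stable under restriction to a smaller open set. The only point requiring a moment's care is the last one — ensuring that shrinking $U_X$ does not introduce \emph{new} indices $p$ with $\tau^p$ nonvanishing somewhere on $U_X$ while $\tau^p(x)=0$ — which is handled by the extra shrinking against the finitely many closed sets $\supp\tau^{q_r}$ not containing $x$.
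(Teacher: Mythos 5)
Your overall strategy (local finiteness plus shrinking) is the same as the paper's, but there is a genuine gap in how you set up the dichotomy of indices. You take the "good" set to be $\{p:\tau^p(x)\neq 0\}$ and assert parenthetically that this is the same as $\{p:x\in\supp\tau^p\}$. With the standard (closed) notion of support these sets differ: there can be indices $p$ with $\tau^p(x)=0$ but $x\in\partial\bigl(\overline{\{\tau^p\neq 0\}}\bigr)$ — e.g.\ $x$ on the boundary of a bump function's support. For such a $p$, \emph{every} neighborhood of $x$ meets $\{\tau^p\neq 0\}$, so the lemma's conclusion forces $U_X\subseteq U_X^p$; yet $p$ is not among your $p_1,\dots,p_\ell$, so your intersection $U_X^0\cap\bigcap_j U_X^{p_j}$ does not arrange this containment, and your final shrinking step "shrink $W$ further to avoid these closed sets" is impossible for this $p$, since $x$ lies in the closed set $\overline{\{\tau^p\neq 0\}}$ you are trying to avoid. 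So the argument as written fails exactly at the boundary-of-support indices, which are the only subtle case in this lemma.

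The fix is the one the paper uses: set $G^p=\overline{\{y:\tau^p(y)>0\}}$ and take the "good" set to be $R_x=\{p:x\in G^p\}$ rather than $\{p:\tau^p(x)\neq 0\}$. Then the finitely many remaining indices $p$ (those in the local-finiteness set $R'$ but not in $R_x$) satisfy $x\notin G^p$ with $G^p$ closed, so they genuinely can be excised, while for $p\in R_x$ one intersects with $U_X^p$ (using that the support of $\tau^p$ lies in $U_X^p$, so $x\in U_X^p$). With that correction the rest of your argument — that the conditions of Definition \ref{def:adapted-chart} survive restriction to a smaller relatively compact neighborhood — is fine.
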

\begin{proof}
	For $p\in R$ let $G^{p}$ be the closure of the support of $\tau^{p}$, i.e.\ $G^{p}=\bar{\{y\in X: \tau^{p}(y)>0\}}$. Put $R_{x}=\{p\in R: x\in G^{p}\}$.  
	Since $\btau$ is locally finite, there is an open neighborhood $V'$ of $x$ such that the set $R'=\{p\in R: \tau^{p}|_{V'}\not\equiv 0\}$ is finite. Clearly, $R_x\subseteq R'$. Put $V=V'\cap \bigcap_{p\in R_x} U^{p}_{X} \setminus \bigcup_{p\in R'\setminus R_{x}} G^{p}$. Clearly $x\in V$, and since $R'$ and $R_x$ are finite, the set $V$ is open. 
	
	Now, fix $p\in R$ such that $\tau^{p}$ is not identically zero on $V$. Then $\tau^{p}|_{V'}\not\equiv 0$, so $p\in R'$ by definition of $R'$. Definition of $V$ implies that $p\in R_x$, and therefore, $V\subseteq U^p_X$. We conclude that any chart adapted to $f$ contained in $V$ has the required property.
\end{proof}

From now on, we fix a chart $U_X$ adapted to $f$, satisfying the statements of Lemmas \ref{lem:smoothchart} and  \ref{lem:small-chart}, and use Notation \ref{not:S-smooth} for this chart.

Put $R_0=\{p\in R : \tau^{p}|_{U_X}\not\equiv 0\}$. Then the statement of Lemma \ref{lem:small-chart} asserts that  $U_X\subseteq U_X^p$ for every $p\in R_0$. Thus for every $p\in R_0$ and $i\in \{2,\dots, k\}$ we have a \enquote{transition} function $a\in \cC^{\infty}(U_X)$ defined in Lemma \ref{lem:w_extends}\ref{item:t-comparison}. We define $\cT_{i}$ as the set of those functions $a$, as $p$ ranges among the set $R_0$. By definition, $\cT_i\subseteq \cC^{\infty}(U_X)$, so by Lemma \ref{lem:pullbacks}\ref{item:pullbacks} we have $\cT_i\subseteq \cS(U_1)$. 
\smallskip

Fix $i\in \{2,\dots, k\}$, and let $\cR_{i}$, $\cW_{i}$ be the $\R$-algebras defined in \eqref{eq:RW}. We define

\begin{equation}\label{eq:AI}
	\begin{split}
		\cA_{i}&=\cW_{i}\cap \cR_{i}
		+\langle \d^{\alpha}h,\ \d^{\alpha}r_{j},\ \cos(2\pi\theta_j),\ \sin(2\pi\theta_j),\ 
		(1+at_{i})^{-1},\ \log(1+at_{i}),\\
		&\phantom{=\cW_{i}\cap \cR_{i}+\langle\ }
		(1-u_{i}\log(1+at_{i}))^{-1} 
		: \alpha\in \N^{k},\ h\in \cC^{\infty}(U_X),\ j\in \{1,i\},\ a\in \cT_{i} \rangle, \\
		\cI_{i}&=\langle \d^{\alpha} r_{1},\d^{\alpha}r_{i}:\alpha\in \N^k \rangle \cdot \cA_i.
	\end{split}
\end{equation}
This way, $\cI_{i}$ is an ideal of $\cA_{i}$ generated by all the derivatives of $r_{1}$, $r_{i}$. Intuitively speaking, the ideal $\cI_{i}$ consists of elements which decay exponentially fast with respect to $t_i$. In fact, we have $r_{i}=e^{-(m_{i}t_{i})^{-1}}$ by Lemma \ref{lem:computations}\ref{item:t-r}; and we have seen in \eqref{eq:r1<ri} that $r_1<r_i^{\gamma}$ for some constant $\gamma>0$. 

We now list basic properties of the algebra $\cA_i$ and its ideal $\cI_{i}$.

\begin{lema}\label{lem:AI}
	For each $i\in \{2,\dots, k\}$, the following hold.
	\begin{enumerate}
		\item \label{item:AI-smooth-off-corner} We have $\cA_{i}\subseteq \cS(W_i\cup T_i)$.
		\item\label{item:AI-closed} Both the algebra $\cA_{i}$ and its ideal $\cI_{i}$ are closed under $\d$.
		\item\label{item:AI-flattened} For all $\alpha\in \N^{k}$, the functions $\d^{\alpha}w_i$, $\d^{\alpha}r_i$ and $\d^{\alpha}r_1$ flatten $\cA_{i}$ on $U_1\setminus W_i$, $U_1\setminus T_i$ and on $U_1\cap \d A$, respectively.
		\item\label{item:AI-I-smooth} We have $\cI_{i}\subseteq \cS(U_1)$.		
		\item\label{item:dl-smooth} For every $h\in \cC^{\infty}(U_X)$, we have $\d_{i}h\in \cI_i$.		
		\item\label{item:AI-dl} For every $j\in \{2,\dots, k\}$ such that $j\neq i$ we have $\d_{j}\cA_{i}\subseteq \cI_{j}\cdot \cA_{i}$.
	\end{enumerate}
\end{lema}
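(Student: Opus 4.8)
The plan is to establish the six items in the order \ref{item:AI-smooth-off-corner}, \ref{item:AI-closed}, \ref{item:dl-smooth}, \ref{item:AI-flattened}, \ref{item:AI-I-smooth}, \ref{item:AI-dl}, since closedness under $\d$ is needed before the flattening statements even make sense, \ref{item:AI-flattened} feeds into \ref{item:AI-I-smooth}, and \ref{item:dl-smooth} feeds into \ref{item:AI-dl}. Throughout I work with the chart $U_X$ of Notation \ref{not:S-smooth}, so $S=\{1,\dots,k\}$, $i\in\{2,\dots,k\}$, and $U_1$ carries the smooth structure given by $\psi_1$. I will use two bookkeeping principles repeatedly: an $\R$-algebra $\langle\cF\rangle$ is closed under $\d$ as soon as $\d_j$ maps every element of $\cF$ into it, and, for a $\d$-closed algebra, flattening can be verified on a generating set by Lemma \ref{lem:flattening}\ref{item:flat-on-gens}. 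A preliminary remark used several times below: every generator of $\cA_i$ in \eqref{eq:AI} \emph{other than} the ones coming from $\cW_i\cap\cR_i$ extends continuously to all of $U_1$ --- for the $\d^\alpha h$ and $\d^\alpha r_l$ by Lemma \ref{lem:pullbacks}\ref{item:r-smooth},\ref{item:pullbacks}, for $\cos 2\pi\theta_l,\sin 2\pi\theta_l$ because the $\theta_l$ are coordinates of $\psi_1$, and for $(1+at_i)^{-1}$, $\log(1+at_i)$, $(1-u_i\log(1+at_i))^{-1}$ by Lemma \ref{lem:dv}\ref{item:a-bounded} applied to the pair $(U_X,U_X^p)$ (which also gives non-vanishing of the three denominators on $U_1$).

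For \ref{item:AI-smooth-off-corner} I combine $\cW_i\subseteq\cS(W_i)$ (Lemma \ref{lem:w_smooth}\ref{item:w-closed}) and $\cR_i\subseteq\cS(T_i)$ (Lemma \ref{lem:pullbacks}\ref{item:r-closed}); by locality of smoothness this yields $\cW_i\cap\cR_i\subseteq\cS(W_i)\cap\cS(T_i)=\cS(W_i\cup T_i)$, while the remaining generators lie in $\cS(U_1)$ or, using $t_i,u_i\in\cS(W_i\cup T_i)$, $a\in\cS(U_1)$, and the non-vanishing above, in $\cS(W_i\cup T_i)$. For \ref{item:AI-closed}: $\cW_i$ and $\cR_i$ are $\d$-closed, hence so is their intersection; the other generators are differentiated directly, using $\d_j\theta_l=0$, that $\d_j t_i,\d_j u_i$ lie in $\cW_i\cap\cR_i$, and that $\d_j a$ is again a generator, so $\cA_i$ is $\d$-closed; closedness of $\cI_i$ then follows from Leibniz applied to $\d_j(c\cdot\d^\alpha r_l)$, since $\d_j(\d^\alpha r_l)$ is again a derivative of $r_l$. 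For \ref{item:dl-smooth}: expanding $dh$ in the holomorphic coordinates $(z_{i_1},\dots,z_{i_n})$ of $U_X$ and reading off the coefficient of $dv_i$, only the coordinates $z_{i_l}$ with $i_l\in S$ contribute, and for those $\d_i\theta_{i_l}=0$, so one gets $\d_i h=\sum_{i_l\in S}\big(\tfrac{\d h}{\d x_l}\cos 2\pi\theta_{i_l}+\tfrac{\d h}{\d y_l}\sin 2\pi\theta_{i_l}\big)\,\d_i r_{i_l}$ with $x_l,y_l$ the real and imaginary parts of $z_{i_l}$. By Lemma \ref{lem:mixed-vanish} only the terms $i_l\in\{1,i\}$ survive, and the resulting expression is visibly an $\cA_i$-linear combination of $\d_i r_1,\d_i r_i$, which are among the generators of the ideal $\cI_i$; hence $\d_i h\in\cI_i$.

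For \ref{item:AI-flattened} I treat $\d^\alpha w_i$ on $Z=U_1\setminus W_i$; the statements for $\d^\alpha r_i$ on $U_1\setminus T_i$ and $\d^\alpha r_1$ on $U_1\cap\d A$ are proved identically, with Lemma \ref{lem:w_smooth}\ref{item:w-algebra} replaced by Lemma \ref{lem:pullbacks}\ref{item:r-algebra}. Since $w_i\in\cS(U_1)$ (Lemma \ref{lem:w_smooth}\ref{item:w-smooth}), $\d^\alpha w_i$ is continuous on $U_1$ and, being a flattener of $\cW_i\ni 1$, vanishes on $Z$. By Lemma \ref{lem:flattening}\ref{item:flat-on-gens} (applicable because $\cA_i$ is $\d$-closed) it remains to check $|\d^\alpha w_i|^\epsilon a\to 0$ on $Z$ for each generator $a$: for $a\in\cW_i\cap\cR_i\subseteq\cW_i$ this is Lemma \ref{lem:w_smooth}\ref{item:w-algebra}, and for every other generator $a$ is continuous on $U_1$ by the preliminary remark, so $|\d^\alpha w_i|^\epsilon a$ extends continuously to $U_1$ with value $0$ on $Z$. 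Part \ref{item:AI-I-smooth} is then a clean application of Lemma \ref{lem:flattening}\ref{item:flat-is-smooth}: $\cI_i$ is the ideal of $\cA_i$ generated by the $\d^\alpha r_1$ and $\d^\alpha r_i$, so it suffices to show $c\cdot\d^\alpha r_l\in\cS(U_1)$ for $c\in\cA_i$, $l\in\{1,i\}$; with $h=\d^\alpha r_l$ and $Z=U_1\setminus T_i$ (for $l=i$) or $Z=U_1\cap\d A$ (for $l=1$), the hypotheses of Lemma \ref{lem:flattening}\ref{item:flat-is-smooth} --- $h$ continuous on $U_1$ and vanishing on $Z$, $\cA_i$ $\d$-closed and contained in $\cS(U_1\setminus Z)$, and all $\d$-derivatives of $h$ flattening $\cA_i$ on $Z$ --- are supplied by \ref{item:AI-smooth-off-corner}, \ref{item:AI-closed} and \ref{item:AI-flattened}.

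Finally, for \ref{item:AI-dl} one verifies $\d_j a\in\cI_j\cdot\cA_i$ on generators $a$ of $\cA_i$, which suffices since $\cA_i$ is an algebra and $(\cI_j\cdot\cA_i)\cdot\cA_i=\cI_j\cdot\cA_i$. By Lemma \ref{lem:mixed-vanish}, $\d_j$ kills $t_i$, $u_i$, $w_i$, $t^{(l)}$, hence every generator of $\cW_i$, as well as $r_i$; so $\d_j a=0$ for $a\in\cW_i\cap\cR_i$ and $\d_j(\d^\alpha r_i)=\d^\alpha(\d_j r_i)=0$; likewise $\d_j(\cos 2\pi\theta_l)=\d_j(\sin 2\pi\theta_l)=0$; $\d_j(\d^\alpha r_1)$ is again a derivative of $r_1$, hence in $\cI_j$; $\d_j(\d^\alpha h)=\d^\alpha(\d_j h)$ with $\d_j h\in\cI_j$ by \ref{item:dl-smooth} applied with index $j$, and $\d^\alpha(\d_j h)\in\cI_j$ because $\cI_j$ is $\d$-closed; and for the three remaining generators, using $\d_j t_i=\d_j u_i=0$ one sees $\d_j$ of each equals an element of $\cA_i$ times $\d_j a'$ for the associated transition function $a'\in\cT_i\subseteq\cC^\infty(U_X)$, and $\d_j a'\in\cI_j$ again by \ref{item:dl-smooth}. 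The main difficulty I anticipate is not any single computation but keeping the five algebras $\cW_i$, $\cR_i$, $\cA_i$, $\cI_i$, $\cI_j$ and their inclusions straight, and recognizing that the somewhat baroque definition \eqref{eq:AI} is engineered precisely so that the hypotheses of Lemma \ref{lem:flattening}\ref{item:flat-is-smooth} hold in \ref{item:AI-I-smooth} --- so the real obstacle is setting up \ref{item:AI-smooth-off-corner}--\ref{item:AI-flattened} to feed that lemma.
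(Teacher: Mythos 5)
Your proof is correct and follows essentially the same route as the paper's: reduce everything to the generators of $\cA_{i}$ via Lemmas \ref{lem:flattening} and \ref{lem:mixed-vanish}, import the flattening statements for $\cW_i\cap\cR_i$ from Lemmas \ref{lem:w_smooth}\ref{item:w-algebra} and \ref{lem:pullbacks}\ref{item:r-algebra}, handle the remaining (continuous/bounded) generators directly, and feed everything into Lemma \ref{lem:flattening}\ref{item:flat-is-smooth} for \ref{item:AI-I-smooth}. The only deviations (proving \ref{item:dl-smooth} before \ref{item:AI-flattened}, and taking $Z=U_1\cap\d A$ rather than $Z=U_1\setminus T_i$ when treating $\d^{\alpha}r_1$ in \ref{item:AI-I-smooth}) are cosmetic and equally valid.
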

\begin{proof}
	Recall that $t_{i},u_{i}\in \cW_{i}\cap \cR_{i}$. Indeed, by definition \eqref{eq:RW} of $\cW_i$, $\cR_i$ we have $t_{i}, u_i\in \cW_{i}$ and $t_i\in \cR_i$; and  by Lemma \ref{lem:pullbacks}\ref{item:r-inverse} we have $u_i\in \cR_i$.
	\smallskip
	
	\ref{item:AI-smooth-off-corner}	We have $\cW_{i}\subseteq \cS(W_i)$ by Lemma \ref{lem:w_smooth}\ref{item:w-closed} and $\cR_{i}\subseteq \cS(T_i)$ by Lemma \ref{lem:pullbacks}\ref{item:r-closed}, so $\cW_i\cap \cR_i\subseteq \cS(W_i\cup T_i)$. In particular, the above remark shows that $t_i,u_i\in \cS(W_i\cup T_i)$. 
	
	By definition of the coordinate chart \eqref{eq:AC-chart}, we have $\cos(2\pi \theta_j), \sin(2\pi\theta_j)\in \cS(U_1)\subseteq \cS(W_i\cup T_i)$.  By Lemma \ref{lem:pullbacks}\ref{item:r-smooth},\ref{item:pullbacks} we have $r_{j}\in \cS(U_1)$ and $\cC^{\infty}(U_X)\subseteq \cS(U_1)$, so $\d^{\alpha}r_{j},  \d^{\alpha}h\in \cS(U_1)\subseteq \cS(W_i\cup T_i)$ for all $h\in \cC^{\infty}(U_X)$; in particular $a\in \cS(W_i\cup T_i)$ for every $a\in \cT_i$. By  Lemma \ref{lem:dv}\ref{item:a-bounded} we have $1+at_i\neq 0$ and $1-u_{i}\log(1+at_i)\neq 0$ on $U_1$, so the fact that  $t_i,u_i,a\in \cS(W_i\cup T_i)$ implies that the remaining generators $(1+at_i)^{-1}$, $\log(1+at_i)$ and $(1-u_i\log(1+at_i))^{-1}$ are in $\cS(W_i\cup T_i)$, as claimed.
	
	\ref{item:AI-closed} The algebras $\cW_i$ and $\cR_i$ are closed under $\d$ by Lemmas \ref{lem:w_smooth}\ref{item:w-closed} and \ref{lem:pullbacks}\ref{item:r-closed}, respectively. Hence their intersection $\cW_i\cap \cR_i$ is closed under $\d$, too. 

	To show that $\cA_{i}$ is closed under $\d$, it remains to check that for all $j\in \{1,\dots, k\}$, $\d_{j}$ maps the remaining generators of $\cA_i$ to $\cA_i$. For all derivatives of $r_{j}$ and functions $h\in \cC^{\infty}(U_X)$, this is clear by definition of $\cA_{i}$. Moreover, $\d_{j}\cos(2\pi\theta_{l})=0$ and $\d_{j}\sin(2\pi \theta_l)=0$ for all $l$ by definition of the chart \eqref{eq:AC-chart}. 
	Fix $a\in \cT_i$, and put $b=1+a t_i$. Then $b^{-1}\in \cA_i$. Since $a\in \cC^{\infty}(U_X)$, we have  $\d_{j}a\in \cA_{i}$. Since $t_i$ belongs to the algebra $\cW_i\cap \cR_i$ which is closed under $\d$, we have $\d_jt_i\in \cA_i$. Hence $\d_jb=a \d_j t_i+t_i\d_ja \in \cA_i$. As a consequence, we have $\d_j b^{-1}=-b^{-2}\d_j b\in \cA_i$ and $\d_j\log b=b^{-1}\d_jb\in \cA_i$. It remains to prove that $\d_j (1-u_i\log b)^{-1}\in \cA_i$. To see this, recall that $u_i$ belongs to $\cW_i\cap \cR_i$, which is closed under $\d$, so $\d_j u_i\in \cW_i\cap\cR_i\subseteq \cA_i$. Now $\d_j (1-u_i\log b)^{-1}=(1-u_i\log b)^{-2}\cdot (u_i\d_j\log b+\log b\, \d_j u_i)\in \cA_i$, as needed.
	
	Thus $\cA_{i}$ is closed under $\d$. Closedness of its ideal $\cI_{i}$ follows by the Leibniz rule.
	
	\ref{item:AI-flattened} By Lemma \ref{lem:w_smooth}\ref{item:w-algebra}, all derivatives $\d^{\alpha}w_i$ flatten $\cW_i$ on $U_1\setminus W_i$ (and vanish there), so in particular they flatten there a subalgebra $\cW_i\cap \cR_i$. Similarly, Lemma \ref{lem:pullbacks}\ref{item:r-algebra} implies that $\d^{\alpha}r_i$, $\d^{\alpha}r_1$ flatten $\cW_i\cap \cR_{i}$ on $U_1\setminus T_i$ and $U_1\cap \d A$, respectively. By Lemma \ref{lem:flattening}\ref{item:flat-on-gens}, it is sufficient to prove that all the remaining generators of $\cA_{i}$ satisfy \eqref{eq:flattens}. This is clear since they are all bounded. Indeed, by Lemma \ref{lem:pullbacks}\ref{item:r-smooth},\ref{item:pullbacks}, $r_{j}$ and all $h\in \cC^{\infty}(U_1)$ are in $\cS(U_1)$, so their derivatives $\d^{\alpha}r_{j}, \d^{\alpha}h\in \cS(U_1)$ are bounded. Of course, $|\cos(2\pi\theta_j)|,|\sin(2\pi\theta_j)|\leq 1$. 
	The remaining generators of $\cA_i$ are bounded by Lemma \ref{lem:dv}\ref{item:a-bounded}. 
	
	\ref{item:AI-I-smooth} 	We need to show that, for every $a\in \cA_{i}$ and every $\alpha\in \N^{k}$, the functions $(\d^{\alpha}r_1)\cdot a$ and $(\d^{\alpha}r_i)\cdot a$ are in $\cS(U_1)$. By \ref{item:AI-smooth-off-corner}, we have $\cA_{i}\subseteq \cS(T_i)$. By \ref{item:AI-flattened}, both $\d^{\alpha} r_1$ and $\d^{\alpha}r_i$ flatten $\cA_i$ on $U_1\setminus T_i$. Thus the claim follows from Lemma  \ref{lem:flattening}\ref{item:flat-is-smooth}.
	
	\ref{item:dl-smooth} Since $\cC^{\infty}(U_X)\subseteq \cA_{i}$, by the chain rule it is sufficient to prove that the coordinate functions $x_{j}=r_{j}\cos(2\pi\theta_j)$, $y_{j}=r_{j}\sin(2\pi \theta_{j})$ satisfy $\d_{i}x_{j}\in \cI_{i}$, $\d_{i}y_{j}\in \cI_{i}$. By definition of the chart \eqref{eq:AC-chart}, we have $\d_{i}x_{j}$, $\d_{i}y_{j}=0$ for $j\neq 1,i$. If $j\in \{1,i\}$ then $\d_{i}\theta_{j}=0$, so $\d_{i}x_{j}=\cos(2\pi\theta_{j}) \cdot \d_{i}r_{j}\in \cI_{i}$; and similarly $\d_{i}y_{j}=\sin(2\pi\theta_{j}) \cdot \d_{i}r_{j}\in \cI_{i}$.

	\ref{item:AI-dl}  Assume $j\neq 1,i$. 
	By Lemma \ref{lem:mixed-vanish}, we have $\d_{j}t_{i}=0$, $\d_{j}u_{i}=0$, and more generally, $\d_{j}c=0$ for all $c\in \cW_{i}\cap \cR_{i}$. Thus $\d_j(\cW_i\cap \cR_i)=\{0\}\subseteq \cA_{i}\cdot \cI_{j}$. By \ref{item:dl-smooth}, for any $h\in \cC^{\infty}(U_X)$ we have $\d_{j} h\in \cI_{j}$. Since by \ref{item:AI-closed} the ideal $\cI_{j}$ is closed under $\d$, we get $\d_{j}\d^{\alpha}h=\d^{\alpha}\d_jh\in \cI_{j}$ for all $h\in \cC^{\infty}(U_X)$, $\alpha\in \N^{k}$. 
	Moreover, $\d_{j}\cos(2\pi\theta_l)$, $\d_j\sin(2\pi \theta_l)=0$ for $l\in \{1,i\}$; $\d_{j}r_{i}=0$ and $\d_{j}r_{1}\in \cI_{j}$ by definition of $\cI_{j}$.
	
	Let $b$ be one of the remaining generators of $\cA_i$. Since $\d_j t_i=0$ and $\d_j u_i=0$, when computing $\d_jb$ by the chain rule, we get an element of $\cA_{i}$ multiplied by $\d_{j} a$, where $a\in \cT_{i}\subseteq \cC^{\infty}(U_X)$. By \ref{item:dl-smooth} we have $\d_j a\in \cI_j$, so $\d_j b\in \cA_{i}\cdot \cI_{j}$, as claimed.
\end{proof}

We will now introduce additional notation. For $i\in \{2,\dots, k\}$, we put
\begin{equation*}
	\cW_{i}'=\langle w_{i}^{(s)} :s\geq 0\rangle,\quad\mbox{where}\quad w_{i}^{(s)}=\d_i^{s} w_i.
\end{equation*}
Moreover, we put 
\begin{equation*}
	\cA = \sum_{i=2}^{k}\cA_{i},\quad 
	\cJ = \langle \d^{\alpha} r_1: \alpha\in \N^{k}\rangle \cdot \cA,\quad 
	\cP=\R+\cJ+\sum_{i=2}^{k}\cI_{i}.
\end{equation*}
Then $\cJ$ is an ideal of $\cA$, and $\cP$ is a subalgebra of $\cA$, containing $1$. One should think of $\cJ$ as an ideal whose elements decay exponentially with respect to all $t_i$, see \eqref{eq:r1<ri}, so they can be used to flatten elements of $\cA$. In turn, viewing the elements of $\cI_i$ as those which decay exponentially with respect to $t_i$, we can view the elements of $\cP$ as those which do not require further flattening. In particular, we will see in Lemma \ref{lem:products}\ref{item:I-smooth} below that $\cP\subseteq \cS(U_1)$.

\begin{lema}\label{lem:products}
	The algebras defined above have the following properties.
	\begin{enumerate}
		\item\label{item:I-closed} The algebras $\cA$, $\cJ$ and $\cP$ are closed under $\d$. 
		\item\label{item:dl-iterate} For every $i\in \{2,\dots, k\}$ we have $\cA_{i}\cdot \d_{i}\cP\subseteq \cP$.		
		\item\label{item:I-smooth} We have $\cP \subseteq \cS(U_1)$.  
		\item \label{item:AW-smooth} For every $i\in \{2,\dots, k\}$ we have $\cW_{i}'\cdot \cA_i\subseteq \cS(U_1)$.
	\end{enumerate}
\end{lema}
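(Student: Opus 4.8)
The four assertions are all of the same flavour: each asserts that a certain algebra (or ideal) is closed under the operators $\d_1,\dots,\d_k$, or that it consists of smooth functions on $U_1$. The strategy in every case is to reduce the statement to generators, using the Leibniz rule (for closedness under $\d$) and Lemma \ref{lem:flattening}\ref{item:flat-is-smooth} (for smoothness), and then to invoke the already-established properties of the building blocks $\cA_i$, $\cI_i$, $\cW_i$, $\cR_i$ collected in Lemmas \ref{lem:AI}, \ref{lem:w_smooth}, \ref{lem:pullbacks}. Let me describe each part.

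For \ref{item:I-closed}: since $\cA=\sum_{i=2}^{k}\cA_i$ and each $\cA_i$ is closed under $\d$ by Lemma \ref{lem:AI}\ref{item:AI-closed}, the sum $\cA$ is closed under $\d$ as well. The ideal $\cJ=\langle \d^{\alpha}r_1:\alpha\in\N^k\rangle\cdot\cA$ is closed under $\d$ because its generators $\d^{\alpha}r_1$ are mapped by $\d_j$ to $\d^{\alpha'}r_1$ (higher derivatives of $r_1$, again among the generators), and $\d$ preserves $\cA$; one then applies the Leibniz rule as in Lemma \ref{lem:AI}\ref{item:AI-closed}. Finally $\cP=\R+\cJ+\sum_{i=2}^k\cI_i$ is closed under $\d$ since $\d_j\R=0$, $\d_j\cJ\subseteq\cJ$, and $\d_j\cI_i\subseteq\cI_i$ by Lemma \ref{lem:AI}\ref{item:AI-closed}.

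For \ref{item:dl-iterate}: unwinding $\d_i\cP=\d_i\cJ+\sum_{l=2}^k\d_i\cI_l$, I need $\cA_i\cdot\d_i\cJ\subseteq\cP$ and $\cA_i\cdot\d_i\cI_l\subseteq\cP$ for each $l$. For the second, if $l=i$ then $\d_i\cI_i\subseteq\cI_i$ and $\cA_i\cdot\cI_i\subseteq\cI_i\subseteq\cP$; if $l\neq i$, then by Lemma \ref{lem:AI}\ref{item:AI-dl} applied with the roles adjusted, $\d_i\cA_l\subseteq\cI_i\cdot\cA_l$, and since $\cI_l$ is an ideal of $\cA_l$ one gets $\d_i\cI_l\subseteq\cI_i\cdot\cA_l\cdot(\text{derivatives of }r_1,r_l)$, which lands in $\cI_i\cdot\cA\subseteq\sum\cI_j+\cJ\subseteq\cP$ after absorbing; here I must be careful to track that $\cA_i\cdot\cA_l$ need not sit in a single $\cA_j$, so the cleanest bookkeeping is to observe $\cA_i\cdot\cA_l\subseteq\cA$ and that multiplying by an element of $\cI_i$ keeps one inside $\sum_j\cI_j$ — but strictly $\cA$ is only a sum of the $\cA_j$, not closed under products, so I expect this to be \textbf{the main obstacle}: one needs either that each $\cA_j$ is multiplicatively closed (true, it is an $\R$-algebra) together with the fact that $\cA_i\cdot\cA_l\subseteq\cA_i\cap\text{-type containment}$, or a direct generator check. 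The safe route is to note $\d_i(\cW_l\cap\cR_l)=0$ for $l\neq i$ by Lemma \ref{lem:mixed-vanish}, and for the remaining generators of $\cA_l$ use Lemma \ref{lem:AI}\ref{item:AI-dl} which already gives $\d_i(\text{gen})\in\cA_l\cdot\cI_i$; then $\cA_i\cdot\cA_l\cdot\cI_i$ must be shown $\subseteq\cP$, and for this I would show $\cA_i\cdot\cA_l\subseteq\cA_l+\cI_i\cdot(\cdots)$ or simply enlarge $\cP$-membership by noting $\cI_i$ absorbs everything: $\cI_i\cdot\cS(W_i\cup T_i)$-type products, combined with exponential decay of $\d^\alpha r_i$, land in $\cS(U_1)$ and in fact in $\cI_i$ once one recalls $\cI_i$ is an ideal of $\cA_i$ and the other factor, after flattening, becomes a bounded smooth multiple. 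For $\cA_i\cdot\d_i\cJ$: $\d_i\cJ\subseteq\cJ$ (higher $r_1$-derivatives times $\cA$, plus $\langle\d^\alpha r_1\rangle\cdot\d_i\cA\subseteq\langle\d^\alpha r_1\rangle\cdot\cA=\cJ$), and $\cA_i\cdot\cJ\subseteq\cJ\subseteq\cP$ since $\cJ$ is an ideal of $\cA$.

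For \ref{item:I-smooth}: it suffices to show $\cJ\subseteq\cS(U_1)$ and each $\cI_i\subseteq\cS(U_1)$. The latter is Lemma \ref{lem:AI}\ref{item:AI-I-smooth}. For $\cJ$: a generator is $(\d^\alpha r_1)\cdot a$ with $a\in\cA=\sum\cA_l$; write $a=\sum_l a_l$ with $a_l\in\cA_l$. By Lemma \ref{lem:AI}\ref{item:AI-smooth-off-corner}, $a_l\in\cS(W_l\cup T_l)$, hence $a\in\cS\big(\bigcap_l(W_l\cup T_l)\big)$, i.e.\ $a$ is smooth away from $U_1\cap\d A$ (since a point of $\d A$ in $U_1$ has $t_l=0$ for all $l$, so for it to fail to be in $W_l\cup T_l$ we'd need $w_l=0$ too, and $\sum w_l=1$ with $w_1>\tfrac1{n+1}$ handles $l=1$; for $l\ge2$ the point can lie off $W_l\cup T_l$ only on deeper strata — this needs a short argument: actually on $\d A$ one may have $w_l=0$, so $a_l$ need not be defined there, which is exactly why one multiplies by $\d^\alpha r_1$). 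Then invoke Lemma \ref{lem:AI}\ref{item:AI-flattened}: $\d^\alpha r_1$ flattens each $\cA_l$ on $U_1\cap\d A$, hence flattens $\cA$ there, and by Lemma \ref{lem:flattening}\ref{item:flat-by-ders} so do all its further derivatives ($r_1^{-1}\d_j r_1\in\cA_l$ by Lemma \ref{lem:pullbacks}\ref{item:r-inverse}-type reasoning, more precisely one already knows derivatives of $r_1$ flatten); Lemma \ref{lem:flattening}\ref{item:flat-is-smooth} then gives $(\d^\alpha r_1)a\in\cS(U_1)$. One also needs smoothness on $W_l\cup T_l$ away from $\d A$, which is clear, and a gluing argument over the cover of $U_1$ by $\{W_l\cup T_l\}_l$ together with $U_1\cap\d A$ — but more simply, since $\cA\subseteq\cS(U_1\setminus\d A)$ trivially (all generators are smooth off $\d A$, being ratios of smooth functions with nonvanishing denominators there) and $\d^\alpha r_1$ and all its derivatives flatten $\cA$ on $Z=U_1\cap\d A$, Lemma \ref{lem:flattening}\ref{item:flat-is-smooth} applies directly with this $Z$.

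For \ref{item:AW-smooth}: a generator of $\cW_i'\cdot\cA_i$ is $w_i^{(s)}\cdot a$ with $a\in\cA_i$. By Lemma \ref{lem:w_smooth}\ref{item:w-algebra} the derivative $w_i^{(s)}=\d_i^s w_i$ flattens $\cW_i$ on $U_1\setminus W_i$, and $\cW_i\cap\cR_i\subseteq\cW_i$, so it flattens that intersection; for the remaining generators of $\cA_i$ (derivatives of $h\in\cC^\infty(U_X)$, of $r_j$, the $\cos,\sin$, and the three $a$-dependent bounded functions) these are all bounded by Lemma \ref{lem:dv}\ref{item:a-bounded} and Lemma \ref{lem:pullbacks}\ref{item:r-smooth},\ref{item:pullbacks}, so by Lemma \ref{lem:flattening}\ref{item:flat-on-gens}, $w_i^{(s)}$ flattens all of $\cA_i$ on $U_1\setminus W_i$. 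Since additionally $w_i^{-1}\d_j w_i\in\cW_i\subseteq\cA_i$ (Lemma \ref{lem:w_smooth}\ref{item:w-inverse}), Lemma \ref{lem:flattening}\ref{item:flat-by-ders} shows all iterated derivatives of $w_i$ flatten $\cA_i$ on $U_1\setminus W_i$ as well; and on $W_i$ everything is smooth since $\cA_i\subseteq\cS(W_i\cup T_i)$. Hence Lemma \ref{lem:flattening}\ref{item:flat-is-smooth} (with $Z=U_1\setminus W_i$, which is closed in $U_1$ since $W_i$ is open) gives $w_i^{(s)}\cdot a\in\cS(U_1)$, completing the proof. The genuinely delicate point throughout, and the one I would write out most carefully, is the product-absorption in \ref{item:dl-iterate}: keeping track of which ideal $\cI_j$ a product $\cA_i\cdot\cA_l\cdot\cI_i$ lands in, given that $\cA=\sum_j\cA_j$ is not itself closed under multiplication.
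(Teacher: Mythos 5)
Parts \ref{item:I-closed}, \ref{item:I-smooth} and \ref{item:AW-smooth} of your argument follow the paper's route: reduce to generators, quote Lemma \ref{lem:AI}\ref{item:AI-closed} for closedness under $\d$, Lemma \ref{lem:AI}\ref{item:AI-I-smooth} for $\cI_i\subseteq\cS(U_1)$, and Lemma \ref{lem:AI}\ref{item:AI-flattened} together with Lemma \ref{lem:flattening}\ref{item:flat-is-smooth} for $\cJ$ and for $\cW_i'\cdot\cA_i$. (In \ref{item:AW-smooth} you re-derive from scratch that $\d^{\alpha}w_i$ flattens $\cA_i$ on $U_1\setminus W_i$; Lemma \ref{lem:AI}\ref{item:AI-flattened} already states this.)

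The gap is in \ref{item:dl-iterate}, and the ``main obstacle'' you name there is an artifact of misreading the notation. By the conventions set just before Definition \ref{def:flattening}, $\cA+\cB$ denotes $\langle\cA\cup\cB\rangle$, so $\cA=\sum_{j}\cA_j$ and $\cP=\R+\cJ+\sum_{i}\cI_i$ are \emph{algebras}, hence closed under multiplication. In particular $\cI_i\cdot\cI_j\subseteq\cP$ holds by definition, and no absorption of a product into a single $\cI_j$ is needed; your floated containment $\cI_i\cdot\cA\subseteq\cJ+\sum_j\cI_j$ is neither justified nor required, and the claim that ``$\cI_i$ absorbs $\cS(W_i\cup T_i)$-type products'' is false, since $\cI_i$ is only an ideal of $\cA_i$. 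With the correct reading, the step you leave open closes in two lines. A generator of $\cI_j$ for $j\neq i$ is $a\,\d^{\alpha}r_1$ or $a\,\d^{\alpha}r_j$ with $a\in\cA_j$. The first lies in $\cJ$, which is closed under $\d$ by part \ref{item:I-closed}, so its $\d_i$-derivative stays in $\cJ$. For the second, $\d_i(a\,\d^{\alpha}r_j)=\d_i a\cdot\d^{\alpha}r_j$ because $\d_i r_j=0$ by Lemma \ref{lem:mixed-vanish} and the operators commute; by Lemma \ref{lem:AI}\ref{item:AI-dl} we have $\d_i a\in\cI_i\cA_j$, whence $\d_i(a\,\d^{\alpha}r_j)\in\cI_i\cA_j\cdot\cI_j=\cI_i\cI_j$. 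Therefore $\cA_i\cdot\d_i\cI_j\subseteq\cA_i\cJ+\cA_i\cI_i\cI_j\subseteq\cJ+\cI_i\cI_j\subseteq\cP$, which is exactly the missing piece.
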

\begin{proof}
	\ref{item:I-closed} Follows from Lemma \ref{lem:AI}\ref{item:AI-closed}
	
	\ref{item:dl-iterate} By \ref{item:I-closed}, $\cJ$ is an ideal of $\cA$ which is closed under $\d$, so $\cA_{i}\cdot \d_{i}\cJ\subseteq \cA_{i}\cdot \cJ\subseteq \cJ \subseteq \cP$. It remains to show that for every $j\in \{2,\dots, k\}$ we have $\cA_{i}\cdot \d_{i}\cI_{j}\subseteq \cP$. 
	
	Assume $j=i$.  By Lemma \ref{lem:AI}\ref{item:I-closed} the ideal $\cI_{i}$ of $\cA_{i}$ is closed under $\d$, so we have $\cA_{i}\cdot \d_{i}\cI_{i}\subseteq \cI_{i}\subseteq \cP$. 
	
	Assume $j\neq i$. We claim that $\d_{i}\cI_{j}\subseteq \cJ+ \cI_{i}\cdot \cI_{j}$. Let $a\in \cA_{j}$ and $\alpha\in \N^{k}$. We have $a\d^{\alpha}r_{1}\in \cJ$, so $\d_{i}(a\d^{\alpha}r_{1})\in \cJ$ because $\cJ$ is closed under $\d$ by \ref{item:I-closed}. For the second generator, we compute $\d_{i}(a\d^{\alpha}r_{j})=a\d^{\alpha}\d_{i}r_{j}+\d_ia \cdot \d^{\alpha}r_j=\d_ia \cdot \d^{\alpha}r_j$, because $\d_{i}r_{j}=0$ by Lemma \ref{lem:mixed-vanish}. By Lemma \ref{lem:AI}\ref{item:AI-dl}, we have $\d_ia \in \cI_{i}\cA_{j}$, so $\d_i a\cdot \d^{\alpha} r_{j}\in \cI_{i}\cA_{j}\cdot \cI_{j}=\cI_{i}\cI_{j}$. Hence $\d_{i}(a\d^{\alpha}r_{j})\in \cJ+ \cI_{i}\cdot \cI_{j}$, as claimed.
	
	Now $\cA_{i}\cdot \d_{i}\cI_{j}\subseteq \cA_{i}\cJ+\cA_{i}\cI_{i}\cI_{j}\subseteq \cJ+\cI_{i}\cI_{j}\subseteq \cP$, as needed.
	
	\ref{item:I-smooth} We need to show that $\cJ\subseteq \cS(U_1)$ and $\cI_{i}\subseteq \cS(U_1)$ for all $i\in \{2,\dots, k\}$. The second assertion follows from Lemma \ref{lem:AI}\ref{item:AI-I-smooth}. For the first one, recall from Lemma \ref{lem:AI}\ref{item:AI-flattened} that each generator $\d^{\alpha}r_1$ of $\cJ$ flattens $\cA_{i}$ on $U_1\cap \d A$, for all $i\in \{2,\dots, k\}$, so it flattens the whole $\cA=\sum_{i=2}^{k}\cA_{i}$ there. Thus by Lemma \ref{lem:flattening}\ref{item:flat-is-smooth} we have $\d^{\alpha}r_1\cdot \cA\subseteq \cS(U_1)$, and therefore $\cJ\subseteq \cS(U_1)$, as claimed.
	
	\ref{item:AW-smooth} By Lemma \ref{lem:AI}\ref{item:AI-flattened}, for any $s$ the function $w_{i}^{(s)}$ flattens $\cA_{i}$ on $U_1\setminus W_i$. Since $\cA_{i}\subseteq \cS(W_i)$ by Lemma \ref{lem:AI}\ref{item:AI-smooth-off-corner}, Lemma \ref{lem:flattening} implies that $\cW_{i}'\cdot \cA_{i}\subseteq \cS(U_1)$, as needed.
\end{proof}

\begin{definition}\label{def:M}
	Let $\cM$ be the set of $k\times k$ matrices $[a_{ij}]_{1\leq i,j\leq k}$ satisfying the following properties.
	\begin{enumerate}[(i)]
		\item\label{item:M-first-row} For every $j\in \{1,\dots, k\}$ we have $a_{1j}=0$ (i.e.\ the first row is zero),
		\item\label{item:M-Jij} For every $i\in \{2,\dots, k\}$, $j\in \{1,\dots, k\}$ we have $a_{ij}\in \cA_{i}\cdot \cP$,
		\item\label{item:M-off-diagonals} For every $i,j\in \{2,\dots, k\}$ such that $i\neq j$ we have $a_{ij}\in \cA_{i}\cdot \cI_{j}\cdot \cP$,
	\end{enumerate}
\end{definition}

\begin{lema}\label{lem:M-ring}
	The set $\cM$ is a ring.
\end{lema}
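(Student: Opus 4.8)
The statement to prove is that $\cM$, the set of $k\times k$ matrices from Definition \ref{def:M}, is a ring. The plan is to verify closure under addition and multiplication, since the zero matrix obviously lies in $\cM$ and additive inverses pose no problem.

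\textbf{Setup and additive closure.} First I would record the inclusions among the relevant algebras and ideals that make the bookkeeping work: each $\cA_i$ is an algebra, $\cI_i$ is an ideal of $\cA_i$, $\cP$ is an algebra containing $1$, and $\cA=\sum_{i=2}^{k}\cA_i$, $\cJ=\langle \d^{\alpha}r_1\rangle\cdot \cA$ is an ideal of $\cA$, $\cP=\R+\cJ+\sum_{i=2}^{k}\cI_i$. I would also note the evident containments $\cI_i\subseteq \cA_i$, $\cA_i\subseteq \cA$, $\cI_i\subseteq \cP$, $\cJ\subseteq \cP$, and the fact that $\cA_i\cdot \cA_i\subseteq \cA_i$, $\cA_i\cdot \cI_i\subseteq \cI_i$, etc., all of which follow directly from the definitions. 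Additive closure is then immediate: if $[a_{ij}],[b_{ij}]\in\cM$, then the first row of the sum is still zero; $a_{ij}+b_{ij}\in \cA_i\cdot \cP$ because $\cA_i\cdot \cP$ is closed under addition; and for $i\neq j$, $a_{ij}+b_{ij}\in \cA_i\cdot \cI_j\cdot \cP$ for the same reason. Scalar multiples are handled identically.

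\textbf{Multiplicative closure.} This is the substantive part. Let $[a_{ij}],[b_{ij}]\in \cM$ and set $c_{ij}=\sum_{l=1}^{k}a_{il}b_{lj}$. Since $a_{1l}=0$ for all $l$, the first row of the product vanishes, giving property \ref{item:M-first-row}. For $i\in\{2,\dots,k\}$ the term $l=1$ contributes $a_{i1}b_{1j}=0$, so $c_{ij}=\sum_{l=2}^{k}a_{il}b_{lj}$. Now I would split each summand according to whether $l=i$, $l=j$, or $l\notin\{i,j\}$, and check in each case that the product lands in the right place.
\begin{itemize}
\item If $l=i$: $a_{ii}b_{ij}\in (\cA_i\cdot\cP)(\cA_i\cdot\cP)\subseteq \cA_i\cdot\cP$ for property \ref{item:M-Jij}; and if additionally $i\neq j$, $a_{ii}b_{ij}\in (\cA_i\cdot\cP)(\cA_i\cdot\cI_j\cdot\cP)\subseteq \cA_i\cdot\cI_j\cdot\cP$ for \ref{item:M-off-diagonals}.
\item If $l=j$ (and $l\neq i$, so $j\neq i$): $a_{ij}b_{jj}\in (\cA_i\cdot\cI_j\cdot\cP)(\cA_j\cdot\cP)$; here I would use $\cI_j\cdot\cA_j\subseteq \cI_j$ to conclude membership in $\cA_i\cdot\cI_j\cdot\cP$.
\item If $l\notin\{i,j\}$ (so in particular $l\neq i$, hence also contributing when $i=j$ or $i\neq j$): $a_{il}b_{lj}\in (\cA_i\cdot\cI_l\cdot\cP)(\cA_l\cdot\cP)$. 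Using $\cI_l\cdot\cA_l\subseteq \cI_l\subseteq \cP$, this lies in $\cA_i\cdot\cP$, proving \ref{item:M-Jij}. For \ref{item:M-off-diagonals} with $i\neq j$, I need an extra factor of $\cI_j$: here $b_{lj}\in \cA_l\cdot\cI_j\cdot\cP$ since $l\neq j$, so $a_{il}b_{lj}\in (\cA_i\cdot\cI_l\cdot\cP)(\cA_l\cdot\cI_j\cdot\cP)\subseteq \cA_i\cdot\cI_j\cdot\cP$, again absorbing $\cI_l\cdot\cA_l\subseteq \cP$.
\end{itemize}
Summing over $l$ and using that $\cA_i\cdot\cP$ and $\cA_i\cdot\cI_j\cdot\cP$ are closed under addition finishes the verification, so $[c_{ij}]\in\cM$.

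\textbf{Anticipated obstacle.} The only real subtlety is keeping the algebra of inclusions straight: one must be careful that products like $\cI_l\cdot\cA_l$ stay inside $\cI_l$ (using that $\cI_l$ is an ideal of $\cA_l$), and that $\cP$ is genuinely closed under multiplication — which holds because $\cP=\R+\cJ+\sum \cI_i$ with $\cJ$ an ideal of $\cA$ and each $\cI_i$ an ideal of $\cA_i$, and cross terms $\cI_i\cdot\cI_j$ land in $\cP$. I would state this closure of $\cP$ as a preliminary observation (or cite Lemma \ref{lem:products} where the closure under $\d$ and smoothness of $\cP$ are recorded, noting that the argument there already presupposes $\cP$ is an algebra). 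No hard analysis is involved — it is entirely formal manipulation of the generating sets — so the proof should be short once the containments are laid out cleanly.
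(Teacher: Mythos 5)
Your proof is correct and follows essentially the same route as the paper's: drop the $l=1$ term using the vanishing first row of the second factor, then verify conditions (ii) and (iii) summand by summand using $\cI_l\cdot\cA_l=\cI_l$ and $\cI_l\subseteq\cP$ to absorb the intermediate factors. The only difference is cosmetic (you organize the cases by $l\in\{i\}$, $\{j\}$, or neither, while the paper splits on $i=l$ versus $i\neq l$), and your closing observation that multiplicative closure of $\cP$ must be in place is exactly the property the paper asserts when it declares $\cP$ a subalgebra of $\cA$.
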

\begin{proof}
	Let $M,M'\in \cM$. Clearly, $M+M'\in \cM$. To show that $M\cdot M'\in \cM$, write $M=[a_{il}]$, $M'=[a'_{lj}]$, $M\cdot M'=[b_{ij}]$. Since $M'$ satisfies Definition \ref{def:M}\ref{item:M-first-row}, we have $a'_{1j}=0$ for every $j$, so
	\begin{equation*}
		b_{ij}=\sum_{l=2}^{k} a_{il}a_{lj}'.
	\end{equation*}
	Condition \ref{def:M}\ref{item:M-first-row} for $M$ gives $a_{1l}=0$, so $b_{1j}=0$, too, i.e.\ $MM'$ satisfies condition  \ref{def:M}\ref{item:M-first-row}.
	
	To check condition \ref{def:M}\ref{item:M-Jij}, it is enough to show that $a_{il}a_{lj}'\in \cA_{i}\cP$ for every $i,l\in \{2,\dots, k\}$ and $j\in \{1,\dots, k\}$. Consider the case $i=l$. We have $a_{ii}\in \cA_{i}\cP$ and $a_{ij}'\in \cA_{i}\cP$, so $a_{ii}a_{ij}'\in \cA_{i}\cP$, too, as needed. Consider now the case $i\neq l$. Then $a_{il}\in \cA_{i}\cI_{l}\cP$ and $a_{lj}'\in \cA_{l}\cP$. Since $\cI_{l}$ is an ideal of $\cA_{l}$, we have $\cI_{l}\cA_{l}=\cI_{l}$. Thus $a_{il}a_{lj}'\in \cA_{i}\cI_{l}\cP\cdot \cA_{l}\cP= \cA_{i}\cI_{l}\cdot \cP\subseteq \cA_{i}\cP$ because $\cI_{l}\subseteq \cP$, as needed.
	
	Condition \ref{def:M}\ref{item:M-off-diagonals} is checked similarly. We claim that for every $i,l,j\in \{2,\dots, k\}$ such that $i\neq j$, we have $a_{ij}a_{lj}'\in \cA_{i}\cI_{j}\cP$. Since $i\neq j$, we have $a_{ii}a_{ij}'\in \cA_{i}\cP\cdot \cA_{i}\cI_{j}\cP=\cA_{i}\cI_{j}\cP$, which settles the case $i=l$. Assume $i\neq l$, so $a_{il}\in \cA_{i}\cI_{l}\cP$. If $l=j$ then the equality $\cI_j\cA_j=\cI_j$ implies that $a_{ij}a_{jj}'\in \cA_{i}\cI_{j}\cP\cdot \cA_{j}\cP=\cA_{i}\cI_{j}\cP$, as needed. Eventually, if $l\neq j$ then using the equality $\cI_l\cA_l=\cI_l$ we get  $a_{il}a_{lj}'\in \cA_{i}\cI_{l}\cP\cdot \cA_{l}\cI_{j}\cP=\cA_{i}\cI_{l}\cI_{j}\cP\subseteq \cA_{i}\cI_{j}\cP$, because $\cI_{l}\subseteq \cP$, which ends the proof.
\end{proof}

Recall that our goal is to study the map $\bar{\psi}_{1}=(g,\bar{v}_2,\dots,\bar{v}_{k},\rest)\colon U_1\to Q_{k,n}$ introduced in \eqref{eq:AC-chart}, in the coordinate system $(g,v_{2},\dots, v_{k},\rest)$ fixed in Notation \ref{not:S-smooth}. The Jacobian matrix of $\bar{\psi}_1$ with respect to these coordinates has a block form
\begin{equation}\label{eq:M-def}
	\left[
	\begin{matrix}
		\id -\Psi & * \\
		0 & \id
	\end{matrix}
	\right]
\end{equation}
for some $k\times k$ matrix $\Psi$.  By Lemma \ref{lem:dv}\ref{item:tame_1}, the matrix $\Psi$ is zero on $A_{S}^{\circ}\cap U_1$. Therefore, we can, and will, shrink $U_X$ so that on $U_1$ we have
\begin{equation}\label{eq:M-small}
	||\Psi||<1.
\end{equation}
Recall that we still require $U_X$ to satisfy the statements of Lemmas \ref{lem:smoothchart} and \ref{lem:small-chart}. 

The following result is what motivates Definition \ref{def:M} of the matrix ring $\cM$.

\begin{lema}\label{lem:jacobian}
	The matrix $\Psi$ defined above belongs to the ring $\cM$.
\end{lema}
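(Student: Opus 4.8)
The plan is to compute the entries of $\Psi$ directly from Lemma \ref{lem:dvbar}\ref{item:tame_1-bar} and check that each entry lands in the appropriate product of algebras from Definition \ref{def:M}. Recall that $\bar{\psi}_1 = (g,\bar v_2,\dots,\bar v_k,\rest)$ differs from the coordinate chart $\psi_1 = (g,v_2,\dots,v_k,\rest)$ only by replacing $v_j$ with $\bar v_j$ for $j\in\{2,\dots,k\}$. Hence the block $\id-\Psi$ in \eqref{eq:M-def} is the $k\times k$ Jacobian of $(g,\bar v_2,\dots,\bar v_k)$ with respect to $(g,v_2,\dots,v_k)$, i.e.\ $\id-\Psi$ has first row equal to the first unit vector (so $\Psi$ has zero first row, giving Definition \ref{def:M}\ref{item:M-first-row} immediately), and for $i\in\{2,\dots,k\}$ its $i$-th row records $\d_j \bar v_i$. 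Writing $\bar v_i = v_i - (v_i-\bar v_i)$ and using $\d_j v_i = \delta_{ij}$ for $i,j\in\{2,\dots,k\}$, $\d_1 v_i = 0$, we get $\Psi_{ij} = \d_j(v_i-\bar v_i)$ for all $i\in\{2,\dots,k\}$, $j\in\{1,\dots,k\}$. So the whole statement reduces to controlling the one-form $d(v_i-\bar v_i)$ in the coordinates $(g,v_2,\dots,v_k)$.

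\textbf{Key computation.} Fix $i\in\{2,\dots,k\}$ and apply Lemma \ref{lem:dvbar}\ref{item:tame_1-bar} with $I=\emptyset$: for any $\epsilon\in(0,1)$ there are functions $c,q$ bounded from $X$ and a $1$-form $\gamma$ bounded from $X$ such that $d(v_i-\bar v_i) = t_i c\, dv_i + t_i^\epsilon q\, dg + \sigma_i\gamma$ on $U_1\setminus\d A$. I would then extract the coefficients of $dg, dv_2,\dots,dv_k$ by pairing with the vector fields $\d_1,\dots,\d_k$. Since $\d_j v_l = \delta_{jl}$ and $\d_j g = \delta_{j1}$, and since Lemma \ref{lem:computations-d}\ref{item:dt-rho} gives $\d_j t_i = 0$ for $j\neq 1,i$ while $\d_i t_i = \rho_i t_i$ and $\d_1 t_i = \rho_i u_i^2 w_i^{-1} t'$, and similarly $\d_j\sigma_i$ is governed by $t_i,\rho_i,u_i$, one computes:
\begin{itemize}
\item[] $\Psi_{ii} = \d_i(v_i-\bar v_i) = t_i c + \sigma_i\gamma(\d_i)$, which lies in $t_i\cdot\cA_i + \sigma_i\cdot\cA_i$; since $\sigma_i = t_i^2 + t_iu_i^2 = t_i(t_i+u_i^2)$ and $t_i,u_i\in\cW_i\cap\cR_i\subseteq\cA_i$, this is in $t_i\cA_i\subseteq\cA_i\cdot\cP$ (using $t_i\in\cA_i$ and $1\in\cP$), giving Definition \ref{def:M}\ref{item:M-Jij} for the diagonal entry;
\item[] $\Psi_{i1} = \d_1(v_i-\bar v_i) = t_i^\epsilon q + \sigma_i\gamma(\d_1)$; here I would choose $\epsilon$ so that $t_i^\epsilon$ is absorbed — using $\sigma_i = t_i(t_i+u_i^2)$ and, crucially, that $r_i = e^{-(m_it_i)^{-1}}$ flattens, one shows $t_i^\epsilon q\in\cA_i\cdot\cP$ since a positive power of $t_i$ (or better, $r_i$) times a bounded function is smooth and in $\cI_i\subseteq\cP$; more carefully, $t_i^\epsilon$ itself need not be in $\cA_i$, so I'd rewrite $\sigma_i\gamma(\d_1)$ and $t_i^\epsilon q\, dg$ contributions using the identity $\d_1 t_i = \rho_i u_i^2 w_i^{-1}t'$ from Lemma \ref{lem:computations-d}\ref{item:dt-rho} together with Lemma \ref{lem:computations}\ref{item:eta-u}, \ref{item:eta-t-bounded} to show the coefficient of $dg$ is in $t_i\cdot(\text{bounded in }\cA_i)$, landing in $\cA_i\cdot\cP$;
\item[] $\Psi_{ij}$ for $j\in\{2,\dots,k\}$, $j\neq i$: $\d_j(v_i-\bar v_i) = t_i\,\d_j c\cdot 0 + \sigma_i\gamma(\d_j) + (\d_j t_i) c + \cdots$; but $\d_j t_i = 0$ and $\d_j v_i = 0$, so only terms of the form (derivative of a bounded-from-$X$ function, which by Lemma \ref{lem:AI}\ref{item:AI-dl} lands in $\cI_j\cdot\cA_i$) survive, giving $\Psi_{ij}\in\cA_i\cdot\cI_j\cdot\cP$, which is Definition \ref{def:M}\ref{item:M-off-diagonals}.
\end{itemize}

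\textbf{Main obstacle.} The delicate point is the off-diagonal and the $dg$-column entries: one must be sure that differentiating the ``bounded from $X$'' ingredients $c$, $q$, $\gamma$ (and the functions $a\in\cT_i$ hidden inside $v_i-\bar v_i$ via Lemma \ref{lem:dv}\ref{item:vp-v}) by $\d_j$ for $j\neq i$ produces something in $\cI_j$, not merely something bounded — this is exactly what Lemma \ref{lem:AI}\ref{item:dl-smooth},\ref{item:AI-dl} is designed to provide, so the real work is bookkeeping: expressing $v_i-\bar v_i$ and its derivatives as $\cA_i$-combinations of the explicit generators (derivatives of $r_1,r_i$, trig functions of $\theta_j$, and the functions $(1+at_i)^{-1}$, $\log(1+at_i)$, $(1-u_i\log(1+at_i))^{-1}$ appearing in Lemma \ref{lem:dv}\ref{item:vp-v}), then invoking closedness of $\cA_i$, $\cI_i$, $\cP$ under $\d$ (Lemma \ref{lem:AI}\ref{item:AI-closed}, Lemma \ref{lem:products}\ref{item:I-closed}) and the containments $\cI_j\subseteq\cP$, $\cA_i\cI_j\subseteq\cA_i\cP$. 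A clean way to organize this: prove once and for all that $d(v_i-\bar v_i)$, written in the basis $dg, dv_2,\dots,dv_k, dx_l, dy_l$, has its $dg$-coefficient in $t_i^{1/2}\cdot\cA_i$ (hence in $\cI_i\subseteq\cP$ after multiplying by the $t_i^{1/2}$ that a further $r_i$-flattening absorbs), its $dv_i$-coefficient in $t_i\cdot\cA_i\subseteq\cA_i\cP$, its $dv_j$-coefficient ($j\neq i$) zero, and its $dx_l,dy_l$-coefficients in $\sigma_i\cdot\cA_i$; then since $dv_j$ in terms of the ambient coordinates and $dx_l,dy_l$ are all smooth one-forms on $U_1$, re-expanding in the $\psi_1$-coordinate coframe and reading off $\Psi_{ij} = \langle d(v_i-\bar v_i),\d_j\rangle$ gives the three cases above. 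I expect the verification of \ref{def:M}\ref{item:M-off-diagonals} via Lemma \ref{lem:AI}\ref{item:AI-dl} to be the step requiring the most care.
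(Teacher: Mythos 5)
Your reduction of the statement to controlling $d(v_i-\bar v_i)$ in the coordinates $(g,v_2,\dots,v_k,\rest)$ is correct, and you correctly identify that the first row of $\Psi$ vanishes and that $\Psi_{ij}=\d_j(v_i-\bar v_i)$ for $i\geq 2$. But your key computation starts from Lemma \ref{lem:dvbar}\ref{item:tame_1-bar}, and that lemma only asserts that the coefficients $c$, $q$ and the form $\gamma$ are \emph{bounded}. Membership in the ring $\cM$ requires the entries of $\Psi$ to lie in the very specific finitely generated algebras $\cA_i\cdot\cP$ and $\cA_i\cdot\cI_j\cdot\cP$, and an arbitrary bounded function is not an element of $\cA_i$. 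So the assertions ``$t_ic\in t_i\cdot\cA_i$'', ``$\sigma_i\gamma(\d_i)\in\sigma_i\cdot\cA_i$'', and ``$\gamma(\d_j)$ lands in $\cI_j\cdot\cA_i$ by Lemma \ref{lem:AI}\ref{item:AI-dl}'' are all unjustified: Lemma \ref{lem:AI}\ref{item:AI-dl} applies to derivatives of elements \emph{of} $\cA_i$, not to bounded data. Moreover the claim that ``$t_i^\epsilon$ times a bounded function is in $\cI_i\subseteq\cP$'' is false: $\cI_i$ is generated by the derivatives $\d^\alpha r_1,\d^\alpha r_i$, whose decay is exponential in $t_i^{-1}$, so a factor $t_i^\epsilon$ decays far too slowly, and the bounded factor is again not in $\cA_i$. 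The boundedness estimates of Lemma \ref{lem:dvbar} suffice for the $\cC^1$ theory (Lemma \ref{lem:smoothchart}) but not here.

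The paper's proof instead decomposes $\Psi$ through the partition of unity: writing $d(v_i-\bar v_i)=\sum_p\tau^p\,d(v_i-v_i^p)+\sum_p(v_i-v_i^p)\,d\tau^p$ over the finite index set $R_0$ of Lemma \ref{lem:small-chart}, one gets $\Psi=\sum_p[a_{ij}^p]+[b_{ij}^p]$ with $a_{ij}^p=\tau^p\,\d_j(v_i-v_i^p)$ and $b_{ij}^p=(v_i-v_i^p)\,\d_j\tau^p$. The explicit formula of Lemma \ref{lem:dv}\ref{item:vp-v} shows that $v_i-v_i^p$ is built from the generators $t_i$, $u_i$, $a\in\cT_i$, $(1+at_i)^{-1}$, $\log(1+at_i)$, $(1-u_i\log(1+at_i))^{-1}$ of $\cA_i$ — this is exactly why those generators were placed in \eqref{eq:AI} — so $v_i-v_i^p\in\cA_i$, and $\tau^p\in\cC^\infty(U_X)\subseteq\cA_i$. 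Closedness of $\cA_i$ under $\d$ (Lemma \ref{lem:AI}\ref{item:AI-closed}) then gives Definition \ref{def:M}\ref{item:M-Jij}, and Lemma \ref{lem:AI}\ref{item:AI-dl} applied to these genuine elements of $\cA_i$ gives Definition \ref{def:M}\ref{item:M-off-diagonals}. You gesture at this correct route in your final paragraph (``expressing $v_i-\bar v_i$ as $\cA_i$-combinations of the explicit generators''), but the argument you actually present does not carry it out, and without the partition-of-unity decomposition and the explicit formula for $v_i-v_i^p$ the proof does not close.
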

\begin{proof}
	Recall that for $i\in \{1,\dots, k\}$, the function $\bar{v}_{i}$ is defined as $\sum_{p}\tau^{p}v_{i}^{p}$, where $\{\tau^{p}\}$ is the fixed partition of unity, and each function $v_{i}^{p}$ corresponds to the chart $U_{X}^{p}$ of the fixed atlas $\cU_{X}$. Since by assumption our chart $U_X$ satisfies the statement of Lemma \ref{lem:small-chart}, the above sum runs over the finite set $R_{0}=\{p\in R: \tau^{p}|_{U_X}\not\equiv 0\}$, and for each $p\in R_0$, the function $v^{p}_{i}$ is defined on the entire $U_1$. We have
	\begin{equation*}
		d(v_{i}-\bar{v}_i)=\sum_{p}\tau^{p}\,d(v_i-v_i^p)+\sum_{p}(v_i-v_i^p)d\tau^{p}
	\end{equation*}
	so we can write 
	\begin{equation*}
		\Psi=\sum_{p} [a_{ij}^{p}]+[b_{ij}^p],
	\end{equation*}
	where the $k\times k$ matrices $[a_{ij}^p]$ and $[b_{ij}^p]$ are defined by 
	\begin{equation*}
		a_{1j}^p=0,\quad a_{ij}^p=\tau^{p}\cdot \d_{j}(v_{i}-v_{i}^{p}),
		\quad\mbox{and}\quad
		b_{1j}^{p}=0,\quad b_{ij}^p=(v_{i}-v_{i}^{p})\cdot \d_{j}\tau^{p},
	\end{equation*}
	for all $i\in \{2,\dots, k\}$ and $j\in \{1,\dots, k\}$. We will show that $[a_{ij}^p]\in \cM$ and $[b_{ij}^p]\in \cM$. Clearly, the first rows of these matrices are zero, so they satisfy Definition \ref{def:M}\ref{item:M-first-row}.
	
	We have $\tau^{p}\in \cC^{\infty}(U_X)$, so for every $i\in \{2,\dots, k\}$ we have $\tau^{p} \in \cA_{i}$ by definition \eqref{eq:AI} of $\cA_{i}$. Definition of $\cA_{i}$ and Lemma \ref{lem:dv}\ref{item:vp-v} imply that $v_{i}-v_{i}^{p}\in \cA_{i}$, too. By Lemma \ref{lem:AI}\ref{item:AI-closed}, the algebra $\cA_i$ is closed under $\d$, so $\d_j \tau^{p}\in \cA_{i}$ and $\d_j(v_i-v_i^p)\in \cA_i$. Since $1\in \cP$, we have $\cA_i\subseteq \cA_i\cP$, and therefore  the matrices $[a_{ij}^p]$ and $[b_{ij}^p]$ satisfy  Definition \ref{def:M}\ref{item:M-Jij}. 
	
	Assume $j\neq 1,i$. Then $\d_j\cA_i\subseteq \cI_j\cA_i$ by Lemma \ref{lem:AI}\ref{item:AI-dl}. Hence $\tau^{p}\cdot \d_{j}(v_{i}-v_{i}^{p})\in \cA_i\cdot \cI_j\cA_i=\cA_i\cI_j$, and similarly $(v_{i}-v_{i}^{p})\cdot \d_{j}\tau^{p}\in \cA_i\cI_j$. Again, using the fact that $1\in \cP$ we infer that $[a_{ij}^p]$ and $[b_{ij}^p]$ satisfy Definition \ref{def:M}\ref{item:M-off-diagonals}. too.
\end{proof}

\subsubsection{Differential operators induced by $\cM$} \label{sec:dM}
Our goal is to compare differential operators  $\d_j$ with the ones given by the coordinate vector fields of our eventual chart $\bar{\psi}_i=(g,\bar{v}_2,\dots, \bar{v}_k,\rest)$. To make this comparison in an efficient way, we introduce the following notation.
\smallskip

For a matrix $M\in \cM$, define differential operators $\d_{1}^{M},\dots,\d_{k}^{M}$ by
\begin{equation*}
	[\d_{1}^{M},\dots,\d_{k}^{M}]^{\top}=M^{\top}\cdot [\d_{1},\dots,\d_{k}]^{\top}.
\end{equation*}
We say that an algebra $\cB$ is \emph{closed under $\d^{M}$} if $\d^{M}_{j}b\in \cB$ for every $j\in \{1,\dots, k\}$ and every $b\in \cB$. For a multi-index $(j_1,\dots,j_s)\in \{1,\dots, k\}^{s}$, with $s\geq 0$, we write  $\d_{\boldsymbol{j}}^{M}=\d_{j_1}^{M}\dots, \d_{j_s}^{M}$ and $|\boldsymbol{j}|=s$.

\begin{lema}\label{lem:operators-M} For every matrix $M\in \cM$, the following hold.
	\begin{enumerate} 
		\item \label{item:M-closed} The algebras $\cP$ and $\cA_{i}\cdot \cW_{i}'\cdot \cP$, for every $i\in \{2,\dots, k\}$, are closed under $\d^{M}$.
		\item \label{item:M-smooth} For every multi-index $\boldsymbol{j}$ we have $\d^{M}_{\boldsymbol{j}}v_1\in \cS(U_1)$ and $\d^{M}_{\boldsymbol{j}} h\in \cS(U_1)$ for every $h\in \cC^{\infty}(U_X)$.
	\end{enumerate}
\end{lema}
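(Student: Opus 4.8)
The plan is to prove both assertions by induction on $|\boldsymbol{j}|$, using Lemma \ref{lem:products} and Lemma \ref{lem:AI} as the main engines, together with the ring structure of $\cM$ established in Lemma \ref{lem:M-ring}. For part \ref{item:M-closed}, I would first unpack the definition of $\d_{j}^{M}$: since $M=[a_{jl}]\in \cM$, we have $\d_{j}^{M}=\sum_{l=1}^{k}a_{jl}\d_{l}$, and by Definition \ref{def:M}\ref{item:M-first-row} the first row of $M$ vanishes, so in fact $\d_{1}^{M}=0$ and for $j\geq 2$ the sum runs over $l\in\{1,\dots,k\}$ with $a_{jl}\in \cA_{j}\cP$ when $l=j$ or $l=1$, and $a_{jl}\in\cA_{j}\cI_{l}\cP$ when $l\neq j$. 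To see $\cP$ is closed under $\d^{M}$, fix $b\in \cP$ and $j\geq 2$; I need $a_{jl}\d_{l}b\in\cP$ for each $l$. For $l\in\{1,j\}$: $\d_{l}b\in\cP$ since $\cP$ is closed under $\d$ by Lemma \ref{lem:products}\ref{item:I-closed}, and $a_{jl}\in\cA_{j}\cP$; when $l=j$ this gives $a_{jj}\d_{j}b\in \cA_{j}\cdot\d_{j}\cP\subseteq\cP$ by Lemma \ref{lem:products}\ref{item:dl-iterate} (after absorbing the extra $\cP$-factor, since $\cP\cdot\cP=\cP$); when $l=1$, $\d_{1}b\in\cP$ and $a_{j1}\in\cA_{j}\cP$, and I would note $\cA_{j}\cP\cdot\cP=\cA_{j}\cP$, but I actually need this to land in $\cP$, not just $\cA_{j}\cP$ — here I would use that $\d_1$ raises the $t^{(l)}$-index, so $\d_1 b\in\cP$ and more precisely one should track that $a_{j1}$ itself, being a coefficient coming from the Jacobian \eqref{eq:M-def}, lies in $\cA_j\cI_1\cdot\cP$-type ideals when $1\neq j$... so $a_{j1}\d_1 b\in \cA_j\cI_1\cP\subseteq\cP$ since $\cI_1\subseteq\cP$... wait, $\cI_1$ is not defined ($i$ ranges over $\{2,\dots,k\}$). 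Let me instead handle $l=1$ via: $a_{j1}\in\cA_j\cP$ and $\d_1 b\in\cP$, so $a_{j1}\d_1 b\in\cA_j\cP$; then apply Lemma \ref{lem:products}\ref{item:dl-iterate} in the form $\cA_j\cdot\d_j\cP\subseteq\cP$ is the wrong shape. The cleanest route: observe $a_{j1}=\tau^p\d_1(v_j-v_j^p)$-type terms, and by Lemma \ref{lem:dv}\ref{item:tame_1} the $dg$-coefficient of $d(v_j-v_j^p)$ is $t_j^\epsilon q$, hence lies in $\cI_j$; so in fact $a_{j1}\in\cA_j\cI_j\cP=\cI_j\cP\subseteq\cP$, and then $a_{j1}\d_1 b\in\cP\cdot\cP=\cP$. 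For $l\neq 1,j$: $a_{jl}\in\cA_j\cI_l\cP\subseteq\cP$ and $\d_l b\in\cP$, done.

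For the closedness of $\cA_{i}\cdot\cW_{i}'\cdot\cP$ under $\d^{M}$, I would similarly write $\d_{j}^{M}(a\,w\,p)=\sum_l a_{jl}(\d_l a\cdot w\,p+a\cdot\d_l w\cdot p+a\,w\,\d_l p)$ for $a\in\cA_i$, $w\in\cW_i'$, $p\in\cP$, and check each piece. The key observations are: $\cA_i$ is closed under $\d$ (Lemma \ref{lem:AI}\ref{item:AI-closed}), $\cW_i'$ is spanned by $w_i^{(s)}=\d_i^s w_i$ so $\d_i$ maps $\cW_i'$ into $\cW_i'$ while $\d_j w_i^{(s)}=0$ for $j\neq i,1$ (Lemma \ref{lem:mixed-vanish}) and $\d_1 w_i^{(s)}$ needs a separate look — but by Lemma \ref{lem:computations-d}\ref{item:dw} and iteration, $\d_1 w_i\in\rho_i t'$-combinations which lie in $\cW_i$, so more care: I would enlarge bookkeeping to note $\d_1(\cW_i'\cdot\cA_i)\subseteq\cW_i'\cdot\cA_i$ using $t'\in\cS(U_1)$ and $\rho_i\in\cW_i\cap\cR_i\subseteq\cA_i$. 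When $l\neq i$, the coefficient $a_{jl}$ carries an $\cI_l$ or $\cI_j$ factor, and Lemma \ref{lem:AI}\ref{item:AI-dl} gives $\d_l\cA_i\subseteq\cI_l\cA_i$, which combined with $\cI_l\subseteq\cP$ keeps things inside $\cA_i\cW_i'\cP$. The diagonal term $l=i$ uses Lemma \ref{lem:products}\ref{item:dl-iterate}, i.e.\ $\cA_i\cdot\d_i\cP\subseteq\cP$, to absorb $\d_i p$.

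For part \ref{item:M-smooth}, I would induct on $|\boldsymbol{j}|$. The base case $|\boldsymbol{j}|=0$ is $v_1\in\cS(U_1)$ (Lemma \ref{lem:w_smooth}\ref{item:t1v1-smooth}) and $h\in\cC^\infty(U_X)\subseteq\cS(U_1)$ (Lemma \ref{lem:pullbacks}\ref{item:pullbacks}). For the inductive step I claim something sharper that propagates: that $\d_{\boldsymbol j}^M v_1$ and $\d_{\boldsymbol j}^M h$ both lie in a fixed subalgebra of $\cS(U_1)$ which is $\d^M$-closed. Concretely, $\d_j h\in\cI_i$ for appropriate $i$ by Lemma \ref{lem:AI}\ref{item:dl-smooth} — more precisely $\d_i h\in\cI_i$, and $\d_1 h$, $\d_j h$ need the chart structure; here I would use that $h$ pulls back from $U_X$, so $dh$ is a combination of $dr_l$ and $d\theta_l$, giving $\d_j^M h\in\sum_i\cI_i+\cJ\subseteq\cP\subseteq\cS(U_1)$ by Lemma \ref{lem:products}\ref{item:I-smooth}; then closedness of $\cP$ under $\d^M$ (part \ref{item:M-closed}, just proved) finishes the induction for $h$. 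For $v_1=t_1-\eta(w_1)=t w_1^{-1}-\eta(w_1)$ with $w_1=1-\sum_{i=2}^k w_i$: I would show $\d_j^M v_1$ lands in $\sum_i\cA_i\cW_i'\cP$ (or a similar $\d^M$-closed subalgebra of $\cS(U_1)$), using $\d_j^M w_1=-\sum_i\d_j^M w_i$ and $\d_l w_i\in\cW_i'\cdot(\text{stuff in }\cA_i)$ from Lemma \ref{lem:computations-d}, plus $t\in\cS(U_1)$ and $w_1^{-1}$ bounded on $U_1$ (since $w_1>\frac1{n+1}$ there by \eqref{eq:Ui}), so $w_1^{-1}\in\cS(U_1)$ and in fact $w_1^{-1}\in\cP$. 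Then part \ref{item:M-closed} applied to the algebra $\cA_i\cW_i'\cP$ closes the induction.

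The main obstacle I anticipate is the careful bookkeeping in part \ref{item:M-closed} showing that the \emph{off-diagonal} coefficients $a_{jl}$ ($l\neq j$) carry \emph{enough} $\cI$-factors to absorb the worst-behaved derivatives — in particular verifying that $\d_1$ applied to elements of $\cW_i'$ or to the $g$-direction does not destroy membership, and that the $\cI_j\cP$ versus $\cA_j\cP$ distinction in Definition \ref{def:M}\ref{item:M-Jij}–\ref{item:M-off-diagonals} is exactly strong enough to make Lemma \ref{lem:products}\ref{item:dl-iterate} applicable at every step. Everything else is a mechanical consequence of the ring axioms for $\cM$ (Lemma \ref{lem:M-ring}), the closedness-under-$\d$ results (Lemmas \ref{lem:AI}\ref{item:AI-closed}, \ref{lem:products}\ref{item:I-closed}), and the flattening lemma (Lemma \ref{lem:flattening}\ref{item:flat-is-smooth}) which is already packaged inside Lemma \ref{lem:products}\ref{item:I-smooth} and Lemma \ref{lem:AI}\ref{item:AI-I-smooth}.
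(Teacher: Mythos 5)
Your overall strategy (expand $\d^{M}_{j}$, use Definition \ref{def:M} together with Lemmas \ref{lem:AI} and \ref{lem:products} to keep everything inside the $\d^{M}$-closed algebras $\cP$ and $\cA_i\cW_i'\cP$, then induct for part \ref{item:M-smooth}) is exactly the paper's. But you have transposed the matrix action, and this creates a real gap. From $[\d_{1}^{M},\dots,\d_{k}^{M}]^{\top}=M^{\top}[\d_{1},\dots,\d_{k}]^{\top}$ one gets $\d_{j}^{M}=\sum_{l}a_{lj}\d_{l}$, and the vanishing of the \emph{first row} of $M$ (Definition \ref{def:M}\ref{item:M-first-row}, i.e.\ $a_{1j}=0$) kills the $l=1$ term: for \emph{every} $j$, including $j=1$, one has $\d_{j}^{M}=\sum_{l=2}^{k}a_{lj}\d_{l}$, so the operator $\d_{1}=\tfrac{\d}{\d g}$ never occurs. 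It is \emph{not} true that $\d_{1}^{M}=0$, and there is no $a_{j1}\d_{1}$ term to worry about. Your attempted rescue of that phantom term is itself unsound: you claim the $dg$-coefficient $t_{j}^{\epsilon}q$ from Lemma \ref{lem:dv}\ref{item:tame_1} lies in $\cI_{j}$, but $\cI_{j}$ is generated by derivatives of $r_{1},r_{j}$, i.e.\ by quantities decaying like $e^{-(m_jt_j)^{-1}}$, whereas $t_{j}^{\epsilon}$ decays only polynomially in $t_{j}$; so $t_{j}^{\epsilon}q\notin\cI_{j}$ in general. Once the transpose is fixed, the whole "main obstacle" you anticipate evaporates: every coefficient $a_{lj}$ with $l\geq 2$ is in $\cA_{l}\cP$ (diagonal via Lemma \ref{lem:products}\ref{item:dl-iterate}) or $\cA_{l}\cI_{l'}\cP$ (off-diagonal via Lemma \ref{lem:AI}\ref{item:AI-dl}), and the paper's two-line computation closes both algebras. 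You must also then verify closedness under $\d_{1}^{M}=\sum_{l\geq2}a_{l1}\d_{l}$, which you currently skip because you believe it vanishes; the uniform argument covers it since Definition \ref{def:M}\ref{item:M-Jij} applies to all columns $j$, including $j=1$.

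The misreading also weakens part \ref{item:M-smooth}. The paper's computation of $\d^{M}_{\boldsymbol{j}}v_{1}$ hinges on $\d^{M}_{\boldsymbol{j}}t=0$ for $|\boldsymbol{j}|>0$, which holds precisely because $\d_{l}t=0$ for $l\neq1$ (Lemma \ref{lem:mixed-vanish}) and $\d_{1}$ is absent from every $\d^{M}_{j}$; with your version of $\d^{M}_{j}$ the term $a_{j1}\d_{1}t=a_{j1}t'$ would survive and your argument would not close. Separately, your claim that $w_{1}^{-1}\in\cP$ is false: the non-constant part of $\cP$ is $\cJ+\sum_{i}\cI_{i}$, whose elements vanish on $\d A$, while $w_{1}^{-1}$ is bounded between $1$ and $n+1$. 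What is actually needed (and what the paper uses) is only that $v_{1}=tw_{1}^{-1}-\eta(w_{1})$ is a smooth function of $t$ and $w_{2},\dots,w_{k}$ with $w_{1}$ bounded below on $U_{1}$, so that smoothness of all $\d^{M}_{\boldsymbol{j}}w_{i}$ (which follows from part \ref{item:M-closed} applied to $w_{i}\in\cA_{i}\cW_{i}'\cP$ and Lemma \ref{lem:products}\ref{item:I-smooth},\ref{item:AW-smooth}) propagates to $v_{1}$. Your treatment of $h$ is essentially the paper's and is fine once the indexing is corrected.
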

\begin{proof}
	\ref{item:M-closed} Write $M=[a_{lj}]$ and fix $j\in \{1,\dots, k\}$. Since $a_{1j}=0$ by Definition \ref{def:M}\ref{item:M-first-row}, we have
	\begin{equation}\label{eq:dM}
		\d_{j}^{M}=\sum_{l=2}^{k} a_{lj}\d_{l}.
	\end{equation}
	For every $l\in \{2,\dots, k\}$ we have $a_{lj}\in \cA_l\cdot \cP$ by Definition \ref{def:M}\ref{item:M-Jij}, and $\cA_l \cdot \d_l\cP\subseteq \cP$ by Lemma \ref{lem:products}\ref{item:dl-iterate}, so $a_{lj}\cdot \d_l \cP\subseteq \cA_l \cP\cdot \d_l\cP \subseteq \cP$. By \eqref{eq:dM}, it follows that $\d_j^M\cP\subseteq \cP$. Hence $\cP$ is closed under $\d^M$.
	
	Now, fix $b\in \cA_i$ and an integer $s\geq 0$. By Lemma \ref{lem:mixed-vanish}, we have $\d_{j}w_{i}^{(s)}=0$ for $j\neq i,1$, so by \eqref{eq:dM} 
	\begin{equation*}
		\d_{j}^{M}(b\cdot w_{i}^{(s)})=\sum_{l=2}^{k} a_{lj}\cdot(\d_{l}b)\cdot w_{i}^{(s)}+
		a_{ij}\cdot b\cdot w_{i}^{(s+1)}.
	\end{equation*}
	To prove that $\cA_{i}\cdot \cW_{i}'\cdot \cP$ is closed under $\d^{M}$, we need to show that all the above summands are in $\cA_{i}\cdot \cW_{i}'\cdot \cP$. By Definition \ref{def:M}\ref{item:M-Jij}, we have $a_{ij}\in \cA_i\cP$, so $a_{ij}bw_i^{(s+1)}\in \cA_i\cP\cdot \cA_i\cdot \cW_i'=\cA_{i}\cdot \cW_{i}'\cdot \cP$, as needed. It remains to show that $a_{lj}\cdot \d_lb\in \cA_{i}\cP$ for all $l\in \{2,\dots, k\}$. 
	
	Fix $l\in \{2,\dots, k\}$. By Definition \ref{def:M}\ref{item:M-Jij}, we have $a_{lj}\in \cA_{l}\cP$. If $l\neq i$ then $\d_{l}b\in \cI_{l}\cA_{i}$ by Lemma \ref{lem:AI}\ref{item:AI-dl}, so $a_{lj}\d_lb\in \cA_{l}\cP\cdot  \cI_l\cA_i=\cA_{i}\cI_{l}\cP\subseteq \cA_{i}\cP$, as needed. For the remaining case $l=i$, recall that $\cA_{i}$ is closed under $\d$ by Lemma \ref{lem:AI}\ref{item:AI-closed}, so $\d_{i}b\in \cA_{i}$, and therefore $a_{ij}\d_{i}b\in \cA_{i}\cP\cA_{i}=\cA_{i}\cP$, as needed.
	\smallskip
	
	\ref{item:M-smooth} 
	Fix $i\in \{2,\dots, k\}$. Since $w_i\in \cW_{i}'$ and $1\in \cA_{i}\cdot \cP$, we have $w_{i}\in \cA_{i}\cdot \cW_{i}'\cdot \cP$. By \ref{item:M-closed}, we have $\d^{M}_{\boldsymbol{j}}w_i\in \cA_{i}\cdot \cW_{i}'\cdot \cP$. By Lemma \ref{lem:products}\ref{item:I-smooth},\ref{item:AW-smooth} the latter algebra is contained in $\cS(U_1)$, so $\d^{M}_{\boldsymbol{j}}w_i\in \cS(U_1)$.
	
	As in the proof of Lemma \ref{lem:v1C1}\ref{item:viwi-C1}, we conclude that $\d^{M}_{\boldsymbol{j}}v_1\in \cS(U_1)$. Let us recall the argument here. Lemma \ref{lem:intro}\ref{item:intro-sum} implies that  $\d^{M}_{\boldsymbol{j}}w_{1}=-\sum_{i=2}^{k}\d^{M}_{\boldsymbol{j}}w_{i}\in \cS(U_1)$. By Lemma \ref{lem:mixed-vanish} we have $\d_{l}t=0$ for $l\neq 1$, so by \eqref{eq:dM} we have $\d^{M}_{\boldsymbol{j}}t=0$, for all $\boldsymbol{j}$. Since $w_{1}|_{U_1}>0$ by \eqref{eq:Ui}, the formula $v_1=t_1-u_1=tw_1^{-1}-\eta(w_1)$ shows that  $\d^{M}_{\boldsymbol{j}}v_{1}\in \cS(U_1)$, as claimed.
	\smallskip
	
	For the last statement, fix $h\in \cC^{\infty}(U_1)$. By Lemma \ref{lem:pullbacks}\ref{item:pullbacks} we have $h\in \cS(U_1)$, so the claim holds for $|\boldsymbol{j}|=0$. Assume $|\boldsymbol{j}|>0$. By Lemma \ref{lem:AI}\ref{item:dl-smooth}, for any $l\in \{2,\dots,k\}$ we have $\d_{l}h\in \cI_{l}$. By Definition \ref{def:M}\ref{item:M-Jij}, it follows that for any $j\in \{1,\dots, k\}$ we have  $a_{lj}\d_l h\in \cA_l \cP\cdot \cI_l=\cI_l\cP\subseteq \cP$, so $\d^{M}_{j}h\in \cP$ by \eqref{eq:dM}. Since by \ref{item:M-closed} the algebra $\cP$ is closed under $\d^M$, we infer that $\d^{M}_{\boldsymbol{j}}h\in \cP$. But $\cP\subseteq \cS(U_1)$ by Lemma \ref{lem:products}\ref{item:I-smooth}, so eventually $\d^{M}_{\boldsymbol{j}}h\in \cS(U_1)$, as claimed.
\end{proof}

\subsubsection{The A'Campo space admits a \texorpdfstring{$\cC^\infty$}{smooth}  structure.}\label{sec:AX-smooth-proofs}
We are now ready to prove Lemma \ref{lem:transsmooth}, which makes $A$ a $\cC^{\infty}$ manifold with boundary, and establish its properties listed in Proposition \ref{prop:AXsmooth}. 

Recall that in Notation \ref{not:S-smooth}, we have fixed a chart $U_X$, considered the subset $U_1\subseteq\pi^{-1}(U_X)$ with smooth coordinates $(g,v_2,\dots,v_{k},\rest)$, and denoted the sheaf of functions which are smooth with respect to these coordinates by $\cS$. By Lemma \ref{lem:smoothchart}, the map $\bar{\psi}_{1}=(g,\bar{v}_2,\dots,\bar{v}_k,\rest)$ is a $\cC^{1}$-diffeomorphism from $U_1$ onto an open subset of $Q_{k,n}$, so it gives \emph{another} smooth structure on $U_1$, which is (only) $\cC^{1}$-diffeomorphic to the previous one. We denote the sheaf of smooth functions with respect to that smooth structure by $\bar{\cS}$. As in Notation \ref{not:S}, this structure gives differential operators
\begin{equation*}
	\bar{\d}_1=\tfrac{\d}{\d g} \quad\mbox{and}\quad \bar{\d}_{i}=\tfrac{\d}{\d\bar{v}_i}\quad \mbox{for}\quad  i\in \{2,\dots, k\}.
\end{equation*}
Note that $\tfrac{\d}{\d g}$ has different meaning here than in Notation \ref{not:S}, i.e.\ $\bar{\d}_{1}$ may not be equal to $\d_1$: this is because the vector field $\bar{\d}_1$ is tangent to the fibers of $(\bar{v}_2,\dots,\bar{v}_k)$, which differ from the fibers of $(v_2,\dots,v_k)$, to which the vector field $\d_1$ is tangent.

Combining Lemmas \ref{lem:operators-M} and \ref{lem:jacobian}, we get the following.

\begin{lema}\label{lem:v-smooth}
	We have $\bar{v}_{i}\in \bar{\cS}(U_1)$ for all $i\in \{1,\dots, N\}$ and $h\in \bar{\cS}(U_1)$ for all $h\in \cC^{\infty}(U_X)$.
\end{lema}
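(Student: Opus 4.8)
The plan is to show that for every multi-index $\boldsymbol{j}$, the iterated derivative $\bar{\d}_{\boldsymbol{j}}\bar{v}_i$ extends to a continuous function on $U_1$; this implies $\bar{v}_{i}\in \bar{\cS}(U_1)$, and similarly for $h\in \cC^{\infty}(U_X)$. The key point is to relate the operators $\bar{\d}_j$ to the operators $\d_j^M$ introduced in Section \ref{sec:dM}. By \eqref{eq:M-def}, the Jacobian matrix of $\bar{\psi}_1$ in the coordinates $(g,v_2,\dots,v_k,\rest)$ has block form, with the upper-left $k\times k$ block being $\id-\Psi$, where $\Psi\in \cM$ by Lemma \ref{lem:jacobian}, and $||\Psi||<1$ on $U_1$ by \eqref{eq:M-small}. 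Hence $\id-\Psi$ is invertible on $U_1$, with inverse $M\de (\id-\Psi)^{-1}=\sum_{s\geq 0}\Psi^{s}$; since $\cM$ is a ring by Lemma \ref{lem:M-ring} and contains $\id$ (note $\id\in \cM$: its first row is zero off the $(1,1)$ entry -- wait, this needs care, see below), the partial sums lie in $\cM$, and I will check that $\cM$ is closed under the relevant limit so that $M\in \cM$, or more simply work directly with $\id-\Psi$ being invertible and express $\bar{\d}_j$ as a $\cP$-combination of the $\d_l$. The upshot is that on the functions not involving $g$, i.e.\ for $i,j\in\{2,\dots,k\}$, we have $\bar\d_j = \d_j^{M}$ for a suitable $M\in\cM$; the $\bar\d_1$ direction is handled using that $\d^{M}_{\boldsymbol j}t=0$ as recorded in the proof of Lemma \ref{lem:operators-M}\ref{item:M-smooth}.

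More precisely, first I would deal with the subtlety that $\cM$ as defined in Definition \ref{def:M} consists of matrices with zero first row, so $\id\notin\cM$; instead I will observe that $\bar{\d}_i$ for $i\geq 2$ only differs from a $\d^{M}_i$-type operator by terms that the machinery already controls. Concretely, inverting the block Jacobian shows $\bar{\d}_i = \d_i + \sum_{l=2}^{k}c_{li}\,\d_l$ for $i\in\{2,\dots,k\}$, where the matrix $[c_{li}]$ (extended by zero first row) equals $(\id-\Psi)^{-1}-\id = \sum_{s\geq 1}\Psi^{s}\in\cM$ by Lemma \ref{lem:M-ring}. Thus $\bar{\d}_i = \d^{M}_i$ where $M=\id+[c_{li}]$ acts as in Section \ref{sec:dM}; since $[c_{li}]\in\cM$ and the first summand contributes only $\d_i$, the conclusions of Lemma \ref{lem:operators-M} apply verbatim to iterated $\bar{\d}$-derivatives. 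For $\bar{\d}_1=\tfrac{\d}{\d g}$, the fact that $g$ is the common first coordinate of both $\psi_1$ and $\bar{\psi}_1$ gives $\bar{\d}_1 = \d_1 + \sum_{l=2}^{k}c_{l1}\,\d_l$ with the same $[c_{l1}]$ column, so again $\bar{\d}_1=\d^{M}_1$.

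With this identification, applying Lemma \ref{lem:operators-M}\ref{item:M-smooth} directly gives $\bar{\d}_{\boldsymbol{j}}v_1\in \cS(U_1)$ and $\bar{\d}_{\boldsymbol{j}}h\in \cS(U_1)$ for every $h\in \cC^{\infty}(U_X)$ and every multi-index $\boldsymbol{j}$. Since $\cS(U_1)$ consists of functions that are (in particular) continuous on $U_1$, this shows that $v_1$ and $h$ are of class $\cC^{\infty}$ with respect to the coordinates $(g,\bar v_2,\dots,\bar v_k,\rest)$, i.e.\ $v_1,h\in \bar{\cS}(U_1)$. For $\bar{v}_i$ with $i\in\{2,\dots,k\}$ there is nothing to prove, as these are coordinates of $\bar\psi_1$; and for $i\notin\{1,\dots,k\}$, i.e.\ $D_i\cap U_X=\emptyset$, we have $\bar v_i\equiv 0$ by the convention in Section \ref{sec:Adef}. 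It remains to treat $\bar{v}_1$ on the charts $U_i$ with $i\neq 1$: but by symmetry of the construction in the index $i\in S$, the same argument with the roles of the coordinates permuted shows $\bar v_1\in\bar{\cS}(U_i)$; alternatively, $\bar v_1 = \sum_p\tau^p v_1^p$ and by Lemma \ref{lem:w_smooth}\ref{item:t1v1-smooth} combined with the above we get $v_1^p, \tau^p\in\bar{\cS}(U_1)$, hence $\bar v_1\in\bar{\cS}(U_1)$ as well.

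The main obstacle is the bookkeeping needed to justify that $(\id-\Psi)^{-1}$ lies in (an appropriate extension of) $\cM$ and that the operators $\bar\d_j$ are genuinely of the form $\d^M_j$; once that algebraic identity is in place, the smoothness is an immediate consequence of Lemma \ref{lem:operators-M}, which was designed precisely for this purpose.
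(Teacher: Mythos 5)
Your main argument for $v_1$ and for pullbacks $h\in\cC^{\infty}(U_X)$ follows the paper's route (invert the Jacobian by the Neumann series $\sum_s\Psi^s$, use Lemmas \ref{lem:jacobian}, \ref{lem:M-ring} and \ref{lem:operators-M}), and that part is essentially fine, modulo one loose end you flag but never close: you assert $(\id-\Psi)^{-1}-\id=\sum_{s\geq 1}\Psi^{s}\in\cM$, but $\cM$ is only shown to be a ring, not closed under limits of infinite series, so this membership is unjustified. The paper sidesteps this by never placing the infinite sum in $\cM$: it applies Lemma \ref{lem:operators-M} to each finite power $\Psi^{s}\in\cM$ separately and uses the uniform convergence guaranteed by \eqref{eq:M-small} to conclude continuity of $\bar{\d}_{\boldsymbol{j}}v_1$ and $\bar{\d}_{\boldsymbol{j}}h$.

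There are, however, two genuine gaps in how you finish. First, your claim that $\bar{v}_i\equiv 0$ on $U_1$ for $i\notin\{1,\dots,k\}$ is false. The convention in Section \ref{sec:Adef} sets $v_i^{p}=0$ only when the \emph{atlas} chart $U_X^{p}$ is disjoint from $D_i$; an atlas chart with $\tau^{p}|_{U_X}\not\equiv 0$ may well meet $D_i$ away from $U_X$, and then $v_i^{p}=t_i^{p}-u_i^{p}$ is generically nonzero on $U_X$, so $\bar{v}_i=\sum_p\tau^{p}v_i^{p}$ does not vanish on $U_1$. This case genuinely needs an argument: one writes $u_j^{p}=g\cdot(1+g\log t_j^{p})^{-1}$, notes that $t_j^{p}|_{U_X}>0$ so $t_j^{p},\log t_j^{p}\in\cC^{\infty}(U_X)\subseteq\bar{\cS}(U_1)$ (by the first part of your own argument), and uses that $g$ is a coordinate of $\bar{\psi}_1$; note that $u_j^{p}$ is \emph{not} a pullback of a smooth function on $U_X$, so it cannot be absorbed into the "$h\in\cC^{\infty}(U_X)$" case. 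Second, your justification that $v_1^{p}\in\bar{\cS}(U_1)$ does not work as stated: Lemma \ref{lem:w_smooth}\ref{item:t1v1-smooth} gives $v_1\in\cS(U_1)$ for the \emph{current} chart in the $\cS$-structure, and says nothing about $v_1^{p}$ in the $\bar{\cS}$-structure; likewise the "symmetry over the charts $U_i$" remark is beside the point, since the statement concerns $U_1$ only and comparing the structures coming from $\bar{\psi}_1$ and $\bar{\psi}_i$ is exactly Lemma \ref{lem:transsmooth}, which is proved \emph{after} (and using) this lemma. The correct move is to rerun the first part of the proof with $U_X$ replaced by the restriction of the $p$-th atlas chart (legitimate thanks to Lemma \ref{lem:small-chart}), observing via $\cC^{\infty}(U_X)\subseteq\bar{\cS}(U_1)$ and Lemma \ref{lem:dv}\ref{item:theta-theta'} that the change of the $\rest$-coordinates is harmless, which yields $v_1^{p}\in\bar{\cS}(U_1)$ and then $\bar{v}_1=\sum_p\tau^{p}v_1^{p}\in\bar{\cS}(U_1)$.
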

\begin{proof}
	Let $\Psi$ be the matrix introduced in \eqref{eq:M-def}. Then the definition of $\bar{\d}_1,\dots\bar{\d}_k$ reads as
	\begin{equation*}
		[\bar{\d}_{1},\dots,\bar{\d}_{k}]^{\top}= ((\id-\Psi)^{-1})^{\top}\cdot [\d_{1},\dots,\d_{k}]^{\top}   =\sum_{s\geq 0} (\Psi^{s})^{\top}\cdot [\d_{1},\dots,\d_{k}]^{\top}.
	\end{equation*}
	By assumption \eqref{eq:M-small}, the above sequence is uniformly convergent.
	
	By Lemma \ref{lem:jacobian} we have $\Psi\in \cM$, so by Lemma \ref{lem:M-ring}, for any $s\geq 1$ we have $\Psi^{s}\in \cM$, too. Therefore, by Lemma \ref{lem:operators-M} for all $s\geq 1$ and all multi-indices $\boldsymbol{j}$ we have $\d^{\Psi^{s}}_{\boldsymbol{j}} v_1, \d^{\Psi^{s}}_{\boldsymbol{j}} h\in \cS(U_1)$, for any $h\in \cC^{\infty}(U_X)$. Recall that $v_1\in \cS(U_1)$ by Lemma \ref{lem:w_smooth}\ref{item:t1v1-smooth} and $h\in \cS(U_1)$ by Lemma \ref{lem:pullbacks}\ref{item:pullbacks}, so the above result holds for $s=0$, too. It follows that $\bar{\d}_{\boldsymbol{j}}v_1$ and $\bar{\d}_{\boldsymbol{j}}h$, for any $\boldsymbol{j}$, are continuous on $U_1$. In other words, $v_1$ and $h$ are smooth with respect to the coordinates $(g,\bar{v}_2,\dots, \bar{v}_{k},\rest)$, i.e.\ $v_1,h\in \bar{\cS}(U_1)$.
	
	Taking for $U_{X}$ the restriction of the $p$-th chart of the fixed atlas $\cU_{X}$, we infer that $v_1^{p}\in \bar{\cS}(U_1)$, for all $p\in R_0$. Since $\tau^{p}\in \cC^{\infty}(U_X)$, we have shown that $\tau^{p}\in \bar{\cS}(U_1)$, too. Hence $\bar{v}_1=\sum_{p\in R_0} \tau^{p}v_{i}^{p}\in \bar{\cS}(U_1)$. The functions $\bar{v}_2,\dots, \bar{v}_{k}$ are coordinates of the smooth chart $\bar{\psi}_1$ defining $\bar{\cS}$, so they are in $\bar{\cS}(U_1)$, too. 
	
	It remains to show that $\bar{v}_{j}\in \bar{\cS}(U_1)$ for all $j\in \{k+1,\dots, N\}$. We have $\bar{v}_{j}=\sum_{p\in R_0}\tau^{p}v_{j}^{p}$, where $\tau^{p}\in \cC^{\infty}(U_X)\subseteq \bar{\cS}(U_1)$. Thus it is enough to show that $v_{j}^{p}\in \bar{\cS}(U_{1})$ for all $p\in R_0$. Since $j>k$, our chart $U_{X}$ is disjoint from $D_{j}$. Hence $t_{j}^{p}|_{U_X}>0$ and $t_{j}^{p}|_{U_X},\log t_{j}^{p}|_{U_X}\in \cC^{\infty}(U_{X})\subseteq \bar{\cS}(U_1)$. By definition \eqref{eq:def-v_i} of $v_{j}^{p}$, we have $v_{j}^{p}=t_{j}^{p}-u_{j}^{p}$, so it remains to show that $u_{j}^{p}\in \bar{\cS}(U_1)$. Formulas \eqref{eq:def-w_i-u_i} and \eqref{eq:smooth_simplex} give $u_{j}^{p}=\eta(\frac{t}{t_{j}^{p}})=(1-\log t+\log t_{j}^{p})^{-1}=(g^{-1}+\log t_{j}^{p})^{-1}=g\cdot (1+g\log t_{j}^{p})^{-1}$. Since both the coordinate $g$, and the restriction $\log t_{j}^{p}|_{U_X}$ are  in $\bar{\cS}(U_{1})$, we infer that $u_{j}^{p}\in \bar{\cS}(U_1)$, as needed.
\end{proof}

\begin{proof}[Proof of Lemma \ref{lem:transsmooth}]
	Since by Lemma \ref{lem:smoothchart} each $\bar{\psi}_{i}$ is a $\cC^{1}$-diffeomorphism, the transition map $\bar{\psi}_j'\circ \bar{\psi}_{i}^{-1}$ is a $\cC^{1}$-diffeomorphism, too. It remains to prove that it is smooth. 
	
	Without loss of generality, we can assume that $i=1$, and choose $U_X$ so small that it satisfies the statement of Lemma \ref{lem:small-chart} and the assumption \eqref{eq:M-small} above. Moreover, we can assume that $U_{X}\subseteq U_{X}'$, so the index for $U_X$ is contained with the one for $U_{X}'$. Call the latter $S'$. The chart $\bar{\psi}_j'$ is defined in \eqref{eq:AC-chart} as $(g,(\bar{v}_{l})_{l\in S'\setminus \{j\}},\rest')$.  Using above notation, we need to prove that all these functions are in $\bar{S}(U_1)$, i.e.\ that they are smooth with respect to the coordinates $(g,\bar{v}_{2},\dots,\bar{v}_{k},\rest)$.
	
	Clearly, the coordinate $g$ is in $\bar{S}(U_1)$. By Lemma \ref{lem:v-smooth}, we have $\bar{v}_{l}\in \bar{\cS}(U_1)$ for all $l$. The first $k$ functions in $\rest'$ are $\theta_{1}',\dots,\theta_{k}'$. By Lemma \ref{lem:dv}\ref{item:theta-theta'}, they differ from coordinates $\theta_{1},\dots, \theta_{k}$ of $\rest$ by a smooth function on $U_X$, so by Lemma \ref{lem:v-smooth} they are in $\bar{\cS}(U_1)$, too. The remaining functions in $\rest'$ are smooth on $U_X$, so again by Lemma  \ref{lem:v-smooth} they are in $\bar{\cS}(U_1)$, as claimed.
\end{proof}

\begin{proof}[Proof of Proposition \ref{prop:AXsmooth}]
	
	\ref{item:AX-pismooth} By Lemma \ref{lem:v-smooth}, the map $\pi$ pulls back smooth functions to smooth functions, so it is smooth. Its restriction to $A\setminus \d A$ is a diffeomorphism by Lemma \ref{lem:C1chart}.
	
	\ref{item:AX-gsmooth} We argue exactly as in the proof of Proposition \ref{prop:AXC1}\ref{item:AX-gC1}. The function $g$ is a coordinate in each smooth chart \eqref{eq:AC-chart} meeting $\d A$; and $\theta$ is a nonzero linear combination of such coordinates. Thus $(g,\theta)$ is a submersion near $\d A$. It follows that $f\AC=(\exp(-\exp(g^{-1}-1)),\theta)$ is smooth near $\d A$. Away from $\d A$, the result follows from the fact that $f$ is a submersion.
	
	\ref{item:AX-vbar-smooth} This part was proved in Lemma \ref{lem:v-smooth}.
	
	\ref{item:AX-stratification}
	Recall that for every $\emptyset\neq I \subseteq \{1,\dots, N\}$, the restriction $(\pi,\mu)\colon \Int_{\d A} A_{I}^{\circ}\to X_{I}^{\circ}\times \Delta_{I}^{\circ}$ is a topological $(\S^{1})^{\# I}$-bundle by Proposition \ref{prop:AX-topo}\ref{item:top-S1-bundle}. It remains to prove that it is smooth. The map $\pi$ is smooth by \ref{item:AX-pismooth}. The $i$-th coordinate of $\mu$ is $\bar{u}_{i}|_{A_{I}^{\circ}}$, so by Lemma \ref{lem:intro}\ref{item:intro-u=ubar} it is zero if $i\not\in I$ and equals $-\bar{v}_{i}|_{A_{I}^{\circ}}$ if $i\in I$. Thus smoothness of $\mu|_{\Int_{\d A} A_{I}^{\circ}}$ follows from \ref{item:AX-vbar-smooth}.
\end{proof}

\section{Fiberwise symplectic form on the A'Campo space}\label{sec:symplectic}

Let us recall some notation and assumptions from Section \ref{sec:ACampo}. Let $X$ be a complex manifold of dimension $n$, and let $f\colon X\to \C$ be a holomorphic function with only one critical value $0\in \C$, whose central fiber $D\de f^{-1}(0)$ is snc. We denote the irreducible components of $D$ by $D_1,\dots, D_N$, and write $D=\sum_{i=1}^{N}m_{i}D_{i}$ for some positive integers $m_i$.

Assume that $\log |f|<-1$, and that $f|_{X\setminus D}$ is a submersion onto $\D_{e^{-1}}^{*}$. Using certain additional data $(\cU_X,\btau)$, 
we have constructed in Definition \ref{def:AX} the A'Campo space $A$. In Section \ref{sec:AX-smooth-def}, we have endowed $A$ with a structure of a smooth manifold with boundary. By Proposition \ref{prop:AXsmooth}, this manifold comes with a smooth map $\pi\colon A\to X$ which restricts to a diffeomorphism $\pi|_{A\setminus \d A}\to X\setminus D$; and with a submersion $(g,\theta)\colon A\to [0,1)\times \S^1$ whose fibers over $(0,1)\times \S^1$ agree with those of $f$.
\smallskip

Fix a K\"ahler form $\omega_{X}\in \Omega^{2}(X)$. In this section, we will introduce, for any $\delta>0$ and any open subset $W\subseteq A$ so that $f_A|_{\bar{W}}$ is proper,  a $2$-form  on $A$ which agrees with $\omega_{X}$ on $g^{-1}([\delta,1))\subseteq A\setminus \d A=X\setminus D$, and whose restriction to $W$ is fiberwise symplectic with respect to the submersion $(g,\theta)\colon A\to [0,1)\times \S^1$. 

The restriction to $W$ is important for the compactness argument in the proof, and comes naturally in our applications. For example, if $f$ is a log resolution of an isolated hypersurface singularity, we can take for $W$ the preimage of a bounded domain in $\C^{n}$, e.g.\ a ball, cf.\ Example \ref{ex:typical}. In turn, if $f$ is a projective family then one can take $W=X$, i.e.\ no restriction is needed at all.

\subsection{Construction of the fiberwise symplectic form} 

\begin{definition}
	\label{def:finechartz}
	We say that a holomorphic chart $U_{X}$ on $X$ is \emph{fine} if it satisfies the statement of Lemma \ref{lem:smoothchart}, or is disjoint from $D$.
\end{definition}

Lemma \ref{lem:smoothchart} shows that we can choose a fine chart at any point of $X$; and any fine chart meeting $D$ gives rise to a collection of smooth charts \eqref{eq:AC-chart} on its preimage in $A$. Fine charts are, by definition, adapted to $f$, so they come equipped with functions $t_i,w_i,\theta_{i},v_{i}$ introduced in Section \ref{sec:basicfunctions}. We will now use the angular coordinates $\theta_{i}$ to define \emph{global angular $1$-forms} $\alpha_{i}\in \Omega^{1}(A)$, as follows.

Fix an atlas $\cV_{X}=\{(V^{q}_{X},\psi^{q}_{X}):q\in \tilde{R}\}$ whose all charts are fine; and a locally finite partition of unity $\tilde{\btau}=\{\tilde{\tau}^{q}\}_{q\in \tilde{R}}$ inscribed in $\cV_{X}$ (note that $(\cV_X,\tilde{\btau})$ may differ from the adapted pair $(\cU_X,\btau)$ used in Section \ref{sec:AX-smooth} to define the smooth structure on $A$). Since the map $\pi\colon A\to X$ is smooth, the pullback through $\pi$ of each $\tilde{\tau}^{q}$ to $A$ is smooth, too. As usual, we denote these pullbacks by the same letters. 

Fix $i\in \{1,\dots, N\}$ and $q\in \tilde{R}$ such that $i$ lies in the index set \eqref{eq:index-set} of the chart $V^{q}_X$. Let $\theta_{i}^{q}$ be the corresponding angular coordinate on $V^{q}\de \pi^{-1}(V_X^q)$, see \eqref{eq:def-theta_i}: it is smooth by definition \eqref{eq:AC-chart} of the smooth charts on $V^q$. Then $d\theta_{i}^{q}$ is a smooth $1$-form on $V^{q}$. Since $\tilde{\tau}^{q}$ vanishes identically away from $V^q$, the form $\tilde{\tau}^{q}d\theta_{i}^{q}$ extends to a global smooth form on $A$, which is zero off $V^{q}$. If $V_{X}^{q}\cap D_{i}=\emptyset$, we put $\theta_{i}^{q}=0$, so $\tilde{\tau}^{q}d\theta_{i}^{q}=0$.  
This way, for each $i\in \{1,\dots, N\}$ we define a \emph{global angular $1$-form}
\begin{equation*}
	\alpha_{i}\de \sum_{q\in \tilde{R}}\tilde{\tau}^{q}\, d\theta_{i}^{q}.
\end{equation*}

By Proposition \ref{prop:AXsmooth}\ref{item:AX-vbar-smooth}, for each $i\in \{1,\dots, N\}$ we have a smooth function $\bar{v}_{i}\colon A\to [-1,1]$. Put
\begin{equation}\label{eq:omegaE}
	\lambda_{E}=\sum_{i=1}^{N}\bar{v}_{i}\alpha_{i}, \quad \omega_{E}\de d\lambda_{E}.
\end{equation}
Given any $\epsilon\geq 0$ we define 
\begin{equation}\label{eq:omegaAC}
	\omega\AC^\epsilon \de \pi^{*}\omega_{X}+\epsilon \omega_{E}.
\end{equation}
For every $\delta>0$ we fix a smooth function $\rho_{\delta}\colon [0,\infty)\to [0,1]$ satisfying  $\rho_{\delta}(0)=1$ and $\rho_{\delta}|_{[\delta,\infty)}\equiv 0$. We define
	\begin{equation}\label{eq:omegaACdelta}
		\lambda^\delta_{E}\de \rho_{\delta}(g)\lambda_E, \quad \omega^\delta_{E}\de d\lambda^\delta_{E}
		\quad\mbox{and}\quad 
		\omega\AC^{\delta,\epsilon}=\pi^{*}\omega_{X}+\epsilon\omega^\delta_{E}.
	\end{equation}
Note that the restrictions of $\omega\AC^{\delta,\epsilon}$ and $\pi^{*}\omega_{X}$ to $g^{-1}([\delta,+\infty))$ are equal for any $\epsilon\geq 0$.
\smallskip

If $\omega_{X}|_{X\setminus D}=d\lambda_{X}$ for some $\lambda_{X}\in \Omega^{1}(X\setminus D)$, we put
	\begin{equation}\label{eq:lambdaACdelta}
		\lambda\AC^{\delta,\epsilon} \de \pi^{*}\lambda_{X}+\epsilon\lambda^\delta_{E},\quad\mbox{so}\quad \omega_{A}^{\delta,\epsilon}=d\lambda_{A}^{\delta,\epsilon}.
	\end{equation}
In our applications, the pullback $\pi^{*}\lambda_{X}$ will extend to a $1$-form on $A$, hence the forms $\omega_{A}^{\delta,\epsilon}$ constructed above will be exact.

Proposition \ref{prop:omega} below is the main result of this section. Part \ref{item:omega-lift} shows that the symplectic monodromy induced by $\omega_{A}^{\delta,\epsilon}$ at \enquote{radius zero} is particularly easy to handle. In Section \ref{sec:monodromy} we will see that -- assuming $D$ is $m$-separating -- its dynamics are exactly as for the topological monodromy constructed in  \cite{A'Campo}, and \emph{nearly} as for the symplectic monodromy associated with a model resolution of a germ of isolated singularity constructed in \cite[Example 5.14]{McLean}, see Remark \ref{rem:McLean-5.41}. The main difference is the fact that all iterates of our monodromy will have one more component of fixed points, whose analysis is crucial for the proof of Theorem \ref{theo:Zariski}. 

\begin{prop}\label{prop:omega}
	Let $W\subseteq A$ be an open set such that $f_{A}|_{\bar{W}}$ is proper. Then the following hold.
	\begin{enumerate}
		\item\label{item:omega-symplectic} 	There is an $\epsilon_0>0$ and a neighborhood $W'$ of $\d A\cap W$ in $W$, such that for every $\delta>0$ and every $\epsilon\in (0,\epsilon_0]$, the restriction of each of the forms $\omega^\epsilon\AC$ and $\omega\AC^{\delta,\epsilon}$ defined in \eqref{eq:omegaAC} and \eqref{eq:omegaACdelta} is symplectic on each fiber of  $(g,\theta)|_{W'}\colon W'\to [0,1)\times \S^1$
		\item\label{item:omega-lift} Let $U_X$ be a fine chart such that $U\de \pi^{-1}(U_X)\subseteq W$. Let $S$ be its index set \eqref{eq:index-set}. The symplectic lift, with respect to $\omega^\epsilon\AC$ or $\omega\AC^{\delta,\epsilon}$, of the unit angular vector field on $\{0\}\times \S^1$ to $U\cap \d A$ in coordinates \eqref{eq:AC-chart} on $U$ is equal to 
		\begin{equation}\label{eq:monodromy-vector-field}
			\left(\sum_{i\in S} m_{i} \zeta(u_{i})\right)^{-1}\cdot \sum_{i\in S} \zeta(u_{i})\cdot \frac{\d}{\d \theta_{i}},
		\end{equation}
	\end{enumerate}
	where for each $i\in S$, the function $u_{i}\colon U\to [0,1]$ is defined in \eqref{eq:def-w_i-u_i}, see Lemma \ref{lem:intro}\ref{item:intro-extends}, and $\frac{\d}{\d \theta_i}$ is the coordinate vector field of the chart \eqref{eq:AC-chart}. The function $\zeta\colon s\mapsto s^{-2}e^{1-s^{-1}}\colon [0,\infty)\to [0,\infty)$ is the derivative of the inverse to \eqref{eq:smooth_simplex}. In particular, $\zeta$ is smooth and all its derivatives vanish at $0$.
\end{prop}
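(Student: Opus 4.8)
The key is that on a neighborhood of $\d A$ the form $\pi^*\omega_X$ is bounded from $X$ (Definition \ref{def:bounded-from-X}), hence decays exponentially fast relative to the \enquote{tropical} directions, so its contribution to the fiberwise form is negligible near $\d A$ and the fiberwise nondegeneracy is governed entirely by $\epsilon\omega_E$. I would first compute $\omega_E$ in the smooth chart \eqref{eq:AC-chart}. Since $\bar v_i$ is smooth by Proposition \ref{prop:AXsmooth}\ref{item:AX-vbar-smooth}, and since $\bar v_i$ differs from $v_i$ by a term that vanishes to second order in $\sigma_i$ along $\d A$ (Lemma \ref{lem:dvbar}), the differential $d\bar v_i$ restricted to $\d A$ equals $dv_i|_{\d A}$. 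On $\d A$, using Lemma \ref{lem:Ui-simple}\ref{item:Ui-cap-dA} and Lemma \ref{lem:intro}\ref{item:intro-u=ubar}, one has $\bar v_i=-u_i$ on the components $A_I^\circ$ with $i\in I$ and $\bar v_i=t_i>0$ otherwise; the global angular form $\alpha_i$ restricts on $\d A$ to $d\theta_i$ (modulo terms supported where $u_i=0$, i.e.\ away from the stratum, by the partition-of-unity construction and Lemma \ref{lem:dv}\ref{item:theta-theta'}). So, fiberwise (i.e.\ modulo $dg$ and $d\theta$, which are the base directions), $\omega_E$ restricted to a fiber of $(g,\theta)$ over $\d C_{\log}$ is cohomologous to $\sum_{i\in S} d(-u_i)\wedge d\theta_i = -\sum_{i\in S}\eta'(w_i)\,dw_i\wedge d\theta_i$, which by Lemma \ref{lem:computations}\ref{item:eta-u} equals $-\sum_{i\in S}u_i^2 w_i^{-1}\,dw_i\wedge d\theta_i$. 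On the open stratum $\Delta_I^\circ\times(\S^1)^{\#I}$ this is a positive multiple of the standard symplectic form on the torus fibers, hence nondegenerate on each $(g,\theta)$-fiber.

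The second, more delicate point is that nondegeneracy must hold not just on $\d A$ but on a full neighborhood $W'$, and uniformly in $\delta,\epsilon$. Here is where the compactness hypothesis on $\bar W$ enters. By Proposition \ref{prop:AXsmooth}, $A$ is covered by finitely many smooth charts \eqref{eq:AC-chart} near $\bar W\cap\d A$ (using that $f_A|_{\bar W}$ is proper, so $\bar W\cap\d A$ is compact). In each such chart I would show: (a) the wedge power $(\omega_E)^{n-1}\wedge dg\wedge d\theta$ is, up to a nowhere-zero smooth factor on $\d A$, a positive multiple of the chart volume form — this is the local computation above carried to top degree; (b) $\pi^*\omega_X$ is bounded from $X$ near $\d A$, so all its contributions to the relevant top-degree expression are multiples of $dr_i=m_i^{-1}t_i^{-2}r_i\,dt_i$ terms (Lemma \ref{lem:computations-d}\ref{item:dr}), hence decay exponentially by Lemma \ref{lem:computations}\ref{item:t-r}, $r_i=e^{-(m_it_i)^{-1}}$. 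Therefore the fiberwise Pfaffian of $\omega_A^{\delta,\epsilon}$ equals $\epsilon^{n-1}$ times a strictly positive continuous function on $\d A$, plus error terms that are $O(\epsilon^{n-1}\cdot r_i)$ and $O(\epsilon^{n-2})$; choosing $\epsilon_0$ small and $W'$ a small enough neighborhood of $\bar W\cap\d A$ (so that the positive function stays bounded below and the $\pi^*\omega_X$ contribution and the lower-order-in-$\epsilon$ terms stay dominated) gives nondegeneracy for all $\delta>0$, $\epsilon\in(0,\epsilon_0]$. The role of $\rho_\delta(g)$ is harmless: near $\d A$ we have $g$ small, $\rho_\delta(g)=1$, and $d\rho_\delta(g)$ is a multiple of $dg$, a base direction, so it does not affect the fiberwise form. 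I expect the bookkeeping of which error terms appear in the top-degree Pfaffian expansion — separating base directions $dg,d\theta$ from fiber directions, and tracking the powers of $\epsilon$ and the exponentially small $r_i$-factors — to be the main obstacle, though conceptually it is routine once (a) and (b) are in place.

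For part \ref{item:omega-lift}, the symplectic lift $V$ of the unit angular vector field $\frac{\d}{\d\theta}$ on $\{0\}\times\S^1$ is characterized by: $(g,\theta)_*V = \frac{\d}{\d\theta}$, i.e.\ $dg(V)=0$ and $d\theta(V)=1$; and $\iota_V\omega_A^{\delta,\epsilon}$ annihilates the fiber directions of $(g,\theta)$. Write $V=\sum_{i\in S}c_i\frac{\d}{\d\theta_i}$ in the chart \eqref{eq:AC-chart} (a priori $V$ could have $\frac{\d}{\d v_j}$ and $\frac{\d}{\d\rest}$ components, but on $\d A$ one checks these vanish: pairing $\iota_V\omega_E$ against $\frac{\d}{\d\theta_j}$ using the local form of $\omega_E$ forces $V$ to be purely angular, and $\pi^*\omega_X$ contributes nothing on $\d A$ by boundedness). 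Since $\theta=\sum_{i\in S}m_i\theta_i$ by Definition \ref{def:adapted-chart}\ref{item:f-locally}, the constraint $d\theta(V)=1$ reads $\sum_{i\in S}m_ic_i=1$. The fiberwise-annihilation condition, using $\omega_E|_{\d A}\sim -\sum_{i\in S}\eta'(w_i)\,dw_i\wedge d\theta_i$ and the base constraint $dg(V)=0$ (equivalently $\sum_{i\in S}\frac{\d}{\d\theta_i}$-independent directions), forces $c_i$ proportional to $\eta'(w_i)$; but $\eta'(w_i)=u_i^2w_i^{-1}$, and on $\d A$ (where $t=0$, $t_i\geq 0$) one has $u_i=\eta(w_i)$, so $\eta'(w_i)=\zeta(\eta(w_i))=\zeta(u_i)$ since $\zeta=(\eta^{-1})'$ and $\eta^{-1}\circ\eta=\mathrm{id}$; more directly one computes $\eta'(w_i)=\zeta(u_i)$ from $\eta(w_i)=u_i \iff w_i=\eta^{-1}(u_i)$. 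Hence $c_i=\big(\sum_{j\in S}m_j\zeta(u_j)\big)^{-1}\zeta(u_i)$, which is exactly \eqref{eq:monodromy-vector-field}. The fact that the same formula holds for $\omega_A^{\delta,\epsilon}$ as for $\omega_A^\epsilon$ is immediate since $\rho_\delta(g)\equiv 1$ and $d\rho_\delta(g)\equiv 0$ near $\d A$. The smoothness of $\zeta$ and vanishing of all its derivatives at $0$ (needed to know the vector field is smooth across $\{u_i=0\}$) follows from $\zeta(s)=s^{-2}e^{1-s^{-1}}$ having a flat zero at $s=0$, which I would note in one line.
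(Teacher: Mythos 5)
The central premise of your argument for part (a) is wrong, and it inverts the logic of what actually has to happen. You claim that $\pi^{*}\omega_{X}$, being bounded from $X$, \enquote{decays exponentially} near $\d A$, so that fiberwise nondegeneracy is \enquote{governed entirely by $\epsilon\omega_{E}$}. That decay holds only when $\pi^{*}\omega_{X}$ is evaluated on the singular radial directions $\nu_i$ dual to $ds_i=d\log r_i$, because it is $\pi_{*}\nu_i$ that shrinks exponentially; on the directions tangent to the strata $X_I^{\circ}$ of $D$ (the subbundle $\cZ_I$ of Lemma \ref{lem:coordinates-on-stratum}, or the $\xi$-directions of \eqref{eq:coordinates-sx}) the form $\pi^{*}\omega_{X}$ is just the K\"ahler form of the stratum and does not decay at all, while $\omega_E$ is there merely bounded and in general degenerate. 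For instance, for $f=z_1$ on $\C^2$ the $(g,\theta)$-fiber at radius zero is essentially the $z_2$-plane, $\omega_E$ restricted to it is $-d\alpha_1$, a pullback of a bounded form from $X$ that can vanish identically, and all the positivity comes from $\pi^{*}\omega_{X}$. Consequently your claim (a), that $(\omega_E)^{n-1}\wedge dg\wedge d\theta$ is a positive multiple of the chart volume form on $\d A$, fails whenever $k<n$: $\omega_E$ does not see enough directions. Your $\epsilon$-bookkeeping is also internally inconsistent: if the leading term of the Pfaffian were $\epsilon^{n-1}\cdot(\text{positive})$ with errors of order $\epsilon^{n-2}$, shrinking $\epsilon_0$ would make the errors dominate, not the main term. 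The proof must treat the two kinds of directions separately --- $\epsilon\omega_E$ gives positivity on the tropical and angular directions, $\pi^{*}\omega_{X}$ on the stratum directions --- and the real work is controlling the cross terms: this is what Proposition \ref{prop:positivity} does off $\d A$ via the Gaussian elimination of the matrix \eqref{eq:omega-matrix} and the estimates of Lemmas \ref{lem:matrix}--\ref{lem:Gauss} (and it is precisely on the $\xi$-directions that $\epsilon_0$ must be taken \emph{small}, so that $\epsilon|\tilde{\omega}_E^J(\xi,\xi)|$ does not overwhelm $(\pi^{*}\omega_X)^J(\xi,\xi)$), and what Lemma \ref{lem:Z} does on $\d A$ itself. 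The same error undercuts part (b): to conclude that the lift $V$ has no $\cZ_I$-component you must use nondegeneracy of $\omega\AC^{\epsilon}|_{\cZ_I}$, which is supplied by $\pi^{*}\omega_{X}$ and not by $\omega_E$; the assertion that \enquote{$\pi^{*}\omega_X$ contributes nothing on $\d A$} is true only on $\cV_{I}\oplus\Theta_{I}$, the directions collapsed by $\pi$.

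A smaller but genuine slip in part (b): the identity $\eta'(w_i)=\zeta(u_i)$ is false. Differentiating $\eta^{-1}\circ\eta=\id$ gives $\zeta(\eta(w_i))\cdot\eta'(w_i)=1$, i.e.\ $\eta'(w_i)=\zeta(u_i)^{-1}$. Correspondingly, the annihilation condition in your own setup forces $c_i\eta'(w_i)$ to be independent of $i$ (so that $\sum_i c_i\eta'(w_i)\,dw_i$ vanishes on $\{\sum_i dw_i=0\}$), hence $c_i\propto\eta'(w_i)^{-1}=\zeta(u_i)$, not $c_i\propto\eta'(w_i)$ as you wrote. The two inversions cancel and you land on the correct formula \eqref{eq:monodromy-vector-field}, but neither intermediate statement is correct as written.
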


The proof of Proposition \ref{prop:omega} will be carried out in two steps, as follows. Since $\bar{W}$ is compact, we can work locally, in a preimage $U$ of a fine chart $U_X$. In Proposition \ref{prop:positivity}, we prove that the forms $\omega\AC^\epsilon (\sdot,J\sdot)$ and $\omega\AC^{\delta,\epsilon} (\sdot,J\sdot)$ are fiberwise positive definite on $U \setminus \d A$, where $J$ is the standard almost complex structure on $X$, pulled back by a diffeomorphism $\pi|_{A\setminus \d A}$. Unfortunately, $J$ does not extend to $\d A$, so this argument does not show non-degeneracy at radius zero. We prove the latter in Proposition \ref{prop:lifts} by directly computing that the symplectic orthogonal in $T\d A$ to the fibers of $(g,\theta)$ is spanned by the vector field  \eqref{eq:monodromy-vector-field}.

\subsection{Positivity away from \texorpdfstring{$\d A$}{d A}}\label{sec:positivity}

Let $J$ be the standard almost complex structure on $X\setminus D$. We use the same letter $J$ to denote its pullback to $A\setminus \d A$ via the diffeomorphism $\pi|_{A\setminus \d A}$. We say that a $2$-form $\omega\in \Omega^2(A\setminus \d A)$ \emph{fiberwise tames $J$} if $\omega(v,Jv)>0$ for every nonzero vector $v$ tangent to the fiber of $(g,\theta)$. In particular, if $\omega$ (fiberwise) tames $J$, then it is (fiberwise) nondegenerate. Note that in this definition we do not assume that $\omega(Jv,Jw)=\omega(v,w)$, that is, $\omega$ may not be (fiberwise) $J$-compatible.

In this section, we prove the following result.

\begin{prop}\label{prop:positivity}
	For every point $x_0\in D$ there is an $\epsilon_0>0$ and a neighborhood $U_X$ containing $x_0$, such that for any $\epsilon\in [0,\epsilon_0]$, $\delta>0$, each of the forms $\omega^\epsilon\AC$ and $\omega\AC^{\delta,\epsilon}$ 
	fiberwise tames $J$ on $\pi^{-1}(U_X)\setminus \d A$.	
\end{prop}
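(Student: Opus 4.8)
The plan is to reduce the statement to a local computation in a single smooth chart \eqref{eq:AC-chart} near $x_0$, and then to split the needed positivity into a contribution from $\pi^*\omega_X$ (which is fiberwise positive but possibly degenerate in the angular directions as we approach $\d A$) and a contribution from $\epsilon\,\omega_E$ (which is small away from $\d A$ but, crucially, dominates precisely in the dangerous directions). First I would fix a fine chart $U_X\ni x_0$ with associated index set $S=\{1,\dots,k\}$; by Lemma \ref{lem:smoothchart} (and shrinking) we get smooth charts $(U_i,\bar\psi_i)$ on $U=\pi^{-1}(U_X)$, with coordinates $(g,(\bar v_j)_{j\ne i};\rest)$, where $\rest=((\theta_i)_{i\in S},(z_j)_{j\notin S})$. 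The fibers of $(g,\theta)$ are cut out by fixing $g$ and $\theta=\sum_{i\in S}m_i\theta_i$, so a basis of the fiber tangent space at a point of $U\setminus\d A$ can be written using $\frac{\d}{\d v_i}$, the $\frac{\d}{\d\theta_i}$ subject to $\sum m_i\,d\theta_i=0$, and the real/imaginary coordinate vector fields of the $z_j$ with $j\notin S$. Away from $\d A$ I may work with the radial/angular coordinates $(r_i,\theta_i)$ inherited from $X$, since $\pi|_{A\setminus\d A}$ is a diffeomorphism.

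The key estimates. For $\pi^*\omega_X$: since $\omega_X$ is K\"ahler, $\pi^*\omega_X(\cdot,J\cdot)$ is positive definite on $T(X\setminus D)$, hence on the fibers; but expressed in the coordinate $t_i=-(m_i\log r_i)^{-1}$, which vanishes on $D_i$, this form degenerates — one has $dr_i = m_i^{-1}t_i^{-2}r_i\,dt_i$ by Lemma \ref{lem:computations-d}\ref{item:dr}, so the $dr_i\wedge d\theta_i$ term of $\omega_X$ carries a factor $r_i t_i^{-2}=m_i^{-1} t_i^{-2}e^{-(m_it_i)^{-1}}\to 0$, i.e.\ the metric pairing of $\frac{\d}{\d t_i}$ and of the $\theta_i$-directions collapses exponentially fast. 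For $\epsilon\,\omega_E$: from \eqref{eq:omegaE}, $\omega_E=\sum_i d\bar v_i\wedge\alpha_i$, and I will show that at radius zero (and, by continuity, in a neighborhood) $d\bar v_i$ is, up to terms bounded from $X$, controlled by $dv_i$, which by Lemma \ref{lem:computations-d}\ref{item:dv} equals $(1+t_i^{-1}u_i^2)\,dt_i - t_i^{-1}\eta'(w_i)t'\,dg$; together with $\alpha_i\approx d\theta_i$ this produces a term comparable to $(1+t_i^{-1}u_i^2)\,dt_i\wedge d\theta_i$, which on the fiber (where $dg=0$) pairs $\frac{\d}{\d t_i}$ with $\frac{\d}{\d\theta_i}$ with a coefficient $\rho_i^{-1}=t_i+u_i^2$ staying bounded \emph{away from zero} on the common zero locus of $t_i$ and $u_i$ (compare Lemma \ref{lem:w_smooth}). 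Thus $\omega_E(\cdot,J\cdot)$ is positive definite exactly in the directions where $\pi^*\omega_X(\cdot,J\cdot)$ degenerates.

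The combination argument. I will split a fiber tangent vector $v=v^{\mathrm{rad}}+v^{\mathrm{ang}}$ into a part spanned by the $\frac{\d}{\d t_i}$ and the $z_j$-directions ($j\notin S$), on which $\pi^*\omega_X(\cdot,J\cdot)$ is uniformly positive definite with a bound independent of the point, and a part spanned by the (constrained) $\frac{\d}{\d\theta_i}$'s, on which $\pi^*\omega_X(\cdot,J\cdot)$ is positive but degenerates. On the angular part $\omega_E(\cdot,J\cdot)$ is uniformly positive; the cross terms between $v^{\mathrm{rad}}$ and $v^{\mathrm{ang}}$ in $\pi^*\omega_X+\epsilon\omega_E$ can be absorbed by a standard quadratic-form (Cauchy–Schwarz / completing-the-square) argument once $\epsilon$ is small enough, because the cross terms are bounded relative to the "good" directions and are multiplied either by the collapsing coefficient (for $\pi^*\omega_X$) or by $\epsilon$ (for $\omega_E$). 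This yields a positive lower bound on $(\pi^*\omega_X+\epsilon\omega_E)(v,Jv)$ for all $0\le\epsilon\le\epsilon_0$ and all $x\in\pi^{-1}(U_X)\setminus\d A$, with $U_X$ and $\epsilon_0$ depending only on $x_0$; since $\rho_\delta(g)\le 1$ and $d\rho_\delta(g)$ vanishes near $\d A$ (where $g\equiv 0$) — more precisely $\rho_\delta(g)=1$ and $\rho_\delta'(g)=0$ on a neighborhood of $\d A$ — the same estimate applies verbatim to $\omega_A^{\delta,\epsilon}$.

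\textbf{Main obstacle.} The delicate point is bookkeeping the precise rates: one must verify that the off-diagonal (cross) contributions of $\pi^*\omega_X$ between radial and angular directions decay at least as fast as (a constant times) the geometric mean of the corresponding diagonal entries — this is exactly the content of $\omega_X$ being a genuine (positive) $(1,1)$-form, so it holds, but making it uniform near $\d A$ in the $t_i$-coordinates requires carefully tracking the factors $r_i t_i^{-2}$ and using that functions bounded from $X$ (Definition \ref{def:bounded-from-X}) stay bounded on the compact $\bar U_i$. A secondary technicality is that $d\bar v_i$ differs from $dv_i$ by forms involving $dg$ and by $\sigma_i$-multiples of forms bounded from $X$ (Lemma \ref{lem:dvbar}\ref{item:tame_1-bar}); these corrections vanish on $\d A$ and, after shrinking $U_X$, are small enough not to destroy the positivity of the $\omega_E$-contribution on the angular directions. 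Once these two uniformity points are in hand, the completing-the-square step is routine and gives the claimed $\epsilon_0$ and neighborhood.
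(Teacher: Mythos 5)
Your overall strategy --- work in a single chart near $x_0$, split the fiber tangent space into a block where $\pi^*\omega_X$ is uniformly positive and a complementary block where $\epsilon\omega_E$ takes over, then complete the square for small $\epsilon$ --- is indeed the strategy of the paper's proof. However, the two uniform-positivity claims on which your particular splitting rests are both false, and this is not a bookkeeping issue but precisely the difficulty the proof must address. First, $\pi^*\omega_X(\sdot,J\sdot)$ is \emph{not} uniformly positive definite on the span of the $\tfrac{\d}{\d t_i}$, $i\in S$: you computed the relevant factor yourself, since $\pi_*\tfrac{\d}{\d t_i}=m_i^{-1}t_i^{-2}r_i\,\tfrac{\d}{\d r_i}$ with $t_i^{-2}r_i=t_i^{-2}e^{-(m_it_i)^{-1}}\rightarrow 0$, so $\pi^*\omega_X(\tfrac{\d}{\d t_i},J\tfrac{\d}{\d t_i})$ collapses exponentially, just like the angular pairings. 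The only directions carrying a point-independent lower bound for $\pi^*\omega_X^J$ are the transverse coordinates $z_j$, $j\notin S$ (Lemma \ref{lem:matrix}\ref{item:xi-iso}). Second, $\omega_E^J$ is \emph{not} uniformly positive on the span of the $\tfrac{\d}{\d\theta_i}$ in the coordinate normalization: fiberwise $\omega_E^J$ equals $\sum_i\sigma_{\bar i}\bigl(b_i(ds_i)^2+ds_i\cdot\beta_i\bigr)+\omega_1^J$ with $\sigma_i=t_i^2+t_iu_i^2\rightarrow 0$ (Lemma \ref{lem:omegaAC-sym}), so the diagonal entry in the $\tfrac{\d}{\d\theta_i}$-direction also tends to zero. (Relatedly, $\rho_i^{-1}=t_i+u_i^2$ \emph{vanishes} on the common zero locus of $t_i$ and $u_i$; it is not bounded away from zero there.) As written, neither block of your decomposition carries a uniform lower bound, so the completing-the-square step has nothing with which to absorb the cross terms.

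The missing ingredient is a rescaling that makes the comparison of decay rates quantitative: group \emph{all} the $S$-directions together (both $\log r_i$ and $\theta_i$), rescale them by $\sigma_{\bar i}^{-1/2}$, and perform a Gaussian diagonalization against the transverse block. Relative to this normalization the diagonal entries of $\omega_E^J$ are bounded below by a positive constant, while every pairing coming from a form bounded from $X$ --- in particular every $\pi^*\omega_X$-pairing involving an $S$-direction --- lies in the ideal $\cQ_{\bar i}=\langle r_{\bar i}t_{\bar i}^{\,l}\rangle\cdot\cB$ of functions decaying faster than any power of $\sigma_{\bar i}$ (Lemma \ref{lem:Qi}), hence is negligible. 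The positivity of $\pi^*\omega_X$ is then invoked only on the transverse block, and $\epsilon_0$ is chosen so that $\epsilon\omega_E^J$ cannot overwhelm it there. In short, on the $S$-block (radial and angular directions alike) it is $\omega_E$, not $\pi^*\omega_X$, that supplies the positivity; your decomposition assigns the $t_i$-directions to the wrong form. A minor further point: your reduction from $\omega_A^{\delta,\epsilon}$ to $\omega_A^{\epsilon}$ is correct, but the clean reason is that $dg$ vanishes on the fibers of $(g,\theta)$, so that $\omega_A^{\delta,\epsilon}|_{g^{-1}(c)}=\omega_A^{\rho_\delta(c)\epsilon}|_{g^{-1}(c)}$ with $\rho_\delta(c)\epsilon\in[0,\epsilon_0]$; the vanishing of $\rho_\delta'(g)$ near $\d A$ alone does not cover all of $\pi^{-1}(U_X)\setminus\d A$.
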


\begin{remark}
	\label{rem:positivityreductions}
	For the proof of Proposition~\ref{prop:positivity} it is enough to consider the case $\omega^\epsilon\AC$, for  $\epsilon>0$.
\end{remark}
\begin{proof}
	For $\epsilon=0$ we have $\omega_{A}^{\delta,0}=\omega^{0}\AC=\pi^*\omega_X$, which tames $J$. Assume that $\omega_{A}^{\epsilon}$ fiberwise tames $J$ for all $\epsilon\in [0,\epsilon_0]$. Fix $\epsilon \in [0,\epsilon_0]$. The restriction of $\omega\AC^{\epsilon,\delta}$ to the level set $g^{-1}(c)$ equals  $\omega\AC^{\rho_{\delta}(c)\epsilon}$. Since $\rho_\delta(c)\leq 1$ we have $\rho_{\delta}(c)\epsilon\leq\epsilon_0$, so by assumption this restriction fiberwise tames $J$, as needed. 
\end{proof}

We will use the following notation. Fix $x_0\in D$, so $x_0\in X_{S}^{\circ}$ for some nonempty $S\subseteq \{1,\dots, N\}$. We order the components of $D$ so that $S=\{1,\dots,k\}$. By Lemma \ref{lem:smoothchart}, there is a fine chart around $x_0$ whose associated index set \eqref{eq:index-set} is $S$. We fix such a chart $U_X$, put $U=\pi^{-1}(U_X)$, and use functions $w_i,\theta_i$ etc.\ introduced for $U_X$ in Section \ref{sec:basicfunctions}, cf.\ Lemma \ref{lem:intro}\ref{item:intro-extends}.

\begin{lema}\label{lem:d-theta} The following hold.
	\begin{enumerate}
		\item\label{item:alpha-dtheta} For every $i\in S$ there is a $1$-form  $\beta_i\in\Omega^1(U_X)$ such that we have $d\theta_{i}-\alpha_{i}=\pi^{*}\beta_i$.
		\item\label{item:dvbar-rest-fiberwise} Fix $j\in \{1,\dots, N\}\setminus S$ and a subset $I\subseteq S$. Then there is a bounded $1$-form $\mu_{j}\in \Omega^{1}(U_{X}\cap X_{I}^{\circ})$ which satisfies  $\mu_{j}|_{g^{-1}(\tau)}=d\bar{v}_{j}|_{g^{-1}(\tau)}$ for every $\tau\geq 0$.
		\item\label{item:alpha-rest} For every $j\in \{1,\dots, N\}\setminus S$, the $1$-form $\alpha_{j}$ is a pullback of a smooth $1$-form on $U_X$.
		\item \label{item:dalpha} For every $i\in \{1,\dots, N\}$, the $2$-form $d\alpha_{i}$ is a pullback of a smooth $2$-form on $X$.
	\end{enumerate}
\end{lema}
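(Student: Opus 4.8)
\textbf{Plan of proof for Lemma \ref{lem:d-theta}.}

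The strategy is to reduce every assertion to a statement about the behaviour of the angular forms $d\theta_i^q$ (and their differences over different fine charts) near $D$, and then observe that the combination $d\theta_i-\alpha_i$, being a convex combination over $q$ of the differences $d\theta_i^q-d\theta_i$ taken in other fine charts, has a \emph{removable} logarithmic pole. Concretely, for \ref{item:alpha-dtheta} I would fix the chart $U_X$ with index set $S$ and write $\alpha_i=\sum_q\tilde\tau^q\,d\theta_i^q$, so that $d\theta_i-\alpha_i=\sum_q\tilde\tau^q\,(d\theta_i-d\theta_i^q)$. On the overlap of $U_X$ with a fine chart $V_X^q$ meeting $D_i$, Lemma \ref{lem:w_extends}\ref{item:lambda} gives a nonvanishing holomorphic $\lambda\in\cO_X^*$ with $z_i^q=\lambda z_i$; hence $\theta_i^q-\theta_i=\beta$ for the smooth function $\beta$ with $\lambda=|\lambda|e^{2\pi\imath\beta}$, exactly as in the proof of Lemma \ref{lem:dv}\ref{item:theta-theta'}. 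Thus each difference $d\theta_i-d\theta_i^q$ is the pullback of the smooth (in particular pole-free) $1$-form $-d\beta$ on $V_X^q\cap U_X$; if $V_X^q\cap D_i=\emptyset$ then $\theta_i^q\equiv 0$ and the difference is just $d\theta_i$, which is \emph{not} pole-free — but in that case $\tilde\tau^q|_{U_X}$ must be supported away from $D_i$ as well since... wait, that need not hold. The clean way around this: partition $\tilde R$ into those $q$ with $D_i\cap V_X^q\neq\emptyset$ and those with $D_i\cap V_X^q=\emptyset$; for the latter, on the locus where $\tilde\tau^q\neq 0$ we are away from $D_i$, so $d\theta_i$ is a smooth form there and $\tilde\tau^q\,d\theta_i$ extends to a smooth form on $U_X$; for the former, use the computation above. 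In both cases $\tilde\tau^q(d\theta_i-d\theta_i^q)$ descends to a smooth $1$-form on $U_X$ (extended by zero), and summing over the locally finite index set yields $\beta_i\in\Omega^1(U_X)$ with $\pi^*\beta_i=d\theta_i-\alpha_i$.

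For \ref{item:dvbar-rest-fiberwise} I would use Lemma \ref{lem:dvbar}\ref{item:tame_1-bar}: on $U_{X,I}^\circ$ (so $i\notin I$ in that statement must be chosen with care — here $j\notin S\supseteq I$, so the role of ``$i$'' is played by $j$ and the condition $j\notin I$ is automatic), the form $d(v_j-\bar v_j)$ equals $t_j\cdot c\,dv_j+t_j^\epsilon\cdot q\,dg+\sigma_j\cdot\gamma$ with $c,q$ bounded and $\gamma$ bounded from $X$. Restricting to a level set $g^{-1}(\tau)$ kills the $dg$ term, so $d\bar v_j|_{g^{-1}(\tau)}=dv_j|_{g^{-1}(\tau)}-(t_jc\,dv_j+\sigma_j\gamma)|_{g^{-1}(\tau)}$. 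Since $j\notin S$ the function $t_j$ is bounded below by a positive constant on the compact-closure chart $U_X$ (it does not vanish there), so $v_j=t_j-u_j$ is smooth and bounded and $dv_j$ is a bounded form on $U_X$; likewise $t_j c\,dv_j$, $\sigma_j\gamma$ are bounded $1$-forms. Hence $\mu_j:=dv_j-t_jc\,dv_j-\sigma_j\gamma\in\Omega^1(U_X\cap X_I^\circ)$ is bounded and agrees with $d\bar v_j$ on every level set of $g$, as required.

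Parts \ref{item:alpha-rest} and \ref{item:dalpha} are then cheap. For \ref{item:alpha-rest}: if $j\notin S$, then $U_X\cap D_j=\emptyset$, so for every $q$, on $U_X$ the angular coordinate $\theta_j^q$ is a smooth function on an open set disjoint from $D_j$, hence $d\theta_j^q$ is (the pullback of) a genuine smooth $1$-form on $U_X$; multiplying by the smooth functions $\tilde\tau^q$ and summing over the locally finite family gives that $\alpha_j=\sum_q\tilde\tau^q\,d\theta_j^q$ is the pullback of a smooth $1$-form on $U_X$. For \ref{item:dalpha}: by \ref{item:alpha-dtheta} we have $\alpha_i=d\theta_i-\pi^*\beta_i$ over any fine chart $U_X$ meeting $D_i$, so $d\alpha_i=d(d\theta_i)-\pi^*(d\beta_i)=-\pi^*(d\beta_i)$, which is the pullback of a smooth $2$-form; over a fine chart disjoint from $D_i$ use \ref{item:alpha-rest} instead. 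These local smooth $2$-forms patch (they are all equal to $d\alpha_i$ where defined), giving a global smooth $2$-form on $X$ whose pullback is $d\alpha_i$.

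The only genuinely delicate point is the bookkeeping in \ref{item:alpha-dtheta} and \ref{item:dvbar-rest-fiberwise}: one must be careful that the identities of Lemma \ref{lem:dv}/\ref{lem:dvbar} are being invoked with the correct choice of the distinguished index (the index whose divisor we are \emph{not} approaching), and that ``bounded from $X$'' is genuinely inherited by the sum over the partition of unity — but this is exactly the content already packaged in Lemma \ref{lem:dvbar}, whose proof handled the sum over $\btau$, so the present argument only needs to repeat that pattern for the possibly different partition $\tilde\btau$. I expect no real obstacle beyond this organizational care.
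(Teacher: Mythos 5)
Parts \ref{item:alpha-dtheta}, \ref{item:alpha-rest} and \ref{item:dalpha} of your argument are correct and essentially identical to the paper's proof: the case split over $q$ according to whether $V_X^q$ meets $D_i$, the use of Lemma \ref{lem:w_extends}\ref{item:lambda} / Lemma \ref{lem:dv}\ref{item:theta-theta'} to write $\theta_i-\theta_i^q$ as a smooth function on the overlap, and the extension by zero after multiplying by $\tilde\tau^q$ are exactly what the paper does.

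Part \ref{item:dvbar-rest-fiberwise}, however, has a genuine gap. You invoke Lemma \ref{lem:dvbar}\ref{item:tame_1-bar} with the distinguished index equal to $j$, but that lemma is stated (and proved) only for an index lying in the index set $S$ of the chart $U_X$: the quantities $v_i$, $t_i$, $u_i$, $\sigma_i$ appearing in it are built from the coordinate $z_i$ of $U_X$ via \eqref{eq:def-r_i-t_i}, and for $j\notin S$ there is no coordinate $z_j$ on $U_X$ and hence no functions $v_j$, $t_j$, $\sigma_j$ there. You checked that ``$j\notin I$'' is automatic but overlooked that the hypothesis ``$i\in S$'' fails. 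Consequently your formula $\mu_j:=dv_j-t_jc\,dv_j-\sigma_j\gamma$ manipulates undefined objects (there is not even a canonical choice of $t_j$ on $U_X$, since different atlas charts meeting $D_j$ give different, non-canonically related functions $t_j^p$).

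The underlying intuition — everything is uniformly bounded because $\bar U_X$ is disjoint from $D_j$ — is correct, but it must be applied to the right objects. The fix is to expand $\bar v_j=\sum_{p}\tau^{p}v_j^{p}$ from \eqref{eq:def-vbar-ubar-mu}, so that $d\bar v_j=\sum_p\tau^p\,dv_j^p+\sum_p v_j^p\,d\tau^p$. The second sum is a bounded form on $U_X\cap X_I^\circ$ by Lemma \ref{lem:Ui-simple}\ref{item:vi-smooth-on-strata}. For the first, work on each $U_X\cap U_X^p$ separately: if $U_X^p\cap D_j=\emptyset$ then $v_j^p=0$ by convention; otherwise, restricting Lemma \ref{lem:computations-d}\ref{item:dv} to a level set of $g$ (which kills the $dg$ term) gives $dv_j^p|_{g^{-1}(\tau)}=(1+(t_j^p)^{-1}(u_j^p)^2)\,dt_j^p|_{g^{-1}(\tau)}$, and this is a bounded form on $U_X\cap U_X^p\cap X_I^\circ$ precisely because $\bar U_X\cap D_j=\emptyset$ forces $t_j^p$ to be bounded away from zero there. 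Summing against $\tau^p$ then produces the required bounded $\mu_j$. This is the route the paper takes.
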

\begin{proof}	
	\ref{item:alpha-dtheta} 
	Fix $q\in \tilde{R}$. Put $V_{X}=V_{X}^{q}\cap U_{X}$, $V=\pi^{-1}(V_X)$. On $V$, we have an equality $\theta_{i}=\theta_{i}^q+b_{i}^{q}$ for some smooth map $b_{i}^{q}\colon V_{X}\to \S^1$. Indeed, if $V_{X}^{q}\cap D_{i}\neq \emptyset$, then this follows from Lemma \ref{lem:dv}\ref{item:theta-theta'}, applied to $U_{X}'=V_{X}^{q}$. In turn, if $V_{X}^{q}\cap D_{i}=\emptyset$, then by convention $\theta_{i}^{q}=0$, and the restriction $\theta_{i}|_{V}$ is a pullback of a smooth function from $V_{X}$, as needed. 
	
	Since the restriction $\tilde{\tau}^{q}|_{U_X}$ is supported on $V_{X}$, the $1$-form $\tilde{\tau}^{q}\, db_{i}^{q}$ extends to a smooth $1$-form on $U_{X}$. Thus on $U$, we have an equality $\tilde{\tau}^{q}\, d\theta_{i}=\tilde{\tau}^{q}d\theta_{i}^{q}+\pi^{*}(\tilde{\tau}^{q}\, db_{i}^{q})$. Since $\sum_{q\in \tilde{R}}\tilde{\tau}^{q}=1$, taking a sum over all $q\in \tilde{R}$ we get $d\theta_{i}=\alpha_{i}+\pi^{*}\beta_{i}$, where $\beta_{i}\de \sum_{q\in \tilde{R}}\tilde{\tau}^{q}\, db_{i}^{q}$ is a smooth form on $U_X$, as needed.
	
	\ref{item:dvbar-rest-fiberwise} By definition \eqref{eq:def-vbar-ubar-mu}, we have $\bar{v}_{j}=\sum_{p\in R}\tau^{p}v_{j}^{p}$, so $d\bar{v}_{j}=\sum_{p}\tau^{p}dv_{j}^{p}+\sum_{p}v_{j}^{p}d\tau^{p}$. Since $j\not\in I$, by Lemma \ref{lem:Ui-simple}\ref{item:vi-smooth-on-strata}  $v_{j}^{p}$ restricts to a smooth function on $V_{X,I}^{p}\de U_{X}\cap U_{X}^{p}\cap X_{I}^{\circ}$. Since each $\tau^{p}$ is a smooth function on $X$, the form $\sum_{p} v_{j}^{p}d\tau^{p}\in \Omega^{1}(U_{X}\cap X_{I}^{\circ})$ is bounded. It remains to show that each $dv_{j}^{p}$ is fiberwise bounded from $X$. More precisely, for each $p$ we need to find a bounded $1$-form $\mu_{j}^{p}\in \Omega^{1}(V_{X,I}^{\circ})$, such that for every $\tau\geq 0$ we have an equality of restrictions $\mu_{j}^{p}|_{g^{-1}(\tau)\cap V^{p}_{X,I}}=dv_{j}^{p}|_{g^{-1}(\tau)\cap V_{X,I}^{p}}$. 
	
	If $U_{X}^{p}\cap D_{j}=\emptyset$ 	then by convention $v_{j}^{p}=0$, so we can take $\mu_{j}^{p}=0$. Assume $U_{X}^{p}\cap D_{j}\neq \emptyset$. Since $dg|_{g^{-1}(\tau)}=0$, Lemma \ref{lem:computations-d}\ref{item:dv} implies that $dv_{j}^{p}|_{g^{-1}(\tau)}=(1+(t_{j}^{p})^{-1}\cdot (u_{j}^{p})^{2})\cdot dt_{j}^{p}|_{g^{-1}(\tau)}$. By assumption $j\not\in S$, so the closure $\bar{U}_{I}^{\circ}$ does not meet $D_{j}$. Hence the restrictions $(1+(t_{j}^{p})^{-1}\cdot (u_{j}^{p})^{2})|_{V_{X,I}^{p}}$ and $dt_{j}^{p}|_{V_{X,I}^{p}}$ are smooth and bounded. Thus $\mu_{j}^{p}\de (1+(t_{j}^{p})^{-1}\cdot (u_{j}^{p})^{2})\cdot dt_{j}^{p}|_{V_{X,I}^{p}}\in \Omega^{1}(V_{X,I}^{p})$ is a bounded form, whose restriction to each fiber $g^{-1}(\tau)$ is equal to $dv_{j}^{p}|_{V_{X,I}^{p}}$, as needed.
	%
	
	\ref{item:alpha-rest} For every $q\in \tilde{R}$ we have a smooth map $\theta_{j}^{q}\colon V_{X}^{q}\setminus D_{j}\to \S^1$, defined by the formula \eqref{eq:def-theta_i} if $V_{X}^{q}$ meets $D_j$, and by $\theta_{j}^{q}=0$ otherwise. Hence for each $q\in \tilde{R}$, the form $\tilde{\tau}^{q}d\theta_{j}^{q}$ is smooth on $X\setminus D_j$, in particular, on $U_X$. Taking a sum over all $q\in \tilde{R}$ we infer that $\alpha_j$ is a smooth form on $U_X$, as needed. 
	
	\ref{item:dalpha} Follows from \ref{item:alpha-dtheta} if $i\in S$ and from  \ref{item:alpha-rest} if $i\not\in S$.
\end{proof}

The following easy consequence of Lemma \ref{lem:d-theta} 
will be important in Section \ref{sec:basic-setting} to control various monodromies near the boundary of the Milnor fiber.

\begin{remark}
	\label{rem:vanishing_restriction}
	For $\theta\in \S^1$ put $B_{i,\theta}^{\circ}=f_{A}^{-1}(0,\theta)\cap A_{i}^{\circ}$. 
	Then $\pi_{i,\theta}\de \pi|_{B^{\circ}_{i,\theta}}\colon B_{i,\theta}^{\circ}\to X_{i}^{\circ}$ is an $m_{i}$-fold covering. Moreover, there is a form $\beta_{i}\in \Omega^{1}(X_{i}^{\circ})$ such that for every $\theta\in \S^{1}$ we have $\lambda_{E}|_{B_{i,\theta}^{\circ}}=\pi_{i,\theta}^{*}\beta_{i}$. As a consequence, if $\omega_{X}|_{X_{i}^{\circ}}=d\lambda_{X}$ for some $\lambda_{X}\in \Omega^{1}(X_{i}^{\circ})$, then for every $\delta,\epsilon>0$ the form $\lambda_{A}^{\delta,\epsilon}$ defined in \eqref{eq:lambdaACdelta} satisfies the equality  $\lambda_{A}^{\delta,\epsilon}|_{B_{i,\theta}^{\circ}}=\pi_{i,\theta}^{*}(\lambda_{X}+\epsilon \beta_{i}^{\delta})$ for some $\beta_{i}^{\delta}\in \Omega^{1}(X_{i}^{\circ})$.
\end{remark}
\begin{proof}
	The first assertion is clear. Indeed, in coordinates \eqref{eq:AC-chart} the subset $B_{i}^{\circ}$ is given by $\{m_{i}\theta_i=0\}$, and the restriction $\pi|_{A_{i}^{\circ}}\colon A_{i}^{\circ}\to X_{i}^{\circ}$ is  $(0,\theta_{i},z)\mapsto (0,z)$, i.e.\ it collapses the angular coordinate $\theta_{i}$.
	
	To find $\beta_{i}$, recall from  Lemma \ref{lem:intro}\ref{item:intro-u=ubar} that on $A_{i}^{\circ}$ we have $\bar{v}_{i}=-\bar{u}_{i}=-1$ and $\bar{u}_{j}=0$ for all $j\neq i$. Hence by Lemma \ref{lem:d-theta}\ref{item:alpha-rest}, we have  $\lambda_{E}|_{A_{i}^{\circ}}=-\alpha_{i}+\pi^{*}\gamma$ for some $\gamma\in \Omega^{1}(X_{i}^{\circ})$. Choose a fine chart $V_X$ with associated index set $\{i\}$, and let $V=\pi^{-1}(V_X)$. By Lemma \ref{lem:d-theta}\ref{item:alpha-dtheta}, we have $\alpha_{i}|_{V}=d\theta_{i}+\pi^{*}\beta$ for some $\beta\in \Omega^{1}(V_X)$. Since $f_{A}|_{V}=(r_{i}^{m_i},m_i\theta_{i})$, the restriction of $d\theta_{i}$ to any fiber of $f_{A}|_{V}$ is zero. Hence $\alpha_i|_{V\cap B_{i,\theta}^{\circ}}=\pi^{*}_{i,\theta}\beta|_{V\cap B_{i,\theta}^{\circ}}$. Let $V_{X}'$ be another such chart, let $\beta'\in \Omega^{1}(V_{X}')$ be the corresponding $1$-form, and let $V''_{X}=V_X\cap V_X'$, $V''=\pi^{-1}(V_{X}'')$. Then we have an equality of restrictions $\pi_{i,\theta}^{*}\beta|_{V''_X\cap X_{i}^{\circ}}=\alpha_i|_{V''\cap B_{i,\theta}^{\circ}}=\pi_{i,\theta}^{*}\beta'|_{V''_X\cap X_{i}^{\circ}}$. Since $\pi_{i,\theta}\colon B_{i,\theta}^{\circ}\to X_{i}^{\circ}$ is a local diffeomorphism, it follows that $\beta|_{V''_X\cap X_{i}^{\circ}}=\beta'|_{V''_X\cap X_{i}^{\circ}}$. We conclude that the above forms glue to a form $\check{\beta}\in \Omega^{1}(X_{i}^{\circ})$. Eventually, we put $\beta_{i}=-\check{\beta}+\gamma$.
\end{proof}

We will now introduce smooth coordinates on $U_X\setminus D=U\setminus \d A$ which are well suited to the standard complex structure $J$. Recall that our holomorphic coordinates on $U_X$ are $(z_{1},\dots,z_{k},z_{i_{k+1}},\dots, z_{i_{n}})$, for some $\{i_{k+1},\dots, i_{n}\}\subseteq \Z_{>N}$. For $i\in \{1,\dots, k\}$, we have $r_{i}=|z_i|$, $\theta_{i}=\frac{z_i}{|z_i|}$, see \eqref{eq:def-r_i-t_i}, \eqref{eq:def-theta_i}. Put
\begin{equation*}
	s_{i}=\log r_{i}\colon U_{X}\setminus D_i \to \R,\qquad s_{i+k}=\theta_{i}\colon U_{X}\setminus D_i  \to \S^1.
\end{equation*}
For $j\in \{1,\dots, n-k\}$ let $x_{2j-1},x_{2j}\colon U_X\to \R$ be the real and imaginary part of $z_{i_j}$. Now
\begin{equation}\label{eq:chart-sx}
	(s_{1},\dots,s_{2k},x_{1},\dots,x_{2(n-k)})\colon U\setminus \d A\to \R^{k}\times (\S^1)^{k}\times \R^{2(n-k)}
\end{equation}
is a smooth chart on $U\setminus \d A=U_X\setminus D$. With respect to this chart, we can define coordinate vector fields $\nu_{1},\dots,\nu_{2k},\xi_{1},\dots,\xi_{2(n-k)}$ by the conditions
\begin{equation}\label{eq:coordinates-sx}
	d s_{i}(\nu_{j})=\delta_{j}^{i},\quad d s_{i}(\xi_{a})=0,\quad dx_{a}(\nu_{j})=0,\quad dx_{a}(\xi_{b})=\delta_{b}^{a}
\end{equation}
for all $i,j\in \{1,\dots, 2k\}$ and all $a,b\in \{1,\dots, 2(n-k)\}$. For an integer $i\in \{1,\dots,2k\}$ we put $\bar{i}=i$ if $i\leq k$ and $\bar{i}=i-k$ if $i>k$. We will also use the functions $\sigma_i$, $i\in \{1,\dots, k\}$, defined in \eqref{eq:rho-sigma} by
\begin{equation*}
	\sigma_i=t_i^2+t_i u_i^2.
\end{equation*}
Note that $\sigma_{i}\rightarrow 0$ as $t_i\rightarrow 0$. In particular, $\sigma_{i}$ is bounded. 

\begin{lema}\label{lem:nu-j}
	The following hold.
	\begin{enumerate}
		\item\label{item:dt-ds} For every $i\in \{1,\dots,k\}$, we have $ds_{i}=m_{i}^{-1}t_{i}^{-2}\, dt_{i}$,
		\item\label{item:ds-J} For every $i\in \{1,\dots,k\}$, we have  $ds_{i} \circ J= -d s_{i+k}$ and $ds_{i+k}\circ J=ds_{i}$.
	\end{enumerate}
\end{lema}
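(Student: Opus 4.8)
Both identities are direct computations which only involve the holomorphic coordinates $(z_1,\dots,z_k,z_{i_{k+1}},\dots,z_{i_n})$ on $U_X$ and the elementary functions $r_i=|z_i|$, $\theta_i=z_i/|z_i|$, $s_i=\log r_i$, $s_{i+k}=\theta_i$; nothing about the A'Campo space $A$ or its smooth structure enters except via $\pi|_{A\setminus\d A}$, which identifies $U\setminus\d A$ with $U_X\setminus D$ together with its standard complex structure $J$. So I would work entirely on $U_X\setminus D$.

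\textbf{Part \ref{item:dt-ds}.} This follows from Lemma \ref{lem:computations-d}\ref{item:dr} (or equivalently Lemma \ref{lem:computations}\ref{item:t-r}) together with the definition $s_i=\log r_i$. From $t_i=-(m_i\log r_i)^{-1}=-(m_i s_i)^{-1}$ I get $s_i=-(m_i t_i)^{-1}$, hence $ds_i=m_i^{-1}t_i^{-2}\,dt_i$ by differentiating. Alternatively one may start from Lemma \ref{lem:computations-d}\ref{item:dr}, $dr_i=m_i^{-1}t_i^{-2}r_i\,dt_i$, and use $ds_i=d\log r_i=r_i^{-1}\,dr_i$. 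Either way this is a one-line computation.

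\textbf{Part \ref{item:ds-J}.} Here I invoke Example \ref{ex:J}: for the multiplication-by-$\imath$ structure $J$ on $\C^*$, in polar coordinates $z=r\exp(2\pi\imath\theta)$ one has $d^c r=-r\,d\theta$ and $d^c\theta=\tfrac{1}{r}\,dr$, where $d^c\varrho=d\varrho\circ J$. Applying this to the $i$-th coordinate $z_i$ (the standard complex structure on $U_X\setminus D_i$ splits off the $z_i$-factor), I get $dr_i\circ J=-r_i\,d\theta_i$ and $d\theta_i\circ J=\tfrac{1}{r_i}\,dr_i$. Since $s_i=\log r_i$ we have $ds_i=r_i^{-1}\,dr_i$, so $ds_i\circ J=r_i^{-1}(dr_i\circ J)=r_i^{-1}(-r_i\,d\theta_i)=-d\theta_i=-ds_{i+k}$; and $ds_{i+k}\circ J=d\theta_i\circ J=\tfrac{1}{r_i}\,dr_i=ds_i$. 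This is exactly the two claimed equalities.

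\textbf{Main obstacle.} There is essentially no obstacle: both statements are bookkeeping around the already-established Example \ref{ex:J} and Lemma \ref{lem:computations-d}\ref{item:dr}. The only mild care needed is to note that the identities are asserted on $U_X\setminus D=U\setminus\d A$, where all the functions $r_i,\theta_i,s_i,t_i$ are genuinely smooth and the complex structure $J$ is defined (it does not extend over $\d A$, but that is irrelevant here); and to keep the index conventions straight, namely that $s_{i+k}=\theta_i$ for $i\in\{1,\dots,k\}$, so that \ref{item:ds-J} is a statement about the pair of indices $(i,i+k)$. I would present it as two displayed computations, one for each part, each a single line.
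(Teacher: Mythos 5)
Your proof is correct and follows essentially the same route as the paper: part (a) by differentiating $s_i=-(m_it_i)^{-1}$, and part (b) by combining $ds_i=r_i^{-1}dr_i$ with the formulas from Example \ref{ex:J}. The only cosmetic difference is that the paper deduces $ds_{i+k}\circ J=ds_i$ from the first identity via $J^2=-\id$, whereas you read it off directly from $d^c\theta=\tfrac{1}{r}\,dr$; both are one-line equivalents.
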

\begin{proof}
	\ref{item:dt-ds} The function $t_i$ was defined in \eqref{eq:def-r_i-t_i} as $t_{i}=-(m_{i} \log r_{i})^{-1}=-(m_{i}s_{i})^{-1}$, so $s_{i}=-(m_{i}t_{i})^{-1}$, and therefore $ds_{i}=m_{i}^{-1}t_{i}^{-2}\, dt_{i}$, as claimed.
	
	\ref{item:ds-J} We have $ds_{i}=d\log r_i=r_{i}^{-1}\, dr_{i}$. Recall from Example \ref{ex:J} that $dr_{i}\circ J=-r_{i}\, d\theta_{i}$, so $ds_{i}\circ J=r_{i}^{-1}\cdot (-r_{i}\, d\theta_{i})=- d\theta_{i}=-d s_{i+k}$ by definition. Now $ds_{i+k}\circ J=-ds_{i}\circ J\circ J=ds_{i}$, as claimed.
\end{proof}


%

For a $2$-form $\omega\in \Omega^{2}(U\setminus \d A)$, denote by $\omega^{J}$ the symmetric part of $(v,w)\mapsto \omega(v,Jw)$. In the next lemma, we compute $\omega^{J}_{E}$. 

Recall from Definition \ref{def:bounded-from-X} that a form $\gamma\in \Omega^{*}(U\setminus \d A)$ is \emph{bounded from $X$} if $\gamma=\pi^{*}\gamma'$ for some bounded $\gamma'\in \Omega^{*}(U_X\setminus D)$. For example, if $i\in \{1,\dots, k\}$ then $dr_{i}$ is bounded from $X$, but $dt_i$ is not.

\begin{lema}\label{lem:omegaAC-sym}
	For $i\in \{1,\dots, 2k\}$ there are bounded $b_i\in \cC^{\infty}(U \setminus \d A)$ and forms $\beta_i\in \Omega^{1}(U\setminus \d A)$, $\omega_1\in \Omega^2(U\setminus \d A)$ bounded from $X$, such that 
	for every $\tau>0$, we have the equality of restrictions
	\begin{equation}\label{eq:omegaAC-sym}
		\omega_{E}^{J}|_{g^{-1}(\tau)}=\tilde{\omega}_E^{J}|_{g^{-1}(\tau)},
		\quad\mbox{where}\quad 
		\tilde{\omega}_E^{J}\de
		\sum_{i=1}^{2k} \sigma_{\bar{i}} (b_{i}\, (ds_{i})^{2}+ds_{i}\cdot \beta_{i})
		+
		\omega_1^{J}.	
	\end{equation}
	Moreover, we can shrink the neighborhood $U_X$ of $x_0$ so that $b_i>\frac{1}{2}$ for all $i\in \{1,\dots,2k\}$.
\end{lema}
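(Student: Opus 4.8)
The strategy is to compute $\omega_E=d\lambda_E$ with $\lambda_E=\sum_i\bar v_i\alpha_i$ explicitly in the coordinates \eqref{eq:chart-sx}, restrict everything to a level set $g^{-1}(\tau)$ so that all $dg$-terms disappear, and then organize the result according to powers of the $\sigma_i$. The key input for this organization is Lemma \ref{lem:dvbar} (in particular \ref{item:tame_1-bar}), which controls $d(v_i-\bar v_i)$ up to $t_i\cdot c\,dv_i$, $t_i^\epsilon q\,dg$ and $\sigma_i\gamma$-terms; and Lemma \ref{lem:computations-d}\ref{item:dv},\ref{item:dt-rho} together with Lemma \ref{lem:nu-j}\ref{item:dt-ds}, which relate $dv_i$, $dt_i$ and $ds_i$. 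The aim is to show that, after restriction, $\omega_E^J$ takes the asserted form with a \emph{positive, bounded-below} leading coefficient $b_i$ in front of each $(ds_i)^2$, so that the quadratic form is dominated on the fiber directions by a positive term — this is what gives fiberwise taming in Proposition \ref{prop:positivity}.

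\textbf{Step 1: reduce to the principal part of $\lambda_E$.} First I would split $\bar v_i=v_i+(\bar v_i-v_i)$ for $i\in S$, and for $j\notin S$ observe via Lemma \ref{lem:d-theta}\ref{item:dvbar-rest-fiberwise},\ref{item:alpha-rest} that $\bar v_j\alpha_j$ contributes, after restriction to $g^{-1}(\tau)$, only a form bounded from $X$ (both $\alpha_j$ and the fiberwise part of $d\bar v_j$ are bounded from $X$ there). Likewise, using Lemma \ref{lem:d-theta}\ref{item:alpha-dtheta}, each $\alpha_i$ differs from $d\theta_i=ds_{i+k}$ by a pullback $\pi^*\beta_i$ of a smooth form on $U_X$, hence bounded from $X$; and Lemma \ref{lem:d-theta}\ref{item:dalpha} shows $d\alpha_i$ is bounded from $X$. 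So modulo forms bounded from $X$ (which will all be absorbed into $\omega_1$), $\lambda_E\equiv\sum_{i\in S} v_i\,ds_{i+k}$ and hence
\begin{equation*}
	\omega_E\equiv\sum_{i\in S} dv_i\wedge ds_{i+k}\pmod{\text{forms bounded from }X}.
\end{equation*}
Here I must be careful that the Leibniz error terms — $(\bar v_i-v_i)\,d\alpha_i$, $\bar v_i\,d(\alpha_i-d\theta_i)$, and the terms coming from $d(\bar v_i - v_i)\wedge\alpha_i$ whose $\sigma_i\gamma$ and $t_i c\,dv_i$ pieces survive — are handled correctly: the first two are bounded from $X$, while $d(\bar v_i-v_i)\wedge ds_{i+k}$ is, by Lemma \ref{lem:dvbar}\ref{item:tame_1-bar} restricted to $g^{-1}(\tau)$, of the form $t_ic\,dv_i\wedge ds_{i+k}+\sigma_i\gamma\wedge ds_{i+k}$, i.e. exactly of the shape allowed in the second and third summands of \eqref{eq:omegaAC-sym}.

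\textbf{Step 2: rewrite $dv_i$ in terms of $ds_i$ and take the $J$-symmetric part.} On $g^{-1}(\tau)$ we have $dg=0$, so Lemma \ref{lem:computations-d}\ref{item:dv} gives $dv_i=(1+t_i^{-1}u_i^2)\,dt_i$, and Lemma \ref{lem:nu-j}\ref{item:dt-ds} gives $dt_i=m_it_i^2\,ds_i$, whence $dv_i|_{g^{-1}(\tau)}=m_i(t_i^2+t_iu_i^2)\,ds_i=m_i\sigma_i\,ds_i$. Therefore the principal part of $\omega_E$ restricted to the fiber is $\sum_{i\in S} m_i\sigma_i\,ds_i\wedge ds_{i+k}$. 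Now apply $\omega\mapsto\omega^J$, the symmetrization of $(v,w)\mapsto\omega(v,Jw)$. Using Lemma \ref{lem:nu-j}\ref{item:ds-J}, namely $ds_i\circ J=-ds_{i+k}$ and $ds_{i+k}\circ J=ds_i$, one computes that $(ds_i\wedge ds_{i+k})^J=\tfrac12\big((ds_i)^2+(ds_{i+k})^2\big)$, so the principal part contributes $\sum_{i\in S}\tfrac{m_i}{2}\sigma_i\big((ds_i)^2+(ds_{i+k})^2\big)$; in the notation of the lemma, $\sigma_{\bar i}$ with $\bar i$ folding $i$ and $i+k$ together, and $b_i\to\tfrac{m_i}{2}$ as $t_i\to0$. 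The remaining pieces — the $t_ic\,dv_i\wedge ds_{i+k}$ and $\sigma_i\gamma\wedge ds_{i+k}$ terms from Step 1, plus anything from cross terms and from $\omega_1$ — all have, after symmetrization, the shape $\sigma_{\bar i}\,ds_i\cdot\beta_i$ or are bounded from $X$; here I use that $t_ic\,dv_i=t_ic\cdot m_i\sigma_i\,ds_i$ already carries a factor $\sigma_i$ (indeed $t_i\sigma_i$, even better), and that $\sigma_i\gamma$ manifestly carries the factor $\sigma_i$. Assembling, one gets \eqref{eq:omegaAC-sym} with some bounded $b_i$ and forms $\beta_i$, $\omega_1$ bounded from $X$.

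\textbf{Step 3: positivity of $b_i$ near $x_0$.} Since $b_i\to\tfrac{m_i}{2}\ge\tfrac12$ as we approach $D$ (i.e. as $t_i\to0$), and the only other contributions to $b_i$ come with extra factors of $t_i$ or $\sigma_i$ which vanish on $D$, continuity and the fact that $b_i$ extends continuously to $\bar U$ (by the boundedness statements and Lemma \ref{lem:intro}\ref{item:intro-extends}) let me shrink $U_X$ around $x_0$ so that $b_i>\tfrac12$ on all of $\pi^{-1}(U_X)\setminus\d A$. \textbf{The main obstacle} I anticipate is the careful bookkeeping in Step 2: one must verify that \emph{every} error term — in particular those arising from differentiating the partition-of-unity factors hidden inside $\bar v_i$ and $\alpha_i$, and the cross terms $ds_i\cdot\beta_{i'}$ with $i\ne i'$ — either carries a genuine factor $\sigma_{\bar i}$ attached to a single $ds_i$, or is bounded from $X$ so it can be dumped into $\omega_1$. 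This is where Lemma \ref{lem:dvbar}, Lemma \ref{lem:d-theta}, and the inequality Lemma \ref{lem:inequality} (for combining $t_i^2$- and $t_iu_i^2$-weighted terms into $\sigma_i$-weighted ones, exactly as in the proof of Lemma \ref{lem:dv}) do the real work; the computation itself is routine once the shape of each term is identified.
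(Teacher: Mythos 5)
Your proposal follows essentially the same route as the paper's proof: isolate the principal part $\sum_{i\in S} dv_i\wedge ds_{i+k}$ of $\omega_E$ using Lemma \ref{lem:d-theta} and Lemma \ref{lem:dvbar}\ref{item:tame_1-bar}, convert $dv_i$ into $m_i\sigma_i\,ds_i$ on the fibers via Lemmas \ref{lem:computations-d}\ref{item:dv} and \ref{lem:nu-j}\ref{item:dt-ds}, and symmetrize with Lemma \ref{lem:nu-j}\ref{item:ds-J}, dumping everything else into $\sigma_{\bar i}\,ds_i\cdot\beta_i$ or $\omega_1$. The one slip is the factor $\tfrac12$ in your computation of $(ds_i\wedge ds_{i+k})^J$: the bilinear form $(v,w)\mapsto (ds_i\wedge ds_{i+k})(v,Jw)$ is already symmetric, so its symmetric part equals $(ds_i)^2+(ds_{i+k})^2$ with no $\tfrac12$, giving $b_i\to m_i\ge 1$ rather than $\tfrac{m_i}{2}$; under your normalization the case $m_i=1$ yields only $b_i\to\tfrac12$, and the strict inequality $b_i>\tfrac12$ after shrinking $U_X$ would not follow.
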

\begin{proof}
	Fix $i\in \{1,\dots, k\}$. Since we work with forms restricted to fibers of $g$, we will tacitly use the equality $dg=0$. This way, Lemma \ref{lem:computations-d}\ref{item:dv} reads as $dv_{i}=(1+t_{i}^{-1}u_{i}^{2})dt_i$. Since $(1+t_{i}^{-1}u_{i}^{2})=t_{i}^{-2}(t_{i}^{2}+t_iu_{i}^2)=t_i^{-2}\sigma_i$ by definition \eqref{eq:rho-sigma} of $\sigma_{i}$, we get $dv_{i}=\sigma_{i}t_{i}^{-2}\, dt_{i}$. Hence by Lemma \ref{lem:nu-j}\ref{item:dt-ds}
	\begin{equation*}\label{eq:mu}
		d v_{i}=\sigma_{i}m_{i}\, ds_{i}.
	\end{equation*}
	Substituting $dg=0$ to Lemma  \ref{lem:dvbar}\ref{item:tame_1-bar}, we infer that there is a bounded function $c_i$, and a $1$-form $\gamma_{i}$ bounded from $X$, such that $d\bar{v}_{i}=(1+ct_{i})\, dv_{i}+\sigma_{i}\gamma_{i}$. 
	Substituting the above formula for $dv_i$, we get 
	\begin{equation}\label{eq:dv-mu}
		d\bar{v}_{i}=\sigma_{i}(m_{i}(1+c_{i}t_{i})\, ds_i +\gamma_{i}).
	\end{equation}
	
	By definition \eqref{eq:omegaE} of the form $\omega_E$, we have 
	$\omega_{E}=\sum_{i=1}^{k}d\bar{v}_{i}\wedge \alpha_{i}+\sum_{j=k+1}^{N}d\bar{v}_{j}\wedge \alpha_{j}+\sum_{i=1}^{N}\bar{v}_i\, d\alpha_{i}.
	$ 
		By Lemma \ref{lem:d-theta}\ref{item:alpha-dtheta}, for every $i\in \{1,\dots, k\}$ there is a $1$-form $\check{\beta}_{i}\in \Omega^{1}(U)$, bounded from $X$, such that $\alpha_{i}=d\theta_{i}+\check{\beta}_{i}$. By Lemma \ref{lem:d-theta}\ref{item:dvbar-rest-fiberwise},\ref{item:alpha-rest}, for every $j\in \{k+1,\dots, N\}$, the $2$-form $d\bar{v}_{j}\wedge \alpha_{j}$ is fiberwise equal to a $2$-form bounded from $X$. Eventually, by Lemma \ref{lem:d-theta}\ref{item:dalpha}, the $2$-form $d\alpha_{i}$ is bounded from $X$ for all $i\in \{1,\dots, N\}$. We thus obtain a fiberwise equality
		\begin{equation*}
			\omega_{E}=
			\sum_{i=1}^{k}d\bar{v}_{i}\wedge (d\theta_{i}+ \check{\beta}_{i})+\check{\omega}_1,
		\end{equation*}
	for some $1$-forms $\check{\beta}_{i}$ and a $2$-form $\check{\omega}_1$, all bounded from $X$. Using the formula \eqref{eq:dv-mu} for $d\bar{v}_i$ and the notation $d\theta_i=d s_{k+i}$, we conclude that
	\begin{equation}\label{eq:omegaAC-mu}
		\omega_{E}=\sum_{i=1}^{k} \sigma_{i} \left(
		m_{i}(1+c_it_i)\, ds_{i}\wedge ds_{i+k}
		+
		\gamma_{i}\wedge d s_{i+k}
		+
		m_i(1+c_{i}t_{i})\, ds_{i}\wedge \check{\beta}_{i}
		\right)
		+
		\omega_1,
	\end{equation}
	where the $2$-form $\omega_1\de \sum_{i=1}^{k} \sigma_{i}\gamma_{i}\wedge \check{\beta}_{i}+\check{\omega}_1$ is bounded from $X$.  
	By Lemma \ref{lem:nu-j}\ref{item:ds-J}, we have 
	\begin{gather*}
		(ds_{i}\wedge ds_{i+k})^{J}=(ds_{i})^{2}+(d s_{i+k})^{2},\quad \mbox{and} \\
		(\gamma_{i}\wedge d s_{i+k})^{J}=d s_{i}\cdot \gamma_{i}-ds_{i+k}\cdot (\gamma_{i}\circ J),\quad
		(ds_{i}\wedge \check{\beta}_{i})^{J}=ds_{i}\cdot (\check{\beta}_{i}\circ J)+d s_{i+k}\cdot \check{\beta}_{i}.
	\end{gather*}
	Since the function $c_i$ is bounded, and the forms $\check{\beta}_i$, $\gamma_{i}$ are bounded from $X$, the forms $\beta_{i}\de \gamma_{i}+m_i(1+c_{i}t_{i})(\check{\beta}_{i}\circ J)$ and $\beta_{i+k}\de -(\gamma_{i}\circ J)+m_i(1+c_{i}t_{i})\check{\beta}_{i}$ are bounded from $X$, too. 
	
	Substituting these definitions to \eqref{eq:omegaAC-mu}, we get
	\begin{equation*}
		\tilde{\omega}_{E}^{J}=\sum_{i=1}^{2k}\sigma_{\bar{i}}\left( m_{\bar{i}}(1+c_{\bar{i}}t_{\bar{i}})\, (ds_{i})^{2}+ds_{i}\cdot \beta_{i}\right)+\omega_1^{J}.
	\end{equation*}
	Put $b_{i}=m_{\bar{i}}(1+c_{\bar{i}}t_{\bar{i}})$. As we approach the point $x_0$, we have $t_{\bar{i}}\rightarrow 0$. Since $m_{\bar{i}}\geq 1$ and $c_{\bar{i}}$ is bounded, for $t_{\bar{i}}$ sufficiently small we have $b_{i}>\tfrac{1}{2}$, as needed. 
\end{proof}

From now on, we assume that our fine chart $U_X$ is so small that the functions $b_i$ from Lemma \ref{lem:omegaAC-sym} satisfy $b_i>\tfrac{1}{2}$. For the next result, we introduce the following notation. 
\smallskip

Let $\cB\subseteq \cC^{\infty}(U_X\setminus D)$ be the algebra of bounded functions. For $i\in \{1,\dots, k\}$, we put
\begin{equation*}
	\cQ_{i}= \langle r_{i}t_{i}^{l}:l\in \Z \rangle\cdot \cB.
\end{equation*}
Since by Lemma \ref{lem:computations}\ref{item:t-r} we have $r_{i}t_{i}^{l}=e^{-(m_it_i)^{-1}}t_{i}^{l}\in \cB$, the algebra $\cQ_{i}$ is an ideal of $\cB$. One should think of $\cQ_{i}$ as the ideal of functions which decay exponentially with respect to $t_i$. In Section \ref{sec:AX-smooth}, a similar role was played by the ideal $\cI_{i}\subseteq \cA_i$.

In the following lemma, we use the coordinate vector fields $\nu_{i}$, $\xi_{j}$ introduced in \eqref{eq:coordinates-sx}.

\begin{lema}\label{lem:Qi} The ideals $\cQ_i\subseteq \cB$ satisfy the following properties.
	\begin{enumerate}
		\item\label{item:Qi-t} For every $i\in \{1,\dots, k\}$ we have $t_{i}\cdot \cQ_{i}=\cQ_i$ and $\sigma_{i}\cdot \cQ_{i}=\cQ_i$.
		\item\label{item:Qi-1-form} Let $\beta\in \Omega^{1}(U \setminus \d A)$ be a $1$-form bounded from $X$. Then for every $i\in \{1,\dots, 2k\}$ and every $j\in \{1,\dots, 2(n-k)\}$ we have $\beta(\nu_{i})\in \cQ_{\bar{i}}$ and $\beta(\xi_{j})\in \cB$.
		\item\label{item:Qi-2-form} Let $\gamma\in \Omega^{2}(U\setminus \d A)$ be a $2$-form bounded from $X$. Then for every $i,j\in \{1,\dots, 2k\}$ and every $l\in \{1,\dots, 2(n-k)\}$ we have $\gamma^{J}(\nu_{i},\nu_{j})\in \cQ_{\bar{i}}\cdot \cQ_{\bar{j}}$ and $\gamma^{J}(\nu_i,\xi_l)\in \cQ_{\bar{i}}$.
	\end{enumerate}
\end{lema}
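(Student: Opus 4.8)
\textbf{Proof strategy for Lemma \ref{lem:Qi}.} The plan is to verify the three assertions in order, reducing everything to explicit computations with the coordinates $(s_1,\dots,s_{2k},x_1,\dots,x_{2(n-k)})$ introduced in \eqref{eq:chart-sx}, and using the formula $r_i=e^{-(m_it_i)^{-1}}$ from Lemma \ref{lem:computations}\ref{item:t-r} as the basic tool that makes exponential decay visible.

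First I would prove \ref{item:Qi-t}. That $t_i\cdot \cQ_i\subseteq \cQ_i$ is immediate since $t_i\in \cB$ and $\cQ_i$ is an ideal of $\cB$; for the reverse inclusion I would observe that $r_it_i^{l}=(r_it_i^{l+1})\cdot t_i^{-1}$ and $r_it_i^{l+1}\in \cQ_i$, while $t_i^{-1}\notin \cB$, so instead I argue that $r_it_i^{l}=t_i\cdot(r_it_i^{l-1})$ with $r_it_i^{l-1}\in \cQ_i$ (valid for all $l\in\Z$), giving $\cQ_i\subseteq t_i\cdot\cQ_i$. For the statement about $\sigma_i=t_i^2+t_iu_i^2=t_i(t_i+u_i^2)$: since $u_i\in[0,1]$ is bounded, $t_i+u_i^2\in\cB$, so $\sigma_i\cdot\cQ_i\subseteq\cQ_i$; conversely $\sigma_i\cdot\cQ_i=t_i(t_i+u_i^2)\cdot\cQ_i$, and $r_it_i^{l}=\sigma_i\cdot r_it_i^{l-2}(t_i+u_i^2)^{-1}$, where $(t_i+u_i^2)^{-1}$ need not be bounded near $A_i^\circ$, so I would instead note that on $U_X\setminus D$ we have $\sigma_i>0$ and use that $\sigma_i^{-1}r_it_i^{l}=r_it_i^{l-1}(t_i+u_i^2)^{-1}$; here $r_i\cdot(t_i+u_i^2)^{-1}\to 0$ because $r_i$ decays exponentially in $t_i$ while $(t_i+u_i^2)^{-1}$ grows at most like $(1-\log w_i)^{2}t_i^{-1}$, a polynomial-logarithmic rate — this last estimate, already implicit in the proof of Lemma \ref{lem:w_smooth}\ref{item:w-algebra}, is the one mildly delicate point here.

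Next, \ref{item:Qi-1-form}. By definition $\beta=\pi^*\beta'$ with $\beta'$ bounded in the coordinates $(z_{i_1},\dots,z_{i_n})$, equivalently (for $\bar i\le k$) in the real coordinates $(x_1,x_2,\dots)$ together with $dr_1,\dots,dr_k$. I would expand $\beta$ in the basis $dr_1,\dots,dr_k,d\theta_1,\dots,d\theta_k,dx_1,\dots$ with bounded coefficients, then convert to the $ds_i$-basis using $ds_i=d\log r_i=r_i^{-1}dr_i$ for $i\le k$ and $ds_{i+k}=d\theta_i$. Contracting with $\nu_i$: for $i\le k$, $ds_i(\nu_i)=1$ and $dr_i=r_i\,ds_i$, so the $dr_i$-component contributes a bounded multiple of $r_i$; and $r_i=r_it_i^0\in\cQ_i$, so $\beta(\nu_i)\in\cQ_i=\cQ_{\bar i}$. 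For $i>k$, $\nu_i$ pairs with $d\theta_{\bar i}$, and the $d\theta_{\bar i}$-coefficient of $\beta$ is bounded; however I must still land in $\cQ_{\bar i}$, so I would use that a form bounded from $X$ in the standard complex coordinates, when its $d\theta_{\bar i}=\imath\,d\arg z_{\bar i}$ component is extracted, actually carries a factor of $r_{\bar i}$ (since $dz_{\bar i}=e^{\imath\cdot 2\pi\theta_{\bar i}}(dr_{\bar i}+2\pi\imath\, r_{\bar i}\,d\theta_{\bar i})$), hence that component is $r_{\bar i}$ times a bounded function, landing in $\cQ_{\bar i}$. Contracting with $\xi_j$ just picks out the bounded $dx_j$-coefficient, so $\beta(\xi_j)\in\cB$.

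Finally, \ref{item:Qi-2-form} follows from \ref{item:Qi-1-form} by the bilinearity of $\gamma^J(v,w)=\tfrac12(\gamma(v,Jw)+\gamma(w,Jv))$ together with Lemma \ref{lem:nu-j}\ref{item:ds-J}, which shows $J\nu_i=\pm\nu_{i\mp k}$ up to the identifications there (so $J$ permutes the $\nu$'s among themselves), and the fact that $J$ acts on the $\xi$'s within the span of the $\xi$'s; thus $\gamma^J(\nu_i,\nu_j)$ is a $\cB$-linear combination of values $\gamma(\nu_a,\nu_b)$ with $\bar a\in\{\bar i\}$, $\bar b\in\{\bar j\}$, each in $\cQ_{\bar i}\cdot\cB$ or $\cB\cdot\cQ_{\bar j}$ — and since $\gamma$ being a $2$-form bounded from $X$ means each such $\gamma(\nu_a,\nu_b)$ is a product of two $1$-form contractions, I get membership in $\cQ_{\bar i}\cdot\cQ_{\bar j}$; similarly $\gamma^J(\nu_i,\xi_l)\in\cQ_{\bar i}\cdot\cB=\cQ_{\bar i}$, using \ref{item:Qi-t} to absorb bounded factors. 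The only genuine obstacle I anticipate is bookkeeping in \ref{item:Qi-1-form}: making precise the claim that the $d\theta$-components of a form bounded from $X$ vanish to order $r_{\bar i}$, which requires carefully unwinding Definition \ref{def:bounded-from-X} in polar versus Cartesian coordinates; everything else is routine ideal arithmetic.
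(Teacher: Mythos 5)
Your parts \ref{item:Qi-1-form} and \ref{item:Qi-2-form} are essentially the paper's own argument: expand $\beta=\pi^{*}\beta'$ in the frame $dr_1,\dots,dr_k,\ r_1\,d\theta_1,\dots,r_k\,d\theta_k,\ dx_1,\dots$ with \emph{bounded} coefficients (the passage from Cartesian to polar frames preserves boundedness, which is exactly your observation that the $d\theta_{\bar i}$-component of a form bounded from $X$ carries a factor $r_{\bar i}$, via $dz_{\bar i}=e^{2\pi\imath\theta_{\bar i}}(dr_{\bar i}+2\pi\imath\, r_{\bar i}\,d\theta_{\bar i})$), then contract with $\nu_i$ and $\xi_j$ using $dr_i=r_i\,ds_i$; and for \ref{item:Qi-2-form} write the bounded $2$-form as a finite sum of wedges of bounded $1$-forms and use that $J$ preserves the splitting $\sspan(\nu_i)\oplus\sspan(\xi_j)$, exchanging $\nu_i$ and $\nu_{i\pm k}$ up to sign. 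This is the route taken in the paper, with only cosmetic differences. (One small slip there: $\cQ_{\bar i}\cdot\cB=\cQ_{\bar i}$ is just the ideal property of $\cQ_{\bar i}$ in $\cB$, not a consequence of \ref{item:Qi-t}.)

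The one place where your argument, as written, does not prove what is claimed is the inclusion $\cQ_i\subseteq\sigma_i\cdot\cQ_i$ in \ref{item:Qi-t}. You reduce correctly to $\sigma_{i}^{-1}r_it_i^{l}=r_it_i^{l-1}(t_i+u_i^2)^{-1}$, but then you only show that $r_i\cdot(t_i+u_i^2)^{-1}\to 0$, invoking the exponential decay of $r_i$. Boundedness (or decay) of that product gives membership in $\cB$, not in $\cQ_i$: an element of $\cQ_i$ must again be a finite sum $\sum r_it_i^{m}\cdot(\text{bounded})$, and this stronger structural statement is precisely what the later applications (e.g.\ the proof of Lemma \ref{lem:matrix}) use. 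The missing step is, however, one line of ideal arithmetic and needs no decay estimate at all: since $t_i+u_i^2=t_i(1+t_i^{-1}u_i^{2})$ and $(1+t_i^{-1}u_i^{2})^{-1}\leq 1$ is bounded, one gets $\sigma_{i}^{-1}r_it_i^{l}=r_it_i^{l-2}\cdot(1+t_i^{-1}u_i^{2})^{-1}\in\cQ_i$; equivalently, as in the paper, $\sigma_i^{-1}t_i^{2}=(1+t_i^{-1}u_i^{2})^{-1}\in\cB$ together with $t_i^{2}\cdot\cQ_i=\cQ_i$. So the point you single out as the ``mildly delicate'' estimate is in fact unnecessary, and replacing it by the factorization above closes the gap; with that repair the proposal is correct and coincides with the paper's proof.
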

\begin{proof}
	\ref{item:Qi-t} The first equality follows directly from the definition of $\cQ_i$. For the second one, recall that $\sigma_{i}$ was defined in \eqref{eq:rho-sigma} by $\sigma_i=t_i^2+t_iu_i^2$, so $\sigma_i\in \cB$. Since $\cQ_i$ is an ideal of $\cB$, we have $\sigma_i \cdot \cQ_i\subseteq \cQ_i$. 

	To see the other inclusion, note that $\sigma_{i}^{-1}t_{i}^{2}=(t_{i}^{2}+t_iu_i^2)^{-1}t_i^2=(1+t_{i}^{-1}u_i^2)^{-1}\in \cB$. Since $\cQ_{i}$ is an ideal of $\cB$, we have $\sigma_{i}^{-1}t_{i}^{2}\cdot \cQ_{i}\subseteq \cQ_i$. The first equality of \ref{item:Qi-t} shows that $t_i^2\cdot \cQ_i=\cQ_i$, so $\sigma_{i}^{-1}\cdot \cQ_i= \sigma_{i}^{-1}t_{i}^2\cdot \cQ_i\subseteq \cQ_i$, as needed.
	
	\ref{item:Qi-1-form} By Definition \ref{def:bounded-from-X} of a form bounded from $X$, we have $\beta=\pi^{*}\beta'$ for some bounded $\beta'\in \Omega^{1}(U_X\setminus D)$. Recall that our holomorphic chart on $U_X$ is $(z_1,\dots,z_k,z_{i_{k+1}},\dots, z_{i_n})$. Write $y_i$, $y_{i}'$ for the real and imaginary parts of $z_i$, $i\in \{1,\dots, k\}$. Then the $1$-forms $dy_i$, $dy'_i$ for $i\in \{1,\dots, k\}$  and $dx_{l}$ for $l\in \{1,\dots, 2(n-k)\}$ give a basis of $T^{*}U_X$. Away from the zero locus of $z_i$, we can change this basis by replacing $dy_i,d y_i'$ with polar coordinates $dr_i$, $r_id\theta_i$. Hence there are bounded functions $p_i,q_l\in \cC^{\infty}(U_X\setminus D)$ such that
	\begin{equation*}
		\beta'=\sum_{i=1}^{k} p_{i}\, dr_{i}+\sum_{i=1}^{k}p_{k+i} r_i\, d\theta_i+\sum_{l=1}^{2(n-k)}q_{l}\, dx_{l}.
	\end{equation*}
	We have $dr_{i}=de^{s_i}=e^{s_i}\, ds_i=r_{i}\, ds_i$, and $r_{i}\, d\theta_{i}=r_{i}\, d s_{i+k}=r_{\bar{i+k}}ds_{i+k}$ by definition. Since we denote the functions on $U_X\setminus D$ and their pullbacks to $U\setminus \d A$ by the same letters, we get
	\begin{equation*}
		\beta=\sum_{i=1}^{2k} p_{i}\, r_{\bar{i}}\, ds_{i}+\sum_{l=1}^{2(n-k)}q_{l}\, dx_{l}.
	\end{equation*}
	Now by definition \eqref{eq:coordinates-sx} of the coordinate vector fields $\nu_{i},\xi_{l}$ we get $\beta(\nu_i)=p_{i}r_{\bar{i}}\in r_{\bar{i}}\cB\subseteq \cQ_{\bar{i}}$ and $\beta(\xi_{l})=q_{l}\in \cB$, as claimed.
	
	\ref{item:Qi-2-form} By definition of a $2$-form bounded from $X$ we have $\gamma=\pi^{*}\gamma'$ for some bounded $\gamma'\in \Omega^{2}(U_X\setminus D)$. Using some basis of $T^{*}(U_X\setminus D)$, we can write $\gamma'=\sum_{l} \gamma'_{1l}\wedge \gamma'_{2l}$ for some bounded  $\gamma'_{1l},\gamma'_{2l}\in \Omega^{1}(U_X\setminus D)$. Thus $\gamma=\sum_{l} \gamma_{1l}\wedge \gamma_{2l}$ for some  $\gamma_{1l},\gamma_{2l}$ bounded from $X$. Now  \ref{item:Qi-2-form} follows from   \ref{item:Qi-1-form}.
\end{proof}
In a fixed basis of $T(U\setminus \d A)$, we write the matrix of the symmetric $2$-form $\tilde{\omega}_E^{J}$ introduced in \eqref{eq:omegaAC-sym} as
\begin{equation}\label{eq:omega-matrix}
	\left[
	\begin{matrix}
		A & Q^{\top} \\
		Q & P  \\
	\end{matrix}
	\right]
\end{equation}
for some $2k\times 2k$ matrix $A=[a_{ij}]$; some $2(n-k)\times 2k$ matrix $Q=[q_{ij}]$, and some $2(n-k)\times 2(n-k)$ matrix $P=[p_{ij}]$. The entries of $A,Q,P$ are smooth functions on $U\setminus \d A$. 

The next lemma summarizes some properties of these entries for our basis $((\nu_{i})_{i=1}^{2k},(\xi_{i})_{i=1}^{2(n-k)})$ given by coordinate vector fields \eqref{eq:coordinates-sx}. In Lemma \ref{lem:Gauss} we will show that these properties are preserved after change of basis given by the Gaussian diagonalization of \eqref{eq:omega-matrix}.

\begin{lema}\label{lem:matrix} The matrix \eqref{eq:omega-matrix} of $\tilde{\omega}_{E}^{J}$ in basis $((\nu_{i})_{i=1}^{2k}, (\xi_{i})_{i=1}^{2(n-k)})$ has the following properties:
	\begin{enumerate}
		\item \label{item:a_ii} For every $i\in \{1,\dots, 2k\}$, there is a neighborhood $V_X$ of $x_0$ and a function $\check{b}_{i} \in \cB$ such that $a_{ii}=\sigma_{\bar{i}} \check{b}_i$ and $\check{b}_i>\frac{1}{4}$ on $\pi^{-1}(V_X)\setminus \d A$.
		\item \label{item:a_ij} For every $i,j\in \{1,\dots, 2k\}$ such that $i\neq j$ we have  $a_{ij}\in  \sigma_{\bar{i}}\sigma_{\bar{j}} \cdot \cB$.
		\item \label{item:q_ij} For every $i\in \{1,\dots, 2(n-k)\}$ and every $j\in \{1,\dots, 2k\}$ we have $q_{ij}\in \sigma_{\bar{j}}\cdot \cB$.
		\item \label{item:P} All entries of the matrix $P$ are bounded functions.
		\item \label{item:vectorsmall} For every $i\in \{1,\dots, 2k\}$ the vector fields  $\sigma_{\bar{i}}^{-1/2}\cdot \pi_*\nu_{i}$ and $\pi_*\nu_{i}$ are bounded and converge to $0$ as $r_{\bar{i}}$ converges to $0$.
		\item \label{item:xi-iso} Let $\Xi$ be the subbundle of $T(U\setminus \d A)$ spanned by $\xi_{1},\dots,\xi_{2(n-k)}$, let $||\sdot||$ be the maximum norm on $\Xi$ with respect to the coefficients of this basis; and let $||\sdot ||_{X}$ be any norm on $T\bar{U}_{X}$. Then there is a constant $K_0>0$ such that for every $\xi\in \Xi$ we have $||\pi_{*}\xi||_{X}\geq K_{0}||\xi||$.
	\end{enumerate}
\end{lema}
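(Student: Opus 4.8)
The statement is a direct computation: I would unwind the formula \eqref{eq:omegaAC-sym} for $\tilde\omega_E^J$ and read off the matrix entries in the basis $\big((\nu_i)_{i=1}^{2k},(\xi_j)_{j=1}^{2(n-k)}\big)$, then apply Lemma \ref{lem:Qi} to control the size of everything. Recall from Lemma \ref{lem:omegaAC-sym} that, on each level set $g^{-1}(\tau)$,
\[
\tilde\omega_E^J=\sum_{i=1}^{2k}\sigma_{\bar i}\big(b_i\,(ds_i)^2+ds_i\cdot\beta_i\big)+\omega_1^J,
\]
with $b_i>\tfrac12$ after shrinking $U_X$, each $\beta_i$ bounded from $X$, and $\omega_1$ bounded from $X$. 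By \eqref{eq:coordinates-sx} we have $ds_i(\nu_j)=\delta_i^j$, $ds_i(\xi_l)=0$. The plan is to evaluate the displayed expression on pairs of basis vectors, one term at a time.

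First, for the diagonal entries $a_{ii}=\tilde\omega_E^J(\nu_i,\nu_i)$: the term $\sigma_{\bar i}b_i(ds_i)^2$ contributes $\sigma_{\bar i}b_i$; the term $\sigma_{\bar i}\,ds_i\cdot\beta_i$ contributes $\sigma_{\bar i}\beta_i(\nu_i)$, where by Lemma \ref{lem:Qi}\ref{item:Qi-1-form} $\beta_i(\nu_i)\in\cQ_{\bar i}\subseteq\cB$, in fact $\beta_i(\nu_i)=p\,r_{\bar i}$ for a bounded $p$, so it tends to $0$ as $r_{\bar i}\to 0$; cross terms $\sigma_{\bar j}\,ds_j\cdot\beta_j$ for $j\neq i$ with $\bar j\neq \bar i$ vanish on $(\nu_i,\nu_i)$ since $ds_j(\nu_i)=0$, unless $\bar j=\bar i$, in which case $j=i+k$ or $i-k$ and again $ds_j(\nu_i)=0$ because $j\neq i$; the term $\omega_1^J(\nu_i,\nu_i)$ lies in $\cQ_{\bar i}\cdot\cQ_{\bar i}$ by Lemma \ref{lem:Qi}\ref{item:Qi-2-form}, hence in $\sigma_{\bar i}\cdot\cB$ by Lemma \ref{lem:Qi}\ref{item:Qi-t}, and again tends to $0$ relative to $\sigma_{\bar i}$. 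Factoring out $\sigma_{\bar i}$ gives $a_{ii}=\sigma_{\bar i}\check b_i$ with $\check b_i=b_i+\sigma_{\bar i}^{-1}(\sigma_{\bar i}\beta_i(\nu_i)+\omega_1^J(\nu_i,\nu_i))$; since $b_i>\tfrac12$ and the correction tends to $0$ as $r_{\bar i}\to 0$, there is a neighborhood $V_X$ of $x_0$ on which $\check b_i>\tfrac14$. This is \ref{item:a_ii}. For the off-diagonal entries $a_{ij}$, $i\neq j$: if $\bar i\neq\bar j$ then the $(ds_i)^2$ terms do not contribute, and each remaining term ($\sigma_{\bar i}ds_i\cdot\beta_i$ evaluated picks up $\beta_i(\nu_j)\in\cQ_{\bar j}$, $\omega_1^J(\nu_i,\nu_j)\in\cQ_{\bar i}\cdot\cQ_{\bar j}$) lies in $\cQ_{\bar i}\cdot\cQ_{\bar j}$; if $\bar i=\bar j$ but $i\neq j$, the term $\sigma_{\bar i}\,ds_i\cdot\beta_i$ contributes $\tfrac12\sigma_{\bar i}\beta_i(\nu_j)\in\sigma_{\bar i}\cQ_{\bar i}$, and symmetrically, and $\omega_1^J(\nu_i,\nu_j)\in\cQ_{\bar i}^2$; in all cases Lemma \ref{lem:Qi}\ref{item:Qi-t} converts $\cQ_{\bar i}\cQ_{\bar j}$ into $\sigma_{\bar i}\sigma_{\bar j}\cB$, giving \ref{item:a_ij}. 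The entries $q_{ij}=\tilde\omega_E^J(\xi_i,\nu_j)$: since $ds_l(\xi_i)=0$ the first sum contributes only through $\sigma_{\bar j}\beta_j(\xi_i)$, and $\beta_j(\xi_i)\in\cB$; the term $\omega_1^J(\xi_i,\nu_j)\in\cQ_{\bar j}$; both lie in $\sigma_{\bar j}\cB$ by Lemma \ref{lem:Qi}\ref{item:Qi-t}, giving \ref{item:q_ij}. For $P$, i.e. $p_{ij}=\tilde\omega_E^J(\xi_i,\xi_j)$: the first sum vanishes on $\xi$'s, and $\omega_1^J(\xi_i,\xi_j)\in\cB$ by the boundedness of $\omega_1$, giving \ref{item:P}.

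For \ref{item:vectorsmall}, recall $s_i=\log r_i=-(m_i t_i)^{-1}$ for $i\le k$ and $s_{i+k}=\theta_i$, so in the original holomorphic coordinates $\pi_*\nu_i=\partial/\partial s_i$ equals $r_{\bar i}\,\partial/\partial r_{\bar i}$ for $i\le k$ (since $dr_{\bar i}=r_{\bar i}ds_{\bar i}$) and $r_{\bar i}\,\partial/\partial\theta_{\bar i}$ rescaled for $i>k$; in both cases $\pi_*\nu_i=r_{\bar i}\cdot(\text{a vector field bounded on }\bar U_X)$, so $\pi_*\nu_i$ is bounded and $\to 0$ as $r_{\bar i}\to 0$. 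Since $\sigma_{\bar i}=t_{\bar i}^2+t_{\bar i}u_{\bar i}^2\ge t_{\bar i}^2$ and $r_{\bar i}=e^{-(m_{\bar i}t_{\bar i})^{-1}}$, we have $\sigma_{\bar i}^{-1/2}r_{\bar i}\le t_{\bar i}^{-1}e^{-(m_{\bar i}t_{\bar i})^{-1}}\to 0$ as $r_{\bar i}\to 0$, so $\sigma_{\bar i}^{-1/2}\pi_*\nu_i$ is bounded and converges to $0$ too. Finally, \ref{item:xi-iso}: the vector fields $\xi_j$ are, by \eqref{eq:coordinates-sx}, the coordinate vector fields $\partial/\partial x_j$ of the holomorphic chart, which extend smoothly across $D$ to a frame of $TU_X$ near $\bar U_X$; on the compact set $\bar U_X$ any two norms are equivalent, so $\pi_*\colon\Xi\to T U_X$ being a (bounded, injective) bundle map with image spanned by a fixed frame, there is $K_0>0$ with $\|\pi_*\xi\|_X\ge K_0\|\xi\|$ for all $\xi\in\Xi$.

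\textbf{Main obstacle.} None of the steps is deep; the only thing requiring care is bookkeeping the exponents of $t_{\bar i}$ versus $r_{\bar i}$ — i.e.\ making sure that every error term that is merely ``bounded from $X$'' genuinely decays faster than $\sigma_{\bar i}$, so that the corrections to $b_i$ and to the $a_{ij}$, $q_{ij}$ really do vanish in the limit. This is exactly what Lemma \ref{lem:Qi} is designed to package: the ideal $\cQ_{\bar i}=\langle r_{\bar i}t_{\bar i}^l:l\in\Z\rangle\cdot\cB$ absorbs all the $t_{\bar i}$-powers, and the identity $\sigma_{\bar i}\cQ_{\bar i}=\cQ_{\bar i}$ lets me freely trade factors of $\sigma_{\bar i}$. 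So in practice the proof is: expand, evaluate on basis vectors, and quote Lemma \ref{lem:Qi} for each term. The neighborhood $V_X$ in \ref{item:a_ii} is obtained by shrinking $U_X$ once more so that the correction term $\sigma_{\bar i}^{-1}(\cdots)$, which is $O(r_{\bar i}t_{\bar i}^{-1})\to 0$, stays below $\tfrac14$ in absolute value.
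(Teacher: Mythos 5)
Your proposal is correct and follows essentially the same route as the paper: evaluate \eqref{eq:omegaAC-sym} on the coordinate vector fields \eqref{eq:coordinates-sx} term by term and absorb every error into the ideals $\cQ_{\bar{i}}$ via Lemma \ref{lem:Qi}, using $\sigma_{\bar{i}}\cQ_{\bar{i}}=\cQ_{\bar{i}}$ to factor out $\sigma_{\bar{i}}$. Your treatment of \ref{item:vectorsmall} inlines the polar-coordinate computation ($\pi_{*}\nu_{i}=r_{\bar{i}}\cdot(\text{bounded})$ and $\sigma_{\bar{i}}^{-1/2}r_{\bar{i}}\leq t_{\bar{i}}^{-1}e^{-(m_{\bar{i}}t_{\bar{i}})^{-1}}\to 0$) where the paper instead applies Lemma \ref{lem:Qi}\ref{item:Qi-1-form} to a basis of $1$-forms, but this is the same estimate in substance.
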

\begin{proof}
	%
	\ref{item:a_ii} By definition of the matrix \eqref{eq:omega-matrix}, we have $a_{ii}=\tilde{\omega}_E^{J}(\nu_i,\nu_i)$. 
	By definition \eqref{eq:coordinates-sx} of $\nu_i$ we have $d s_{j}(\nu_i)=\delta_{j}^{i}$, for all $j\in \{1,\dots, 2k\}$. Substituting this to the formula \eqref{eq:omegaAC-sym} for $\tilde{\omega}_E^{J}$, we get 
	\begin{equation*}
		a_{ii}=\sigma_{\bar{i}}(b_{i}+\beta_{i}(\nu_i))+\omega_1^{J}(\nu_i,\nu_i).
	\end{equation*}	
	Since the $1$-form $\beta_{i}$ is bounded from $X$, by Lemma \ref{lem:Qi}\ref{item:Qi-1-form} we have $\beta_{i}(\nu_i)\in \cQ_{\bar{i}}$, so $\sigma_{\bar{i}}\beta_{i}(\nu_i)\in \sigma_{\bar{i}}\cQ_{\bar{i}}=\cQ_{\bar{i}}$, where the last equality holds by Lemma \ref{lem:Qi}\ref{item:Qi-t}. Similarly, since $\omega_1$ is a $2$-form bounded from $X$, by Lemma \ref{lem:Qi}\ref{item:Qi-2-form}, we have $\omega_1^{J}(\nu_i,\nu_i)\in \cQ_{\bar{i}}\cQ_{\bar{i}}\subseteq \cQ_{\bar{i}}$. Hence
	\begin{equation*}
		a_{ii}=\sigma_{\bar{i}}b_{i}+q
	\end{equation*}
	for some $q\in \cQ_{\bar{i}}$. By Lemma \ref{lem:Qi}\ref{item:Qi-t} we have $\cQ_{\bar{i}}=\sigma_{\bar{i}}t_{\bar{i}} \cQ_{\bar{i}}$, so we can write $q=\sigma_{\bar{i}}t_{\bar{i}} q'$ for some $q'\in \cQ_{\bar{i}}\subseteq \cB$. This way, $a_{ii}=\sigma_{\bar{i}}(b_{i}+t_{\bar{i}}q')$, with bounded $q'$. Since we have chosen $U_X$ so that the function $b_i$ from Lemma \ref{lem:omegaAC-sym} satisfies $b_{i}>\tfrac{1}{2}$, we have $\check{b}_i\de b_{i}+t_{\bar{i}}q'>\frac{1}{4}$ whenever $t_{\bar{i}}$ is small enough. In particular, we have $\check{b}_{i}>\frac{1}{4}$ sufficiently close to the point $x_0$, as needed.
	
	\ref{item:a_ij} By definition of the matrix \eqref{eq:omega-matrix}, we have $a_{ij}=\tilde{\omega}_E^{J}(\nu_i,\nu_j)$.
	As before, using definition \eqref{eq:coordinates-sx} of $\nu_i,\nu_j$ and the formula \eqref{eq:omegaAC-sym} for $\tilde{\omega}_E^J$, we get that for $i\neq j$:
	\begin{equation*}
		a_{ij}=\sigma_{\bar{i}}\beta_{i}(\nu_j)+\sigma_{\bar{j}}\beta_{j}(\nu_i)+\omega_1^J(\nu_i,\nu_j).
	\end{equation*}
	We need to show that each of the above summands is in $\sigma_{\bar{i}}\sigma_{\bar{j}}\cdot \cB$. Since  $\beta_{i}$ is bounded from $X$, by Lemma \ref{lem:Qi}\ref{item:Qi-1-form} we have $\beta_{i}(\nu_j)\in \cQ_{\bar{j}}$. By Lemma \ref{lem:Qi}\ref{item:Qi-t}, we have $\cQ_{\bar{j}}=\sigma_{\bar{j}}\cQ_{\bar{j}}\subseteq \sigma_{\bar{j}}\cB$, so $\sigma_{\bar{i}}\beta_{i}(\nu_j)\in \sigma_{\bar{i}}\sigma_{\bar{j}}\cB$, as needed. For the second summand, we apply the same argument interchanging the roles of $i$ and $j$. Eventually, since the $2$-form $\omega_1$ is bounded from $X$, by Lemma \ref{lem:Qi}\ref{item:Qi-2-form} we have $\omega_1^{J}(\nu_i,\nu_j)\in \cQ_{\bar{i}}\cQ_{\bar{j}}=\sigma_{\bar{i}}\sigma_{\bar{j}}\cQ_{\bar{i}}\cQ_{\bar{j}}\subseteq \sigma_{\bar{i}}\sigma_{\bar{j}}\cB$, where the equality follows from Lemma \ref{lem:Qi}\ref{item:Qi-t}.

	\ref{item:q_ij} The entry $q_{ij}$ is defined as $\tilde{\omega}_E^J(\xi_i,\nu_j)$. Now, substituting \eqref{eq:coordinates-sx} to \eqref{eq:omegaAC-sym} gives
	\begin{equation*}
		q_{ij}=\sigma_{\bar{j}}\beta_{j}(\xi_i)+\omega_1^{J}(\xi_i,\nu_j).
	\end{equation*}
	By Lemma \ref{lem:Qi}\ref{item:Qi-1-form}, we have $\beta_{j}(\xi_i)\in \cB$, so $\sigma_{\bar{j}} \beta_{j}(\xi_i)\in \sigma_{\bar{j}}\cB$, as needed. Lemma \ref{lem:Qi}\ref{item:Qi-2-form} gives $\omega_1^{J}(\xi_i,\nu_j)\in \cQ_{\bar{j}}$. By Lemma \ref{lem:Qi}\ref{item:Qi-t}, we have $\cQ_{\bar{j}}=\sigma_{\bar{j}}\cQ_{\bar{j}}\subseteq \sigma_{\bar{j}}\cB$, so  $\omega_1^{J}(\xi_i,\nu_j)\in \sigma_{\bar{j}}\cB$, as claimed.
	
	\ref{item:P} Formula \eqref{eq:omegaAC-sym} shows that the entries of $P$ are $p_{ij}=\omega_{1}^{J}(\xi_{i},\xi_{j})$ for $i,j\in \{1,\dots, 2(n-k)\}$. By Lemma \ref{lem:omegaAC-sym}, the form $\omega_{1}$ is bounded from $X$, so these entries are bounded functions.
	
	\ref{item:vectorsmall} Fix a basis $\beta_1,\dots,\beta_{2n}$ of $T^{*}U_X$. We need to show that for all $j\in \{1,\dots,2n\}$ and $i\in \{1,\dots,2k\}$, the functions $\sigma_{i}^{-1/2}\beta_{j}(\pi_{*}\nu_i)$ and $\beta_{j}(\pi_{*}\nu_i)$ are bounded and converge to zero as $r_{\bar{i}}\rightarrow 0$. Since each $1$-form $\pi^{*}\beta_{j}$ is bounded from $X$, by Lemma \ref{lem:Qi}\ref{item:Qi-1-form} we have $\beta_{j}(\pi_{*}\nu_i)=\pi^{*}\beta_{j}(\nu_i)\in \cQ_{\bar{i}}$. By Lemma \ref{lem:Qi}\ref{item:Qi-t}, we have $\cQ_{\bar{i}}\subseteq \sigma_{\bar{i}}\cB$, so $\beta_{j}(\pi_{*}\nu_i)=\sigma_{\bar{i}}b_{ji}$ for some bounded function $b_{ji}$. In particular, the functions $\sigma_{\bar{i}}^{-1/2}\cdot \beta_{j}(\pi_{*}\nu_{i})$ and $\beta_{j}(\pi_{*}\nu_{i})$ are bounded. Moreover, since $\sigma_{\bar{i}}\rightarrow 0$ as $r_{\bar{i}}\rightarrow 0$, it follows that $\sigma_{\bar{i}}^{-1/2} \beta_{j}(\pi_{*}\nu_i)=\sigma_{\bar{i}}^{1/2}b_{ji}\rightarrow 0$ and $\beta_{j}(\pi_{*}\nu_i)=\sigma_{\bar{i}}b_{ji}\rightarrow 0$ as $r_{\bar{i}}\rightarrow 0$, as claimed.
	
%

	\ref{item:xi-iso} By definition \eqref{eq:coordinates-sx} of $\xi_{i}$, for $\xi\in \Xi$ the norm $||\xi||$ equals the maximum norm of $\pi_{*}\xi$ with respect to the natural coordinates on $\bar{U}_{X}$. The result follows since all norms on $T \bar{U}_{X}$ are equivalent.
\end{proof}

We now apply the Gaussian diagonalization to the first $2k$ rows and columns of \eqref{eq:omega-matrix}. For $j\in \{1,\dots, 2k\}$, denote by  $((\nu_{i}^{j})_{i=1}^{2k}, (\xi_{i}^{j})_{i=1}^{2(n-k)})$ the basis of $T(U\setminus \d A)$ obtained from $((\nu_{i})_{i=1}^{2k}, (\xi_{i})_{i=1}^{2(n-k)})$ after the $j$-th diagonalization step.

\begin{lema}\label{lem:Gauss}
	For every $j\in \{1,\dots, 2k\}$ we can shrink the fine chart $U_X$ around $x_0$ so that the matrix \eqref{eq:omega-matrix} of $\tilde{\omega}_E^{J}$ in basis $((\nu_{i}^j)_{i=1}^{2k}, (\xi_{i}^j)_{i=1}^{2(n-k)})$ keeps the properties listed in Lemma \ref{lem:matrix}.
\end{lema}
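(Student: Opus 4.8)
The statement is an induction on the diagonalization step $j$. At step $0$ the matrix in the basis of coordinate vector fields $((\nu_i)_i,(\xi_i)_i)$ has exactly the properties \ref{item:a_ii}--\ref{item:xi-iso} of Lemma~\ref{lem:matrix}; this is the base case. So I only need to carry out the inductive step: assuming the matrix \eqref{eq:omega-matrix} in basis $((\nu_i^{j-1})_i,(\xi_i^{j-1})_i)$ has these properties, I perform the $j$-th Gaussian elimination step, which uses the pivot entry $a_{jj}$ to clear the $j$-th column and row, and I check that each property survives, possibly after shrinking $U_X$ around $x_0$.

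\textbf{The elimination step in formulas.} By property \ref{item:a_ii} (inductive hypothesis) the pivot is $a_{jj}=\sigma_{\bar j}\check b_j$ with $\check b_j>\tfrac14$ bounded on a neighborhood of $x_0$, so in particular $a_{jj}^{-1}=\sigma_{\bar j}^{-1}\check b_j^{-1}$ with $\check b_j^{-1}$ bounded. The $j$-th step replaces, for $i>j$,
\begin{equation*}
	\nu_i^{j}=\nu_i^{j-1}-\frac{a_{ij}}{a_{jj}}\,\nu_j^{j-1},\qquad
	\xi_i^{j}=\xi_i^{j-1}-\frac{q_{ij}}{a_{jj}}\,\nu_j^{j-1},
\end{equation*}
leaving $\nu_j^{j}=\nu_j^{j-1}$, the $\nu_i^j$ with $i<j$, and (up to relabeling) all already-cleared directions unchanged. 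The new matrix entries are $a_{il}^{j}=a_{il}^{j-1}-a_{ij}a_{jj}^{-1}a_{jl}^{j-1}$ for $i,l>j$, and similarly for the $q$'s and $p$'s; the crucial point is to track which ideal $\sigma_{\bar i}\cB$, $\sigma_{\bar i}\sigma_{\bar l}\cB$ etc.\ the correction term lies in. I compute: for $i\ne l$, both $>j$, the correction $a_{ij}a_{jj}^{-1}a_{jl}^{j-1}$ lies in $(\sigma_{\bar i}\sigma_{\bar j}\cB)(\sigma_{\bar j}^{-1}\cB)(\sigma_{\bar j}\sigma_{\bar l}\cB)=\sigma_{\bar i}\sigma_{\bar j}\sigma_{\bar l}\cB\subseteq\sigma_{\bar i}\sigma_{\bar l}\cB$, using that $\sigma_{\bar j}$ is bounded; so \ref{item:a_ij} persists. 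For $i=l>j$, the correction is $a_{ij}^2 a_{jj}^{-1}\in(\sigma_{\bar i}\sigma_{\bar j}\cB)^2\sigma_{\bar j}^{-1}\check b_j^{-1}=\sigma_{\bar i}^2\sigma_{\bar j}\cB$, and since $a_{ii}^{j-1}=\sigma_{\bar i}\check b_i$ with $\check b_i>\tfrac14$, we get $a_{ii}^{j}=\sigma_{\bar i}(\check b_i+\sigma_{\bar i}\sigma_{\bar j}(\cdots))$, so $\check b_i^{\mathrm{new}}$ is bounded and, after shrinking $U_X$ so that $\sigma_{\bar i}\sigma_{\bar j}|(\cdots)|<\tfrac1{16}$ near $x_0$, still $>\tfrac14$; hence \ref{item:a_ii} persists. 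For the $Q$ block, $q_{ij}^{j-1}\in\sigma_{\bar j}\cB$ and the correction $q_{ij}^{j-1}a_{jj}^{-1}a_{jl}^{j-1}\in(\sigma_{\bar j}\cB)(\sigma_{\bar j}^{-1}\cB)(\sigma_{\bar j}\sigma_{\bar l}\cB)=\sigma_{\bar j}\sigma_{\bar l}\cB\subseteq\sigma_{\bar l}\cB$, so \ref{item:q_ij} persists. For the $P$ block, the correction is $q_{ij}^{j-1}a_{jj}^{-1}q_{lj}^{j-1}\in(\sigma_{\bar j}\cB)^2\sigma_{\bar j}^{-1}\cB=\sigma_{\bar j}\cB\subseteq\cB$ bounded, so \ref{item:P} persists.

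\textbf{The geometric properties \ref{item:vectorsmall}--\ref{item:xi-iso}.} Here I need that the new vector fields, pushed forward by $\pi$, retain their size estimates. For \ref{item:vectorsmall}: $\pi_*\nu_i^{j}=\pi_*\nu_i^{j-1}-(a_{ij}a_{jj}^{-1})\pi_*\nu_j^{j-1}$; the coefficient $a_{ij}a_{jj}^{-1}\in(\sigma_{\bar i}\sigma_{\bar j}\cB)(\sigma_{\bar j}^{-1}\cB)=\sigma_{\bar i}\cB$, and by induction $\pi_*\nu_j^{j-1}$ is bounded, so the correction is $\sigma_{\bar i}\cdot(\text{bounded})$; combined with $\sigma_{\bar i}^{-1/2}\pi_*\nu_i^{j-1}\to 0$ and $\pi_*\nu_i^{j-1}\to 0$ as $r_{\bar i}\to 0$ (inductive hypothesis) and $\sigma_{\bar i}^{1/2}\to 0$, both $\sigma_{\bar i}^{-1/2}\pi_*\nu_i^{j}$ and $\pi_*\nu_i^{j}$ are bounded and $\to 0$ as $r_{\bar i}\to 0$. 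For \ref{item:xi-iso}: $\pi_*\xi_i^{j}=\pi_*\xi_i^{j-1}-(q_{ij}a_{jj}^{-1})\pi_*\nu_j^{j-1}$ with $q_{ij}a_{jj}^{-1}\in(\sigma_{\bar j}\cB)(\sigma_{\bar j}^{-1}\cB)=\cB$ bounded; the span $\Xi^{j}$ of $\xi_1^{j},\dots,\xi_{2(n-k)}^{j}$ still projects isomorphically, but one must re-derive the lower bound $\|\pi_*\xi\|_X\ge K_0\|\xi\|$ since the basis change mixes in $\nu_j^{j-1}$. The way to handle this is to observe that $\|\xi^{j}\|$ (the max of the coefficients of $\xi$ in the new basis) and $\|\xi^{j-1}\|$ differ by a factor bounded above and below uniformly near $x_0$ — because the transition matrix is unipotent with entries in $\cB$, hence after shrinking $U_X$ it is within $\tfrac12$ of the identity in operator norm — and then use the inductive bound for $\|\cdot\|^{j-1}$. \textbf{I expect this last point — keeping the uniform lower bound in \ref{item:xi-iso} under the non-orthogonal basis change — to be the main technical obstacle}, since unlike the algebraic ideal-membership bookkeeping it requires a genuine (if elementary) estimate on the transition matrices; the fix is exactly that the transition matrices are unipotent with controlled (in fact, in the relevant entries, $\sigma$- or bounded-) off-diagonal parts, so shrinking $U_X$ makes them uniformly close to $\mathrm{id}$. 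Finally, I note that each property is required only on a neighborhood of $x_0$, and only finitely many shrinkings of $U_X$ occur over the $2k$ steps, so the intersection is again an open neighborhood of $x_0$, completing the induction and hence the proof.
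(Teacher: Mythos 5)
Your proposal follows the paper's proof essentially step for step: induction on the elimination step, the same ideal-membership bookkeeping ($a_{jj}^{-1}a_{jl}\in\sigma_{\bar l}\cdot\cB$, $a_{jj}^{-1}q_{l'j}\in\cB$) for properties \ref{lem:matrix}\ref{item:a_ii}--\ref{item:P}, and the same correction-term estimates for \ref{lem:matrix}\ref{item:vectorsmall}. The one place where your justification goes wrong is exactly the step you flagged as the main obstacle, property \ref{lem:matrix}\ref{item:xi-iso}. You assert that after shrinking $U_X$ the unipotent transition matrix is "within $\tfrac12$ of the identity in operator norm"; this is not true, because the off-diagonal coefficients $q_{ij}a_{jj}^{-1}$ lie in $\cB$ but do not tend to $0$ near $x_0$. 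What becomes small is not the coefficient but the pushforward of the vector it multiplies: $\pi_{*}(\xi_i^{j-1}-\xi_i^{j})=q_{ij}a_{jj}^{-1}\,\pi_{*}\nu_j^{j-1}$, and $\pi_{*}\nu_j^{j-1}\to 0$ as $r_{\bar j}\to 0$ by the already-verified property \ref{lem:matrix}\ref{item:vectorsmall}. Relatedly, $\|\xi\|^{(j-1)}$ is not defined for $\xi\in\Xi^{j}$, since $\Xi^{j}\not\subseteq\Xi^{j-1}$; the correct comparison (the one the paper makes) is between $\pi_{*}\xi$ and $\pi_{*}\xi'$, where $\xi'=\sum_i\lambda_i\xi_i^{j-1}$ carries the same coefficients as $\xi=\sum_i\lambda_i\xi_i^{j}$, so that $\|\xi'\|^{(j-1)}=\|\xi\|^{(j)}$ and, after shrinking $U_X$, $\|\pi_{*}\xi-\pi_{*}\xi'\|_X\le\sum_i\|\pi_{*}(\xi_i^{j-1}-\xi_i^{j})\|_X\le\tfrac12 K_0\|\xi\|^{(j)}$, giving the lower bound with constant $\tfrac12 K_0$. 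With that repair your argument coincides with the paper's; everything else in the proposal is correct.
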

\begin{proof}
	We argue by induction on $j\in \{1,\dots, 2k\}$. Assume that we have eliminated the off-diagonal entries from rows and columns up to $j-1$; and that the resulting basis still satisfies Lemma \ref{lem:matrix}. While eliminating the off-diagonal entries from the $j$-th column, we perform the following operations on the columns $l\in \{j+1,\dots, 2k\}$ and $\check{l}\in \{2k+1,\dots, n\}$:
	\begin{equation*}
		\begin{split}	
			& a_{il}\rightsquigarrow a_{il}- a_{jj}^{-1}a_{jl}\cdot a_{ij},\mbox{ for all } i\in \{j+1,\dots,2k\} \\
			& q_{il}\rightsquigarrow q_{il}- a_{jj}^{-1}a_{jl}\cdot q_{ij}, \mbox{ for all } i\in \{1,\dots, 2(n-k)\}. \\
			\mbox{and}\quad 
			&	p_{il'} \rightsquigarrow p_{il'} - a_{jj}^{-1}q_{l'j}\cdot q_{ij}, \mbox{ for all } i\in \{1,\dots, 2(n-k)\}, \mbox{ where } l'=\check{l}-2k.
		\end{split}
	\end{equation*}
	Put $c=a_{jj}^{-1} a_{jl}$, $c'=a_{jj}^{-1}q_{l'j}$. We claim that, after possibly shrinking $U_{X}$, we have  
	\begin{equation}\label{eq:cl-c}
		c\in \sigma_{\bar{l}}\cdot \cB\quad\mbox{and} \quad c'\in \cB. 
	\end{equation}
	Indeed, by property \ref{lem:matrix}\ref{item:a_ii} we can shrink $U_X$ so that $a_{jj}=\sigma_{\bar{j}}\check{b}_{j}$ for a function $\check{b}_{j}$ bounded from below by a positive constant. This way, $\check{b}_j^{-1}\in \cB$, so $a_{jj}^{-1}\in \sigma_{\bar{j}}^{-1}\cB$. Since we assume $l>j$, by property \ref{lem:matrix}\ref{item:a_ij} we have $a_{jl}\in \sigma_{\bar{j}}\sigma_{\bar{l}}\cB$, so $c=a_{jj}^{-1} \cdot a_{jl}\in \sigma_{\bar{j}}^{-1}\cdot \sigma_{\bar{j}}\sigma_{\bar{l}}\cdot \cB= \sigma_{\bar{l}}\cdot \cB$.  Similarly, since $q_{l'j}\in \sigma_{\bar{j}}\cdot \cB$ by \ref{lem:matrix}\ref{item:q_ij}, we have $c'=a_{jj}^{-1}q_{l'j}\in \sigma_{\bar{j}}^{-1}\sigma_{\bar{j}}\cB=\cB$. This proves \eqref{eq:cl-c}.
	
	Recall that $\sigma_j$ was defined in \eqref{eq:rho-sigma} as $\sigma_{j}=t_{j}^2+t_ju_j^2$, so  $\sigma_{j}\in \cB$. Now for $i>j$, property \ref{lem:matrix}\ref{item:a_ij} gives $a_{ij}\in \sigma_{\bar{i}}\sigma_{\bar{j}}\cB\subseteq \sigma_{\bar{i}}\cB$, so the formula \eqref{eq:cl-c} implies that
	\begin{equation}\label{eq:claim-bcd}
		ca_{ij}\in \sigma_{\bar{i}}\sigma_{\bar{l}}\cdot \cB,
	\end{equation}
	so the property \ref{lem:matrix}\ref{item:a_ij} is preserved. 
	
	Similarly, property \ref{lem:matrix}\ref{item:q_ij} shows that $q_{ij}\in \sigma_{\bar{j}}\cB\subseteq \cB$, so formula \eqref{eq:cl-c} gives $cq_{ij}\in \sigma_{\bar{l}}\cB$ and $c'q_{ij}\in \cB$. It follows that the properties of $q_{il}$ and $p_{il'}$ listed in Lemma \ref{lem:matrix}\ref{item:q_ij},\ref{item:P} are preserved.
	\smallskip

	Thus we have shown that the off-diagonal entries in the first $2k$ columns keep their properties. For the diagonal ones, we need to show that \ref{lem:matrix}\ref{item:a_ii} is preserved. To do this, assume $i=l$. By \eqref{eq:claim-bcd}, we have $ca_{ij}\in \sigma_{\bar{i}}\sigma_{\bar{l}}\cdot \cB=\sigma_{\bar{i}}^2\cdot \cB$. Recall that by definition \eqref{eq:rho-sigma} we have  $\sigma_{\bar{i}}=t_{\bar{i}}^2+t_{\bar{i}}u_{\bar{i}}^2\in t_{\bar{i}}\cdot \cB$, so $ca_{ij}\in \sigma_{\bar{i}}t_{\bar{i}}\cdot\cB$, i.e.\ $ca_{ij}=\sigma_{\bar{i}}t_{\bar{i}} b$ for some $b\in \cB$. Thus the new $(i,i)$-th entry of $A$ is
	\begin{equation*}
		a_{ii}-ca_{ij}=\sigma_{\bar{i}}\check{b}_{\bar{i}}-\sigma_{\bar{i}}t_{\bar{i}}b=\sigma_{\bar{i}}(\check{b}_i+t_{\bar{i}}b).
	\end{equation*}
	As we approach $x_0$, we have $t_{\bar{i}}\rightarrow 0$, so $t_{\bar{i}}b\rightarrow 0$ because $b\in \cB$. Thus if $\check{b}_{i}>\frac{1}{4}$ then $\check{b}+t_{\bar{i}}b>\frac{1}{4}$, too, for $t_{\bar{i}}$ small enough. Therefore, shrinking $U_X$  we can ensure that the diagonal entries of $A$ keep their property \ref{lem:matrix}\ref{item:a_ii}.
	\smallskip
	
	To see property~\ref{lem:matrix}\ref{item:vectorsmall}, we recall the Gaussian elimination formula
	\begin{equation*}
		\nu_{i}^{j}=\nu_{i}^{j-1}-a_{ij}a_{jj}^{-1}\cdot \nu_{j}^{j-1}\quad \mbox{for } i>j.
	\end{equation*}
	We need to prove that $\sigma_{\bar{i}}^{-1/2}\pi_{*}\nu_{i}^{j}\rightarrow 0$ as $r_{\bar{i}}\rightarrow 0$. The convergence $\pi_{*}\nu_{i}^{j}\rightarrow 0$ will follow since $\sigma_{\bar{i}}\in\cB$.
	
	By the inductive assumption, $\sigma_{\bar{i}}^{-1/2}\pi_{*}\nu_{i}^{j-1}\rightarrow 0$ as $r_{\bar{i}}\rightarrow 0$. We now study the second summand.  We have $a_{ij}\in \sigma_{\bar{i}}\sigma_{\bar{j}}\cdot \cB$ by property \ref{lem:matrix}\ref{item:a_ij} and  $a_{jj}^{-1}\in \sigma_{\bar{j}}^{-1}\cdot \cB$ by property \ref{lem:matrix}\ref{item:a_ii}, so $\sigma_{\bar{i}}^{-1/2}\cdot a_{ij}a_{jj}^{-1}\in \sigma_{\bar{i}}^{-1/2}\cdot \sigma_{\bar{i}}\sigma_{\bar{j}}\cdot \sigma_{\bar{j}}^{-1}\cdot \cB=\sigma_{\bar{i}}^{1/2}\cdot \cB$. Thus $\sigma_{\bar{i}}^{-1/2}a_{ij}a_{jj}^{-1} =\sigma^{1/2}_{\bar{i}}\cdot b$ for some $b\in \cB$. By the inductive assumption, the vector field $\pi_{*}\nu_{j}^{j-1}$ is bounded, hence so is $b\cdot \pi_{*}\nu_{j}^{j-1}$. By definition \eqref{eq:rho-sigma} of $\sigma_{\bar{i}}$, we have $\sigma_{\bar{i}}\rightarrow 0$ as $r_{\bar{i}}\rightarrow 0$. Thus $\sigma_{\bar{i}}^{-1/2}\cdot a_{ij}a_{jj}^{-1}\cdot \pi_{*}\nu_{j}^{j-1}=\sigma_{\bar{i}}^{1/2}\cdot b \pi_{*}\nu_{j}^{j-1}\rightarrow 0$ as $r_{\bar{i}}\rightarrow 0$, as needed. 
	\smallskip
	
	It remains to show that the property \ref{lem:matrix}\ref{item:xi-iso} is preserved. Let $\Xi^{j}$ be the subbundle of $T(U\setminus \d A)$ spanned by $\{\xi^{j}_{1},\dots,\xi_{2(n-k)}^{j}\}$, and let $||\sdot ||^{(j)}$ be the corresponding maximum norm on $\Xi^j$.  We need to find a constant $K_{0}'>0$ such that for every $\xi\in \Xi^{j}$ we have $||\pi_{*}\xi||_{X}\geq K_{0}' ||\xi||^{(j)}$. Clearly, it is enough to ensure this inequality for $\xi\in \Xi^{j}$ such that $||\xi||^{(j)}=1$.
	
	Take such $\xi\in \Xi^{j}$ and write it as  $\xi=\sum_{i=1}^{2(n-k)} \lambda_{i}\xi_{i}^{j}$ for some smooth functions $\lambda_{i}$. Then $\max_{i}||\lambda_{i}||=||\xi||^{(j)}=1$. Put $\xi'=\sum_{i}\lambda_{i}\xi_{i}^{j-1}\in \Xi^{j-1}$. We have $||\xi'||^{(j-1)}=1$, so by the inductive assumption, $||\pi_{*}\xi'||_{X}\geq K_0$. Moreover, $||\pi_{*}\xi'-\pi_{*}\xi||_{X}\leq \sum_{i} ||\pi_{*}(\xi^{j-1}_{i}-\xi^{j}_{i})||_{X}$.
	
	The Gaussian elimination formula reads as $\xi^{j}_{i}=\xi^{j-1}_{i}-q_{ij}a_{jj}^{-1}\cdot \nu_{j}^{j-1}$, so $\pi_{*}(\xi^{j-1}_{i}-\xi^{j}_{i})=q_{ij}a_{jj}^{-1}\pi_{*}\nu_{j}^{j-1}$. Property \ref{lem:matrix}\ref{item:vectorsmall} implies that $\pi_{*}\nu_{j}^{j-1}\rightarrow 0$ as $r_{\bar{j}}\rightarrow 0$. Like before, combining properties \ref{lem:matrix}\ref{item:q_ij} and \ref{item:a_ii} we infer that $q_{ij}\cdot a_{jj}^{-1}\in \sigma_{\bar{j}}\cdot \sigma_{\bar{j}}^{-1}\cdot \cB=\cB$. Therefore, $\pi_{*}(\xi^{j-1}_{i}-\xi^{j}_{i})\rightarrow 0$ as $r_{\bar{j}}\rightarrow 0$. Since $r_{\bar{j}}(x_0)=0$ for all $j\in \{1,\dots, 2k\}$, we can shrink the neighborhood $U_X$ of $x_0$ so that $||\pi_{*}(\xi^{j-1}_{i}-\xi^{j}_{i})||_{X}\leq \tfrac{1}{2(n-k)}K_{0}$. Then $||\pi_{*}\xi'-\pi_{*}\xi||_{X}\leq \tfrac{1}{2}K_0$, and as a consequence, $||\pi_{*}\xi||_{X}\geq ||\pi_{*}\xi'||_X-||\pi_{*}\xi'-\pi_{*}\xi||_{X}\geq \tfrac{1}{2}K_0$. Thus property \ref{lem:matrix}\ref{item:xi-iso} holds with the constant $K_0>0$ replaced by $\tfrac{1}{2}K_0>0$.
	%
	%
\end{proof}

\begin{proof}[Proof of Proposition \ref{prop:positivity}]
	Consider the matrix \eqref{eq:omega-matrix} of $\tilde{\omega}_E^{J}$ after the first $2k$ steps of the Gaussian diagonalization, that is, in the basis $((\nu_{i}^{2k})_{i=1}^{2k},(\xi_{i}^{2k})_{i=1}^{2(n-k)})$. Then its off-diagonal block $Q$ is zero, and the top-left block $A$ is diagonal.  By Lemma \ref{lem:Gauss}, we can shrink $U_X$ so that the properties \ref{lem:matrix}\ref{item:a_ii}--\ref{item:xi-iso} are preserved.  Furthermore, we change the basis by replacing each $\nu_{i}^{2k}$ with $\tilde{\nu}_{i}^{2k}\de \sigma_{\bar{i}}^{-1/2} \cdot \nu_{i}^{2k}$. 
	
	Now, property \ref{lem:matrix}\ref{item:a_ii} asserts that the diagonal terms of $A$ are bounded from below by $\tfrac{1}{4}$, and by \ref{lem:matrix}\ref{item:vectorsmall} the corresponding basis vector fields $(\tilde{\nu}_{i}^{2k})_{i=1}^{2k}$ approach zero as we approach $x_0$.
	
	Any vector field $v\in T(U\setminus\partial A)$ splits as $v=\nu +\xi $, where $\nu $ and $\xi $ are linear combinations of $\{\tilde{\nu}_i^{2k}\}_{i=1}^{2k}$ and $\{\xi_i^{2k}\}_{i=1}^{2(n-k)}$, respectively. 	 Since the block form \eqref{eq:omega-matrix} of $\tilde{\omega}_E^J$ has the off-diagonal block $Q$ equal to zero, we have  the equality
	\begin{equation}\label{eq:omega-splitting}
		\tilde{\omega}_E^J(v,v)=\tilde{\omega}_E^J(\nu ,\nu )+\tilde{\omega}_E^J(\xi ,\xi ). 
	\end{equation}
	In the tangent bundle $T(U\setminus\partial A)$, we consider the maximum norm $||\sdot||$ with respect to the coefficients of the basis $((\tilde{\nu}_i^{2k})_{i=1}^{2k},(\xi_i^{2k})_{i=1}^{2(n-k)})$. The diagonal entries of $A$ are equal to $\tilde{\omega}_{E}^{J}(\tilde{\nu}_{i}^{2k},\tilde{\nu}_{i}^{2k})$, so by property \ref{lem:matrix}\ref{item:a_ii}, they are bounded from below by $\tfrac{1}{4}$. It follows that 
	\begin{equation}\label{eq:constant-nu}
		\tilde{\omega}_E^J(\nu ,\nu )\geq\tfrac{1}{4} ||\nu ||^2.
	\end{equation}
	Property \ref{lem:matrix}\ref{item:P} asserts that the functions $\tilde{\omega}_{E}^{J}(\xi_{i}^{2k},\xi_{j}^{2k})$, which are entries of the matrix $P$, are bounded. Thus there is a constant $K>0$ such that, uniformly on $U\setminus \d A$, we have the inequality
	\begin{equation}\label{eq:constant-xi}
		|\tilde{\omega}_E^J(\xi ,\xi )|\leq K ||\xi ||^2. 
	\end{equation}
	
	Recall that the form $\omega\AC^{\epsilon}$ was defined in  \eqref{eq:omegaAC} as 	$\pi^{*}\omega_{X}+\epsilon\omega_{E}$. Hence by Lemma \ref{lem:omegaAC-sym}, for every $\tau>0$ we have the equality of restrictions
	\begin{equation*}
		(\omega\AC^{\epsilon})^{J}|_{g^{-1}(\tau)}=(\tilde{\omega}^{\epsilon}\AC)^{J}|_{g^{-1}(\tau)},
		\quad\mbox{where}\quad
		(\tilde{\omega}\AC^{\epsilon})^{J}= \pi^{*}\omega_{X}^{J}+\epsilon\tilde{\omega}_E^{J}.
	\end{equation*}
	We need to show that there is $\epsilon_0>0$ and a neighborhood $V_X$ of $x_0$ such that whenever $\epsilon\in [0,\epsilon_0]$, 
	we have the inequality $(\tilde{\omega}\AC^{\epsilon})^{J}(v,v)>0$ for every nonzero $v\in T(\pi^{-1}(V_{X})\setminus \d A)$.
	
	We split $v=\nu +\xi $ as above. Consider the case $||\nu ||> 2K^{\frac{1}{2}}||\xi ||$. Since the form $\omega_{X}^{J}$ is positive definite, we have $(\pi^{*}\omega_{X})^{J}(v,v)\geq 0$, so  $(\tilde{\omega}\AC^{\epsilon})^{J}(v,v)\geq \epsilon\tilde{\omega}_{E}^{J}(v,v)$. Substituting the inequalities \eqref{eq:constant-nu} and \eqref{eq:constant-xi} to \eqref{eq:omega-splitting}, we get $\tilde{\omega}_E^J(v,v)\geq \frac{1}{4}||\nu ||^2-K||\xi ||^2>0$ by assumption. Thus  $(\tilde{\omega}\AC^{\epsilon})^{J}(v,v)>0$, as needed.
	
	Hence we can assume 
	\begin{equation}
		\label{eq:assumption}
		||\nu ||\leq 2K^{\frac{1}{2}}  ||\xi ||.
	\end{equation}
	%
	Using \eqref{eq:omega-splitting}, we expand the equality $(\tilde{\omega}\AC^{\epsilon})^{J}(v,v)=(\pi^*\omega_{X})^J(v,v)+\epsilon\tilde{\omega}_{E}^{J}(v,v)$ as 
	\begin{equation}
		\label{eq:quadraticexpansion}
		(\tilde{\omega}\AC^{\epsilon})^J(v,v)=(\pi^*\omega_X)^J(\nu ,\nu )+2(\pi^*\omega_X)^J(\nu ,\xi )+(\pi^*\omega_X)^J(\xi ,\xi )+\epsilon\tilde{\omega}_E^J(\nu ,\nu )+\epsilon\tilde{\omega}_E^J(\xi ,\xi ).
	\end{equation}
	The inequality \eqref{eq:constant-nu} implies that $\tilde{\omega}_{E}^{J}(\nu ,\nu )\geq 0$, so since $(\pi^*\omega_X)^J$ is positive definite, for $v\neq 0$ we have
	\begin{equation*}
		(\pi^*\omega_X)^J(\nu ,\nu )+\tfrac{1}{3}(\pi^*\omega_X)^J(\xi ,\xi )+\epsilon\tilde{\omega}_E^J(\nu ,\nu )>0.
	\end{equation*} 
	We group the remaining summands of the expansion~\eqref{eq:quadraticexpansion} as follows:
	\begin{equation*}
		\begin{split}
			B&\de \tfrac{1}{3}(\pi^*\omega_X)^J(\xi ,\xi )+2(\pi^*\omega_X)^J(\nu ,\xi ),\\
			C &\de \tfrac{1}{3}(\pi^*\omega_X)^J(\xi ,\xi )+\epsilon\tilde{\omega}_E^J(\xi ,\xi ).
		\end{split}
	\end{equation*}
	
	Let $||\sdot ||_X$ be a norm in the tangent bundle $T \bar{U}_X$. By property \ref{lem:matrix}\ref{item:xi-iso}, there is a constant $K_0>0$ such that $||\pi_*\xi ||_X\geq K_0||\xi ||$. By positive definiteness of $\omega_X^J$ there exists a constant $K_1'>0$ such that $\omega_X^J(\eta ,\eta )\geq K_{1}'||\eta ||_{X}^2$ for every $\eta$ tangent to $\bar{U}_{X}$. Putting $K_{1}=K_{1}'K_{0}^2$ we get
	\begin{equation}\label{eq:C-bound}
		(\pi^*\omega_X)^J(\xi ,\xi )\geq K_1||\xi ||^2.
	\end{equation}
	There are also constants $K_{2}',K_{2}''>0$ such that $|(\pi^{*}\omega_{X})^{J}(\nu ,\xi )|\leq  K_{2}'\cdot ||\pi_{*}\nu ||_{X}\cdot ||\pi_{*}\xi ||_{X}$ and $||\pi_{*}\xi ||_{X}\leq K_{2}'' ||\xi ||$, so putting $K_{2}=K_{2}'K_{2}''$ we get $|(\pi^*\omega_X)^J(\nu ,\xi )|\leq K_2||\xi ||\cdot||\pi_*\nu ||_X$.  This inequality together with \eqref{eq:C-bound} implies that
	\begin{equation}\label{eq:B-bound}
		B\geq \tfrac{1}{3}K_{1}||\xi ||^{2}-2K_{2} ||\xi ||\cdot||\pi_*\nu ||_X.
	\end{equation}
	
	Recall that $\tilde{\nu}_{i}^{2k}=\sigma_{i}^{-1/2}\nu_{i}^{2k}$, so by  property~\ref{lem:matrix}\ref{item:vectorsmall}, we have $\tilde{\nu}_{i}^{2k}\rightarrow 0$ as $r_{\bar{i}}\rightarrow 0$. Hence there is a continuous function $\kappa\colon [0,\infty)^k\to[0,\infty)$ such that $\kappa(0)=0$ and $||\pi_*\nu ||_X\leq \kappa(r_1,\dots,r_k)||\nu ||$. Together with the assumption \eqref{eq:assumption}, this implies that $||\pi_{*}\nu ||_{X}\leq 2K^{\frac{1}{2}} \kappa(r_1,\dots,r_k) ||\xi ||$. Substituting this inequality to \eqref{eq:B-bound}, we get
	\begin{equation*}
		B
		\geq 
		||\xi ||^2(\tfrac{1}{3}K_1-4K^{\frac{1}{2}}K_2\cdot \kappa(r_1,\dots,r_k)).
	\end{equation*} 
	Since $r_{1}, \dots,r_k$ vanish at $x_0$, we can shrink the neighborhood $U_X$ of $x_0$ so that $\kappa(r_1,\dots,r_k)<\frac{\frac{1}{3}K_1}{4K^{\frac{1}{2}}K_2}$. After this shrinking of $U_X$, we get $B\geq 0$. Notice that this shrinking is independent of $\epsilon$. 
	\smallskip
	
	The inequalities \eqref{eq:C-bound} and \eqref{eq:constant-xi} imply that
	\begin{equation*}
		C\geq 
		(\tfrac{1}{3}K_{1}-\epsilon K)\cdot ||\xi ||^{2}.
	\end{equation*}
	Choose $\epsilon_0\de \frac{1}{3}\frac{K_1}{K}$. Then for every $\epsilon\in [0,\epsilon_0]$ we have $C\geq 0$, so $(\tilde{\omega}\AC^{\epsilon})^J(v,v)>0$, as needed.
\end{proof}

\subsection{Non-degeneracy on \texorpdfstring{$\d A$}{dA}}\label{sec:lifts}

In Proposition \ref{prop:positivity} we have shown that the forms $\omega\AC^{\epsilon}$ and $\omega\AC^{\delta,\epsilon}$ are fiberwise nondegenerate near $\d A$. Proposition \ref{prop:lifts} below, which we will prove in this section, shows that the same holds on $\d A$. Recall from Proposition \ref{prop:AXsmooth}\ref{item:AX-gsmooth} that $\theta\colon A\to \S^1$  is a submersion extending $f/|f|\colon X\setminus D\to \S^1$ to $\d A$.

\begin{prop}
	\label{prop:lifts}
	Fix $x_0\in D$. There is an $\epsilon_0>0$, and a fine chart $U_X$ containing $x_0$, such that for every $\epsilon\in (0,\epsilon_0]$ and any $\delta>0$, the restriction of each of the forms $\omega\AC^{\epsilon}$ and $\omega\AC^{\delta,\epsilon}$  to $U\cap \d A$ is nondegenerate on every fiber of $\theta|_{U\cap \d A}$, where $U=\pi^{-1}(U_X)$. Moreover, the symplectic lift to $U\cap \d A$ of the unit angular vector field on $\S^1$ 
	is independent of the choice of the form $\omega\AC^{\epsilon}$ and $\omega\AC^{\delta,\epsilon}$ defining the symplectic connection, and is given by the formula \eqref{eq:monodromy-vector-field}.
\end{prop}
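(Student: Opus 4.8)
The plan is to reduce everything to an explicit local computation on $U\cap\d A$, exploiting the fact — foreshadowed by the guiding principle of Section~\ref{sec:ACampo} — that all the auxiliary choices entering $\omega_A^\epsilon$ and $\omega_A^{\delta,\epsilon}$ become invisible at radius zero. First I would fix a fine chart $U_X$ around $x_0$ with associated index set $S=\{1,\dots,k\}$, use the smooth coordinates \eqref{eq:AC-chart}, and compute the restriction of $\lambda_E=\sum_i\bar v_i\alpha_i$ to $U\cap\d A$. On $\d A\cap A_I^\circ$ Lemma~\ref{lem:intro}\ref{item:intro-u=ubar} gives $\bar v_i=-u_i$ for $i\in I$ and $\bar v_i$ equal to a smooth pullback from $X$ for $i\notin S$; combined with Lemma~\ref{lem:d-theta}\ref{item:alpha-dtheta} ($\alpha_i=d\theta_i+\pi^*\beta_i$ with $\beta_i$ pulled back from $X$) and \ref{item:alpha-rest} ($\alpha_j$ pulled back for $j\notin S$), the restriction of $\omega_E$ to a fiber of $\theta|_{U\cap\d A}$ becomes, modulo forms that vanish on the fiber, $\sum_{i\in S}d(-u_i)\wedge d\theta_i$. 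Since $\rho_\delta(0)=1$, the same holds for $\omega_E^\delta$, and $\pi^*\omega_X$ restricts to zero on $\d A$ (as $\pi|_{\d A}$ has $k$-dimensional image over each stratum and $\theta$ cuts down further), so on each $\theta$-fiber in $U\cap\d A$ both $\omega_A^\epsilon$ and $\omega_A^{\delta,\epsilon}$ restrict to $\epsilon\sum_{i\in S}du_i\wedge d\theta_i$ up to a sign.

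Next I would identify the $\theta$-fibers and the vector field. On $U\cap\d A$ we have $\theta=\sum_{i\in S}m_i\theta_i$ by Definition~\ref{def:adapted-chart}\ref{item:f-locally}, and the $u_i$ together with the $\theta_i$ and the transverse coordinates $z_{i_{k+1}},\dots,z_{i_n}$ parametrize $U\cap\d A$, with the simplex relation $\sum_{i\in S}u_i$ replaced by the single constraint coming from $\sum_{i\in S}w_i=1$ (Lemma~\ref{lem:intro}\ref{item:intro-sum}); concretely $u_i=\eta(w_i)$ and the $w_i$ sum to $1$, so the $(k-1)$-dimensional simplex of $u$'s together with the $(\S^1)^k$ of $\theta$'s and the transverse factor is exactly the fiber structure of Proposition~\ref{prop:AXsmooth}\ref{item:AX-stratification}. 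A fiber of $\theta$ inside the torus direction is the hypertorus $\{\sum m_i\theta_i=\mathrm{const}\}$, and I must check that $\epsilon\sum_{i\in S}du_i\wedge d\theta_i$ is nondegenerate there and compute its kernel-lift of $\d_\theta$. Writing a candidate lift $V=\sum_{i\in S}c_i\d_{\theta_i}$, the conditions $d\theta(V)=1$ and $\iota_V(\sum du_i\wedge d\theta_i)=0$ on the fiber (i.e. the $2$-form pairs $V$ trivially with all fiberwise directions, in particular with the $\d_{u_j}$ directions constrained by $\sum dw_j=0$) force $\sum m_i c_i=1$ and $c_i\,\eta'(w_i)^{-1}$ independent of $i$ — here one uses $du_i=\eta'(w_i)\,dw_i$ and the constraint $\sum dw_i=0$. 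Since $\eta'(w_i)=\zeta(u_i)^{-1}$ is the derivative of the inverse of $\eta$ (that is, $\eta'(w_i)^{-1}=\zeta(u_i)$ after the substitution $u_i=\eta(w_i)$; this is the content of $\zeta=(\eta^{-1})'$ together with $\zeta\circ\eta=1/\eta'$, cf. Lemma~\ref{lem:computations}), solving the linear system gives exactly $c_i=\zeta(u_i)\big/\sum_{j\in S}m_j\zeta(u_j)$, which is formula \eqref{eq:monodromy-vector-field}. I would then record that $\zeta$ and all its derivatives vanish at $0$, so $V$ extends smoothly; and that the transverse $z$-directions are harmless because $\pi^*\omega_X|_{\d A}=0$ makes them lie in the fiberwise kernel automatically on $\d A$, which is why the lift does not see them.

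The final paragraph assembles the statement: nondegeneracy of $\omega_A^\epsilon|_{U\cap\d A}$ and $\omega_A^{\delta,\epsilon}|_{U\cap\d A}$ on $\theta$-fibers for every $\epsilon>0$ (so $\epsilon_0$ can be taken arbitrary, e.g. $\epsilon_0=1$, or matched to the $\epsilon_0$ of Proposition~\ref{prop:positivity} so both propositions hold simultaneously); independence of the symplectic lift from the choice of $\omega_A^\epsilon$ versus $\omega_A^{\delta,\epsilon}$, which is clear since their restrictions to $U\cap\d A$ agree; and the explicit formula. Combining with Proposition~\ref{prop:positivity} and a compactness argument over $\bar W$ (each point of $\d A\cap\bar W$ has such a fine chart, finitely many cover, take the minimum $\epsilon_0$) then yields Proposition~\ref{prop:omega}.

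I expect the main obstacle to be the bookkeeping in the second paragraph: one must be careful that the naive $2$-form $\sum_{i\in S}du_i\wedge d\theta_i$ is genuinely nondegenerate on the $\theta$-fiber despite the single linear relation among the $du_i$ (coming from $\sum w_i=1$) and the single relation $\sum m_i d\theta_i=d\theta=0$ defining the fiber — these two constraints must be shown to be "compatible" so that the restricted form is symplectic, which is where the positive integers $m_i$ and the strict positivity of $\zeta$ on $(0,1)$ (and the vanishing at $0$) enter. Equivalently, one shows the Gram matrix of $du_i$ against $\d_{\theta_j}$ restricted to the fiber is the bordered matrix $\mathrm{diag}(\eta'(w_i))$ bordered by $(m_i)$, whose invertibility on the complement of the defining relations is exactly the solvability of the linear system above; this is elementary but needs to be written out carefully rather than asserted.
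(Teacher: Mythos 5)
There is a genuine gap in your first paragraph, and it propagates through the rest. You assert that $\pi^{*}\omega_{X}$ restricts to zero on $\d A$, so that on a $\theta$-fiber the form reduces to $\epsilon\sum_{i\in S}du_{i}\wedge d\theta_{i}$. This is false: over a stratum $A_{I}^{\circ}$ with $\#I=l$, the map $\pi$ collapses only the torus and simplex directions, and maps the complementary subbundle (the paper's $\cZ_{I}$ in Lemma \ref{lem:coordinates-on-stratum}\ref{item:splitting-on-stratum}) isomorphically onto $TX_{I}^{\circ}$, where $\omega_{X}$ is K\"ahler and hence nondegenerate. The fiber of $\theta|_{U\cap\d A}$ has dimension $2n-2$ and \emph{contains} all of $\cZ_{I}$ (real dimension $2(n-l)$); the form $\epsilon\sum du_{i}\wedge d\theta_{i}$ vanishes identically on $\cZ_{I}$, so if your reduction were correct the restriction would be \emph{degenerate}, not symplectic. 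What is actually needed — and what the paper supplies as Lemma \ref{lem:Z} — is a proof that $\omega_{A}^{\epsilon}|_{\cZ_{I}}$ is nondegenerate; this is obtained by re-running the taming argument of Proposition \ref{prop:positivity} over the stratum $X_{I}^{\circ}$ (using Lemma \ref{lem:d-theta}\ref{item:dvbar-rest-fiberwise},\ref{item:alpha-rest},\ref{item:dalpha} to control the $\omega_{E}$-contribution there), and it is precisely here that one must shrink $U_{X}$ and take $\epsilon\leq\epsilon_{0}$ for a specific small $\epsilon_{0}$. Consequently your claim that ``$\epsilon_{0}$ can be taken arbitrary, e.g.\ $\epsilon_{0}=1$'' is also wrong.

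Your computation of the lift in the torus/simplex directions is essentially the paper's: the identities $\omega_{A}^{\epsilon}(\sdot,\tfrac{\d}{\d\theta_{i}})=\epsilon\,d\bar{v}_{i}$ and $d\bar{v}_{1}=-\sum_{i\geq 2}\tfrac{\zeta(u_{i})}{\zeta(u_{1})}d\bar{v}_{i}$ (Lemma \ref{lem:TdA}) do yield the linear system $\sum m_{i}c_{i}=1$, $c_{i}\propto\zeta(u_{i})$, hence formula \eqref{eq:monodromy-vector-field}; and the independence of the lift from the choice of $\omega_{A}^{\epsilon}$ versus $\omega_{A}^{\delta,\epsilon}$ is indeed immediate from $\rho_{\delta}(0)=1$. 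But to conclude that a vector $\nu$ symplectically orthogonal to the fiber has vanishing $\cZ_{I}$-component — which is needed before you can solve that linear system — you must pair $\nu$ against arbitrary $\xi'\in\cZ_{I}$ and invoke both $d\bar{v}_{i}|_{\cZ_{I}}=0$ and the nondegeneracy of $\omega_{A}^{\epsilon}|_{\cZ_{I}}$. Without Lemma \ref{lem:Z} this step, and the whole nondegeneracy claim, does not close.
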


We notice that at $\partial A$ the forms $\omega\AC^{\epsilon}$ and $\omega\AC^{\delta,\epsilon}$ coincide, so it is enough to deal with the former one.

As in Section \ref{sec:positivity}, we fix $x_{0}\in X_{S}^{\circ}$ for some $S\neq \emptyset$ and order the components of $D$ so that $S=\{1,\dots, k\}$. We fix a fine chart $U_{X}$ around $x_0$ with index set $S$; and put $U=\pi^{-1}(U_X)\subseteq A$.

Let us recall the definition of the smooth structure on $U$. In \eqref{eq:Ui} we have defined an open covering $U=\bigcup_{i=1}^{k}U_i$ by $U_i=\{w_{i}>\frac{1}{n+1}\}$. The smooth coordinate chart \eqref{eq:AC-chart} on $U_1$ is
\begin{equation*}
	(g,\bar{v}_2,\dots,\bar{v}_k,\theta_1,\dots,\theta_k,z_{i_{k+1}},\dots, z_{i_n})\colon U_1\to [0,1)\times \R^{k-1}\times (\S^1)^{k}\times \C^{n-k},
\end{equation*}
where $\{i_{k+1},\dots, i_{n}\}$ is some subset of $\Z_{>N}$. The charts on $U_2,\dots, U_k$ are defined analogously. The vector fields 
	$
	\tfrac{\d}{\d\theta_{1}},\dots,\tfrac{\d}{\d \theta_{k}} 
	$
	on $U_1$ used in formula \eqref{eq:monodromy-vector-field} are defined as the ones dual to $d\theta_{1},\dots,d\theta_{k}$ with respect to the basis of $T^{*}U_1$ given by the above chart \eqref{eq:AC-chart}.
\smallskip

	In the proof of Proposition \ref{prop:lifts}, we will treat separately the subsets $U_{1}\cap A_{I}^{\circ}$ lying over the strata $U_{X}\cap X_{I}^{\circ}$, see \eqref{eq:stratification-pullback}. Therefore, it will be convenient to use slightly different charts over each stratum, which we introduce in formula \eqref{eq:coordinates-on-stratum} below. For the deepest stratum $X_{S}^{\circ}$, formula \eqref{eq:coordinates-on-stratum} agrees with \eqref{eq:AC-chart}.
	
	\begin{lema}\label{lem:coordinates-on-stratum}
		 Fix a subset $I\subseteq S$ containing $1$. Put $U_{I}^{\circ}\de U_1\cap A_{I}^{\circ}$, and reorder the components of $D$ so that $I=\{1,\dots, l\}$. Then, after possibly shrinking $U_X$, the following hold.
		\begin{enumerate}
			\item\label{item:coordinates-on-stratum} 
			For $i\in \{l+1,\dots, k\}$, put $s_{i}=\log r_{i}$, see \eqref{eq:coordinates-sx}. Then the map 
		\begin{equation}\label{eq:coordinates-on-stratum}
			(\bar{v}_2,\dots,\bar{v}_{l},s_{l+1},\dots,s_{k},\theta_{1},\dots,\theta_k,z_{i_{k+1}},\dots,z_{i_{n}})\colon U_{I}^{\circ}\to \R^{k}\times (\S^1)^{k}\times \C^{n-k}.
		\end{equation}
		is a smooth coordinate chart on a neighborhood of $U_{I}$ in $\d A$.
			\item\label{item:coordinate-vector-fields-on-stratum} The vector fields $
			\tfrac{\d}{\d\theta_{1}},\dots,\tfrac{\d}{\d \theta_{k}} 
			$ are dual to $d\theta_1,\dots,d\theta_k$ with respect to the basis of $T^{*}U_{I}$ given by coordinates \eqref{eq:coordinates-on-stratum}.
			\item\label{item:splitting-on-stratum} Define vector fields $\tfrac{\d}{\d \bar{v}_2},\dots,\tfrac{\d}{\d \bar{v}_l}$ as dual to $d\bar{v}_2,\dots,d\bar{v}_l$ in the above basis. 
			We have a splitting
			\begin{equation*}
				T U_{I}^{\circ}=\cV_{I}\oplus \Theta_{I}\oplus \cZ_{I},
				\quad
				\mbox{where }
				\cV_{I}=\sspan\{\tfrac{\d}{\d \bar{v}_{2}},\dots, \tfrac{\d}{\d \bar{v}_l}\},\  \Theta_{I}=\sspan\{\tfrac{\d}{\d \theta_{1}},\dots, \tfrac{\d}{\d \theta_l}\},
			\end{equation*}
			and $\cZ_{I}$ is spanned by the vector fields dual to $ds_{l+1},\dots,ds_{k}$, $d\theta_{l+1},\dots, d\theta_{k}$ and to the (real and imaginary parts of) $dz_{i_{k+1}},\dots, dz_{i_{n}}$, with respect to the basis of $T^{*}U_{I}$ given by the chart  \eqref{eq:coordinates-on-stratum}. \\
			%
			We have $\pi_{*}|_{\cV_{I}\oplus \Theta_{I}}=0$, and $\pi_{*}|_{\cZ_{I}}$ maps $\cZ_{I}$ isomorphically onto the tangent bundle to $X_{I}^{\circ}\cap \pi(U_1)$.
		\end{enumerate}
	\end{lema}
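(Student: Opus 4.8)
\textbf{Proof sketch for Lemma \ref{lem:coordinates-on-stratum}.}

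The plan is to verify that the listed $k$ radial-type functions together with the $k$ angles $\theta_i$ and the $2(n-k)$ transversal functions genuinely form a chart on a neighborhood of $U_I$ in $\d A$, and then read off \ref{item:coordinate-vector-fields-on-stratum}, \ref{item:splitting-on-stratum} as formal consequences. First I would work entirely on $U_1$, where the smooth structure is the one given by the chart \eqref{eq:AC-chart}, namely $(g,\bar{v}_2,\dots,\bar{v}_k,\theta_1,\dots,\theta_k,z_{i_{k+1}},\dots,z_{i_n})$. The subset $U_I^{\circ}=U_1\cap A_I^{\circ}$ is, by Lemma \ref{lem:Gamma_structure}\ref{item:trivial-fibration} and Proposition \ref{prop:AXsmooth}\ref{item:AX-stratification}, a smooth submanifold of $\d A$, and $\d A=\{g=0\}$. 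Along $U_I^{\circ}$ we have $t_i=0$, hence $r_i=0$ for $i\in I$ (so $s_i$ is \emph{not} a coordinate there — this is why we keep $\bar{v}_2,\dots,\bar{v}_l$), while $r_i>0$ for $i\in S\setminus I$, so that $s_i=\log r_i$ is smooth and finite there after shrinking $U_X$. The core computation is to show the $2n$-tuple \eqref{eq:coordinates-on-stratum} has invertible differential at each point of $U_I$, relative to the restriction to $\d A=\{g=0\}$ of the chart \eqref{eq:AC-chart}; equivalently, that passing from $(\bar{v}_{l+1},\dots,\bar{v}_k)$ to $(s_{l+1},\dots,s_k)$ is a local diffeomorphism in the $S\setminus I$ directions, the remaining coordinates being kept intact.

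For that step I would argue as follows. On the relevant region $r_i>0$, $i\in S\setminus I$, we have by Lemma \ref{lem:computations}\ref{item:t-r} that $t_i$ is a smooth positive function of $s_i$ (indeed $s_i=-(m_it_i)^{-1}$), so $(s_i)_{i\in S\setminus I}$ and $(t_i)_{i\in S\setminus I}$ are smoothly interchangeable. Next, restricted to $\d A$, by Lemma \ref{lem:intro}\ref{item:intro-w-zero} we have $u_i=0$ for $i\in S\setminus I$ and hence $v_i=t_i$ there, while Lemma \ref{lem:dvbar}\ref{item:tame_0-bar} (applied along the stratum, with the subset ``$I$'' of that lemma taken to be our $I$) gives $v_i-\bar v_i=\sigma_i b$ with $b$ bounded and $\sigma_i=t_i^2+t_iu_i^2=t_i^2$ on the stratum; differentiating, Lemma \ref{lem:dvbar}\ref{item:tame_1-bar} yields $d(v_i-\bar v_i)=t_i c\,dv_i+t_i^{\epsilon}q\,dg+\sigma_i\gamma$, so on $\{g=0\}$ modulo terms vanishing at $t_i=0$ we get $d\bar v_i=(1+t_i c)\,dv_i+t_i^2\gamma$. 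Therefore the Jacobian of $(\bar v_{l+1},\dots,\bar v_k)\mapsto(t_{l+1},\dots,t_k)\mapsto(s_{l+1},\dots,s_k)$ with respect to the chart \eqref{eq:AC-chart} restricted to $\d A$ is, at each point of $U_I$ (where $t_i=0$), upper-triangular with nonzero diagonal (it is the identity at $U_I$ up to the strictly positive factor $m_i^{-1}t_i^{-2}r_i$ coming from $ds_i=m_i^{-1}t_i^{-2}\,dt_i$, cf.\ Lemma \ref{lem:nu-j}\ref{item:dt-ds}, which is what forces us to restrict attention to $r_i>0$). By the inverse function theorem there is a neighborhood of each point of $U_I$ where \eqref{eq:coordinates-on-stratum} is a diffeomorphism; injectivity on a possibly smaller neighborhood of the compact set $U_I$ follows by the usual argument (suppose two sequences with equal images converge to points of $U_I$, pass to a common limit using properness of $\pi$, Proposition \ref{prop:AX-topo}\ref{item:top-homeo-off-D}, and reach a contradiction with the local diffeomorphism property), exactly as in the proof of Lemma \ref{lem:smoothchart}. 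After shrinking $U_X$ accordingly we obtain \ref{item:coordinates-on-stratum}.

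Parts \ref{item:coordinate-vector-fields-on-stratum} and \ref{item:splitting-on-stratum} are then formal. Item \ref{item:coordinate-vector-fields-on-stratum} is merely the definition of the dual basis to the chart \eqref{eq:coordinates-on-stratum}: the angular coordinates $\theta_1,\dots,\theta_k$ appear in \eqref{eq:coordinates-on-stratum} exactly as in \eqref{eq:AC-chart}, so the notation $\frac{\d}{\d\theta_i}$ is unambiguous. For \ref{item:splitting-on-stratum}, the direct-sum decomposition $TU_I^{\circ}=\cV_I\oplus\Theta_I\oplus\cZ_I$ is just the grouping of the dual basis into the blocks $(d\bar v_2,\dots,d\bar v_l)$, $(d\theta_1,\dots,d\theta_l)$, and the rest. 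That $\pi_*$ kills $\cV_I\oplus\Theta_I$: the vector fields $\frac{\d}{\d\bar v_i}$ ($i\le l$) are, by construction, tangent to the fibres of $\pi$ along $A_I^{\circ}$ (they move the ``hybrid'' coordinate $\bar v_i$, i.e.\ the simplex direction $u_i$, with all of $\pi$'s coordinates $r_j,\theta_j,z_{i_j}$ fixed), and $\frac{\d}{\d\theta_i}$ ($i\le l$) rotates the $\S^1$-fibre of $(\pi,\mu)\colon A_I^{\circ}\to X_I^{\circ}\times\Delta_I$ from Proposition \ref{prop:AX-topo}\ref{item:top-S1-bundle}, again fixing $\pi$'s coordinates; more concretely, $r_i=0$ for $i\in I$ so $\theta_i$ is collapsed by $\pi$ (cf.\ Remark \ref{rem:vanishing_restriction}), and one checks $d(\text{any coordinate of }X_I^{\circ})$ annihilates $\frac{\d}{\d\bar v_i},\frac{\d}{\d\theta_i}$ from the block structure of the chart. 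Finally $\pi_*|_{\cZ_I}$ is an isomorphism onto $T(X_I^{\circ}\cap\pi(U_1))$: the functions $s_{l+1},\dots,s_k,\theta_{l+1},\dots,\theta_k$ and real/imaginary parts of $z_{i_{k+1}},\dots,z_{i_n}$ are precisely (the logarithmic/polar form of) a coordinate system on $X_I^{\circ}\cap\pi(U_1)$ — note $\dim_\R X_I^{\circ}=2(n-l)$ matches $\#\cZ_I=2(k-l)+2(n-k)$ — so $\pi_*$ sends the dual basis of $\cZ_I$ to the dual basis of that coordinate system, hence is a fibrewise isomorphism.

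\textbf{Main obstacle.} The only genuinely delicate point is the smoothness/invertibility statement \ref{item:coordinates-on-stratum}: one must confirm that replacing the globally-defined smooth functions $\bar v_i$ by the sharper $s_i=\log r_i$ in the $S\setminus I$ directions does not spoil smoothness \emph{in a full neighborhood of $U_I$ inside $\d A$}, not merely on the open stratum. This is exactly where the estimates of Lemma \ref{lem:dvbar} (the $\sigma_i$- and $t_i^{\epsilon}$-flattening of $v_i-\bar v_i$) together with Lemma \ref{lem:pullbacks}\ref{item:r-smooth} (smoothness of $r_i$ on all of $U_1$, with all derivatives vanishing on its zero locus) do the work; after invoking them the inverse function theorem plus a compactness/properness argument finishes, just as in Lemma \ref{lem:smoothchart}.
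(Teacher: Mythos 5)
Your proposal is correct and follows essentially the same route as the paper's proof: reduce invertibility of the Jacobian block to comparing $\bar{v}_i$ with $v_i=t_i$ (valid on the stratum since $u_i=0$ for $i\notin I$) via Lemma \ref{lem:dvbar}\ref{item:tame_1-bar}, observe that $s_i$ is a diffeomorphic function of $t_i$ on $\{t_i>0\}$, shrink $U_X$ so the matrix $[\tfrac{\d v_i}{\d\bar{v}_j}]$ is close to the identity, and then read off parts \ref{item:coordinate-vector-fields-on-stratum} and \ref{item:splitting-on-stratum} from the block structure of the chart, with $\pi$ becoming the projection forgetting $(\bar{v}_2,\dots,\bar{v}_l,\theta_1,\dots,\theta_l)$. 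Two harmless slips: $ds_i=m_i^{-1}t_i^{-2}\,dt_i$ carries no factor $r_i$ (that factor belongs to $dr_i$), and for $i\in S\setminus I$ one has $t_i>0$ on $U_I^{\circ}$ — the needed smallness of $t_i$ comes from shrinking $U_X$ around $x_0\in X_S^{\circ}$, not from $t_i$ vanishing there.
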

	\begin{proof}
		\ref{item:coordinates-on-stratum} On the stratum $U_{X}\cap X_{I}^{\circ}$, we have a smooth coordinate system $(z_{l+1},\dots,z_{k},z_{i_1},\dots,z_{i_{n-k}})$. Since the coordinates $z_{l+1},\dots,z_{k}$ do not vanish on $U_{X}\cap X_{I}^{\circ}$, we can replace each of them by polar coordinates $(r_i,\theta_{i})$, or $(s_i,\theta_i)$. Since the map $\pi$ is smooth, the pullback of each $s_i$ and $\theta_i$ to $U \cap A_{I}^{\circ}$ is smooth, too. Thus the map \eqref{eq:coordinates-on-stratum} is smooth. To see that it defines a system of coordinates, it is enough to see that the block $[\frac{\d s_{i}}{\d \bar{v}_{j}}]_{l<i,j\leq k}$ of its Jacobian matrix is invertible. By definition \eqref{eq:def-r_i-t_i} of $t_{i}$, we have $s_{i}=-(m_it_i)^{-1}$, and by Lemma \ref{lem:Ui-simple}\ref{item:Ui-cap-dA}, on $U_{I}^{\circ}$ we have $t_{i}=v_i$. Thus it is enough to show that the matrix  $[\frac{\d v_{i}}{\d \bar{v}_{j}}]_{l<i,j\leq k}$ is invertible. By Lemma \ref{lem:dvbar}\ref{item:tame_1-bar}, as we approach $x_0$ we have $d(v_{i}-\bar{v}_{i})\rightarrow 0$, so our matrix approaches the identity; in particular it is invertible once $U_X$ is small enough.
	
		\ref{item:coordinate-vector-fields-on-stratum} 
		The coordinates of $\tfrac{\d}{\d \theta_{i}}$ in our basis of $TU_{I}^{\circ}$ is given by the $(k+i)$-th column of the above Jacobian matrix.  
		Its entries are $\frac{\d \bar{v}_j}{\d \theta_i}=0$, $\frac{\d s_j}{\d \theta_i}=0$, $\frac{\d \theta_{j}}{\d \theta_{i}}=\delta_{i}^{j}$ and $\frac{\d z_j}{\d \theta_{i}}=0$, as needed.
		
		
		\ref{item:splitting-on-stratum} 
		 The map $\pi$ collapses $A_{I}^{\circ}$ onto $X_{I}^{\circ}$. In local coordinates \eqref{eq:coordinates-on-stratum} on $U_{I}^{\circ}$ and $(s_{l+1},\dots,s_{k},\allowbreak\theta_{l+1},\dots,\theta_{k},\allowbreak z_{i_1},\dots,z_{i_{n-k}})$ on its image in $X_{I}^{\circ}$, this map is a projection
		 \begin{equation*}
		 	(\bar{v}_2,\dots,\bar{v}_{l},s_{l+1},\dots,s_{k},\theta_{1},\dots,\theta_k,z_{i_1},\dots,z_{i_{n-k}})\mapsto
		 	(s_{l+1},\dots,s_{k},\theta_{l+1},\dots,\theta_{k}, z_{i_1},\dots,z_{i_{n-k}}).		 \end{equation*}
		 Thus $\pi_{*}|_{\cV_{I}\oplus \Theta_{I}}=0$, and $\pi_{*}|_{\cZ_{I}}$ is an isomorphism onto its image, as needed.
	\end{proof}

\begin{lema}\label{lem:TdA}
	On $U_{I}^{\circ}$, the following hold.
	\begin{enumerate}
		\item\label{item:ort-Z} For every $\beta\in \Omega^{*}(U_X \cap X_{I}^{\circ})$ we have $\pi^{*}\beta|_{\mathcal{V}_{I}\oplus \Theta_{I}}=0$. 
		\item\label{item:ort-Z-vj} For every $j>l$ we have $\bar{v}_{j}=\bar{t}_{j}$ for some $\bar{t}_{j}\in \cC^{\infty}(U_X\cap X_{I}^{\circ})$. In particular, $d\bar{v}_j|_{\mathcal{V}_I\oplus \Theta_I}=0$.
		\item\label{item:ort-alpha} For every $i\in \{1,\dots, k\}$, $j\in \{1,\dots, l\}$ we have $\alpha_{i}|_{\mathcal{V}_{I}}=0$ and $\alpha_{i}(\tfrac{\d}{\d \theta_{j}})=\delta_{i}^{j}$.
		\item\label{item:dv1} We have $d\bar{v}_{1} =-\sum_{i=2}^{l}\frac{\zeta(u_i)}{\zeta(u_1)}\, d\bar{v}_{i}$, where $\zeta=(\eta^{-1})'\colon [0,\infty)\to[0,\infty)$ is as in \eqref{eq:monodromy-vector-field}.
		\item\label{item:dvi} For every $i\in \{1,\dots, l\}$ we have $d\bar{v}_{i}|_{\Theta_{I}\oplus \cZ_{I}}=0$. 
		\item\label{item:omega-dtheta} 
		For every $i\in \{1,\dots, l\}$ we have 
		$\omega\AC^{\epsilon}(\sdot, \frac{\d}{\d\theta_{i}})=\epsilon\, d\bar{v}_{i}$.
	\end{enumerate}
\end{lema}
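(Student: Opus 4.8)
The six assertions are local computations on $U_{I}^{\circ}$ in the coordinates \eqref{eq:coordinates-on-stratum}, and the main tool will be the identities collected in Lemmas \ref{lem:computations}--\ref{lem:computations-d} together with the structural facts about $\bar{v}_{i}$ from Lemma \ref{lem:dvbar} and about the angular forms $\alpha_{i}$ from Lemma \ref{lem:d-theta}. I will treat the items roughly in the stated order, since later ones depend on earlier ones. Throughout I use the splitting $TU_{I}^{\circ}=\cV_{I}\oplus\Theta_{I}\oplus\cZ_{I}$ from Lemma \ref{lem:coordinates-on-stratum}\ref{item:splitting-on-stratum}, and the fact (also from that lemma) that $\pi_{*}$ kills $\cV_{I}\oplus\Theta_{I}$ and maps $\cZ_{I}$ isomorphically onto $T(X_{I}^{\circ}\cap\pi(U_1))$.

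\textbf{Items \ref{item:ort-Z}, \ref{item:ort-Z-vj}, \ref{item:ort-alpha}.} Part \ref{item:ort-Z} is immediate from $\pi_{*}|_{\cV_{I}\oplus\Theta_{I}}=0$: for any form $\beta$ pulled back from the base, evaluating $\pi^{*}\beta$ on a vector killed by $\pi_{*}$ gives $0$. Part \ref{item:ort-Z-vj}: for $j>l=\#I$ the component $D_{j}$ does not meet the stratum $X_{I}^{\circ}$ near $x_0$ (by the choice of index set $S$ and $I\subseteq S$), so on $U_{I}^{\circ}$ we have $u_{j}=0$ by Lemma \ref{lem:intro}\ref{item:intro-w-zero}, hence $\bar v_{j}=\bar u... $ — more precisely, by Lemma \ref{lem:Ui-simple}\ref{item:vi-smooth-on-strata} the function $v_{j}$ restricts to the pullback of a smooth function $t_{j}$ on $X_{I}^{\circ}$, and since $\bar v_{j}$ is a convex combination of the $v_{j}^{p}$'s each of which has the same property (or is $0$), $\bar v_{j}$ is the pullback of a smooth $\bar t_{j}\in\cC^{\infty}(U_X\cap X_{I}^{\circ})$; now \ref{item:ort-Z} gives $d\bar v_{j}|_{\cV_I\oplus\Theta_I}=0$. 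Part \ref{item:ort-alpha}: write $\alpha_{i}=\sum_{q}\tilde\tau^{q}d\theta_{i}^{q}$; on $\cV_{I}$ each $\theta_{i}^{q}$ is constant (the coordinates $\theta_1,\dots,\theta_k$ are frozen along $\cV_I$, and by Lemma \ref{lem:dv}\ref{item:theta-theta'} any two $\theta_i^q$ differ by a function pulled back from the base, which is also frozen on $\cV_I$ by \ref{item:ort-Z}), so $\alpha_{i}|_{\cV_I}=0$; and by Lemma \ref{lem:d-theta}\ref{item:alpha-dtheta}, $\alpha_{i}=d\theta_{i}+\pi^{*}\beta_{i}$ for a form $\beta_i$ on the base, so by Lemma \ref{lem:coordinates-on-stratum}\ref{item:coordinate-vector-fields-on-stratum} and \ref{item:ort-Z}, $\alpha_{i}(\tfrac{\d}{\d\theta_j})=d\theta_i(\tfrac{\d}{\d\theta_j})+0=\delta_i^j$ for $j\le l$.

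\textbf{Items \ref{item:dv1}, \ref{item:dvi}, \ref{item:omega-dtheta}.} For \ref{item:dv1} and \ref{item:dvi}, recall from Lemma \ref{lem:intro}\ref{item:intro-sum} that $\sum_{i\in S}w_i=1$, and on $U_{I}^{\circ}$ the summands with $i\notin I$ vanish (Lemma \ref{lem:intro}\ref{item:intro-w-zero}), so $\sum_{i=1}^{l}w_i=1$. Since $\bar v_i|_{U_I^\circ}=v_i=t_i-u_i$ and on the stratum $t_i=0$ (so $\bar v_i=-u_i=-\eta(w_i)$), differentiating $\sum_{i=1}^{l}w_i=1$ along $\d A$ and using $u_i=\eta(w_i)$, $w_i=\eta^{-1}(u_i)$, $\zeta=(\eta^{-1})'$ gives $\sum_{i=1}^{l}\zeta(u_i)\,du_i=0$, i.e. $\sum_{i=1}^{l}\zeta(u_i)\,d\bar v_i=0$, which rearranges to \ref{item:dv1} (using $w_1>\tfrac1{n+1}>0$, hence $\zeta(u_1)\ne 0$, on $U_1$). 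For \ref{item:dvi}: the functions $\bar v_2,\dots,\bar v_l$ are among the coordinates \eqref{eq:coordinates-on-stratum}, so $d\bar v_i$ annihilates the dual vectors to $ds_{j},d\theta_{j},dz_{\bullet}$ ($j>l$) and to $d\theta_1,\dots,d\theta_l$; that is exactly $d\bar v_i|_{\Theta_I\oplus\cZ_I}=0$ for $i\in\{2,\dots,l\}$, and then \ref{item:dv1} extends this to $i=1$. Finally \ref{item:omega-dtheta}: from \eqref{eq:omegaE}, $\omega_E=\sum_{j}d\bar v_j\wedge\alpha_j+\sum_j \bar v_j\,d\alpha_j$, so $\omega_E(\,\cdot\,,\tfrac{\d}{\d\theta_i})=\sum_j\big(d\bar v_j\cdot \alpha_j(\tfrac{\d}{\d\theta_i})-\alpha_j\cdot d\bar v_j(\tfrac{\d}{\d\theta_i})\big)+\sum_j\bar v_j\,d\alpha_j(\,\cdot\,,\tfrac{\d}{\d\theta_i})$. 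By \ref{item:dvi}, $d\bar v_j(\tfrac{\d}{\d\theta_i})=0$ for $j\le l$, and for $j>l$, $d\bar v_j(\tfrac{\d}{\d\theta_i})=0$ too by \ref{item:ort-Z-vj} (since $\tfrac{\d}{\d\theta_i}\in\Theta_I$). By \ref{item:ort-alpha}, $\alpha_j(\tfrac{\d}{\d\theta_i})=\delta_i^j$ for $j\le l$; for $j>l$, $\alpha_j$ is pulled back from the base (Lemma \ref{lem:d-theta}\ref{item:alpha-rest}), hence vanishes on $\Theta_I$. And $d\alpha_j$ is pulled back from $X$ (Lemma \ref{lem:d-theta}\ref{item:dalpha}), so $d\alpha_j(\,\cdot\,,\tfrac{\d}{\d\theta_i})=0$. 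Collecting, $\omega_E(\,\cdot\,,\tfrac{\d}{\d\theta_i})=d\bar v_i$, and since $\omega\AC^{\epsilon}=\pi^{*}\omega_X+\epsilon\omega_E$ with $\pi^{*}\omega_X(\,\cdot\,,\tfrac{\d}{\d\theta_i})=0$ (as $\pi_{*}\tfrac{\d}{\d\theta_i}=0$), we get $\omega\AC^{\epsilon}(\,\cdot\,,\tfrac{\d}{\d\theta_i})=\epsilon\,d\bar v_i$, as claimed.

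\textbf{Main obstacle.} The only genuinely delicate point is bookkeeping in item \ref{item:dv1}/\ref{item:dvi}: one must be careful that $\bar v_i$ equals $v_i=-u_i$ only \emph{on} $U_{I}^{\circ}$ and that differentiating the relation $\sum w_i=1$ \emph{along} $\d A$ is legitimate — this uses Lemma \ref{lem:dvbar}\ref{item:tame_0-bar} (so $v_i-\bar v_i$ vanishes to second order in $\sigma_i$ on the stratum) to transfer the identity from the $v_i$ to the $\bar v_i$, plus the smoothness of all the functions involved on $\d A$ established in Section \ref{sec:AX-smooth}. Everything else is a direct substitution. The payoff, realized in the proof of Proposition \ref{prop:lifts}, is that \ref{item:omega-dtheta} reduces computing the symplectic lift to inverting the (explicitly $\zeta(u_i)$-weighted) system $d\bar v_i$, giving the vector field \eqref{eq:monodromy-vector-field}.
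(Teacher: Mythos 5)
Your proof is correct and follows essentially the same route as the paper's: the same splitting $\cV_I\oplus\Theta_I\oplus\cZ_I$, the decomposition $\alpha_i=d\theta_i+\pi^*\beta_i$, the relation $\sum_{i\in I}w_i=1$ differentiated via $\zeta=(\eta^{-1})'$ for \ref{item:dv1}, and the same expansion of $\omega_E$ for \ref{item:omega-dtheta}. The only cosmetic slip is citing Lemma \ref{lem:d-theta}\ref{item:alpha-rest} for all $j>l$ in item \ref{item:omega-dtheta}, whereas for $l<j\le k$ one should instead use item \ref{item:ort-alpha} (or note that $\theta_j$ is itself a pullback on the stratum since $j\notin I$); this does not affect the argument.
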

\begin{proof}
	\ref{item:ort-Z} By Lemma \ref{lem:coordinates-on-stratum}\ref{item:splitting-on-stratum}, we have $\pi_{*}|_{\mathcal{V}_{I}\oplus \Theta_{I}}=0$, so $\pi^{*}\beta|_{\mathcal{V}_{I}\oplus \Theta_{I}}=0$. 
	
	\ref{item:ort-Z-vj} Recall that $\bar{v}_{j}=\sum_{p}\tau^{p}v_{j}^{p}$, where each $\tau^{p}$ is a  smooth function on $X$; and $v_{j}^{p}=0$ for those $p\in R$ such that $U_X^p\cap D_j=\emptyset$. Take $p\in R$ such that $U_X^p\cap D_j\neq \emptyset$. Since $j\not\in I$, Lemma \ref{lem:intro}\ref{item:intro-w-zero} and definition \eqref{eq:def-v_i} of $v_{j}^{p}$ imply that $v_{j}^{p}|_{U_{I}^{\circ}\cap U^{p}}=t_{j}^{p}|_{U_{I}^{\circ}\cap U^{p}}$. Again since $j\not \in I$, the restriction $t_{j}^{p}|_{U_{I}^{\circ}\cap U^p}$ is a pullback of a smooth function on $(U_{X}\cap X_{I}^{\circ})\cap U_{X}^{p}$. We conclude that $\sum_{p}\tau^{p}t_{j}^{p}$ is a smooth function on $U_X\cap X_{I}^{\circ}$, whose pullback to $U_{I}^{\circ}$ equals $\bar{v}_{j}$. This proves the first statement of \ref{item:ort-Z-vj}; the second follows from \ref{item:ort-Z}.
	
	\ref{item:ort-alpha} By Lemma \ref{lem:d-theta}\ref{item:alpha-dtheta}, we have $\alpha_{i}=d\theta_{i}+\pi^{*}\beta_{i}$ for some $\beta_{i}\in \Omega^{1}(U_X)$. Hence part  \ref{item:ort-Z} implies that $\alpha_{i}|_{\mathcal{V}_{I}\oplus \Theta_{I}}=d\theta_{i}|_{\mathcal{V}_{I}\oplus \Theta_{I}}$. Now \ref{item:ort-alpha} follows from the definition of the coordinate vector fields $\tfrac{\d}{\d \theta_j}$.
	
	\ref{item:dv1} Note first that the restriction $u_{1}|_{U_1}$ is nonzero by definition \eqref{eq:Ui} of $U_1$, so each summand in \ref{item:dv1} is well defined. By continuity, it is enough to prove \ref{item:dv1} at each point $x\in \Int_{\d A} U_{I}^{\circ}$. 
	
	Fix $i\in \{1,\dots, l\}$. Since $x\in \Int_{\d A} U_{I}^{\circ}$, we have $t_{i}=0$ on some neighborhood of $x$ in $\d A$. There, $\bar{v}_{i}=-\bar{u}_i$ by definition \eqref{eq:def-vbar-ubar-mu} of $\bar{v}_{i}$; so $\bar{v}_i=-u_i$ by Lemma \ref{lem:intro}\ref{item:intro-u=ubar}. The formula \eqref{eq:def-w_i-u_i} gives $u_{i}=\eta(w_i)$, so $w_{i}=\eta^{-1}(u_i)$, and therefore $dw_{i}=\zeta(u_i)du_i$ by definition of $\zeta$. It follows that on $T_{x}U_{I}^{\circ}$ we have $dw_{i}=-\zeta(u_i) d\bar{v}_{i}$.
	
	In particular, $d\bar{v}_{1}=-\tfrac{1}{\zeta(u_1)}dw_{1}$. By Lemma \ref{lem:intro}\ref{item:intro-sum} we have $w_{1}=-\sum_{i=2}^{l}w_{i}$, so
	\begin{equation*}
		d\bar{v}_{1}=-\frac{1}{\zeta(u_1)}dw_{1}=\frac{1}{\zeta(u_1)}\sum_{i=2}^{l}dw_{i}=-\sum_{i=2}^{l}\frac{\zeta(u_i)}{\zeta(u_1)}d\bar{v}_{i},
	\end{equation*}
	as needed. 
	
	\ref{item:dvi} For $i\in \{2,\dots, l\}$ we have $d\bar{v}_{i}|_{\Theta_{I}\oplus \cZ_{I}}=0$ by definition of $\Theta_{I}\oplus\cZ_{I}$, see Lemma \ref{lem:coordinates-on-stratum}\ref{item:splitting-on-stratum}. The remaining equality $d\bar{v}_1|_{\Theta_{I}\oplus \cZ_{I}}=0$ follows from \ref{item:dv1}.
	
	\ref{item:omega-dtheta} By \ref{item:ort-Z}, we have $\pi^{*}\omega_{X}(\sdot,\tfrac{\d}{\d \theta_i})=0$, so $\omega\AC^{\epsilon}(\sdot,\tfrac{\d}{\d \theta_i})=\epsilon \omega_{E}(\sdot,\tfrac{\d}{\d \theta_i})$. By definition \eqref{eq:omegaE} of $\omega_{E}$, we have $\omega_{E}|_{\partial A}=\sum_{j=1}^{N}d\bar{v}_{j}\wedge \alpha_{j}+\sum_{j=1}^{N}\bar{v}_{j}d\alpha_{j}$. By Lemma \ref{lem:d-theta}, the $1$-forms $\alpha_j$ for $j>l$, and the $2$-forms $d\alpha_{j}$ for all $j$ are pullbacks of smooth forms from $U_X$. Thus parts \ref{item:ort-Z} and \ref{item:ort-Z-vj} imply that $(d\bar{v}_{j}\wedge \alpha_{j})(\sdot, \tfrac{\d}{\d \theta_{i}})=0$ for $j>l$ and $d\alpha_{j} (\sdot,\tfrac{\d}{\d \theta_i})=0$ for all $j$. We conclude that 
	\begin{equation*}
		\omega\AC^{\epsilon}\left(\sdot,\frac{\d}{\d \theta_i}\right)=
		\epsilon \sum_{j=1}^{l}\alpha_{j}\left(\frac{\d}{\d\theta_i}\right)\cdot d\bar{v}_{j}-d\bar{v}_{j}\left(\frac{\d}{\d\theta_{i}}\right)\cdot \alpha_j=
		\epsilon d\bar{v}_i,
	\end{equation*}
	where in the second equality we have used identities $\alpha_{j}(\tfrac{\d}{\d\theta_i})=\delta_{i}^{j}$ and $d\bar{v}_{j}(\tfrac{\d}{\d \theta_{i}})=0$ from \ref{item:ort-alpha}, \ref{item:dvi}.
\end{proof}

		\begin{lema}\label{lem:Z}
		We can shrink $U_{X}$ so that the following holds. There is $\epsilon_0>0$ such that for every $\epsilon\in [0,\epsilon_0]$, the restriction of the form $\omega_{A}^{\epsilon}$ to the subbundle $\cZ_{I}$ is nondegenerate.
	\end{lema}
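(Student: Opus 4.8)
The key point is that $\cZ_{I}$ is, by Lemma \ref{lem:coordinates-on-stratum}\ref{item:splitting-on-stratum}, mapped isomorphically by $\pi_{*}$ onto the tangent bundle of the stratum $X_{I}^{\circ}\cap \pi(U_{1})$, and that along $\cZ_{I}$ the forms $\omega\AC^{\epsilon}$ and $\pi^{*}\omega_{X}$ differ only by $\epsilon\omega_{E}$. So the plan is to show that $\pi^{*}\omega_{X}|_{\cZ_{I}}$ is nondegenerate and that $\epsilon\omega_{E}|_{\cZ_{I}}$ is a small, uniformly bounded perturbation of it, whence the sum stays nondegenerate for $\epsilon$ below a threshold $\epsilon_0$ that is uniform on $U_{I}^{\circ}$ after shrinking $U_X$. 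Nondegeneracy of $\pi^{*}\omega_{X}|_{\cZ_{I}}$ is immediate: $\pi_{*}$ restricts to an isomorphism from $\cZ_{I}$ onto $T(X_{I}^{\circ}\cap\pi(U_1))$, and $\omega_{X}$, being K\"ahler, is symplectic, so its restriction to the complex submanifold $X_{I}^{\circ}$ is symplectic, hence nondegenerate; pulling back by the isomorphism $\pi_{*}|_{\cZ_{I}}$ preserves nondegeneracy.

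For the perturbation term, I would first express $\omega_{E}|_{\cZ_{I}}$ concretely. From \eqref{eq:omegaE} we have $\omega_{E}|_{\d A}=\sum_{j=1}^{N}d\bar{v}_{j}\wedge\alpha_{j}+\sum_{j=1}^{N}\bar{v}_{j}\,d\alpha_{j}$. By Lemma \ref{lem:TdA}\ref{item:dvi}, $d\bar{v}_{i}|_{\cZ_{I}}=0$ for $i\in\{1,\dots,l\}$, so those wedge summands drop out on $\cZ_{I}$; by Lemma \ref{lem:TdA}\ref{item:ort-Z-vj}, for $j>l$ the function $\bar{v}_{j}$ and the form $d\bar{v}_{j}$ are pullbacks of smooth objects from $U_X\cap X_{I}^{\circ}$, and by Lemma \ref{lem:d-theta}\ref{item:alpha-rest},\ref{item:dalpha} the same holds for $\alpha_{j}$ with $j>l$ and for all $d\alpha_{j}$. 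Hence $\omega_{E}|_{\cZ_{I}}=\pi^{*}\big(\sum_{j>l}d\bar{t}_{j}\wedge\beta_{j}+\sum_{j}\bar{v}_{j}\,d\gamma_{j}\big)|_{\cZ_{I}}$ for smooth forms on the stratum; in particular $\omega_{E}|_{\cZ_{I}}=\pi^{*}\eta|_{\cZ_{I}}$ for a fixed smooth $2$-form $\eta$ on $U_X\cap X_{I}^{\circ}$, which extends continuously to the compact closure and is therefore bounded with respect to a fixed background metric on $\bar U_X$. Transporting everything through the isometry-free but linear isomorphism $\pi_{*}|_{\cZ_{I}}$ and fixing a norm on $T\bar U_X$, the bilinear form $\epsilon\omega_{E}|_{\cZ_{I}}$ has operator norm at most $\epsilon C$ for a constant $C$ depending only on $U_X$, while $\pi^{*}\omega_{X}|_{\cZ_{I}}$ is uniformly bounded below in the operator sense by positive-definiteness of $\omega_{X}^{J}$ on the (relatively compact) stratum, say by a constant $c>0$. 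Setting $\epsilon_0=c/(2C)$, the sum $\omega\AC^{\epsilon}|_{\cZ_{I}}=\pi^{*}\omega_{X}|_{\cZ_{I}}+\epsilon\omega_{E}|_{\cZ_{I}}$ is invertible for all $\epsilon\in[0,\epsilon_0]$, as required. The shrinking of $U_X$ is only needed to make $\bar U_X$ relatively compact inside a fixed larger fine chart so that the constants $c,C$ are available.

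The one slightly delicate point — really the only obstacle — is the interplay between this lemma and the ones stated for the other summands $\cV_{I}$, $\Theta_{I}$: the final nondegeneracy of $\omega\AC^{\epsilon}$ on the full fiber of $\theta$ over $U\cap\d A$ will require choosing a single $\epsilon_0$ that works simultaneously for the block decomposition $TU_{I}^{\circ}=\cV_{I}\oplus\Theta_{I}\oplus\cZ_{I}$, and checking that the off-diagonal blocks (pairing $\cZ_{I}$ with $\cV_{I}\oplus\Theta_{I}$) do not spoil invertibility. But that synthesis belongs to the proof of Proposition \ref{prop:lifts}, not to this lemma; here it suffices to handle the $\cZ_{I}$-block in isolation, and the only care needed is to make all estimates uniform on the relatively compact closure of $U_{I}^{\circ}$, which is exactly why we are allowed to shrink $U_X$. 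I expect the write-up to be short, with the bulk of it being the identification $\omega_{E}|_{\cZ_{I}}=\pi^{*}\eta|_{\cZ_{I}}$ via Lemmas \ref{lem:TdA} and \ref{lem:d-theta}.
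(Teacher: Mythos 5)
Your identification of the $\cZ_{I}$-block of $\omega_{E}$ is where the argument breaks down. It is true that $d\bar{v}_{i}|_{\cZ_{I}}=0$ for $i\in I$ and that the terms indexed by $j\notin S$ are pullbacks of bounded forms from the stratum; but for the intermediate indices $j\in S\setminus I$ the summand $d\bar{v}_{j}\wedge\alpha_{j}$ survives, and while it is the pullback of a smooth $2$-form on the \emph{open} stratum $U_{X}\cap X_{I}^{\circ}$, that form does not extend continuously to the closure and is not bounded in the ambient coordinates of $\bar{U}_{X}$: on the stratum one has $d\bar{v}_{j}\approx dt_{j}=m_{j}t_{j}^{2}r_{j}^{-1}\,dr_{j}$ and $\alpha_{j}\approx d\theta_{j}=-r_{j}^{-1}d^{c}r_{j}$, both of which blow up as $r_{j}\to 0$, and $U_{X}\cap X_{I}^{\circ}$ accumulates on the deeper strata (in particular on the centre $x_{0}\in X_{S}^{\circ}$ of the chart whenever $I\subsetneq S$), so no shrinking of $U_{X}$ makes the stratum relatively compact inside $X_{I}^{\circ}$. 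Consequently the constant $C$ in your bound $|\epsilon\omega_{E}|_{\cZ_{I}}|\leq \epsilon C$ does not exist. Worse, the perturbation is genuinely not small relative to $\pi^{*}\omega_{X}|_{\cZ_{I}}$: on the coordinate vector $\nu_{j}$ dual to $ds_{j}$ one has $\omega_{E}^{J}(\nu_{j},\nu_{j})\sim\sigma_{j}b_{j}$ while $||\pi_{*}\nu_{j}||_{X}^{2}\sim\sigma_{j}^{2}$, so the ratio blows up like $\sigma_{j}^{-1}$ near $D_{j}$; the term $\epsilon\omega_{E}$ actually \emph{dominates} $\pi^{*}\omega_{X}$ in those directions, and nondegeneracy there comes from the positivity of the diagonal terms $\sigma_{j}b_{j}(ds_{j})^{2}$ of $\omega_{E}^{J}$, not from the base form.

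This is why the paper's proof cannot stop at a boundedness-plus-compactness observation. After isolating the genuinely bounded piece $\check{\omega}_{1}$, it rewrites $\omega_{E}|_{\cZ_{I}}$ as $\sum_{j=l+1}^{k}\sigma_{j}\bigl(m_{j}(1+c_{j}t_{j})\,ds_{j}\wedge d\theta_{j}+\cdots\bigr)+\omega_{1}$, i.e.\ the same weighted, degenerating expression as \eqref{eq:omegaAC-mu}, and then reruns the entire argument of Proposition \ref{prop:positivity} --- the ideals $\cQ_{j}$, the rescaled frame $\sigma_{j}^{-1/2}\nu_{j}$, and the Gaussian elimination controlling the off-diagonal blocks --- over the stratum $X_{I}^{\circ}$ in place of $X_{\emptyset}^{\circ}$. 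Your perturbative argument is correct only in the extreme case $I=S$, where $S\setminus I=\emptyset$ and $\omega_{E}|_{\cZ_{I}}$ really is a bounded pullback; for $I\subsetneq S$ the weighted analysis is unavoidable.
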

	\begin{proof}
		By Lemma \ref{lem:coordinates-on-stratum}\ref{item:splitting-on-stratum}, the differential of $\pi$ maps $\cZ_{I}$ isomorphically onto the tangent bundle to the stratum $X_{I}^{\circ}\cap \pi(U_1)$. In particular, the standard almost complex structure $J$ on $X$ pulls back to an almost complex structure on $\cZ_{I}$. We denote this pullback by $J$, too. Now it is enough to prove that, after possibly shrinking $U_{X}$, there is an $\epsilon_0>0$ such that for all $\epsilon\in [0,\epsilon_0]$, the symmetric part of the bilinear form $\cZ_{I}\times \cZ_{I}\ni (v,w)\mapsto \omega_{A}^{\epsilon}(v,Jw)\in \R$ is positive definite. 
		
		By Lemma \ref{lem:TdA}\ref{item:dvi}, we have $d\bar{v}_{i}|_{\cZ_{I}}=0$ for all $i\in \{1,\dots, l\}$. Hence 
		\begin{equation*}
			\omega_{E}|_{\cZ_{I}}=\sum_{i=l+1}^{k}d\bar{v}_{i}\wedge \alpha_{i}+\check{\omega}_{1},
		\end{equation*}
		where $\check{\omega}_{1}\de \sum_{j=k+1}^{N}d\bar{v}_{j}\wedge \alpha_{j}+\sum_{j=1}^{N}\bar{v}_{j}d\alpha_{j}$. By Lemma \ref{lem:d-theta}\ref{item:dvbar-rest-fiberwise},\ref{item:alpha-rest},\ref{item:dalpha}, the $1$-forms $d\bar{v}_{j}$, $\alpha_{j}$ for $j>k$; and the $2$-forms $d\alpha_j$ for all $j$, are pullbacks of smooth, bounded forms on $U_{X,I}^{\circ}\de U_X\cap X_{I}^{\circ}$. By Lemma \ref{lem:TdA}\ref{item:ort-Z-vj}, the functions $\bar{v}_{j}$ for $j>l$ are pullbacks of smooth functions on $U_{X,I}^{\circ}$. Clearly the functions $\bar{v}_{j}$ are bounded. We conclude that the form $\check{\omega}_1$ is bounded in the natural coordinates of $U_{X,I}^{\circ}$.
		
		By Lemma \ref{lem:dvbar}\ref{item:tame_1-bar}, for every $i\in \{l+1,\dots, k\}$, the restriction of $d\bar{v}_{i}$ to $T U_{I}^{\circ}$ equals $(1+c_{i}t_{i})dv_{i}+\sigma_{i}\gamma_{i}$, where $c_{i}$ (respectively, $\gamma_{i}$) is a pullback of a smooth, bounded function (respectively, $1$-form) on $U_{X,I}^{\circ}$; and $\sigma_{i}$ is the function defined in \eqref{eq:rho-sigma}. Moreover, since $u_{i}|_{U_{I}^{\circ}}=0$, we have $dv_{i}=dt_{i}=t_{i}^{2}m_{i}\, ds_{i}$ by Lemma \ref{lem:nu-j}\ref{item:dt-ds}, and $t_{i}^{2}=\sigma_{i}$ by definition \eqref{eq:rho-sigma} of $\sigma_{i}$. Eventually, by Lemma \ref{lem:d-theta}\ref{item:alpha-dtheta}, we have $\alpha_{i}=d\theta_{i}+\pi^{*}\beta_{i}$ for some bounded $\beta_{i}\in \Omega^{1}(U_{X})$. As in the proof of Lemma \ref{lem:omegaAC-sym}, we conclude that
		\begin{equation*}
			\omega_{E}|_{\cZ_{I}}=\sum_{i=l+1}^{k} \sigma_{i}(m_{i}(1+c_i t_i)ds_{i}\wedge d\theta_{i}+\gamma_{i}\wedge d\theta_{i}+m_{i}(1+c_i t_i)ds_{i}\wedge \check{\beta}_{i})+\omega_{1},
		\end{equation*}
		for some smooth, bounded $c_i\in \cC^{\infty}(U_{X,I}^{\circ})$, $\gamma_{i},\check{\beta}_{i}\in \Omega^{1}(U_{X,I}^{\circ})$ and $\omega_{1}\in \Omega^{2}(U_{X,I}^{\circ})$. 
		
		This is precisely formula \eqref{eq:omegaAC-mu} with two modifications. First, the sum starts at $i=l+1$ instead of $1$. Second, the functions and forms which in \eqref{eq:omegaAC-mu} are bounded from $X$, i.e.\ bounded in the natural coordinates of $U_X\cap X_{\emptyset}^{\circ}$, are now bounded in the natural coordinates of $U_X\cap X_{I}^{\circ}$.
		
		The remaining part of the proof follows by repeating the proof of Proposition \ref{prop:positivity}, given in Section \ref{sec:positivity}, starting from formula \eqref{eq:omegaAC-mu}. The only difference is that now the computations take place over the stratum $X_{I}^{\circ}$ instead of $X_{\emptyset}^{\circ}$. In particular, instead of vector fields $\nu_{1},\dots,\nu_{2k}$ on $U_{\emptyset}^{\circ}$, which were dual to $ds_1,\dots,d s_k$, $d\theta_1,\dots,d\theta_k$ with respect to the coordinates \eqref{eq:coordinates-sx}, we now use vector fields which are dual to $ds_{l+1},\dots,d s_{k}$, $d\theta_{l+1},\dots,d\theta_{k}$ with respect to the coordinates \eqref{eq:coordinates-on-stratum}. 
\end{proof}

\begin{proof}[Proof of Proposition \ref{prop:lifts}]
	The point $x_0\in D$ lies in a stratum $X_{S}^{\circ}$ for some nonempty $S\subseteq \{1,\dots, N\}$, see \eqref{eq:stratification}. Choose a fine chart $U_X$ around $x_0$, whose associated index set \eqref{eq:index-set} equals $S$. The preimage $U=\pi^{-1}(U_X)$ admits a finite covering $U=\bigcup_{i\in S}U_i$, where each $U_i$ is the domain of a smooth chart \eqref{eq:AC-chart}. Therefore, it is enough to prove the following. For every $i_0\in S$, we can shrink the chart $U_{X}$ around $x_0$, and find a  positive number $\epsilon_0>0$ such that for every $\epsilon\in (0,\epsilon_0]$, the form $\omega_{A}^{\epsilon}$ is fiberwise nondegenerate on $U_{i_0}$; and the symplectic lift of $\frac{\d}{\d \theta}$ to $U_{i_0}$ is given by the formula \eqref{eq:monodromy-vector-field}.

	As before, we reorder the components of $D$ so that $S=\{1,\dots, k\}$ and $i_0=1$. We can assume that $U_X$ and $\epsilon_0>0$ are small enough so that the statement of Lemma \ref{lem:Z} holds for all subsets $I\subseteq U_X$ containing $1$. We fix one such subset, say $I=\{1,\dots, l\}$, and use coordinates \eqref{eq:coordinates-on-stratum} on $U_{I}^{\circ}=U_1\cap A_{I}^{\circ}$. Also, we fix an $\epsilon\in (0,\epsilon_0]$.

	Fix a point $x\in U_I^{\circ}$. Let $F$ be the fiber of $\theta$ passing through $x$. Since the fine chart $U_{X}$ is by definition adapted to $f$, Definition \ref{def:adapted-chart}\ref{item:f-locally} together with the formula \eqref{eq:def-theta_i} for $\theta_{i}$ show that
	\begin{equation*}
		\theta|_{U_I^{\circ}}=m_1\theta_{1}+\dots+m_{k}\theta_k,
	\end{equation*}
	where we use additive notation \ref{not:S1} on $\S^1$. Hence 
	\begin{equation}\label{eq:TF}
		T_{x}F=\ker \left[\sum_{i=1}^{k} m_{i} d\theta_{i}\right]\subseteq T_{x}U_I^{\circ}.
	\end{equation}
	Assume that a nonzero vector $\nu\in T_{x}U_I^{\circ}$ satisfies $\omega\AC^{\epsilon}(\nu,\nu')=0$ for all $\nu'\in T_{x}F$. To prove non-degeneracy of $\omega\AC^{\epsilon}|_{T_{x}F}$, we need to show that $\nu\not\in T_{x}F$.
	
	Let $\nu^{\theta}$ be the vector given by the formula \eqref{eq:monodromy-vector-field}. We note that for $i\in \{l+1,\dots, k\}$, we have $u_i=0$, so the sum in \eqref{eq:monodromy-vector-field} actually runs from $i=1$ to $l$. For every $j\in \{1,\dots, k\}$, we have $d\theta_{j}(\nu^{\theta})=(\sum_{i=1}^{l}m_{i}\zeta(u_i))^{-1}\cdot \zeta(u_j)\geq 0$, with strict inequality for $j=1$. Hence formula \eqref{eq:TF} gives $\nu^{\theta}\not\in T_{x}F$. Thus to show that $\nu\not\in T_{x}F$, it is enough to show that $\nu$ is proportional to \eqref{eq:monodromy-vector-field}.
	
	Using coordinate vector fields $\tfrac{\d}{\d \bar{v}_i}$ introduced in Lemma \ref{lem:coordinates-on-stratum}\ref{item:splitting-on-stratum}, we can write 
	\begin{equation}\label{eq:ort}
		\nu=\sum_{i=2}^{l} a_{i}\frac{\d}{\d \bar{v}_{i}}+\sum_{i=1}^{l}b_{i}\frac{\d}{\d \theta_{i}}+\xi,\quad\mbox{for some } a_i,b_i\in \R\mbox{ and } \xi\in \mathcal{Z}_{I}.
	\end{equation}
	First, we claim that all coefficients $a_2,\dots, a_{l}$ in \eqref{eq:ort} are zero. 
	
	For $i\in \{2,\dots,l\}$ put $\nu^\theta_{i}=\tfrac{1}{m_{i}}\tfrac{\d}{\d \theta_{i}}-\tfrac{1}{m_{1}}\tfrac{\d}{\d \theta_{1}}\in T_{x}U_{I}^{\circ}$. Formula \eqref{eq:TF} shows that 
	$\nu_{i}^{\theta}\in T_{x}F$, so $\omega\AC^{\epsilon}(\nu,\nu^{\theta}_i)=0$ by definition of $\nu$. Using Lemma \ref{lem:TdA}\ref{item:omega-dtheta}, we compute
	\begin{equation*}
		0=\omega\AC^{\epsilon}(\nu,\nu^\theta_{i})= \tfrac{1}{m_{i}} \omega\AC^{\epsilon}(\nu, \tfrac{\d}{\d \theta_{i}})- \tfrac{1}{m_{1}}\omega\AC^{\epsilon}(\nu,\tfrac{\d}{\d \theta_{1}})=\tfrac{1}{m_{i}}\cdot \epsilon d\bar{v}_i(\nu)-\tfrac{1}{m_{1}}\cdot  \epsilon d\bar{v}_{1}(\nu)=\epsilon(\tfrac{a_i}{m_i}-\tfrac{1}{m_{1}}d\bar{v}_{1}(\nu)),
	\end{equation*}
	where the last equality follows from definition of the coordinate vector fields. Thus for all $i\in \{2,\dots, l\}$, the numbers $\tfrac{a_i}{m_{i}}$ are equal to the same number $a\de \tfrac{1}{m_{1}}d\bar{v}_{1}(\nu)$. 
	
	Suppose $a\neq 0$. Then $\tfrac{a_i}{a}=m_i$. Since $d\bar{v}_{1}|_{\Theta_{I}\oplus \cZ_{I}}=0$ by Lemma \ref{lem:TdA}\ref{item:dvi}, the formula \eqref{eq:ort} for $\nu$ gives
	\begin{equation*}
		m_1=\frac{1}{a}d\bar{v}_1(\nu)=\frac{1}{a}\sum_{i=2}^{l}a_{i} d\bar{v}_{1}\left(\frac{\d}{\d \bar{v}_i}\right)=-\sum_{i=2}^{l} m_{i}\frac{\zeta(u_i)}{\zeta(u_1)}\leq 0,
	\end{equation*}
	where for the last equality we used the fact that $\tfrac{a_i}{a}=m_i$, and the formula for $d\bar{v}_1$ from Lemma \ref{lem:TdA}\ref{item:dv1}. This is a contradiction. Therefore, $a=0$, so $a_{i}=am_i=0$ for all $i\in \{2,\dots, l\}$, as claimed.
	\smallskip
	
	Now, we claim that the vector $\xi\in\cZ_{I}$ in \eqref{eq:ort} is zero. Fix any $\xi'\in \mathcal{Z}_{I}$. Then $\xi'\in T_{x}F$ by \eqref{eq:TF}, so $\omega\AC^{\epsilon}(\nu,\xi')=0$ by definition of $\nu$. By Lemma \ref{lem:TdA}\ref{item:omega-dtheta}, for any $i\in \{1,\dots,l\}$ we have $\omega\AC^{\epsilon}(\tfrac{\d}{\d \theta_{i}},\xi')=-\epsilon d\bar{v}_{i}(\xi')=0$, where the last equality follows from Lemma \ref{lem:TdA}\ref{item:dvi}. Since we have shown that $a_2=\dots=a_{l}=0$, we infer from \eqref{eq:ort} that $0=\omega\AC^{\epsilon}(\nu,\xi')=\omega\AC^{\epsilon}(\xi,\xi')$, for all $\xi'\in \cZ_{I}$. Since by Lemma \ref{lem:Z}, the restriction $\omega\AC^{\epsilon}|_{\cZ_{I}}$ is nondegenerate, we get $\xi=0$, as claimed.
	
	Therefore, $\nu=\sum_{i=1}^{l}b_{i}\tfrac{\d}{\d \theta_{i}}$ for some $b_1,\dots,b_{l}\in \R$. Since for all $j\in \{2,\dots, l\}$ we have $\tfrac{\d}{\d \bar{v}_{j}}\in T_{x}F$, we compute using Lemma \ref{lem:TdA}\ref{item:omega-dtheta} that 
	\begin{equation*}
		0=\omega\AC^{\epsilon}\left(\frac{\d}{\d \bar{v}_{j}},\nu\right)=
		\sum_{i=1}^{l}b_{i}\omega\AC^{\epsilon}\left(\frac{\d}{\d \bar{v}_{j}},\frac{\d}{\d \theta_{i}}\right)=
		\epsilon\sum_{i=1}^{l}b_{i}d\bar{v}_{i}\left(\frac{\d}{\d\bar{v}_{j}}\right)=
		\epsilon\left(b_{j}-b_{1}\frac{\zeta(u_j)}{\zeta(u_1)}\right),
	\end{equation*}
	where the last equality follows from Lemma \ref{lem:TdA}\ref{item:dv1}. We conclude that for all $j\in \{1,\dots, l\}$ we have $b_{j}=\zeta(u_{j})b$ for some $b\in \R$. This shows that $\nu$ is proportional to \eqref{eq:monodromy-vector-field}, as claimed. In particular, the form $\omega_{A}^{\epsilon}$ is non-degenerate at $x$.

	It remains to show that if $\nu$ is normalized by $\theta_{*}(\nu)=\tfrac{\d}{\d \theta}$ then $\nu$ equals \eqref{eq:monodromy-vector-field}. In this case, we have 
	\begin{equation*}
		\frac{\d}{\d\theta}=\theta_{*}(\nu)=\sum_{i=1}^{l}b_{i}m_{i} \cdot \frac{\d}{\d\theta}=b\sum_{i=1}^{l} \zeta(u_{i})m_{i}\cdot \frac{\d}{\d\theta},
	\end{equation*}
	hence $b=(\sum_{i=1}^{l}\zeta(u_{i})m_{i})^{-1}$, as needed.
\end{proof}

\begin{proof}[Proof of Proposition \ref{prop:omega}]
	Fix $x\in \d A$. By Propositions \ref{prop:positivity} and \ref{prop:lifts}, there is an $\epsilon_{x}>0$ and an open neighborhood $V_{x}$ of $x$ in $A$, such that, for any $\delta>0$, each of the forms $\omega\AC^\epsilon$ and $\omega\AC^{\delta,\epsilon}$ is fiberwise nondegenerate on $V_{x}$ for every $\epsilon\in (0,\epsilon_{x}]$, and on $V_{x}\setminus \d A$ for every $\epsilon\in [0,\epsilon_x]$. Since $\d A \cap \bar{W}$ is compact, choosing a finite sub-cover from $\{V_{x}\}$ we get an $\epsilon_0>0$ and a neighborhood $W'$ of $\d A\cap \bar{W}$, such that, again, each of the forms $\omega\AC^\epsilon$ and $\omega\AC^{\delta,\epsilon}$ is fiberwise nondegenerate on $W'$ for all $\epsilon\in (0,\epsilon_0]$. This proves part \ref{item:omega-symplectic}, part \ref{item:omega-lift} follows from Proposition \ref{prop:lifts}.
\end{proof}

We end this section with a simple example illustrating Definition \ref{def:AX} of the A'Campo space, and the  monodromy  at radius zero given by Proposition \ref{prop:omega}\ref{item:omega-lift}. In Proposition \ref{prop:monodromy} we will describe the latter more precisely, in terms of the associated abstract contact open book.

\begin{example}\label{ex:monodromy}
	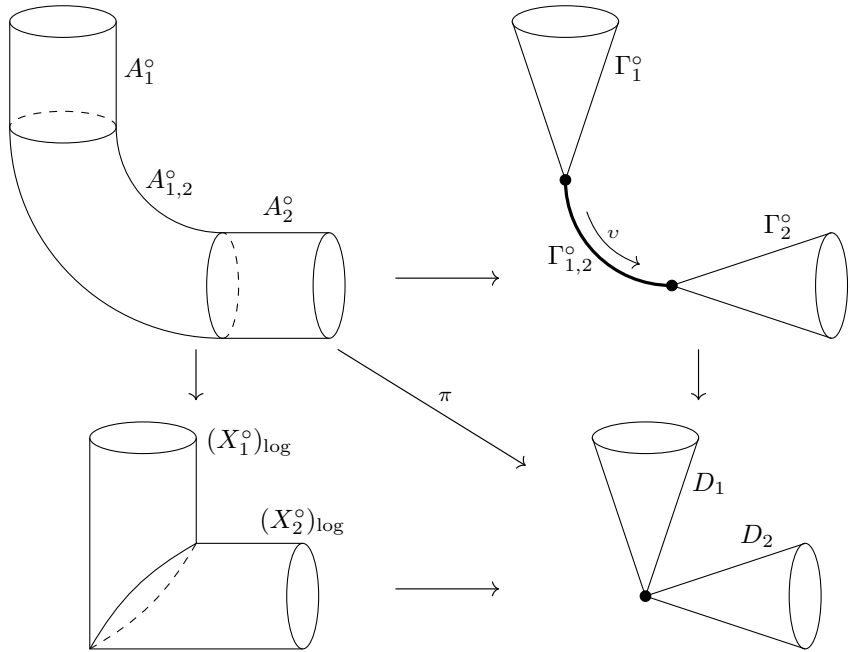
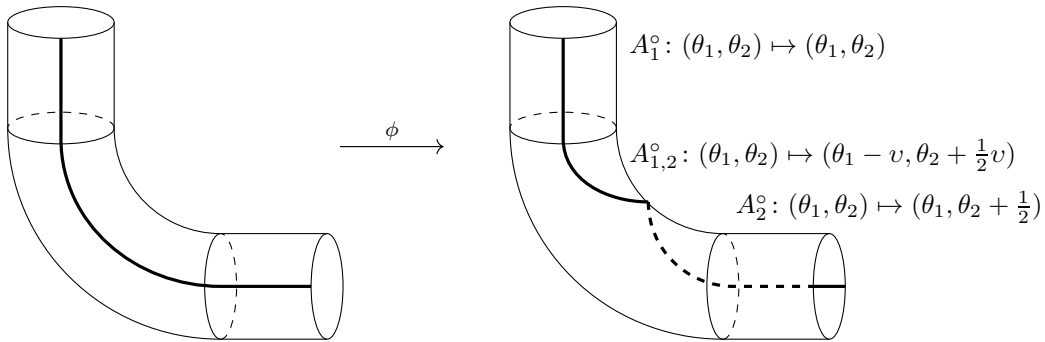
\begin{figure}[ht]
	\begin{subfigure}{\textwidth}
	\centering
	\begin{tikzcd}[column sep=large]
		\begin{tikzpicture}[scale=0.7]
			\path[use as bounding box] (-3,-1) rectangle (4,5.3);
			\draw (-2,5) ellipse (1 and 0.3);
			\draw (-1,5) -- (-1,3) to [out=-90, in=180] (1,1) to (3,1);
			\draw (-3,5) -- (-3,3) to [out=-90, in=180] (1,-1) to (3,-1);
			\draw[dashed] (-1,3) arc(0:180:1 and 0.3);
			\draw (-3,3) arc(180:360:1 and 0.3);
			\draw (1,1) arc(90:270:0.3 and 1);
			\draw[dashed] (1,-1) arc(-90:90:0.3 and 1);
			\draw (3,0) ellipse (0.3 and 1);	
			\node[below right] at (-1.1,4.5) {\small{$A_{1}^{\circ}$}};
			\node[below right] at (-0.7,2.4) {\small{$A_{1,2}^{\circ}$}};
			\node[above right] at (1.5,1) {\small{$A_{2}^{\circ}$}};
		\end{tikzpicture}
		\ar[r]
		\ar[d]
		\ar[rd, "\pi", xshift=10]
		&
		\begin{tikzpicture}[scale=0.7]
			\path[use as bounding box] (-3,-1) rectangle (4,5.3);
			\draw[fill] (0,0) circle [radius=0.1];
			\draw[very thick] (-2,2) to [out=-90,in=180] (0,0);
			\draw[fill] (-2,2) circle [radius=0.1];
			\draw (-3,5) -- (-2,2) -- (-1,5);
			\draw (-2,5) ellipse (1 and 0.3);
			\draw (3,1) -- (0,0) -- (3,-1);
			\draw (3,0) ellipse (0.3 and 1);
			\node at (-0.8,4) {\small{$\Gamma_{1}^{\circ}$}};
			\node at (2,1) {\small{$\Gamma_{2}^{\circ}$}};
			\node at (-1.9,0.4) {\small{$\Gamma_{1,2}^{\circ}$}};
			\draw[->] (-1.6,1.4) to [out=-70,in=160] (-0.6,0.4);
			\node at (-1.1,0.9) {\tiny{$\upsilon$}};
		\end{tikzpicture}
		\ar[d]
		\\
		\begin{tikzpicture}[scale=0.7]
		\path[use as bounding box] (-2.5,-1) rectangle (4.5,3.5);	
			\draw (0,3) ellipse (1 and 0.3);
			\draw (-1,3) -- (-1,-1) -- (3,-1);
			\draw (1,3) -- (1,1) -- (3,1);
			\draw (-1,-1) to [out=60, in=210] (1,1);
			\draw[dashed] (-1,-1) to [out=30, in=-120] (1,1);
			\draw (3,0) ellipse (0.3 and 1);	
			\node at (2,2.8) {\small{$(X_{1}^{\circ})_{\log}$}};
			\node at (3,1.3) {\small{$(X_{2}^{\circ})_{\log}$}};
		\end{tikzpicture}
		\ar[r]
		&
		\begin{tikzpicture}[scale=0.7]
			\path[use as bounding box] (-2.5,-1) rectangle (4.5,3.5);
			\draw[fill] (0,0) circle [radius=0.1];
			\draw (-1,3) -- (0,0) -- (1,3);
			\draw (0,3) ellipse (1 and 0.3);
			\draw (3,1) -- (0,0) -- (3,-1);
			\draw (3,0) ellipse (0.3 and 1);
			\node at (1.2,2) {\small{$D_1$}};
			\node at (2.1,1) {\small{$D_2$}};
		\end{tikzpicture}
	\end{tikzcd}
	\caption{Radius-zero fibers in $A$, $\Gamma$, $X_{\log}$ and $X=\C^2$, see diagram \protect\eqref{eq:AX-diagram}.}
	\label{fig:construction}
	\end{subfigure}
\medskip

\begin{subfigure}{\textwidth}
	\centering
	\begin{tikzcd}[column sep=large]
		\begin{tikzpicture}[scale=0.7]
			\path[use as bounding box] (-4,-1) rectangle (3,5.5);
			\draw (-2,5) ellipse (1 and 0.3);
			\draw (-1,5) -- (-1,3) to [out=-90, in=180] (1,1) to (3,1);
			\draw (-3,5) -- (-3,3) to [out=-90, in=180] (1,-1) to (3,-1);
			\draw[very thick] (-2,4.7) -- (-2,2.8) to [out=-90, in=180] (1,0) -- (2.7,0);
			\draw[dashed] (-1,3) arc(0:180:1 and 0.3);
			\draw (-3,3) arc(180:360:1 and 0.3);
			\draw (1,1) arc(90:270:0.3 and 1);
			\draw[dashed] (1,-1) arc(-90:90:0.3 and 1);
			\draw (3,0) ellipse (0.3 and 1);	
		\end{tikzpicture}
		\ar[r, "\phi", yshift=50]
		&
		\begin{tikzpicture}[scale=0.7]
			\path[use as bounding box] (-4,-1) rectangle (7,5.5);
			\draw (-2,5) ellipse (1 and 0.3);
			\draw (-1,5) -- (-1,3) to [out=-90, in=180] (1,1) to (3,1);
			\draw (-3,5) -- (-3,3) to [out=-90, in=180] (1,-1) to (3,-1);
			\draw[very thick] (-2,4.7) -- (-2,2.8) to [out=-90, in=180] (-0.4,1.6);
			\draw[very thick, dashed] (-0.4,1.6) to [out=-90, in=180] (1.2,0) -- (2.7,0);
			\draw[dashed] (-1,3) arc(0:180:1 and 0.3);
			\draw (-3,3) arc(180:360:1 and 0.3);
			\draw (1,1) arc(90:270:0.3 and 1);
			\draw[dashed] (1,-1) arc(-90:90:0.3 and 1);
			\draw[very thick] (2.7,0) -- (3.3,0);
			\draw (3,0) ellipse (0.3 and 1);	
			\node[below right] at (-1,5) {\small{$A_{1}^{\circ}\colon (\theta_1,\theta_2)\mapsto (\theta_1,\theta_2)$}};
			\node[below right] at (-1,3) {\small{$A_{1,2}^{\circ}\colon (\theta_1,\theta_2)\mapsto (\theta_{1}-\upsilon,\theta_2+\tfrac{1}{2}\upsilon)$}};
			\node[above right] at (1,1) {\small{$A_{2}^{\circ} \colon (\theta_1,\theta_2) \mapsto (\theta_1,\theta_2+\tfrac{1}{2})$}};
		\end{tikzpicture}
	\end{tikzcd}
	\caption{Monodromy at radius zero, see Propositions \ref{prop:omega}\ref{item:omega-lift} and \ref{prop:monodromy}.}
	\label{fig:monodromy}
\end{subfigure}
	\caption{Example \protect\ref{ex:monodromy}: the A'Campo space for the function $f(z_1,z_2)=z_1z_2^2$.}
	\label{fig:example}
\end{figure}	
	Take $f\colon X=\C^{2}\to \C$ given by $f(z_1,z_2)=z_1z_2^2$. 
	Then $f^{-1}(0)=D_{1}+2D_{2}$, where $D_{i}=\{z_{i}=0\}$. In the bottom-right of Figure \ref{fig:construction}, the disks $D_{1},D_{2}$ meeting at the origin are shown as the vertical and horizontal cone, respectively. The fiber $f_{\log}^{-1}(0,1)\subseteq X_{\log}$ is shown in the bottom-left. The restriction of the map $X_{\log}\to X$ to this fiber is an isomorphism in the vertical cylinder, is a double cover in the horizontal one, and collapses the middle circle to the origin. Hence a natural monodromy at radius zero in the Kato--Nakayama space $X_{\log}$ is not continuous: indeed, in this example it is the identity in $(X_{1}^{\circ})_{\log}$, and a  half-twist in $(X_{2}^{\circ})_{\log}$.
	
	The top-right picture in Figure \ref{fig:construction}  is the fiber over $0$ in $\Gamma=\operatorname{graph}(\mu)$: there, the origin of $\C^2$ is replaced by a $1$-simplex $\Gamma_{1,2}^{\circ}$. Eventually, in the fiber $f_{A}^{-1}(0,1)\subseteq A$, shown in the top-left, we multiply this simplex by the middle circle of $f_{\log}^{-1}(0,1)$. The radius-zero monodromy $\phi$ in $A$ interpolates between the identity in $A_{1}^{\circ}$ and half-twist in $A_{2}^{\circ}$, using a parameter of the $1$-simplex $\Gamma_{1,2}^{\circ}$, call it $\upsilon\in [0,1]$. Figure \ref{fig:monodromy} shows how the bold ruling is transformed by $\phi$. A precise formula for the parameter $\upsilon$ can be obtained from equation \eqref{eq:monodromy-vector-field}: in this example, we have  $\upsilon=\frac{2\zeta(u_2)}{\zeta(u_1)+2\zeta(u_2)}$. 
%
%
%
%
\end{example}

\section{Symplectic monodromy and abstract contact open books}\label{sec:acobs}

We now recall further notions concerning Liouville domains and their fibrations.  For a general introduction, see \cite{CE_from-Stein-to-Weinstein,McDuff_Salamon,Geiges,Giroux_cool-gadget,Oh_1,Seidel_Fukaya-categories} and references therein. Our aim is to establish a natural setting \ref{basic-setting} where we can produce or extend symplectic monodromies.

\subsection{Hamiltonian vector fields}\label{sec:Hamiltonian}

Let $(M,\omega)$ be a symplectic manifold. A \emph{Hamiltonian} is a smooth function $H\colon M\times \R\ni (x,t)\mapsto H_{t}(x)\in \R$. A \emph{time-independent Hamiltonian} is a smooth function $H\colon M\to \R$, which we view as a Hamiltonian $M\times \R\ni (x,t)\mapsto H(x)\in \R$. In other words, our Hamiltonians are always time-dependent, unless explicitly stated otherwise. 

Fix a Hamiltonian $H\colon M\times \R\to \R$. Since $\omega$ is non-degenerate, there exists a unique vector field $X^{H}$, called a \emph{Hamiltonian vector field} of $H$, satisfying the equality
\begin{equation*}
	\omega(X^{H},\sdot)=dH.
\end{equation*}
The \emph{Hamiltonian flow} of $H$ is a family of symplectomorphisms $\psi_{t}^{H}\colon M\to M$, parametrized by $t\in \R$, defined by the formulas
\begin{equation*}
	\psi^{H}_{0}=\id_{M},\quad \tfrac{d}{dt}\psi_{t}^{H}=X^{H}.
\end{equation*}
Our definition of $X^{H}$ coincides with the one used in \cite{Uljarevic,McDuff_Salamon,Dostoglou_Salamon,Oh_1}. However, \cite{AD_book,McLean} use the opposite one, namely $\omega(\sdot, X^{H})=dH$: thus for a given Hamiltonian $H$, our vector field $X^{H}$ and flow $\psi^{H}_{t}$ are equal to $-X^{H}$ and $(\psi^{H}_{t})^{-1}$ from \cite{AD_book,McLean}. 

So, when we quote \cite{Uljarevic} we need no adaptations. However, we will quote results from \cite{AD_book,McLean} too. To avoid sign mistakes, whenever we do this, we replace $H$ in loc.\ cit.\ by $-H$: this way, we get the same vector field $X^{H}$ and flow $\psi^{H}_{t}$ as in loc.\ cit; so no further modifications are needed. In particular, to get the same Floer Cauchy--Riemann equation \eqref{eq:floereqmclean} as used in \cite[Definition 4.3]{McLean}, we will need to perturb $\phi$ by an opposite Hamiltonian, so we will chose it \emph{negative} near $\d M$, despite the symbol \enquote{$+$}; see Section \ref{sec:perturbed} for details.

\subsection{Liouville and symplectic fibrations}\label{sec:Liouville-fibrations}

\begin{notation}\label{not:fibers}
	Fix a map $f\colon X\to Y$. If $f$ is clear from the context, we will use the following, simplified notation for its fibers. For $y\in Y$, we put $X_{y}=f^{-1}(y)$. Given a form $\lambda\in \Omega^{*}(X)$, we write $\lambda_{y}\de \lambda|_{X_y}$ for the restriction of $\lambda$ to a fiber. For subsets $U\subseteq X$, $V\subseteq Y$ we put $U|_{V}\de U\cap f^{-1}(V)$.
\end{notation}

A subset $B$ of a manifold $M$ is a \emph{codimension zero submanifold with corners} if for every point $p\in B$ there is a coordinate chart $\varphi\colon U\to\RR^d$ of $M$ around $p$ such that $\varphi(B\cap U)$ is an open subset of $\RR_{\geq 0}^k\times\RR^{d-k}$ for some $k\geq 0$. A \emph{manifold with corners} is a topological space which embeds into a smooth manifold as a codimension zero manifold with corners. This definition is compatible with the one given in \cite[Appendice]{corners} by Proposition 3.1 loc.\ cit. We say that a map (or a form) is smooth on $B$ if it extends to a smooth map (or form) on some neighborhood of $B$ in the ambient manifold $M$.

For example, the closure of each $A_{i}^{\circ}$ in Figure \ref{fig:v3} is a codimension zero submanifold with corners of $\d A$, cf.\ Proposition \ref{prop:monodromy}. Another example is the total space of the Milnor fibration
\begin{equation}
	\label{eq:milfib1}
	f\colon \B_{\epsilon}\cap f^{-1}(\D_{\delta}^{*})\to \D_{\delta}^{*},
\end{equation}
where $f\colon \C^n\to \C$ is a holomorphic function with an isolated critical point at $0\in \C^n$.
\smallskip

Given a smooth map $f\colon M\to B$ between manifolds with corners, we split the boundary $\partial M$ as:
\begin{equation*}
	\d M=\d\vert M\cup \d\hor M, \quad\mbox{where}\quad \d\vert M=f^{-1}(\d B)\cap \d M\quad\mbox{and}\quad \d\hor M=\overline{\d M\setminus \d\vert M}.
\end{equation*}
We say that $f$ is a \emph{fibration} if it is smoothly locally trivial, i.e.\ every $b\in B$ has a neighborhood $U$ and a diffeomorphism $\varphi\colon U\times f^{-1}(p)\to f^{-1}(U)$ such that $f\circ\varphi=\pr_{U}$. By the Ehressmann lemma, see e.g.\ \cite[6.2.10]{handbook}, a proper map $f$ is a fibration if both $f$ and $f|_{\d\hor M}$ are surjective and submersive.

Let $\lambda\in\Omega^1(M)$ be a $1$-form. The pair $(f,\lambda)$ is called a {\em Liouville fibration} if $f\colon M\to B$ is a fibration and $(M_b,\lambda|_{M_b})$ is a Liouville domain for each $b\in B$, see Section \ref{sec:symplectic-intro}. The similar notion of a \emph{symplectic fibration} is defined replacing $\lambda$ by a closed, fiberwise symplectic form $\omega$, see Section \ref{sec:symplectic-intro}. A Liouville fibration induces a symplectic fibration by taking $\omega=d\lambda$. 

Note that in the definition of a symplectic fibration we require that the form $\omega\in \Omega^{2}(M)$ is \emph{closed}, and consider it a part of the data. This definition, while convenient for our purposes, is \emph{not} a standard one. For example, in  \cite[\sec 4.2]{Oh_1}, what we call a symplectic fibration is called a \emph{Hamiltonian fibration} with a choice of a coupling form, cf.\ \cite[\sec 6.4]{McDuff_Salamon}. We impose this more restrictive condition since it holds in all our applications, and guarantees that the flows of symplectic lifts are symplectomorphisms.


\subsection{Definition of a symplectic monodromy}

In this section we will modify the Milnor fibration \eqref{eq:milfib1} so that its monodromy becomes a compactly supported symplectomorphism. Note that such a modification is needed: symplectomorphisms preserve volume, and there is no reason why two fibers of \eqref{eq:milfib1} should have the same volume. The following notion, introduced by McLean in \cite[Definition 3.12]{McLean}, abstracts the properties of symplectic monodromy that we need.  

\begin{definition}\label{def:acob}
	Let $(M,\lambda)$ be a Liouville domain. We say that $(M,\lambda,\phi)$ is an \emph{abstract contact open book} if $\phi\colon M\to M$ is a diffeomorphism that is {\em compactly supported}, i.e.\ satisfies $\phi|_{C}=\id_{C}$ for some neighborhood $C$ of $\d M$, and {\em exact}, that is:
	\begin{equation}\label{eq:exact-symplectiomorphism}
		\phi^{*}\lambda-\lambda=-d \ac
	\end{equation}
	for some function $\ac \colon M\to \R$, called \emph{action} of $\phi$. Note that the formula \eqref{eq:exact-symplectiomorphism} defines $\ac$ uniquely, up to an additive constant. Moreover, it implies that $\phi$ is a symplectomorphism.
	
	Let $M$ be a smooth manifold, let $f\colon M\to [0,1]$ be a smooth map, and let $\lambda\in \Omega^{1}(M)$ be a $1$-form such that $(f,\lambda)$ is a Liouville fibration. Let $\phi\colon M\to M$ be a diffeomorphism which preserves each fiber of $f$ and is equal to the identity close to $\d\hor M$. Put $\phi_{t}=\phi|_{M_t}$,  see Notation \ref{not:fibers}. If for every $t\in [0,1]$ the triple $(M_t,\lambda_t,\phi_t)$ is an abstract contact open book, we say that the family $\{(M_t,\lambda_t,\phi_t)\}_{t\in [0,1]}$ is an \emph{isotopy of abstract contact open books}. In this case, we say that the abstract contact open books $(M_t,\lambda_t,\phi_t)$ are \emph{isotopic}, and write $(M_t,\lambda_t,\phi_t)\sim (M_s,\lambda_s,\phi_s)$ for $t,s\in [0,1]$.
\end{definition}

Clearly, if $(M,\lambda,\phi)$ is an abstract contact open book, then so is $(M,\lambda,\phi^m)$ for any iterate $\phi^m$ of $\phi$; and if $(M_0,\lambda_0,\phi_0)\sim (M_1,\lambda_1,\phi_1)$ then $(M_0,\lambda_0,\phi_0^m)\sim (M_1,\lambda_1,\phi_1^m)$ for every $m\geq 1$.
\smallskip

In order to turn monodromies into abstract open books and compare their isotopy classes, we will need the following notion.

\begin{definition}
	\label{def:collartriv}
	Let $f\colon M\to B\times P$ be a fibration from a manifold with corners to a product of 
	manifolds with boundary. Fix $b_0\in B$ and let $M_{b_{0}}\de (\pr_{B}\circ f)^{-1}(b_0)$, see Notation \ref{not:fibers}. 
	A \emph{$P$-fiberwise  trivialization} of $f$ is a diffeomorphism
	\begin{equation}
		\label{eq:triv}
		\psi\colon M_{b_0}\times B\to M
	\end{equation}
	such that for all points $x\in M_{b_{0}}$, and all $b\in B$, we have
	\begin{equation*}
		f(\psi(x,b))=(b,\pr_P(f(x))).
	\end{equation*}
	A \emph{$P$-fiberwise collar trivialization} of $f$ is an open neighborhood $C$ of $\d\hor M$ in $M$, together with a $P$-fiberwise trivialization of $f|_C$.
	
	Let $\omega\in \Omega^2(M)$ be a $2$-form such that $(f,\omega)$ is a symplectic fibration. A $P$-fiberwise trivialization \eqref{eq:triv} is called {\em $(B\times P)$-fiberwise symplectic} if for any $(b,p)\in B\times P$ we have 
	\begin{equation}
		\label{eq:simpcolltriv}
		\psi^*(\omega_{b,p})=\omega_{b_{0},p},
	\end{equation}
	where $\omega_{b,p}$ is the restriction of $\omega$ to the fiber $M_{b,p}=f^{-1}(b,p)$.
\end{definition}

\begin{definition}
	\label{def:sympmonod}
	Let $f\colon M\to \S^1$ and $\omega\in\Omega^2(M)$ be such that $(f,\omega)$ is a symplectic fibration. A {\em symplectic monodromy trivialization} is a smooth map $\Phi:M_1\times [0,1]\to M$ such that  $f(\Phi(x,t))=e^{2\pi\imath t}$ for all $(x,t)\in M\times [0,1]$, and such that for all $t\in [0,1]$, the restriction $\Phi_t:=\Phi|_{M_1\times\{t\}}$ defines a symplectomorphism from $(M_1,\omega_1)$ to $(M_\theta,\omega_\theta)$, where $\theta=e^{2\pi\imath t}$, $\omega_\theta=\omega|_{M_\theta}$, and $\Phi_{0}=\id_{M_1}$. The corresponding {\em symplectic monodromy} is the symplectomorphism $\phi\de\Phi_1$. 
	
	Assume that $\omega=d\lambda$, and that $(f,\lambda)$ is a Liouville fibration. Fix an $\S^1$-fiberwise symplectic collar trivialization $\psi\colon C\times \S^1\to M$ of $f$. A {\em symplectic monodromy trivialization relative to $\psi$} is a symplectic monodromy trivialization $\Phi$ satisfying $\Phi(x,t)=\psi(x, e^{2\pi\imath t})$ for all $(x,t)\in C\times [0,1]$. A {\em symplectic monodromy relative to $\psi$} is the symplectomorphism $\phi:=\Phi_1$ for a symplectic monodromy trivialization relative to $\psi$. Observe that in this case $\phi$ is the identity close to the boundary $\partial M_1$, so if $\phi$ is exact, then $(M_1,\lambda_1,\phi)$ is an abstract contact open book.
\end{definition}

\begin{remark}
	\label{rem:mondunique}
	Fix $f\colon M\to \S^1$ and $\omega\in\Omega^2(M)$ such that $(f,\omega)$ is a symplectic fibration. If a symplectic monodromy exits, then it is unique as an element of the group of symplectomorphisms up to a symplectic isotopy. Indeed,    
	given two symplectic monodromy trivializations $\Phi$ and $\Phi'$, the needed isotopy is $\Phi'_t\comp\Phi_{t}^{-1}\comp\Phi_1$.
	
	Fix $f\colon M\to \S^1$ and $\lambda\in\Omega^1(M)$ such that $(f,\lambda)$ is a Liouville fibration. Fix an  $\S^1$-fiberwise symplectic collar trivialization $\psi$ of $f$. Then as before, if a symplectic monodromy relative to $\psi$ exists, it is unique in the group of compactly supported symplectomorphisms of $\partial M_1$, up to a compactly supported symplectic isotopy. In particular, if every symplectic monodromy $\phi$ relative to $\psi$ is exact, then the isotopy class of abstract contact open books $(M_1,\lambda_1,\phi)$ does not depend on the choice of $\phi$. 
\end{remark}

We now distinguish a class of \emph{cohomologically trivial} collar trivializations. In Proposition \ref{prop:sufficientmonod} we will use this technical condition to construct symplectic monodromy via a Moser argument.


Let $f\colon M\to B\times P$ as in Definition~\ref{def:collartriv} and let $\lambda\in\Omega^1(M)$ be such that $(f,\lambda)$ is a Liouville fibration. Put $\omega=d\lambda$. Assume that we are given a $P$-fiberwise collar trivialization
\begin{equation*}
	\psi\colon C_{b_0}\times B\to C,
\end{equation*}
that is $(B\times P)$-fiberwise symplectic, i.e.\ satisfies condition \eqref{eq:simpcolltriv}. For any $b\in B$ choose an open neighborhood $U_b$ of $b$ in $B$ and a $P$-fiberwise local trivialization 
\begin{equation*}
	\Theta:M_{b}\times U_b\to f^{-1}(U_b\times P)
\end{equation*}
which is induced by $\psi$ on the collar $C$, that is, such that for any $x\in M_{b}\cap C$ and any $b'\in U_{b}$ we have 
\begin{equation*}
	\Theta(x,b')=\psi(\pr_{C_{b_0}}(\psi^{-1}(x)),b').
\end{equation*}

For every $p\in P$ the form $\Theta^*\omega$ can be interpreted as a family of symplectic forms on the fiber $M_{b,p}$ parametrized by $U_b$; for any $b'\in U_b$, denote by $(\Theta^*\omega)_{b',p}\in\Omega^2(M_{b,p})$ the corresponding form. 
Since the collar trivialization $\psi$ is $(B\times P)$-fiberwise symplectic, for every $b,b'\in U_b$ we have the equality of restrictions 
$(\Theta^*\omega)_{b,p}|_{C_{b}}=(\Theta^*\omega)_{b',p}|_{C_{b}}$, 
hence the difference 
\begin{equation}\label{eq:eta_Theta}
	\eta^{\Theta}_{b,b';p}\de (\Theta^*\omega)_{b,p}-(\Theta^*\omega)_{b',p}
\end{equation}
vanishes in a neighborhood of the boundary $\d M_{b,p}$. Thus we obtain a relative cohomology class $[\eta^{\Theta}_{b,b';p}]\in H^2(M_{b,p},\d M_{b,p};\RR)$. We claim that this cohomology class does not depend on the choice of the trivialization $\Theta$, and hence denote it by  $[\eta_{b,b';p}]$.

To prove the claim, we choose any class $[z]\in H_2(M_{b,p},\d M_{b,p};\RR)$. We have $\eta^{\Theta}_{b,b';p}(z)=\omega(\Theta(z,b))-\omega(\Theta(z,b'))$. Hence, if $\Theta'$ is a different trivialization we have 
$$\eta^{\Theta}_{b,b';p}(z)-\eta^{\Theta'}_{b,b';p}(z)=\omega(\Theta(z,b)-\Theta'(z,b))+\omega(-\Theta(z,b')+\Theta'(z,b')).$$
Since $\Theta$ and $\Theta'$ coincide close to $\d M_{p,b}$, the chain $\Theta(z,b)-\Theta'(z,b)$ is closed. Therefore, since $\omega$ is exact we have the vanishing $\omega(\Theta(z,b)-\Theta'(z,b))=0$. The second summand vanishes for the same reasons. This proves the claim. 

\begin{definition}
	\label{def:cohtriv}
	Fix a Liouville fibration $(f,\lambda)$ and a $(B\times P)$-fiberwise symplectic collar trivialization $\psi$ as in the discussion above. Fix $p\in P$. We say that $(f,\lambda)$ is {\em cohomologically trivial with respect to $\psi$ at $p$} if for every $b\in B$ and a trivialization $\Theta$ as above, the cohomology class $[\eta_{p;b,b'}]$ vanishes in $H^2(M_{b,p},\d M_{b,p};\RR)$ for any $b'\in U_b$.
\end{definition}

\begin{prop}
	\label{prop:sufficientmonod}
	Let $f\colon M\to \S^1\times P$ be as in Definition~\ref{def:collartriv} and let $\lambda\in\Omega^1(M)$ be such that $(f,\lambda)$ is a Liouville fibration. Let $\psi\colon C_{1}\times \S^1\to C$ be its $P$-fiberwise collar trivialization which is $(\S^1\times P)$-fiberwise symplectic. 
	Assume that $(f,\lambda)$  is cohomologically trivial with respect to $\psi$ at every point $p\in P$, see Definition \ref{def:cohtriv}. Then there exists an open covering $\{Q_i\}_{i\in I}$ of $P$, and, for each $i\in I$, a smooth map
	\begin{equation*}
		\Theta^i:M_1|_{Q_i}\times [0,1]\to M|_{Q_i},
	\end{equation*}
	such that for every $(x,t)\in M_1|_{Q_i}\times [0,1]$ we have $(\pr_P\circ f\circ \Theta^i)(x,t)=(\pr_P\circ f)(x)$, and for every $p\in Q_i$ the restriction $\Theta^{i}|_{M_{1,p}\times [0,1]}$ is a symplectic monodromy trivialization relative to the $\S^1$-fiberwise symplectic collar trivialization $\psi|_{C_{1,p}\times\S^1}$.

	For $i\in I$ and $p\in Q_i$, denote by $\phi_{p}^{i}:M_{1,p}\to M_{1,p}$ the associated symplectic monodromy. Assume that $P$ is connected, and for every $i\in I$, $p\in Q_i$ the symplectomorphism $\phi^{i}_{p}$ is exact. Then for every $i\in I$, $p\in Q_i$, the triple $(M_{1,p},\lambda_{1,p},\phi_{p}^{i})$ is an abstract contact open book, whose isotopy type does not depend on the choice of $i\in I$, $p\in Q_i$.
\end{prop}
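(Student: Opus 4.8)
The statement splits into two parts: (1) the local construction of the partial monodromy trivializations $\Theta^i$ over a cover of $P$, obtained by a parametrized Moser argument, and (2) the conclusion that the resulting abstract contact open books are all isotopic when $P$ is connected. I would begin with part (1). Fix $p_0\in P$. Since $f$ is a fibration, I would choose a small enough neighborhood $Q$ of $p_0$ together with a $P$-fiberwise local trivialization $\Theta\colon M_1|_Q\times\S^1\times?\dots$ — more precisely, work on $M_{1,p_0}$ and trivialize simultaneously in the $\S^1$- and $P$-directions, using $\psi$ near $\d\hor M$ as required by the "relative to $\psi$" condition. Under this trivialization, pulling back $\omega$ gives, for each fixed $p\in Q$, a smooth family $\{(\Theta^*\omega)_{e^{2\pi\imath t},p}\}_{t\in[0,1]}$ of symplectic forms on the fixed fiber $M_{1,p}$, all agreeing near the boundary (because $\psi$ is $(\S^1\times P)$-fiberwise symplectic) and all cohomologous in $H^2(M_{1,p},\d M_{1,p};\R)$ — the latter is exactly the cohomological triviality hypothesis of Definition~\ref{def:cohtriv}, applied with $B=\S^1$, $b=1$, $b'=e^{2\pi\imath t}$. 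Hence the derivative $\tfrac{d}{dt}(\Theta^*\omega)_{e^{2\pi\imath t},p}$ is an exact $2$-form vanishing near $\d M_{1,p}$, and I can write it as $d\beta_{t,p}$ for a smooth (in $t$ and $p$) family of $1$-forms $\beta_{t,p}$ compactly supported away from $\d M_{1,p}$; here one needs a version of the Poincaré lemma with parameters and with control near the boundary, which is standard for a compact manifold with boundary. Defining the time-dependent vector field $X_{t,p}$ by $\iota_{X_{t,p}}(\Theta^*\omega)_{e^{2\pi\imath t},p}=-\beta_{t,p}$ and integrating its flow (which exists for all $t\in[0,1]$ since $X_{t,p}$ is compactly supported in the interior), the usual Moser computation $\tfrac{d}{dt}\big(\Psi_{t,p}^*(\Theta^*\omega)_{e^{2\pi\imath t},p}\big)=\Psi_{t,p}^*\big(\mathcal L_{X_{t,p}}(\Theta^*\omega)_{\cdots}+\tfrac{d}{dt}(\Theta^*\omega)_{\cdots}\big)=0$ shows $\Psi_{t,p}^*(\Theta^*\omega)_{e^{2\pi\imath t},p}=(\Theta^*\omega)_{1,p}=\omega_{1,p}$. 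Composing $\Theta$ with the flow $\Psi_{t,p}$ yields the desired $\Theta^i\colon M_1|_{Q}\times[0,1]\to M|_{Q}$; it is $\S^1$-fiberwise a symplectomorphism onto the relevant fiber, equals $\psi$ near $\d\hor M$ because $X_{t,p}$ is supported in the interior, and restricts to $\id$ at $t=0$. Covering $P$ by such $Q$'s gives $\{Q_i\}_{i\in I}$ and the maps $\Theta^i$.

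For part (2), first fix $i$ and vary $p$ within a single $Q_i$: the family $\{(M_{1,p},\lambda_{1,p},\phi^i_p)\}_{p\in Q_i}$ is literally an isotopy of abstract contact open books in the sense of Definition~\ref{def:acob} (reparametrizing any smooth path in $Q_i$ to $[0,1]$), since $\phi^i_p=\Theta^i_{1,p}$ is the identity near $\d M_{1,p}$ and exact by hypothesis; so the isotopy type is locally constant on each $Q_i$. To compare different charts, note that on an overlap $Q_i\cap Q_j\ni p$ both $\phi^i_p$ and $\phi^j_p$ are symplectic monodromies of the same symplectic fibration $f|_{\{e^{2\pi\imath t}\}\times\{p\}}$ relative to the \emph{same} $\S^1$-fiberwise symplectic collar trivialization $\psi|_{C_{1,p}\times\S^1}$; by Remark~\ref{rem:mondunique} they differ by a compactly supported symplectic isotopy, so $(M_{1,p},\lambda_{1,p},\phi^i_p)\sim(M_{1,p},\lambda_{1,p},\phi^j_p)$. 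Hence the isotopy type of $(M_{1,p},\lambda_{1,p},\phi^i_p)$ depends only on $p$, and we have just shown it is locally constant in $p$; since $P$ is connected it is globally constant, which is the claim.

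The routine verifications — smoothness of $\beta_{t,p}$ in all parameters, the exact shape of the chart $\Theta$, the reparametrization of paths in $Q_i$, and checking that the collar condition survives the Moser flow — I would treat briefly. The one genuine point requiring care, and the step I expect to be the main obstacle, is the \emph{parametrized, boundary-controlled Poincaré lemma}: producing the primitives $\beta_{t,p}$ so that they depend smoothly on $(t,p)$, are supported in a fixed interior compact set independent of $(t,p)$ (this uses that $(\Theta^*\omega)_{e^{2\pi\imath t},p}$ is independent of $t$ on the collar $C_{1,p}$, which comes from $(\S^1\times P)$-fiberwise symplecticity of $\psi$), and realize the \emph{zero} relative cohomology class rather than just some fixed class — this is exactly where Definition~\ref{def:cohtriv} enters and must be invoked correctly, with $B$ playing the role of the $\S^1$-direction. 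Everything else is a standard Moser argument plus the uniqueness statement of Remark~\ref{rem:mondunique}.
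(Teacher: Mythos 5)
Your proposal is correct and follows essentially the same route as the paper: a parametrized Moser argument over a local trivialization that agrees with $\psi$ on the collar, with the cohomological triviality hypothesis supplying exactness of $\tfrac{d}{dt}(\Theta^*\omega)_{t,p}$ relative to the boundary, and Remark~\ref{rem:mondunique} handling the overlaps $Q_i\cap Q_j$ in the second part. The one step you defer as "standard" --- producing primitives $\beta_{t,p}$ that are smooth in $(t,p)$ and supported in a fixed interior compact set --- is exactly where the paper invests its effort (a compactly supported Mayer--Vietoris induction on a good cover in the style of Bott--Tu), and you correctly flag it as the crux.
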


\begin{proof}
	For the first assertion, we will use a parametrized Moser argument. Put $\omega=d\lambda$.
	
	Since $f$ is a fibration and admits a $P$-fiberwise collar trivialization $\psi$, there exists a smooth map 
	\begin{equation*}
		\Theta':M_1\times [0,1]\to M
	\end{equation*}
	such that we have  $(\pr_P\circ f\circ \Theta')(x,t)=(\pr_P\circ f)(x)$ for any $(x,t)\in M_1\times [0,1]$, and $\Theta'(x,t)=\psi(x,e^{2\pi\imath t})$ for any $(x,t)\in C\cap M_1\times [0,1]$. We view the pullback $\omega':=(\Theta')^*\omega$ as a family of closed $2$-forms in $\Omega^2(M_1)$ parametrized by $t\in [0,1]$ which are symplectic fiberwise over $P$. We denote by $\omega'_t$ the restriction of $\omega'$ to $M_1\times\{t\}$. Since $\psi$ is $(\S^1\times P)$-fiberwise symplectic, $\Theta'$ can be chosen so that the restriction $\omega'_t|_{C_{1,p}}$ is independent of $t$ for any $p\in P$. We now follow the strategy of the proof of \cite[Theorem 3.2.4]{McDuff_Salamon}, taking care of the fact that our construction is parametrized over $P$ and compatible with the collar trivialization. 
	
	Since the assertion is local in the base $P$, we may assume that $M_1$ is of the form $M_1=P\times N_1$. Then we can view  $\omega'_t$ as a family of closed forms $\omega'_{t,p}\in\Omega^{2}(N_1)$ which are independent of $t$ near $\partial N_1$, for each fixed $p\in P$. Since $(f,\lambda)$ is cohomologically trivial with respect to $\psi$, the relative cohomology class $[\omega'_{t,p}-\omega'_{t',p}]$ vanishes in $H^{2}(N_1,\partial N_1,\RR)$ for every fixed $p\in P$, and every $t,t'\in [0,1]$. Therefore, the class  $[\frac{d}{dt}\omega'_{t,p}]$ vanishes in the same group. This means that for any $p\in P$ and any $t\in [0,1]$ there exists a $1$-form $\sigma_{t,p}\in\Omega^1(N_1)$ which vanishes near $\d N_1$ and satisfies $d\sigma_{t,p}=\frac{d}{dt}\omega'_{t,p}$.
	
	We will produce such a family of $\sigma_{t,p}$ smoothly depending on $t$ and $p$. For this, out of the two methods suggested in the proof of \cite[Theorem 3.2.4]{McDuff_Salamon} to construct $\sigma_{t,p}$, we adapt the first one, mimicking the proof of the compactly supported Mayer-Vietoris sequence in de Rham cohomology. 
	
	To be precise: denote by $\Omega_c^*(N_1\times [0,1]\times P)|_{[0,1]\times P}$ the {chain} complex formed by families $\xi_{t,p}$ of compactly supported forms in $N_1\setminus\d N_1$ smoothly depending on $(t,p)$. Let $H_c^*(N_1\times [0,1]\times P)|_{[0,1]\times P}$ be its cohomology. Define a homomorphism 
	\begin{equation*}
		\gamma\colon H_c^*(N_1\times [0,1]\times P)|_{[0,1]\times P}\to H_c^*(N_1\setminus\d N_1)\otimes\cC^\infty([0,1]\times P)
	\end{equation*}
	as follows: choose {a basis} $[\Gamma_{1}],\dots,[\Gamma_{r}]$ {of} $H_{*}(N_1,\d N_1)$ and define 
	\begin{equation*}
		\gamma\left([\xi_{t,p}]\right) \de \sum_{j=1}^r\left(\int_{\Gamma_{j}}\xi_{t,p}\right)\cdot \left[\Gamma_{j}\right]^*,
	\end{equation*}
	where $\{[\Gamma_{j}]^*\}_{j=1}^r$ is the {basis of $H_{c}^{*}(N_1\setminus \d N_1)$ dual to} $\{[\Gamma_{j}]\}_{j=1}^r$. 
	
	In order to produce the needed family it is enough to prove that $\gamma$ is an isomorphism. This is proved by induction on the minimal number of open subsets of a good cover of $N_1$, where a good cover is a cover by contractible open subsets such that any finite intersection is contractible. Existence of good covers is established in \cite[Theorem 5.1]{Bott_Tu}. The initial step of the induction follows the proof of the compactly supported Poincar\'e Lemma \cite[Propositions 4.6, 4.7]{Bott_Tu}. The inductive step follows the Mayer-Vietoris argument of \cite[\sec I.5]{Bott_Tu}.
	
	After the smooth family $\sigma_{t,p}$ has been constructed, applying Moser trick to the family $\sigma_{t,p}$ as in \cite[\sec 3.2]{McDuff_Salamon} we produce a family of diffeomorphisms $\Theta''_{t,p}\colon N_1\to N_1$ that are equal to the identity near $\partial N_1$ and such that $(\Theta''_{t,p})^*\omega'_{t,p}= \omega'_{0,p}$ for any $t\in [0,1]$ and $p\in P$. Then the required smooth map $\Theta$ is defined by $\Theta_{t,p}:=\Theta'_{t,p}\comp \Theta''_{t,p}$. 
	
	For the second assertion, we note that for any $p\in Q_i$ the monodromy diffeomorphism $\phi_{p}^{i}$ induced by $\Theta^i$ is compactly supported, so if it is exact, then  $(M_{1,p},\lambda_{1,p},\phi_{p}^{i})$ is an abstract contact open book. Clearly, $(M_{1,p},\lambda_{1,p},\phi_{p}^{i})\sim (M_{1,q},\lambda_{1,q},\phi_{q}^{i})$ if $p,q\in Q_{i}$ and $Q_i$ is connected. If $p\in Q_i\cap Q_{j}$ then since $\phi_{p}^{i}$ and $\phi_{p}^{j}$ are both symplectic monodromies relative to the same collar trivialization, we have $(M_{1,p},\lambda_{1,p},\phi_{p}^{i})\sim (M_{1,p},\lambda_{1,p},\phi_{p}^{j})$ by Remark~\ref{rem:mondunique}.
\end{proof}

\begin{definition}\label{def:monodromy_acob}
	Let $f\colon M\to P\times \S^1$ be a Liouville fibration, with a connected $P$, and let $\psi$ be its fiberwise symplectic collar trivialization.  
	The abstract contact open book from Proposition \ref{prop:sufficientmonod} will be called the \emph{monodromy abstract contact open book} of $f,\psi$. It is unique up to an isotopy, and exists if $\psi$ is cohomologically trivial and the monodromy $\phi$ is exact.
\end{definition}

\subsection{Grading and Conley--Zehnder index}

In order to define a grading of the Floer complex, we need to consider \emph{graded} abstract contact open books. We now recall basic notions leading to their definition, following \cite[Appendix A]{McLean}.

\subsubsection{Graded principal bundles} Let $G$ be a Lie group with universal cover $\tilde{G}$, and let $p\colon F\to M$ be a principal $G$-bundle. A \emph{grading} of $F$ is a principal $\tilde{G}$ bundle $\tilde{F}\to M$ together with an isomorphism $\alpha\colon \tilde{F}\times_{\tilde{G}} G \to F$. Thus there is a natural covering map $\alpha'\colon\tilde{F}\to F$. An isotopy between two gradings $\alpha_0$, $\alpha_1$ is a principal $G$-bundle over $M\times [0,1]$, together with a grading which restricts to $\alpha_i$ over $M\times \{i\}$ for $i\in \{0,1\}$. Choose a base point $f\in F$. Then by \cite[Lemma A.3]{McLean}, there is a one-to-one correspondence
\begin{equation}\label{eq:gradings-subgroups}
	\begin{split}
		&\{\mbox{gradings of }p\colon F\to M\}/_{\mbox{\tiny{isotopy}}} 
		\longleftrightarrow \\
		&\longleftrightarrow
		\{N\vartriangleleft \pi_{1}(F,f): p_{*}|_{N}\colon N\to \pi_{1}(M,p(f))\mbox{ is an isomorphism}\}
	\end{split}
\end{equation}
sending a grading $(\tilde{p}\colon \tilde{F}\to M,\alpha)$ to  $\alpha'_{*}(\pi_{1}(\tilde{F},\tilde{f}))\vartriangleleft \pi_{1}(F,f)$, where $\tilde{f}\in (\alpha')^{-1}(f)$.

\subsubsection{Graded symplectic manifolds} \label{sec:graded-manifolds}
Let $(M,\omega)$ be a symplectic manifold of dimension $2d$. A \emph{grading} of $M$ is a grading of its symplectic frame bundle $\Fr(M)\to M$, which is a principal $\Sp(2d)$-bundle, see \cite[Definition A.6]{McLean}. Choose an $\omega$-compatible almost complex structure $J$, so that $TM$ becomes a complex vector bundle of rank $d$. Let $K^{*}_{M}\de \Lambda^{d} TM$ be the corresponding anti-canonical bundle. The natural maps
\begin{equation*}
	\begin{tikzcd}
		\Sp(2d) & \ar[l, hook'] U(d) \ar[r, "\det"] & \S^1
	\end{tikzcd}
\end{equation*}
induce isomorphisms between fundamental groups, hence \eqref{eq:gradings-subgroups} gives a one-to-one correspondence between gradings of $M$ and gradings of the $\S^{1}$-bundle of $K^{*}_{M}$. By \cite[Lemma A.5]{McLean}, the latter are in one-to-one correspondence with homotopy classes of $\cC^{\infty}$ trivializations of $K^{*}_{M}$, or, equivalently, of $K_{M}\de \Lambda^{d} T^{*}M$. Recall that $K_M$ is a complex line bundle, so its $\cC^{\infty}$ trivializations are given by nonvanishing $\cC^{\infty}$ sections. To summarize, we have a one-to-one correspondence 
\begin{equation}\label{eq:gradings-trivializations}
	\{\mbox{gradings of } M\}/_{\mbox{\tiny{isotopy}}} 
	\longleftrightarrow
	\{\mbox{nonvanishing smooth sections of }K_{M} \}/_{\mbox{\tiny{homotopy}}}.
\end{equation}
Likewise, given a symplectic fibration $M\to P$ with a fiberwise symplectic form $\omega$ and an $\omega$-compatible almost complex structure $J$ of the relative tangent bundle, let $F\to M$ be the vertical symplectic frame bundle, and let $K\vert$ be the relative canonical bundle. We have a one-to-one correspondence
\begin{equation}\label{eq:gradings-trivializationsfiberwise}
	\{\mbox{gradings of } F\}/_{\mbox{\tiny{isotopy}}} 
	\longleftrightarrow
	\{\mbox{nonvanishing smooth sections of }K\vert \}/_{\mbox{\tiny{homotopy}}}.
\end{equation}

\subsubsection{Graded symplectomorphisms and graded abstract contact open books}\label{sec:graded-acobs}

Let $\phi\colon M\to M$ be a symplectomorphism. Let $\beta\colon \Fr(M)\to \phi^{*}\Fr(M)$ be the induced automorphism of the symplectic frame bundle. A \emph{grading} of $\phi$ is an isomorphism $\tilde{\beta}\colon \tilde{\Fr}(M)\to\phi^{*}\tilde{\Fr}(M)$ which lifts $\beta$ via the grading isomorphism $\alpha\colon \tilde{\Fr}(M)\times_{\tilde{\Sp}(2d)} \Sp(2d) \to \Fr(M)$, i.e.\ satisfies $\tilde{\beta}\times_{\tilde{\Sp}(2d)}\Sp(2d)=\beta$. 

A \emph{grading} of an abstract contact open book $(M,\lambda,\phi)$ is a grading of the symplectic manifold $(M,d\lambda)$ and a compatible grading of the symplectomorphism $\phi\colon M\to M$. An isotopy of abstract contact open books $\{(M_t,\lambda_t,\phi_t)\}_{t\in [0,1]}$ is \emph{graded} if there is a family $\tilde{\beta_t}\colon \tilde{\Fr}(M_t)\to \phi_t^{*}\tilde{\Fr}(M_t)$ of gradings of $\phi_t$ depending smoothly on $t$. 

\begin{lema}
	\label{lem:liftgradings}
	Let $f\colon M\to \S^{1}\times P$ be a symplectic fibration with a closed fiberwise symplectic form $\omega$, and let $F\to M$ be the vertical symplectic frame bundle. Let $\Phi\colon M_1\times [0,1]\to M$ be a map that preserves fibers over $P$, and is a symplectic monodromy trivialization fiberwise over $P$. Any grading $\tilde{F}\to M$ of $F$ induces a grading $\tilde{\beta}_p$ of the symplectic monodromy $\phi_p\colon M_{1,p}\to M_{1,p}$ defined by $\Phi$, which depends smoothly on $p$. 
\end{lema}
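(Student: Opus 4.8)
\textbf{Proof plan for Lemma \ref{lem:liftgradings}.}
The plan is to follow the correspondences set up in Section \ref{sec:graded-manifolds}, using the fact that $\Phi$ provides a canonical path in each fiber back to the reference fiber $M_{1,p}$, so that gradings can be transported along it. First I would fix an $\omega$-compatible almost complex structure $J$ on the relative tangent bundle $T\vert M$ (which exists and is unique up to contractible choice), so that $K\vert$ becomes a complex line bundle. By \eqref{eq:gradings-trivializationsfiberwise}, the given grading $\tilde F\to M$ of the vertical symplectic frame bundle corresponds to a homotopy class of nonvanishing smooth sections of $K\vert$; fix a representative section $s$. For each $p\in P$, the restriction of $K\vert$ to $M_{1,p}$ is the canonical bundle $K_{M_{1,p}}$ of the symplectic fiber $(M_{1,p},\omega_{1,p})$, and $s$ restricts to a nonvanishing section $s_p$ of $K_{M_{1,p}}$; by \eqref{eq:gradings-trivializations} this determines a grading of the symplectic manifold $M_{1,p}$, smoothly in $p$.

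Next I would produce the grading of the symplectomorphism $\phi_p$. The key observation is that $\Phi_{t,p}\de \Phi|_{M_{1,p}\times\{t\}}\colon M_{1,p}\to M_{e^{2\pi\imath t},p}$ is a symplectomorphism with $\Phi_{0,p}=\id$, so $\{\Phi_{t,p}\}_{t\in[0,1]}$ is a path of symplectomorphisms from the identity to $\phi_p$; pulling back $s$ over this path, i.e.\ comparing $s_p$ on $M_{1,p}$ with $\Phi_{1,p}^*$ of the restriction of $s$ to $M_{e^{2\pi\imath},p}=M_{1,p}$, gives a canonical homotopy class of trivialization of $\phi_p^*K_{M_{1,p}}\otimes K_{M_{1,p}}^{-1}$. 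Concretely: the automorphism $\beta_p$ of the symplectic frame bundle induced by $\phi_p$, together with the trivializations coming from $s_p$ and from the pullback section along $\Phi$, determines a preferred lift $\tilde\beta_p\colon \tilde{\Fr}(M_{1,p})\to\phi_p^*\tilde{\Fr}(M_{1,p})$ via \eqref{eq:gradings-subgroups}, since the path $\Phi_{t,p}$ kills the ambiguity (the universal cover element) in choosing such a lift. One then checks, using that $\Phi$ is smooth in $(x,t,p)$ and the correspondences \eqref{eq:gradings-trivializations}, \eqref{eq:gradings-trivializationsfiberwise} are natural, that $\tilde\beta_p$ and the underlying grading of $M_{1,p}$ vary smoothly with $p$; near $\partial M_{1,p}$ one uses that $\Phi$ agrees with the collar trivialization, so everything is locally constant there and no compatibility issue arises at the boundary.

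The main obstacle I expect is the bookkeeping in translating between the three equivalent descriptions of a grading — as a $\tilde\Sp$-bundle, as a homotopy class of trivializations of $K$, and as a normal subgroup of $\pi_1$ via \eqref{eq:gradings-subgroups} — and in verifying that the lift $\tilde\beta_p$ obtained from the section $s$ along the path $\Phi_{t,p}$ is independent, up to the relevant notion of equivalence, of the representative $s$ within its homotopy class, and of $J$. This is essentially the content of \cite[Appendix A]{McLean}; the new point here is only the \emph{fiberwise-over-$P$ smoothness}, which follows because the path $\Phi$ and the section $s$ are both globally smooth on $M_1\times[0,1]$ and $M$ respectively, so all the constructions can be performed in smooth families. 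I would therefore present the argument by reducing, at each stage, to the corresponding statement of \cite[Appendix A]{McLean} applied to the fiber $M_{1,p}$, and then observe that the auxiliary data ($J$, $s$, and the path $\Phi_{\bullet,p}$) have been chosen to depend smoothly on $p$, which gives the asserted smooth dependence of $\tilde\beta_p$.
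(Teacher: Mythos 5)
Your proposal is correct, but it takes a genuinely different route from the paper. The paper's proof avoids almost complex structures and the canonical bundle altogether: it first replaces $\omega$ by a closed fiberwise symplectic form $\omega'$ (built from the mapping cylinder of $\phi$ and the trivialization $\Phi$) agreeing with $\omega$ on every fiber, for which $\phi$ is exactly the time-$1$ flow of the symplectic lift of $(0,\tfrac{\d}{\d\theta})$; the induced vector field on the frame bundle $F$ then lifts \emph{uniquely} to the covering $\tilde F$, and the time-$1$ flow of that lift is $\tilde\beta_p$. Smoothness in $p$ is automatic because everything is a single flow on the total space. Your argument instead fixes a compatible $J$, passes through the correspondence \eqref{eq:gradings-trivializationsfiberwise} with nonvanishing sections of $K\vert$, and transports the section $s$ along the path $\Phi_{\bullet,p}$ to produce the homotopy from $s_p$ to $\phi_p^*s_p$ that encodes the grading of $\phi_p$. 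Both proofs are instances of "parallel transport of the grading along the monodromy isotopy, with the covering/lifting ambiguity killed by connectedness of the path"; the paper's version buys independence from auxiliary choices ($J$, the representative $s$) for free, while yours has the advantage of connecting directly to the trivialization-of-$K$ picture that is later used to compute Conley--Zehnder indices (Proposition \ref{prop:monodromy}\ref{item:B_i-CZ}), at the cost of the bookkeeping you flag: identifying $\Phi_{t,p}^*K_{M_{e^{2\pi\imath t},p}}$ with $K_{M_{1,p}}$ up to canonical homotopy and checking independence of $J$ and of $s$ within its homotopy class. (One small remark: the lemma does not assume $\Phi$ is relative to a collar trivialization, so your boundary discussion is unnecessary, though harmless.)
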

\begin{proof}
By the symplectic monodromy construction, given any symplectic monodromy $\phi\colon M_1\to M_1$ there exists a closed fiberwise symplectic form $\omega'\in\Omega^2(M)$ such that $\omega'_{t,p}=\omega_{t,p}$ for all $(t,p)\in \S^1\times P$ and $\phi$ is the time $1$ flow of the lifting of the unit vector field $(0,\frac{\d}{\d\theta})$ on $\S^1\times P$ by the symplectic connection associated with $\omega'$. Indeed, let $\bar{M}$ be the mapping cylinder of $\phi$, i.e.\ $\bar{M}=M_{1}\times [0,1]/_{\sim}$ with $(x,1)\sim (\phi(x),0)$. We have an induced map $\bar{M}\ni (x,t)\mapsto (e^{2\pi\imath t},\pr_{P}\circ f(x))\in \S^1\times P$, and a diffeomorphism $\bar{\Phi}\colon \bar{M}\ni (x,t)\mapsto \Phi(x,t)\in M$. Since $\phi^{*}\omega_{1}=\omega_{1}$, the form $\pr_{M_1}^{*}\omega_{1}$ descends to a fiberwise symplectic form $\bar{\omega}$ on $\bar{M}$. Put $\omega'=(\bar{\Phi}^{-1})^{*}\bar{\omega}$. Since $\Phi$ is fiberwise a symplectic monodromy trivialization, its restrictions $\Phi_{t,p}\colon M_{1,p}\to M_{t,p}$ are symplectomorphisms. It follows that $\omega'_{t,p}=\omega_{t,p}$. The symplectic lift of $(0,\tfrac{\d}{\d \theta})$ from $P\times \S^1$ to $M$ is $\bar{\Phi}_{*}\frac{\d}{\d t}$, and its time one flow is $\phi$, as needed. 

Since the statement only concerns fibers we may assume $\omega=\omega'$ for the rest of the proof. The symplectic connection defines a lift $\mathcal{X}$ of the vector field $(0,\frac{\d}{\d\theta})$ to $F$, and the automorphism  $\beta\colon F\to F$ induced by $\phi$ is the time $1$ flow of $\mathcal{X}$. Since $\tilde{F}$ is a covering space of $F$, there is a unique lifting of $\mathcal{X}$ to a vector field $\mathcal{Y}$ in $\tilde{F}$, and $\tilde{\beta}$ is the time $1$ flow of $\mathcal{Y}$.
\end{proof}

\begin{cor}
	\label{cor:gradedunicitymon}
	Given a symplectic fibration $M\to\S^{1}$ and a grading $\tilde{F}\to M$ of the vertical symplectic frame bundle, any symplectic monodromy inherits a grading, and the graded symplectic monodromy is unique up to graded isotopy. Furthermore, in the situation of Proposition~\ref{prop:sufficientmonod}, a grading of the vertical symplectic frame bundle of $M\to P\times\S^{1}$ induces a grading of each symplectic monodromy $\phi_p$, and of each isotopy connecting them. 
\end{cor}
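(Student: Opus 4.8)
The plan is to deduce Corollary~\ref{cor:gradedunicitymon} almost formally from Lemma~\ref{lem:liftgradings}, Remark~\ref{rem:mondunique} and Proposition~\ref{prop:sufficientmonod}, with only a small amount of additional bookkeeping about gradings of isotopies.

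\begin{proof}[Proof of Corollary \ref{cor:gradedunicitymon}]
Let $f\colon M\to \S^1$ be a symplectic fibration with fiberwise symplectic form $\omega$, let $F\to M$ be the vertical symplectic frame bundle, and fix a grading $\tilde{F}\to M$. Apply Lemma~\ref{lem:liftgradings} with $P$ a point: any symplectic monodromy trivialization $\Phi\colon M_1\times [0,1]\to M$ induces a grading $\tilde{\beta}$ of the symplectic monodromy $\phi=\Phi_1$, so $\phi$ inherits a grading. For uniqueness up to graded isotopy, let $\Phi,\Phi'$ be two symplectic monodromy trivializations, with monodromies $\phi,\phi'$ and induced gradings $\tilde{\beta},\tilde{\beta}'$. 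By Remark~\ref{rem:mondunique}, the family $\phi'_t\circ\phi_t^{-1}\circ\phi_1$ is a symplectic isotopy from $\phi$ to $\phi'$; it is itself a symplectic monodromy trivialization in the sense needed, so, concatenating $\Phi$, this isotopy, and $\Phi'$ (reparametrized appropriately), we obtain a symplectic fibration over $\S^1\times [0,1]$ whose vertical symplectic frame bundle is the pullback of $\tilde{F}$ via the evident projection, and whose restrictions over $\S^1\times\{0\}$ and $\S^1\times\{1\}$ recover the two graded monodromies. Lemma~\ref{lem:liftgradings}, applied with $P=[0,1]$, produces a grading of the monodromy of this family depending smoothly on the parameter, which is precisely a graded isotopy from $(\phi,\tilde{\beta})$ to $(\phi',\tilde{\beta}')$.

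For the last assertion, we are in the situation of Proposition~\ref{prop:sufficientmonod}: $f\colon M\to P\times\S^1$ is a Liouville fibration with $P$ connected, $\psi$ a fiberwise symplectic collar trivialization which is cohomologically trivial, and the maps $\Theta^i\colon M_1|_{Q_i}\times[0,1]\to M|_{Q_i}$ restrict, over each $p\in Q_i$, to symplectic monodromy trivializations relative to $\psi$. Fix a grading $\tilde{F}\to M$ of the vertical symplectic frame bundle of $M\to P\times\S^1$. For each $i$, Lemma~\ref{lem:liftgradings} applied to the restriction $M|_{Q_i}\to Q_i\times\S^1$ and the map $\Theta^i$ yields a grading $\tilde{\beta}^i_p$ of $\phi^i_p$ depending smoothly on $p\in Q_i$; this grades each abstract contact open book $(M_{1,p},\lambda_{1,p},\phi^i_p)$. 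To see that the isotopy type of the graded abstract contact open book is independent of $i\in I$ and $p\in Q_i$, recall from the proof of Proposition~\ref{prop:sufficientmonod} that for $p,q\in Q_i$ with $Q_i$ connected, the path $p\rightsquigarrow q$ in $Q_i$ gives an isotopy of abstract contact open books connecting $(M_{1,p},\lambda_{1,p},\phi^i_p)$ and $(M_{1,q},\lambda_{1,q},\phi^i_q)$; since $\tilde{\beta}^i_p$ depends smoothly on $p$, Lemma~\ref{lem:liftgradings} (again with $P$ the parameter interval) upgrades this to a graded isotopy. For $p\in Q_i\cap Q_j$, the symplectomorphisms $\phi^i_p$ and $\phi^j_p$ are symplectic monodromies relative to the \emph{same} collar trivialization $\psi|_{C_{1,p}\times\S^1}$, so the uniqueness statement established in the previous paragraph provides a graded isotopy between $(M_{1,p},\lambda_{1,p},\phi^i_p,\tilde{\beta}^i_p)$ and $(M_{1,p},\lambda_{1,p},\phi^j_p,\tilde{\beta}^j_p)$. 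Since $P$ is connected and $\{Q_i\}$ covers $P$, chaining these graded isotopies shows that all the graded abstract contact open books obtained are mutually graded-isotopic.
\end{proof}

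The only genuinely non-formal point is the claim that a smoothly parametrized family of gradings of monodromies is the same thing as a graded isotopy; this is exactly the content of Lemma~\ref{lem:liftgradings} applied with the isotopy parameter playing the role of $P$, together with the definition of graded isotopy in Section~\ref{sec:graded-acobs}. I expect the main obstacle, such as it is, to be purely organizational: making sure that when one concatenates a symplectic monodromy trivialization, an isotopy coming from Remark~\ref{rem:mondunique}, and a second trivialization, the resulting object over $\S^1\times[0,1]$ is still a symplectic fibration of the type to which Lemma~\ref{lem:liftgradings} applies (in particular that the fiberwise symplectic form is closed and that the collar trivialization is respected), so that the grading of $\tilde{F}$ pulls back coherently. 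No new analytic input is needed beyond what is already in Lemma~\ref{lem:liftgradings} and Proposition~\ref{prop:sufficientmonod}.
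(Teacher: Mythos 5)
Your argument is correct and is essentially the derivation the paper intends: the corollary is stated without proof as an immediate consequence of Lemma~\ref{lem:liftgradings}, Remark~\ref{rem:mondunique} and Proposition~\ref{prop:sufficientmonod}, and you assemble exactly those ingredients. The only point to phrase more carefully is the ``concatenation'' step: what Lemma~\ref{lem:liftgradings} with $P=[0,1]$ actually requires is a $[0,1]$-family of symplectic monodromy trivializations $\Psi^s$ of $M\to\S^1$ with $\Psi^0=\Phi$, $\Psi^1=\Phi'$ and $\Psi^s_1$ equal to the isotopy of Remark~\ref{rem:mondunique} --- for instance $\Psi^s_t=\Phi'_t$ for $t\leq s$ and $\Psi^s_t=\Phi_t\circ\Phi_s^{-1}\circ\Phi'_s$ for $t\geq s$, smoothed near $t=s$ --- after which your appeal to the lemma over the parameter interval, and the chaining over the overlaps $Q_i\cap Q_j$, go through verbatim.
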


\subsubsection{Conley--Zehnder index}\label{sec:CZ}

Let $\phi\colon M\to M$ 
be a graded symplectomorphism. Let $x\in M$ be a fixed point of $\phi$. The grading $\tilde{\beta}\colon \tilde{\Fr}(M)\to \phi^{*}\tilde{\Fr}(M)$ sends the fiber $\tilde{\Fr}(M)_{x}$ over $x$ to itself. Its restriction $\tilde{\beta}_{x}$ is thus an element of $\tilde{\Sp}(2d)$, well defined up to a conjugation. It can be viewed as a homotopy class (relative to the ends) of a path $\gamma$ of symplectic matrices, starting from $\id$; well defined up to a conjugation. The \emph{Conley--Zehnder index of $x$}, denoted by $\CZ_{\phi}(x)$, is the Maslov index of $\gamma$, see \cite[\sec 4]{RS_CZ-index} for its definition. This number depends only on the conjugacy class of this path. Indeed, by \cite[Theorem 4.1]{RS_CZ-index} the Maslov index is additive under concatenation of paths, so it is invariant under conjugation. In Section \ref{sec:spectral-sequence} we will need the following, known result.

\begin{lema}\label{lem:CZ-Hamiltonian}
	Let $(M,\omega)$ be a graded symplectic manifold of dimension $2d$. Let $H\colon M\to \R$ be a time-independent Hamiltonian, and let $\psi^{H}_{1}\colon M\to M$ be its time-one flow, see Section \ref{sec:Hamiltonian}. Let $x$ be a Morse point of $H$, of Morse index $\Ind(x)$. Assume furthermore that $H$ is $\cC^{2}$-small at $x$, more explicitly, $||\operatorname{Hess}_{x}(H)||<2\pi$. Then the following hold. 
	\begin{enumerate}
		\item \label{item:CZ_Morse} The point $x$ is a fixed point of $\psi^{H}_{1}$ of Conley--Zehnder index $d-\Ind(x)$.
		\item \label{item:CZ_phi+Morse} Let $\phi\colon M\to M$ be a graded symplectomorphism such that $x$ is a fixed point of $\phi$. Then $x$ is a fixed point of $\phi\circ \psi_{1}^{H}$, of Conley--Zehnder index $\CZ_{\phi}(x)+d-\Ind(x)$.
	\end{enumerate}
\end{lema}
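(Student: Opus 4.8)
The statement is a standard computation relating the Conley--Zehnder index of a fixed point coming from a $\mathcal{C}^2$-small time-independent Hamiltonian to its Morse index; I would prove it by reducing to a linear model at $x$ and invoking known properties of the Maslov index. The plan is to first establish part \ref{item:CZ_Morse}, and then deduce part \ref{item:CZ_phi+Morse} from it by the additivity of the Maslov index under concatenation of paths of symplectic matrices (\cite[Theorem 4.1]{RS_CZ-index}).

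For part \ref{item:CZ_Morse}, I would argue as follows. Since $x$ is a Morse critical point of $H$, it is a nondegenerate zero of $dH$, hence a fixed point of the Hamiltonian flow $\psi^H_t$; moreover it is nondegenerate provided the linearization of the flow has no eigenvalue $1$, which we check below. Choose an $\omega$-compatible almost complex structure $J$ and Darboux-type coordinates near $x$ identifying $(T_xM,\omega_x,J_x)$ with $(\mathbb{R}^{2d},\omega\std,J\std)\cong(\mathbb{C}^d, \omega\std, \imath)$. The linearized flow $d_x\psi^H_t$ is then the path of symplectic matrices $\gamma(t)=\exp(t\,\mathcal{A})$, where $\mathcal{A}$ is the linear Hamiltonian vector field associated with the quadratic form $\tfrac12\operatorname{Hess}_x(H)$; concretely, with $S=\operatorname{Hess}_x(H)$ a symmetric matrix, $\mathcal{A}=J\std S$ (up to the sign convention fixed in Section \ref{sec:Hamiltonian}, which matches $\omega(X^H,\cdot)=dH$). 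The Conley--Zehnder index of $x$ is, by definition (Section \ref{sec:CZ}), the Maslov index of $\gamma|_{[0,1]}$, computed in the trivialization of $T_xM$ coming from the chosen grading --- but since $x$ is a Morse point the grading only shifts by a constant which we incorporate later, and for \ref{item:CZ_Morse} we use the canonical trivialization of $\mathbb{C}^d$. The hypothesis $\|S\|<2\pi$ guarantees that the eigenvalues of $t\mathcal{A}$ stay in a strip small enough that $\gamma(t)$ has no eigenvalue $1$ for $t\in(0,1]$ (since the eigenvalues of $\mathcal{A}$ have modulus $<2\pi$, those of $\gamma(1)$ are $e^{\lambda}$ with $|\lambda|<2\pi$, and a purely computational check, or the diagonalization $S=O^\top\operatorname{diag}(\mu_i)O$ splitting $\mathbb{C}^d$ into $2$-dimensional $\gamma$-invariant blocks, shows $1$ is not an eigenvalue). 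Then I would invoke the well-known formula (see e.g.\ \cite[\S 4]{RS_CZ-index}, or Salamon's lectures) that for such a short path generated by a time-independent quadratic Hamiltonian the Maslov index equals the signature-type count $\tfrac12(\text{sign of }-S)=d-\Ind(x)$: diagonalizing $S$ reduces this to the $1$-dimensional case, where a positive eigenvalue contributes a clockwise rotation by a small angle (Maslov index $-\tfrac12$ from each of the two real directions, totalling the block contribution $\mp\tfrac12$ consistent with $d-\Ind$) and a negative one a counter-clockwise small rotation; the block-additivity of the Maslov index then gives the claim. I would present this as a short lemma-level computation rather than reprove the Robbin--Salamon formula from scratch.

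For part \ref{item:CZ_phi+Morse}, since $\phi$ fixes $x$, the graded symplectomorphism $\phi$ restricts at $x$ to an element $\tilde\beta_{\phi,x}\in\widetilde{\mathrm{Sp}}(2d)$ represented by a path $\delta$ of symplectic matrices from $\mathrm{id}$, well defined up to homotopy and conjugation, whose Maslov index is by definition $\CZ_\phi(x)$. The grading of the composite $\phi\circ\psi^H_1$ at the fixed point $x$ is represented by the concatenation of $\delta$ with the linear path $\gamma$ from part \ref{item:CZ_Morse} (pushed forward appropriately; here one uses that $\psi^H_1$ is $\mathcal{C}^1$-close to the identity near $x$ so that the product grading is literally the concatenation of the two paths, up to homotopy rel endpoints). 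By additivity of the Maslov index under concatenation \cite[Theorem 4.1]{RS_CZ-index}, $\CZ_{\phi\circ\psi^H_1}(x)=\mu(\delta)+\mu(\gamma)=\CZ_\phi(x)+(d-\Ind(x))$, using part \ref{item:CZ_Morse} for the second summand. One must also note that $x$ is indeed a nondegenerate fixed point of $\phi\circ\psi^H_1$: its linearization is $d_x\phi\circ\gamma(1)$, and since $\gamma(1)$ is $\mathcal{C}^0$-close to $\mathrm{id}$ and $d_x\phi$ has no eigenvalue $1$ on the relevant subspace (or, more carefully, one perturbs $H$ to ensure nondegeneracy --- but the hypothesis that $x$ is a \emph{Morse} point of $H$ together with $\phi$-invariance is exactly what is needed), the composite has no eigenvalue $1$.

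\textbf{Main obstacle.} The genuinely delicate point is the \emph{grading bookkeeping}: matching the abstract $\widetilde{\mathrm{Sp}}(2d)$-valued description of $\CZ_\phi(x)$ from Section \ref{sec:CZ} with the concrete linear path $\gamma(t)=\exp(t J S)$, and checking that the grading of a composite symplectomorphism at a common fixed point really is the concatenation of the individual gradings (not just up to a global constant, but exactly, rel endpoints). This requires unwinding the definitions in Section \ref{sec:graded-acobs} carefully and using that $\psi^H_1$ is supported arbitrarily close to $x$ in the $\mathcal{C}^1$ topology, so its grading contributes a path homotopic rel endpoints to $\gamma$. Everything else --- the sign convention from Section \ref{sec:Hamiltonian}, the eigenvalue estimate from $\|S\|<2\pi$, and the signature formula --- is routine once the linear model is set up, and I would relegate those to a couple of lines citing \cite{RS_CZ-index}.
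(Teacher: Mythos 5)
Your proposal is correct and follows essentially the same route as the paper: the paper proves part (a) by citing the standard result \cite[Corollary 7.2.2]{AD_book} (whose proof is exactly the linear-model/signature computation you sketch), and deduces part (b) from additivity of the Maslov index under concatenation \cite[Theorem 4.1]{RS_CZ-index}, which is precisely your argument. The only superfluous point is your worry about nondegeneracy of $x$ as a fixed point of $\phi\circ\psi^{H}_{1}$: the Conley--Zehnder index as defined in Section \ref{sec:CZ} via the Robbin--Salamon Maslov index does not require it, and the lemma does not assert it.
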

\begin{proof}
	Part \ref{item:CZ_Morse} is proved in \cite[Corollary 7.2.2]{AD_book}. Part \ref{item:CZ_phi+Morse} follows from \ref{item:CZ_Morse} since the Maslov index of a concatenation of paths is a sum of their Maslov indices \cite[Theorem 4.1]{RS_CZ-index}.
\end{proof}
%
%
%

\subsection{Constructing symplectic monodromies}\label{sec:basic-setting}

The aim of this section is to give a general construction of symplectic monodromy for families of degenerations of complex manifolds over a punctured disc. Later, we will apply it to a $\mu$-constant family of isolated hypersurface singularities, see Example \ref{ex:mu_constant}; and a log resolution of a single isolated hypersurface singularity, see Section \ref{sec:isotopy-to-radius-zero}. Now, we work in the following general Setting \ref{basic-setting}.

\begin{setting}[see Figure \ref{fig:collars}]\label{basic-setting}
	Let $X$ be a complex variety (possibly singular). Let $P$ be a compact domain in a manifold, and let $(f_{p})_{p\in P}\colon X\to \C$ be a smooth family of regular maps. Put 
	\begin{equation*}
		F:X\times P\ni (x,p)\mapsto (f_{p}(x),p)\in \C\times P.
	\end{equation*}
	Assume that for some $\delta>0$, the restriction of $F$ over $\D_{\delta}^{*}\times P$ is a submersion. In  particular, all fibers $f_{p}^{-1}(z)$ for $z\in \D_{\delta}^{*}$, $p\in P$ are smooth and contained in the smooth part $X\reg$ of $X$.  Let $U\subseteq X$ be an open subset whose closure is compact and whose boundary is a manifold, and such that for every $p\in P$, we have $f_{p}^{-1}(0)\trans \d \bar{U}$. Assume that we have open subsets $V,W\subseteq U$ with the same transversality property, and such that $\overline{V}\subset W\subset\overline{W}\subset U$. 
	Put $C'=((\bar{U}\setminus V)\times P)\cap F^{-1}(\D_{\delta}\times P)$. As usual, we write $C'_{z,p}=(\bar{U}\setminus V)\cap f_{p}^{-1}(z)$ for $z\in \D_{\delta}$, $p\in P$, see Notation \ref{not:fibers}. 
	
	Assume that $\pr_{X}(C')\subseteq X\reg$, and that the restriction $F|_{C'}\colon C'\to \D_{\delta}\times P$ is a submersion. Thus for every $p\in P$, the fiber $C'_{0,p}$ is smooth  and contained in $X\reg$. Furthermore, we assume that the restriction of $F$ gives a locally trivial fibration of pairs
	\begin{equation}
		\label{eq:collarfibrationaux1}
		F\colon (C',(\d \bar{W}\times P) \cap C')   \to\D_\delta\times P.
	\end{equation}
	By Ehressmann Lemma, the latter condition can always be achieved by shrinking $\delta>0$. Now 
	for each $(z,p)\in\D_\delta\times P$, each connected component of $C'_{z,p} \setminus \d \bar{W}$ cannot meet $\d \bar{V}$ and $\d \bar{U}$ at the same time. 
\smallskip
	
Let $\lambda\in \Omega^{1}(C' \cup F^{-1}(\D_{\delta}^{*}\times P))$ be a $1$-form whose restriction to each collar $C'_{0,p}$ and to each smooth fiber $f_{p}^{-1}(z)\cap \bar{U}$ for $z\in \D_{\delta}^{*}$, $p\in P$ is Liouville. Furthermore, we assume that for some $p_0\in P$, the  Liouville vector field of $\lambda|_{C_{0,p_0}'}$ points outwards $f^{-1}_{p_0}(0)\cap \d \bar{W}$. 
\end{setting}

\begin{figure}
	\begin{tikzpicture}[scale=0.7] 
		\path[draw, name path=c] (-5,2.5) to [out=0,in=180] (-4,2) to [out=0,in=180] (-3,2.5);
		\path[draw, name path=d] (-5,1.5) to [out=0,in=180] (-4,1) to [out=0,in=180] (-3,1.5);
		\tikzfillbetween[ of=c and d ] {pattern=vertical lines, pattern color=black!15};   
		\path[draw, name path=c] (1,2.5) to [out=0,in=180] (2,2) to [out=0,in=180] (3,2.7);
		\path[draw, name path=d] (1,1.5) to [out=0,in=180] (2,1) to [out=0,in=180] (3,1.7);
		\tikzfillbetween[ of=c and d ] {pattern=vertical lines, pattern color=black!15};;   
		\draw (-6,0) -- (8,0);
		\node[below left] at (8,0) {$\d \bar{V}\times P$};
		\draw (-6,2) -- (8,2);
		\node[below left] at (8,2) {$\d \bar{W}\times P$};
		\draw (-6,3.5) -- (8,3.5);
		\node[below left] at (8,3.5) {$\d \bar{U}\times P$};
		\draw[->] (-4,-4.4) -- (-4,-5.8);
		\node[right] at (-4,-5.2) {\small{$f_{p_0}$}};
		\draw (-4,-6.6) ellipse (1 and 0.5);
		\filldraw (-4,-6.6) circle (1.5pt);   
		\filldraw (-3,-6.6) circle (2pt);
		\filldraw (-4,2) circle (2pt);
		\filldraw (-4,1) circle (2pt);
		\filldraw (2,2) circle (2pt);
		\filldraw (2,1) circle (2pt);   
		\node[right] at (-3,-6.6) {\small{$\delta$}};
		\draw[->] (2,-4.4) -- (2,-5.8);
		\node[right] at (2,-5.2) {\small{$f_{p}$}};
		\draw (2,-6.6) ellipse (1 and 0.5);
		\filldraw (2,-6.6) circle (1.5pt);   
		\filldraw (3,-6.6) circle (2pt);
		\node[right] at (3,-6.6) {\small{$\delta$}};
		\node at (-5.2,-6.2) {\small{$\D_\delta$}};   
		\draw[line width=2.5] (-4,2)--(-4,1);   
		\draw[line width=2.5] (2,2)--(2,1);   
		\draw[decoration=snake, decorate, segment length=30, dashed] (-3,2.5) -- (1,2.5);
		\draw[decoration=snake, decorate, segment length=30, dashed] (-3,1.5) -- (1,1.5);

		\draw[decoration=snake, decorate, segment length=30, dashed] (-6,2.5) -- (-5,2.5);
		\draw[decoration=snake, decorate, segment length=30, dashed] (-6,1.5) -- (-5,1.5);
		
		\draw[decoration=snake, decorate, segment length=30, dashed] (3,2.7) -- (5,2.5);
		\draw[decoration=snake, decorate, segment length=30, dashed] (3,1.7) -- (5,1.5);
		\node[left] at (-6.3,2) {$C$};
		\draw [decorate, decoration = {calligraphic brace}, very thick] (-6.2,1.5) --  (-6.2,2.5);
		\node[left] at (-6.3,0.8) {\small{$C_{0,p_0}$}};
		\draw [->] (-6.3,0.8) to[out=0,in=180] (-4.2,1.6);
		\draw [decorate, decoration = {calligraphic brace}, very thick] (-2.7,2.5) --  (-2.7,-4);
		\node[right] at (-2.7,-0.9) {$N_{\delta,p_0}$};
		\draw [decorate, decoration = {calligraphic brace}, very thick] (3.3,2) --  (3.3,-4);
		\node[right] at (3.4,-1.1) {$\bar{W}_{\delta,p}$}; 
		\draw [decorate, decoration = {calligraphic brace}, very thick] (8.2,3.5) --  (8.2,0);
		\node[right] at (8.3,1.75) {\small{$C'$}};  
		
		\draw[very thick] (2,4) -- (2,0) to[out=-90,in=0] (1.7,-1) to[out=0,in=90] (2,-2) -- (2,-2.5) to[out=-90,in=0] (1.7,-3) to[out=0,in=90]  (2,-4);
		\draw[ultra thick] (3,4) -- (3,0) to[out=-90,in=90] (2.6,-1.2) to[out=-90,in=90] (2.8,-2) to[out=-90,in=90] (2.6,-2.8) to[out=-90,in=90] (3,-4);
		\draw (1,4) -- (1,0) to[out=-90,in=90] (0.6,-1.2) to[out=-90,in=90] (0.8,-2) to[out=-90,in=90] (0.6,-2.8) to[out=-90,in=90] (1,-4);
		
		\draw[very thick] (-4,4) -- (-4,0) to[out=-90,in=0] (-4.3,-1.3) to[out=0,in=90] (-4,-2) -- (-4,-2.3) to[out=-90,in=0] (-4.3,-2.7) to[out=0,in=90]  (-4,-4);
		\draw[ultra thick] (-3,4) -- (-3,0) to[out=-90,in=90] (-3.4,-1.4) to[out=-90,in=90] (-3.2,-2) to[out=-90,in=90] (-3.4,-2.6) to[out=-90,in=90] (-3,-4);
		\draw (-5,4) -- (-5,0) to[out=-90,in=90] (-5.4,-1.4) to[out=-90,in=90] (-5.2,-2) to[out=-90,in=90] (-5.4,-2.6) to[out=-90,in=90] (-5,-4);

		\draw (-6,-7.5)-- (8,-7.5);
		\node[above left] at (8,-7.5) {$P$};
		\draw (-4,-7.6) -- (-4,-7.4);
		\node[below] at (-4,-7.5) {\small{$p_0$}};
		\draw (2,-7.6) -- (2,-7.4);
		\node[below] at (2.2,-7.5) {\small{$p\neq p_0$}};       
		
	\end{tikzpicture}
	\caption{Producing collar trivializations in Setting \protect\ref{basic-setting}.}
	\label{fig:collars}
\end{figure}
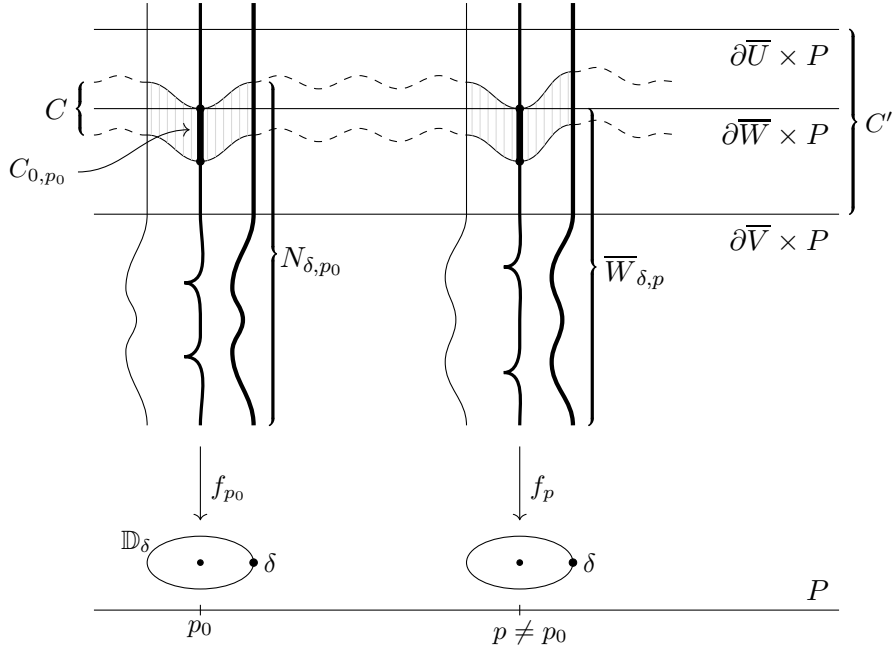

The reader may have in mind the following example: the ambient space is $X=\C^{n}$, the parameter space $P$ is, say, $\D_{\eta}$; the map $F=(f_t,t)\colon \C^n\times \D_\eta\to\CC\times\D_\eta$ is a family of holomorphic functions with isolated singularities and $V\subset W\subset U\subset \C^n$ are concentric balls, such that the only critical value of $f_t$ is $0$, and $f_t^{-1}(0)$ is smooth in $\bar{U}\setminus V$; and $\lambda$ is the standard form $\lambda=\pi \cdot \sum_{i}r_{i}^{2}\, d\theta_{i}$ from Example \ref{ex:J}.
\smallskip

The remaining part of Section \ref{sec:basic-setting} is organized as follows. In Section \ref{sec:basic-Liouville} we show how to adjust $\d W$ within $U\setminus V$ so that $F|_{W\times P}$ becomes a Liouville fibration over $\D^{*}_{\delta}\times P$, with a cohomologically trivial collar trivialization, see Definitions \ref{def:collartriv} and \ref{def:cohtriv}. This may require shrinking $\delta>0$ or the parameter space $P$. Next, in Section \ref{sec:basic-monodromy} we observe that under a mild assumption \eqref{eq:ass-exact}, the monodromy of this Liouville fibration, see Definition \ref{def:sympmonod}, makes the fibers into isotopic abstract contact open books, see Definition \ref{def:acob}. Next, in Section \ref{sec:basic-grading}, we endow those abstract contact open books with a grading, under another mild assumption \eqref{eq:vanishing-c1}. In Remarks \ref{rem:exact} and \ref{rem:c1} we comment that these additional  assumptions \eqref{eq:ass-exact} and \eqref{eq:vanishing-c1} are satisfied in most interesting cases. Eventually, in Section \ref{sec:basic-radius-zero} we extend each fibration to the A'Campo space constructed in Section \ref{sec:ACampo}, and,  under yet another assumption \eqref{eq:lambda_extends}, we use forms introduced in Section \ref{sec:symplectic} to get isotopic abstract contact open books at radius zero. In Section \ref{sec:basic-applications} we gather basic applications of this construction which will be used later.
	
\subsubsection{Constructing a Liouville fibration with a collar trivialization in Setting \ref{basic-setting}}\label{sec:basic-Liouville} \ 

We keep the notation and assumptions of Setting \ref{basic-setting}. 
Let $(r,\theta)$ be the polar coordinates in the disc $\D_\delta$. The radial vector field $\frac{\d}{\d r}$ in $\D_\delta^{*}$ does not extend to a smooth vector field at the origin, but it does extend if restricted to a ray  $R_{\theta}\subseteq \D_{\delta}$ of constant argument $\theta\in \S^1$. 
Let $\mathcal{X}_{\theta}$ be the symplectic lift of $(\frac{\d}{\d r})|_{R_{\theta}}$ to $F^{-1}(R_{\theta}\times P)\cap C'$.

Let $C_0$ be a compact neighborhood of $F^{-1}(\{0\}\times P)\cap(\d \bar{W}\times P)$ in $F^{-1}(\{0\}\times P)\cap (\bar W\setminus \bar V\times P)$. Its slice $C_{0,p}$ over $p\in P$ is a compact collar neighborhood of $\d \bar{W}\cap f_{p}^{-1}(0)$ in $\bar{W}\cap f_{p}^{-1}(0)$. Let $P'\subseteq P$ be a compact subdomain such that all  integral curves of $\mathcal{X}_\theta$ with initial points in $C_{0}\cap F^{-1}(\{0\}\times P')$ exist for all times $t\in [0,\delta]$ and are contained in $C'$; one can always take $P'=\emptyset$. 
Then after shrinking $P$ around $P'$, or after shrinking $\delta>0$ in case $P'=\emptyset$, we get the same property for $C_0\cap F^{-1}(\{0\}\times P)$. Here we use compactness of $C_0$ and $\D_\delta$ in the first case; and of $C_0$ and $P$ in the second case. This way, for each $\theta\in \S^1$ we obtain a smooth flow
\begin{equation*}
	\varphi_{\theta}\colon C_0\times [0,\delta]\ni ((x,p),t)\mapsto (\varphi_{t\cdot \theta,p}(x),p)\in X\times P.
\end{equation*}
The above formula defines, for each $z\in \D_{\delta}$, $p\in P$, a map $\varphi_{z,p}\colon C_{0,p}\to C_{z,p}'$. More explicitly, we put $\varphi_{0,p}=\id_{C_{0,p}}$ and $\varphi_{z,p}(x)=\varphi_{z/|z|}((x,p),|z|)$ for $z\in \D_{\delta}^{*}$. Since the flow $\varphi_{\theta}$ is defined by a symplectic connection, each map $\varphi_{z,p}$ is a symplectomorphism onto its image. Put
\begin{equation*}
	C\de \bigcup_{\theta\in\S^1}\varphi_{\theta}(C_0\times [0,\delta]),\quad
	\d_{\mathrm{out}}C\de \bigcup_{\theta\in\S^1}\varphi_{\theta}((C_0\cap (\d\bar{W}\times P))\times [0,\delta])
\end{equation*}
Note that $C_{0}=F^{-1}(\{0\}\times P)\cap C$. Now $F|_{C}\colon C\to \D_{\delta}\times P$ is a symplectic fibration, and
\begin{equation}\label{eq:collar-triv-center}
	\psi_{0}\colon C_{0}\times \D_{\delta}\ni ((x,p),z)\mapsto (\varphi_{z,p}(x),p)\in C
\end{equation}
is a $P$-fiberwise trivialization of $F|_{C}$ which is $(\D_{\delta}\times P)$-fiberwise symplectic. 
%
The map 
\begin{equation}
	\label{eq:collarfibrationaux1red}
	F\colon (C', \d_{\mathrm{out}}C )   \to\D_\delta\times P
\end{equation}
is a locally trivial fibration of pairs, which coincides with~\eqref{eq:collarfibrationaux1} over $\{0\}\times P$. In particular, it also has the property that for each $(z,p)\in\D_\delta\times P$, each connected component of $C'_{z,p}\setminus \d_{\mathrm{out}}C_{z,p}$ cannot meet $\d \bar{V}$ and $\d \bar{U}$ at the same time. We call those meeting $\d \bar{V}$ the {\em inner components of the fibration~\eqref{eq:collarfibrationaux1red} over $(z,p)$}. 
Define
\begin{equation}
	\label{eq:Liouv_fib1}
	F|_N\colon N\to \D_\delta^*\times P
\end{equation}
as the symplectic fibration whose fiber over $(z,p)$ is the union of $\overline{V}_{z,p}$ and the inner components of the fibration~\eqref{eq:collarfibrationaux1red} over $(z,p)$. 
Note that, by Ehresmann Lemma, the fiber $N_{z,p}=F^{-1}(z,p) \cap N$ is diffeomorphic to $\bar{W}_{z,p}\de f_{p}^{-1}(z)\cap \bar{W}$, and its diffeomorphism type does not depend on $(z,p)\in \D_{\delta}^{*}\times P$. 

Using \eqref{eq:collar-triv-center}, we get a collar trivialization of \eqref{eq:Liouv_fib1}, as follows. Fix a point $z_{0}\in \D_{\delta}^{*}$, let $C_{z_0}=F^{-1}(\{z_0\}\times P)\cap C$, and let $\hat{\varphi}_{z_0}\colon C_{z_0}\ni (x,p)\mapsto (\varphi_{z_0,p}^{-1}(x),p)\in C_0$ be the fiberwise identification of $C_{z_0}$ with $C_{0}$ via the trivialization \eqref{eq:collar-triv-center}. Now, the map 
\begin{equation}
	\label{eq:asscolltriv1}
	\psi\de \psi_0\circ (\hat{\varphi}_{z_0}\times \id_{\D_{\delta}^{*}})\colon C_{z_0}\times\D_{\delta}^{*}\to C
\end{equation}
is a $P$-fiberwise collar trivialization of \eqref{eq:Liouv_fib1}, which is $(\D_\delta^{*}\times P)$-fiberwise symplectic. 

	\begin{lema}\label{lem:cohtrivial}
		The collar trivialization \eqref{eq:asscolltriv1} is  cohomologically trivial at every $p\in P$.
	\end{lema}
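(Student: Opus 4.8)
The plan is to trace through the construction of $\eta^{\Theta}_{b,b';p}$ from Definition~\ref{def:cohtriv} and show that, in the situation at hand, every such form is actually exact relative to the boundary, by exhibiting a primitive coming from the $1$-form $\lambda$. The crucial structural fact we want to exploit is that the fibration $F|_N \colon N \to \D_\delta^* \times P$ was built as a \emph{Liouville} fibration: $\omega = d\lambda$ holds not only fiberwise but on the whole domain $C' \cup F^{-1}(\D_\delta^*\times P)$ where $\lambda$ is defined, so $\omega$ is globally exact there.

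First I would set up the local trivialization $\Theta \colon N_{z_0,b}\times U_b \to N|_{U_b\times\{p\}}$ compatible with $\psi$, as in the discussion preceding Definition~\ref{def:cohtriv} (here the base is $\S^1\times P$, so a point of the base is a pair; I will write $b,b'\in\S^1$ for the first coordinate and keep $p\in P$ fixed). The form $\Theta^*\omega$ is exact on $N_{z_0,b}\times U_b$ with primitive $\Theta^*\lambda$, since $\omega = d\lambda$ and $\Theta$ maps into the locus where $\lambda$ lives. Restricting to the slices $N_{z_0,b}\times\{b\}$ and $N_{z_0,b}\times\{b'\}$ gives $(\Theta^*\omega)_{b,p} = d(\Theta^*\lambda)_b$ and $(\Theta^*\omega)_{b',p} = d(\Theta^*\lambda)_{b'}$ as forms on the fixed manifold $N_{z_0,b,p}$, so
\begin{equation*}
	\eta^{\Theta}_{b,b';p} = (\Theta^*\omega)_{b,p} - (\Theta^*\omega)_{b',p} = d\big((\Theta^*\lambda)_b - (\Theta^*\lambda)_{b'}\big).
\end{equation*}
Thus $\eta^{\Theta}_{b,b';p}$ is exact; what remains is to check that the primitive $(\Theta^*\lambda)_b - (\Theta^*\lambda)_{b'}$ vanishes near $\partial N_{z_0,b,p}$, which is exactly what is needed to conclude vanishing of the \emph{relative} cohomology class $[\eta_{b,b';p}]\in H^2(N_{z_0,b,p},\partial N_{z_0,b,p};\R)$. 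This is where the collar trivialization $\psi$ does its work: near $\partial N$ (i.e.\ over the collar $C$), $\Theta$ factors through $\psi$ by construction, and $\psi$ is $(\S^1\times P)$-fiberwise symplectic. Since $\psi$ is built from the symplectic parallel transport $\varphi_{z,p}$, which is induced by the symplectic connection of $\omega = d\lambda$, the pullback $\psi^*\lambda$ differs from the ``constant'' form only by a fiberwise-exact term; more precisely, on the collar, the difference $(\Theta^*\lambda)_b - (\Theta^*\lambda)_{b'}$ is the pullback under the symplectomorphisms $\varphi_{z,p}$ of the difference of $\lambda$ restricted to two nearby fibers, transported back — and because the transport maps are symplectomorphisms intertwining the fiber forms, this difference is closed near the boundary, hence (after possibly subtracting a locally constant function, which does not affect $d$) the primitive can be taken to vanish there. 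I would make this precise by noting that $\psi^*\omega$ is independent of the $\S^1$-coordinate on the collar, so $d\big((\psi^*\lambda)_b - (\psi^*\lambda)_{b'}\big) = 0$ near $\partial$; since the collar fibers are connected to the boundary and one can normalize, this closed $1$-form is exact and its primitive can be chosen $0$ at $\partial N$, forcing $(\Theta^*\lambda)_b - (\Theta^*\lambda)_{b'}$ itself to be a genuine (relative) primitive.

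The main obstacle I anticipate is the bookkeeping at the boundary: one must be careful that ``$\Theta$ is induced by $\psi$ on the collar'' really gives that $(\Theta^*\lambda)_b - (\Theta^*\lambda)_{b'}$ vanishes identically near $\partial N_{z_0,b,p}$ rather than merely being closed there, and this hinges on the precise definition \eqref{eq:asscolltriv1} of $\psi$ together with the fact that $\lambda|_{C_{0,p}}$ is $\varphi_{z,p}$-related to $\lambda|_{C_{z,p}}$ only up to an exact $1$-form (transport by a symplectic connection preserves $d\lambda$ but not necessarily $\lambda$). The cleanest way around this is to observe that the difference form is closed near $\partial$, pass to the relative de Rham complex directly, and note that a closed $1$-form supported near the boundary and exact as an absolute form represents $0$ in relative cohomology once we are working with $2$-forms; alternatively, replace $\lambda$ on the collar by $\varphi^*(\lambda|_{C_{z_0,p}})$ from the outset, which is permissible since it changes $\lambda$ only by a closed form on the collar and does not affect $\omega$. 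Either route reduces the lemma to the exactness of $\omega$ on the total space, which holds by hypothesis, so I expect the proof to be short once this boundary normalization is spelled out.
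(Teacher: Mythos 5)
Your setup coincides with the paper's up to the decisive point: both exploit $\omega=d\lambda$ and Stokes to reduce the vanishing of $[\eta_{b,b';p}]\in H^2(N_{b,p},\d N_{b,p};\R)$ to the vanishing of the boundary periods $\int_\Gamma\bigl(\lambda_b-\Theta^*\lambda_{b'}\bigr)$ over $1$-cycles $\Gamma$ in $\d N_{b,p}$, and both observe that on the collar $\Theta$ is the composition of parallel transports $\varphi_{b',p}\circ\varphi_{b,p}^{-1}$. The gap is in how you dispose of these periods. Parallel transport by the symplectic connection preserves the fiberwise $\omega$, so it only makes $\lambda_b-\Theta^*\lambda_{b'}$ \emph{closed} near the boundary, and neither of your workarounds bridges the remaining distance. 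A closed $1$-form on the collar $\d N\times[0,1)$ need not be exact ($H^1(\d N)\neq 0$ already for plane curve singularities, where $\d N_{b,p}$ is a union of circles), so \enquote{one can normalize, this closed $1$-form is exact} is unjustified. The principle behind your first route --- that a $2$-form vanishing near $\d N$, exact as an absolute form with primitive closed near $\d N$, represents $0$ in $H^2(N,\d N)$ --- is false: on $N=S^1\times[0,1]$ take $\eta=\chi(r)\,dr\wedge d\theta$ with $\chi$ a bump function of integral $1$ supported in $(1/4,3/4)$; then $\eta=d\bigl(X(r)\,d\theta\bigr)$ with $X(r)=\int_0^r\chi$, the primitive is closed near $\d N$, yet $\int_N\eta\neq 0$, so $[\eta]\neq 0$ in $H^2(N,\d N)$. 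The second route (replacing $\lambda$ on the collar by its transport) begs the question: to keep a global primitive of $\omega$ on the whole fiber after such a replacement you need exactly the period identity you are trying to prove.

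The missing ingredient, which is the actual content of the paper's proof, is a dynamical computation showing that radial parallel transport preserves the periods of $\lambda$ over boundary cycles. Set $\tau(r)=\int_\Gamma\bigl(\lambda_{0,p}-\varphi_{r\theta,p}^{*}\lambda_{r\theta,p}\bigr)$; then $\tau(0)=0$ and $\tau'(r)=-\int_{\varphi_{r\theta,p}(\Gamma)}L_{\cX_\theta}\lambda$. Cartan's formula gives $L_{\cX_\theta}\lambda=d\bigl(\lambda(\cX_\theta)\bigr)+\omega(\cX_\theta,\sdot)$; the first term integrates to zero over the closed cycle $\varphi_{r\theta,p}(\Gamma)$, and the second restricts to zero on the fiber because $\cX_\theta$ is $\omega$-orthogonal to the fibers. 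Hence $\tau\equiv 0$. This identity is not a formal consequence of the exactness of $\omega$; without it (or an equivalent) your argument does not close.
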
	
	\begin{proof}
		By Definition \ref{def:cohtriv}, \eqref{eq:asscolltriv1} is cohomologically trivial at $p\in P$ if for every $z\in \D_{\delta}^{*}$, and every $z'\in \D_{\delta}^{*}$ sufficiently close to $z$, the class $[\eta_{z,z';p}]\in H^{2}(N_{z,p},\d N_{z,p};\R)$ is zero. This class was defined in the formula \eqref{eq:eta_Theta}, as follows. Let $U_{z}\subseteq \D_{\delta}^{*}$ be a neighborhood of $z$ such that the restriction  $F|_{N\cap F^{-1}(U_z\times P)}$ admits a $P$-fiberwise trivialization $\Theta$ which agrees with $\psi$ in the collar. For $z'\in U_{z}$ let $\Theta_{z',p}\colon N_{z,p}\to N_{z',p}$ be the corresponding map identifying the fibers. Then $[\eta_{z,z';p}]$ is the class of $\omega_{z,p}-\Theta_{z',p}^{*}\omega_{z',p}$.
		
		We need to show that, for every $\Delta\in H_{2}(N_{z,p},\d N_{z,p};\R)$, the integral
		$
			\int_{\Delta}\omega_{z,p}-\Theta^{*}_{z',p}\omega_{z',p}
		$
		vanishes. By Stokes theorem, this is equivalent to vanishing of 
		$	
			 \int_{\Gamma} \lambda_{z,p}-\Theta_{z',p}^{*}\lambda_{z',p}
		$
		for every $1$-cycle $\Gamma$ on $\d N_{z,p}$.
		
		The condition that $\Theta$ agrees with $\psi$ in the collar means that $\Theta_{z',p}|_{C_{z',p}}= \psi_{z,p}\circ \psi^{-1}_{z',p}$, where $\psi_{z,p}\colon C_{z_0,p}\to C_{z,p}$ is the symplectomorphism induced by $\psi$. Using the formula \eqref{eq:asscolltriv1} defining $\psi$, we see that $\psi_{z,p}=\varphi_{z,p}\circ \varphi_{z_0,p}^{-1}$, so $\Theta_{z',p}|_{C_{z',p}}=\varphi_{z',p}\circ \varphi_{z,p}^{-1}$. In consequence, to prove the lemma it is enough to show the vanishing of the integral $\int_{\Gamma} \lambda_{0,p}-\varphi_{z,p}^{*}\lambda_{z,p}$ for every $1$-cycle $\Gamma$ on $\d \bar{W}_{0,p}$.
	
		Fix such a $1$-cycle $\Gamma$, and an argument $\theta\in \S^1$. Consider a function $\tau\colon [0,\delta]\ni r \mapsto \int_{\Gamma} \lambda_{0,p}-\varphi_{r\theta,p}^{*}\lambda_{r\theta,p}$. We need to prove that $\tau\equiv 0$. Since  $\varphi_{0,p}=\id_{\bar{W}_{0,p}}$, we have $\tau(0)=0$. It remains to prove that $\tau'(r)=0$ for all $r\in [0,\delta]$. Because $\varphi_{t\theta,p}$ is a time $t$ flow of $\cX_{\theta}$, we have $\tau'(r) =-\int_{\Gamma}\frac{d}{d t}\varphi_{t\theta,p}^{*}\lambda_{t\theta,p}|_{t=r}=-\int_{\varphi_{r\theta,p}(\Gamma)}L_{\cX_{\theta}}\lambda_{r\theta,p}$. By Cartan formula we have $L_{\cX_{\theta}}\lambda_{r\theta,p}=d(\lambda_{r\theta,p}(\cX_{\theta}))+\omega_{r\theta,p}(\cX_{\theta},\sdot)$. The restriction of $\omega_{r\theta,p}(\cX_{\theta},\sdot)$ to $\d N_{r\theta,p}$ is zero, because $\cX_{\theta}$ is $\omega$-orthogonal to the fibers of $F$. Hence the form $L_{\cX_{\theta}}\lambda_{r\theta,p}$ is exact, so its integral along a closed cycle is zero by Stokes theorem. Thus $\tau'(r)=0$, as needed.
	\end{proof}
	
	\begin{remark}\label{rem:cohtriv}
		Consider a special case when $W_{z,p}$ is Stein manifold of dimension $n-1$. This happens e.g.\ if $W$ is a ball in $\C^n$, see Example \ref{ex:Milnor-fibration}. For $n\geq 4$ the group 
		 $H^{2}(N_{z,p},\d N_{z,p})=H_{2n-4}(N_{z,p})=H_{2n-4}(W_{z,p})$ vanishes, so the cohomological triviality proved in Lemma \ref{lem:cohtrivial} holds automatically. 
		 
		 In case $n=2$ (e.g.\ for plane curves), Lemma \ref{lem:cohtrivial} reduces to the observation that all fibers $N_{z,p}$ have the same symplectic area. The latter holds because, by construction, the integral curves of $-\cX_{z/|z|}$ starting in $N_{z,p}$ do not escape the collar $C$, hence in time $|z|$ they map $N_{z,p}$ to the central fiber.
	\end{remark}

We now continue the general construction, and upgrade the symplectic fibration \eqref{eq:Liouv_fib1} to a Liouville one. Assume that there is a nonempty compact subdomain $P_0\subseteq P$, and a number $\delta_0\in [0,\delta]$, such that the Liouville vector field $X_{\lambda}$ of $\lambda$ points outwards $\d_{\mathrm{out}}C_{z,p}$ for all $p\in P_0$ and $z\in \D_{\delta_0}$. Recall that we have assumed it for $\d_{\mathrm{out}}C_{0,p_0}=f_{p_0}^{-1}(0)\cap \d\bar{W}$, so we can always take $P_0=\{p_0\}$, $\delta_0=0$; nonetheless, sometimes other choices will be convenient, too. Since $P_0$ and $\D_{\delta_0}$ are compact, after possibly 
shrinking $P$ around $P_0$ and $\delta>\delta_0$, we can assume that $X_{\lambda}$ points outwards $ \d_{\mathrm{out}}C_{z,p}$  for all $(z,p)\in \D_{\delta}\times P$. Now the restriction \eqref{eq:Liouv_fib1} is  a Liouville fibration, with a cohomologically trivial collar trivialization \eqref{eq:asscolltriv1}.

\begin{remark}\label{rem:Seidel}
	In the language of \cite[\sec 1.1]{Seidel_exact-sequence}, cf.\ \cite[Remark 15.2]{Seidel_Fukaya-categories}, the restriction $F|_{N}$ is 
	an \emph{exact symplectic fibration}. If $X$ is smooth then the restriction $F|_{\bar{N}}$, i.e.\ $F|_{N}$ together with the singular fiber, is an \emph{exact symplectic fibration with singularities} in the sense of \cite[\sec 15a]{Seidel_Fukaya-categories}.
\end{remark}

\subsubsection{Constructing symplectic monodromy in Setting \ref{basic-setting}}\label{sec:basic-monodromy}
%
Our next goal is to introduce a monodromy making the fibers of \eqref{eq:Liouv_fib1} into an abstract contact open books. 

 To this end, we fix a family of loops $\Upsilon$ in $\D_{\delta}^{*}$. More precisely, we choose a domain $Q$ and a smooth map $\Upsilon\colon Q\times\S^1 \to \D_{\delta}^{*}$. The reader may keep in mind the example $\Upsilon\colon (0,\delta]\times\S^1\ni (r,\theta)\mapsto r\theta\in \D_{\delta}^{*}$. Define $N'$ as the fibered product of the Liouville fibration $N\to \D_{\delta}^{*}\times P$ given by \eqref{eq:Liouv_fib1} and the map $\Upsilon\times \id_{P}\colon Q\times \S^1\times P\to \D_{\delta}^{*}\times P$. Then the induced map $N'\to Q\times \S^{1}\times P$ together with the pullback of $\lambda|_{N}$ form a Liouville fibration, which inherits the fiberwise symplectic collar trivialization \eqref{eq:asscolltriv1} constructed above. By Lemma \ref{lem:cohtrivial}, the latter is cohomologically trivial. 
  
 By Proposition \ref{prop:sufficientmonod}, for every $q\in Q$, $p\in P$, and $\theta\in \S^1$ we have a symplectic monodromy trivialization $\Theta_{q,p}\colon N'_{q,\theta,p}\times [0,1]\to N_{q,p}'$, see Definition \ref{def:sympmonod}. 
Now, we assume that
\begin{equation}\label{eq:ass-exact}
	\mbox{each associated symplectic monodromy } \phi_{q,\theta,p}\colon N'_{q,\theta,p}\to N'_{q,\theta,p} \mbox{ is exact}.
\end{equation}
If the condition \eqref{eq:ass-exact} holds then each triple
\begin{equation} \label{eq:basic-acob}
		(N'_{q,\theta,p},\lambda_{q,\theta,p},\phi_{q,\theta,p})
\end{equation}
is a \emph{monodromy abstract contact open book} of $\Theta_{q,p}$, see Definition \ref{def:monodromy_acob}. Its isotopy type does not depend on the choice of $q,\theta,p$. The Liouville domain $(N'_{q,\theta,p},\lambda_{q,\theta,p})$ is isomorphic to the fiber $(N_{z,p},\lambda_{z,p})$ for $z=\Upsilon(q,\theta)$. Its underlying manifold $N_{z,p}\subseteq f_{p}^{-1}(z)$ is diffeomorphic to $\bar{W}_{z,p}=f_{p}^{-1}(z)\cap \bar{W}$. 

\begin{remark}\label{rem:exact} We now gather some natural examples when condition \eqref{eq:ass-exact} is satisfied, so the construction above yields monodromy abstract contact open books \eqref{eq:basic-acob}.
	\begin{parlist}
		\item\label{item:exact_H1} Clearly, \eqref{eq:ass-exact} holds if for some (hence, any) $z\in \D_{\delta}^{*}$, $p\in P$ we have $H^{1}(\bar{W}_{z,p},\d \bar{W}_{z,p};\R)=0$.
		\item\label{item:exact_Stein} Assume that $W_{z,p}$ is a Stein manifold of dimension $n-1\geq 2$. Then by Lefschetz duality  $H^{1}(\bar{W}_{z,p},\d \bar{W}_{z,p})=H_{2n-3}(W_{z,p})=0$ since $2n-3>n-1$, so condition \eqref{eq:ass-exact} holds by \ref{item:exact_H1}.
		\item \label{item:exact_link} Let $L_{p}\de \Int (C_{p}\cup N_{p})\subseteq X\reg\setminus \Sing f_{p}^{-1}(0)$ be the \enquote{link} of $\Sing f_{p}^{-1}(0)\subseteq X$. We claim that if $H^{1}(L_{p};\R)=0$ then condition \eqref{eq:ass-exact} holds.
		
		Indeed, the symplectic monodromies $\phi_{q,\theta,p}$ constructed above, together with $\id_{C_p}$, glue to a symplectomorphism $\phi\colon L_{p}\to L_{p}$. By assumption, the closed form $\phi^{*}\lambda_{p}-\lambda_{p}$ is exact. Hence its restricted pullbacks $\phi_{q,\theta,p}^{*}\lambda_{q,\theta,p}-\lambda_{q,\theta,p}$ are exact, too, i.e.\ the condition \eqref{eq:ass-exact} holds, as claimed.
		
		\item \label{item:exact_link-example} Part \ref{item:exact_link} can be applied in the following situation. Let $X$ be an algebraic surface with an isolated singularity, whose link $L$ is a rational homology sphere. Take $U$ small enough so that each $L_{p}$ defined above is homotopically equivalent to $L$. Now, we have $H^{1}(L_{p};\R)=0$, so condition \eqref{eq:ass-exact} holds by \ref{item:exact_link}.
		
		\item \label{item:exact_more-links} More generally, assume that $X$ is an algebraic surface whose all singularities have rational homology sphere links, and that $\lambda_{X}$ is defined everywhere on $X\reg$: for example, $X=\C^2$ and $\lambda=\lambda\std$. Using \ref{item:exact_link-example} we first construct compactly supported, exact monodromies around each singularity of $X$ and $f^{-1}(0)$ as above; and then we extend them by identity to the remaining part of the fiber. 
		
		In case $P=\{\pt\}$, the same construction works for $f$ replaced by $f\circ h$, where $h$ is a log resolution restricting to an isomorphism away from the zero fiber and in the collar.
	\end{parlist}
\end{remark}

\subsubsection{Grading in Setting \ref{basic-setting}.}\label{sec:basic-grading}
Recall that under the assumptions of Setting \ref{basic-setting} and \eqref{eq:ass-exact}, we have constructed monodromy  abstract contact open books \eqref{eq:basic-acob}. Now, we endow them with a grading. 

Fix $p\in P$. Let $T_{p,\mathrm{vert}}\de \ker (f_{p})_{*} \subseteq T(X\setminus f_{p}^{-1}(0))$ be the vertical tangent bundle. Fix an almost complex structure $J_{p}$ on $T_{p,\mathrm{vert}}$ which is compatible with the fiberwise symplectic form $d\lambda_{p}$, and assume 
\begin{equation}\label{eq:vanishing-c1}
	c_{1}(T_{p,\mathrm{vert}}^{*})=0. 
\end{equation}
This first Chern class depends only on $d\lambda_{p}$, not on $J_{p}$, see \cite[Defintion 4.1.4]{McDuff_Salamon}.  
Let  $K_{p,\mathrm{vert}}$ be the top exterior power of $T_{p,\mathrm{vert}}^{*}$. Condition \eqref{eq:vanishing-c1} means that $K_{p,\mathrm{vert}}$ admits a nonvanishing $\cC^{\infty}$ section. By the correspondence \eqref{eq:gradings-trivializationsfiberwise}, its pullback induces a grading of the vertical symplectic frame bundle of the Liouville fibration $N'\to Q\times \S^1\times P$. By Corollary~\ref{cor:gradedunicitymon}, this grading makes each triple \eqref{eq:basic-acob} a graded abstract contact open book, whose graded isotopy class does not depend on the choice of $q,\theta$. Moreover, grading for one $p\in P$ determines uniquely the grading for all $p\in P$.

\begin{remark}\label{rem:c1} Like in Remark \ref{rem:exact}, we gather some examples where condition \eqref{eq:vanishing-c1} holds, so the monodromy abstract contact open books \eqref{eq:basic-acob} are naturally graded.
	\begin{parlist}
		\item\label{item:c1_H1} Clearly, \eqref{eq:vanishing-c1} holds if for some (hence, any) $z\in \D_{\delta}^{*}$, $p\in P$ we have $H^{2}(W_{z,p};\R)=0$.
		\item\label{item:c1_Kahler} Assume that $d\lambda_{p}$ is K\"ahler. Then we can take for $J_{p}$ the standard complex structure. Denoting by $K_{X_{p}^{*}}$ the canonical bundle of the complex manifold $X_{p}^{*}\de X\setminus f_{p}^{-1}(0)$, we get an isomorphism
		\begin{equation}\label{eq:K-vert}
			K_{X_{p}^{*}}=K_{p,\mathrm{vert}} \otimes f_{p}^{*}(T^{*}\D_{\delta}^{*}).
		\end{equation}
		Thus if $c_{1}(X_{p}^{*})=0$ then condition \eqref{eq:vanishing-c1} holds, and every $\cC^{\infty}$ section of $K_{X_{p}^{*}}$ gives rise to a grading of the monodromy abstract contact open books \eqref{eq:basic-acob}. 
	\end{parlist}
\end{remark} 

\subsubsection{Summary} Assume that in Setting \ref{basic-setting}, conditions \eqref{eq:ass-exact} and \eqref{eq:vanishing-c1} hold. Then (after possibly shrinking $\delta>0$ or $P$) the above procedure, say with $\Upsilon\colon (0,\delta]\times\S^1\ni (r,\theta)\mapsto r\theta\in \D_{\delta}^{*}$ defines, for all $(r,\theta,p)\in (0,\delta]\times \S^1\times P$, \emph{monodromy graded abstract open books} \eqref{eq:basic-acob}, all graded isotopic to each other. The underlying space $N_{r,\theta,p}$ of \eqref{eq:basic-acob} is diffeomorphic to the fiber $\bar{W}_{r\cdot \theta,p}$.

 Moreover, conditions \eqref{eq:ass-exact} and \eqref{eq:vanishing-c1} hold e.g.\ if $X$ is a Stein manifold with trivial canonical bundle, and $d\lambda$ is an exact K\"ahler form on $X$; e.g.\ $(X,\lambda)=(\C^{n},\lambda\std)$, see Remarks \ref{rem:exact}\ref{item:exact_Stein},\ref{item:exact_more-links} and \ref{rem:c1}\ref{item:c1_Kahler}.

\subsubsection{Extending monodromies to radius zero.}\label{sec:basic-radius-zero}

To get monodromy abstract contact open book \enquote{at radius zero}, we need to perform the above construction in Setting \ref{basic-setting2}, which slightly modifies Setting \ref{basic-setting}, replacing base $\C$ with  $\C_{\log}$. Below, we only consider the case which will be needed later, leaving straightforward generalizations (e.g.\ to more general families of loops) for the reader. 

For $\delta>0$ we denote by $\D_{\delta,\log}$ the preimage of $\D_{\delta}$ in $\C_{\log}$, and identify $\D_{\delta,\log}\setminus \d \C_{\log}$ with $\D_{\delta}^{*}$.

	\begin{setting}\label{basic-setting2}
		Let $X$ be a smooth complex manifold, let $f\colon X\to \C$ be a holomorphic function, and let $U_X,W_X,V_X\subseteq X$ and $C'_{X}=\bar{U}_{X}\setminus V_{X}$ be as in Setting \ref{basic-setting} for $P=\{\pt\}$.  	Let $\lambda_X\in \Omega^{1}(C'_{X}\cup f^{-1}(\D_{\delta}^{*}))$ be a $1$-form such that $d\lambda_{X}$ extends to a K\"ahler form on $X$. 
		
		Assume that the singular fiber $f^{-1}(0)$ is snc, and let $A$ be A'Campo space for $f$. It is endowed with a smooth structure introduced in Section~\ref{sec:AX-smooth} using some adapted atlas and a partition of unity. Let $\pi\colon A\to X$ and $f_A\colon A\to \C_{\log}$ be the smooth maps as in the diagram \eqref{eq:AX-diagram}, see Proposition \ref{prop:AXsmooth}. Put $U=\pi^{-1}(U_X)$, $W=\pi^{-1}(W_X)$, $C'=\pi^{-1}(C'_X)$. Assume that
		\begin{equation}\label{eq:lambda_extends}
			\mbox{the pullback }\pi^{*}\lambda_{X}\in \Omega^{1}(A\setminus \d A)\mbox{ extends to a $1$-form on }A.
		\end{equation} 
		Then for every $\delta',\epsilon>0$, formula \eqref{eq:lambdaACdelta} defines a $1$-form  $\lambda_{A}^{\delta',\epsilon}\in \Omega^{1}(A)$. Put $\omega_{A}^{\delta',\epsilon}=d\lambda_{A}^{\delta',\epsilon}$. 
	\end{setting}
	
	Our goal is to define, in the above Setting \ref{basic-setting2}, symplectic monodromy over $\d \C_{\log}$ with respect to $\omega_{A}^{\delta',\epsilon}$, whose associated abstract contact open book will be graded isotopic with monodromy abstract contact open books \eqref{eq:basic-acob} constructed above over $\d\D_{\delta}$; for $\epsilon,\delta>0$ small enough.
	\smallskip
	
	In Section \ref{sec:basic-Liouville} we have seen that for small enough $\delta>0$, formula \eqref{eq:collar-triv-center} defines a $\D_{\delta}$-fiberwise symplectic collar trivialization
		\begin{equation*}
			\psi_{X}\colon C_{X,0} \times \D_{\delta}\to C_{X},
		\end{equation*}
		where $C_{X,0}=C_{X}\cap f^{-1}(0)$. Let $f|_{N_{X}}\colon N_{X}\to \D_{\delta}^{*}$ be the corresponding Liouville fibration \eqref{eq:Liouv_fib1}. 
		
		In Section \ref{sec:basic-monodromy}, we took a family of loops $\Upsilon$. Now, for simplicity, we take  $\Upsilon\colon (0,\delta]\times \S^{1}\ni (r,\theta)\mapsto r\theta\in \D_{\delta}^{*}$. Assume that the induced monodromies are exact, i.e.\ condition \eqref{eq:ass-exact} holds. Then formula \eqref{eq:basic-acob} gives, for every $r\in (0,\delta]$ and $\theta\in \S^1$, isotopic monodromy abstract contact open books
		\begin{equation}\label{eq:basic-acob-standard}
			(N_{r,\theta},\lambda_{r,\theta},\phi_{r,\theta})
		\end{equation}
		The Liouville domain $(N_{r,\theta},\lambda_{r,\theta})$ is naturally identified with the fiber $N_{X}\cap f^{-1}(r\theta) $ equipped with the restriction of $\lambda_{X}$. Its  underlying space $N_{r,\theta}$ is diffeomorphic to $\bar{W}_X\cap f^{-1}(z)$ for any $z\in \D_{\delta}^{*}$.
		\smallskip
		
		By the assumptions of Setting \ref{basic-setting}, the special fiber $f^{-1}(0)$ is smooth along the collar $C_X$, so the preimage $C\de \pi^{-1}(C_{X})$ is a fibered product of the restriction  $f|_{C_X}\colon C_{X}\to \D_{\delta}$ with the natural map $\D_{\delta,\log}\to \D_{\delta}$. Thus the trivialization $\psi_{X}$ pulls back to a trivialization 
		\begin{equation*}
			\psi_{0}\colon C_{0}\times \D_{\delta,\log}\to C,
		\end{equation*}
		where $C_0\de C\cap f_{A}^{-1}(0,1)$ is the intersection of $C$ with a radius-zero fiber.
		
		By Proposition \ref{prop:omega}\ref{item:omega-symplectic} there is a neighborhood $U'$ of $\bar{U}\cap \d A$, and an $\epsilon_0>0$ such that for every $\delta'>0$ and $\epsilon\in (0,\epsilon_0]$, the form $\omega_{A}^{\delta',\epsilon}$ is fiberwise symplectic in $U'$. Thus after  shrinking $\delta>0$ 
		if needed, the restriction $f_{A}|_{\bar{U}\cap f_{A}^{-1}(\D_{\delta,\log})}\colon \bar{U}\cap f_A^{-1}(\D_{\delta,\log})\to \D_{\delta,\log}$, together with the form $\omega_{A}^{\delta',\epsilon}$, is a symplectic fibration for every $\delta'>0$ and $\epsilon\in (0,\epsilon_0]$. Fix one $\delta'>0$ and define a family of $1$-forms $\lambda\in \Omega^{1}(A\times (0,\epsilon_0] \cup C\times [0,\epsilon_0])$ by $\lambda_{\epsilon}=\lambda_{A}^{\delta',\epsilon}$. 
		
		By Remark \ref{rem:vanishing_restriction}, there is a $1$-form $\beta_X\in \Omega^{1}(C_{X,0})$ such that $\lambda_{\epsilon}|_{C\cap \d A}=\pi^{*}(\lambda_{X}|_{C_{X,0}}+\epsilon\beta_{X})$. Hence
		\begin{equation*}
			C_0 \times \d\D_{\delta,\log}\times [0,\epsilon_0]
			\ni (x,\theta,\epsilon)\mapsto 
			(\psi_{0}(x,0,\theta),\epsilon)
			\in
			C\times [0,\epsilon_0]
		\end{equation*}
		is an $[0,\epsilon_0]$-fiberwise trivialization of $f_{A}|_{C\cap \d A}\times \id_{[0,\epsilon_0]}$, which is  $(\d\D_{\delta,\log}\times [0,\epsilon_0])$-fiberwise symplectic with respect to the form $d\lambda$. 
		
		Consider the symplectic lift $\mathcal{X}$ of the radial vector field $\tfrac{\d}{\d r}$ from $\D_{\delta,\log}$ to $C\times[0,\epsilon_0]$. Note that, unlike in Section \ref{sec:basic-Liouville}, now we do not need to restrict it to rays. By definition of $C_X$ in Section \ref{sec:basic-Liouville}, the flow lines of $\cX$ starting in $(C\cap \d A)\times\{0\}$ exist in $C'\times \{0\}$ for all times $t\in [0,\delta]$. Since $C\cap \d A$ and $[0,\delta]$ are compact, shrinking $\epsilon_0>0$ if needed, we infer that the same is true for the flow lines starting in $(C\cap \d A)\times [0,\epsilon_0]$. This means that the vector field $\mathcal{X}$ defines a flow
		\begin{equation*}
			(C\cap \d A)\times [0,\epsilon_0]\times [0,\delta] \ni (x,\epsilon,t) \mapsto (\varphi_{\epsilon,t}(x),\epsilon)\in C' \times [0,\epsilon_0]
		\end{equation*}
		such that for every $\theta\in \S^{1}$, the restriction of $\varphi_{\epsilon,t}$ to $C \cap f_{A}^{-1}(0,\theta)$ is a symplectomorphism onto its image; with respect to the form $d\lambda_{\epsilon}$. The image of this restriction is contained in $C'\cap f_{A}^{-1}(t,\theta)$.
		
		Thus for some $\tilde{C}\subseteq C'\times [0,\epsilon_0]$, the restriction $(f_{A}\times \id_{[0,\epsilon_0]})|_{\tilde{C}}$ admits a $[0,\epsilon_0]$-fiberwise trivialization
		\begin{equation}\label{eq:colltriv2}
			C_0\times \D_{\delta,\log}\times [0,\epsilon_0]
			\ni
			(x,t,\theta,\epsilon)
			\mapsto
			(\varphi_{\epsilon,t}(\psi_0(x,0,\theta)),\epsilon)
			\in \tilde{C}.
		\end{equation}
	 	which is  ($\D_{\delta,\log}\times [0,\epsilon_0]$)-fiberwise symplectic. Note that this trivialization restricts to $\psi_0$ over $\epsilon=0$.
		
		Exactly as in Section \ref{sec:basic-Liouville}, we define $\d_{\mathrm{out}}\tilde{C}$ and, using inner components of the locally trivial fibration $f_{A}\times \id_{[0,\epsilon_0]}\colon (\tilde{C},\d_{\mathrm{out}}\tilde{C})\to \D_{\delta,\log}\times [0,\epsilon_0]$, we define a locally trivial fibration 
		\begin{equation}\label{eq:Liouv_fib2}
			(f_{A}\times \id_{[0,\epsilon_0]})|_{\tilde{N}}\colon \tilde{N}\to \D_{\delta,\log}\times [0,\epsilon_0]
		\end{equation}
		which agrees with $f_{A}|_{\bar{W}\cap \d A}\times \id_{[0,\epsilon_0]}$ over $\d\D_{\delta,\log}\times [0,\epsilon_0]$, and with the Liouville fibration \eqref{eq:Liouv_fib1} over $\D_{\delta}^{*}\times\{0\}$. Moreover, the fibration \eqref{eq:Liouv_fib2} is symplectic over $B\de (\D_{\delta,\log}\times (0,\epsilon_0])\cup(\D_{\delta}^{*}\times [0,\epsilon_0])$. 
		\smallskip
		
		By definition of $C_{X}$, the Liouville vector field of $\lambda_{X}$ points outwards $\d_{\mathrm{out}} C_{X}$, hence the one for $\lambda_{0}=\pi^{*}\lambda_{X}$ points outwards $\tilde{N}_{r,\theta,0}$ for every $(r,\theta)\in \D_{\delta,\log}$. Since $\D_{\delta,\log}$ is compact, we can shrink $\epsilon_0>0$ so that the same is true for all $\epsilon\in [0,\epsilon_0]$. Now, the restriction of \eqref{eq:Liouv_fib2} over $B$ is a Liouville fibration, with a  fiberwise collar trivialization obtained from \eqref{eq:colltriv2} by moving the base point inside $\D_{\delta}^{*}$ like in the formula \eqref{eq:asscolltriv1}. Over $\D_{\delta}^{*}\times \{0\}$, this collar trivialization agrees with the one constructed for \eqref{eq:Liouv_fib1} in Section \ref{sec:basic-Liouville}. Exactly the same argument as in Lemma \ref{lem:cohtrivial} shows that the above collar trivialization is cohomologically trivial. 
		\smallskip
		
		As before, we assume that the symplectic monodromy associated to its restriction over each loop $\S^1\ni \theta\mapsto (r,\theta,\epsilon)\in B$ is exact. This condition is satisfied in the situations described in Remark \ref{rem:exact}. Eventually, we get monodromy abstract contact open books 
		\begin{equation}\label{eq:basic-acob-Clog}
		(\tilde{N}_{r,\theta,\epsilon},\lambda_{r,\theta,\epsilon},\phi_{r,\theta,\epsilon})	
		\end{equation}
		 for all $(r,\theta,\epsilon)\in B$, all isotopic to each other. Note that for $r>0$ and $\epsilon=0$, the abstract contact open book \eqref{eq:basic-acob-Clog} can be identified with \eqref{eq:basic-acob-standard}, since both are obtained using the same collar trivialization. In turn, for $(r,\theta)=(0,1)\in \d\D_{\delta,\log}$ and $\epsilon=\epsilon_0>0$, the abstract contact open book \eqref{eq:basic-acob-Clog} is equal to
		 \begin{equation}\label{eq:basic-radius-zero-acob}
		 	(F,\lambda,\phi)
		 \end{equation}
	 	where $F=\bar{W}\cap f_{A}^{-1}(0,1)$ is a radius-zero fiber, $\lambda=\lambda_{A}^{\delta',\epsilon_0}|_{F}$, and $\phi\colon F\to F$ is the time one flow of the monodromy vector field given locally by the formula \eqref{eq:monodromy-vector-field}.
%
		
		We summarize the above construction in the following corollary, which will be used in Section \ref{sec:isotopy-to-radius-zero}.
		
		\begin{cor}\label{cor:isotopyisotopy-to-radius-zero}
			The abstract contact open books \eqref{eq:basic-acob} defined in Section \ref{sec:basic-monodromy} using a family of loops $\Upsilon\colon (0,\delta]\times \S^{1}\ni (r,\theta)\mapsto r\theta\in \D_{\delta}^{*}$, are isotopic to the \enquote{radius-zero} abstract contact open book \eqref{eq:basic-radius-zero-acob}. 
		\end{cor}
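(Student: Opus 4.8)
The statement is a packaging of the construction carried out in Section~\ref{sec:basic-radius-zero}, so the plan is to point to the relevant fibration and make one application of Proposition~\ref{prop:sufficientmonod}. The object to work with is the locally trivial fibration \eqref{eq:Liouv_fib2}, whose restriction over $B=(\D_{\delta,\log}\times(0,\epsilon_0])\cup(\D_{\delta}^{*}\times[0,\epsilon_0])$ is a Liouville fibration for the $1$-form $\lambda_{\epsilon}=\lambda_{A}^{\delta',\epsilon}$, carrying the fiberwise collar trivialization obtained from \eqref{eq:colltriv2} after moving the base point into $\D_{\delta}^{*}$ as in \eqref{eq:asscolltriv1}. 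Writing $\D_{\delta,\log}\cong[0,\delta]\times\S^{1}$ with radial coordinate $r\in[0,\delta]$ (so that $r=0$ is the radius-zero circle), one checks that $B\cong P'\times\S^{1}$ with $P'=\bigl([0,\delta]\times[0,\epsilon_0]\bigr)\setminus\{(0,0)\}$, a connected (indeed path-connected) base for the loop direction $\S^{1}$.

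I would then carry out three steps. First, record that the above collar trivialization of \eqref{eq:Liouv_fib2} over $B$ is cohomologically trivial in the sense of Definition~\ref{def:cohtriv}: this is the same computation as in Lemma~\ref{lem:cohtrivial}, the only modification being that the symplectic lift $\mathcal{X}$ of $\tfrac{\d}{\d r}$ need not be restricted to rays, and it relies on the fact — arranged when shrinking $\delta$ and $\epsilon_0$ in Section~\ref{sec:basic-radius-zero} — that the flow lines of $-\mathcal{X}$ issuing from a fiber stay in the collar, hence sweep it back to the central fiber. Second, assuming \eqref{eq:ass-exact} (the monodromies over the loops $\theta\mapsto(r,\theta,\epsilon)$ are exact; this holds in all the situations of Remark~\ref{rem:exact}), apply Proposition~\ref{prop:sufficientmonod} to the Liouville fibration over $B=P'\times\S^{1}$: it produces the monodromy abstract contact open books \eqref{eq:basic-acob-Clog} for every $(r,\theta,\epsilon)\in B$ and, since $P'$ is connected, all of them are isotopic in the sense of Definition~\ref{def:acob}. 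If a grading is wanted, one propagates a grading of the vertical symplectic frame bundle through the family by Lemma~\ref{lem:liftgradings} and Corollary~\ref{cor:gradedunicitymon}. Third, identify the two distinguished members: over $r>0,\ \epsilon=0$ the collar trivialization of \eqref{eq:Liouv_fib2} restricts to the one of Section~\ref{sec:basic-Liouville}, so \eqref{eq:basic-acob-Clog} coincides with \eqref{eq:basic-acob-standard}, i.e.\ with the object \eqref{eq:basic-acob} of the statement (taken in Setting~\ref{basic-setting2}, $P=\{\pt\}$, $\Upsilon(r,\theta)=r\theta$); and over $(r,\theta)=(0,1),\ \epsilon=\epsilon_0$ the abstract contact open book \eqref{eq:basic-acob-Clog} is by construction the radius-zero abstract contact open book \eqref{eq:basic-radius-zero-acob}, whose monodromy is the time-one flow of the vector field \eqref{eq:monodromy-vector-field} (Proposition~\ref{prop:omega}\ref{item:omega-lift}). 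As both lie in the single isotopy class produced in the second step, they are isotopic, which is the claim.

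Everything here is bookkeeping except the cohomological triviality invoked in the first step, which is the real input needed for the Moser argument inside Proposition~\ref{prop:sufficientmonod}; this is the step I expect to require the most care, but it is not new, being a verbatim repetition of Lemma~\ref{lem:cohtrivial}. In the cases of principal interest — e.g.\ $W$ a ball, so that the fibers are Stein of dimension $n-1$ — one can moreover short-circuit it: for $n\geq4$ the relevant relative cohomology group $H^{2}(N_{z,p},\d N_{z,p};\R)$ already vanishes (Remark~\ref{rem:cohtriv}), and for $n=2$ cohomological triviality amounts to the equality of the symplectic areas of the fibers, which holds by the same collar argument.
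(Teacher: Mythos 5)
Your proposal is correct and follows essentially the same route as the paper: the corollary is indeed just the summary of the construction in Section~\ref{sec:basic-radius-zero}, namely the Liouville fibration \eqref{eq:Liouv_fib2} over the connected base $B$, cohomological triviality of its collar trivialization by the argument of Lemma~\ref{lem:cohtrivial}, one application of Proposition~\ref{prop:sufficientmonod}, and the identification of the two endpoints \eqref{eq:basic-acob-standard} (at $r>0$, $\epsilon=0$) and \eqref{eq:basic-radius-zero-acob} (at $(r,\theta)=(0,1)$, $\epsilon=\epsilon_0$). No gaps.
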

			
			Moreover, if $c_{1}(X\setminus f^{-1}(0))=0$ then the natural grading of \eqref{eq:basic-acob} defined in Remark \ref{rem:c1}\ref{item:c1_Kahler} by a nonvanishing section of $K_{X\setminus f^{-1}(0)}$ induces a grading of  \eqref{eq:basic-radius-zero-acob}, by Corollary \ref{cor:gradedunicitymon}.

\subsubsection{Applications} \label{sec:basic-applications} 
A typical situation where the above constructions apply is the following.

\begin{example}
	\label{ex:typical}
	Let $Z$ be a Stein manifold and let $\varrho\colon Z\to \R$ be an exhaustive strictly plurisubharmonic function. 
	Let $X\subseteq Z$ be a closed analytic subset whose all singularities are isolated. In case $\dim_{\C}X=2$, assume further that the links of all singularities of $X$ are rational homology spheres, so the assumptions of Remark \ref{rem:exact}\ref{item:exact_Stein} or \ref{item:exact_more-links} hold. Choose regular values $\xi_{V}<\xi_{W}<\xi_{U}$ of $\varrho|_{X}$ such that $\varrho^{-1}[\xi_{V},\xi_{U}]\cap X\subseteq X\reg$, and define $V=\varrho^{-1}(-\infty,\xi_V)$, $W=\varrho^{-1}(-\infty,\xi_W)$, $U=\varrho^{-1}(-\infty,\xi_U)$.
	
	Consider a family of $1$-forms $\lambda_{t}\in \Omega^{1}(X\reg)$ smoothly dependent on $t\in[0,\epsilon]$ for some $\epsilon\geq 0$, such that $\lambda_{0}=-d^{c}\varrho|_{X\reg}$. Recall from Section \ref{sec:symplectic-intro} that the Liouville vector field of $\lambda_0$ is the gradient field of $\varrho$, so it points outwards $\d \bar{W}$.
	\smallskip
	
	Let $f_s\colon X\to\CC$ be a family of holomorphic functions with isolated singularities in $X$, holomorphically depending on a parameter $s\in \D_\eta$ for some $\eta\geq 0$. Assume that 
	\begin{equation}\label{eq:assumption-only-0}
		\mbox{the only critical value of }f_s
		\mbox{ is }0. 
	\end{equation}
	In particular, all fibers $f_{s}^{-1}(z)$ for $z\neq 0$ are smooth and contained in $X\reg$.
	\smallskip

	Note that, since the form $d\lambda_{0}=-dd^{c}\varrho$ is K\"ahler, it restricts to a symplectic form on each fiber $f_{s}^{-1}(z)\cap \bar{U}$ for $z\neq 0$. Thus for some $\delta>0$, the restrictions $f_{s}|_{\bar{U}\cap f^{-1}(\D_{\delta}^{*})}$, for all $s\in \D_{\eta}$, are symplectic fibrations with respect to $d\lambda_{0}$. In case $\epsilon>0$, we can shrink $\epsilon>0$ so that the same is true for all $d\lambda_{t}$, $t\in [0,\epsilon]$: here we use compactness of $\bar{U}$, $\D_{\eta}$ and $\D_{\delta}$. Now, our situation fits into Setting \ref{basic-setting}, with $P=\D_{\eta}\times [0,\epsilon]$ and $F\colon X\times P\ni (x,s,t)\mapsto (f_{s}(x),s,t)\in \C\times P$. 
	
	The construction in Section \ref{sec:basic-Liouville} shows that we can shrink $\delta>0$ and $\epsilon,\eta>0$ (in case they are nonzero) so that the map $F$ restricts to a Liouville fibration $F|_{N}\colon N\to \D_{\delta}^{*}\times P$ defined in \eqref{eq:Liouv_fib1}, with a collar trivialization \eqref{eq:asscolltriv1}. By construction, the special fiber $\bar{N}_{0,s,t}$ equals $\bar{W}_{0,s,t}$ for all $s\in \D_{\eta}$, $t\in [0,\epsilon]$. 
	
	In the construction of Section \ref{sec:basic-monodromy}, we take the standard family of loops $\Upsilon\colon (0,\delta]\times \S^1\ni (r,\theta)\mapsto r\theta\in \D_{\delta}^{*}$. Recall that condition \eqref{eq:ass-exact} holds by Remark \ref{rem:exact}\ref{item:exact_Stein},\ref{item:exact_more-links}. Thus for every $z\in \D_{\delta}^{*},s\in \D_{\eta},t\in[0,\epsilon]$ formula \eqref{eq:basic-acob} defines isotopic abstract contact open books
	\begin{equation}\label{eq:basic-acob-example}
		(N_{z,s,t},\lambda_{z,s,t},\phi_{z,s,t}).
	\end{equation}
	Here $N_{z,s,t}$ is a subset of $\bar{U}\cap f_{s}^{-1}(z)$ diffeomorphic to $\bar{W}\cap f_{s}^{-1}(z)$, and $\lambda_{z,s,t}$ is the restriction of $\lambda_{t}$. 
	
	If furthermore $c_{1}(X\reg)=0$ then by Remark \ref{rem:c1}\ref{item:c1_Kahler}, a choice of a nonvanishing section of $K_{X\reg}$ endows all abstract contact open books \eqref{eq:basic-acob-example} with a grading, making them graded isotopic.
\end{example}

\begin{remark}
	\label{rem:muconstant}
	In the situation of Example~\ref{ex:typical}, if $X$ is smooth and near each critical point of $f_0$ the family $f_s$ has constant Milnor number, then condition \eqref{eq:assumption-only-0} is satisfied.
\end{remark}

Example \ref{ex:typical} covers the case of $\mu$-constant families considered in Theorem \ref{theo:Zariski}. We will make it precise in Example \ref{ex:mu_constant} below. To do this, we first recall some known facts about the Milnor fibration.

Let $f\colon (\C^n,0)\to (\C,0)$ be a holomorphic germ with an isolated singularity. By \cite[Corollary 2.9]{Milnor}, there is a radius $\epsilon_0>0$ such that the ordinary sphere $\d \B_{\epsilon}$ of radius $\epsilon$ centered at $0$ is transversal to $f^{-1}(0)$ for any $\epsilon\in (0,\epsilon_0]$. Any $\epsilon \in (0,\epsilon_0]$ is called {\em a Milnor radius}, and $\B_\epsilon$ is called a {\em Milnor ball}. 

Fix a Milnor radius $\epsilon>0$ for $f$. There exists $\delta>0$ such that $f|_{\B_\epsilon\cap f^{-1}(\D_\delta^*)}\colon \B_\epsilon\cap f^{-1}(\D_\delta^*)\to \D_\delta^*$ is a locally trivial fibration. Its restriction over $\d \D_\delta$ is, up to a diffeomorphism, independent on the choices \cite[Theorem 2.10]{Milnor}. It is called the \emph{Milnor fibration} of $f$, see formula \eqref{eq:milfibtbintro} in the introduction. 

\begin{example}
	\label{ex:Milnor-fibration}
	Let $f\colon (\C^{n},0)\to (\C,0)$ be a germ of isolated hypersurface singularity. Put $\varrho(z)=\frac{\pi}{2}||z||^2$, and let $\lambda=-d^{c}\varrho$ be the standard Liouville form, see Example \ref{ex:J}. Let $\xi_V<\xi_W<\xi_U$ be Milnor radii for $f$. Then Example \ref{ex:typical} (with $\epsilon=\eta=0$) gives a subset $N\subseteq \B_{\xi_U}$ and a $\delta>0$ such that the restriction $f|_{N}\colon N\to \D_{\delta}^{*}$, is a Liouville fibration, diffeomorphic to $f|_{\B_{\xi_U}\cap f^{-1}(\D_\delta^*)}$. Moreover, for each $z\in \D_{\delta}^{*}$, we get a monodromy abstract contact open book $(N_{z},\lambda_{z},\phi_z)$. Using the standard holomorphic volume on $\C^{n}$, we endow each $(N_z,\lambda_z,\phi_z)$ with a grading.
	
	We remark that, if $n\geq 3$, then by \cite[Theorem 6.4]{Milnor} the Milnor fiber is simply connected, so  by the correspondence \eqref{eq:gradings-subgroups} the above grading of $(N_{z},\lambda_{z},\phi_{z})$ 
	is 
	the unique one.
\end{example}

\begin{example}
	\label{ex:mu_constant}
	More generally, let $\C^{n}\times \D_{\eta}\ni (x,s)\mapsto f_{s}(x)\in \C$ be a $\mu$-constant family of holomorphic functions with isolated singularities. As before, take $\varrho(z)=\frac{\pi}{2}||z||^2$, and $\lambda=-d^{c}\varrho$. 
	
	Let $\xi_V<\xi_W<\xi_U$ be Milnor radii for $f_0$. Then for $\delta,\eta>0$ small enough, formula \eqref{eq:basic-acob-example} in  Example \ref{ex:typical} (with $\epsilon=0$) defines graded isotopic abstract contact open books $(N_{z,s},\lambda_{z,s},\phi_{z,s})$ for $z\in \D_{\delta}^{*}$, $s\in \D_{\eta}$. For $s=0$, they agree with the ones constructed in Example \ref{ex:Milnor-fibration} for $f_0$.
\end{example}

Note that the Milnor radii chosen in Example \ref{ex:mu_constant} for $f_0$ are not necessarily Milnor radii for $f_s$, $s\neq 0$. In fact, it is a long-standing, open question if a Milnor radius for $f_s$ can be chosen independently of $s$; even modifying the distance function $\varrho$, see \cite{OShea-vanishing-folds}. 

In particular, the graded abstract contact open books produced in Example \ref{ex:mu_constant} for $f_0$, and the analogous ones produced using a Milnor radius for $f_s$, might not be isotopic. 
To prove Theorem \ref{theo:Zariski}, we will compare their Floer homology. A crucial ingredient of this comparison will be the fact that they differ by a homologically trivial cobordism. More precisely, the following result is known, see Figure \ref{fig:family}.

\begin{prop}[\cite{Le-Ramanujam}]
	\label{prop:homologtriv}
	Let $\C^{n}\times \D_{\eta}\ni (x,s)\mapsto f_{s}(x)\in \C$ be a $\mu$-constant family of isolated hypersurface singularities. Let $\epsilon_0$ a Milnor radius for $f_0$. For any $s$ small enough and any Milnor radius $\epsilon_s<\epsilon_0$ for $f_s$, the cobordism 
	$\overline{f_s^{-1}(0)\cap \B_{\epsilon_0}\setminus \B_{\epsilon_s}}$
	is homologically trivial, that is, we have the vanishing 
	\begin{equation*}
		H_*(\overline{f_s^{-1}(0)\cap \B_{\epsilon_0}\setminus \B_{\epsilon_s}},f_s^{-1}(0)\cap\d \B_{\epsilon_0};\ZZ)=0.
	\end{equation*}
\end{prop}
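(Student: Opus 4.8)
The statement is essentially the content of \cite{Le-Ramanujam}, and we sketch a proof. Write $V_{s}=f_{s}^{-1}(0)$, $L_{s}=V_{s}\cap\d\B_{\epsilon_{s}}$, $L_{0}=V_{s}\cap\d\B_{\epsilon_{0}}$, and put $C_{s}=\overline{V_{s}\cap(\B_{\epsilon_{0}}\setminus\B_{\epsilon_{s}})}$. Since $\epsilon_{0}$ is a Milnor radius for $f_{0}$ we have $\d\B_{\epsilon_{0}}\trans V_{0}$, so by openness of transversality and compactness of $\d\B_{\epsilon_{0}}$ we get $\d\B_{\epsilon_{0}}\trans V_{s}$ for $s$ small. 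As the only critical point of $f_{s}$ lies in $\B_{\epsilon_{s}}$, the set $C_{s}$ is then a smooth, compact, oriented (complex) cobordism from $L_{s}$ to $L_{0}$ of real dimension $2(n-1)$, with $\d C_{s}=L_{s}\sqcup L_{0}$.

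The first step is to reduce the claim to the acyclicity of $V_{s}\cap\B_{\epsilon_{0}}$. By Poincar\'e--Lefschetz duality for the oriented manifold-with-boundary $C_{s}$ one has $H_{k}(C_{s},L_{0};\ZZ)\cong H^{2(n-1)-k}(C_{s},L_{s};\ZZ)$, and by the universal coefficient theorem the right-hand side vanishes as soon as $H_{*}(C_{s},L_{s};\ZZ)=0$; so it suffices to prove $H_{*}(C_{s},L_{s};\ZZ)=0$. Pick $0<\epsilon_{s}'<\epsilon_{s}$ with no critical value of $\|\cdot\|^{2}|_{V_{s}\setminus\{0\}}$ in $[\epsilon_{s}',\epsilon_{s}]$ (possible because $\epsilon_{s}$ is a Milnor radius for $f_{s}$), and apply Mayer--Vietoris to $A=V_{s}\cap\B_{\epsilon_{s}}$ and $B=\overline{V_{s}\cap(\B_{\epsilon_{0}}\setminus\B_{\epsilon_{s}'})}$. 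Then $A\cup B=V_{s}\cap\B_{\epsilon_{0}}$, $A\cap B$ deformation retracts onto $L_{s}$ and $B$ onto $C_{s}$, and $A$ is contractible, being a cone over the link of $f_{s}$ by the conical structure theorem \cite[Corollary 2.9, Theorem 2.10]{Milnor}. If $V_{s}\cap\B_{\epsilon_{0}}$ is acyclic, the reduced Mayer--Vietoris sequence yields an isomorphism $\tilde H_{*}(L_{s};\ZZ)\xrightarrow{\ \sim\ }\tilde H_{*}(C_{s};\ZZ)$ induced by inclusion, i.e.\ $H_{*}(C_{s},L_{s};\ZZ)=0$, as wanted. (Reduced homology makes this argument valid also when $L_{s}$ is disconnected, e.g.\ for reducible plane curves.)

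It remains to show that $V_{s}\cap\B_{\epsilon_{0}}$ is acyclic for $s$ small, and this is the heart of the matter and the only place where the $\mu$-constant hypothesis enters in an essential way; it is exactly here that the radius degeneration problem bites, since $\epsilon_{0}$ need not be a Milnor radius for $f_{s}$, so $\|\cdot\|^{2}|_{V_{s}\setminus\{0\}}$ may have critical values in $(0,\epsilon_{0}^{2}]$ and the conical structure theorem cannot be invoked for $V_{s}$ directly. The plan is to work with the total space $\mathcal{X}=\{(x,s):f_{s}(x)=0\}$, whose singular locus near the origin is the section $\{0\}\times\D_{\eta}$: by the theorem of L\^e--Saito, constancy of $\mu$ in the family yields Thom's $a_{f}$-condition for the canonical two-stratum stratification of $\mathcal{X}$, and, building on this, L\^e and Ramanujam prove that the family of pairs $(\B_{\epsilon_{0}},\,V_{s}\cap\B_{\epsilon_{0}})_{s\in\D_{\eta}}$ is topologically trivial over $\D_{\eta}$ (for $n\ge4$ one constructs the trivializing homeomorphism from the $h$-cobordism theorem; for $n\le2$ the statement is classical). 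In particular $V_{s}\cap\B_{\epsilon_{0}}$ is homeomorphic to $V_{0}\cap\B_{\epsilon_{0}}$, which is contractible by \cite[Theorem 2.10]{Milnor}, hence acyclic. For the weaker, purely homological conclusion needed here all dimensions (including $n=3$) are covered, since one only needs $\mathcal{X}\cap(\B_{\epsilon_{0}}\times\D_{\eta})\to\D_{\eta}$ to be a homological fibration near $s=0$, which is part of \cite{Le-Ramanujam}. I expect this last step to be the genuine obstacle; the duality and Mayer--Vietoris bookkeeping of the previous paragraphs is routine.
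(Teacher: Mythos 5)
The paper offers no proof of this proposition: it is quoted verbatim from \cite{Le-Ramanujam}, so there is no internal argument to compare yours against. Your reductions are correct and cleanly organized: Poincar\'e--Lefschetz duality on the oriented cobordism $C_s$ turns the claim into $H_*(C_s,L_s;\Z)=0$, and the Mayer--Vietoris bookkeeping (with the small piece a cone by the conical structure theorem) correctly shows this is equivalent to acyclicity of $V_s\cap\B_{\epsilon_0}$. You also correctly identify that all the content sits in that last acyclicity statement and that the radius degeneration is exactly what makes it nontrivial.

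The one place where your write-up is shakier than it should be is the final appeal to \cite{Le-Ramanujam}. Topological triviality of the family of pairs $(\B_{\epsilon_0},V_s\cap\B_{\epsilon_0})$ is only established there for $n\neq 3$ (the $n=3$ case is open, as the paper itself stresses), and the fallback claim that a ``homological fibration'' statement for the family of \emph{zero sets} is ``part of \cite{Le-Ramanujam}'' is not accurate as stated: the homological lemma actually proved there, valid in all dimensions, concerns the \emph{nearby} fibers, namely that the inclusion of the small Milnor fiber $F^{\mathrm{small}}_\delta=f_s^{-1}(\delta)\cap\B_{\epsilon_s}$ into the big one $F^{\mathrm{big}}_\delta=f_s^{-1}(\delta)\cap\B_{\epsilon_0}$ satisfies $H_*(F^{\mathrm{big}}_\delta,F^{\mathrm{small}}_\delta;\Z)=0$ (both are wedges of $\mu$ spheres of dimension $n-1$, $\mu$ constant). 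To transfer this to the central fiber you need one more, easy but necessary, step: on the closed annulus $\overline{\B_{\epsilon_0}\setminus\B_{\epsilon_s}}$ the function $f_s$ has no critical points and both boundary spheres are transverse to all fibers $f_s^{-1}(z)$ for $|z|$ small, so by Ehresmann the restriction of $f_s$ to $\overline{\B_{\epsilon_0}\setminus\B_{\epsilon_s}}\cap f_s^{-1}(\D_\delta)$ is a trivial fibration of pairs; this identifies $(C_s,L_s)$ with $(W_\delta,\d F^{\mathrm{small}}_\delta)$, where $W_\delta=f_s^{-1}(\delta)\cap\overline{\B_{\epsilon_0}\setminus\B_{\epsilon_s}}$, and excision gives $H_*(W_\delta,\d F^{\mathrm{small}}_\delta)\cong H_*(F^{\mathrm{big}}_\delta,F^{\mathrm{small}}_\delta)=0$. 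With that bridge inserted (and it makes your Mayer--Vietoris detour through the acyclicity of $V_s\cap\B_{\epsilon_0}$ unnecessary, though not wrong), the proof is complete and valid in all dimensions, including $n=3$.
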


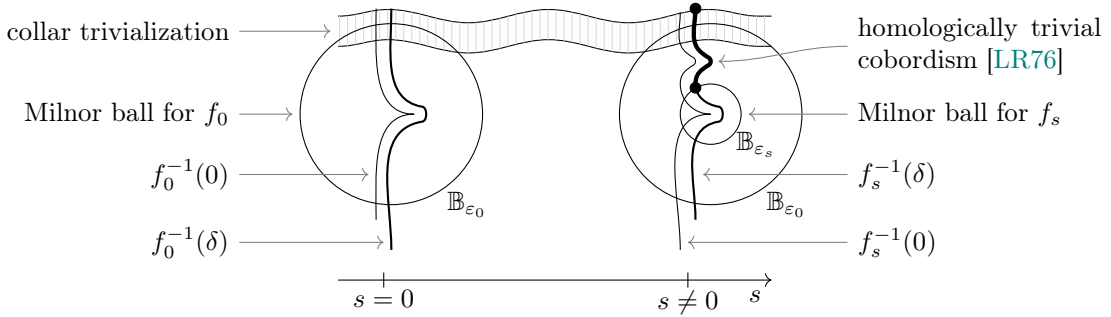
\begin{figure}[htbp]
	\begin{tikzpicture}
\path[use as bounding box] (-6,-2.3) rectangle (10,1.5);
		\draw[->] (-.7,-2.2) -- (5,-2.2); 
		\draw (-0.1,-2.1) -- (-0.1,-2.3);
		\node[below] at (-0.1,-2.2) {\small{$s=0$}};
		\draw (3.9,-2.1) -- (3.9,-2.3);
		\node[below] at (3.9,-2.2) {\small{$s\neq 0$}};
		\node[below left] at (5,-2.2) {\small{$s$}};		
		\draw[decoration=snake, decorate, segment length=60, name path=a] (-.7,1.3) -- (5,1.3);
		\draw[decoration=snake, decorate, segment length=60, name path=b] (-.7,0.9) -- (5,0.9);
		\tikzfillbetween[ of=a and b ] {pattern=vertical lines, pattern color=black!15};
		\draw (-0.2,1.4) to [out=-90,in=180] (0.3,0) to [out=180,in=90] (-0.2,-1.4);
		\draw[thick] (0,1.4) to [out=-90,in=170] (0.4,0.1) to [out=0,in=0] (0.4,-0.1) to [out=-170,in=90] (0,-1.8);
		\draw (0,0) circle (1.2);
		\draw (3.8,1.35) to [out=-90,in=145] (3.9,.8) 
		to [out=-45,in=90] (4,.7) to [out=-90,in=45] (3.9,.6) to [out=-145,in=90] (3.8,.4) 
		to [out=-90,in=180] (4.2,0) to [out=180,in=90] (3.8,-1.8);
		\draw[thick] (4,1.4) to [out=-90,in=145] (4.1,.8) 
		to [out=-45,in=90] (4.2,.7) to [out=-90,in=45] (4.1,.6) to [out=-145,in=90] (4,.4) 
		to [out=-90,in=170] (4.3,0.1) to [out=0,in=0] (4.3,-0.1) to [out=-170,in=90] (4,-1.4);
		\draw[ultra thick] (4,1.4) to [out=-90,in=145] (4.1,.8) 
		to [out=-45,in=90] (4.2,.7) to [out=-90,in=45] (4.1,.6) to [out=-145,in=90] (4,.4); 
		\filldraw (4,1.4) circle (2pt);	
		\filldraw (4,.35) circle (2pt);				
		\draw  (4.2,0) circle (.4);
		\draw (4.2,0) circle (1.2);
		\node at (1,-1.2) {\small{$\B_{\epsilon_0}$}};
		\node at (5.2,-1.2) {\small{$\B_{\epsilon_0}$}};
		\node at (4.8,-0.4) {\small{$\B_{\epsilon_s}$}};
		\draw [ ->, gray] (6,0) to [out=180,in=0] (4.7,0);
		\node[right] at (6,0) {\small{Milnor ball for $f_{s}$}};
		\draw [ ->, gray] (6,.9) to [out=180,in=0] (4.3,.7);
		\node[right] at (6,.9) {\parbox{3.2cm}{\small{homologically trivial cobordism \cite{Le-Ramanujam}}}};
		\draw [ ->, gray] (6,-0.8) to [out=180,in=0] (4.1,-.8);
		\node[right] at (6,-0.8) {\small{$f_{s}^{-1}(\delta)$}};
		\draw [ ->, gray] (6,-1.7) to [out=180,in=0] (3.9,-1.7);
		\node[right] at (6,-1.7) {\small{$f_{s}^{-1}(0)$}};
		\draw [ ->, gray] (-2,1.1) to [out=0,in=180] (-0.8,1.1);
		\node[left] at (-2,1.1) {\small{collar trivialization}};		
		\draw [ ->, gray] (-2,-0.8) to [out=0,in=180] (-0.3,-0.8);
		\node[left] at (-2,-0.8) {\small{$f_{0}^{-1}(0)$}};
		\draw [ ->, gray] (-2,-1.7) to [out=0,in=180] (-0.1,-1.7);
		\node[left] at (-2,-1.7) {\small{$f_{0}^{-1}(\delta)$}};
		\draw [ ->, gray] (-2,0) to [out=0,in=180] (-1.3,0);
		\node[left] at (-2,0) {\small{Milnor ball for $f_{0}$}};					
	\end{tikzpicture}
	\caption{$\mu$-constant family, see Example \ref{ex:mu_constant}}
	\label{fig:family}
\end{figure}

\subsection{Codimension zero families of fixed points}\label{sec:0codim}
We end this section by introducing a notion of \emph{codimension zero families of fixed points}, slightly extending the one introduced by McLean in \cite[p.\ 978]{McLean}. In Proposition \ref{prop:spectral-sequence} we will see that if all fixed points of $\phi$ come in such families, then their topology gives information about Floer homology groups $\HF_{*}(\phi,+)$. In Section \ref{sec:monodromy} we will see that this is the case for the \enquote{radius zero} monodromy on the boundary of the A'Campo space.
%

\begin{definition}
	\label{def:0codim}
	Let $(M,\lambda,\phi)$ be an abstract contact open book. 
	A {\em codimension zero family of fixed points} $B\subseteq M$ is a compact, connected codimension zero submanifold of $M$ with corners, such that there exist an open neighborhood $N_B$ of $B$ in $M$ and a time-independent Hamiltonian $H_{B}\colon N_{B}\to \R$ satisfying
	\begin{equation*}
		B=H_{B}^{-1}(0)=\Fix \phi|_{N_{B}}\quad\mbox{and}\quad \phi|_{N_{B}}=\psi^{H_{B}}_{1}|_{N_{B}}\colon N_{B}\to N_{B}.
	\end{equation*}
	The action function $\ac$ associated with $(M,\lambda,\phi)$ is constant in $B$, and we denote its value by $\ac(B)$. If $(M,\lambda,\phi)$ is graded then the function $\CZ$ is also constant in $B$, and we write $\CZ(B)$ for its value.
\end{definition}

Put $\d^{M}B=B\cap \d M$. Since $\phi=\id$ near $\d M$, the set $\d^{M}B$ is either empty, or a union of some connected components of $\d M$. 

Since $B=H_{B}^{-1}(0)$ is a codimension zero submanifold with corners, at every point $p\in \d B\setminus \d^{M}B$ there is a chart $U_{p}\subseteq N_{B}$ such that $H_{B}|_{U_{p}\setminus B}$ is either positive or negative. Denote by $\d^{+}B$ (resp.\ $\d^{-}B$) the set of those points $p$ such that $H_{B}|_{U_{p} \setminus B}>0$ (resp.\ $<0$). This way, we get a decomposition 
\begin{equation}
	\label{eq:boundarysplitting1}
	\partial B=\partial^M B\sqcup\partial^+ B \sqcup \partial^- B.
\end{equation}
where each factor is a union of connected components of $\d B$. 

\begin{remark}\label{rem:McLean-has-no-d+}
	Definition \ref{def:0codim} generalizes the one given in \cite[p.\ 978]{McLean}. Namely, a codimension zero family $B$ of fixed points satisfies the definition of loc.\ cit.\ if $\d^{-}B=\emptyset$, i.e.\ if the Hamiltonian $H_{B}$ is nonnegative (we warn the reader that, due to the opposite sign convention explained in Section \ref{sec:Hamiltonian}, the Hamiltonian in loc.\ cit.\ is nonpositive). As a result of this more general definition,  our version of McLean's axiom (HF3), stated in Proposition \ref{prop:spectral-sequence}, involves relative homology. This more general setting is needed to treat the additional fixed point component $B_0$, discussed in Section \ref{sec:intro_summary}.
\end{remark} 

In practice, codimension zero families of fixed points will have corners. For example, in Figure \ref{fig:v3} they are the quadrants $\bar{A_{j}^{\circ}}$, see Proposition \ref{prop:monodromy}. The following lemma allows to get rid of  these corners.

\begin{lema}
	\label{lem:cornerelimination}
	Let $(M,\lambda,\phi)$ be an abstract contact open book, and let $B\subseteq M$ be a codimension zero family of fixed points. Then there is an abstract contact open book $(M,\lambda,\phi_1)\sim (M,\lambda,\phi)$ such that $\Fix \phi_1=B_1\sqcup (\Fix\phi\setminus B)$, where $B_1$ is a codimension zero family of fixed points, which is a manifold with boundary (but no corners), and the triple $(B_1,\d^{+}B_1,\d^{-}B_1)$ is homeomorphic to $(B,\d^{+}B,\d^{-}B)$.
\end{lema}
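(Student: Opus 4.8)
The idea is to locally modify $\phi$ near $B$ by composing with a Hamiltonian isotopy that ``rounds the corners'' of $B$, while keeping track of the fact that $\d^{+}B$, $\d^{-}B$ and $\d^{M}B$ are the loci where the defining Hamiltonian $H_B$ is positive, negative, and where we hit $\d M$ respectively. Since $\phi|_{N_B}=\psi_{1}^{H_B}|_{N_B}$ for a time-independent Hamiltonian $H_B$ on a neighborhood $N_B$, the whole construction can be carried out at the level of Hamiltonians: I will produce a new time-independent Hamiltonian $H_{B,1}$ on a (possibly smaller) neighborhood $N_{B,1}$ of a rounded model $B_1$, agreeing with $H_B$ outside a compact piece of $N_B$, such that $H_{B,1}^{-1}(0)=B_1$ is a manifold with boundary without corners, $H_{B,1}$ is positive/negative exactly on the pieces corresponding to $\d^{+}B$ and $\d^{-}B$, and $\Fix\psi_{1}^{H_{B,1}}|_{N_{B,1}}=B_1$. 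Then one sets $\phi_1=\psi_1^{H_{B,1}}$ on $N_{B,1}$ and $\phi_1=\phi$ elsewhere; the interpolation between $\phi$ and $\phi_1$ is through the Hamiltonian flows $\psi_1^{H_{B,s}}$ of a smooth family $H_{B,s}$ joining $H_B$ to $H_{B,1}$, which gives the required isotopy of abstract contact open books once we check exactness (see below).

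\textbf{Key steps in order.} First I would set up a collar model: since $B$ is a compact codimension zero submanifold with corners of $M$, near $B$ we can choose coordinates in which $B$ looks like $\{u_1\ge 0,\dots,u_k\ge 0\}\times(\text{manifold with boundary})$, and in which $H_B$ vanishes on $B$. I would choose a smooth function $\chi\colon \R^k\to\R$ which is a ``corner-rounded'' replacement: $\chi\le 0$ precisely on a region $B_1$ homeomorphic (in fact the homeomorphism can be taken to preserve the stratification into $\d^+$, $\d^-$, $\d^M$ pieces) to $B$ via the obvious radial rescaling, $\chi$ has $0$ as a regular value so that $\{\chi=0\}$ is a smooth hypersurface, and $\chi$ agrees with the original local equation of $B$ (namely with $-H_B$ up to sign conventions, cf.\ the splitting \eqref{eq:boundarysplitting1}) outside a neighborhood of the corner locus. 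The point is that the corner locus of $B$ has codimension $\ge 2$, so such a $\chi$ exists with $\{\chi\le 0\}$ arbitrarily close to $\{u_i\ge 0\ \forall i\}$. Second, I would define $H_{B,1}$ to equal $-\chi$ multiplied by the same positive factor that relates $H_B$ to its local equation, cutting off so that $H_{B,1}=H_B$ outside a compact neighborhood of the corner set inside $N_B$; here one must check that the multiplicative factor can be chosen so that $\psi_1^{H_{B,1}}$ still has $B_1$ as its exact fixed-point set, which is automatic by shrinking the support since the Hessian of $H_{B,1}$ transverse to $B_1$ controls whether $\psi_1^{H_{B,1}}$ has extra fixed points. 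Third, I would form the straight-line family $H_{B,s}=(1-s)H_B+sH_{B,1}$, note it is time-independent, supported in $N_B$, vanishes on a family of hypersurfaces-with-corners $B_s$ interpolating $B$ and $B_1$, and that $\Fix\psi_1^{H_{B,s}}|_{N_B}=B_s$ provided we arranged the cutoff carefully (this again follows from the transverse nondegeneracy of $H_{B,s}$, which holds for a convex combination of two functions each transversely nondegenerate with the same sign of the transverse Hessian). Fourth, I would verify that $\phi_s$ (equal to $\psi_1^{H_{B,s}}$ near $B$ and $\phi$ elsewhere) defines an isotopy of abstract contact open books in the sense of Definition \ref{def:acob}: each $\phi_s$ is compactly supported since $H_{B,s}$ is supported away from $\d M$ (the pieces $\d^M B$ are left untouched because $H_B=0=H_{B,1}$ there and the cutoff is compactly supported inside $N_B\setminus\d M$), and each $\phi_s$ is exact because $\phi$ is exact and the correction $\psi_1^{H_{B,s}}\circ\phi^{-1}$ is a Hamiltonian diffeomorphism, hence exact; one can take the action function to depend smoothly on $s$. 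This is precisely the content of packaging $\{(M,\lambda,\phi_s)\}$ as a Liouville fibration over $[0,1]$ with a fiberwise diffeomorphism, as in Definition \ref{def:acob}.

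\textbf{Main obstacle.} The delicate point is arranging the corner-rounding so that \emph{simultaneously} (a) $B_1$ (and every intermediate $B_s$) is homeomorphic to $B$ respecting the decomposition \eqref{eq:boundarysplitting1}, (b) $0$ remains a regular value of each $H_{B,s}$ away from the parts where it was already regular, and (c) no spurious fixed points of $\psi_1^{H_{B,s}}$ are created in $N_B\setminus B_s$. Item (c) is the real analytic content: for a time-independent Hamiltonian $H$ with $H^{-1}(0)$ a codimension zero family of fixed points, having $\Fix\psi_1^H=H^{-1}(0)$ in a neighborhood is equivalent to the flow having no $1$-periodic orbits emanating from the zero locus, which is guaranteed by a $\cC^2$-smallness / transverse-nondegeneracy condition on $H$ near its zero locus (compare the hypothesis $\|\operatorname{Hess}_x H\|<2\pi$ in Lemma \ref{lem:CZ-Hamiltonian}). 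So the plan is to first shrink $N_B$ and rescale $H_B$ (which does not change $\Fix\psi_1^{H_B}$ once it is a codimension zero family, one rescales within the allowed window) so that $H_B$ and all of $H_{B,1}$, $H_{B,s}$ are uniformly $\cC^2$-small near their zero loci; then (c) holds for free. I expect the bulk of the write-up to be this bookkeeping of neighborhoods and smallness, together with the explicit local model for $\chi$, and everything else (exactness, compact support, invariance of $\d^{\pm}$, $\d^M$) to be routine.
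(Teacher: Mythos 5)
Your overall strategy (deform the defining Hamiltonian so that its zero set becomes a manifold with boundary, and interpolate through Hamiltonian flows) is the right one, but the specific device you chose does not go through, and it misses the observation that makes the paper's proof short. The paper does not round the corners of $B$ at all: it \emph{thickens} $B$ to $B_t=H_B^{-1}([-\tfrac14 t\delta,\tfrac14 t\delta])$, which is automatically a manifold with boundary because $H_B$ has no critical points on $N_B\setminus B$ (a critical point there would be a fixed point of $\phi$ off $B$), so $\pm\tfrac14 t\delta$ are regular values. The deformation is $H^t=\rho_t\circ H_B$ for a one-variable function $\rho_t$ that flattens $H_B$ near $0$ and has $0\le\rho_t'\le 4$. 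The key payoff is that $X^{H^t}=\rho_t'(H_B)\,X^{H_B}$ is a pointwise reparametrization of the \emph{original} Hamiltonian vector field, so a fixed point of $\psi_1^{H^t}$ outside $B_t$ would produce an $s$-periodic orbit of $X^{H_B}$ with $s\in(0,4]$; this is excluded simply by shrinking $N_B$ (no $t$-periodic orbits for $t\in(0,4]$). No local models, no gluing, no control of new critical points is needed.

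Your construction has concrete gaps at exactly the points this trick avoids. (a) The function $\chi$ you posit cannot "agree with $-H_B$ up to sign away from the corners": $\d B$ is \emph{not} a regular level set of $H_B$ (indeed $dH_B\equiv 0$ along $\d B$, since $H_B$ vanishes on the codimension zero set $B$), and near a component of $\d^{+}B$ one has $H_B\ge 0$ on \emph{both} sides, so $\{-H_B\le 0\}$ is a whole neighborhood rather than $B$; the "local equation of $B$" you want to match does not exist as a smooth function with regular zero level. (b) For the straight-line family $H_{B,s}=(1-s)H_B+sH_{B,1}$, transverse nondegeneracy of Hessians along the zero loci says nothing about critical points of $H_{B,s}$ created in the region between $B$ and $B_1$, where the two gradients can cancel; every such critical point is a spurious fixed point of $\psi_1^{H_{B,s}}$, and you give no argument ruling them out. (c) The proposed remedy — rescale $H_B$ to make it $\cC^2$-small — is not available: replacing $H_B$ by $cH_B$ replaces $\phi|_{N_B}=\psi_1^{H_B}$ by $\psi_c^{H_B}$, i.e.\ changes the given symplectomorphism rather than isotoping it through abstract contact open books unless you already know $\psi_c^{H_B}$ has no fixed points off $B$ for $c\in(0,1]$ — which is precisely the "no short periodic orbits of $X^{H_B}$ in $N_B\setminus B$ after shrinking $N_B$" statement that the paper isolates and that your write-up never establishes. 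The criterion $\|\operatorname{Hess}_x(H)\|<2\pi$ of Lemma \ref{lem:CZ-Hamiltonian} concerns isolated Morse critical points and is not the relevant tool for a codimension zero zero set. Finally, the homeomorphism of triples at the end also needs an argument (the paper uses a vector-field/collar construction), not just "the obvious radial rescaling" in a local chart.
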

\begin{proof}
	Let $H_{B}\colon N_{B}\to \R$ be the time independent Hamiltonian from Definition \ref{def:0codim}. Then $H_{B}|_{N_B\setminus B}$ has no critical points, so $H_{B}\colon N_B\setminus B\to (-\delta,\delta)$ is a submersion for some $\delta>0$ (if $\d^{-}B$ or $\d^{+}B$ is empty, the image $(-\delta,\delta)$ should be replaced by $[0,\delta)$ or $(-\delta,0]$, respectively). By assumption, the Hamiltonian flow $\psi_{s}^{H_B}$ has no $1$-periodic orbits in $N_B$. Reducing $\delta$ (hence shrinking $N_{B}$) we can assume that it has no $t$-periodic orbits for any $t\in (0,4]$. 
	
	Define a smooth family of functions $\rho_t:[-\delta,\delta]\to [-\delta,\delta]$, parametrized by $t\in [0,1]$, such that $\rho_0=\id$, $\rho_t$ vanishes identically in $[-\frac{1}{4}t\delta,\frac{1}{4} t\delta]$, equals the identity outside $[-\frac{1}{2}\delta,\frac{1}{2}\delta]$ and is strictly increasing away from $[-\frac{1}{4}t\delta,\frac{1}{4} t\delta]$ with derivative bounded by $4$. For each fixed $t\in [0,1]$ define a time independent Hamiltonian $H^{t}\de  \rho_t\comp H_{B}\colon N_{B}\to (-\delta,\delta)$.
	\smallskip
	
	Let $\phi_{t}\de \psi_{1}^{H^t}\colon N_{B}\to N_{B}$ be its time one flow. Recall that $\phi|_{N_B}$ is the time one flow of the Hamiltonian $H_{B}=H^{0}$, so $\phi_0=\phi|_{N_{B}}$, and $\phi_{t}$ agrees with $\phi$ off  $H_{B}^{-1}([-\frac{1}{2}\delta,\frac{1}{2}\delta])\subsetneq N_B$, so we can extend $\phi_{t}$ to an exact symplectomorphism $\phi_{t}\colon M\to M$, equal to $\phi$ away from $N_{B}$. In particular, $\phi_{t}=\id$ near $\d M$. Thus $(M,\lambda,\phi_{t})$ is an isotopy of abstract contact open books, and $\Fix\phi_{t}\setminus N_{B}=\Fix\phi \setminus B$.
	
	Put $B_t=H_{B}^{-1}([-\frac{1}{4}t\delta,\frac{1}{4} t\delta])$. Because $\pm\frac{1}{4}t\delta$ are regular values of $H_{B}$, the subset $B_t\subseteq M$ is a submanifold with boundary, of codimension zero. We claim that $B_t=\Fix\phi_{t}|_{N_B}$. Clearly, $B_t\subseteq \Fix\phi_t$. Outside $B_t$, the Hamiltonian vector field associated with $H^t$ coincides with the Hamiltonian vector field of $H_{B_i}$, multiplied by the derivative of $\rho_t$, which is bounded by $4$. The claim follows because, by assumption, the time $s$ Hamiltonian flow $\psi^{H_{B}}_s$ has no fixed points in $N_{B}\setminus B$ for $s\in (0,4]$.
	
	Therefore, $\Fix \phi_{t}=B_{t}\sqcup (\Fix \phi\setminus N_{B})$, and $B_{t}$ is a codimension zero family of fixed points, without corners. It remains to see that $(B_{t},\d^{+}B_{t},\d^{-}B_{t})$ is homeomorphic to $(B,\d^{+}B,\d^{-}B)$, for $t>0$ small enough. Write $N_{B}=N_{B}^{+}\sqcup N_{B}^{-}\sqcup B$, where $H_{B}>0$ on $N_{B}^{+}$ and $H_{B}<0$ on $N_{B}^{-}$. Define a smooth vector field $X_{1}$ as $-\nabla H_{B}$ on $N_{B}^{+}$, $\nabla H_{B}$ on $N_{B}^{-}$, and $0$ on $B$. Fix a vector field $X_0$ on $N_{B}$, pointing inwards $\d^{+} B \sqcup \d^{-}B$, see \cite[Appendice, \sec 2]{corners}. Since $X_{1}$ vanishes on $B$, we can choose $t>0$ small enough so that $X\de X_0+X_1$ does not vanish on a neighborhood $N$ of $\bar{B_{t}\setminus B}$, and points inwards both $\d^{+}B_{t}\sqcup \d^{-}B_{t}$ and $\d^{+}B\sqcup \d^{-}B$. As in \cite[Appendice, Proposition 6.1]{corners}, we conclude that both $N\cap B_{t}$ and $N\cap B$ are homeomorphic to $(\d^{+}B_{t}\sqcup \d^{-}B_{t})\times [0,\infty)$, and we can extend this homeomorphism to the required homeomorphism of triples.
\end{proof}

\section{The McLean spectral sequence in fixed point Floer homology}\label{sec:spectral-sequence}

In this section, we introduce fixed point Floer homology $\HF_{*}(\phi,+)$ of an abstract contact open book $(M,\lambda,\phi)$, following Uljarevic and McLean \cite{Uljarevic,McLean}. This definition is a variant of \cite{Dostoglou_Salamon}, cf.\ \cite{Her}. It was first introduced in a slightly different way in Seidel's thesis \cite{Seidel_thesis}, see \cite[\sec 4]{Seidel_more}.

After some preparations in Section \ref{sec:completions}, we introduce Floer complexes in Section \ref{sec:floer}. In Proposition \ref{prop:spectral-sequence}, we construct a spectral sequence converging to $\HF_{*}(\phi,+)$ in case $\Fix\phi$ is a disjoint union of codimension zero families of fixed points. This spectral sequence is a slight generalization of the axiom \cite[(HF3)]{McLean}, proved in Appendix C of loc.\ cit. Our arguments are essentially the same, but we give a less compressed presentation for convenience of those readers who are not experts in Floer theory.

\subsection{Completions of Liouville domains}\label{sec:completions}

A \emph{contact manifold} $(N,\alpha)$ is a compact manifold $N$ of dimension $2k-1$ with a $1$-form $\alpha\in \Omega^{1}(N)$ such that $\alpha\wedge (d\alpha)^{k-1}\neq 0$. The \emph{Reeb vector field} $R_{\alpha}$ is defined by $\alpha(R_{\alpha})=1$, $d\alpha(R_{\alpha},\sdot)=0$. 
\smallskip

Let $(M,\lambda)$ be a Liouville domain. Then $(\d M,\lambda|_{\d M})$ is a contact manifold. For $\eta>0$, the backwards flow of the Liouville vector field $X_\lambda$ defines a diffeomorphism onto its image
\begin{equation}
	\label{eq:necktriv}
	\Psi\colon (-\eta,0]\times\partial M\to M
\end{equation}
such that $\Psi^*\lambda=e^r\lambda|_{\partial M}$, where $r$ is a coordinate on $(-\eta,0]$. The {\em completion} of $(M,\lambda)$, see \cite[p.\ 239]{CE_from-Stein-to-Weinstein} or \cite[\sec 1.1]{Uljarevic}, is a manifold $\hat{M}=(M\sqcup (-\eta,\infty)\times\partial M)/_{\sim}$, where $\sim$ identifies $(-\eta,0]\times\partial M$ with its image under $\Psi$, together with a Liouville form  defined as  $\lambda$ in $M$ and  $e^r\lambda|_{\partial M}$ in the \emph{cylindrical end} $(-\eta,\infty)\times\partial M$. Abusing notation, we denote this $1$-form by $\lambda$, too. 

Let $\frac{\d}{\d r}$ be the vector field corresponding to the coordinate $r$ of the cylindrical end $(-\eta,\infty)\times \d M\subseteq \hat{M}$. The restriction $\lambda|_{\{r\}\times\partial M}$ is a contact form, and we denote by $R_\lambda$ its Reeb vector field. Therefore, $R_\lambda$ is a vector field on $(-\eta,\infty)\times\partial M\subseteq \hat M$, which is tangent to the level sets of the coordinate $r$. 

An almost complex structure $J$ on $M$ or $\hat{M}$ is {\em cylindrical} if there is $\epsilon>0$ such that for any $r\geq -\epsilon$, the restriction $J|_{\{r\}\times \d M}$ leaves invariant the contact distribution $\ker(\lambda|_{\{r\}\times M})$ and satisfies $J(\frac{\d}{\d r})=R_\lambda$. This notion coincides with \cite[Definition 4.1]{McLean} and \cite[(2.7), (2.8)]{Uljarevic}.

Fix $a\in \R$. A (time-dependent) Hamiltonian $H_t\colon M\to\RR$ is of {\em slope} $a$ if there exists $\epsilon>0$ such that $H_t\comp\Psi(r,y)=ar$ for $r\geq -\epsilon$, where $\Psi$ is the trivialization~(\ref{eq:necktriv}), see \cite[p.\ 977]{McLean} or \cite[(2.6)]{Uljarevic}. A Hamiltonian of slope $a$ is extended to the completion $\hat{M}$ by the formula $H_t(r,y)=ar$ at $(-\epsilon,\infty)\times \partial M$. At the locus where $H_t(r,y)=ar$ the Hamiltonian flow at time $1$ coincides with the Reeb flow of $R_{\lambda}$ at time $a$. A nonzero slope $a$ is {\em small} if $|a|$ is strictly smaller than the minimal period of a Reeb orbit. In this case, $\psi^{H}_{1}$ has no fixed points in the cylindrical end of $\hat{M}$.

\subsection{The Floer complexes}\label{sec:floer}

Let $(M,\phi,\lambda)$ be a graded abstract contact open book, see Definition \ref{def:acob} and Section \ref{sec:graded-acobs}. We now define the \emph{$\phi$-twisted loop Floer complex} $\CFlp_{*}$ and the \emph{$\check{\phi}$-fixed point Floer complex} $\CFpt_{*}$, using the Floer Cauchy--Riemann equation in a perturbed  \eqref{eq:floerequlj} and unperturbed \eqref{eq:floereqmclean} version, respectively. The first setting is adapted from \cite{Uljarevic}, and mirrors \cite{Dostoglou_Salamon} in the compact case, which in turn specifies to the classical Floer theory for $\phi=\id$. The second one is used in \cite{McLean}: here $\check{\phi}$ is a small Hamiltonian perturbation of $\phi$. It is known how to pass from one setting to the other: we outline the method in Section \ref{sec:floerequivalence}. In particular, both complexes have the same homology $\HF_{*}(\phi,+)$.  Nonetheless, to derive the spectral sequence in Proposition \ref{prop:spectral-sequence}, it will be convenient to use both approaches at different parts of the argument. 

\subsubsection{Periodic almost complex structures and Hamiltonians}

Let $(M,\lambda,\phi)$ be a graded abstract contact open book. Extending $\phi$ to the identity in $[0,\infty)\times \d M$ we obtain a compactly supported exact symplectomorphism $\hat M\to\hat M$, which we denote by $\phi$, too. 

A family of almost complex structures $J_t$ on $M$ or $\hat{M}$, smoothly parametrized by $t\in \R$, is {\em $\phi$-periodic} if $J_{t+1}=\phi^*J_t$, see \cite [(2.5)]{Uljarevic}. A (time dependent) Hamiltonian $H_{t}$ is {\em $\phi$-periodic} if it satisfies the equality $H_{t+1}=H_t\comp\phi$, see \cite [(2.4)]{Uljarevic}. A {\em $\phi$-twisted loop} in $M$ is a path $\gamma:\RR\to M$ such that $\phi(\gamma(t+1))=\gamma(t)$, see \cite[\sec 2.2]{Uljarevic}. 

We warn the reader that in \cite[\sec 2.2]{Dostoglou_Salamon} the periodicity conditions are defined in an opposite way, i.e.\ $\gamma(t+1)=\phi(\gamma(t))$, etc. Also, the symbols $t,s$  play the opposite roles in \cite{Dostoglou_Salamon} than in the discussion below, which follows \cite{Uljarevic}.

Let $H$ be a $\phi$-periodic Hamiltonian. It satisfies a basic equality
\begin{equation}\label{eq:basic}
	\psi_{t+1}^{H}=\phi^{-1}\circ \psi_{t}^{H}\circ \phi\circ \psi_{1}^{H},
\end{equation}
cf.\  \cite[p.\ 586]{Dostoglou_Salamon}. 
To prove \eqref{eq:basic}, note that $\psi_{t+1}^{H}=\psi_{t}^{H'}\circ \psi_{1}^{H}$, where $H_{t}'(x)=H_{t+1}(x)$. By $\phi$-periodicity, $H_{t}'=H_{t}\circ \phi$, so $\psi_{t}^{H'}=\phi^{-1}\circ \psi_{t}^{H}\circ \phi$, see \cite[Exercise 3.1.14(iii)]{McDuff_Salamon}. This proves \eqref{eq:basic}.
\smallskip

 We denote by $\cP_{\phi,H}$ the space of $\phi$-twisted loops which are \emph{Hamiltonian}, i.e.\ of the form $t\mapsto \psi_{t}^{H}(x)$ for some $x\in M$. 
We have a one-to-one correspondence
\begin{equation}\label{eq:corresponence_Fix-P}
	\cP_{\phi,H}\longleftrightarrow \Fix(\phi \circ \psi_{1}^{H})
	\quad\mbox{given by}\quad
	\gamma\mapsto \gamma(0).
\end{equation}
Indeed, if $\gamma\in \cP_{\phi,H}$ then $(\phi\circ\psi_{1}^{H})(\gamma(0))=\phi(\gamma(1))=\gamma(0)$, so $\gamma(0)\in \Fix \phi\circ\psi_{1}^{H}$. For the converse, let $x\in \Fix (\phi\circ \psi_{1})$ and 
define $\gamma(t)=\psi_{t}^{H}(x)$. We have 
\begin{equation*}
	\phi(\gamma(t+1))=\phi\circ \psi_{t+1}^{H}(x)\overset{\mbox{\tiny{\eqref{eq:basic}}}}{=}\phi\circ\phi^{-1}\circ \psi_{t}^{H}\circ \phi \circ \psi_{1}^{H}(x)=
	\psi_{t}^{H}\circ \phi\circ \psi_{1}^{H}(x)=\psi_{t}^{H}(x)=\gamma(t),
\end{equation*}
where the first and last equality follow from the definition of $\gamma$, and the second-to-last holds because $x\in \Fix (\phi\circ \psi_1^H)$. Thus $\gamma$ is a $\phi$-twisted loop, as needed.
\smallskip

Note that if a $\phi$-periodic Hamiltonian $H$ has small negative (or positive) slope, then all fixed points of $\phi\comp \psi^{H}_1$ are away from $\partial M$, and away from the cylindrical end of of $\hat{M}$.

\subsubsection{The Conley--Zehnder index of a Hamiltonian twisted loop}\label{sec:CZ-loop}

Fix a Hamiltonian $\phi$-twisted loop $\gamma\in \cP_{\phi,H}$. By \eqref{eq:corresponence_Fix-P}, the point $\gamma(0)$ is fixed by the symplectomorphism $\phi\circ \psi_{1}^{H}$. The latter has a unique grading, induced from the grading of $\phi$ by the isotopy $(\psi_{t}^{H})$. We define $\CZ(\gamma)$ as the Conley--Zehnder index of $\gamma(0)\in \Fix(\phi\circ\psi_{1}^{H})$, see Section \ref{sec:CZ}.

This definition lifts the relative Conley--Zehnder index introduced in \cite[Definition 2.10]{Uljarevic} to an absolute one. To see this, let us recall how $\CZ$ was defined in Section \ref{sec:CZ}. Since $\gamma$ is an integral curve of the Hamiltonian flow $\psi^{H}_{t}$, for any time $t\in [0,1]$ the differential of $\psi^{H}_{t}$ at time $t$ induces a mapping $\Fr(M)_{\gamma(0)}\to \Fr(M)_{\gamma(t)}$, which lifts to a mapping $\tilde{\psi}^{H}_t\colon \tilde{\Fr}(M)_{\gamma(0)}\to\tilde{\Fr}(M)_{\gamma(t)}$. The grading of $\phi$ defines a mapping $\tilde{\beta}_{\gamma(1)}\colon \tilde{\Fr}(M)_{\gamma(1)}\to\tilde{\Fr}(M)_{\gamma(0)}$. So, composing we obtain a mapping 
\begin{equation*}
	\tilde\beta_{\gamma(1)}\circ \tilde{\psi}^{H}_{1}\colon \tilde{\Fr}(M)_{\gamma(0)}\to \tilde{\Fr}(M)_{\gamma(0)}.
\end{equation*}
which gives rise to a homotopy class of paths in the symplectic group relative to its ends as in Section \ref{sec:CZ}. The Conley--Zehnder index $\CZ(\gamma)$ is now the Maslov index of this path. 

Given two frames $f_0\in \Fr(M)_{\gamma(0)}$ and $f_1\in \Fr(M)_{\gamma(1)}$ such that 
$(\psi_{1}^{H})_{*}f_0=f_1$, 
any homotopy class (relative to its ends) of frames $f_t\in \Fr(M)_{\gamma(t)}$ connecting $f_0$ and $f_1$ determines a map $\tilde{\Fr}(M)_{\gamma(1)}\to\tilde{\Fr}(M)_{\gamma(0)}$. Let us call the homotopy class {\em admissible} if the map equals $\tilde{\beta}_{\gamma(1)}$.  

Fix $\gamma_1\in \cP_{\phi,H_{1}}$, $\gamma_2\in \cP_{\phi,H_{2}}$ for two Hamiltonians $H_1$, $H_2$, and fix $u\colon \RR\times\RR\to M$ satisfying $\phi(u(s,t+1))=u(s,t)$, $u(s,t)=\gamma_1(t)$ for $s\leq 0$ and $u(s,t)=\gamma_2(t)$ for $s\geq 1$. Then the grading  induces, via the choice of a homotopy of admissible classes of framings along $u(s,\sdot)$ for any $s$, a symplectic trivialization $T_{s,t}$ of $u^*TM$ such that $\phi^*T_{s,t+1}=T_{s,t}$. Now, the relative grading of $(\gamma_1,\gamma_2)$ is defined in \cite[Definition 2.10]{Uljarevic} as the difference between the Maslov indices $\mu(\Psi^2)-\mu(\Psi^1)$, where $\Psi^{i}\colon t\mapsto T_{i,t}^{-1}\circ (\psi_{t}^{H})^{*}T_{i,0}\in \Sp(2n)$. The path  $\Psi^{i}$ represents the homotopy class used above to define $\CZ(\gamma_{i})$, as claimed.

\subsubsection{The $\phi$-twisted loop Floer complex}\label{sec:perturbed} Now, we introduce the fixed point Floer homology $\HF_{*}(\phi,+)$ using $\phi$-twisted loops, following \cite{Uljarevic}.

Recall from Definition~\ref{def:acob} that associated with an abstract contact open book $(M,\lambda,\phi)$ we have an action function $\ac\colon M\to\RR$, defined (up to a constant) by $\phi^{*}\lambda-\lambda=-d\ac$. We fix a $\phi$-periodic Hamiltonian $H$ of small \emph{negative} slope. Taking $H$ of small \emph{positive} slope would lead to a definition of another Floer homology group, $\HF_{*}(\phi,-)$, which is not relevant for us: hence for the sake of clarity, we focus on $\HF_{*}(\phi,+)$, and leave the analogous discussion of $\HF_{*}(\phi,-)$ to the reader. The relation between $\HF_{*}(\phi,+)$ and $\HF_{*}(\phi,-)$ is outlined in \cite[Remark 4.1]{Seidel_more}. 
	
The \emph{action functional} on the space of $\phi$-twisted loops is given in \cite[Definition 2.6]{Uljarevic} by the formula
\begin{equation}
	\label{eq:actiondef}
	\cA_{\phi,H}(\gamma)\de -\int_0^1(\gamma^*\lambda+H_t(\gamma)\, dt) + \ac(\gamma(1)).
\end{equation}
Note that if $\gamma\in \cP_{\phi,H}$ is a constant path, and $H$ is time-independent, then the image of $\gamma$ is a fixed point of $\phi$, say $p$, and $\cA_{\phi,H}(\gamma)=-H(p)+\ac(p)$.

The set of critical points of $\cA_{\phi,H}$ equals exactly $\cP_{\phi,H}$, see \cite[Lemma 2.7]{Uljarevic}. A $\phi$-periodic Hamiltonian $H$ is {\em nondegenerate} if $\det(D(\phi\comp\psi^{H}_1)(x)-\id)\neq 0$ at any fixed point $x$ of $\phi\comp\psi^{H_t}_1$, see \cite[Definition 2.5]{Uljarevic}. If a $\phi$-periodic Hamiltonian $H$ is nondegenerate then the set $\cP_{\phi,H}$ is finite.
\smallskip

Let $J_t$ be a family of cylindrical, $(d\lambda)$-compatible and $\phi$-periodic almost complex structures on $\hat{M}$.  Fix $\gamma_{-},\gamma_{+}\in \cP_{\phi,H}$. A \emph{Floer trajectory} between $\gamma_{-}$ and $\gamma_{+}$ is a smooth map $u\colon \RR^2\to \hat M$ satisfying
\begin{equation}
	\label{eq:boundulj}
	\phi(u(s,t+1))=u(s,t),\quad 
	\lim_{s\rightarrow -\infty} u(s,t)=\gamma_{-}(t),\quad 
	\lim_{s\rightarrow +\infty} u(s,t)=\gamma_{+}(t)
\end{equation}
and the \emph{perturbed Floer Cauchy--Riemann equation}:
\begin{equation}
	\label{eq:floerequlj}
	\frac{\partial u}{\partial s}+J_t(u)\left(\frac{\partial u}{\partial t}-X^{H}(u)\right)=0,
\end{equation}
see \cite[(2.10)]{Uljarevic}. Whenever additional precision is needed, we refer to such a Floer trajectory as a \emph{$\CFlp_{*}(\phi,H_t,J_t)$-Floer trajectory from $\gamma_{-}$ to $\gamma_{+}$}. Note that if $u(s,t)$ is such a Floer trajectory then so is $u(c+s,t)$ for any $c\in\RR$. So, the additive group $\RR$ acts on the Floer trajectories.

\begin{definition}[{\cite[2.5]{Uljarevic}}]
	\label{def:perturbedregularpair}
	A pair $(H_t,J_t)$ is {\em regular for $\phi$} if $H_t$ is nondegenerate and the linearization of the operator \eqref{eq:floerequlj} for every Floer trajectory is surjective.
\end{definition}
It is known that a generic pair $(H_t,J_t)$ is regular. In particular, regular pairs exist, cf.\ \cite[Theorem 8.1.1]{AD_book} or \cite[p.\ 316]{Seidel_Fukaya_structures_II}.

The \emph{$\phi$-twisted loop Floer complex} associated with a graded abstract compact open book $(M,\lambda,\phi)$ and a regular pair $(H_t,J_t)$ is the $\ZZ_2$-vector space $\CFlp_{*}(\phi,H_t,J_t)$ generated by the Hamiltonian $\phi$-twisted loops, graded by $-\CZ$, whose differential $\d$ is defined as follows. Given $\gamma_+,\gamma_-\in \cP_{\phi,H}$ such that $\CZ(\gamma_+)=\CZ(\gamma_-)+1$, the coefficient of $\gamma_+$ in $\partial(\gamma_-)$ is the number (mod $2$) of Floer trajectories between $\gamma_{-}$ and $\gamma_{+}$, up to the additive action of $\RR$. By \cite[Theorem~2.18]{Uljarevic}, this differential is well defined and its square is zero, so $\CFlp_{*}$ is indeed a chain complex. By \cite[Corollary 2.22 and Lemma 2.25]{Uljarevic}, the homology groups of this complex are independent of the choice of a regular pair $(H_t,J_t)$, up to unique isomorphism, as long as $H$ remains of small negative slope. The reader should note that we have improved the relative grading used by Uljarevic to the absolute grading defined in Section \ref{sec:CZ-loop} using the grading of $\phi$ as in \cite{McLean}; the proofs of \cite{Uljarevic} work in this setting.  

As a consequence of the above independence, we can associate to each graded abstract contact open book $(M,\lambda,\phi)$ a $\Z$-graded Floer homology group $\HF_{*}(\phi,+)$, given by $H$ of small negative slope. The sign \enquote{$+$} comes from the notation of \cite{McLean}, where, as explained in Section \ref{sec:Hamiltonian}, Hamiltonians have opposite signs. Taking $H$ of small \emph{positive} slope would lead to $\HF_{*}(\phi,-)$.

\subsubsection{The action filtration} \label{sec:action-filtration}
Let $\CFlp_{*}(\phi,H_t,J_t)$ be the $\phi$-twisted loop Floer complex introduced in Section \ref{sec:perturbed} for a regular pair $(H_t,J_t)$. For any $a\in \R$, let $F^{\leq a}  \CFlp_{*}(\phi,H_t,J_t)$ be the subspace of $\CFlp_{*}(\phi,H_t,J_{t})$ spanned by those Hamiltonian $\phi$-twisted loops $\gamma\in \cP_{\phi,H}$ whose action satisfies $\cA_{\phi,H}(\gamma)\leq a$. It is a subcomplex of $\CFlp_{*}(\phi,H_t,J_{t})$, see \cite[\sec 2.6]{Uljarevic}. To see this, recall that for $\gamma_{+},\gamma_{-}\in \cP_{\phi,H}$, the coefficient of $\gamma_{+}$ in the expression of $\d(\gamma_{-})$ is the count (mod $2$) of the Floer trajectories connecting $\gamma_{-}$ with $\gamma_{+}$. By  \cite[p.\ 867]{Uljarevic}, these trajectories are anti-gradient flow lines of the action functional $\cA_{\phi,H}$. In particular, the differential $\d$ decreases the value of the action, so it maps $F^{\leq a}  \CFlp_{*}(\phi,H_t,J_t)$ to itself, as claimed.

\subsubsection{The $\check{\phi}$-fixed point Floer complex}
\label{sec:unperturbed}
Now we review the definition of Floer homology given in \cite{McLean}. In Section \ref{sec:floerequivalence} we will see that it is equivalent to the one recalled in Section \ref{sec:perturbed}.

Let again $(M,\lambda,\phi)$ be a graded abstract compact open book. Consider a $\cC^{2}$-small $\phi$-periodic Hamiltonian $H$ which is of small negative slope. Assume that all fixed points of $\check{\phi}\de \phi\comp\psi^{H}_1$ are nondegenerate (hence there is finitely many such points). Let $J_t$ be a family of cylindrical, $(d\lambda)$-compatible and $\check{\phi}$-periodic almost complex structures on $M$. For $p_{-},p_{+}\in \Fix \check{\phi}$, a \emph{Floer trajectory} (or, more precisely: a \emph{$\CFpt_{*}(\check{\phi},J_{t})$-Floer trajectory}) from $p_{-}$ to $p_{+}$ is a smooth map $v\colon \RR^2\to M$ satisfying
\begin{equation}
	\label{eq:boundmclean}
	\check{\phi}(v(s,t+1))=v(s,t),\quad 
	\lim_{s\rightarrow -\infty} v(s,t)=p_{-},\quad 
	\lim_{s\rightarrow +\infty} v(s,t)=p_{+}
\end{equation}
and the \emph{unperturbed Floer Cauchy--Riemann equation}:
\begin{equation}
	\label{eq:floereqmclean}
	\frac{\partial v}{\partial s}+J_t(v)\left(\frac{\partial v}{\partial t}\right)=0,
\end{equation}
see \cite[Definition 4.3]{McLean}.

We define the \emph{$\check{\phi}$-fixed point Floer complex $\CFpt_{*}(\check{\phi},J_t)$} as the $\ZZ_2$-vector space freely generated by the fixed points of $\check{\phi}$, graded by the minus Conley--Zehnder index, and equipped with the following differential: given $p_+,p_-\in \Fix \check{\phi}$ such that $\CZ(p_+)=\CZ(p_-)+1$, the coefficient of $p_+$ in $\partial(p_-)$ is the number of Floer trajectories between $p_{-}$ and $p_{+}$, up to the additive action of $\RR$ in the $s$ variable of $u$. As we will see in Section \ref{sec:floerequivalence} below, for generic choice of $J_t$ and $H$, this differential is well defined, its square is zero and the resulting homology groups coincide with  $\HF_{*}(\phi,+)$ introduced in Section \ref{sec:perturbed} above. In particular, they do not depend on the choice of the almost complex structure $J_t$, and of the Hamiltonian $H$ of small negative slope, cf.\ \cite[p.\ 980]{McLean}.

\subsubsection{Equivalence of the two Floer complexes}\label{sec:floerequivalence} Here we explain the equivalence of Uljarevic's and McLean's definitions of the Floer complex, recalled in Sections \ref{sec:perturbed} and \ref{sec:unperturbed} above. 

Let $H$ be a $\phi$-periodic Hamiltonian and let $(J_t)$ be a family of cylindrical, $(d\lambda)$-compatible, $\phi$-periodic almost complex structures such that $(H_t,J_t)$ is regular for $\phi$, see Definition \ref{def:perturbedregularpair}. As in  Section~\ref{sec:unperturbed} put $\check{\phi}\de\phi\comp\psi^{H}_1$. We claim that there is an isomorphism of chain complexes $\CFlp_{*}(\phi,H_{t},J_{t})\cong \CFpt(\check{\phi},\tilde{J}_{t})$ for a suitable cylindrical, $(d\lambda)$-compatible and $\check{\phi}$-periodic family $\tilde{J}_{t}$. 
	
The correspondence \eqref{eq:corresponence_Fix-P}, together with our definition of $\CZ(\gamma)$, give a natural identification of the underlying graded $\Z_{2}$-vector spaces. For the equality of differential we use the following, known argument from \cite[Proposition 8.6.1]{AD_book}.

Fix $\gamma_-,\gamma_+\in \cP_{\phi,H}$, and let $u:\RR^2\to \hat{M}$ be a $\CFlp_{*}(\phi,H_{t},J_{t})$-Floer trajectory from  $\gamma_{-}$ to $\gamma_{+}$. Let $p_{-},p_{+}$ be the corresponding fixed points of $\check{\phi}$. By \cite[Lemma 2.15]{Uljarevic}, the image of $u$ is contained in $M$. Define $v(s,t):=(\psi^{H}_t)^{-1}(u(s,t))$ and 
\begin{equation}
	\label{eq:Jtilde}
	\tilde{J}_t\de (\psi^{H}_t)^*J_t=D(\psi^{H}_t)^{-1}\circ J_t\circ D(\psi^{H}_t).
\end{equation}
We claim that $v$ is a $\CFpt_{*}(\check{\phi},\tilde{J}_{t})$-Floer trajectory from $p_{-}$ to $p_{+}$. The proof of \cite[Proposition 8.6.1]{AD_book} precisely shows that $v$ satisfies the unperturbed Cauchy--Riemann Floer equation~\eqref{eq:floereqmclean}, with $J_t$ replaced by $\tilde{J}_t$. Clearly, $\lim_{s\rightarrow \pm\infty} v(s,t)=p_{\pm}$. It remains to show that $\check{\phi}(v(s,t+1))=v(s,t)$ and that $\tilde{J}_{t}$ is a $\check{\phi}$-periodic family of cylindrical almost complex structures. This follows by elementary computation, using the basic equality \eqref{eq:basic}. Indeed, \eqref{eq:basic} and definition of $\check{\phi}$ give
\begin{equation*}
	\check{\phi}\circ (\psi_{t+1}^{H})^{-1}=\phi\circ \psi_1^H\circ( \phi^{-1}\circ \psi_t^H\circ\phi\circ\psi_1^H)^{-1}=
	(\psi_{t}^{H})^{-1}\circ\phi
\end{equation*}
Hence by definition of $v$, we have
\begin{equation*}
	\check{\phi}(v(s,t+1))=\check{\phi}\circ (\psi_{t+1}^{H})^{-1}(u(s,t+1))=(\psi_{t}^{H})^{-1}\circ \phi(u(s,t+1))
	\overset{\mbox{\tiny{\eqref{eq:boundulj}}}}{=}
	(\psi_t^{H})^{-1}(u(s,t))=v(s,t),
\end{equation*}
which proves the periodicity condition \eqref{eq:boundmclean}. To see that $\tilde{J}_t$ is $\check{\phi}$-periodic, we write
\begin{equation*}
	\tilde{J}_{t+1}=(\psi_{t+1}^{H})^{*}J_{t+1}
	\overset{\tiny{\mbox{\eqref{eq:basic}}}}{=}
	(\phi^{-1}\circ \psi_{t}^{H}\circ \phi\circ \psi_{1}^{H})^{*}J_{t+1}
	\overset{\tiny{(*)}}{=}
	(\psi_{t}^{H}\circ \phi\circ \psi_{1}^{H})^{*}J_{t}
	=
	(\phi\circ \psi_{1}^{H})^{*}\tilde{J}_{t}
	=
	\check{\phi}^{*}\tilde{J}_t,
\end{equation*}
as needed. The equality $(*)$ above holds by the $\phi$-periodicity condition $J_{t+1}=\phi^{*}J_{t}$.

Since the pair $(H_t,J_t)$ is regular for $\phi$, the linearized Floer equation for $u$ is surjective, so the same happens for $v$ and the family of almost complex structures $\tilde{J}_t$. This way we get a one to one correspondence between the $\CFlp_{*}(\phi,H_t,J_t)$-Floer trajectories from $\gamma_{-}$ to $\gamma_{+}$, and $\CFpt_{*}(\check{\phi}_t,\tilde{J}_t)$-Floer trajectories from $p_{-}$ to $p_{+}$, modulo the $\R$-action. Hence the differentials in both $\CFlp_{*}(\phi,H_t,J_t)$ and $\CFpt_{*}(\check{\phi},\tilde{J}_t)$ coincide. In particular, the results of \cite{Uljarevic} quoted in Section \ref{sec:perturbed} can be used to prove that both differentials are well defined and square to zero. We conclude that the complexes $\CFlp_{*}(\phi,H_t,J_t)=\CFpt_{*}(\check{\phi},\tilde{J}_t)$ constructed in Sections \ref{sec:perturbed} and \ref{sec:unperturbed}, respectively, are naturally isomorphic. In particular, both constructions give the same Floer homology groups $\HF_{*}(\phi,+)$.

\subsubsection{Invariance by isotopy of abstract contact open books}
The following result, crucial for us, was proved in \cite[Lemma B.3]{McLean}. We include a proof for the convenience of the reader.
\begin{prop}
	\label{prop:isotopy_invariance}
	Let $\{(M_t,\lambda_t,\phi_t)\}_{t\in [0,1]}$ be a graded isotopy of abstract contact open books. Then the groups $\HF_*(\phi_t,+)$ and $\HF_{*}(\phi_t,-)$ are independent of $t$.
\end{prop}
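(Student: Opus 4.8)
\textbf{Proof plan for Proposition \ref{prop:isotopy_invariance}.}

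The plan is to reduce the statement to the standard continuation-map argument in Floer theory, taking care of the two complications specific to our setting: the boundary of $M_t$ varies with $t$, and we need compatibility with the gradings so that the isomorphism preserves the $\ZZ$-grading by $-\CZ$. I will treat $\HF_*(\phi_t,+)$; the argument for $\HF_*(\phi_t,-)$ is verbatim with the slope sign reversed. Since the statement is local in the parameter, it suffices to show that for every $t_0\in[0,1]$ there is $\varepsilon>0$ such that $\HF_*(\phi_t,+)\cong\HF_*(\phi_{t_0},+)$ for $|t-t_0|<\varepsilon$; connectedness of $[0,1]$ then gives the global statement.

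First I would trivialize the family of Liouville domains. Using the flow of the Liouville vector fields $X_{\lambda_t}$ and the collar coordinates \eqref{eq:necktriv} near $\partial M_t$, together with an ambient isotopy of the total space of the Liouville fibration $\{(M_t,\lambda_t)\}$ (recall $(M_t,\lambda_t,\phi_t)$ sits inside a Liouville fibration over $[0,1]$ by Definition \ref{def:acob}), I would produce, after possibly shrinking the parameter interval, diffeomorphisms $\Xi_t\colon M_{t_0}\to M_t$ with $\Xi_{t_0}=\id$ such that $\Xi_t^*\lambda_t=\lambda_{t_0}+d g_t$ for some smooth family of functions $g_t$ compactly supported in the interior, and such that $\Xi_t$ intertwines the collar trivializations. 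Pulling everything back by $\Xi_t$, we may assume $M_t=M$, $\partial M_t=\partial M$ are fixed, $\lambda_t$ is a smooth family of Liouville forms on $M$ all agreeing near $\partial M$ (replacing $\lambda_t$ by $\lambda_{t_0}$ near the boundary using the compactly supported $dg_t$), and $\phi_t$ is a smooth family of compactly supported exact symplectomorphisms, each equal to the identity near $\partial M$. This also gives a smooth family of completions $\hat M$ with a fixed cylindrical end, and a smooth family of gradings of $\phi_t$ (the grading varies continuously by definition of a graded isotopy). Then I would fix a $\phi_{t}$-periodic Hamiltonian $H^t$ of small negative slope and a regular pair $(H^t_s, J^t_s)$ for $\phi_{t}$ for each of the two endpoint values, and choose a smooth homotopy of data interpolating between them through $\phi_{t(\tau)}$-periodic data, where $\tau\mapsto t(\tau)$ runs from $t_0$ to $t$.

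Next I would define the continuation map $\CFlp_*(\phi_{t_0},H^{t_0}_s,J^{t_0}_s)\to\CFlp_*(\phi_{t},H^{t}_s,J^{t}_s)$ by counting solutions of the $s$-dependent perturbed Floer equation \eqref{eq:floerequlj} with the interpolating data, where the twisting condition $\phi(u(s,t+1))=u(s,t)$ in \eqref{eq:boundulj} is replaced by the $s$-dependent condition $\phi_{t(s)}(u(s,t+1))=u(s,t)$, with $t(s)$ a monotone cutoff equal to $t_0$ for $s\ll 0$ and $t$ for $s\gg 0$. The key a priori estimate is a $\cC^0$-bound confining the solutions to a fixed compact subset of $\hat M$: this follows from the maximum principle for cylindrical almost complex structures and Hamiltonians of small negative slope exactly as in the construction of the differential (cf. \cite[Lemma 2.15]{Uljarevic}), because along the homotopy the slopes stay small and negative and the data agree near the cylindrical end. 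The energy of a continuation solution is bounded by the difference of action functionals plus an error controlled by $\partial_\tau H$, giving compactness of the relevant moduli spaces; for a generic choice of homotopy these are smooth and of the expected dimension. Standard gluing and the broken-trajectory analysis show the continuation map is a chain map, that the composition of the continuation map and the reverse one is chain homotopic to the identity (via a homotopy-of-homotopies argument), and that the map is independent of the choice of interpolating data up to chain homotopy. Grading compatibility is where the chosen smooth family of gradings of $\phi_{t(s)}$ enters: it ensures the Conley--Zehnder indices of the asymptotic loops are computed consistently along the homotopy, so the continuation map has degree zero. This produces the isomorphism $\HF_*(\phi_{t_0},+)\cong\HF_*(\phi_t,+)$.

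The main obstacle I anticipate is the first paragraph above — making the trivialization of the family of Liouville domains genuinely compatible with everything at once (the boundary, the collar/cylindrical-end structure, the exactness of each $\phi_t$, and the grading), so that once this is done the continuation argument becomes the textbook one. In particular one must check that pulling back by $\Xi_t$ preserves exactness of $\phi_t$ with an action function depending smoothly on $t$, and that the family of almost complex structures can be chosen cylindrical for the whole family simultaneously near infinity; both are routine but need the collar flow to be handled carefully. Everything after that — the maximum principle, energy estimates, transversality, gluing — is the standard machinery already invoked from \cite{Uljarevic,McLean,AD_book} in Sections \ref{sec:perturbed}--\ref{sec:floerequivalence}, and I would cite it rather than reprove it.
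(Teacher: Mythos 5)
Your route is, in outline, the same as the paper's: first trivialize the family so that only $\phi_t$ varies on a fixed Liouville manifold, then conclude by a continuation-map argument. The paper implements the second half simply by citing \cite[Theorem 2.34]{Uljarevic}, which is exactly the continuation machinery you re-derive (interpolating data, maximum principle, energy bounds, gluing, grading compatibility); sketching it yourself is harmless but not needed.

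The one step that is not correct as you state it is the trivialization in your first paragraph. You ask for diffeomorphisms $\Xi_t\colon M_{t_0}\to M_t$ of the \emph{compact} domains with $\Xi_t^*\lambda_t=\lambda_{t_0}+dg_t$, $g_t$ compactly supported in the interior, and collars intertwined. A Liouville isotopy can rescale the boundary contact form (already $\lambda_t=e^{t}\lambda_{0}$ on a fixed $M$ does this), and then no such $\Xi_t$ exists, even for $t$ arbitrarily close to $t_0$: equality of the forms near $\partial M$ would force the boundary contact forms, hence their Reeb periods, to agree. The correct statement — and the one the paper uses, quoting \cite[\S 9.1]{Keating} — lives on the completions: there is a trivialization $\Phi\colon\hat{M}_0\times[0,1]\to\hat{M}$ with $\Phi^*\lambda_t=\lambda_0+dG_t$, where $G_t$ is in general \emph{not} compactly supported, the discrepancy being absorbed at infinity by the Liouville flow. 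One then needs the extra (easy) observation that the Floer complex of $(\hat{M}_0,\lambda_0+dG_t,\phi_t)$ coincides with that of $(\hat{M}_0,\lambda_0,\phi_t)$, after which \cite[Theorem 2.34]{Uljarevic} applies, with the grading carried along the isotopy exactly as you say. Since you do pass to completions later in your argument, the fix is only to weaken your normalization claim to this completed, non-compactly-supported form and add that observation; with that change your plan matches the paper's proof.
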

\begin{proof}
	Let $\pi\colon M\to [0,1]$ be the total space of the isotopy, and let $\phi\colon M\to M$ be the diffeomorphism which restricts to $\phi_t$ on each fiber, see Definition \ref{def:acob}. 
	
	Given such an isotopy $(M,\pi,\lambda)$ of Liouville domains, in \cite[\sec 9.1]{Keating}, an isotopy of completions $(\hat{M},\pi,\lambda)$ is constructed. Since $\phi$ is fiberwise compactly supported, it extends to a diffeomorphism $\phi\colon \hat{M}\to\hat{M}$ which respects fibers and at each fiber is a compactly supported exact symplectomorphism. By \cite[\sec 9.1]{Keating}, $\pi\colon \hat{M}\to [0,1]$ admits a trivialization by an exact symplectomorphism, i.e.\ there is  a diffeomorphism $\Phi:\hat{M_0}\times [0,1]\to\hat{M}$ such that $\Phi^*\lambda_t=\lambda_0+dG_t$, where $G_t$ is the restriction to $M_0\times\{t\}$ of a smooth function $G\colon M\times [0,1]\to\RR$. So, given an isotopy of abstract contact open books $(M,\pi,\lambda,\phi)$, we have an isotopy $(\hat{M}_0\times [0,1],\pr_{[0,1]},\lambda,\phi)$ of their completions, such that $\lambda_t=\lambda_0+dG_t$, and $\phi$ is fiberwise an exact compactly supported symplectomorphism. If the isotopy $(M,\pi,\lambda,\phi)$ is graded, then this grading is inherited to $(\hat{M}_0\times [0,1],\pr_{[0,1]},\lambda,\phi)$. 
	
	By construction, the Floer homology of $(\hat{M}_0,\lambda_0+dG_t,\phi_t)$ coincides with the Floer homology of $(\hat{M}_0,\lambda_0,\phi_t)$. The result now follows from \cite[Theorem 2.34]{Uljarevic}.
\end{proof}

\subsection{The McLean spectral sequence}

The remaining part of this section is devoted to the proof of the following result, which is a (slightly more general) version of the axiom (HF3) from \cite[p.\ 980]{McLean}, proved in Appendix C of loc.\ cit; see Remark \ref{rem:McLean-HF3} for comparison. Our proof follows the same path, but we provide a more detailed exposition for the convenience of the reader.

\begin{prop}\label{prop:spectral-sequence}
	Let $(M,\lambda,\phi)$ be a graded abstract contact open book. Assume that 
	\begin{equation*}
		\Fix \phi=\bigsqcup_{i=1}^{N} B_{i},
	\end{equation*}
	where each $B_{i}$ is a codimension zero families of fixed points, see Definition \ref{def:0codim}. Choose an action $\ac\colon M\to \R$, see \eqref{eq:exact-symplectiomorphism}, and a function $\iota\colon \{1,\dots, N\}\to \Z$ with the following property: the function $\hat{\iota}$ from the discrete set $\ac(\bigsqcup_{i=1}^{N}B_{i})\subseteq \R$ to $\Z$, defined by $\hat{\iota}(\ac(B_{i}))=\iota(i)$, is strictly increasing. Assume further that one of the following holds
	\begin{enumerate}[(i)]	
		\item\label{item:ss-H1} $H^{1}(M,\d M;\R)=0$, or
		\item\label{item:ss-bd} for every $i$, the codimension zero family $B_i$ has either $\d^{-}B_{i}=\emptyset$ or $\d^{+}B_i=\emptyset$, see \eqref{eq:boundarysplitting1}.
	\end{enumerate} 
Then there is a spectral sequence $E_{p,q}^{r}$ converging to $\HF_{*}(\phi,+)$, whose first page equals
	\begin{equation}\label{eq:spectral-sequence-first-page}
		E_{p,q}^{1}=\bigoplus_{\{i: \iota(i)=p\}} H_{d+p+q+\CZ(B_i)} (B_{i},\d^{+}B_{i};\Z_2),
	\end{equation}
where $d=\tfrac{1}{2}\dim M$. Moreover, there is an analogous spectral sequence converging to the Floer homology $\HF_{*}(\phi,-)$, whose first page is as above with $\d^{+}B_i$ replaced by $\d^{+}B_i\sqcup \d^{M}B_i$.
\end{prop}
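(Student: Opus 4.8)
The plan is to adapt the standard argument (see \cite[Appendix C]{McLean}) that computes Floer homology of a symplectomorphism all of whose fixed points come in codimension-zero families via a small Morse--Bott-type perturbation, keeping track of the action filtration to get the pages of the spectral sequence. First I would fix a $\phi$-periodic Hamiltonian $H$ of small negative slope which is, in a neighborhood $N_{B_i}$ of each $B_i$, of the form $H_{B_i}+\epsilon f_i$, where $H_{B_i}$ is the time-independent Hamiltonian from Definition~\ref{def:0codim} that cuts out $B_i=\Fix\phi|_{N_{B_i}}$, $f_i\colon N_{B_i}\to\R$ is a $\cC^2$-small Morse function whose restriction to $B_i$ is Morse with all critical points in the interior, and $\epsilon>0$ is small. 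By Lemma~\ref{lem:CZ-Hamiltonian}\ref{item:CZ_phi+Morse}, the fixed points of $\check\phi=\phi\circ\psi_1^{H}$ lying in $N_{B_i}$ are exactly the critical points of $f_i|_{B_i}$, each with Conley--Zehnder index $\CZ(B_i)+d-\Ind(x)$ (here one uses that on $B_i$ the symplectomorphism $\phi$ is the time-one flow of $H_{B_i}$, whose linearized return map is trivial, so the grading contribution is $\CZ(B_i)$, and $f_i$ contributes $d-\Ind(x)$). Before doing this I would apply Lemma~\ref{lem:cornerelimination} to each $B_i$ in turn, replacing $\phi$ by an isotopic $\phi_1$ so that each family of fixed points is an honest manifold with boundary (no corners) with the homeomorphism type of the triple $(B_i,\d^+B_i,\d^-B_i)$ preserved; by Proposition~\ref{prop:isotopy_invariance} the graded Floer homology is unchanged. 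One must choose $f_i$ so that, as a Morse function on the manifold-with-boundary $B_i$, it is ``of gradient type'' compatible with $\d^+B_i$ as the outgoing boundary and $\d^-B_i$ as incoming (gradient pointing outward along $\d^+$, inward along $\d^-$); this is exactly the setup computing $H_*(B_i,\d^+B_i;\Z_2)$ via Morse homology, hence the shift $d+p+q+\CZ(B_i)$ in \eqref{eq:spectral-sequence-first-page} after passing to the homological grading $-\CZ$.

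The action filtration of Section~\ref{sec:action-filtration} is the source of the spectral sequence. The value of the action functional $\cA_{\phi,H}$ on a (nearly constant) Hamiltonian $\phi$-twisted loop through a critical point $x\in\Crit(f_i|_{B_i})$ is $\ac(x)-H(x)+O(\epsilon)=\ac(B_i)+O(\epsilon)$, so for $\epsilon$ small enough the generators are clustered into groups indexed by the finitely many values $\ac(B_i)$, with the ordering of clusters given by $\hat\iota$, i.e.\ by the function $\iota$. Defining $F_p\CFlp_*$ to be spanned by loops with $\cA_{\phi,H}\le$ (the cutoff just above the $p$-th value in the range of $\hat\iota$) gives an increasing filtration by subcomplexes (the Floer differential decreases the action), and the associated spectral sequence converges to $\HF_*(\phi,+)=H_*(\CFlp_*)$ since the filtration is finite. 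The $E^0$ page is the direct sum over $i$ with $\iota(i)=p$ of the Morse complex of $f_i|_{B_i}$ relative to $\d^+B_i$: here the key point is that, by the usual Floer--Gromov compactness and the fact that $H$ is $\cC^2$-small away from the critical clusters, any Floer trajectory between two generators in the \emph{same} action cluster stays in a small neighborhood of the corresponding $B_i$ and, after the time-dependent rescaling $v(s,t)=(\psi_t^H)^{-1}u(s,t)$ of Section~\ref{sec:floerequivalence}, converges to a negative gradient flow line of $f_i|_{B_i}$ (this is the standard ``Floer = Morse for $\cC^2$-small Hamiltonians'' theorem, e.g.\ \cite[\S7]{AD_book}, in its Morse--Bott guise). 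Taking $E^0$-homology therefore yields \eqref{eq:spectral-sequence-first-page}.

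\textbf{The main obstacle.} The delicate point is the role of hypotheses \ref{item:ss-H1} and \ref{item:ss-bd}, and the treatment of boundary behavior of Floer trajectories near $\d^-B_i$ and near $\d^M M$. On the boundary side: trajectories could in principle escape a neighborhood of $B_i$ through $\d^-B_i$ (where $H_{B_i}<0$ pushes the flow inward), and one needs a maximum-principle or an explicit neighborhood-of-the-boundary analysis to rule this out, or to show the relevant count still computes relative Morse homology. On the exactness side: since we do \emph{not} assume $H^1(M,\d M;\R)=0$ in general, the action $\ac$ need not be single-valued on $\phi$-twisted loops that wind nontrivially, and two generators in the same topological Morse complex could a priori have action differing by a nonzero period, which would break the clustering. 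Hypothesis \ref{item:ss-H1} kills this directly; under \ref{item:ss-bd}, the point is that a $B_i$ with $\d^-B_i=\emptyset$ has $H_{B_i}\ge 0$ so all its loops are ``trapped'' with controlled action and one can run a monotonicity argument, and dually for $\d^+B_i=\emptyset$, so again the action is well-behaved on each cluster. So my plan is: (1) do the corner-elimination and the construction of $H$; (2) prove the local Morse--Bott identification of generators and trajectories within one action value, being careful about $\d^{\pm}B_i$; (3) set up the action filtration, checking under \ref{item:ss-H1} or \ref{item:ss-bd} that distinct $B_i$ land in distinct, correctly ordered filtration levels; (4) read off the $E^1$ page and invoke convergence of the spectral sequence of a finite filtration; (5) run the identical argument with $H$ of small positive slope, where the contribution of $\d^M B_i$ changes sign and gets absorbed into $\d^+B_i$, giving the $\HF_*(\phi,-)$ statement. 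I expect step (3)—pinning down exactly how \ref{item:ss-H1}/\ref{item:ss-bd} guarantee the action clustering and the relative-homology (rather than absolute-homology) form of $E^1$—to be where essentially all the genuine content lies, the rest being a careful but routine transcription of known Floer--Morse technology.
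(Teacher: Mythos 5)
Your overall skeleton (corner elimination via Lemma \ref{lem:cornerelimination}, a small negative-slope Hamiltonian perturbation, the action filtration of Section \ref{sec:action-filtration}, a confinement argument, and a Morse-theoretic identification of $E^1$ giving $H_{*}(B_i,\d^{+}B_i;\Z_2)$) matches the paper's proof. But your step (3) misdiagnoses where the hypotheses \ref{item:ss-H1}/\ref{item:ss-bd} enter, and this hides the genuinely missing construction. There is no single-valuedness problem for the action: $\phi$ is exact by Definition \ref{def:acob}, so $\ac$ is an honest function on $M$, constant on each $B_i$, and the functional \eqref{eq:actiondef} is well defined on \emph{all} twisted loops; the clustering of actions into the values $\ac(B_i)+O(\epsilon)$ is automatic. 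What \ref{item:ss-H1}/\ref{item:ss-bd} are actually needed for is to build, for each $i$, a \emph{globally defined} Hamiltonian $H_i'\colon M\to\R$ which is locally constant outside a collar of $\d B_i$, grows compatibly with $H_{B_i}$ inside it, and separates the values on $\d^{+}B_i$ and $\d^{-}B_i$: the local candidate always exists, but extending it over $M$ by constants requires either exactness of the extended closed $1$-form (condition \ref{item:ss-H1}) or the structural fact from Remark \ref{rem:d-B} that no component of the complement connects $\d^{-}$ to $\d^{+}$ (condition \ref{item:ss-bd}). These global Hamiltonians are used to first shrink each fixed component strictly into its interior, so that the subsequent Morse perturbation has all critical points and all relevant dynamics away from $\d B_i$, and so that McLean's confinement lemma (Lemma \ref{lem:confinement}) applies. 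Your direct recipe $H=H_{B_i}+\epsilon f_i$ skips this shrinking: since $dH_{B_i}\to 0$ as you approach $\d B_i$, the two terms can balance and create spurious fixed points of $\check\phi$ arbitrarily close to $\d B_i$, with actions indistinguishable from $\ac(B_i)$, and your "maximum-principle or boundary analysis" placeholder is exactly the step that the paper's Step 2 construction exists to replace.

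A second, smaller gap: your appeal to "Floer $=$ Morse for $\cC^2$-small Hamiltonians in its Morse--Bott guise" needs a time-independent almost complex structure near each $B_i$, while $J_t$ must be $\phi$-periodic globally and $\phi\neq\id$ just outside $B_i$. The paper resolves this by introducing an auxiliary Floer complex for the \emph{identity} (a global Morse Hamiltonian $K_\delta$ agreeing with the perturbation data on $\bigsqcup_i B_i$, and $J'$ time-independent with $J'|_{B_i}=J_t|_{B_i}$), proving a filtration-preserving continuation quasi-isomorphism, and only then identifying the pieces of $E^1$ with pieces of the Morse complex of $(-K_\delta,J')$, which compute $H_{*}(B_i,\d^{+}B_i;\Z_2)$ because the gradient points inwards along $\d^{+}B_i$ and outwards along $\d^{-}B_i$ (your intuition on this last point is correct). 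Without this detour, or an equivalent argument, the identification of the $E^1$-differential with the Morse differential is not justified by the standard time-independent theory you cite.
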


	\begin{remark}\label{rem:graph}
		The technical assumptions \ref{item:ss-H1} or \ref{item:ss-bd} are introduced to simplify the construction below, and most likely can be omitted. Nonetheless, working under either of these assumptions is sufficient for our purposes. Indeed, as we will see in Proposition \ref{prop:monodromy}\ref{item:B_i-boundary}, if $\phi$ is the \enquote{radius zero} symplectic monodromy, given by the flow of the vector field \eqref{eq:monodromy-vector-field}, then all its codimension zero families of fixed points satisfy $\d^{-}B_i=\emptyset$, so condition \ref{item:ss-bd} holds. Moreover, in the proof of Theorem \ref{theo:Zariski} the Liouville domain $M$ will be (diffeomorphic to) the Milnor fiber of an isolated hypersurface singularity $f\colon (\C^{n},0)\to (\C,0)$, so for $n\geq 3$ it will also satisfy $H^1(M,\d M;\R)=0$, i.e.\ condition \ref{item:ss-H1}.
		
		We note that each of these assumptions can be replaced by the following, weaker one. Consider a directed graph $\Gamma$ whose vertices are connected components of $M\setminus \bigsqcup_{i}B_i$, and for two such connected components, say $C_1$, $C_2$, there is an edge from $C_1$ to $C_2$ whenever there is an $i$ such that the closure $\bar{C}_1$ meets $\d^{-} B_{i}$, and the closure $\bar{C}_2$ meets $\d^{+}B_{i}$. Now, we assume that the graph $\Gamma$ has no loops. 
	\end{remark}

\begin{remark}\label{rem:McLean-HF3}
	The original McLean spectral sequence introduced in  \cite[(HF3)]{McLean} converges to Floer cohomology. Its 	
	first page is $E_{1}^{p,q}=\bigoplus_{\{i: \iota(i)=p\}} H_{d-p-q-\CZ(B_i)} (B_{i};\Z)$. In the setting of \cite{McLean}, this first page (tensored by $\Z_2$) is dual to the first page \eqref{eq:spectral-sequence-first-page}.
	
	To see this, we recall from Remark \ref{rem:McLean-has-no-d+} that in \cite{McLean} each codimension zero family $B_{i}$ has by assumption $\d^{-}B_{i}=\emptyset$. Moreover, \cite[(HF3)]{McLean} is formulated for a small slope deformation of $\phi$, which forces $\d^{M}B_i=\emptyset$. Thus each $B_i$ in loc.\ cit.\ has $\d B_{i}=\d^{+}B_{i}$, so the group $H_{d-p-q-\CZ(B_i)}(B_{i})$ is isomorphic, by Lefschetz duality, to $H^{d+p+q+\CZ(B_i)}(B_{i},\d^{+}B_{i})$. By the universal coefficient theorem, the latter is dual to $H_{d+p+q+\CZ(B_i)}(B_i,\d^{+}B_i)$, which is the group in \eqref{eq:spectral-sequence-first-page}
	, as claimed.
	
	We have chosen the $\Z_2$ coefficients for convenience, since the same choice is made in our references \cite{Uljarevic,AD_book}. It is enough for most applications, including the proof of Theorem \ref{theo:Zariski}. 
\end{remark}

For the proof of Proposition~\ref{prop:spectral-sequence}, the general idea is the following: choose a Hamiltonian perturbation of $\phi$ such that the fixed points become non-degenerate and contained in the union $\bigsqcup_i B_i$. Prove that any Floer trajectory connecting two fixed points contained in the same component $B_i$ is contained in $B_i$ too. Then, since $\phi|_{B_i}$ equals the identity, following the technique of the classical proof that Hamiltonian Floer homology coincides with ordinary homology, it is possible to identify the pieces of the first page of the spectral sequence. For the precise implementation of this scheme a couple of technicalities need to be taken into account. 

A confinement lemma \cite[Lemma C.2]{McLean} of pseudo-holomorphic curves due to McLean is used: it allows to confine Floer trajectories in a small neighborhood $N_{B_i}$ of $B_i$ rather than in $B_i$. Since the almost complex structure $J_t$ needs to be $\phi$-periodic, and $\phi|_{N_{B_i}\setminus B_i}$ is not the identity, it is not possible to choose $J_t$ independent of time in $\phi|_{N_{B_i}\setminus B_i}$. On the other hand, for the proof of the classical fact that Hamiltonian Floer homology coincides with ordinary homology, an almost complex structure independent of time is used in order to prove that Floer trajectories associated with small time independent Hamiltonians are in fact Morse trajectories. In order to deal with this we will produce first a Hamiltonian deformation of $\phi$ whose fixed points is a disjoint union of codimension zero families of fixed points $B'_i$, and each $B'_i$ is strictly contained in the interior of $B_i$. Then a comparably smaller second deformation will produce non-degenerate fixed points inside $\bigsqcup_i B'_i$, and such that any Floer trajectory connecting two fixed points in the same $B'_i$ is contained in $B'_i$. This way, a $\phi$-periodic complex structure independent of $t$ in $\bigsqcup_i B_i$ will suffice to identify the groups appearing in the first page of the spectral sequence. In the rest of this section we carry out the proof following the program just hinted.

\subsubsection{Technical preparations} Our first aim is to provide a small Hamiltonian perturbation of $\phi$, which is well suited to apply the confinement lemma \cite[Lemma C.2]{McLean}; and a similar perturbation of the identity, which we will later use to compare the Floer complex with pieces of the Morse complex.

\paragraph{Step 1}\label{step:1}
By Lemma \ref{lem:cornerelimination}, we can and do assume that each $B_i$ is a connected codimension zero submanifold of $M$ with boundary, i.e.\ it does not have corners. 
\smallskip

Choose a collar structure $\Psi:(-\eta,0]\times \partial M\to M$ near the boundary of $M$ such that  $\Psi^*\lambda=e^r\lambda|_{\partial M}$, see \eqref{eq:necktriv}, and $\phi=\id$ in the image of $\Psi$. Choose a small negative slope $a$ and consider a smooth decreasing function $\sigma\colon [-\eta,0]\to \RR$ that is constant near $-\eta$ and equal to the linear function $r\mapsto ar$ near $0$. Define a time-independent Hamiltonian $H$ by  $H=\sigma\circ \pr_{(-\eta,0]}\circ \Psi^{-1}$ in the image of $\Psi$, and $H=\sigma(-\eta)$ otherwise. It has small negative slope; and is $\phi$-periodic since it is constant whenever $\phi\neq \id$. Let $\phi'\de \phi\circ \psi_{1}^{H}$ be the corresponding small negative slope deformation $\phi'$ of $\phi$. Now, $\Fix(\phi')=\bigsqcup_{i=1}^{N}B_{i}''$, where each $B_{i}''$ is a codimension zero family of fixed points, contained in $B_{i}$ in such  way that $\d B_{i}''$ does not intersect $\partial M$ anymore. In terms of the decomposition \eqref{eq:boundarysplitting1}, we have
\begin{equation*}
	\partial B_i''=\partial^+ B_i''\sqcup\partial^- B_i'',
\end{equation*}
$\d^{+} B_{i}''=\d^{+} B_{i}$, and the triple $(B_{i}'',\d^{+}B_{i}'',\d^{-}B_{i}'')$ is homeomorphic to $(B_{i},\d^{+}B_{i},\d^{-}B_{i}\sqcup \d^{M} B_i)$. 

\begin{notation}\label{not:B=B''}
	In the remaining part of the proof we will abuse notation and rename $B''_i=B_i$. This way, $\partial B_i=\partial^+ B_i\sqcup\partial^- B_i$.
\end{notation}

\begin{remark}\label{rem:d-B}
	In terms of Notation \ref{not:B=B''}, 
	assumption \ref{item:ss-bd} of Proposition \ref{prop:spectral-sequence} implies the following. Take $i\in \{1,\dots, N\}$ such that $\d^{+}B_{i}\neq \emptyset$. Then there is a collar structure $\Psi\colon (-\eta,0]\times \d M \to M$, and a level $c\in (-\eta,0)$, such that every connected component of $\d^{-}B_{i}$ is a connected component of $\Psi(\{c\}\times \d M)$; and $\d^{+}B_i$ is disjoint from the collar $\Psi((-\eta,0]\times \d M)$. Hence if some connected component of $\bar{M\setminus B_i}$, say $C$, meets $\d^{-}B_i$, then $C$ is contained in a collar near $\d M$; in particular $C$ does not meet $\d^{+}B_i$.
\end{remark}

For the next steps we need the following notion.

\begin{definition}
	\label{def:growcompat}
	Let $F,G\colon U\to\RR$ be smooth functions on an open subset $U\subset M$. We say that $F$ and $G$ {\em grow compatibly in $U$} if the level sets of $F$ coincide with the level sets of $G$, and there is a smooth positive function $\sigma\colon U\to (0,\infty)$, constant on each of those level sets, such that $dF=\sigma\cdot d G$.
\end{definition}

\paragraph{Step 2}\label{step:2}
Let $H_{B_i}\colon N_{B_i}\to\RR$ be the time independent Hamiltonian associated to a codimension zero family $B_i$ of fixed points of $\phi'$, see Figure \ref{fig:FGH}. Choose a compact collar neighborhood $C_{B_i}$ of $\partial B_i$ contained in $N_{B_i}$ such that the restriction 
$$H_{B_i}|_{\overline{C_{B_i}\setminus B_i}}:\overline{C_{B_i}\setminus {B_i}}\to\RR$$
is topologically locally trivial. We claim that there is a smooth function $H'_{i}\colon M\to \RR$ such that     
\begin{enumerate}
	\item\label{item:H_i-top-triv} the restriction $H_{i}'|_{C_{B_{i}}}$ is topologically locally trivial and a submersion at the interior of $C_{B_i}$, 
	\item\label{item:H_i-compatible} 
	the functions $H'_{i}$ and $H_{B_{i}}$ grow compatibly in the interior of $C_{B_{i}}\setminus B_{i}$, see Definition \ref{def:growcompat},
	\item\label{item:H_i-extends} the restriction of $H_{i}'$ to each connected component of $M\setminus C_{B_i}$ is constant,
	\item\label{item:H_i-action} the function $H'_i$ is constant in $B'_i\de\overline{B_i\setminus C_{B_i}}$ and in each connected component of $\partial B_i$. Furthermore, we have $\min\{H'_i(x):x\in\d^+B_i\}>H'_i(B'_i)>\max\{H'_i(x):x\in\d^-B_i\}$.
\end{enumerate}
It is easy to produce a function $\tilde{H}_{i}\colon U\to \R$ satisfying properties \ref{item:H_i-top-triv}--\ref{item:H_i-extends} in some neighborhood $U$ of $C_{B_i}$. In order to obtain $H_{i}'$ satisfying \ref{item:H_i-top-triv}--\ref{item:H_i-extends} and defined everywhere on $M$, we consider two cases.

First, we assume that condition \ref{item:ss-H1} holds, that is, $H^{1}(M,\d M;\R)=0$. By property \ref{item:H_i-extends}, the $1$-form  $d\tilde{H}_{i}$ extends by zero to a closed $1$-form on $M$, supported away from $\d M$. Since $H^{1}(M,\d M;\R)=0$, this $1$-form is exact, i.e.\ there is a smooth function $H_{i}'\colon M\to \R$ such that $dH_{i}'=d\tilde{H}_{i}'$ on $U$, and $dH_{i}'=0$ on $M\setminus U$. Thus $H_{i}'$ satisfies properties \ref{item:H_i-top-triv}--\ref{item:H_i-extends}, as needed. 

Now, we assume that condition \ref{item:ss-bd} holds. We can choose $\tilde{H}_{i}$ so that it is constant on $B_i\cap \d C_{B_{i}}$. This way, $\tilde{H}_{i}$ extends to a smooth function $B_{i}\cup C_{B_{i}}\to \R$, constant on $B_i'$. Write $\d(B_i\cup C_{B_i})=\d^{+}\sqcup \d^{-}$, where $\d^{+}$ (respectively, $\d^{-}$) lies in the union of connected components of $C_{B_{i}}$ containing $\d^{+}B_i$ (respectively, $\d^{-}B_i$). We can furthermore choose $\tilde{H}_{i}$ so that it is constant on $\d^{+}$ and on $\d^{-}$. If $\d^{+}B_{i}=\emptyset$ then $\d^{+}=\emptyset$, so $\tilde{H}_{i}$ is constant on $\d(B_i\cup C_{B_i})$, and therefore it extends to a smooth function $H_{i}'\colon M\to \R$ satisfying  \ref{item:H_i-top-triv}--\ref{item:H_i-extends}, as needed. Assume $\d^{+}B_{i}\neq \emptyset$. Then by Remark \ref{rem:d-B}, no connected component of $M\setminus (B_{i}\cup \Int C_{B_{i}})$ meets both $\d^{+}$ and $\d^{-}$.  Hence $\tilde{H}_{i}$ is constant on the boundary of each connected component $M\setminus (B_i\cup \Int C_{B_i})$, so again it extends to a smooth function $H_{i}'\colon M\to \R$ satisfying  \ref{item:H_i-top-triv}--\ref{item:H_i-extends}, as required. Property \ref{item:H_i-action} now follows from \ref{item:H_i-top-triv}--\ref{item:H_i-extends}.
\smallskip

\begin{figure}[htbp]
	\begin{tikzpicture}[scale=0.6]
		\path[use as bounding box] (-1,-3.5) rectangle (25,3.5); 
		\begin{scope}
			\fill[black!5] (3,-2.5) -- (3,2.5) -- (7,2.5) -- (7,-2.5) -- (3,-2.5);
			\fill[black!10] (4,-2.5) -- (4,2.5) -- (6,2.5) -- (6,-2.5) -- (4,-2.5);
			\draw[->] (-1,0) -- (10,0);
			\draw[->] (-0.5,-3) -- (-0.5,3);
			\draw[black!50, dotted] (2,-2.5)--(2,2.5);
			\draw[dotted] (3,-2.5)--(3,2.5);
			\draw[black!50, dotted] (4,-2.5)--(4,2.5);
			\draw[black!50, dotted] (6,-2.5)--(6,2.5);
			\draw[dotted] (7,-2.5)--(7,2.5);
			\draw[black!50, dotted] (8,-2.5)--(8,2.5);
			\draw [decorate, decoration = {calligraphic brace}, very thick] (3,2.7) --  (7,2.7);
			\node[above] at (5,2.7) {\small{$B_{i}$}};
			\draw [decorate, decoration = {calligraphic brace}, very thick] (6,-2.7) --  (4,-2.7);
			\node[below] at (5,-2.7) {\small{$B_{i}'$}};
			\draw [decorate, decoration = {calligraphic brace}, very thick] (4,-2.7) --  (2,-2.7);
			\node[below] at (3,-2.7) {\small{$C_{B_{i}}$}};
			\draw [decorate, decoration = {calligraphic brace}, very thick] (8,-2.7) --  (6,-2.7);
			\node[below] at (7,-2.7) {\small{$C_{B_{i}}$}};
			\draw[very thick] (3,0.2) -- (3,-0.2);
			\node[above] at (3,0.2) {\small{$\d^{-}B_{i}$}};
			\draw[very thick] (7,0.2) -- (7,-0.2);
			\node[below] at (7,-0.2) {\small{$\d^{+}B_{i}$}};
			\draw[thick] (0,-3) to[out=45,in=180] (3,0) -- (7,0) to[out=0,in=-135] (10,3);
			\node[right] at(10,3) {\small{$H_{B_{i}}$}};
			\draw[thick] (0,-2) -- (2,-2) to[out=0,in=180] (4,0) -- (6,0) to[out=0,in=180] (8,2) -- (10,2); 
			\node[right] at(10,2) {\small{$H_{i}'$}};
		\end{scope}
		\begin{scope}[shift={(15,0)}]
			\fill[black!5] (3,-2.5) -- (3,2.5) -- (7,2.5) -- (7,-2.5) -- (3,-2.5);
			\fill[black!10] (4,-2.5) -- (4,2.5) -- (6,2.5) -- (6,-2.5) -- (4,-2.5);
			\draw[->] (-1,0) -- (10,0);
			\draw[->] (-0.5,-3) -- (-0.5,3);
			\draw[black!50, dotted] (2,-2.5)--(2,2.5);
			\draw[dotted] (3,-2.5)--(3,2.5);
			\draw[black!50, dotted] (4,-2.5)--(4,2.5);
			\draw[black!50, dotted] (6,-2.5)--(6,2.5);
			\draw[dotted] (7,-2.5)--(7,2.5);
			\draw[black!50, dotted] (8,-2.5)--(8,2.5);
			\draw [decorate, decoration = {calligraphic brace}, very thick] (3,2.7) --  (7,2.7);
			\node[above] at (5,2.7) {\small{$B_{i}$}};
			\draw [decorate, decoration = {calligraphic brace}, very thick] (6,-2.7) --  (4,-2.7);
			\node[below] at (5,-2.7) {\small{$B_{i}'$}};
			\draw [decorate, decoration = {calligraphic brace}, very thick] (4,-2.7) --  (2,-2.7);
			\node[below] at (3,-2.7) {\small{$C_{B_{i}}$}};
			\draw [decorate, decoration = {calligraphic brace}, very thick] (8,-2.7) --  (6,-2.7);
			\node[below] at (7,-2.7) {\small{$C_{B_{i}}$}};
			\draw[very thick] (3,0.2) -- (3,-0.2);
			\node[above] at (3,0.2) {\small{$\d^{-}B_{i}$}};
			\draw[very thick] (7,0.2) -- (7,-0.2);
			\node[below] at (7,-0.2) {\small{$\d^{+}B_{i}$}};
			\draw[thick] (0,-1) -- (2,-1) to[out=0,in=180] (4,0) -- (6,0) to[out=0,in=180] (8,1) -- (9,1); 
			\node[right] at(9,1) {\small{$H_{i}''$}};	
			\draw[thick] (0,0) -- (3,0) to[out=0,in=180] (4.5,1) to[out=0,in=180] (5.5,-1) to[out=0,in=180] (7,0) -- (9,0);
			\node[below] at(9,0) {\small{$F_{i}$}};
		\end{scope}
	\end{tikzpicture}
	\caption{Auxiliary functions $H_{i}'$, $H_{i}''$ and $F_{i}$ introduced in Steps \protect\hyperref[step:2]{2} and  \protect\hyperref[step:3]{3}.}
	\label{fig:FGH}
\end{figure}
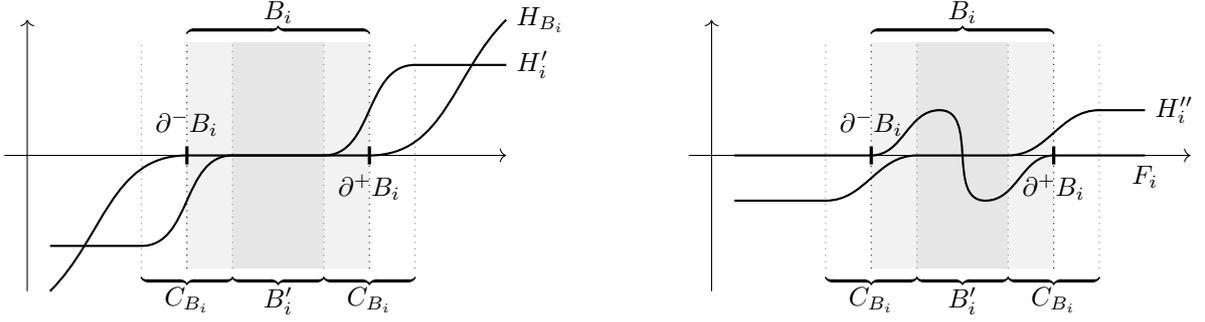

Since the restrictions of $H'_{i}$ and $H_{B_i}$ to $\Int C_{B_i}\setminus B_i$ grow compatibly, there is a function $g\colon \Int C_{B_i}\setminus B_{i}\to (0,\infty)$, constant on the level sets of $H'_{i}$, such that $gX^{H_i'}=X^{H_{B_i}}$. Since $X^{H_{B_i}}$ is zero on $B_i$, and $X^{H_i'}$ does not vanish on $\d B_i$, the function $g$ extends by zero to a smooth function $N_{B_{i}}'\to [0,\infty)$, where $N_{B_{i}}'\de B_{i}\cup \Int C_{B_{i}}$.

Fix $r_i>0$ and put $H''_i=(r_i+g)H'_{i}$. Then $H''_i\colon N_{B_{i}}'\to\RR$ is a time independent Hamiltonian such that we have the equality $\psi^{H''_i}_1=\psi^{H_{B_i}}_1\comp\psi^{r_iH'_i}_1$. Indeed, at a level set of $H'_i$ the composition on the right hand side moves a point first time $r_i$ along an integral curve of $X^{H'_i}$, and then time $1$ along an integral curve of $X^{H_{B_i}}$. Moving time $1$ along an integral curve of $X^{H_{B_i}}$ is the same than moving time $g$ along an integral curve of $X^{H'_i}$ (recall that $g$ is constant on those integral curves), so the total effect is moving time $r_i+g$ along an integral curve of $X^{H'_i}$. 

We claim that if $r_i$ is positive and small enough the set of fixed points of $\psi^{X^{H''_i}}_1$ in $N_{B_{i}}'$ is equal to $B'_i$. Indeed, $B'_i$ is obviously contained in the fixed point set of $\psi^{H''_i}_1$. To check the opposite inclusion we notice that, since the time $1$ flow of $X^{H_{B_i}}$ coincides with the time $1$ flow of $X^{gH'_i}$, and has no fixed points in $C_{B_i}\setminus B_i$, there exists a positive $r_i$ such that the time $1$ flow of $X^{(r_i+g)H'_i}$ has no fixed points in $C_{B_i}$, either. We fix such $r_i$ for the rest of the section.

Define $H'\de \sum_{i=1}^Nr_iH'_i$. The composition $\phi'':=\phi'\comp\psi_1^{H'}$ is an exact symplectomorphism whose fixed point set is the disjoint union of codimension zero families of fixed points $\bigsqcup_{i=1}^N B'_i$.  For every $i$ we have constructed a Hamiltonian $H''_i\colon N_{B_{i}}'\to\RR$ such that $\phi''|_{N_{B_{i}}'}$ is equal to the time $1$ flow $\psi^{H''_i}_1$.
\smallskip

We note that, under the weaker assumption of Remark \ref{rem:graph}, it might not be possible to construct individual functions $H_{i}'\colon M\to \R$. However, it is equally easy to construct a global function $H'\colon M\to \R$ and local $H_{i}''\colon N_{B_{i}}\to \R$; and this is all we need in the sequel. We leave the details to the reader.

\paragraph{Step 3}\label{step:3}
For each $i$ choose a function $F_{i}:B_i\to\RR$ such that
\begin{enumerate}
	\item\label{item:F_i-extends} $F_i$ admits a smooth extension $F_i:M\to\RR$ by a constant function equal to $0$ in $M\setminus B_i$,
	\item\label{item:F_i-no-crit} the restriction of $F_i$ to $\Int(C_{B_i}\cap {B}_i)$  has no critical points,
	\item\label{item:F_i-compatible} the functions $F_i$ and $H_{i}''$ grow compatibly in $\Int(C_{B_i}\cap {B}_i)$,
	\item\label{item:F_i-Morse} the critical points of $F_i|_{B_{i}}$ are of Morse type,
	\item\label{item:F_i-Fix} for every $t\in (0,1]$, the only $t$-periodic orbits of the Hamiltonian flow of $F_{i}$ at the interior of $B_i$ are constant and equal to the Morse points. 
\end{enumerate}
It is easy to produce a function with properties \ref{item:F_i-extends}--\ref{item:F_i-compatible}. A small generic perturbation of such a function which does not modify it at $\Int(C_{B_i}\cap {B}_i)$ achieves property \ref{item:F_i-Morse}. Property \ref{item:F_i-Fix} is achieved multiplying any function with properties \ref{item:F_i-extends}--\ref{item:F_i-Morse} by a sufficiently small positive constant, as a consequence of \cite[Proposition 6.1.5]{AD_book}.

Define $F:=\sum_{i=1}^N F_{i}$. For $\delta>0$ small enough define $\phi_\delta''' \de \phi''\comp\psi_1^{\delta F}$. In the same way that we proved above the existence of a time independent Hamiltonian $H''_i$ satisfying  $\psi^{H''_i}_1=\psi^{H_{B_i}}_1\comp\psi^{r_iH'_i}_1$, here we prove that there exists a time independent Hamiltonian $H_\delta$ such that $\phi'''_\delta=\phi\comp\psi_1^{H_\delta}$. So, we have that $\phi_\delta'''$ is a negative slope Hamiltonian perturbation of $\phi$. Moreover, if $\delta>0$ is small enough, that slope is small, too, and the fixed point set of $\phi_\delta'''$ is the disjoint union of the Morse points of $F_i$ at the interior of $B'_i$. Indeed, by property \ref{item:F_i-Fix} of $F_i$ the fixed points of $\phi_\delta'''$ contained in $B_i'$ are the Morse points of $F_i$. Outside $\bigsqcup_{i} N_{B_i}'$, $\phi_\delta'''$ coincides with $\phi'$, so it has no fixed points there. It remains to show that $\phi_\delta'''$ has no fixed points in $\bigsqcup_{i} C_{B_i''}$. This follows, if $\delta>0$ is small enough, by the same arguments that proved above that if $r_i>0$ is small enough the set of fixed points of $\psi^{H''_i}_1$ is equal to $B'_i$. 

\paragraph{Step 4}\label{step:4}
In the three steps described up to now we have provided a Hamiltonian perturbation of $\phi$ that is well suited to analyze McLean's spectral sequence. In the next one we run a similar procedure for the identity symplectomorphism in $M$, with the later aim of producing a spectral sequence similar to McLean's. To prove Proposition \ref{prop:spectral-sequence}, we will compare the first pages of both spectral sequences.

Consider a smooth function $G\colon M\to\RR$ with the following properties:
\begin{enumerate}
	\item we have $G|_{N_{B_i}'}=H''_i$,
	\item\label{item:G-aviods} the restriction of $G$ to $M\setminus (\bigsqcup_{i} N_{B_i}')$ is a Morse function, and moreover the set of values of $G$ at the Morse points of $G|_{M\setminus (\bigsqcup_{i} N_{B_i}')}$ is disjoint from the finite set $\{G(B'_i)\}_{i=1}^{N}$.
\end{enumerate}

We consider the exact symplectomorphism $\psi'''_{\delta}:=\psi^{\delta G}_1\comp \psi_1^{\delta F}$ for $\delta>0$. As before, we prove that there is a time independent Hamiltonian $K_{\delta}$ such that $\psi'''_{\delta}=\psi^{K_{\delta}}_1$, so that $\psi'''_{\delta}$ is a small negative slope Hamiltonian deformation of $\id_M$. Moreover, $K_{\delta}$ is a Morse function in $M$ which satisfies $K_{\delta'}=\frac{\delta'}{\delta}K_{\delta}$. Therefore, by choosing $\delta$ small enough we may assume that the fixed points $\psi'''_{\delta}$ coincide with the Morse points of $K_{\delta}$ \cite[Proposition 6.1.5]{AD_book}. The function $K_{\delta}$ satisfies also the following properties:
\begin{enumerate}\setcounter{enumi}{2}
	\item\label{item:K-boundaries} $\min\{K_\delta(x):x\in\d^+B_i\}>\max\{K_\delta(x):x\in\d^-B_i\}$.
	\item\label{item:K-compatible} The functions $K_\delta$ and $H''_i$ grow compatibly in $\Int(C_{B_i}\cap B_i)$. 
\end{enumerate}
For \ref{item:K-boundaries} notice that since $F_i|_{\partial B_i}\equiv 0$, we have $K_\delta|_{\d B_i}=\delta G|_{\d B_i}=\delta H''_i|_{\d B_i}=\delta (r_i+g)H'_i |_{\d B_i}=\delta r_iH'_i |_{\d B_i}$. Hence property \ref{item:H_i-action} of Step \hyperref[step:2]{2} implies \ref{item:K-boundaries}. Property \ref{item:K-compatible} is a consequence of property \ref{item:F_i-compatible} of Step \hyperref[step:3]{3}.

\paragraph{Step 5}\label{step:5}
Let $J_t,J'$ be cylindrical $\omega$-compatible almost complex structures such that $J_t$ is $\phi$-periodic, $J'$ is time-independent and they have the following additional properties:
\begin{enumerate}
	\item\label{item:J'=Jt} $J_t$ is independent of $t$ in $\bigsqcup_{i} B_i$ and we have the equality 
	\begin{equation}
		\label{eq:coincidenceJ}
		J_t|_{\bigsqcup_{i} B_i}=J'|_{\bigsqcup_{i} B_i}.
	\end{equation}
	\item\label{item:J'-Morse} the pair $(K_{\delta},J')$ is Morse-Smale for any $\delta>0$ (that is, the linearized Morse operator at each gradient line connecting two Morse points of $K_{\delta}$ is surjective, see \cite[Theorem 10.1.5]{AD_book}),
	\item\label{item:J-Morse} for each $i$ the pair $(F|_{B'_i},J|_{B'_i})$ is Morse-Smale at the interior of $B'_i$  
\end{enumerate}
In order to construct $J_t$ and $J'$ we choose, following \cite[\sec 10]{AD_book}, a time independent $J'$ satisfying property \ref{item:J'-Morse} and then choose any cylindrical $\omega$-compatible and $\phi$-periodic $J_t$ satisfying property \ref{item:J'=Jt}. Property \ref{item:J-Morse} holds because $F|_{B'_i}$ is proportional by a non-zero constant factor to $K_{\delta}|_{B'_i}$. 

\subsubsection{The two Floer complexes}

Having fixed $H_\delta$ (for $\delta>0$ small enough) and $J_t$ with the properties of the previous section, following \cite[\sec 8]{AD_book} we find that there exists a time dependent perturbation $\check{H}_{\delta,t}$ of the Hamiltonian $H_\delta$, which is as close as we wish to $H_\delta$, does not modify $H_\delta$ in a neighborhood of the fixed points of $\phi'''_\delta$ and in a neighborhood of $\partial M$, and such that $(\check{H}_{\delta,t},J_t)$ is regular for $\phi$, see Definition~\ref{def:perturbedregularpair}. Then the $\phi$-twisted loop Floer complex $\CFlp_{*}(\phi,\check{H}_{\delta,t},J_t)$ introduced in Section \ref{sec:perturbed} is well defined and, by Proposition~\ref{prop:isotopy_invariance}, its homology is $\HF_*(\phi,+)$. 

We define $\check{\phi}_\delta:=\phi\comp\psi_1^{\check{H}_{\delta,t}}$ and $\tilde{J}_{t}\de (\psi_1^{\check{H}_{\delta,t}})^{*}J_{t}$, see formula~\eqref{eq:Jtilde}. Then the $\check{\phi}_{\delta}$-fixed point Floer complex $\CFpt_{*}(\check{\phi}_\delta,\tilde{J}_t)$ introduced in Section \ref{sec:unperturbed} is well defined and, as we have seen in Section~\ref{sec:floerequivalence}, its homology is $\HF_{*}(\phi,+)$, too.

Similarly, there is a time dependent perturbation $\check{K}_{\delta,t}$ of $K_\delta$ which does not modify $K_{\delta}$ near the critical points of $G$, such that $(\check{K}_{\delta,t},J')$ is regular for the identity, see Definition \ref{def:perturbedregularpair}. Then the $\id$-twisted loop Floer complex $\CFlp_{*}(\id,\check{K}_{\delta,t},J')$ is well defined, and its homology is $\HF_*(\id,+)$.

As before, putting $\tilde{J}_t'=(\psi_1^{\check{K}_{\delta,t}})^{*}J'$, we see that the $\psi_1^{\check{K}_{\delta,t}}$-fixed point Floer complex $\CFpt_{*}(\psi_1^{\check{K}_{\delta,t}},\tilde{J}'_t)$ is well defined and computes the same homology as $\CFlp_{*}(\id,\check{K}_{\delta,t},J')$.

The perturbation procedure described in \cite[\sec 8]{AD_book} makes clear that the perturbations $\check{H}_{\delta,t}$ and $\check{K}_{\delta,t}$ can be chosen so that they coincide in $\bigsqcup_{i} B_i$, that is, 
\begin{equation}
	\label{eq:coincidenceH}
	\check{K}_{\delta,t}|_{\bigsqcup_{i}  B_i}=\check{H}_{\delta,t}|_{\bigsqcup_{i}  B_i}.
\end{equation}
The idea is to replace the space of perturbations of the Hamiltonian, denoted by $\cC^\infty_\epsilon(H_0)$ in \cite[\sec 8]{AD_book}, by a Banach space of perturbations $\cC^\infty_\epsilon(H_\delta,K_\delta)$ whose elements are pairs $(h,k)$ where $h:M\times\RR\to \RR$ is a $\phi$-periodic time dependent Hamiltonian, $k:M\times\RR\to \RR$ is a periodic time dependent Hamiltonian and $h$ and $k$ coincide in  $\bigsqcup_{i}B_i$. Then it is possible to prove like in \cite[\sec 8]{AD_book} that the locus of $(h,k)$ such that $(H_\delta+h,J_t)$ and $(K_\delta+k,J')$ are both regular pairs is dense in $\cC^\infty_\epsilon(H_\delta,K_\delta)$.

\subsubsection{The McLean spectral sequence} We now use the action filtration introduced in Section \ref{sec:action-filtration} to define the McLean spectral sequence.

Recall that the action function $\ac$ of $\phi$ is constant in each codimension zero family $B_i$  of fixed points of $\phi$. We have denoted its value by $\ac(B_i)$. Choose a positive number $\xi$ such that $\xi<\tfrac{1}{3}\min\{|\ac(B_i)-\ac(B_j)|:\ac(B_i)\neq \ac(B_j)\}$. 
Let $\iota$ be the indexing function from the statement of Proposition \ref{prop:spectral-sequence}. For an integer $p$ put $I_{p}=\iota^{-1}(p)$ and 
$a_p\de \max\{\ac(B_{i}):\iota(i)\leq p\}$. Then for every $i\in I_{p}$ we have $\ac(B_{i})=a_{p}$. Put $b_{p}=a_{p}+\xi$. Clearly, $b_p\leq b_q$ if $p\leq q$. 

Put $F_{p}\CFlp_{*}(\phi,\check{H}_{\delta,t},J_t)\de F^{\leq b_p}\CFlp_{*}(\phi,\check{H}_{\delta,t},J_t)$, where the right hand side is defined in  Section \ref{sec:action-filtration}. This is an increasing filtration  by subcomplexes of $\CFlp_{*}(\phi,\check{H}_{\delta,t},J_t)$. The \emph{McLean spectral sequence} $E_{p,q}^{r}$ is the spectral sequence associated with this filtration. We have $E_{p,q}^{r}\Rightarrow \HF_{p+q}(\phi,+)$.
\smallskip

Recall from Section \ref{sec:perturbed} that the complex $\CFlp_{*}(\phi,\check{H}_{\delta,t},J_t)$ is generated as a graded vector space by the set $\cP_{\phi,\check{H}_{\delta,t}}$ of Hamiltonian $\phi$-twisted loops. Since we have chosen $\delta$ and $\check{H}_{\delta,t}$ small, the image of every $\gamma\in\cP_{\phi,\check{H}_{\delta,t}}$ is contained in some $B'_{i}$.

We have that $\phi'''_\delta|_{B'_i}$ is equal to the time $1$ Hamiltonian flow of $H_\delta|_{B'_i}$, which is equal to $\delta F_i|_{B'_i}$. Then, by property \ref{item:F_i-Fix} of $F_i$, the only $1$-periodic orbits of the Hamiltonian flow of $H_\delta|_{B'_i}$ are constant and equal to the Morse points of $F_i$. According to the procedure used to define the small perturbation $\check{H}_{\delta,t}$ following \cite[\sec 8]{AD_book}, we have that the only $\phi$-twisted Hamiltonian loops for the Hamiltonian $\check{H}_{\delta,t}$ which are contained in $B'_i$ are the same as those of $H_\delta|_{B'_i}$. Hence each $\gamma\in \cP_{\phi,\check{H}_{\delta,t}}$ is constant and equal to a Morse point of $F_i$ in $B_{i}'$, for some index $i$. Formula \eqref{eq:actiondef} shows that the action $\mathcal{A}_{\phi,\check{H}_{\delta,t}}$ of such point  belongs to $(\ac(B_{i})-\xi,\ac(B_{i})+\xi)$. We thus get an isomorphism of graded $\Z_{2}$-vector spaces
\begin{equation}\label{eq:first-page}
	F_{p}\CFlp_{*}(\phi,\check{H}_{\delta,t},J_t)/F_{p-1}\CFlp_{*}(\phi,\check{H}_{\delta,t},J_t)\cong\bigoplus_{i\in I_{p}}C^i_{*},
\end{equation}
where $C^i_*$ is a subspace of $\CFlp_{*}(\phi,\check{H}_{\delta,t},J_t)$ generated by the Morse points of $F_i|_{B'_i}$. Lemma \ref{lem:confinement} below implies that each $C^{i}_{*}$ is in fact a subcomplex of 
$F_{p}\CFlp_{*}(\phi,\check{H}_{\delta,t},J_t)/F_{p-1}\CFlp_{*}(\phi,\check{H}_{\delta,t},J_t)$, hence \eqref{eq:first-page} is an isomorphism of chain complexes. 
\smallskip

In order to compute the first page of McLean's spectral sequence we will compare it with the first page of a similar spectral sequence associated with a filtration in the $\id$-twisted loop Floer complex $\CFlp_{*}(\id,\check{K}_{\delta,t},J')$ which we introduce now. Let $a'_1<\dots <a'_{m}$ be the critical values of $G$; notice that some of these values are equal to $G(B_{i}')$. Let $I_{j}'$ be the set of those indices $i$ such that $a_{j}'=G(B_{i}')$.  

Since $\delta$ is as small as we wish, we have that $K_\delta$ and $G$ are as close as we wish in the maximum norm. Therefore we may assume that the critical values of $K_\delta$ belong to $\bigsqcup_{i=1}^{m}(a'_i-\xi',a'_i+\xi')$, for a fixed $\xi'>0$ such that $\xi'<\tfrac{1}{3}\min\{|a'_i-a'_j|:i\neq j\}$. 

Fix $b'_1<\dots <b'_{m}$ such that $b'_{m}>a'_{m}+\xi'$ and $b'_i\de \frac{1}{2}(a'_i-a'_{i-1})$ for $i<m'$. We define an increasing filtration by subcomplexes of $\CFlp_{*}(\id,\check{K}_{\delta,t},J')$ by $F_j\CFlp_{*}(\id,\check{K}_{\delta,t},J')\de F^{\leq b'_j}\CFlp_{*}(\id,\check{K}_{\delta,t},J_t)$, where as in Section \ref{sec:action-filtration}, $F^{\leq b'_j}\CFlp_{*}(\id,\check{K}_{\delta,t},J')$ is the subcomplex generated by Hamiltonian $\id$-twisted loops whose action is bounded from above by $b'_j$.

As before, since the perturbations $K_{\delta}$ and $\check{K}_{\delta,t}$ are small, we conclude that the set of Hamiltonian $\id$-twisted loops $\cP_{\id,\check{K}_{\delta,t}}$ coincides with the set of Morse points of $K_{\delta}$. Since the action function for the identity symplectomorphism can be taken vanishing identically, Formula~\eqref{eq:actiondef} shows that the action $\mathcal{A}_{\id,\check{K}_{\delta,t}}$ of such a point $x$ belongs to $(a'_{j}-\xi',a'_{j}+\xi')$ for a certain index $j$. If $I'_{j}\neq \emptyset$, property \ref{item:G-aviods} of $G$ shows that $x$ is a critical point of $F_{i}|_{B_{i}'}$ for some $i\in I_{j}'$. We conclude that, whenever $I_{j}'\neq\emptyset$, we have an isomorphism of graded $\Z_{2}$-vector spaces 
\begin{equation}\label{eq:first-page_D}
	F_{j}\CFlp_{*}(\id,\check{K}_{\delta,t},J')/F_{j-1}\CFlp_{*}(\id,\check{K}_{\delta,t},J')\cong\bigoplus_{i\in I_{j}'}D^i_{*},
\end{equation}
where $D^i_*$ is a subspace of $\CFlp_{*}(\id,\check{K}_{\delta,t},J')$ generated by the Morse points of $F_i|_{B'_i}$. As before, Lemma \ref{lem:confinement} shows that \eqref{eq:first-page_D} is an isomorphism of chain complexes.

We claim that we have an isomorphism of chain complexes
\begin{equation}\label{eq:C=D}
	D_{*}^{i}=C_{*-\CZ(B_i)}^{i}.
\end{equation}
By definition, both underlying $\Z_2$-vector spaces are spanned by the Morse points of $F_{i}|_{B_{i}'}$, so they coincide. The grading of such a point, say $x$, in $C_{*}^{i}$ is $-\CZ_{\phi\circ \psi}(x)$, where $\psi$ is the time one flow of $\check{H}_{\delta,t}$. By \eqref{eq:coincidenceH}, $\psi$ coincides in $B_i$ with the time one flow of $\check{K}_{\delta,t}$. Hence the grading of $x$ in $D_{*}^{i}$ is $-\CZ_{\psi}(x)$. 
Lemma \ref{lem:CZ-Hamiltonian}\ref{item:CZ_phi+Morse} shows that  $\CZ_{\phi\circ \psi}(x)=\CZ_{\phi}(x)+\CZ_{\psi}(x)=\CZ(B_i)+\CZ_{\psi}(x)$. Thus the grading of $x$ in $C_{*}^{i}$ equals the grading of $x$ in $D_{*}^{i}$ minus $\CZ(B_i)$, which gives the required degree shift.

It remains to  show that the differentials in complexes $D^{i}_{*}$ and $C^{i}_{*}$ are equal. They are computed by counting, respectively, $\CFlp_{*}(\id,\check{K}_{\delta,t},J')$- and $\CFlp_{*}(\phi,\check{H}_{\delta,t},J_t)$- Floer trajectories connecting the Morse points of $F_{i}|_{B_{i}'}$. By the confinement Lemma \ref{lem:confinement} below, these trajectories are contained in $B_{i}$. There, we have equalities $\phi|_{B_i}=\id_{B_i}$, $J_t|_{B_i}=J'|_{B_i}$, see~\eqref{eq:coincidenceJ}, and $\check{K}_{\delta, t}|_{B_i}=\check{H}_{\delta, t}|_{B_i}$, see~\eqref{eq:coincidenceH}. Hence the Floer trajectories counted by both differentials coincide, and we get isomorphism \eqref{eq:C=D}, as needed.
\smallskip

In the above proof, we have used the following confinement lemma for pseudo-holomorphic curves due to McLean, see \cite[Lemma C.2]{McLean}.

\begin{lema}
	\label{lem:confinement}
	Fix any index $p$. If $\delta>0$ and the perturbation $\check{H}_{\delta,t}$ are chosen small enough, for any two Morse points $q_1,q_2$ of $F$ which are contained in $\bigsqcup_{i\in I_p}B'_i$, we have that any  $\CFlp_{*}(\phi,\check{H}_{\delta,t},J_t)$-Floer trajectory from $q_1$ to $q_2$ 
	has image completely contained in $\bigsqcup_{i\in I_p}B_i$.
	
	Similarly, if $\delta>0$ and the perturbation $\check{K}_{\delta,t}$ are chosen small enough, for any two Morse points $q_1,q_2$ of $F$ which are contained in $\bigsqcup_{i\in I'_{p}}B'_i$, any $\CFlp_{*}(\id,\check{K}_{\delta,t},J')$-Floer trajectory from $q_1$ to $q_2$ 
	has image completely contained in $\bigsqcup_{i\in I'_{p}}B'_i$. 
\end{lema}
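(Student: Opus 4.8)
\textbf{Proof plan for the confinement lemma (Lemma~\ref{lem:confinement}).}

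The plan is to reduce both statements to the confinement lemma of McLean \cite[Lemma~C.2]{McLean}, which is stated precisely for this situation: it asserts that if a codimension zero family of fixed points $B_i$ carries a time-independent Hamiltonian $H_{B_i}$ with $B_i=H_{B_i}^{-1}(0)=\Fix\phi|_{N_{B_i}}$ and $\phi|_{N_{B_i}}=\psi_1^{H_{B_i}}$, then for a sufficiently small perturbation the Floer trajectories between critical points lying in a \enquote{small} sublevel of $H_{B_i}$ cannot escape a prescribed neighborhood. The key point is that the setup of Steps~\hyperref[step:1]{1}--\hyperref[step:5]{5} has been arranged precisely so that McLean's hypotheses apply fiberwise over the action filtration. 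First I would record that, by Step~\hyperref[step:2]{2}, on each $N_{B_i}'=B_i\cup\Int C_{B_i}$ the perturbed map $\phi''=\phi'\circ\psi_1^{H'}$ agrees with the time-one flow of the time-independent Hamiltonian $H_i''$, whose zero set is exactly $B_i'$, whose sublevel set $\{H_i''\le 0\}$ contains $B_i'$, and which grows compatibly with both $K_\delta$ and $F$ on $\Int(C_{B_i}\cap B_i)$ by Steps~\hyperref[step:3]{3} and~\hyperref[step:4]{4}. This compatibility guarantees that the boundary levels separating the $B_i$ from the rest of $M$ are \emph{regular} hypersurfaces for all the Hamiltonians involved simultaneously, so that a trajectory touching such a level would have to cross it transversally.

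Next I would invoke the action (energy) estimate. A $\CFlp_*(\phi,\check H_{\delta,t},J_t)$-Floer trajectory $u$ from $q_1$ to $q_2$ satisfies $\mathcal{A}_{\phi,\check H_{\delta,t}}(q_2)\le\mathcal{A}_{\phi,\check H_{\delta,t}}(q_1)$, since the trajectories are anti-gradient flow lines of the action functional (see Section~\ref{sec:action-filtration}). Because $q_1,q_2$ lie in $\bigsqcup_{i\in I_p}B_i'$ and because of the choice of the spacing constant $\xi$ (with $\xi<\tfrac13\min\{|\ac(B_i)-\ac(B_j)|:\ac(B_i)\ne\ac(B_j)\}$), the trajectory is confined within the action window $(a_p-\xi,a_p+\xi)$, hence cannot run off to a family $B_j$ with $\ac(B_j)\ne a_p$. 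What remains is to forbid the trajectory from wandering into the region $M\setminus\bigsqcup_i B_i$, which is where the genuinely geometric confinement argument of \cite[Lemma~C.2]{McLean} enters: one applies a maximum-principle / monotonicity argument for $J_t$-holomorphic curves against the strictly plurisubharmonic-type function provided by the levels of $H_i''$ near $\partial B_i$, after having perturbed $\check H_{\delta,t}$ small enough that the almost complex structure is $t$-independent and equal to $J'$ on $\bigsqcup_i B_i$ (Step~\hyperref[step:5]{5}). The same verbatim argument, with $\phi$ replaced by $\id$, $\check H_{\delta,t}$ by $\check K_{\delta,t}$ and $J_t$ by $J'$, gives the second statement; here one uses the action window $(a_p'-\xi',a_p'+\xi')$ and property~\ref{item:G-aviods} of $G$, which ensures that the only critical points in that window lie in $\bigsqcup_{i\in I_p'}B_i'$.

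The main obstacle is the geometric core of the confinement, i.e.\ verifying that McLean's hypotheses on the shape of the Hamiltonian near $\partial B_i$ (growth compatibility, regularity of the separating level sets, and the exhausting behaviour needed for the maximum principle on pseudo-holomorphic curves) are genuinely met by the auxiliary functions $H_i''$, $F_i$, $G$, $K_\delta$ constructed in Steps~\hyperref[step:2]{2}--\hyperref[step:4]{4}; once these are in place, the escape of a trajectory would contradict either the energy bound or the maximum principle. I would therefore organize the proof as: (a) cite the action estimate to get confinement in the correct action band; (b) on that band, invoke \cite[Lemma~C.2]{McLean} with the Hamiltonian $H_i''$ and the neighborhoods $N_{B_i}'$, noting that the smallness of $\delta$ and of the perturbations $\check H_{\delta,t}$, $\check K_{\delta,t}$ is exactly what loc.\ cit.\ requires; (c) observe that over $B_i$ all relevant data ($\phi|_{B_i}=\id$, $J_t|_{B_i}=J'|_{B_i}$, $\check H_{\delta,t}|_{B_i}=\check K_{\delta,t}|_{B_i}$) coincide, so the two confinement statements are formally identical. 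I do not expect any new difficulty beyond faithfully transcribing McLean's argument into the slightly more general setting where $B_i$ may have a nonempty negative boundary $\partial^-B_i$; the energy estimate is insensitive to that distinction, and the maximum principle only sees the local model near each boundary component.
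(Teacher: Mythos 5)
Your treatment of the first statement is essentially the paper's: confinement in the correct action band via the monotonicity of $\cA_{\phi,\check H_{\delta,t}}$, followed by an appeal to \cite[Lemma C.2]{McLean} applied to $\phi''$ with $\bigsqcup_{i\in I_p}B_i'$ as the family of fixed points and $\bigsqcup_{i\in I_p}B_i$ as the neighborhood where the map is Hamiltonian. Two small points you should still record explicitly: McLean's lemma is stated for $\CFpt_{*}$-trajectories (unperturbed Floer equation), so one must pass through the dictionary of Section \ref{sec:floerequivalence} to apply it to $\CFlp_{*}$-trajectories; and McLean assumes the family of fixed points is connected, whereas $\bigsqcup_{i\in I_p}B_i'$ is not --- the fix is to observe that his proof only uses constancy of the action on the family, which is exactly what the definition of $I_p$ guarantees.

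The second statement is where your proposal has a genuine gap. You claim the ``same verbatim argument'' applies with $\phi$ replaced by $\id$, justified by the fact that all the data coincide on $\bigsqcup_i B_i$. This reduction is not valid: the confinement argument takes place near and beyond $\partial B_i$, precisely where $\phi\neq\id$, $J_t\neq J'$ and $\check H_{\delta,t}\neq\check K_{\delta,t}$, so the two cases are not formally identical. Moreover, even granting the application of \cite[Lemma C.2]{McLean} to $\psi'''_\delta$, its conclusion would only confine trajectories to the ambient neighborhood $\bigsqcup_{i\in I'_p}B_i$, whereas the lemma asserts the strictly stronger confinement into $\bigsqcup_{i\in I'_p}B_i'$. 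The paper's actual argument for this case is elementary and different: since $K_\delta$ and $J'$ are time-independent and $\delta$ is small, \cite[Proposition 10.1.9]{AD_book} shows that the $\CFlp_{*}(\id,K_{\delta},J')$-Floer trajectories are $t$-independent, hence are gradient trajectories of $-K_\delta$; property (3) of Step 4 (the separation $\min K_\delta|_{\partial^+B_i}>\max K_\delta|_{\partial^-B_i}$) prevents such a trajectory from leaving $B_i$, and property (4) (growth compatibility of $K_\delta$ with $H''_i$ on the collar, so $K_\delta$ has no critical points there) prevents it from leaving $B_i'$; the perturbed case $\check K_{\delta,t}$ then follows by Gromov compactness. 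You should replace your ``verbatim'' step by this Morse-theoretic argument, or else weaken the second conclusion to confinement in $B_i$ and check separately that this suffices for the identification of $D^i_*$ with the Morse subcomplex later on.
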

\begin{proof}
For the first case the Lemma coincides exactly with \cite[Lemma C.2]{McLean}, applied to $\phi''$, where $\bigsqcup_{i\in I_{p}}B'_i$ plays the role of codimension zero family $B$ of fixed points and $\bigsqcup_{i\in I_{p}}B_i$ is the neighborhood where $\phi'$ is Hamiltonian, except that loc.\ cit.\ is formulated for $\CFpt_{*}$-Floer trajectories. The case of $\CFlp_{*}$-Floer trajectories is reduced to McLean's setting by the procedure described in Section \ref{sec:floerequivalence}. Moreover, in loc.\ cit.\ $B$ is assumed to be connected, which is not the case for $\bigsqcup_{i\in I_{p}}B'_i$. However, what is actually used in the proof is not the connectedness of $B$, but the fact that the action function $\ac$ is constant on $B$: this is satisfied by $\bigsqcup_{i\in I_{p}}B'_i$ by definition of $I_p$.

For the second case notice that by \cite[Proposition 10.1.9]{AD_book}, if $\delta$ is small enough the solutions of the perturbed Floer Cauchy--Riemann equation \eqref{eq:floerequlj} for $(K_\delta,J')$, i.e.\  $\CFlp_{*}(\id,K_{\delta},J')$-Floer trajectories, are time-independent, and hence equal to gradient trajectories for $-K_\delta$. By Property \ref{item:K-boundaries} of Step \hyperref[step:4]{4} a gradient trajectory starting and ending in $B'_i$ can not escape $B_i$. Moreover, by Property \ref{item:K-compatible} of Step \hyperref[step:4]{4}, a gradient trajectory cannot escape $B'_i$ either, and since the critical points of $K_\delta$ are contained at the interior of $B'_i$ we conclude that all gradient trajectories are contained at the interior of $B'_i$. This proves the confinement  of $\CFlp_{*}(\id,K_{\delta},J')$-Floer trajectories. Now if the perturbation $\check{K}_{\delta,t}$ is chosen small enough, confinement of $\CFlp_{*}(\id,\check{K}_{\delta,t},J')$-Floer trajectories follows from Gromov compactness.
\end{proof}

\begin{remark}
	The proof of \cite[Lemma C.2]{McLean} uses a version of Gromov compactness due to Fish \cite[Theorem 3.1]{Fish}. The latter is a substantial generalization of \cite{Ivashkovich-Shevchishin}. However, the proof of \cite[Lemma C.2]{McLean} remains valid with the reference to \cite{Fish} replaced by \cite{Ivashkovich-Shevchishin}.
\end{remark}

\subsubsection{Computation of the first page}\label{sec:computation}
 To prove Proposition \ref{prop:spectral-sequence}, we need to compute the homology of the complex $C_{*}^{i}$ appearing in the formula \eqref{eq:first-page}. Using the isomorphism \eqref{eq:C=D},  it is enough to compute the homology of the complex $D_{*}^{i}$, defined in \eqref{eq:first-page_D}.

We need to show that the homology of $D_{*}^{i}$ is the relative homology of $(B_{i}, \d^{+}B_{i})$, up to a degree shift. For this we follow the proof that $\HF_*(\id_M,+)=H_{*-d}(M,\ZZ_2)$ via continuation maps in Floer theory. In \cite{Uljarevic} it is remarked that our case (in which $M$ is a Liouville domain) is treated like in the classical proof that Hamiltonian Floer homology for compact $M$ with vanishing second homotopy group, equals ordinary homology shifted by the dimension. This proof is fully explained in \cite{AD_book}.

We recall the needed details here. In \cite[\sec 10]{AD_book} it is proved that since the pair $(K_{\delta}, J')$ is Morse-Smale, the Morse complex of $(K_{\delta}, J')$ is well defined. Since we have $\xi K_\delta=K_{\xi\delta}$, if $\delta$ is chosen small enough, the $\CFlp_{*}(\id,K_{\delta}, J')$-Floer trajectories are independent of $t$, see \cite[Proposition 10.1.9]{AD_book}. Hence the pair $(K_{\delta}, J')$ is regular for $\id$. The $\id$-twisted Floer complex $\CFlp_{*}(\id,K_{\delta}, J')$ is well defined and coincides with the Morse complex of $(K_{\delta}, J')$.

In \cite[\sec 11]{AD_book}, using Floer's continuation maps, a quasi-isomorphism
\begin{equation}
	\label{eq:continuationmap}
	\Phi\colon \CFlp_{*}(\id,K_{\delta}, J')\to  \CFlp_{*}(\id,\check{K}_{\delta, t},J')
\end{equation}
is defined. The action filtration $F_{j} \CFlp_{*}(\id,K_{\delta}, J')$ is defined for the complex $ \CFlp_{*}(\id,K_{\delta}, J')$ in a similar way as above. We claim that the quasi-isomorphism~(\ref{eq:continuationmap}) can be defined in such a way that it respects the action filtration in both sides. To prove the claim we recall some ingredients involved in the construction of the continuation quasi-isomorphism. First we define continuation data $L:M\times\RR\times\RR\to\RR$, where $L$ is a function such that ${L}(p,t,s)=K_{\delta}(p)$ for $s\leq -1$ and  ${L}(p,t,s)\de \check{K}_{\delta,t} (p)$ for $s\geq -1$. Then a small generic perturbation $\tilde{L}:M\times\RR\times\RR\to\RR$ is constructed, without modifying outside $s\in [-1,1]$, and from it $\Phi$ is constructed by counting pseudo-holomorphic curves $u\colon \R\times\S^1\to M\times\R$. The point is that the pseudo-holomorphic curves satisfy the energy bound provided in \cite[Proposition 11.1.2]{AD_book}. If a pseudo-holomorphic curve $u$ connects $q_1$ and $q_2$ then its energy is bounded by
\begin{equation*}
	\cA_{\id,K_{\delta}}(q_1)-\cA_{\id,\check{K}_{\delta, t}}(q_2)+c,
\end{equation*}
where $c$ is a constant determined in \cite[p.\ 387]{AD_book}: it is enough that $c$ bounds $\partial\tilde{L}/\partial s$. Choosing $K_{\delta}$ and its perturbation $\check{K}_{\delta, t}$ small, the perturbation $\tilde{L}$ can be chosen small, too. So we can set $c$ as small as we wish, and doing so makes it  clear that $\Phi$ preserves the action filtration.

As a consequence we obtain a quasi-isomorphism
\begin{equation*}
	F_{j}\CFlp_{*}(\id,K_{\delta},J')/F_{j-1}\CFlp_{*}(\id,K_{\delta},J')\to F_{j}\CFlp_{*}(\id,\check{K}_{\delta,t},J')/ F_{j-1}\CFlp_{*}(\id,\check{K}_{\delta,t},J').
\end{equation*}
Assume $I'_{j}\neq\emptyset$. The complex on the right-hand side equals $\bigoplus_{i\in I_{j}'} D^{i}_{*}$, see \eqref{eq:first-page_D}. The complex on the left-hand side is defined analogously, as a vector space spanned by those critical points of $K_{\delta}$ whose action lies in the interval $(a_j'-\xi,a_j'+\xi)$. They are precisely the Morse points of $K_{\delta}$ contained in $\bigsqcup_{i\in I_{j}'}B_{i}'$. Since $K_{\delta}$ and $J'$ are time-independent, the Floer equation \eqref{eq:floerequlj} used to define the differential in $\CFlp_{*}$ reduces to the Morse equation used to define the differential in the Morse complex for $(-K_{\delta},J')$, see \cite[\sec 10.1]{AD_book} or \cite[Theorems 7.1 and 7.3]{Salamon_Zehnder}: note that the sign of the Hamiltonian gets switched as explained in Section \ref{sec:Hamiltonian}. Therefore, $D_{*}^{i}$ corresponds to a piece of the Morse complex for $(-K_{\delta},J')$ generated by the Morse points contained in $B_{i}$. This piece computes the relative homology of $(B_{i},\d^{+}B_i)$. Indeed, in the decomposition \eqref{eq:boundarysplitting1}, the gradient of $-K_{\delta}$ points inwards $\d^{+}B$ and outwards $\d^{-}B_i$, so 
the claim follows from \cite[(2.2)]{Rot-thesis}. Using Lemma \ref{lem:CZ-Hamiltonian}\ref{item:CZ_Morse} to compute the grading, we get
\begin{equation*}
	H_{*}(D^{i}_{*})=H_{*+d}(B_{i},\d^{+}B_{i}),
\end{equation*}
cf.\ \cite[p.\ 834]{Seidel_Dehn-twist}. 
Together with the formulas \eqref{eq:first-page} and \eqref{eq:C=D}, this gives
\begin{equation*}
	E_{p,q}^{1}=\bigoplus_{i\in I_p} H_{p+q}(C_{*}^{i})=
	\bigoplus_{i\in I_p} H_{p+q+\CZ(B_i)}(D^{i}_{*})=
	\bigoplus_{i\in I_p} H_{p+q+\CZ(B_i)+d}(B_{i},\d^{+}B_{i}),
\end{equation*}
as claimed in Proposition \ref{prop:spectral-sequence}.

To get the analogous spectral sequence converging to $\HF_{*}(\phi,-)$, we need to replace the small negative slope by a small positive one. This way, in Step \hyperref[step:1]{1} $\d^{M}B_i$ becomes a part of $\d^{+}B_i$, see Notation \ref{not:B=B''}, so 
at the end of the day we get relative homology of $B_i$ modulo $\d^{+}B_{i}\sqcup \d^{M}B_i$, as claimed.

\section{The A'Campo abstract contact open book}\label{sec:monodromy}

In this section, we generalize McLean spectral sequence \cite[Theorem 1.2]{McLean}, which converges to the fixed point Floer homology of the monodromy. We will formulate it in Proposition \ref{prop:spectral-sequence-monodromy} after some preparations. To prove it, we will phrase Proposition \ref{prop:omega} in the language of graded abstract contact open books. This way, in Section \ref{sec:isotopy-to-radius-zero} we will get an isotopy between the natural abstract contact open book, constructed at (any small) positive radius in Section \ref{sec:basic-monodromy} (see Example \ref{ex:Milnor-fibration}), and the one at radius zero, given by the flow of \eqref{eq:monodromy-vector-field}, which has good dynamical properties just like the topological A'Campo model \cite{A'Campo}, or McLean model resolution \cite{McLean}, see Remark \ref{rem:McLean-5.41}. We summarize those properties in Proposition \ref{prop:monodromy}, which is the main result of this section. In particular, we will see that the radius zero monodromy satisfies the assumptions of Proposition \ref{prop:spectral-sequence}, which provides the required spectral sequence.

\subsection{The generalized McLean spectral sequence for isolated singularities}\label{sec:monodromy-spectral-sequence}

We work with the monodromy graded abstract contact open book constructed in Example \ref{ex:typical} for a single function (without parameters). Let us briefly recall its definition. 

Let $Z$ be a Stein manifold and let $\varrho\colon Z\to \R$ be an exhaustive strictly plurisubharmonic function. Consider the Liouville form $\lambda_Z:=-d^{c}\varrho$, see Section \ref{sec:symplectic-intro}. Let $Y\subseteq Z$ be a closed analytic subset whose all singularities are isolated, and, in case $\dim_{\C}Y=2$, their links are rational homology spheres. Let 
\begin{equation*}
	f\colon Y\to \C
\end{equation*}
be a holomorphic function such that $\Sing Y\subseteq f^{-1}(0)$. Assume that $0\in \C$ is the only critical value of $f$, and all singularities of  $f^{-1}(0)$ are isolated. Then $f|_{f^{-1}(\D_{\delta}^{*})}$ is a submersion for some $\delta>0$.

We choose regular values $\xi_{V}<\xi_{W}<\xi_{U}$ of $\varrho|_{Y}$, and put $U_Y=\varrho^{-1}(-\infty,\xi_U)\cap Y$, $V_Y= \varrho^{-1}(-\infty,\xi_V)\cap Y$ and $W_Y= \varrho^{-1}(-\infty,\xi_W)\cap Y$. Now,  Example \ref{ex:typical} (with $\epsilon=\eta=0$) provides a subset $N\subseteq Y$ such that $f^{-1}(0)\cap \bar{N}=f^{-1}(0)\cap \bar{W}_Y$, and the restriction $f|_{N}\colon N\to \D_{\delta}^{*}$ is a Liouville fibration, whose monodromy yields abstract contact open books 
\begin{equation}\label{eq:monodromy-acob}
	(N_{z},\lambda_Z,\phi_{z}),
\end{equation}
see \eqref{eq:basic-acob-example}. Note that the fiber $N_{z}$ is diffeomorphic to $f^{-1}(z)\cap \bar{W}_Y$. If $Y=\C^n$, $\varrho(z)=\tfrac{\pi}{2}||z||^2$, and $W_Y$ is a Milnor ball, then \eqref{eq:monodromy-acob} is the abstract contact open book constructed in Example \ref{ex:Milnor-fibration}. 

Note also that in the formula \eqref{eq:monodromy-acob} we abuse the notation and write $\lambda_{Z}$ instead of its restriction $\lambda_{Z}|_{N_{z}}$. We will do so several times in this section, to keep the formulas concise.
\smallskip

We assume furthermore that
\begin{equation}\label{eq:c1-assumption}
	c_{1}(Y\setminus f^{-1}(0))=0.
\end{equation}

Then, as explained in Remark \ref{rem:c1}\ref{item:c1_Kahler}, a nonvanishing section of $K_{Y\setminus f^{-1}(0)}$ endows the abstract contact open book \eqref{eq:monodromy-acob} with a grading. Its graded isotopy type does not depend on $z\in \D_{\delta}^{*}$. 
\smallskip

To state our results, we need to fix some additional data. Let $h'\colon Z'\to Z$ be a log resolution of $(f,Y)$. That is, $h'$ is a proper modification such that, if $X$ denotes the  proper transform of $Y$, then $h\de h'|_X\colon X\to Y$ is a resolution of $f$. Put $D=(f\circ h)^{-1}(0)$. Then $X$ is a smooth complex manifold, and $D\redd$ is snc. We assume that $h$ is an isomorphism away from $D$ and near the preimage of the collar $\bar{U}_Y\setminus V_Y$. We write the irreducible decomposition of $D$ as
\begin{equation*}
	D=\sum_{i\in \cP} D_{i}+\sum_{i\in \cE} m_{i}D_{i},
\end{equation*}
where the first sum runs through the components of the proper transform of $f^{-1}(0)$, and the second one through exceptional components of $h$. 
For $i\in \cP$ we put $m_{i}=1$. 

Fix an integer $m\geq 1$ and assume that the log resolution $h$ is \emph{$m$-separating}, that is, 
\begin{equation}\label{eq:m-separatedness}
	\mbox{if } i\neq j \mbox{ and } D_{i}\cap D_{j}\neq \emptyset \mbox{ then } m_{i}+m_{j}>m.
\end{equation} 
It can always be achieved by composing $h'$ with further blowups, see \cite[Lemma 2.9]{BBLN_contact-loci}.

Since $Y$ is Stein, we can fix an ample divisor $H$ on $X$ such that
\begin{equation}\label{eq:H}
	H=\sum_{i\in \cE}b_{i}D_{i}
\end{equation}
for some negative integers $b_{i}$. For $i\in \cP$ we put $b_i=0$.
\smallskip

We will also need some additional notation. First, by assumption \eqref{eq:c1-assumption}, we can write the canonical bundle of $X$ as
\begin{equation}\label{eq:K}
	K_{X}=\sum_{i\in \cP\cup \cE} a_{i}D_{i},
\end{equation}
for some \emph{discrepancies} $a_{i}\in \Z$. Note that if $c_{1}(Y)=0$ then we have $a_{i}=0$ for $i\in \cP$. We put
\begin{equation}\label{eq:Sm}
	S_{m}=\{i\in \cP\cup \cE : m_{i}|m\},\quad 
	S_{m,p}=\{i\in S_{m}: p=\frac{m}{m_i}b_{i}\}.
\end{equation}
Note that $\cP\subseteq S_{m,0}$.

Fix $i\in \cP\cup \cE$. Recall from \eqref{eq:stratification} the notation $X_{i}^{\circ}=D_{i}\setminus (D-D_{i})$. Put $D_{i}^{\circ}= X_{i}^{\circ}\cap \bar{W}_X$, where $W_X=h^{-1}(W_Y)$. We now introduce a natural $m_i$-fold covering 
\begin{equation*}
	\nu_{i}\colon B_{i}^{\circ}\to D_{i}^{\circ},
\end{equation*}
following \cite[\sec 2.3]{Denef_Loeser-Lefshetz_numbers}. It can be thought of as a piece of the fiber $f^{-1}(\delta)\cap \bar{W}_Y$ which \enquote{corresponds to} the component $D_i$. To define $\nu_i$, fix any holomorphic chart $G_{X}\subseteq X$ around a point of $X_{i}^{\circ}$, and let $\{z_{i}=0\}$ be a local equation of $D_{i}$ in $G_{X}$. Then  $f|_{G_{X}}=\mu\cdot z_{i}^{m_i}$ for some $\mu\in \cO_{X}^{*}(G_{X})$. Define $\tilde{G}_{X}=\{(z,x)\in \C\times (G_{X}\cap D_{i}^{\circ}): \mu(x)\cdot  z^{m_{i}}=1 \}$. Gluing these charts, one gets a topological covering $\nu_{i}\colon B_{i}^{\circ}\to D_{i}^{\circ}$ with Galois group $\Z_{m_i}$, as needed. 

Note that if $i\in \cE$, i.e.\ $D_i$ is contracted by $h$, then $\nu_i$ is exactly the $m_i$-fold covering of $X_{i}^{\circ}$ considered in \cite[\sec 2.3]{Denef_Loeser-Lefshetz_numbers} or \cite[Theorem 1.2]{McLean}. If $i\in \cP$, i.e.\ $D_{i}$ is a proper transform of some component of $f^{-1}(0)$, then $\nu_i$ is an isomorphism; and $B_{i}^{\circ}$ can be identified with a submanifold $X_{i}^{\circ}\cap \bar{W}_{X}$ of $X_{i}^{\circ}$, with boundary $D_i\cap \d \bar{W}_{X}$. In other words, if $i\in \cP$ then $B_{i}^{\circ}$ is diffeomorphic to $(h(D_i)\cap \bar{W}_Y)\setminus \operatorname{Bs} h^{-1}$, where $\operatorname{Bs} h^{-1}$ is the locus where $h$ is not an isomorphism.

With this notation at hand, we can formulate the first result of this section. We denote by $H^{BM}_{*}$ the Borel--Moore homology with coefficients in $\Z_2$.

\begin{prop}\label{prop:spectral-sequence-monodromy}
	There is a spectral sequence $E_{p,q}^{r}$ converging to $\HF_{*}(\phi_{z}^{m},+)$ whose first page equals
	\begin{equation}\label{eq:spectral-sequence-monodromy}
		E_{p,q}^{1}=\bigoplus_{i\in S_{m,p}} H^{BM}_{n-1+p+q+2\frac{m}{m_i}(a_i+1)-2m}(B_{i}^{\circ}).
	\end{equation}
\end{prop}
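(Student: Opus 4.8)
The strategy is to feed the generalized McLean spectral sequence from Proposition \ref{prop:spectral-sequence} into the radius-zero abstract contact open book whose existence and dynamical structure will be established in Proposition \ref{prop:monodromy}. So the first step is to invoke the isotopy construction of Section \ref{sec:isotopy-to-radius-zero} (Corollary \ref{cor:isotopyisotopy-to-radius-zero}): the graded abstract contact open book $(N_z,\lambda_Z,\phi_z^m)$ built in Example \ref{ex:typical} is graded isotopic to a radius-zero model $(F,\lambda,\phi^m)$, where $F=\bar W\cap f_A^{-1}(0,1)$ is a fiber of $f_A$ on the boundary of the A'Campo space $A$ for $f\circ h$, and $\phi$ is the time-one flow of the vector field \eqref{eq:monodromy-vector-field}. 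Here one must check that the setting of Section \ref{sec:basic-radius-zero} applies: $f\circ h$ has snc central fiber, $h$ is an isomorphism away from $D$ and in the collar, $d\lambda_Z$ is exact Kähler (pulled back to $X$ it remains Kähler since $h$ is a modification of a Stein space), the exactness hypothesis \eqref{eq:ass-exact} holds by Remark \ref{rem:exact}\ref{item:exact_Stein},\ref{item:exact_more-links}, the pullback hypothesis \eqref{eq:lambda_extends} holds because $\pi^*\lambda_Z$ extends over $\d A$ (this is part of the construction, cf.\ Remark \ref{rem:vanishing_restriction}), and the grading hypothesis \eqref{eq:vanishing-c1} is exactly \eqref{eq:c1-assumption}, with the grading transported by Corollary \ref{cor:gradedunicitymon}. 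By Proposition \ref{prop:isotopy_invariance} we then have $\HF_*(\phi_z^m,+)\cong \HF_*(\phi^m,+)$, so it suffices to compute the latter.

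\textbf{Applying the spectral sequence.} The radius-zero monodromy $\phi^m$ has, by Proposition \ref{prop:monodromy}, the property that $\Fix\phi^m=\bigsqcup_{i\in S_m} B_i$ where each $B_i$ is a codimension zero family of fixed points (in the sense of Definition \ref{def:0codim}) with $\d^-B_i=\emptyset$, hence hypothesis \ref{item:ss-bd} of Proposition \ref{prop:spectral-sequence} is satisfied. Moreover Proposition \ref{prop:monodromy} identifies each $B_i$, up to homeomorphism of triples $(B_i,\d^+B_i,\d^-B_i)$, and records the two numerical invariants: the action $\ac(B_i)$ and the Conley--Zehnder index $\CZ(B_i)$. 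One chooses the indexing function $\iota$ compatible with the ordering of actions; since $\phi^m$ is a symplectic monodromy, the actions of the components are governed by the ample divisor $H=\sum b_i D_i$ in \eqref{eq:H}, and the relevant combinatorics will give that $\iota(i)$ is (an affine reparametrization of) $\frac{m}{m_i}b_i$, which is constant on each $S_{m,p}$ — this is precisely the partition in \eqref{eq:Sm}. Then Proposition \ref{prop:spectral-sequence} produces a spectral sequence converging to $\HF_*(\phi^m,+)$ with
\begin{equation*}
	E^1_{p,q}=\bigoplus_{\{i:\iota(i)=p\}} H_{d+p+q+\CZ(B_i)}(B_i,\d^+B_i;\Z_2),
\end{equation*}
where $d=\tfrac12\dim F = n-1$ (the Milnor fiber of $f$ has complex dimension $n-1$).

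\textbf{Identifying the $E^1$ terms.} The last and most delicate step is to match each summand $H_*(B_i,\d^+B_i;\Z_2)$ with the Borel--Moore homology group $H^{BM}_*(B_i^\circ)$ appearing in \eqref{eq:spectral-sequence-monodromy}, and to pin down the exact degree shift. For the topology: by Proposition \ref{prop:monodromy}, $B_i$ is (homeomorphic to) the closure in $F$ of the $S^1$-bundle $(\pi,\mu)\colon A_i^\circ\to X_i^\circ\times\Delta_i$ restricted appropriately, and collapsing the circle direction (which is where $\phi$ acts as a rotation by $2\pi/m_i$ when $m_i\mid m$, with fixed locus precisely the $m_i$-fold cover $B_i^\circ\to D_i^\circ$) identifies $B_i$ with a compactification of $B_i^\circ$; the boundary piece $\d^+B_i$ corresponds to the part of $B_i$ lying over the faces of the dual complex and over $\d\bar W_X$, which under the collapse becomes exactly the locus to be removed to recover the open stratum. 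Hence Lefschetz/excision duality gives $H_{k}(B_i,\d^+B_i;\Z_2)\cong H^{BM}_{k}(B_i^\circ;\Z_2)$ (possibly after accounting for the circle factor, which shifts degree by $1$ and is absorbed into the normalization). For the numerics: the Conley--Zehnder index $\CZ(B_i)$ must be computed from the grading, i.e.\ from a nonvanishing section of $K_{Y\setminus f^{-1}(0)}$ and its behavior near $D_i$; the standard discrepancy computation — as in \cite[Example 5.14]{McLean}, cf.\ Remark \ref{rem:McLean-5.41} — yields $\CZ(B_i)=2\frac{m}{m_i}(a_i+1)-2m+c$ for a universal constant $c$ fixed by the convention, where $a_i$ is the discrepancy from \eqref{eq:K}. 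Substituting $d=n-1$, $\iota(i)=p$ on $S_{m,p}$, and the value of $\CZ(B_i)$ into the $E^1$ formula, and reindexing $p\mapsto p$, $q\mapsto q$ to absorb the constant, yields
\begin{equation*}
	E^1_{p,q}=\bigoplus_{i\in S_{m,p}} H^{BM}_{n-1+p+q+2\frac{m}{m_i}(a_i+1)-2m}(B_i^\circ),
\end{equation*}
which is \eqref{eq:spectral-sequence-monodromy}. The main obstacle I expect is this last bookkeeping step: verifying that the topological identification $B_i\simeq$ (compactification of $B_i^\circ$) is compatible with the splitting of $\d B_i$ into $\d^+$, $\d^-$, $\d^M$ pieces dictated by the Hamiltonian $H_{B_i}$ from \eqref{eq:monodromy-vector-field}, and that the Conley--Zehnder/discrepancy computation produces exactly the coefficient $2\frac{m}{m_i}(a_i+1)-2m$ with the correct sign conventions (recall the sign subtleties flagged in Section \ref{sec:Hamiltonian}); a clean way to organize this is to first do the case $i\in\cE$ where it reduces to McLean's computation and the boundary $\d\bar W_X$ plays no role, and then handle $i\in\cP$ (in particular $i=0$, giving the extra component $B_0$) separately using the conical structure and the homological triviality that will be exploited in Section \ref{sec:proof}.
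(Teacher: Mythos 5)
Your proposal is correct and follows essentially the same route as the paper: isotope to the radius-zero model via Proposition \ref{prop:isotopy_invariance}, feed Proposition \ref{prop:monodromy} into Proposition \ref{prop:spectral-sequence} with $\iota(i)=\frac{m}{m_i}b_i$ dictated by the action values, and identify $H_*(B_i,\d^+B_i;\Z_2)$ with $H^{BM}_*(B_i^\circ)$ via $B_i\setminus\d^+B_i\cong B_i^\circ$. The only cosmetic point is that your hedge about a circle factor and a degree shift by $1$ is unnecessary: the fiber $F$ already meets each circle fiber of $A_i^\circ$ in $m_i$ points, so Proposition \ref{prop:monodromy}\ref{item:B_i-covering} gives the diffeomorphism $B_i\setminus\d^+B_i\cong B_i^\circ$ directly, and the Conley--Zehnder value from Proposition \ref{prop:monodromy}\ref{item:B_i-CZ} is exactly $2\frac{m}{m_i}(a_i+1)-2m$ with no extra constant to absorb.
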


Proposition \ref{prop:spectral-sequence-monodromy} will be proved in two steps. First, in Section \ref{sec:isotopy-to-radius-zero}, we will use the lift of the radial vector field on $\C_{\log}$ to isotope the abstract contact open book \eqref{eq:monodromy-acob} to the abstract contact open book $(F,\lambda,\phi)$ at radius zero, i.e.\ at the boundary of the A'Campo space; where $\phi$ is the time one flow of the vector field \eqref{eq:monodromy-vector-field}. Next, in Section \ref{sec:monodromy-dynamics} we will identify $B_{i}^{\circ}$ with the intersections $F\cap A_{i}^{\circ}$, see \eqref{eq:stratification-pullback}, which, as we will see, are codimension zero families of fixed points of $\phi$ whenever $m_i|m$. The $m$-separating condition \eqref{eq:m-separatedness} will guarantee that $\phi$  has no more fixed points. 
Once this is done, Proposition \ref{prop:spectral-sequence-monodromy} will follow from Proposition \ref{prop:spectral-sequence}.

	\begin{remark}\label{rem:chosen_H}
		The spectral sequence \eqref{eq:spectral-sequence-monodromy} depends on the choice of the $m$-separated resolution $h$ and on the ample divisor $H$. Since the limit $\HF_{*}(\phi^{m},+)$ does not depend on $h$ and $H$, flexibility of these choices suggests that some Floer differentials in \eqref{eq:spectral-sequence-monodromy} should be zero.
		
		One reason for such vanishing can be seen as follows. Since the action functional decreases along the Floer trajectories, see Section \ref{sec:action-filtration}, if we have an inequality of actions  $\ac(B_{i}^{\circ})<\ac(B_{j}^{\circ})$ then there are no Floer trajectories from $B_{i}^{\circ}$ to $B_{j}^{\circ}$. Hence any differential from (a sub-quotient of) $H^{BM}_{*}(B_{i}^\circ)$ to $H^{BM}_{*}(B_{j}^{\circ})$ vanishes. By Proposition \ref{prop:monodromy}\ref{item:B_i-action}, the inequality $\ac(B_{i}^{\circ})<\ac(B_{j}^{\circ})$ is equivalent to $\frac{b_i}{m_i}<\frac{b_j}{m_j}$. 
		
		For example, let $f$ be an irreducible plane curve germ. Then the dual graph of $D$ is a tree, with the proper transform of $f^{-1}(0)$ as a root. Following the resolution process it is easy to see that, for any $H$, the quotients $\frac{b_i}{m_i}$ decrease as we move away from the root. Thus if $D_i$ lies closer to the root than $D_j$, all differentials in \eqref{eq:spectral-sequence-monodromy} from $H_{*}^{BM}(B_i^{\circ})$ to $H^{BM}_{*}(B_{j}^{\circ})$ vanish.  However, in the opposite situation, that is if $D_{j}$ lies closer to the root than $D_i$, the ampleness condition is restrictive enough so that we cannot guarantee the vanishing of differentials from $H^{BM}_{*}(B_{i}^{\circ})$ to $H^{BM}_{*}(B_{j}^{\circ})$, see Example \ref{ex:cusp}. 
		
		This result is compatible with the corresponding one concerning topology of the $m$-th contact locus $\mathscr{X}_{m}$ of $f$. Recall that \cite[Theorem 1.1]{BBLN_contact-loci} provides a spectral sequence converging to $H^{*}_{c}(\mathscr{X}_m)$ whose page is dual (up to a shift) to \eqref{eq:spectral-sequence-monodromy}, which  leads to the \emph{Arc-Floer conjecture} $\HF_{*}(\phi^{m},+)\cong H^{*}_{c}(\mathscr{X}_{m})$, see Conjecture 1.5 loc.\ cit. The differentials $H_{*}(B_j)\to H_{*}(B_{i})$ in the above spectral sequence count, roughly speaking, families of arcs lifting to $D_{i}$, whose limit lifts to $D_j$. In case of irreducible plane curves, the proof of \cite[Theorem 3.4]{JaviEdu_conctact-loci} shows that there are no such families if $D_i$ lies closer to the root than $D_j$, so the corresponding differential, again, is zero. But, in the opposite situation, if $D_j$ lies closer to the root than $D_i$, the differential can be non-zero.
	\end{remark}

	\begin{example}\label{ex:cusp}
		Let $f$ be a simple cusp $x^{2}-y^{3}$. By \cite[p.\ 26]{JaviEdu_conctact-loci} we have $\HF_{*}(\phi^{6},+)=H_{*-1}^{BM}(B^{\circ})$, where $B^{\circ}$ is the entire Milnor fiber, i.e.\ a punctured torus. Thus $\HF_{*}(\phi^{6},+)$ equals $\Z_2$ in degrees $2$ and $3$, and $0$ in the remaining degrees. Let $X\to \C^{2}$ be the minimal log resolution of $f$; it is $6$-separated. We now describe the spectral sequence \eqref{eq:spectral-sequence-monodromy} for various choices of ample divisors $H$ on $X$. 
		
		Write $D=D_0+2D_1+3D_2+6D_3$, where $D_0$ is the proper transform of $f^{-1}(0)$, $D_3$ is the component of $\Exc h$ meeting $D_0$, and $D_1,D_2$ are tips of $\Exc h$. Then $B_{1}^{\circ}$ and $B_{2}^{\circ}$ are disjoint unions of $3$ and $2$ disks, respectively; and $B_{3}^{\circ}$ is a torus with six disks removed. Since $K_{X}=D_{1}+2D_{2}+4D_{3}$, by Proposition \ref{prop:monodromy}\ref{item:B_i-CZ} we have $\CZ(B_{1}^{\circ})=\CZ(B_{2}^{\circ})=0$ and $\CZ(B_{3}^{\circ})=-2$. A direct computation shows that a divisor $H=\sum_{i=1}^{3}b_{i}D_{i}$ is ample if and only if $\frac{b_1}{m_1},\frac{b_2}{m_2}<\frac{b_3}{m_3}$, see Remark \ref{rem:chosen_H}, and $2\frac{b_{1}}{m_{1}}+3\frac{b_2}{m_2}>6\frac{b_{3}}{m_{3}}$. 
		
		Choose $H$ so that $\frac{b_{1}}{m_{1}}=\frac{b_{2}}{m_{2}}=\frac{-13}{6}$ and $\frac{b_{3}}{m_{3}}=\frac{-12}{6}$ (strictly speaking, $H$ is a $\Q$-divisor, but it is easy to see that \eqref{eq:spectral-sequence-monodromy} works  as long as $6\cdot \frac{b_i}{m_i}\in \Z$). Then the nonzero entries of the $E^{1}$ page of \eqref{eq:spectral-sequence-monodromy} are $E^{1}_{-13,14}=H^{BM}_{2}(B_{1}^{\circ})\oplus H^{BM}_{2}(B_{2}^{\circ})=\Z_{2}^{5}$, $E^{1}_{-12,14}=H^{BM}_{1}(B_{3}^{\circ})=\Z_{2}^{6}$, and $E^{1}_{-12,15}=H^{BM}_{2}(B_{3}^{\circ})=\Z_{2}$. In particular, in agreement with Remark \ref{rem:chosen_H}, there is no differential from $H_{*}^{BM}(B_{1}^{\circ})$ or $H_{*}^{BM}(B_{2}^{\circ})$ to $H_{*}^{BM}(B_{3}^{\circ})$. On the other hand, there is a non-trivial differential in the opposite direction. Indeed, since we know that the limit $\HF_{*}(\phi^{6},+)$ is $0$ in degree $1$,  we see that the differential $E^{1}_{-12,14}\to E^{1}_{-13,14}$ is surjective, and the spectral sequence degenerates on the second page.
		
		Similarly, choose $H$ so that  $\frac{b_{1}}{m_{1}}=\frac{-13}{6}$, $\frac{b_{2}}{m_{2}}=\frac{-14}{6}$ and $\frac{b_{3}}{m_{3}}=\frac{-12}{6}$. Then the nonzero entries on the $E^{1}$-page are $E^{1}_{-14,15}=H^{BM}_{2}(B_{2}^{\circ})=\Z_{2}^{2}$, $E^{1}_{-13,14}=H^{BM}_{2}(B_{1}^{\circ})=\Z_{2}^{3}$; and as before  $E^{1}_{-12,14}=H^{BM}_{1}(B_{3}^{\circ})=\Z_{2}^6$, $E^{1}_{-12,15}=H^{BM}_{2}(B_{3}^{\circ})=\Z_{2}$. The only nonzero differential on $E^1$ is $E^{1}_{-12,14}\onto E^{1}_{-13,14}$; on $E^{2}$ it is $E^{2}_{-12,14} \onto E^{2}_{-14,15}$ and on $E^{3}$ the spectral sequence degenerates. These differentials count Floer trajectories from $B_{3}^{\circ}$ to $B_{1}^{\circ}$ and $B_{2}^{\circ}$; there are no Floer trajectories from $B_{1}^{\circ}\sqcup B_{2}^{\circ}$ to $B_{3}^{\circ}$.
		
		For $\frac{b_{1}}{m_{1}}=\frac{-14}{6}$, $\frac{b_{2}}{m_{2}}=\frac{-13}{6}$, $\frac{b_{3}}{m_{3}}=\frac{-12}{6}$, we get an analogous spectral sequence, with the roles of $D_1$ and $D_2$ interchanged. 	For $\frac{b_{1}}{m_{1}}=\frac{-15}{6}$, $\frac{b_{2}}{m_{2}}=\frac{-13}{6}$, $\frac{b_{3}}{m_{3}}=\frac{-12}{6}$, the spectral sequence degenerates on $E^{4}$.
		
	\end{example}

\subsection{Isotopy to radius zero}\label{sec:isotopy-to-radius-zero}

We will now construct the \enquote{radius zero} graded abstract contact open book $(F,\lambda,\phi)$, and show that it is isotopic to our original one \eqref{eq:monodromy-acob}. We keep the notation and assumptions from Section \ref{sec:monodromy-spectral-sequence}. 

Write $\tilde{f}\de f\circ h$. Put $\tilde{\lambda}_{Y}=-d^{c}(\varrho\circ h)\in \Omega^{1}(X)$, i.e.\ $\tilde{\lambda}_{Y}$ is the suitable restriction of the pullback of the $1$-form $\lambda_{Z}$. 
Since $h|_{X\setminus D}$ is an isomorphism, the form $\tilde{\lambda}_{Y}$ makes $\tilde{f}|_{h^{-1}(N)}$ a Liouville fibration. Moreover, since $h$ is an isomorphism near the collar, the collar trivialization chosen in Section \ref{sec:monodromy-spectral-sequence} lifts to $X$. In particular, for each $z\in \D_{\delta}^{*}$ we have a graded abstract contact open book  $(\tilde{N}_{z},\tilde{\lambda}_{Y},\tilde{\phi}_{z})\de (h^{-1}(N_z),\tilde{\lambda}_{Y},h^{-1}\circ\phi_{z}\circ h)$ isomorphic to \eqref{eq:monodromy-acob}. We will now modify the form $\tilde{\lambda}_{Y}|_{X\setminus D}$ to a form $\lambda_{X}$ such that $d\lambda_{X}$ extends to a K\"ahler form on $X$.

Recall that, in formula \eqref{eq:H}, we have fixed an ample divisor $H$. Fix an integer $b>0$ such that $bH$ is very ample, and let $\jmath\colon X\into \P^{\dim |bH|}$ be the embedding given by $|bH|$. Let $\tfrac{1}{4\pi} \omega_{\mathrm{FS}} \in \Omega^{2}(X)$ be the pullback of the Fubini-Study form through $\jmath$. The basis of $H^{0}(\cO_{X}(bH))$ used to define $\jmath$ gives a hermitian metric on $\cO_{X}(bH)$, see \cite[Example 4.1.2(i)]{Huybrechts}.  Let $||\sdot ||$ be the corresponding norm. Choose a section $s$ of $\cO_{X}(bH)$ which is supported on $H$, i.e.\ with a pole of order $-bb_{i}>0$ along $D_{i}$ for each $i\in \cE$, and no zeros or poles elsewhere.  
Then by \cite[Examples 4.3.9(iii) and 4.3.12]{Huybrechts}, we have $\omega_{\mathrm{FS}}=2\imath\cdot \bar{\d}\d\log ||s||$. Since $d^{c}=\imath(\d-\bar{\d})$, see Section \ref{sec:symplectic-intro}, we have 
$dd^{c}=2\imath\cdot \bar{\d}\d$, so
\begin{equation*}
	\omega_{\mathrm{FS}}=d\lambda_{\mathrm{FS}},\quad \mbox{where}\quad
	\lambda_{\mathrm{FS}}\de d^{c}\log ||s||\in \Omega^{1}(X\setminus \Exc h).
\end{equation*}
We remark that the $1$-form $\lambda_{\mathrm{FS}}$ is, up to scaling, the same form as the one used in \cite[Example 5.14]{McLean}: the sign difference results from varying definitions of $d^{c}$, explained in Section \ref{sec:symplectic-intro}.
\smallskip

For $t\geq 0$ put $\lambda_t=\tilde{\lambda}_{Y}+t\cdot \lambda_{\mathrm{FS}}\in \Omega^{1}(X\setminus \Exc h)$, and put  $\omega_{t}\de d\lambda_{t}$. Then $\omega_{t}$ is a K\"ahler form on $X\setminus D$ for every $t\geq 0$, and on $X$ for every $t> 0$. In particular, since $f$ is holomorphic, these forms are fiberwise symplectic over $\D_{\delta}^{*}$. Now, the map $(\tilde{f},\pr_{[0,t_0]})\colon X\times [0,t_0]\to \C\times [0,t_0]$ fits into Setting \ref{basic-setting}, with $P=[0,t_0]$, $P'=\{0\}$. Hence after possibly shrinking $t_0>0$, we get a subset $M\subseteq X\times [0,t_0]$ such that the above map restricted to $M\setminus (D\times [0,t_0])$ is a Liouville fibration over $\D_{\delta}^{*}\times [0,t_0]$, with a collar trivialization which extends over $\D_{\delta}\times [0,t_0]$, and agrees over $\D_{\delta}\times \{0\}$ with the one chosen in Section \ref{sec:monodromy-spectral-sequence} to define \eqref{eq:monodromy-acob}. Taking for $\Upsilon$ the standard family of loops, the construction in Section \ref{sec:basic-setting} provides monodromy graded abstract contact open books $(M_{z,t},\lambda_{t},\phi_{z,t})$, see formula \eqref{eq:basic-acob}. 

Since $(M_{z,0},\lambda_{0},\phi_{z,0})=(\tilde{N}_{z},\tilde{\lambda}_{Y},\tilde{\phi}_{z})$, we infer that for every $z\in \D_{\delta}^{*}$ and every $t\in [0,t_0]$, the abstract contact open book $(M_{z,t},\lambda_{t},\phi_{z,t})$ is graded isotopic to \eqref{eq:monodromy-acob}.

Put $\lambda_{X}=\lambda_{t_0}$, $\omega_{X}=d\lambda_{X}$. Then $\omega_{X}$ extends to a K\"ahler form on $X$. Let $A$ be the A'Campo space associated to the function $\tilde{f}\colon X\to \C$, with natural smooth maps $\pi\colon A\to X$ and $\tilde{f}_{A}\colon A\to \C_{\log}$ as in Proposition \ref{prop:AXsmooth}, see diagram \eqref{eq:AX-diagram}. 

We note that the pullback $\pi^{*}\lambda_{X}$ extends to a $1$-form on $A$, i.e.\ assumption \eqref{eq:lambda_extends} of Setting \ref{basic-setting2} holds. To see this, we give a local description of $\lambda_{X}$, which will be useful in the proof of Proposition \ref{prop:monodromy}\ref{item:B_i-action}, too. Choose a fine chart $U_{X}$ (see Definition \ref{def:finechartz}) with associated index set $S$ (see \eqref{eq:index-set}), and a trivialization of $\cO_{X}(bH)|_{U_X}$. In this trivialization, we can write our section $s$ as $\xi' \cdot \prod_{i\in S}z_{i}^{bb_{i}}$ for some nonvanishing holomorphic function $\xi'$. The formula for $||\sdot||$ from \cite[Example 4.1.2(i)]{Huybrechts} shows that $\log ||s||=\sum_{i\in S}bb_{i}\log r_{i}+\xi$ for some $\xi\in \cC^{\infty}(U_X)$. Thus $\lambda_{\mathrm{FS}}|_{U_X}=d^{c}\log||s||=-\sum_{i\in S}bb_id\theta_i+d^{c}\xi$, see Example \ref{ex:J}. Since $\tilde{\lambda}_{Y}\in \Omega^{1}(X)$, we get
\begin{equation}\label{eq:lambdaA-locally}
	\pi^{*}\lambda_{X}|_{U_X}=-\sum_{i\in S}t_0bb_i\, d\theta_{i}+\pi^{*}\beta\quad\mbox{for some } \beta\in \Omega^{1}(U_X). 
\end{equation}
Because the functions $\theta_{i}$ are coordinates of each chart \eqref{eq:AC-chart} covering $\pi^{-1}(U_X)$, the formula \eqref{eq:lambdaA-locally} shows that $\pi^{*}\lambda_{X}$ extends to a smooth form on $A$, as claimed.

We are now in Setting \ref{basic-setting2}. Let $F=\tilde{f}_{A}^{-1}(0,1)\cap \pi^{-1}(\bar{W}_{X})$ be a radius-zero fiber, and for $\delta',\epsilon>0$ let $\lambda_{A}^{\delta',\epsilon}\in \Omega^{1}(A)$ be the $1$-form defined in \eqref{eq:lambdaACdelta}. By Corollary \ref{cor:isotopyisotopy-to-radius-zero}, there is an arbitrarily small  $\epsilon_0>0$ such that, for any $\delta'>0$, the abstract contact open book $(M_{z,t_0},\lambda_{t_0},\phi_{z,t_{0}})$ constructed above is isotopic to 
\begin{equation}\label{eq:radius-zero-acob}
	(F,\lambda,\phi),
\end{equation}
where $\lambda=\lambda_{A}^{\delta',\epsilon_0}|_{F}$, and $\phi\colon F\to F$ is the time-one flow of the monodromy vector field \eqref{eq:monodromy-vector-field}.
Moreover, this isotopy endows \eqref{eq:radius-zero-acob} with a grading.  We conclude that the natural monodromy abstract contact open book \eqref{eq:monodromy-acob} is graded isotopic to the radius-zero abstract contact open book \eqref{eq:radius-zero-acob}. The properties of the latter are listed in Proposition \ref{prop:monodromy} below.

\subsection{Dynamics at radius zero}\label{sec:monodromy-dynamics}

In Section \ref{sec:isotopy-to-radius-zero} above, we have constructed an isotopy joining the natural monodromy abstract contact open book \eqref{eq:monodromy-acob} with an abstract contact open book $(F,\lambda,\phi)$ defined after the formula  \eqref{eq:radius-zero-acob}. 
Dynamical properties of $\phi$ are summarized in the Proposition \ref{prop:monodromy}, which is a key step in the proof of Theorem \ref{theo:Zariski}.  We will state it using notation from Sections \ref{sec:monodromy-spectral-sequence} and \ref{sec:isotopy-to-radius-zero}, which we now briefly recall.

The fiber $F$ in \eqref{eq:radius-zero-acob} is the intersection of the radius-zero fiber $\tilde{f}_{A}^{-1}(0,1)$ with the preimage $\bar{W}$ of the sub-level set $\bar{W}_Y$ of a strictly plurisubharmonic function $\varrho\colon Y\to \R$ chosen in the beginning of Section \ref{sec:monodromy-spectral-sequence}. This sub-level set should be thought of as a compact \enquote{ball} around the singularities of $f^{-1}(0)$ and $Y$, not necessarily a small one. The subset $A_{i}^{\circ}\subseteq \d A$ is a  piece of the \enquote{radius-zero tube} $\d A$ lying over $D_{i}\setminus (D-D_{i})$, see the formula  \eqref{eq:stratification-pullback}. The manifold $B_{i}^{\circ}$ was introduced in Section \ref{sec:monodromy-spectral-sequence} as follows: if $D_{i}\subseteq \Exc h$ then $\nu_{i}\colon B_{i}^{\circ}\to  D_{i}\setminus (D-D_{i})$ is the $m_i$-fold covering described in \cite[\sec 2.3]{Denef_Loeser-Lefshetz_numbers}; and if $D_{i}$ is a proper transform of a component $C$ of $f^{-1}(0)$ then $B_{i}^{\circ}=\bar{W}_{Y}\cap C\setminus \operatorname{Bs} h^{-1}$, which should be thought of as the smooth part of $C$ intersected with the closed \enquote{ball} $\bar{W}_Y$. The integer $a_{i}$ is the discrepancy of $D_{i}$, see \eqref{eq:K} and $b_{i}$ is the coefficient of $D_i$ in the chosen ample divisor $H$, see \eqref{eq:H}: in particular, $b_i<0$ if $D_i\subseteq \Exc h$ and $b_i=0$ otherwise. The integer $b>0$ is chosen so that $bH$ is very ample; and $t_0,\epsilon_0>0$ are arbitrarily small positive numbers chosen in Section \ref{sec:isotopy-to-radius-zero} in such a way that $\epsilon_0$ can be taken arbitrarily small compared to $t_0$.

\begin{prop}\label{prop:monodromy}
	Let $(F,\lambda,\phi)$ be the graded abstract contact open book \eqref{eq:radius-zero-acob}. Then 
	\begin{equation}\label{eq:fixed-points}
		\Fix \phi^{m}=\bigsqcup_{m_{i}|m}B_{i},
		\quad \mbox{where} \quad
		B_{i}= \overline{A_{i}^{\circ}}\cap F.
	\end{equation}
	Moreover, for each $i$ such  that $m_{i}|m$, every connected component of $B_{i}$ is a codimension zero family of fixed points of $\phi^m$, see Definition \ref{def:0codim}. Write $\d B_i=\d^F B_i\sqcup \d^{+} B_i \sqcup \d^{-}B_{i}$, where $\d^{F}B_i=F\cap \d B_i$ and $\d^{\pm}B_{i}$ is the disjoint union of $\d^{\pm} B_{i}'$ for all connected components $B_{i}'$ of $B_{i}$, see formula \eqref{eq:boundarysplitting1}. Then the following holds.
	\begin{enumerate}
		\item \label{item:B_i-boundary} We have $\d^{-} B_i=\emptyset$ and $\partial^F B_i= B_{i}\cap\partial \bar{W}$.
		\item \label{item:B_i-covering} There is a diffeomorphism $\kappa_i\colon B_{i}\setminus \d^{+}B_{i}\to B_{i}^{\circ}$ such that $\nu_{i}\circ \kappa_i=\pi|_{F}$.
		\item \label{item:B_i-CZ} The Conley--Zehnder index of each point in $B_{i}$ equals $\CZ(B_i)=2\frac{m}{m_{i}}(a_{i}+1)-2m$.
		\item \label{item:B_i-action} We can choose the action function $\ac\colon F\to \R$, see \eqref{eq:exact-symplectiomorphism}, 
		so that $\ac|_{B_i}=(t_0b\cdot b_{i}+\epsilon_0)\frac{m}{m_i}$.
	\end{enumerate}	
\end{prop}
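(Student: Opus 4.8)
The strategy is to reduce everything to a local computation in the smooth charts \eqref{eq:AC-chart} of the A'Campo space, using the explicit formula \eqref{eq:monodromy-vector-field} for the monodromy vector field $V\de\bigl(\sum_{i\in S}m_i\zeta(u_i)\bigr)^{-1}\sum_{i\in S}\zeta(u_i)\tfrac{\d}{\d\theta_i}$. The guiding principle, already used throughout Sections \ref{sec:ACampo}--\ref{sec:symplectic}, is that on $\d A$ all choices made in the construction become irrelevant, so it suffices to fix a fine chart $U_X$ with index set $S$ and work there. First I would prove the fixed-point formula \eqref{eq:fixed-points}. Over a point of $A_I^\circ$, using $u_i=0$ for $i\notin I$ (Lemma \ref{lem:intro}\ref{item:intro-w-zero}) and the fact that $\zeta$ and all its derivatives vanish at $0$, the flow of $V$ rotates each angular coordinate $\theta_i$, $i\in I$, with angular speed $\zeta(u_i)\bigl(\sum_{j\in I}m_j\zeta(u_j)\bigr)^{-1}$; since $\sum_{i\in S}m_iw_i=1$ and on the deepest stratum all $w_i$ with $i\in I$ are comparable, the time-one map returns $\theta_i$ to itself iff $\sum_{j\in I}m_j\zeta(u_j)$ divides $m_i\zeta(u_i)$ for all $i$; a short analysis (exactly as in A'Campo \cite{A'Campo}, cf.\ the interpolating picture Figure \ref{fig:monodromy}) shows $\phi^m$ fixes a point of $A_I^\circ$ iff $m_i\mid m$ for every $i\in I$. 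Hence $\Fix\phi^m\cap\d A=\bigsqcup_{i: m_i\mid m}A_i^\circ$ intersected with the closures gives $\bigsqcup_{m_i\mid m}\overline{A_i^\circ}\cap F$; away from $\d A$ the form $\omega_A^{\delta',\epsilon_0}$ is a genuine symplectic fibration over $\D^*_{\delta,\log}$ with monodromy the lift of the angular field, which has no fixed points for $r>0$ because that monodromy moves points along the $\S^1$-factor — more carefully one uses that near $F$, for $\epsilon_0$ small, $\phi$ agrees with the A'Campo topological monodromy, whose only fixed points are \eqref{eq:fixedsetintro}; so only the $B_i$ with $m_i\mid m$ survive.

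Next I would produce, for each such $i$, the local Hamiltonian exhibiting $B_i$ as a codimension zero family of fixed points. Working in a chart \eqref{eq:AC-chart} adapted so that $i$ is in its index set and $w_i>\tfrac1{n+1}$, the subset $B_i$ is cut out by $\{t_i=0\}$ (equivalently $\bar v_i=-u_i$), and $B_i\cap U$ is a codimension zero submanifold with corners of $\d A\cap F$; the normal directions are controlled by the coordinates $\bar v_j$ for the $D_j$ meeting $D_i$. Using Lemma \ref{lem:TdA}\ref{item:omega-dtheta}, $\omega_A^{\delta',\epsilon_0}(\cdot,\tfrac{\d}{\d\theta_j})=\epsilon_0\,d\bar v_j$ on $\d A$, so the vector field $V$ is the Hamiltonian vector field (with respect to the restricted fiberwise form) of $H_{B_i}\de\bigl(\sum_{l\in S}m_l\zeta(u_l)\bigr)^{-1}\sum_{l\in S}\epsilon_0 G_l$ where $G_l$ is a primitive of $\zeta(u_l)\,d\bar v_l$ chosen so that $G_l$ vanishes exactly where $u_l=0$; concretely one takes $G_l$ built from the function $s\mapsto s-\eta(\cdot)$ so that $H_{B_i}^{-1}(0)$ is precisely $\bigcap_{l\in I}\{t_l=0\}$. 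Patching these local Hamiltonians over the cover $\{U_i\}$ by the partition-of-unity-type argument already used for $\bar v_i$, and invoking Lemma \ref{lem:cornerelimination} to pass to a corner-free representative, gives the codimension zero family structure. For \ref{item:B_i-boundary}: $\partial^-B_i=\emptyset$ because $H_{B_i}\geq 0$ — indeed each $G_l\geq 0$ since $\bar v_l$ and $u_l$ move together and $\bar v_l\in[-1,1)$ — and $\partial^F B_i=B_i\cap\partial\bar W$ is immediate since $\partial\bar W=\pi^{-1}(\partial\bar W_X)$ meets $F$ transversally by construction. For \ref{item:B_i-covering}: the natural map $A_i^\circ\to X_i^\circ$ is an $\S^1$-bundle (Proposition \ref{prop:AXsmooth}\ref{item:AX-stratification}), and restricting to the radius-zero fiber $\tilde f_A^{-1}(0,1)$ picks out, by the defining equation $\sum m_l\theta_l=0$, exactly the $m_i$-fold cyclic cover $B_i^\circ$ of Denef--Loeser \cite{Denef_Loeser-Lefshetz_numbers}; the diffeomorphism $\kappa_i$ is this identification, and $\nu_i\circ\kappa_i=\pi|_F$ by definition of $\nu_i$.

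For the Conley--Zehnder index \ref{item:B_i-CZ} I would use the grading coming from a nonvanishing section of $K_{Y\setminus f^{-1}(0)}$ transported through the isotopy (Corollary \ref{cor:gradedunicitymon}, Remark \ref{rem:c1}\ref{item:c1_Kahler}). This is the same computation as in McLean \cite[Example 5.14]{McLean}, which I would follow: the monodromy at radius zero is, near $B_i$, the time-one flow of a small time-independent Hamiltonian composed with a rotation by $\tfrac{2\pi}{m_i}$ about $D_i$, so by Lemma \ref{lem:CZ-Hamiltonian}\ref{item:CZ_phi+Morse} the index is $d-\dim_{\mathbb C}B_i^\circ$ (the Morse-type contribution from the trivial family, which is $0$ for the identity on $B_i$) plus the contribution of the rotation, computed from how the chosen volume form — which has a zero/pole of order $a_i$ along $D_i$ inside $K_X$ and a pole of order $bb_i$ coming from $\lambda_{\mathrm{FS}}$ — is twisted under the $m$-th iterate; tracking the Maslov index of the resulting loop of symplectic matrices (rotation by $\tfrac{2\pi m}{m_i}$ in the normal direction weighted by $a_i+1$, and the shift from iterating $m$ times) yields $\CZ(B_i)=2\tfrac{m}{m_i}(a_i+1)-2m$. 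For the action \ref{item:B_i-action}, I would integrate $\lambda=\lambda_A^{\delta',\epsilon_0}|_F$ along the $\phi^m$-orbit through a point of $B_i$; using the local description \eqref{eq:lambdaA-locally}, $\pi^*\lambda_X=-\sum_{l\in S}t_0bb_l\,d\theta_l+\pi^*\beta$, plus the $\epsilon_0\lambda_E^{\delta'}$ term which on $B_i$ contributes $-\epsilon_0\alpha_i$ up to a pullback (Remark \ref{rem:vanishing_restriction}), the orbit winds $m/m_i$ times around the $\theta_i$-circle, giving $\ac|_{B_i}=(t_0 b\,b_i+\epsilon_0)\tfrac{m}{m_i}$ after choosing the additive constant appropriately; the $\pi^*\beta$ and $d\alpha$ terms integrate to zero over the orbit since they are pulled back from $X$ and the orbit collapses under $\pi$ (Lemma \ref{lem:d-theta}\ref{item:dalpha}, Lemma \ref{lem:coordinates-on-stratum}\ref{item:splitting-on-stratum}).

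\textbf{Main obstacle.} The hard part is the Conley--Zehnder computation \ref{item:B_i-CZ}: one must set up the grading consistently, verify that transporting it through the isotopy of Section \ref{sec:isotopy-to-radius-zero} does not change indices (this is where Corollary \ref{cor:gradedunicitymon} is essential), and then carefully compute the Maslov index of the $m$-fold iterate of a "rotation by $\tfrac{2\pi}{m_i}$ plus small Hamiltonian" near a manifold of fixed points with corners — disentangling the normal rotation from the tangential (trivial) directions and getting the discrepancy $a_i$ and the ample-twist $bb_i$ to enter correctly. A secondary technical nuisance is that $B_i$ genuinely has corners (cf.\ Figure \ref{fig:v3}), so the clean statements about $\partial^\pm B_i$ and the codimension-zero-family structure require the corner-elimination Lemma \ref{lem:cornerelimination} and a partition-of-unity patching of the locally-defined Hamiltonians, which must be done so that the global $H_{B_i}$ still has $B_i$ as its exact zero locus and is still nonnegative.
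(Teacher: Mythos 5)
Your overall strategy --- reduce to the charts \eqref{eq:AC-chart}, read off rotation numbers from \eqref{eq:monodromy-vector-field}, exhibit a local time-independent Hamiltonian near each $B_i$, compute $\CZ$ via \cite[Lemma A.8]{McLean} and the action by integrating the Liouville form --- is indeed the paper's strategy. But two steps, as written, contain genuine errors.

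First, the fixed-point formula. Your criterion \enquote{$\phi^m$ fixes a point of $A_I^\circ$ iff $m_i\mid m$ for every $i\in I$} is false and, were it true, would contradict \eqref{eq:fixed-points}: for $\#I\geq 2$ it would produce fixed points over the interior of the simplex $\Delta_I$, which lies in no $\overline{A_i^{\circ}}$. At a point of $A_I^\circ$ with all $u_i>0$, $i\in I$, the rotation number of $\theta_j$ under $\phi^m$ is the interpolated quantity $m\zeta(u_j)\bigl(\sum_{i\in I}m_i\zeta(u_i)\bigr)^{-1}$, which is generically not an integer even when every $m_i$ divides $m$. What actually rules out these points is the $m$-separating hypothesis \eqref{eq:m-separatedness}, which you never invoke: choosing $j\in I$ with $w_j$ minimal gives $0<m\ell_j\leq m\bigl(\sum_{i\in I}m_i\bigr)^{-1}<1$, so $\theta_j$ is moved by a nonzero angle strictly less than a full turn. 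Without \eqref{eq:m-separatedness} the proposition is simply false, so this is not a cosmetic omission. (Also, the identity is $\sum_{i\in S}w_i=1$, not $\sum_{i\in S}m_iw_i=1$.)

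Second, the Hamiltonian. The vector field of \eqref{eq:monodromy-vector-field} is not tangent to $F$ (it lifts $\tfrac{\d}{\d\theta}$), so it cannot be the Hamiltonian vector field of any function on $F$ with respect to the fiberwise form; moreover your proposed zero locus $\bigcap_{l\in I}\{t_l=0\}$ is not $B_i$, which is $\{u_i=1\}$. The missing idea is to subtract the integer rotation: $X\de mX_{\theta}-\tfrac{m}{m_i}\tfrac{\d}{\d\theta_i}$ is tangent to $F$, has the same time-one flow as $mX_{\theta}$ because $\tfrac{m}{m_i}\in\Z$, and by Lemma \ref{lem:TdA}\ref{item:omega-dtheta} satisfies $\omega\AC^{\delta',\epsilon_0}(X,\sdot)=\tfrac{m}{m_i}\epsilon_0\,d\bar{v}_i$ on $TF$; hence the single global function $H_i=\tfrac{m}{m_i}\epsilon_0(\bar{v}_i+1)$ works on the neighborhood $\bigcup U_i\cap F$, with no partition-of-unity patching, and its nonnegativity gives \ref{item:B_i-boundary} at once. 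Two smaller points: the ample coefficients $b_i$ play no role in the Conley--Zehnder index (they enter only the action), and literally following \cite[Example 5.14]{McLean} would miss the $-2m$ term, cf.\ Remark \ref{rem:degree-shift}. For \ref{item:B_i-action} the paper integrates $(\phi^m)^{*}\lambda-\lambda$ along paths in $F$ joining base points of different $B_i$, using the identity \eqref{eq:inntegration-by-parts} over double intersections; your route via orbit integrals can be made to work, but the identification of the orbit integral with $\ac$ up to a single global constant is exactly what needs proving and is not supplied.
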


\begin{proof}
	Recall that $\phi^{m}\colon F\to F$ is the time $m$ flow of the symplectic lift of the unit angular vector field on $\S^1=\d \C_{\log}$. Fix a fine chart $U_{X}\subseteq X$, i.e.\ one satisfying the statement of Lemma \ref{lem:smoothchart}, let $U=\pi^{-1}(U_X)$, and let $S$ be the associated index set \eqref{eq:index-set}. In local coordinates \eqref{eq:AC-chart} on $U$, this lift is given by the formula \eqref{eq:monodromy-vector-field}. Therefore, $\phi^{m}|_{U}$ is given by
	\begin{equation}\label{eq:monodromy}
		\phi^{m}\colon \theta_{j}\mapsto \theta_{j}+m \ell_{j},
		\quad\mbox{where}\quad 
		\ell_{j}=\frac{\zeta(u_j)}{\sum_{i\in S}m_{i}\zeta(u_i)}
	\end{equation}
	In other words, $\phi$ is a rotation by an angle $2\pi m\ell_{j}$ about the $j$-th axis, see Notation \ref{not:S1}. Note that by Lemma \ref{lem:intro}\ref{item:intro-u=ubar}, the values of $\ell_j$ do not depend on the choice of the chart.
	
	To prove the equality \eqref{eq:fixed-points}, we use the $m$-separatedness assumption \eqref{eq:m-separatedness}, cf.\  \cite[p.\ 242]{A'Campo}.
	
	First, we claim that $\Fix \phi^{m}\subseteq \bigsqcup_{i\in \cP\cup\cE} B_{i}$. To prove this, take $x\in F$ such that $x\not\in \bar{A}_{i}^{\circ}$ for any $i\in \cP\cup\cE$. Put $I=\{i\in S: w_i(x)>0\}$. By Proposition \ref{prop:AX-topo}\ref{item:top-S1-bundle}, $x\in \bar{A}_{I}^{\circ}$, so $\# I\geq 2$ by assumption. Choose $j\in I$ such that $w_i(x)\geq w_j(x)$ for all $i\in I$. 
	Since both functions $\eta$ and $\zeta$ are increasing, we infer that $\zeta(u_i(x))\geq \zeta(u_j(x))>0$ for all $i\in I$, so $\frac{\zeta(u_i(x))}{\zeta(u_j(x))}\geq 1$ if $i\in I$ and $\frac{\zeta(u_i(x))}{\zeta(u_j(x))}=0$ if $i\not\in I$. Hence 
	\begin{equation*}
		0<m\ell_{j}(x)=m\left(\sum_{i\in S}m_{i}\frac{\zeta(u_i(x))}{\zeta(u_j(x))}\right)^{-1}\leq m\left(\sum_{i\in I} m_{i}\right)^{-1}\overset{\mbox{\tiny{\eqref{eq:m-separatedness}}}}{<} 1,
	\end{equation*}
	In particular, $m\ell_{j}(x)\not\in \Z$. Formula \eqref{eq:monodromy} shows that $x\not\in \Fix\phi^{m}$, as claimed.
	
	Therefore,  $\Fix \phi^{m}\subseteq \bigsqcup_{i\in \cP\cup\cE} B_{i}$. Take $x\in B_{i}$. By Proposition \ref{prop:AX-topo}\ref{item:top-S1-bundle} we have $w_{i}(x)=1$ and $w_{j}(x)=0$ for all $j\neq i$. Thus  $m\ell_{i}=\tfrac{m}{m_i}$ and $\ell_{j}=0$ for $j\neq i$, so formula \eqref{eq:monodromy} shows that $x\in \Fix\phi^{m}$ if and only if $m_i|m$. This ends the proof of \eqref{eq:fixed-points}. 
	\smallskip
	
	Now, we claim that each $B_{i}$ is a codimension zero family of fixed points, see Definition \ref{def:0codim}. Say that $i=1$. First, we claim that $B_{1}\subseteq F$ is a codimension zero submanifold with corners.
	
	Fix a point $x\in B_1$, let $U_{X}$ be a fine chart around $\pi(x)$, let $S$ be its index set \eqref{eq:index-set}, and let $U=\pi^{-1}(U_X)$. Recall that by Lemma \ref{lem:intro}\ref{item:intro-u=ubar}, we have $\bar{u}_{j}|_{U\cap F}=u_{j}|_{U\cap F}$ for $j\in S$, 
	where $\bar{u}_j$ 
	are global functions introduced in \eqref{eq:def-vbar-ubar-mu}, and $u_j$ are the local ones, introduced in \eqref{eq:def-w_i-u_i}. By Proposition \ref{prop:AX-topo}\ref{item:top-S1-bundle}, we have $\bar{A}_{1}^{\circ}\cap U=\{u_{1}=1\}$. 	In particular, $\bar{A}_{1}^{\circ}\cap U$ is contained in the open subset $U_{1}\subseteq U$ introduced in \eqref{eq:Ui}.  There, we have a smooth coordinate system $(g,(\bar{v}_{i})_{i\in S\setminus \{1\}},\rest)$, see \eqref{eq:AC-chart}, where $U_1\cap \d A=\{g=0\}$. 
	
	By Lemma \ref{lem:intro}\ref{item:intro-sum}, we have $\bar{A}_{1}^{\circ}\cap U=\{u_1=1\}=\{u_{i}=0: i\in S\setminus \{1\}\}$. For $x\in U\cap \d A$, we have $u_{i}(x)=0$ if and only if $\bar{v}_{i}(x)\geq 0$. Indeed, if $u_i(x)=0$ then by Lemma \ref{lem:intro}\ref{item:intro-independence} $u_{i}^{p}(x)=0$ for every $p\in R$ such that $\tau^{p}(x)\neq 0$, which by \eqref{eq:def-vbar-ubar-mu} implies that $\bar{v}_{i}(x)\geq 0$. Conversely, if $\bar{v}_{i}(x)\geq 0$ then $v_{i}^{p}(x)\geq 0$ for some $p\in R$, which by Lemma \ref{lem:Ui-simple}\ref{item:Ui-cap-dA} implies that $u_{i}^{p}(x)=0$, so again by  Lemma \ref{lem:intro}\ref{item:intro-independence} we get $u_{i}(x)=0$. We conclude that the subset $\bar{A}_{1}^{\circ}\cap U_1\subseteq \d A\cap U_1$ is given by the inequalities $\{\bar{v}_{i}\geq 0: i\in S\setminus \{1\}\}$, see Figure \ref{fig:v3}.  	
	The fiber $F\cap U_1$ is given in $\d A\cap U_1$ by $\sum_{i\in S}m_{i}\theta_{i}=0$. Therefore, $B_{1}\subseteq F$ is a codimension zero submanifold with corners, as claimed.
	\smallskip
	
	Now, according to Definition \ref{def:0codim}, we need to find a neighborhood $N_1$ of $B_1$, and a time independent Hamiltonian $H_{1}\colon N_1\to \R$ such that $\phi|_{N_1}=\psi_{1}^{H_1}$. Define $N_{1}\subseteq F$ as the union of all the sets $U_{1}\cap F$, where $U_1$ is the set \eqref{eq:Ui} defined for an adapted chart $U_X$, and $U_X$ ranges through all fine charts meeting $D_1$. We have seen that $N_{1}$ is a neighborhood of $B_{1}$ in $F$. We claim that
	\begin{equation*}
	 H_{1}\de \tfrac{m}{m_1}\epsilon_0(\bar{v}_{1}+1)|_{N_1}\colon N_{1}\to [0,\infty)	
	\end{equation*}
	is the required time-independent Hamiltonian.
	
	To prove this, once again we restrict to a chart $U_{1}$ with coordinates \eqref{eq:AC-chart}. There, the monodromy $\phi^{m}$ is the time one-flow of a vector field $mX_{\theta}$, where $X_{\theta}$ is the symplectic lift of $\tfrac{\d}{\d \theta}$, given by the formula \eqref{eq:monodromy-vector-field}. Recall that $\tfrac{\d}{\d \theta_1}$ is the coordinate vector field of the chart \eqref{eq:AC-chart}. Put
	\begin{equation*}
		X_1\de mX_{\theta}-\tfrac{m}{m_1}\tfrac{\d}{\d \theta_{1}}.
	\end{equation*}
	We claim that $X_1$ is the Hamiltonian vector field $X^{H_1}$. Since $\theta_{*}(X_{\theta})=\tfrac{\d}{\d \theta}$ by definition of $X_{\theta}$, we have $\theta_{*}(X_1)=0$, so $X_1$ is tangent to the fiber $F$. Because $X_{\theta}$ is orthogonal to $F$, in the tangent space to $F$ we have
	\begin{equation*}
		\omega\AC^{\delta',\epsilon_0}(X_1,\sdot)=\omega\AC^{\epsilon_0}(-\tfrac{m}{m_1}\tfrac{\d}{\d\theta_1},\sdot )=\tfrac{m}{m_1}\epsilon_0 d\bar{v}_1=dH_{1},
	\end{equation*}
	where the second equality holds by Lemma \ref{lem:TdA}\ref{item:omega-dtheta}. Thus $X_{1}=X^{H_1}$, as claimed. 
	
	To verify the conditions of Definition \ref{def:0codim}, we need to check that $\phi^{m}|_{N_1}=\psi^{H_1}_{1}$. Recall that $\phi^{m}$ is the time one flow of $mX_{\theta}$, so it differs from $\psi^{H_1}_{1}$ by a translation by $\tfrac{m}{m_1}$ in the $\theta_{1}$-direction. Since $\tfrac{m}{m_1}\in \Z$, these time one flows are equal, as needed.
	\smallskip
	
	It remains to verify the properties \ref{item:B_i-boundary}--\ref{item:B_i-action}. Part \ref{item:B_i-boundary} is clear since the Hamiltonian $H_{i}$ defined above for each codimension zero family $B_i$ satisfies $H_i=\frac{m}{m_1}\epsilon_0(\bar{v}_{i}+1)\geq 0$ and $H_{i}|_{B_{i}}\equiv 0$. 
	\smallskip
	
	\ref{item:B_i-covering} 
	Locally on $A_{i}^{\circ}$, the fiber $F$ is given by $\{m_i\theta_{i}=0\}$, and the restriction of  $\pi$ collapses the angular coordinate $\theta_{i}$. This yields the required covering, cf.\ Remark \ref{rem:vanishing_restriction}.
	\smallskip
	
	\ref{item:B_i-CZ} Say $i=1$. Recall that the Conley--Zehnder index is constant in a codimension zero family of fixed points, so it is enough to compute it for a single point $x\in F\cap A_{1}^{\circ}$. Choose a fine a chart $U_{X}\subseteq X$ around $\pi(x)$ whose associated index set is $\{1\}$, i.e.\ $U_{X}$ does not meet $D\setminus D_1$. The smooth coordinates \eqref{eq:AC-chart} on $U\cap \d A$ are $(\theta_{1},z_{i_2},\dots,z_{i_n})$: we have skipped $g$ since it cuts out $\d A$. We will compute $\CZ(x)$ using \cite[Lemma A.8]{McLean}. To do this, recall that the grading of $(F,\lambda,\phi)$ is induced by isotopy from the one on $(M_{z,t_0},\lambda_z,\phi_{z,t_0})$. The latter is given by a section $s\vert$ of the relative canonical bundle $K\vert$, such that $s\vert\otimes d\tilde{f}$ maps under \eqref{eq:K-vert} to the section $s_{K}$ of $K_{X\setminus D}$ chosen in Section \ref{sec:basic-grading}. In this case, $s_{K}$ is simply the pullback of the holomorphic volume form on $\C^{n}$, see Example \ref{ex:Milnor-fibration}. 
	
	Since $s_{K}$ has zero of order $a_1\in \Z$ along $D_1$, we can choose the coordinates $(z_{i_2},\dots,z_{i_{n}})$ of our adapted chart $U_X$ so that $s_{K}|_{U_X}=z_{1}^{a_1}\, dz_{1}\wedge dz_{i_2}\wedge \dots \wedge dz_{i_n}$. We have $df|_{U_X}=m_1z_{1}^{m_{1}-1}\,dz_{1}$, so $s\vert|_{U_{X}}=\frac{1}{m_1}z_{1}^{a_{1}-m_{1}+1}\, dz_{i_2}\wedge \dots \wedge dz_{i_n}$. 
	By \eqref{eq:monodromy}, the symplectomorphism $\phi^{m}$ is given by $\theta_1\mapsto \theta_1+m\ell_1$, where $\ell_1=\frac{1}{m_1}$. It follows that the winding number used in \cite[Lemma A.8]{McLean} equals $-(a_{1}-m_{1}+1)\cdot \frac{m}{m_{1}}=-\frac{m}{m_1}(a_1+1)+m$, so $\CZ(x)=2\frac{m}{m_{1}}(a_{1}+1)-2m$, as needed.
	\smallskip
	
	\ref{item:B_i-action} 	
	For every $i\in \cP\cup \cE$, choose a point $x_{i}\in A_{i}^{\circ}\cap F$. Recall that the function $\ac$ is defined up to an additive constant, and is constant in each $B_i$. Therefore, to prove \ref{item:B_i-action} it is sufficient to show that for every $i,j\in \cP\cup \cE$ we have
	\begin{equation*}
		\ac(x_j)-\ac(x_i)=(t_0bb_{j}+\epsilon_0)\frac{m}{m_j}-(t_0bb_{i}+\epsilon_0)\frac{m}{m_i}.
	\end{equation*}
	By definition, $\ac$ satisfies $(\phi^m)^{*}\lambda-\lambda=-d\ac$. Therefore, it is enough to prove that for some path $\gamma\colon [0,1]\to F$ such that $\gamma(0)=x_i$, $\gamma(1)=x_j$, we have
	\begin{equation}\label{eq:action-computation}
	\int_{\gamma}(\phi^{m})^{*}\lambda-\lambda=
	-(t_0bb_{j}+\epsilon_0)\frac{m}{m_j}+(t_0bb_{i}+\epsilon_0)\frac{m}{m_i}.
	\end{equation}	
	We can choose $\gamma$ in such a way that it avoids triple intersections, i.e.\ $\gamma([0,1])\cap \bigsqcup_{\#I\geq 3} A_{I}^{\circ}=\emptyset$. Then $\gamma$ can be split into segments with images lying over adapted charts whose associated index sets \eqref{eq:index-set} have at most two elements. Thus considering each piece of $\gamma$ separately, it is sufficient to prove \eqref{eq:action-computation} for $\gamma\colon [0,1]\to U$, where $U$ is the preimage of an adapted chart $U_{X}$ with index set $S\de \{i,j\}$.

%
%

By the formula \eqref{eq:lambdaA-locally}, we have $\pi^{*}\lambda_{X}|_{U_{X}}=-\sum_{l\in S} t_0bb_{l}\, d\theta_{l}+\pi^{*}\beta'$ for some $\beta'\in \Omega^{1}(U_X)$. Using Lemma \ref{lem:d-theta} and \ref{lem:TdA}\ref{item:ort-Z-vj}, we conclude that $\lambda|_{U\cap F}=\sum_{l\in S}(-t_0bb_{l}+\epsilon_0 \bar{v}_l)\, d\theta_{l}+\pi^{*}\beta$ for some $\beta\in \Omega^{1}(U_X)$. The formula \eqref{eq:monodromy} shows that $\pi|_{F}\circ \phi=\pi|_{F}$, so $(\phi^{m})^{*}(\pi^{*}\beta)-(\pi^{*}\beta)=0$. Now by \eqref{eq:monodromy} we have
	\begin{equation}\label{eq:action-integral}
		\int_{\gamma} (\phi^{m})^{*}\lambda-\lambda = \int_{\gamma} \textstyle\sum_{l\in S}(-t_0bb_{l}+\epsilon_0 \bar{v}_l)m\, d\ell_{l}.
	\end{equation}
	If $S=\{i\}$ then the image of $\gamma$ is contained in $A_{i}^{\circ}$, where by \eqref{eq:monodromy} the function $\ell_{i}$ is constant. Hence the above integral is zero, which proves \eqref{eq:action-computation} in this case. 
	
	Assume that $S=\{i,j\}$ for $i\neq j$. We claim that, along $\gamma$, we have the equality
	\begin{equation}\label{eq:inntegration-by-parts}
		\bar{v}_i d\ell_i+\bar{v}_j d\ell_j=d(\bar{v}_i\ell_i+\bar{v}_j\ell_j).
	\end{equation}
	Inside $A_{i}^{\circ}$, the functions $\bar{v}_i$, $\ell_i$ are constant, and $\ell_j=0$, so $\bar{v}_{j}\ell_j=0$ is constant, too. Hence both forms are zero in $A_{i}^{\circ}$, and by symmetry the same holds in $A_{j}^{\circ}$. It remains to prove the equality \eqref{eq:inntegration-by-parts} in $A^{\circ}_{i,j}$.
	
	Fix $l\in \{i,j\}$. By Lemma \ref{lem:intro}\ref{item:intro-u=ubar}, in $A_{i,j}^{\circ}$ we have $\bar{v}_l=-u_{l}$. Recall that, by definition \eqref{eq:def-w_i-u_i} of $u_l$ we have $w_{l}=\eta^{-1}(u_{l})$, so $dw_{l}=\zeta(u_l)\, d u_l$ by definition of $\zeta$. We conclude that $\zeta(u_l)d\bar{v}_l=-\zeta(u_l)du_l=-dw_l$. By Lemma  \ref{lem:intro}\ref{item:intro-sum} we have $w_i+w_j=1$, so  $\zeta(u_i)d\bar{v}_i+\zeta(u_j)d\bar{v}_j=-dw_i-dw_j=0$. Hence $\ell_id\bar{v}_i+\ell_j d\bar{v}_j=0$ by definition \eqref{eq:monodromy} of $\ell_i$, $\ell_j$. This proves \eqref{eq:inntegration-by-parts}.
	
	Now, the equality \eqref{eq:inntegration-by-parts} implies that the integral \eqref{eq:action-integral} is equal to the difference of values of $\sum_{l\in S}(-t_0bb_l+\epsilon_0 \bar{v}_l)m\ell_l$ at the endpoints $x_j$, $x_i$ of $\gamma$. Since $x_j\in A_{j}^{\circ}$, we have $\ell_{i}(x_j)=0$, and similarly $\ell_{j}(x_i)=0$. Moreover, $\bar{v}_{j}(x_j)=\bar{v}_i(x_i)=-1$ and $\ell_{j}(x_j)=\frac{1}{m_j}$, $\ell_{i}(x_i)=\frac{1}{m_i}$. Thus
	\begin{equation*}
	\int_{\gamma} (\phi^{m})^{*}\lambda-\lambda = 
	(-t_0bb_j-\epsilon_0)\frac{m}{m_i}-(-t_0bb_i-\epsilon_0)\frac{m}{m_i}
	\end{equation*}	
	as claimed in the formula \eqref{eq:action-computation}.
\end{proof}

\begin{proof}[Proof of Proposition \ref{prop:spectral-sequence-monodromy}]
	The monodromy abstract contact open book \eqref{eq:monodromy-acob} is graded isotopic to \eqref{eq:radius-zero-acob}. Hence by Proposition \ref{prop:isotopy_invariance}, their fixed point Floer homology groups are the same, i.e.\ $\HF_{*}(\phi_z,+)=\HF_{*}(\phi,+)$. By Proposition \ref{prop:monodromy}, $\phi$ satisfies the assumptions of Proposition \ref{prop:spectral-sequence}, so we get a spectral sequence converging to $\HF_{*}(\phi,+)$, whose first page $E_{p,q}^{1}$ is given by \eqref{eq:spectral-sequence-first-page}.
	
	By Proposition \ref{prop:monodromy}\ref{item:B_i-action}, we can choose the action function $\ac$ for $\phi$ in such a way that $\ac|_{B_{i}}=(t_0 bb_i+\epsilon_0)\frac{m}{m_i}$ for every $i\in S_{m}$. Moreover, the number $\epsilon_0>0$ can be chosen arbitrarily small compared to $t_0 b>0$. Therefore, we can choose the indexing function $\iota \colon S_{m} \to \Z$ in Proposition \ref{prop:spectral-sequence} to be $\iota(i)=\frac{m}{m_i}b_{i}$. Hence the indexing set $\{i: \iota(i)=p\}$ in \eqref{eq:spectral-sequence-first-page} is the set $S_{m,p}$ defined in \eqref{eq:Sm}. Substituting to \eqref{eq:spectral-sequence-first-page} the formula $\CZ(B_i)=2\frac{m}{m_i}(a_i+1)-2m$ from Proposition \ref{prop:monodromy}\ref{item:B_i-CZ}, we get 
	\begin{equation*}
		E_{p,q}^{1}=\bigoplus_{i\in S_{m,p}}
		H_{n-1+p+q+2\frac{m}{m_i}(a_i+1)-2m}(B_{i},\d^{+} B_{i};\Z_2).
	\end{equation*}  
	By definition, $B_{i}$ is compact, and by Proposition \ref{prop:monodromy}\ref{item:B_i-covering}, the manifold $B_{i}\setminus \d^{+}B_{i}$ is diffeomorphic to $B_{i}^{\circ}$. Thus $H_{*}(B_i,\d^{+}B_{i};\Z_2)=H_{*}^{BM}(B_{i}^{\circ})$, as required by the formula \eqref{eq:spectral-sequence-monodromy}.
\end{proof}

Remarks \ref{rem:McLean-5.41}, \ref{rem:degree-shift} and \ref{rem:McLean} below compare our results with the ones of  \cite{McLean}.

\begin{remark}
	\label{rem:McLean-5.41}
	Assume that $Y=\C^n$, $h\colon X\to \C^{n}$ is an $m$-separating resolution of an isolated singularity $0\in f^{-1}(0)$, and $W_Y=\B_{\epsilon}$ is a Milnor ball. Then, Proposition \ref{prop:monodromy} shows that the radius zero monodromy satisfies almost the same dynamical properties as McLean's \emph{model resolution}, see \cite[Theorem 5.41]{McLean}.
	
	The key difference is that our monodromy has codimension zero family of fixed points corresponding to the proper transform of $f^{-1}(0)$. If $W_Y$ is a Milnor ball, they can be avoided by a small perturbation near the boundary, as done in loc.\ cit. However, to prove Theorem \ref{theo:Zariski} we will need to apply Proposition \ref{prop:monodromy} in more generality (namely, keeping the radius of the ball fixed within the family, see Example \ref{ex:mu_constant}).
	
	Note that Proposition \ref{prop:monodromy} is much more general: for example, it holds for an embedded resolution of $f$ with multiple isolated singularities in $f^{-1}(0)$. Moreover, with minor modifications one can formulate a result analogous to Proposition \ref{prop:monodromy}\ref{item:B_i-boundary},\ref{item:B_i-covering} for a degeneration of projective varieties. In this setting, the Conley--Zehnder index computation in Proposition \ref{prop:monodromy}\ref{item:B_i-CZ} can be carried out, too, provided $\phi$ admits a grading: this happens when fibers are Calabi--Yau.
	
	Eventually, we note that the abstract contact open book in Proposition \ref{prop:monodromy} is actually isotopic to the symplectic monodromy of $f$ (defined in a natural way in Example \ref{ex:typical}). Meanwhile, the model resolution constructed in \cite[Example 5.14]{McLean} just shares the same associated graded contact pair, which in this case is $(\d\B_{\epsilon},\d \B_{\epsilon} \cap f^{-1}(0))$.
\end{remark}

\begin{remark}\label{rem:degree-shift}
	The formula for Conley--Zehnder index in Proposition \ref{prop:monodromy}\ref{item:B_i-CZ} differs from the one in  \cite[Theorem 5.41(3)]{McLean} by $2m$. 
	
	This shift most likely arises due to a mistake in the computation carried out in \cite[p.\ 1025]{McLean}. Let us indicate a possible source of this mistake. The Conley--Zehnder index is computed in loc.\ cit.\ from the winding number associated to a trivialization of the vertical canonical bundle $K\vert$, using Lemma A.8 loc.\ cit. At the bottom of p.\ 1025 loc.\ cit., it is claimed that this winding number is the same as the one for the trivialization of $K_{X\setminus D}$. To be more precise, loc.\ cit.\ uses an analogue of canonical bundles on the model resolutions, but these  two coincide locally, when $X$ is a chart adapted to $f$.
	
	The claimed equality is false. To see this, take $X=\C^{2}$ with coordinates $(z_1,z_2)$, let $f(z_1,z_2)=z_1$, and let $dz_1\wedge dz_2$ be a trivialization of $K_{X\setminus D}$. As in the formula \eqref{eq:K-vert},  write  $K_{X\setminus D}=K\vert \otimes f^{*}K_{\C^{*}}$. The trivialization of $f^{*}K_{\C^{*}}$ is given by $df=dz_{1}$, so the compatible trivialization of $K\vert$ is $dz_2$. To compute the winding number, as in the proof Proposition \ref{prop:monodromy}\ref{item:B_i-CZ}, we pull back these forms by $\Phi_{m\theta}^{-1}\colon (z_1,z_2)\mapsto (e^{-2\pi\imath\cdot m\theta}z_1,z_2)$, and trace the path in $\C^{*}$ made by the leading coefficient. For $K_{X\setminus D}$, the pullback is $e^{-2\pi\imath \cdot m\theta} dz_1\wedge dz_2$, so the winding number is $-m$. In turn, for $K\vert$, the pullback is $dz_2$, so the winding number is $0$. Hence \cite[Lemma A.8]{McLean} shows that to get the correct Conley--Zehnder index, one needs to subtract $2m$ from the one computed in loc.\ cit. 
\end{remark}
\begin{remark}\label{rem:McLean}
	Let $Y=\C^n$, let $f\colon \C^n\to \C$ be a holomorphic function with an isolated critical point $0\in \C^n$, and let $\bar{U}=\B_{\epsilon'}$, $\bar{W}=\B_{\epsilon}$ be Milnor balls. Then Proposition \ref{prop:spectral-sequence-monodromy} provides a spectral sequence converging to Floer homology of the symplectic monodromy of the Milnor fibration \eqref{eq:milfibtbintro} in the tube, introduced in Example \ref{ex:Milnor-fibration}. 
	
	In this case, the term of \eqref{eq:spectral-sequence-monodromy} corresponding to the proper transform of $f^{-1}(0)$ vanishes. To see this, note that $\bigsqcup_{i\in \cP} B_{i}^{\circ}=f^{-1}(0) \cap \B_{\epsilon}^{*}$. By the conical structure theorem \cite[2.10]{Milnor}, $f^{-1}(0) \cap \B_{\epsilon}$ is a cone over $f^{-1}(0)\cap \d \B_{\epsilon}$, so $H_{*}^{BM}(B_{i}^{\circ})=0$ for every $i\in \cP$.
	
	Therefore, in this case the first page \eqref{eq:spectral-sequence-monodromy} is similar to the one in  \cite[Theorem 1.2]{McLean}, cf.\ Remark \ref{rem:McLean-HF3}. The spectral sequence in loc.\ cit.\  converges to the Floer cohomology of the symplectic monodromy of the Milnor fibration in the sphere, i.e.\ $f/|f|\colon \B_{\epsilon}\setminus f^{-1}(0)\to \S^1$.
\end{remark}

\begin{remark}
	The exact values of the action and Conley Zehnder indices, computed in Proposition \ref{prop:monodromy}\ref{item:B_i-action},\ref{item:B_i-CZ} were used to write a precise formula for the degree shift in \eqref{eq:spectral-sequence-monodromy}. However, this formula will not be used in the proof of Theorem \ref{theo:Zariski}, where we will only be interested in vanishing of $\HF_{*}(\phi^m,+)$.
\end{remark}

	To conclude this section, we note the following consequence of Proposition \ref{prop:monodromy}. It is not needed in the sequel, but might become of independent interest.
	
	\begin{cor}\label{cor:no-fixed-poins}
		Let $f\colon (\C^{n},0)\to (\C,0)$ be a singular hypersurface germ. Let $(N,\lambda\std,\phi\std)$ be the abstract contact open book associated to the Milnor fibration of $f$ in the tube $f^{-1}(\d \D_{\delta})\cap \B_{\epsilon}$, constructed in Example \ref{ex:Milnor-fibration}. Then $(N,\lambda\std, \phi\std)$ is isotopic to abstract contact open books $(F,\lambda,\phi_1)$ and $(F,\lambda,\phi_2)$ such that $\Fix\phi_1=\emptyset$ and $\Fix\phi_2=\d F$.
	\end{cor}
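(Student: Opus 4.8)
\textbf{Proof plan for Corollary \ref{cor:no-fixed-poins}.}
The plan is to apply Proposition \ref{prop:monodromy} to a well-chosen $m$-separating resolution of $f$, and then to modify the resulting radius-zero abstract contact open book by a small Hamiltonian perturbation near $\partial F$. First I would invoke the discussion of Section \ref{sec:isotopy-to-radius-zero} with $Y=\C^n$, $\varrho(z)=\tfrac{\pi}{2}\|z\|^2$, and $\bar W_Y=\B_\epsilon$ a Milnor ball: this is precisely the setting of Example \ref{ex:Milnor-fibration}, so $(N,\lambda\std,\phi\std)$ is the abstract contact open book \eqref{eq:monodromy-acob} for $f$. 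Choosing any $m$-separating log resolution $h\colon X\to\C^n$ of $f$ with $m=1$ (every resolution is $1$-separating trivially, but we do need the snc condition, so we take $h$ to be an actual embedded resolution), Section \ref{sec:isotopy-to-radius-zero} produces a graded abstract contact open book $(F,\lambda,\phi)$, isotopic to $(N,\lambda\std,\phi\std)$, where $\phi$ is the time-one flow of the monodromy vector field \eqref{eq:monodromy-vector-field}. By Proposition \ref{prop:monodromy}, applied with $m=1$, we have $\Fix\phi=\bigsqcup_{m_i\mid 1}B_i=\bigsqcup_{i\in\cP}B_i$, i.e.\ the only fixed-point components are the ones $B_i$, $i\in\cP$, coming from the proper transform of $f^{-1}(0)$; moreover each $B_i$ is a codimension zero family of fixed points with $\partial^- B_i=\emptyset$, $\partial^F B_i=B_i\cap\partial\bar W$, and (by Proposition \ref{prop:monodromy}\ref{item:B_i-covering}) $B_i\setminus\partial^+ B_i$ is diffeomorphic to the smooth locus $h(D_i)\cap\B_\epsilon\setminus\operatorname{Bs}h^{-1}$ of a branch of $f^{-1}(0)$ intersected with the Milnor ball.

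Next I would eliminate these components by a Hamiltonian perturbation supported near $\partial F$, exactly as in McLean \cite[p.\ 1023]{McLean}; this is where the hypothesis that $f$ is \emph{singular} and $\bar W=\B_\epsilon$ is a Milnor ball enters. Since $\epsilon$ is a Milnor radius, by the conical structure theorem \cite[2.10]{Milnor} the intersection $f^{-1}(0)\cap\B_\epsilon$ is a cone over the link $f^{-1}(0)\cap\partial\B_\epsilon$; consequently each $B_i$, $i\in\cP$, is (homeomorphic to) a product $(h(D_i)\cap\partial\B_\epsilon\setminus\operatorname{Bs}h^{-1})\times(0,\epsilon]$, and in particular $B_i$ deformation retracts onto $\partial^F B_i\subseteq \partial F$. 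I would make this precise using Lemma \ref{lem:cornerelimination} to first replace $\phi$ by an isotopic $\phi_1'$ whose fixed-point set is a disjoint union of the $B_i$ now as manifolds-with-boundary (no corners), with the triple structure preserved; then, because for $\epsilon$ small the collar region $B_0\cong(f^{-1}(0)\cap\partial\B_\epsilon)\times(0,\epsilon)$ is symplectically a piece of the symplectization of the contact link (this is the content of the discussion right before Step \ref{step:red-balls} in the introduction, and of \cite[p.\ 1023]{McLean}), the Hamiltonian $H_{B_i}$ from Definition \ref{def:0codim} can be perturbed, within this symplectization collar, to a nowhere-vanishing one, pushing all fixed points off $\partial F$ and out of $F$ entirely. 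This yields the isotopic abstract contact open book $(F,\lambda,\phi_1)$ with $\Fix\phi_1=\emptyset$. For $(F,\lambda,\phi_2)$ with $\Fix\phi_2=\partial F$, I would instead perturb $H_{B_i}$ so that its zero locus becomes exactly $\partial^F B_i$ (again using the product structure of the collar to slide the zero set of the Hamiltonian to the boundary), so that the new fixed-point set is $\bigsqcup_{i\in\cP}\partial^F B_i$, which is a union of components of $\partial F$; since the proper transform of $f^{-1}(0)$ dominates the whole link, this is all of $\partial F$.

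The main obstacle is the last step: carrying out the Hamiltonian perturbation \emph{symplectically} rather than merely topologically, i.e.\ verifying that the collar neighborhood of $\partial F$ into which each $B_i$ retracts really is (after the Lemma \ref{lem:cornerelimination} normalization, and after possibly shrinking the Milnor radius $\epsilon$) symplectomorphic to a subset of the symplectization $(f^{-1}(0)\cap\partial\B_\epsilon)\times(0,\epsilon)$ of the contact link with its standard Liouville form, so that a Hamiltonian with the prescribed zero locus exists and its time-one flow agrees with $\phi_1'$ there. This is essentially McLean's argument, and I would follow it, being careful that the form $\lambda=\lambda_A^{\delta',\epsilon_0}|_F$ produced by our radius-zero construction restricts on that collar to (a form Liouville-isotopic to) the standard contact form on the link — which follows from Remark \ref{rem:vanishing_restriction} together with the fact that $h$ is an isomorphism near the collar, so $\lambda_A^{\delta',\epsilon_0}$ agrees there with the pullback of $\lambda\std=-d^c\varrho$ up to the small exact term $\epsilon_0\beta$. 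A minor additional point to check is that the perturbations producing $\phi_1$ and $\phi_2$ keep the map compactly supported away from $\partial\hor$ of the Liouville fibration and exact, so that they are genuine isotopies of abstract contact open books in the sense of Definition \ref{def:acob}; this is automatic since the perturbing Hamiltonians are supported in a collar of $\partial F$ and vanish to infinite order on the relevant boundary strata.
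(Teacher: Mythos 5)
Your proposal follows the same skeleton as the paper's proof: choose a resolution that is an isomorphism off the origin (so that, $f$ being singular, every exceptional component has multiplicity at least $2$ and hence, for $m=1$, the formula \eqref{eq:fixed-points} leaves only the components $B_i$, $i\in\cP$, over the proper transform), apply Lemma \ref{lem:cornerelimination}, use the conical structure theorem to get the product structure of $B_0=\bigsqcup_{i\in\cP}B_i$, and then kill or push the fixed points to $\d F$ by a Hamiltonian perturbation in the spirit of McLean. One small bookkeeping remark: the singularity of $f$ is used already in identifying $\Fix\phi$ with $\bigsqcup_{i\in\cP}B_i$ (to rule out exceptional components of multiplicity one), not only in the elimination step. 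The substantive difference is in how the final perturbation is justified. You propose to follow McLean's original route literally, i.e.\ to verify that the neighborhood of $B_0$ is symplectomorphic to a piece of the symplectization of the contact link for the form $\lambda=\lambda_A^{\delta',\epsilon_0}|_F$, and you correctly flag this as the main obstacle: it requires comparing $\lambda$ with $e^r\lambda|_{\d F}$ on the whole of $B_0$, not just on a thin boundary collar, and the extra term $\epsilon_0\lambda_E^{\delta'}$ makes this nontrivial. The paper avoids this entirely: since each $B_i$ is a codimension zero family of fixed points, $\phi|_U$ is by Definition \ref{def:0codim} already the time-one flow of a time-independent Hamiltonian $v\geq 0$ with $v^{-1}(0)=B_0$, and one only needs the \emph{smooth} product structure $U\cong L\times[-1,\eta)$ to manufacture an auxiliary Hamiltonian $H$ that is constant outside $U$, has no critical points in $U$, and grows compatibly with $v$ (Definition \ref{def:growcompat}); then $\psi^H_\tau\circ\phi=\psi^H_{\tau+\sigma}$ on $U\setminus B_0$ and an elementary estimate shows $\psi^H_\tau\circ\phi$ has no fixed points for small $\tau>0$ ($\phi_2$ is obtained by reparametrizing $H$ so that $dH$ vanishes exactly on $\d F$). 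So your plan would work, but at the cost of a symplectic identification of the collar that the paper's softer flow-composition argument makes unnecessary — a point worth internalizing, since in the $\mu$-constant family setting that identification is precisely what is \emph{not} available.
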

	\begin{proof}
Choose a  resolution $h\colon X\to \C^{n}$ of $f$ which is an isomorphism away from the origin and factors through a blowup of $\C^{n}$ at $0$. Since the germ $f$ is singular, all components of $\Exc h$ have multiplicity at least $2$. In particular, $h$ is $1$-separated, i.e.\ satisfies condition \eqref{eq:m-separatedness} for $m=1$. 
 Let $(F,\lambda,\phi)$ be the radius-zero abstract contact open book \eqref{eq:radius-zero-acob} associated to this resolution. Since $(N,\lambda\std,\phi\std)$ is isotopic to $(F,\lambda,\phi)$, it is enough to prove the claim for $(F,\lambda,\phi)$. 
 
 Put $B_{0}=\bigsqcup_{i\in \cP} B_{i}$ , where $B_i=\bar{A}_{i}^{\circ}\cap F$ is as in formula \eqref{eq:fixed-points}, and the sum runs over all components of the proper transform of $f^{-1}(0)$. Applying Proposition \ref{prop:monodromy} for $m=1$, we get $\Fix \phi=B_0$ and $\d B_0=\d^{+}B_0\sqcup \d F$, where $\d^{+}B_0=\bigsqcup_{i\in \cP} \d^{+} B_{i}$.  Applying an isotopy from Lemma \ref{lem:cornerelimination}, we can assume that $B_0$ is a smooth codimension zero manifold with boundary (and no corners). 
 
 The above isotopy does not change the diffeomorphism type of $B_{0}^{\circ}\de B_{0}\setminus \d^{+}B_0$. Thus $B_{0}^{\circ}\cong \bigsqcup_{i\in \cP} X_{i}^{\circ} \cong (f^{-1}(0)\cap \B_{\epsilon})\setminus \{0\}$. By the conical structure theorem \cite[Theorem 2.10]{Milnor}, the latter is diffeomorphic to $L\times [-1,0)$, where $L\de f^{-1}(0)\cap \d \B_{\epsilon}$ is the link of $f$. Combining the above diffeomorphisms with some trivialization of a collar neighborhood of $\d^{+}B_0$, we conclude that there is an open neighborhood $U$ of $B_0$ and a diffeomorphism $U\to  L\times [-1,\eta)$ for some $\eta>0$. 
 
 Now, we argue like in \cite[p.\ 1023]{McLean}. By Definition \ref{def:0codim}, the condition  $\d B_0=\d^{+}B_0\sqcup \d F$ means that, after shrinking $U$ if needed, the restriction $\phi|_{U}$ is the time one flow of some Hamiltonian $v\colon U\to [0,\infty)$ satisfying  $v^{-1}(0)=B_0$. Shrinking $U$ further we can assume that $v$ extends to a smooth function on a compact closure $\bar{U}$, such that $dv$ vanishes precisely on $B_0$. 

 Using the above diffeomorphism $L\times [-1,\eta)\to U$, we can define a Hamiltonian $H\colon F\to \R$ which is constant on $F\setminus U$, has no critical points in $U$, and grows compatibly with $v$ on $U\setminus B_{0}$, see Definition \ref{def:growcompat}. Put $\check{\phi}_{\tau}\de \psi^{H}_{\tau}\circ \phi$, so $\check{\phi}_{\tau}$ is isotopic to $\phi$. We claim that $\check{\phi}_{\tau}$ has no fixed points for small $\tau>0$.

Since $H$ is constant on $F\setminus U$, we have $\psi^{H}_{\tau}|_{F\setminus U}=\id_{F\setminus U}$ and thus  $\check{\phi}_{\tau}$ has no fixed points in $F\setminus U$. Since $H$ has no critical points in $U$, there is a time $\tau_0>0$ such that $\psi^{H}_{\tau}$ has no fixed points in $U$ for all $\tau\in (0,2\tau_0]$. In particular, since $\phi|_{B_0}=\id_{B_0}$, we see that $\check{\phi}_{\tau}$ has no fixed points in $B_0$. On $U\setminus B_0$, by Definition \ref{def:growcompat} the level sets of $H$ and $v$ agree, and there is a positive function $\sigma$, constant on those level sets, such that $dv=\sigma\cdot dH$, so $\check{\phi}_{\tau}=\psi^{H}_{\tau}\circ \psi_{1}^{v}=\psi_{\tau+\sigma}^{H}$. Since $dv$ vanishes on $B_0$ and $dH$ does not, there is an open neighborhood $V$ of $B_0$ in $U$ such that $\sigma|_{V}<\tau_0$, so $\check{\phi}_{\tau}$ has no fixed points in $V$ for all $\tau\in (0,\tau_0]$. Similarly, since $dv$ does not vanish on the compact set $\bar{U}\setminus V$, the function $\sigma^{-1}$ is bounded on $U\setminus V$, so since $\psi_{1}^{v}=\phi$ has no fixed points on $U\setminus V$, neither does $\check{\phi}_{\tau}=\psi_{\tau}^{H}\circ \psi_{1}^{v}=\psi^{v}_{\tau\sigma^{-1}+1}$ for small enough $\tau>0$; as claimed.
 
This way, we get the required isotopy $(F,\lambda,\phi)\sim (F,\lambda, \phi_1)$. To get $\phi_{2}$, we replace $H$ by a composition $\rho\circ H$ for some smooth, increasing function $\rho\colon \R\to \R$ such that $\rho'$  vanishes precisely at $H(\d F)$. 
	\end{proof}

\section{Proof of Theorem \ref{theo:Zariski}}\label{sec:proof}

In this section, we apply the techniques developed above to prove Theorem \ref{theo:Zariski}, which asserts that any $\mu$-constant family of isolated hypersurface singularities is equimultiple.

Let us recall the definition of a $\mu$-constant family. Consider a family of power series $f=\sum_{\iota\in \N^{n}}a_{\iota}z^{\iota}$, where $a_{\iota}\colon [0,1]\to \C$ are continuous functions. For $t\in [0,1]$ we write $f_{t}=\sum_{\iota\in \N^{n}}a_{\iota}(t)z^{\iota}\in \C[\![z_1,\dots,z_n]\!]$, and let $\mu(f_t)$ be the Milnor number of $f_{t}$. We say that $f$ is a \emph{$\mu$-constant family} if all numbers $\mu(f_t)$ are finite and equal to each other. We say that $f$ is \emph{equimultiple} if the multiplicities $\nu(f_t)$ of $f_{t}$ are equal for all $t$.

Note that the assumption $\mu(f_t)<\infty$, which we include in the definition for convenience, ensures that the singular germ $\{f_{t}=0\}$ is isolated.
\smallskip

We begin the proof by recalling a known reduction.

\begin{lema}\label{lem:reduction}
	Assume that every $\mu$-constant family $f$ satisfying the conditions  \ref{item:mu-determined}--\ref{item:a_I-holo} below, is equimultiple. Then every $\mu$-constant family is equimultiple.
	\begin{enumerate}
		\item\label{item:mu-determined} There is $N\in \N$ such that $a_{\iota}\equiv 0$ for $|\iota|>N$, i.e.\ each $f_{t}$ is a polynomial of degree at most $N$.
		\item\label{item:a_I-holo} The functions $a_{\iota}$ extend to holomorphic maps $\D\to \C^n$
	\end{enumerate}
\end{lema}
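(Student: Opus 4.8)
The plan is to carry out, in turn, the two reductions that hypotheses (a) and (b) call for, so that an arbitrary $\mu$-constant family is ultimately compared with a holomorphic one of bounded degree.

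\emph{First reduction: to bounded degree.} Given an arbitrary $\mu$-constant family $(f_t)_{t\in[0,1]}$, $\mu(f_t)=\mu$, I would use finite determinacy: since each member has finite Milnor number, its $(\mu+1)$-jet determines it up to an analytic change of coordinates in the source, and such a change preserves both the Milnor number and the multiplicity. Hence the degree-$(\mu+1)$ truncation $\tilde f_t\de\sum_{|\iota|\leq N}a_\iota(t)z^\iota$, with $N\de\mu+1$, satisfies $\mu(\tilde f_t)=\mu(f_t)$ and $\nu(\tilde f_t)=\nu(f_t)$ for every $t$, its coefficients remaining continuous in $t$. Thus $(\tilde f_t)$ is a $\mu$-constant family satisfying (a), equimultiple if and only if $(f_t)$ is; so it is enough to treat $\mu$-constant families satisfying (a).

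\emph{Second reduction: to holomorphic coefficients.} Let now $(f_t)$ be $\mu$-constant with $f_t=\sum_{|\iota|\leq N}a_\iota(t)z^\iota$, $a_\iota$ continuous, and suppose for contradiction it is not equimultiple, say $\nu(f_{t_0})\neq\nu(f_{t_1})$. Identifying a polynomial of degree $\leq N$ with its coefficient vector, one obtains a continuous path $\gamma\colon[0,1]\to\C^M$ in a finite-dimensional coefficient space and a universal family $f_a=\sum_{|\iota|\leq N}a_\iota z^\iota$, $a\in\C^M$. The $\mu$-constant locus $Z\de\{a\in\C^M:\mu(f_a)=\mu\}$ is a constructible subset of $\C^M$ (semicontinuity of the Milnor number in families), and $\gamma$ is a path in $Z$, so $\gamma(t_0),\gamma(t_1)$ lie in the same Euclidean connected component of $Z$; invoking the standard fact that a connected constructible set is chain connected by irreducible algebraic curves contained in it, I would pick a chain $C_1,\dots,C_r\subseteq Z$ with $\gamma(t_0)\in C_1$, $\gamma(t_1)\in C_r$ and $C_j\cap C_{j+1}\neq\emptyset$. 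Since $a\mapsto\nu(f_a)$ is Zariski upper semicontinuous (indeed $\{\nu\geq k\}$ is the linear subspace $\{a_\iota=0:|\iota|<k\}$), it takes only finitely many values on each irreducible $C_j$ and is constant off a finite subset; were it constant on every $C_j$, chain connectedness would force $\nu(f_{\gamma(t_0)})=\nu(f_{\gamma(t_1)})$, a contradiction, so some $C\de C_j\subseteq Z$ carries two distinct multiplicity values, i.e.\ there are $s_0,s_1\in C$ with $\nu(f_{s_0})\neq\nu(f_{s_1})$. I would then pass to the normalization $\sigma\colon\tilde C\to C$ (a connected Riemann surface), choose lifts $\tilde s_0,\tilde s_1$, and take a holomorphic disc $\psi\colon\D\to\tilde C$ through both lifts — for instance inside a tubular neighbourhood of an embedded arc joining them, which is a simply connected open Riemann surface — with $\psi(c_j)=\tilde s_j$ for some $c_0,c_1\in[0,1]$. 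Writing $\Psi\de\sigma\circ\psi=(b_\iota)_{|\iota|\leq N}\colon\D\to C\subseteq Z$ with $b_\iota$ holomorphic, the family $g_t\de\sum_{|\iota|\leq N}b_\iota(t)z^\iota$, $t\in[0,1]$, satisfies (a) and (b) and is $\mu$-constant (as $\Psi(t)\in Z$), while $\nu(g_{c_0})\neq\nu(g_{c_1})$ — contradicting the hypothesis that such families are equimultiple. Together with the first reduction this yields the lemma.

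I do not expect a genuine obstacle here; this is a routine algebraization. The step that needs the most care is the second one: one must know that the $\mu$-constant locus in the jet space is constructible and that its connected components are chain connected by complex algebraic curves, and one must pass to the normalization of such a curve in order to obtain a genuinely \emph{holomorphic} (rather than merely continuous or real-analytic) one-parameter subfamily along which the Milnor number stays constant while the multiplicity jumps.
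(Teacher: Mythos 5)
Your proof is correct, and the first reduction (finite determinacy, truncation at degree $\mu+1$) is exactly the paper's. The second reduction reaches the same goal as the paper's --- connect the two endpoints of the family by finitely many holomorphic discs lying inside the $\mu$-constant stratum, then invoke the hypothesis on each disc --- but by a different device. The paper takes a resolution of singularities $\pi\colon\tilde S\to S$ of the stratum, lifts the endpoints to the same component of the smooth variety $\tilde S$, joins them by a path, and covers that path by finitely many holomorphic charts $\varphi_i\colon\D\to\tilde S$; you instead chain the two points by irreducible algebraic curves inside the stratum, pass to the normalization of the offending curve, and extract a single holomorphic disc through two points with different multiplicity. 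Your route has the advantage of making explicit the step the paper leaves implicit (``the result follows after replacing $\alpha$ by each $\varphi_i$''), namely why constancy of $\nu$ on each piece propagates along the chain; the Zariski semicontinuity of $\nu$ in coefficient space, which you spell out, is the right observation. The paper's route avoids your appeal to chain-connectedness of a connected constructible set by algebraic curves: note that the blanket statement for constructible sets is more than you need and slightly delicate, but since the $\mu$-constant stratum is Zariski \emph{locally closed} (hence quasi-affine), the standard fact that two points of a connected quasi-projective variety lie on a chain of irreducible curves contained in it applies verbatim, so this is not a gap.
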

\begin{proof}
	\ref{item:mu-determined} Let $f\in\CC\lbr  z_1,...,z_n\rbr $. If $\mu(f)$ is finite then $f$ is $(\mu(f)+1)$-determined \cite[I.2.24]{GLS_intro-to-singularities}. This means the following: for any $g\in\mathfrak{m}^{\mu(f)+2}$ there exists a formal change of variables $\varphi$ such that $(f+g)\comp\varphi=f$; if moreover $f$ and $g$ are holomorphic then $\varphi$ can be chosen to be holomorphic. As a consequence, $\mu(f+g)=\mu(g)$ and $\nu(f+g)=\nu(g)$.
	
	Now if $f_t:=\sum_{I}a_Iz^I$ is a continuous family with constant Milnor number $\mu$, then for $h_t:=\sum_{|I|\leq \mu+1}a_Iz^I$ we have $\mu(f_t)=\mu(h_t)$ and $\nu(f_t)=\nu(h_t)$, so to prove Theorem \ref{theo:Zariski} for $(f_t)$ it suffices to show it for $(h_t)$ such that each $h_t$ is a polynomial.
	
	\ref{item:a_I-holo} Polynomials whose germs at the origin have Milnor numbers precisely $\mu$ form a Zariski-locally-closed subset of the space of polynomials of degree $\mu+1$, called the $\mu$-constant stratum. Denote it by $S$. The family $f_t$ describes a continuous map $\alpha\colon [0,1]\to S$. Choose a resolution of singularities $\pi\colon \tilde{S}\to S$, and let $p_0$, $p_1$ be points which are in the same irreducible component of $\tilde{S}$ and such that $\pi(p_0)=\alpha(0)$ and $\pi(p_1)=\alpha(1)$. Then there exists a continuous path joining $p_0$ and $p_1$. Because $\tilde{S}$ is smooth, there are points $p_0=q_0, q_1, \dots, q_k = p_1$ and holomorphic parametrizations $\varphi_i:\D\to\tilde{S}$ 
	such that $q_i$ and $q_{i+1}$ are contained in $\varphi_i(\D)$ for any $i$. The result follows after replacing $\alpha$ by each $\varphi_i$. 
%
%
\end{proof}


\begin{proof}[Proof of Theorem \ref{theo:Zariski}]
	By Lemma \ref{lem:reduction}, we can assume that our $\mu$-constant family of power series defines a family of holomorphic functions, holomorphically parametrized by a disk $\D_{\eta}$. Clearly, it is enough to show that the multiplicity remains constant for $\eta>0$ sufficiently small. 
	
	In Example \ref{ex:mu_constant}, we have chosen three Milnor radii for $f_0$, say $\epsilon_0'<\epsilon_0<\epsilon_0''$, and constructed an isotopy of graded abstract contact open books $(F_{s},\lambda_{s},\phi^{m}_{s})$, parametrized by $s\in \D_{\eta}$, such that $F_{s}$ is diffeomorphic to $f^{-1}_{s}(\delta)\cap \B_{\epsilon_0}$ for small $\delta>0$.
	
	By Proposition \ref{prop:isotopy_invariance}, for each $m\geq 1$, the Floer homology groups $\HF_{*}(\phi_{s}^{m},+)$ do not depend on $s\in \D_{\eta}$. We will now study the first page  \eqref{eq:spectral-sequence-monodromy}  of the spectral sequence converging to $\HF_{*}(\phi_{s}^{m},+)$.
	
	Let $\nu(f_s)$ be the multiplicity of $f_s$. Fix $s\in \D_{\eta}$, a number $m\geq 1$, and an $m$-separating resolution $h$ of $f_{s}^{-1}(0)\subseteq \C^{n}$. We can assume that $h$ factors as $h=h_{1}\circ h_{2}$, where $h_1$ is a blowup at the origin, and $h_2$ is a morphism such that the base locus of $h_{2}^{-1}$ is contained in the singular locus of $(f_s\circ h_{1})^{-1}(0)\redd$. Write the irreducible decomposition of  $(f_s\circ h)^{-1}(0)$ as $\sum_{i\in \cP} D_i+\nu(f_s)D_1+\sum_{i\in \cE\setminus \{1\}} m_{i}D_{i}$, where $\sum_{i\in \cP}D_i$ is the proper transform of $f_s^{-1}(0)$, and $D_1$ is the proper transform of $\Exc h_1$. The assumption on $h$ implies that $m_{i}>\nu(f_s)$ for every $i\in \cE\setminus \{1\}$. Thus the sets $S_{m}$ defined in \eqref{eq:Sm} satisfy:
	\begin{equation}\label{eq:Sm-for-low-m}
		S_{m}=\cP \quad\mbox{if}\quad m<\nu(f_s)\qquad \mbox{and}\qquad S_{\nu(f_s)}=\cP\cup \{1\}.
	\end{equation}
	
	Put $B_{0}^{\circ}=\bigsqcup_{i\in \cP} B_{i}^{\circ}$. We claim that
	\begin{equation}\label{eq:B_0=0}
		H_{*}^{BM}(B_{0}^{\circ})=0, 
		\quad\mbox{and}\quad
		H_{*}^{BM}(B_{1}^{\circ})\neq 0,
	\end{equation}
	 i.e.\ the contribution of the proper transform of $f_{s}^{-1}(0)$ (respectively, of $\Exc h_1$) to the first page \eqref{eq:spectral-sequence-monodromy} is zero (respectively, nonzero). 
	 
	Recall that $B_{0}^{\circ}\cong f^{-1}_s(0)\cap \B_{\epsilon_0}^{*}$, where $\epsilon_0>0$ is a Milnor radius for $f_0$ chosen in Example \ref{ex:mu_constant}. 
	Let $\epsilon_s\in (0,\epsilon_0]$ be a Milnor radius for $f_s$. By the Conical Structure Theorem \cite[Theorem 2.10]{Milnor} 
	 we have that $f^{-1}_{s}(0)\cap \B_{\epsilon_s}$ is homeomorphic to  the cone over $f^{-1}_{s}(0)\cap \d\B_{\epsilon_s}$. Therefore,
	\begin{equation*}
		H^{BM}_{*}(B_0^{\circ})=H^{BM}_{*}(f^{-1}_{s}(0)\cap \B_{\epsilon_0}^{*})=H_*(\overline{f^{-1}_{s}(0)\cap \B_{\epsilon_0}\setminus \B_{\epsilon_s}},f^{-1}_{s}(0)\cap\d\B_{\epsilon_s})=0,
	\end{equation*}
	because the cobordism $\overline{f^{-1}_{s}(0)\cap \B_{\epsilon_0}\setminus \B_{\epsilon_s}}$ is homologically trivial by Proposition \ref{prop:homologtriv}. This proves the first claim in \eqref{eq:B_0=0}. Notice that when $s=0$, the above cobordism does not appear, so the vanishing of $H_{*}^{BM}(B_{0}^{\circ})$ reduces to Remark~\ref{rem:McLean}.
	
	For the second claim in \eqref{eq:B_0=0}, recall that $B_{1}^{\circ}$ is a covering of $D_{1}\setminus (D-D_{1})$, which is a manifold without boundary, so its Borel--Moore homology cannot vanish.
	
	Now, the formulas \eqref{eq:Sm-for-low-m} and \eqref{eq:B_0=0} imply that the first page  \eqref{eq:spectral-sequence-monodromy} is zero for $m<\nu(f_{s})$, and for $m=\nu(f_s)$, it has exactly one nonzero column. We conclude that
	\begin{equation}
		\label{eq:deneMcLeanmultz}
		\nu(f_{s})=\min\{m\geq 1: \HF_{*}(\phi^{m}_{s},+)\neq 0\},
	\end{equation}
	which is independent of $s\in \D_{\eta}$ since $\HF_{*}(\phi^{m}_{s},+)$ is independent of $s$.
\end{proof}

\begin{remark}[{cf.\ \cite[Proposition 1.6]{BBLN_contact-loci}}]\label{rem:tangent-cone}
	Let $F_{s}^{\mathrm{in}}\de \{\operatorname{in}(f_s)=1\}$ be the Milnor fiber of the tangent cone to $f_s$. 
	As we will explain in more detail below, the proof of Theorem \ref{theo:Zariski} shows that 
	\begin{equation}\label{eq:tangent-cone}
		\HF_{*}(\phi_{s}^{\nu},+)=H_{*+3n-1-2\nu}^{BM}(F_{s}^{\mathrm{in}}),
	\end{equation}
	where $\nu$ is the multiplicity of $f_s$. By Theorem \ref{theo:Zariski}, the number $\nu$ does not depend on $s$, so by Proposition \ref{prop:isotopy_invariance}, the group $\HF_{*}(\phi_{s}^{\nu},+)$ does not depend on $s$, either. Thus the formula \eqref{eq:tangent-cone} implies that
	\begin{equation*}
		H_{*}^{BM}(F_{s}^{\mathrm{in}})=H_{*}^{BM}(F_{0}^{\mathrm{in}}).
	\end{equation*}
	
	The above equality can be seen as an evidence for \cite[Conjecture 1.7]{BBLN_contact-loci}, which asks whether the homotopy type of the Milnor fiber of the tangent cone is a topological invariant. Recall, however, that by \cite[Theorem A(ii)]{Bobadilla_counterexample}, the projectivized tangent cones $\{\operatorname{in}(f_{s})=0\}\subseteq \P^{n-1}$ may not be homotopically equivalent.
	\smallskip
	
	To prove the equality \eqref{eq:tangent-cone}, recall that for $m=\nu$, the first page \eqref{eq:spectral-sequence-monodromy} has only one nonzero column, which corresponds to the unique component of $(h\circ f_{s})^{-1}(0)$ of multiplicity $\nu$. This component is (the proper transform of) the exceptional divisor of the first blowup over $0\in \C^{n}$. Call this exceptional divisor $D_1$, and define $D_1^{\circ}$ as $D_1$ minus the proper transform of $f_{s}^{-1}(0)$. Then, we have  $\HF_{*}(\phi_{s}^{\nu},+)=H_{*+n+1+2a_1-2\nu}^{BM}(B_1^{\circ})$, where $B_{1}^{\circ}\to D_1^{\circ}$ is a $\nu$-fold covering constructed in \cite[\sec 2.3]{Denef_Loeser-Lefshetz_numbers}, see Section \ref{sec:monodromy-spectral-sequence}. We have $D_1\cong \P^{n-1}$, so by adjunction, the discrepancy $a_1$ equals $n-1$. Moreover, $D_1^{\circ}=\P^{n-1}\setminus \{\mathrm{in}(f_s)=0\}$, and the covering $B_1^{\circ}\to D_1^{\circ}$ can be realized as the restriction of the quotient morphism $\C^n\to \P^{n-1}$ to $ F_{s}^{\mathrm{in}}$; so $B_{1}^{\circ}=F_{s}^{\mathrm{in}}$, which proves \eqref{eq:tangent-cone}.
\end{remark}

\begin{remark}[cf.\ Remark \ref{rem:McLean}]
	\label{rem:multcharaccomparison}
	When $s=0$, Formula~\eqref{eq:deneMcLeanmultz} is a version of \cite[Corollary 1.4]{McLean}. While loc.\ cit.\  uses the graded abstract contact open book arising from $f/|f|$ restricted to a Milnor sphere, that is, the Milnor fibration \eqref{eq:milfibsphintro} in the sphere, we use the graded abstract contact open book associated with the Milnor fibration \eqref{eq:milfibtbintro} in the tube, constructed in Example \ref{ex:Milnor-fibration}.
\end{remark}

\begin{remark}\label{rem:algebraic_Zariski}
	Theorem \ref{theo:Zariski} implies an analogous statement in a purely algebraic setting. More precisely, let $\kk$ be a field of characteristic zero, and let $f\in \kk[t][\![ z_1,\dots,z_n ]\! ]$. Write $f_{t}=f(t,\cdot)\in \kk [\![ z_1,\dots,z_n ]\! ]$; so $t\mapsto f_{t}$ is an algebraic family of power series. Assume that the Milnor number of $f_{t}$ is finite and independent of $t$. Then the multiplicity of $f_t$ is independent of $t$, too. 
	
	Indeed, like in the proof of Lemma \ref{lem:reduction}\ref{item:mu-determined}, we first replace $f$ by its truncation $\sum_{|I|<N} a_{I} z^{I}$, for some $a_{I}\in \kk[t]$ and an integer $N$ independent of $t$, without changing the multiplicity and the Milnor number of each $f_t$. Let now $\kk_{0}$ be a field generated over $\Q$ by all coefficients of polynomials $a_{I}$; so $a_{I}\in \kk_{0}[t]$ for all $I$. Since $\kk_{0}$ is a finitely generated extension of $\Q$, we have an embedding $\kk_{0}\subseteq \C$. This way, $(f_{t})$ becomes a $\mu$-constant family of isolated complex hypersurface singularities, and Theorem \ref{theo:Zariski} shows that the multiplicity of $f_{t}$ does not depend on $t$, as claimed.
\end{remark}

	We recall that Question \ref{q:Zariski}, i.e.\ the original Zariski Question A, does not require the hypersurface singularities to be isolated. Corollary \ref{cor:non-isolated} below, which was proved and communicated to us by David Massey, summarizes some direct consequences of our results for non-isolated singularities.
		
	In this setting, a proper analogue of the Milnor number is the sequence of \emph{L\^e numbers}, introduced in \cite[Definition 2.1]{Massey_Le-varieties-I}. Thus a possible generalization of Theorem \ref{theo:Zariski} to this setting is given by Corollary \ref{cor:non-isolated}\ref{item:Le-constant}. However, since L\^e numbers are not a topological invariant \cite[Theorem A(i)]{Bobadilla_counterexample}, Corollary \ref{cor:non-isolated}\ref{item:Le-constant} does not imply the positive answer to (a family version of) Question \ref{q:Zariski}. 
	
	Nonetheless, \cite[Theorem 7.9]{Massey_Lecture-notes} implies a positive answer in the special case of aligned singularities, see Corollary \ref{cor:non-isolated}\ref{item:aligned}. A germ of a hypersurface singularity $f\colon (\C^n,0)\to (\C,0)$ is \emph{aligned}, see \cite[Definition 7.1]{Massey_Lecture-notes}, if its singular locus admits a stratification satisfying the Thom $a_{f}$ condition, such that the closure of each stratum is smooth at the origin. 
%
	
	\begin{cor}[Massey]\label{cor:non-isolated}
		Let  $f_t\colon (\C^n,0)\to (\C,0)$ be a continuous family of germs of hypersurface singularities, possibly non-isolated. Assume that $(f_t)$ satisfies one of the following conditions. 
		\begin{enumerate}
			\item \label{item:Le-constant} For some coordinate system on $\C^n$, all L\^e numbers of $f_t$ are independent of $t$.
			\item \label{item:aligned}  The embedded topological type of $f_t$ is independent of $t$, and each $f_t$ has aligned singularities.
		\end{enumerate}
		Then the multiplicity of $f_t$ is independent of $t$.
	\end{cor}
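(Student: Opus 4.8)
The strategy is to reduce both parts to Theorem~\ref{theo:Zariski} by means of the L\^e--Iomdine--Massey formulas, which let one pass from a non-isolated hypersurface singularity to an isolated one having the same multiplicity and a Milnor number determined by the L\^e numbers. Throughout, I use that the multiplicity $\nu(f_t)$ is upper semicontinuous in $t$ (it is the order of vanishing of $f_t$ at $0$, a closed condition), so that $\nu(f_t)\leq M$ for some constant $M$ and all $t\in[0,1]$ by compactness.

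For part~\ref{item:Le-constant}, fix a coordinate system $\mathbf{z}=(z_1,\dots,z_n)$ in which all L\^e numbers $\lambda^{i}_{f_t,\mathbf{z}}(0)$ are defined and independent of $t$; in particular the number of them, hence $s=\dim\operatorname{Sing} f_t^{-1}(0)$, is independent of $t$. Using Massey's L\^e--Iomdine formulas (see e.g.\ \cite{Massey_Le-varieties-I, Massey_Lecture-notes}), choose integers $j_1,\dots,j_s$ successively, each larger than $M$ and large enough (a bound depending only on the previously chosen exponents and on the L\^e numbers of $f_t$, hence independent of $t$), so that, setting $g_t\de f_t+z_1^{j_1}+\dots+z_s^{j_s}$, the germ $g_t$ has an isolated singularity at $0$ and its Milnor number $\mu(g_t)$ is given by a fixed universal polynomial in $j_1,\dots,j_s$ whose coefficients are integer linear combinations of the $\lambda^{i}_{f_t,\mathbf{z}}(0)$. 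Thus $\mu(g_t)$ is independent of $t$, so $(g_t)$ is a $\mu$-constant family of isolated singularities and Theorem~\ref{theo:Zariski} gives that $\nu(g_t)$ is independent of $t$. Finally, since each $j_i>M\geq\nu(f_t)$, the added monomials have order strictly larger than $\nu(f_t)$, so $\operatorname{in}(g_t)=\operatorname{in}(f_t)$ and hence $\nu(g_t)=\nu(f_t)$; therefore $\nu(f_t)$ is independent of $t$.

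For part~\ref{item:aligned}, the alignment hypothesis is precisely what makes the L\^e numbers accessible to the topological type. By \cite[Theorem 7.9]{Massey_Lecture-notes}, if $f_t$ has aligned singularities then its L\^e numbers, computed in a suitable (generic, resp.\ aligning) coordinate system, depend only on the embedded topological type of $f_t$; since this type is assumed independent of $t$, one obtains a coordinate system working for all $t$ in which the L\^e numbers of $f_t$ are constant. Part~\ref{item:Le-constant} then applies and shows that $\nu(f_t)$ is independent of $t$.

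I expect the main difficulty to lie in the \emph{uniformity} of the L\^e--Iomdine reduction across the family: one must exhibit a single coordinate system and a single tuple $(j_1,\dots,j_s)$ that work simultaneously for all $t\in[0,1]$, which rests on the constancy of the L\^e numbers, and, for part~\ref{item:aligned}, on extracting from \cite[Theorem 7.9]{Massey_Lecture-notes} the precise statement that these numbers are topological in a uniformly chosen coordinate system. Once this is settled, what remains is routine bookkeeping with the L\^e--Iomdine--Massey formulas together with the black-box use of Theorem~\ref{theo:Zariski}.
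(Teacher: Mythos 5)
Your proof of part \ref{item:Le-constant} is essentially the paper's argument: bound $\nu(f_t)$ uniformly by upper semicontinuity, use the uniform L\^e--Iomdine formulas to pass to a family $g_t$ of \emph{isolated} singularities whose Milnor number is a fixed polynomial in the chosen exponents with the (constant) L\^e numbers as coefficients, note that the exponents exceed the multiplicity bound so that $\nu(g_t)=\nu(f_t)$, and conclude by Theorem \ref{theo:Zariski}. This part is correct.

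Part \ref{item:aligned} has a genuine gap. You attribute to \cite[Theorem 7.9]{Massey_Lecture-notes} the unconditional statement that the L\^e numbers of an aligned singularity are invariants of the embedded topological type; that is not what the theorem provides. Topological invariance of L\^e numbers is itself open, and Massey's result is \emph{conditional}: it requires as a hypothesis an affirmative answer to Zariski's multiplicity question for families of isolated singularities, and it comes with a dimension restriction. This is exactly why the paper's proof first replaces $f_t$ by the suspension $\tilde f_t(z,w)=f_t(z)$ --- which preserves the multiplicity, the alignment of the singularities, and the constancy of the embedded topological type --- so as to reach $n\geq 4$, and then supplies Corollary \ref{cor:Zariski} (the isolated, topologically-constant case of the multiplicity question, itself a consequence of Theorem \ref{theo:Zariski}) as the input that Massey's theorem demands. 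Note that Corollary \ref{cor:Zariski} cannot be applied to $f_t$ directly, since $f_t$ is not isolated; it enters only as the hypothesis of \cite[Theorem 7.9]{Massey_Lecture-notes}. So your reduction ``constant topological type $\Rightarrow$ constant L\^e numbers, then invoke part \ref{item:Le-constant}'' is not available as stated. To repair it you must (i) perform the suspension to meet the dimension hypothesis, and (ii) feed the isolated-case result into Massey's theorem rather than reading it as an unconditional invariance statement. You flagged this as the delicate point yourself; it is indeed where the argument breaks.
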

\begin{proof}
	\ref{item:Le-constant} Let $(z_1,\dots,z_n)$ be the chosen coordinate system. Since multiplicity is upper semicontinuous, there is $N\geq 0$ such that $\nu(f_t)\leq N$ for all $t$. Applying inductively the uniform L\^e--Iomdine formulas \cite[Theorem 4.5]{Massey_Lecture-notes}, we conclude that for some $j>N$, $k\in \{0,\dots, n\}$, and $a_1,\dots,a_k\in \C^{*}$, the singularity $g_t\de f_{t}+a_1z_1^{j}+\dots+a_kz_{k}^{j}$ is isolated, and its sequence of L\^e numbers -- which in this case consists just of the Milnor number $\mu(g_t)$, see \cite[Example 2.1]{Massey_Lecture-notes} -- is independent of $t$. By Theorem \ref{theo:Zariski}, the multiplicity of $g_t$ is independent of $t$. Since $j>N\geq \nu(f_t)$, we conclude that $\nu(f_t)=\nu(g_t)$, so the family $(f_t)$ is equimultiple, too, as needed.
	
	\ref{item:aligned} Replacing $f_t$ by the family $\tilde{f}_{t}\colon (\C^{n+k},0)\to (\C,0)$ given by $\tilde{f}_{t}(z,w)=f_t(z)$, we can assume that $n\geq 4$. Now, part \ref{item:aligned} follows from \cite[Theorem 7.9]{Massey_Lecture-notes} and Corollary \ref{cor:Zariski}.
\end{proof}

We conclude with an example of two isolated hypersurface singularities $f_0,f_1\in \C [\! [x,y,z,w]\! ]$ which are topologically equivalent, but are not connected by a $\mu$-constant family. The existence of such examples shows that Question \ref{q:Zariski}, i.e.\ the original Zariski Question A, is strictly stronger than Theorem \ref{theo:Zariski}. This example arose during our discussions with Norbert A'Campo and Enrique Artal.


\begin{example}[{cf.\ \cite{ACLM_topological-not-topological}}]\label{ex:not-in-a-family}
	Let $g_0,g_1\in \C[\![ x,y]\!]$ be two curve germs with the same integral Seifert form but different embedded topological type. A series of such pairs was constructed in \cite{duBois-Michel}, a particular example is given by
	\begin{equation*}
	\begin{split}
	g_0&=((y^2-x^3)^2-x^{17}-4yx^{10})((x^2-y^5)^2-y^{11}-4xy^{8}),
	\\
	g_1&= ((y^2-x^3)^2-x^{15}-4yx^{9})((x^2-y^5)^2-y^{13}-4xy^{9}).
	\end{split}
	\end{equation*}
	For $i\in \{0,1\}$ let $f_i\de g_i+z^2+w^2\in \C[\![x,y,z,w]\!]$ be the double suspension of $g_i$. By \cite[Theorem 2]{Sakamoto}, the links of $f_0$ and $f_1$ have the same Seifert form. Results of Kervaire and Levine, see \cite{Durfee}, imply that $f_0$ and $f_1$ are topologically equivalent.
	
	Suppose $f_0$ and $f_1$ occur in a $\mu$-constant family $(f_t)_{t\in [0,1]}$. By Theorem \ref{theo:Zariski}, each $f_t$ has multiplicity $2$. Applying Morse lemma in families, cf.\ \cite[Theorem 2.47]{GLS_intro-to-singularities}, we infer that in some local coordinates $(z_1,\dots,z_4)$ we have $f_t=z_1^2+\check{f}_t$ for some $\mu$-constant family $(\check{f}_{t})_{t\in [0,1]}\subset \C[\! [ z_2,z_3,z_4 ]\!]$. Since $\check{f}_0$, $\check{f}_{1}$ have multiplicity two, once again applying Theorem \ref{theo:Zariski} and Morse lemma we get, after a  coordinate change, a $\mu$-constant family $(\check{g}_{t})_{t\in [0,1]}\subset \C[\! [ z_3,z_4 ]\!]$ such that $f_{t}=z_1^2+z_2^2+\check{g}_{t}$ and $\check{g}_{i}=g_{i}$ for $i\in \{0,1\}$. By \cite{Le-Ramanujam}, the family $(\check{g}_{t})$ is topologically trivial, so $g_0$ is topologically equivalent to $g_1$, a contradiction.
\end{example}
%
%
%

\bibliographystyle{amsalpha}
\bibliography{bibl}

\end{document}